\documentclass[twoside]{article}

\usepackage[utf8]{inputenc}
\usepackage{amsmath,amssymb,amsthm,mathtools}
\usepackage{microtype}
\usepackage{graphicx,booktabs,tabularx,array,multirow,longtable}
\usepackage[dvipsnames]{xcolor}
\definecolor{revisionpink}{RGB}{0,0,0}
\definecolor{linkred}{RGB}{215,32,45}
\definecolor{citationgreen}{RGB}{0,138,0}
\usepackage{enumitem}
\usepackage{algorithm,algpseudocode}
\algtext*{EndFor}
\usepackage[round,authoryear]{natbib}
\usepackage{xurl}
\usepackage[colorlinks=true,linkcolor=linkred,citecolor=citationgreen,urlcolor=blue,hypertexnames=false]{hyperref}
\usepackage[capitalise,noabbrev]{cleveref}
\floatname{algorithm}{Procedure}
\crefname{algorithm}{Procedure}{Procedures}
\Crefname{algorithm}{Procedure}{Procedures}
\usepackage{etoc,placeins}
\setlist[itemize]{label=$\bullet$}
\usepackage{pgfplots}
\usepgfplotslibrary{fillbetween,groupplots}
\pgfplotsset{compat=1.18}

\newcommand{\R}{\mathbb{R}}
\newcommand{\N}{\mathbb{N}}

\newcommand{\E}{\mathbb{E}}
\newcommand{\PP}{\mathbb{P}}
\newcommand{\Sph}{\mathbb{S}^{d-1}}
\newcommand{\norm}[1]{\left\lVert #1\right\rVert}
\newcommand{\abs}[1]{\left\lvert #1\right\rvert}
\newcommand{\inner}[2]{\left\langle #1,#2\right\rangle}
\newcommand{\eps}{\varepsilon}
\newcommand{\xstar}{x^{\star}}
\newcommand{\fstar}{f^{\star}}
\newcommand{\op}{\mathrm{op}}
\newcommand{\HS}{\mathrm{HS}}
\newcommand{\Risk}{\mathfrak{R}}

\DeclareMathOperator{\Unif}{Unif}
\DeclareMathOperator{\tr}{tr}
\DeclareMathOperator{\KL}{KL}
\DeclareMathOperator{\TV}{TV}

\DeclareMathOperator{\Proj}{Proj}
\DeclareMathOperator{\sgn}{sgn}
\DeclareMathOperator*{\argmin}{arg\,min}

\theoremstyle{plain}
\newtheorem{theorem}{Theorem}
\newtheorem{proposition}{Proposition}
\newtheorem{corollary}{Corollary}
\newtheorem{alemma}{Lemma}[section]
\newtheorem{atheorem}[alemma]{Theorem}
\newtheorem{aproposition}[alemma]{Proposition}
\newtheorem{acorollary}[alemma]{Corollary}
\theoremstyle{definition}
\newtheorem{definition}{Definition}
\newtheorem{adefinition}[alemma]{Definition}
\theoremstyle{remark}

\newtheorem{aremark}[alemma]{Remark}

\newcommand{\statementrefstyle}[3]{%
  \crefname{#1}{\textcolor{black}{#2}}{\textcolor{black}{#3}}%
  \Crefname{#1}{\textcolor{black}{#2}}{\textcolor{black}{#3}}%
  \creflabelformat{#1}{##2\textcolor{linkred}{##1}##3}%
}
\statementrefstyle{theorem}{Theorem}{Theorems}
\statementrefstyle{proposition}{Proposition}{Propositions}
\statementrefstyle{corollary}{Corollary}{Corollaries}
\statementrefstyle{definition}{Definition}{Definitions}
\statementrefstyle{remark}{Remark}{Remarks}
\statementrefstyle{alemma}{Lemma}{Lemmas}
\statementrefstyle{atheorem}{Theorem}{Theorems}
\statementrefstyle{aproposition}{Proposition}{Propositions}
\statementrefstyle{acorollary}{Corollary}{Corollaries}
\statementrefstyle{adefinition}{Definition}{Definitions}
\statementrefstyle{aremark}{Remark}{Remarks}
\statementrefstyle{algorithm}{Procedure}{Procedures}
\statementrefstyle{procedure}{Procedure}{Procedures}
\DeclareRobustCommand{\statementnumref}[1]{%
  \hyperref[#1]{\textcolor{linkred}{\ref*{#1}}}%
}
\newcommand{\yesmark}{\textcolor{green}{\ensuremath{\boldsymbol{\checkmark}}}}
\newcommand{\nomark}{\textcolor{red}{\ensuremath{\boldsymbol{\times}}}}

\renewcommand{\arraystretch}{1.15}

\newcommand{\Fsc}{\mathcal{F}_{\mathrm{sc}}}
\newcommand{\Fcvx}{\mathcal{F}_{\mathrm{cvx}}}

\newcommand{\bfC}{C_{\mathrm b}}
\newcommand{\Cf}{C_{\mathrm f}}
\newcommand{\gbar}{\bar g}

\makeatletter
\def\theHALG@line{\thealgorithm.\arabic{ALG@line}}
\makeatother

\usepackage[preprint]{aistats2027}
\begin{document}
\etocdepthtag.toc{mainpart}

\runningtitle{Minimax Oracle Call Complexity, Rounds and Admissible Noise in
Zeroth-Order Optimization}
\runningauthor{Timofei Loginov, Darina Dvinskikh,  Nazarii Tupitsa,  Osman Osmanov,  Yuriy Dorn,  Alexander Gasnikov}

\twocolumn[
\aistatstitle{Minimax Oracle Call Complexity, Rounds and Admissible Noise in
Zeroth-Order Optimization of Highly Smooth Functions}
\aistatsauthor{
  Timofei Loginov\\[-1pt] {\normalfont MIRIAI} \And
  Darina Dvinskikh\\[-1pt] {\normalfont HSE University} \And
  Nazarii Tupitsa\\[-1pt] {\normalfont MBZUAI} \AND
  Osman Osmanov\\[-1pt] {\normalfont Innopolis University} \And
  Yuriy Dorn\\[-1pt] {\normalfont AI Institute MSU} \And
  Alexander Gasnikov\\[-1pt] {\normalfont Innopolis University}
}
\aistatsaddress{}
]

\begin{abstract}
We study zeroth-order optimization of convex and strongly convex functions
with H\"older smoothness of order $\beta>2$ from values corrupted by random
noise with bias independent of the method's random directions, a Lipschitz
systematic error and a bounded adversarial error. We measure the number of
function evaluations (calls), of sequential rounds of parallel calls, and the
largest tolerable systematic and adversarial errors. For strongly convex
functions with a bounded Hessian, we prove a lower bound of order
$d^2\varepsilon^{-\beta/(\beta-1)}$ on the number of calls, where $d$ is the
dimension and $\varepsilon$ the accuracy, and find the sharp power of $d$
under the stronger Hilbert--Schmidt smoothness for all $\beta>2$. An
accelerated finite-difference method attains them in
$O(\sqrt{\kappa}\log(1/\varepsilon))$ rounds for condition number $\kappa$,
like Nesterov's method with exact gradients, and tolerates the optimal
systematic and, up to logarithms, adversarial errors; rate-optimal methods
need $\Omega(\log d)$ rounds. The same number of calls is attained when the
Hessian grows at most linearly with the gradient and, for centered noise,
without an upper bound on the Hessian, also in logarithmically many rounds.
For convex functions, our lower bound improves the known one polynomially in
$d$ and is tight at the curvature of the hard instances; the systematic level
is again optimal. These bounds extend to general norms and constraints; on
the $\ell_1$ ball, mirror descent needs calls nearly linear in $d$.
Experiments confirm the theory.
\end{abstract}

\section{Introduction}
\label{sec:intro}

{
Zeroth-order optimization seeks a minimizer using only function evaluations
\citep{Conn2009,Larson2019}. It applies when derivatives are unavailable, as
in black-box attacks \citep{Chen2017ZOO} and forward-only fine-tuning of
language models \citep{Malladi2023MeZO}. We consider the noisy-value problem
\begin{equation}
\label{eq:problem}
\min_{x\in\R^d} f(x).
\end{equation}
Each oracle call returns a corrupted scalar value, whereas the unavailable
gradient has $d$ coordinates. Dimension dependence is therefore a central
complexity issue. Higher-order smoothness improves the accuracy dependence,
but its effect on the sharp power of $d$ has remained unresolved.

For strongly convex objectives with a bounded Hessian and smoothness order
$\beta>2$, the best known upper bound at accuracy $\eps$ was
$O\!\left(d^2\eps^{-\frac{\beta}{\beta-1}}\right)$ oracle calls
\citep{Akhavan2024}, whereas separable lower bounds gave
$\Omega\!\left(d^{\frac{\beta}{\beta-1}}
\eps^{-\frac{\beta}{\beta-1}}\right)$
\citep{Akhavan2020,Akhavan2024}. Their ratio
$d^{\frac{\beta-2}{\beta-1}}$ approaches $d$ as $\beta$ grows. Throughout,
asymptotic constants are independent of $d,T,\eps$; other dependencies are explicit.

We close this gap for strongly convex and convex objectives with H\"older
smoothness of order $\beta$, measured in the operator norm or in the
Hilbert--Schmidt norm. Oracle values may contain centered or
direction-oblivious random noise, a fixed Lipschitz systematic error, and a
bounded adaptive error.
}

\paragraph{{Related work.}}
{
Higher-order finite-difference and kernel methods
\citep{Polyak1990,Bach2016,Akhavan2020,Novitskii2022,Akhavan2024} attain the
$d^2$ dependence. Matching bounds were previously known only for $\beta=3$
under Hilbert--Schmidt smoothness: \citet{Yu2024} obtained
$d^{3/2}\eps^{-3/2}$ for centered noise, unrestricted Hessian magnitude, and
sufficiently large $T$. Under operator smoothness,
\citet{Lobanov2024Improved} attains the $d^2$ rate up to logarithms in
$O(\sqrt\kappa\log(1/\eps))$ rounds, {where
$\kappa$ is the condition number}. Persistent errors and round lower bounds
were studied in \citet{Gasnikov2022Power,Dvinskikh2023,Pasechnyuk2026MALN,DRY2018}.
\Cref{tab:noise,tab:convex-comparison} give detailed comparisons.
}

\paragraph{Main result.}
{
Let $q=\lceil\beta\rceil-1$ and set $s=2$ in operator norm; in
Hilbert--Schmidt norm, let $s=\frac32$ for even $q$ and
$s=2-\frac{q-1}{2(\beta-1)}$ for odd $q$. For the class
$\Fsc(\beta,1;1,3,1)$ defined in \cref{sec:setup} under i.i.d.
$\mathcal N(0,1)$ noise, denote the Gaussian minimax expected suboptimality
after $T$ oracle calls by $\Risk_T^{\mathrm G}$. Then
\[
\Risk_T^{\mathrm G}=\Theta\!\left(
\min\left\{1,
\left(\frac{d^s}{T}\right)^{\frac{\beta-1}{\beta}}\right\}\right),
\qquad d\ge2,\ T\ge1.
\]
Consequently, operator smoothness requires
$\Theta(d^2\eps^{-\frac{\beta}{\beta-1}})$ oracle calls: higher-order
smoothness improves the accuracy exponent but not the sharp $d^2$ factor.
For Hilbert--Schmidt smoothness, $s=\frac32$ for even $q$ and
$\frac32<s\le\frac53$ for odd $q$.

}

\begin{table*}[t]
\centering
\caption{{Leading dependence on $d$, $\eps$, and $\kappa$ for strongly convex objectives.}}
\label{tab:results}
\footnotesize
\renewcommand{\arraystretch}{1.06}
\setlength{\tabcolsep}{2.5pt}
\begin{tabular*}{\textwidth}{@{\extracolsep{\fill}}llllll@{}}
\toprule
\textbf{Setting} & \textbf{Reference} &
\begin{tabular}[c]{@{}c@{}}\textbf{{Oracle calls}}\\\textbf{{lower}}\end{tabular} &
\begin{tabular}[c]{@{}c@{}}\textbf{{Oracle calls}}\\\textbf{{upper}}\end{tabular} &
\begin{tabular}[c]{@{}c@{}}\textbf{{Rounds}}\\\textbf{{lower}}\end{tabular} &
\begin{tabular}[c]{@{}c@{}}\textbf{{Rounds}}\\\textbf{{upper}}\end{tabular}\\
\midrule
\multirow{3}{*}{\begin{tabular}[c]{@{}l@{}}$\alpha I\preceq\nabla^2f\preceq LI$,\\ $[f]_{\beta,\op}\le L_\beta$\end{tabular}}
& \citet{Akhavan2024} & $d^{\frac{\beta}{\beta-1}}\eps^{-\frac{\beta}{\beta-1}}$
& $d^2\eps^{-\frac{\beta}{\beta-1}}$ & -- & --\\
& \citet{Lobanov2024Improved} & -- & $d^2\eps^{-\frac{\beta}{\beta-1}}$\,$^{*}$
& -- & $\sqrt\kappa\,\log\frac1\eps$\\
& \textbf{This paper} (Thms.~\ref{thm:main}, \ref{thm:rounds-lower}) & $d^2\eps^{-\frac{\beta}{\beta-1}}$ & $d^2\eps^{-\frac{\beta}{\beta-1}}$
& $\log d$\,$^{\S}$ & $\sqrt\kappa\,\log\frac1\eps$\\
\midrule
\multirow{2}{*}{\begin{tabular}[c]{@{}l@{}}$\alpha I\preceq\nabla^2f\preceq LI$,\\ $[f]_{\beta,\HS}\le L_\beta$\end{tabular}}
& \citet{Akhavan2024} & -- & $d^2\eps^{-\frac{\beta}{\beta-1}}$ & -- & --\\
& \textbf{This paper} (Thms.~\ref{thm:main}, \ref{thm:rounds-lower})$^{\P}$ & $d^{{s}}\eps^{-\frac{\beta}{\beta-1}}$ & $d^{{s}}\eps^{-\frac{\beta}{\beta-1}}$ & $\log d$\,$^{\S}$ & $\sqrt\kappa\,\log\frac1\eps$\\
\midrule
$\norm{\nabla^2f}_{\op}\le L_0+L_1\norm{\nabla f}$
& \textbf{This paper} (Sec.~\ref{sec:hessian})\,$^{\P}$ & $d^{{s}}\eps^{-\frac{\beta}{\beta-1}}$ & $d^{{s}}\eps^{-\frac{\beta}{\beta-1}}$
& -- & $\kappa_{\mathrm{eff}}\log\frac1\eps$\,$^{\ddagger}$\\
\midrule
\multirow{2}{*}{\begin{tabular}[c]{@{}l@{}}$\alpha I\preceq\nabla^2f$,\\ centered noise\end{tabular}}
& \citet{Yu2024}, $\beta=3$, HS\,$^{\dagger}$ & $d^{\frac32}\eps^{-\frac32}$ & $d^{\frac32}\eps^{-\frac32}$ & -- & --\\
& \textbf{This paper} (Thm.~\ref{thm:nh-main})\,$^{\P\|}$ & $d^{{s}}\eps^{-\frac{\beta}{\beta-1}}$
& $d^{{s}}\eps^{-\frac{\beta}{\beta-1}}$ & -- & $\log\frac d\eps$\\
\bottomrule
\end{tabular*}

\smallskip
\parbox{\textwidth}{\footnotesize $^{*}$Up to logarithmic factors.
$^{\dagger}$Hilbert--Schmidt seminorm; asymptotic in $T$.
$^{\ddagger}${$\kappa_{\mathrm{eff}}=L_{\mathrm{eff}}/\alpha$} (\cref{sec:hessian}).
$^{\S}$Rate-optimal methods, fixed $d$, $L\ge3\alpha$.
$^{\P}${$s=2$} for $[f]_{\beta,\op}$; for $[f]_{\beta,\HS}$,
{$s=\frac32$} if $q$ is even and
{$s=2-\frac{q-1}{2(\beta-1)}$} if $q$ is odd.
$^{\|}$For $T$ above a polynomial in $d$.}
\end{table*}

\paragraph{Contributions.}
{
\begin{itemize}[leftmargin=*,itemsep=1pt,topsep=2pt]
\item \emph{Oracle calls.} Random-frame higher-order differences attain the
factors $d^2$ under operator smoothness and $d^s$ under
Hilbert--Schmidt smoothness; dense hard instances give matching lower bounds.

\item \emph{Rounds and persistent errors.} A restarted Nesterov scheme attains
the rate in $O(\sqrt\kappa\log(1/\eps))$ rounds, and rate-optimal methods
require $\Theta(\log d)$ rounds. For $\alpha$-strong convexity, the sharp
systematic-error level is $\Theta(\sqrt{\alpha\eps})$; the adaptive
threshold is determined up to a factor
$\min\{\sqrt d,\sqrt{\log(4T)}\}$.

\item \emph{Broader curvature classes.} The same rate holds under
gradient-dependent Hessian growth. With centered noise and unrestricted
Hessian magnitude, it holds once $T$ exceeds a polynomial in $d$.

\item \emph{Convexity and geometry.} We improve the convex lower bounds and
match central differences in a curvature regime. An image-ball reduction
extends the lower bounds to general constraints; on the $\ell_1$ ball, the
dimension dependence is linear up to $\log^2(2d)$.
\end{itemize}
}

\paragraph{Organization.}
{
\Cref{sec:setup} defines the model; Sections~\ref{sec:results}--\ref{sec:lower}
develop the strongly convex theory; Sections~\ref{sec:convex} and~\ref{sec:other}
cover convexity and geometry; and \cref{sec:experiments} reports experiments.
The supplement contains proofs, extensions, experimental details,
computational scope, and detailed comparisons.
}

\begin{table*}[t]
\centering
\caption{{Leading oracle-call dependence on $d$, $\eps$, and $L$ for convex objectives.}}
\label{tab:convex-comparison}
\footnotesize
\renewcommand{\arraystretch}{1.06}
\setlength{\tabcolsep}{3.5pt}
\begin{tabular*}{\textwidth}{@{\extracolsep{\fill}}llll@{}}
\toprule
\textbf{Setting} & \textbf{Reference} &
\begin{tabular}[c]{@{}c@{}}\textbf{{Oracle calls}}\\\textbf{{lower bound}}\end{tabular} &
\begin{tabular}[c]{@{}c@{}}\textbf{{Oracle calls}}\\\textbf{{upper bound}}\end{tabular}\\
\midrule
\multirow{6}{*}{\begin{tabular}[c]{@{}l@{}}$\nabla^2f\preceq LI$,\\ $[f]_{\beta,\op}\le L_\beta$\end{tabular}}
& \citet{Akhavan2020,Akhavan2024} & $\min\bigl\{d^2\eps^{-2},\,d^{\frac{\beta}{\beta-1}}\eps^{-\frac{2\beta}{\beta-1}}\bigr\}$\,$^{\|}$ & $d^2\eps^{-\frac{2\beta}{\beta-1}}$\\
& \citet{Bach2016} & -- & $d^2\eps^{-\frac{2\beta}{\beta-1}}$\\
& \citet{Gasnikov2022Survey} & -- & $d^2L\eps^{-3}$\\
& \begin{tabular}[c]{@{}l@{}}\citet{Lattimore2023Second},\\ \citet{Fokkema2024}\end{tabular}
& -- & $d^{p}\eps^{-2}$ for $p\in[3,\frac72]$\,$^{*\dagger}$\\
& \begin{tabular}[c]{@{}l@{}}\textbf{This paper}\\(Thms.~\ref{thm:convex-upper}, \ref{thm:convex-lower-main})\end{tabular} & $\min\bigl\{d^{\frac52}\eps^{-2},\,d^2\eps^{-\frac{2(\beta+1)}{\beta}},\,d^2L\eps^{-3}\bigr\}$
& $d^2\min\bigl\{\eps^{-\frac{2\beta}{\beta-1}},L\eps^{-3}\bigr\}$\\
& \begin{tabular}[c]{@{}l@{}}\textbf{This paper}, $L\lesssim\eps^{1-\frac2\beta}$\,$^{\ddagger}$\\(Sec.~\ref{sec:convex})\end{tabular} & $d^2L\eps^{-3}$
& $d^2L\eps^{-3}$\\
\midrule
\multirow{3}{*}{\begin{tabular}[c]{@{}l@{}}$\nabla^2f\preceq LI$,\\ $[f]_{\beta,\HS}\le L_\beta$\end{tabular}}
& \citet{Bach2016} & -- & $d^2\eps^{-\frac{2\beta}{\beta-1}}$\\
& \begin{tabular}[c]{@{}l@{}}\textbf{This paper}, $q$ even\\(Sec.~\ref{sec:convex})\end{tabular} & $d^2\eps^{-2}$\,$^{\S}$ & $d^{\frac32}\eps^{-\frac{2\beta}{\beta-1}}$\\
& \begin{tabular}[c]{@{}l@{}}\textbf{This paper}, $q$ odd\\(Sec.~\ref{sec:convex})\end{tabular} & $d^2\eps^{-2}$\,$^{\S}$ & $d^{2-\frac{q-1}{2(\beta-1)}}\eps^{-\frac{2\beta}{\beta-1}}$\\
\midrule
\begin{tabular}[c]{@{}l@{}}Constraints with an\\$n$-dimensional image ball\end{tabular}
& \textbf{This paper} (Sec.~\ref{sec:other}) & $\min\bigl\{n^{\frac52}\eps^{-2},\,n^2\eps^{-\frac{2(\beta+1)}{\beta}}\bigr\}$ & --\\
\midrule
$\ell_1$ ball, $[f]_{\beta,1}\le L_\beta$ & \textbf{This paper} (Sec.~\ref{sec:other}) & $d\,\eps^{-\frac{2(\beta+1)}{\beta}}$\,$^{\P}$
& $d\log^2(2d)\,\eps^{-\frac{2\beta}{\beta-1}}$\,$^{*}$\\
\bottomrule
\end{tabular*}

\smallskip
\parbox{\textwidth}{\footnotesize $a\lesssim b$ and $a\gtrsim b$ mean $a\le Cb$ and
$a\ge cb$ with $c,C>0$ independent of $d$, $\eps$ and $L$.
$^{*}$Centered sub-Gaussian noise. $^{\dagger}$Up to
logarithmic factors. $^{\ddagger}$For $d^{-\frac12}L\lesssim\eps\lesssim\min\{1,L\}$.
$^{\S}$For $\eps\lesssim d^{-\frac12\lfloor\frac q2\rfloor}$, $L\gtrsim1$.
$^{\P}$For $d^{-\frac{3\beta}4}\lesssim\eps\lesssim d^{-\frac\beta2}$.
$^{\|}$For $L\gtrsim\eps^{1-\frac2\beta}$.}
\end{table*}

\begin{table*}[t]
\centering
\caption{{Admissible persistent-error scales.}}
\label{tab:noise}
\footnotesize
\renewcommand{\arraystretch}{1.06}
\setlength{\tabcolsep}{2.8pt}
\begin{tabular*}{\textwidth}{@{\extracolsep{\fill}}lllll@{}}
\toprule
\textbf{Setting} & \textbf{Reference} & \textbf{Oblivious} &
\textbf{$M_{\max}$} & \textbf{$\delta_{\max}$}\\
\midrule
\multirow{3}{*}{\begin{tabular}[c]{@{}l@{}}$\alpha I\preceq\nabla^2f\preceq LI$,\\ $[f]_{\beta,\op}\le L_\beta$\end{tabular}}
& \begin{tabular}[c]{@{}l@{}}\citet{Akhavan2020,Akhavan2024},\\\citet{Lobanov2024Improved}\end{tabular} & \yesmark & -- & --\\
& \citet{GasnikovLobanovStonyakin2024} & -- & -- & $\gtrsim\eps^{\frac{\beta}{2(\beta-1)}}d^{-\frac{\beta}{\beta-1}}$\,$^{*}$\\
& \textbf{This paper} (Thm.~\ref{thm:main}) & \yesmark & $\asymp\eps^{\frac12}$
& \begin{tabular}[c]{@{}l@{}}$\gtrsim\eps^{\frac{\beta}{2(\beta-1)}}d^{-\frac12}$,\\ $\lesssim\eps^{\frac{\beta}{2(\beta-1)}}\sqrt{\tfrac{\log(4T)}{d}}$\end{tabular}\\
\midrule
\begin{tabular}[c]{@{}l@{}}$\alpha I\preceq\nabla^2f\preceq LI$,\\ $[f]_{\beta,\HS}\le L_\beta$\end{tabular}
& \textbf{This paper} (Thm.~\ref{thm:main}) & \yesmark & $\asymp\eps^{\frac12}$
& \begin{tabular}[c]{@{}l@{}}$\gtrsim\eps^{\frac{\beta}{2(\beta-1)}}d^{-\frac14}$\,$^{\ddagger}$,\\ $\lesssim\eps^{\frac{\beta}{2(\beta-1)}}d^{-\frac14}\sqrt{\log(4T)}$\,$^{\ddagger}$\end{tabular}\\
\midrule
\begin{tabular}[c]{@{}l@{}}$\alpha I\preceq\nabla^2f$, $[f]_{\beta,\op}\le L_\beta$,\\ $\norm{\nabla^2f}_{\op}\le L_0+L_1\norm{\nabla f}$\end{tabular}
& \textbf{This paper} (Sec.~\ref{sec:hessian}) & \yesmark & $\asymp\eps^{\frac12}$
& \begin{tabular}[c]{@{}l@{}}$\gtrsim\eps^{\frac{\beta}{2(\beta-1)}}d^{-\frac12}$,\\ $\lesssim\eps^{\frac{\beta}{2(\beta-1)}}\sqrt{\tfrac{\log(4T)}{d}}$\end{tabular}\\
\midrule
\multirow{2}{*}{\begin{tabular}[c]{@{}l@{}}$\alpha I\preceq\nabla^2f$,\\ $[f]_{\beta,\op}\le L_\beta$\end{tabular}}
& \begin{tabular}[c]{@{}l@{}}\citet{Yu2024}, $\beta=3$,\\$[f]_{\beta,\HS}\le L_\beta$\end{tabular} & \nomark & -- & --\\
& \textbf{This paper} (Thms.~\ref{thm:main}, \ref{thm:nh-main}) & \nomark & $\lesssim\eps^{\frac12}$
& $\lesssim\eps^{\frac{\beta}{2(\beta-1)}}\sqrt{\tfrac{\log(4T)}{d}}$\\
\midrule
\multirow{4}{*}{\begin{tabular}[c]{@{}l@{}}$0\preceq\nabla^2f\preceq LI$,\\ $[f]_{\beta,\op}\le L_\beta$\end{tabular}}
& \citet{Akhavan2020,Akhavan2024} & \yesmark & -- & --\\
& \citet{Dvinskikh2023}, $\sigma=0$ & -- & $\gtrsim\eps d^{-\frac12}$ & --\\
& \citet{Pasechnyuk2026MALN}, $\sigma=0$ & -- & -- & $\gtrsim\eps^{\frac32}d^{-\frac12}$\,$^{\dagger}$\\
& \textbf{This paper} (Sec.~\ref{sec:convex}) & \yesmark & $\asymp\eps$
& \begin{tabular}[c]{@{}l@{}}$\gtrsim\eps^{\min\{\frac{\beta}{\beta-1},\frac32\}}d^{-\frac12}$,\\ $\lesssim\eps\sqrt{\tfrac{\log(4T)}{d}}$\end{tabular}\\
\midrule
$\ell_1$ ball, $[f]_{\beta,1}\le L_\beta$ & \textbf{This paper} (Sec.~\ref{sec:other}) & \nomark & $\gtrsim\eps$
& $\gtrsim\eps^{\frac{\beta}{\beta-1}}$\\
\bottomrule
\end{tabular*}

\smallskip
\parbox{\textwidth}{\footnotesize $\lesssim$, $\gtrsim$, $\asymp$: up to
constants independent of $d,\eps,T$, for small $\eps$ and large $T$.
\yesmark: oblivious noise allowed; \nomark: centered noise
assumed; --: not considered. $^{*}$Fixed errors; Polyak--{\L}ojasiewicz
functions. $^{\dagger}$Fixed errors and queries in a ball. $^{\ddagger}$For
even $q$; for odd $q$, $d^{-\frac12+\frac{q-1}{4(\beta-1)}}$ instead.}
\end{table*}

\section{Problem setup}
\label{sec:setup}

{This section defines the function classes, noisy value oracle, and
complexity and robustness criteria for problem~\eqref{eq:problem}.}

\paragraph{{Notation.}}
{
Fix $\beta>2$ and set $q=\lceil\beta\rceil-1\ge2$. For operator-norm
smoothness let $\tau=0$; for Hilbert--Schmidt smoothness let
$\tau=\tau_\beta$, where
\begin{equation}
\label{eq:tau}
\tau_\beta=\tfrac{\beta-1}2\ \text{ for even }q,\qquad
\tau_\beta=\tfrac{q-1}2\ \text{ for odd }q .
\end{equation}
In both cases, set
$s=2-\frac{\tau}{\beta-1}$. Constants $c_\beta,C_\beta>0$ depend only on
$\beta$ and are fixed within each formal statement. We use
$\log^+x=\max\{0,\log x\}$ and
$\log_2^+x=\log^+x/\log2$.
}

\paragraph{Smoothness and classes.}
{
Let $\gamma=\beta-q\in(0,1]$. For the symmetric $q$-linear form $D^qf(x)$,
set $\norm A_{\op}=\sup_{\norm{v_i}\le1}\abs{A[v_1,\dots,v_q]}$ and let
$\norm A_{\HS}$ be the Euclidean norm of its coefficient array. The seminorm
$[f]_{\beta,\op}$ is the least $L_\beta$ such that
$\norm{D^qf(x)-D^qf(y)}_{\op}\le L_\beta\norm{x-y}^\gamma$; define
$[f]_{\beta,\HS}$ analogously. Since $\norm A_{\op}\le\norm A_{\HS}$, the
Hilbert--Schmidt assumption is stronger; $[f]_\beta$ denotes either seminorm.
}

For $0<\alpha\le L\le\infty$ and $R>0$, the class
$\Fsc=\Fsc(\beta,L_\beta;\alpha,L,R)$ consists of the $q$-times continuously
differentiable $f$ with $\alpha I\preceq\nabla^2f\preceq LI$, $[f]_\beta\le L_\beta$
and {unique minimizer $\xstar$ in}
$B(R)=\{x:\norm x\le R\}$. {Here $L=\infty$ allows
arbitrary Hessian norms; $\kappa=L/\alpha$ is used when $L<\infty$.} The class
$\Fcvx=\Fcvx(\beta,L_\beta;L,R)$ is defined in the same way with
$0\preceq\nabla^2f\preceq LI$ and some minimizer in $B(R)$. Methods start at
{the origin, so $R$ bounds the initial distance to a
minimizer; it does not constrain the queries or the output. These may lie}
anywhere in $\R^d$, except in \cref{sec:other}.

\paragraph{{Oracle calls, rounds, and noisy values.}}
{
Before round $k$, the past and the randomness
$\Theta_1,\dots,\Theta_{k-1}$ generate $\mathcal F_{k-1}$. The method draws
fresh $\Theta_k$ and selects all queries of that round jointly from
$(\mathcal F_{k-1},\Theta_k)$; hence rounds measure adaptive depth. It makes
$T$ oracle calls in $N$ rounds (at most $n$ per round in
\cref{thm:rounds-lower}), and its output may use the full transcript.
}

\begin{definition}[Noisy values]
\label{def:oracle}
Let $\mathcal I_k$ index round $k$. A query at $X_t$, $t\in\mathcal I_k$, returns
$Y_t=f(X_t)+\eta(X_t)+\xi^{\mathrm c}_t+\xi^{\mathrm o}_t+\delta_t$.
{Let $\mathcal E_t$ be the $\sigma$-field generated by $\mathcal F_{k-1}$,
$\Theta_k$, the queries of round $k$, and the errors of its earlier queries.}
For levels $M,\sigma,\delta\ge0$,
\begin{align}
&\abs{\eta(x)-\eta(y)}\le M\norm{x-y}\ \text{ for all }x,y;
\label{eq:noise-lip}\tag{L}\displaybreak[0]\\
&\E[\xi^{\mathrm c}_t\mid\mathcal E_t]=0,\quad
\E[(\xi^{\mathrm c}_t)^2\mid\mathcal E_t]\le\sigma^2;
\label{eq:noise-centered}\tag{C}\displaybreak[0]\\
&\E[(\xi^{\mathrm o}_t)^2\mid\mathcal F_{k-1}]\le\sigma^2,\quad
(\xi^{\mathrm o}_t)_{t\in\mathcal I_k}
\mathrel{{\perp\!\!\!\perp}}
\Theta_k\mid\mathcal F_{k-1};
\label{eq:noise-oblivious}\tag{O}\\
&\abs{\delta_t}\le\delta .
\label{eq:noise-adv}\tag{A}
\end{align}
In (O), $\perp\!\!\!\perp$ denotes conditional independence given
$\mathcal F_{k-1}$. Here $\eta$ is fixed before the run, and $\delta_t$ may
depend on the past, on $\Theta_k$, and on the query. The noise is then
\emph{admissible at levels
$(M,\sigma,\delta)$}.
\end{definition}

{
Thus $\eta$ is a fixed systematic error; $\xi^{\mathrm c}$ is centered;
$\xi^{\mathrm o}$ may be biased but is direction-oblivious within each round
\citep{Akhavan2020}; and $\delta_t$ is adaptive but bounded. \emph{Gaussian
noise} means independent $\xi^{\mathrm c}_t\sim\mathcal N(0,\sigma^2)$ and no other
component. Methods know the class parameters and $\sigma$, but not $M,\delta$.
}

\paragraph{Criteria and rates.}
{A method is \emph{$\eps$-accurate} on a class at
levels $(M,\sigma,\delta)$ if
$\E[f(\hat x)-\fstar]\le\eps$ for every $f$ in the class and every noise
admissible at these levels, where $\fstar=\min_{x\in\R^d}f(x)$.} The
\emph{admissible levels} $M_{\max}(\eps,T)$ and $\delta_{\max}(\eps,T)$ are the
suprema of the $M$ (with $\delta=0$) and of the $\delta$ (with $M=0$) for which
some method with at most $T$ {oracle calls} is $\eps$-accurate. For strongly
convex functions and $\sigma>0$, the rate $\Phi_T$ and the radius $h_T$ that
balances the bias and the noise of the gradient estimate of
\cref{sec:method} are
\begin{equation}
\label{eq:rate}
\Phi_T=\frac{L_\beta^{\frac{2}{\beta}}}{\alpha}d^{2-\frac{2+\tau}{\beta}}
\Bigl(\frac{\sigma^2}{T}\Bigr)^{\frac{\beta-1}{\beta}},\quad
h_T=\Bigl(\frac{d^{2+\tau}\sigma^2}{L_\beta^2T}\Bigr)^{\frac1{2\beta}},
\end{equation}
and $\Phi_1:=\Phi_T|_{T=1}$. Solving $\Phi_T=\eps$ gives the
\emph{{required number of oracle calls}}
\begin{equation}
\label{eq:Tstar}
{
T_\star(\eps)=d^s\sigma^2L_\beta^{\frac{2}{\beta-1}}
(\alpha\eps)^{-\frac{\beta}{\beta-1}}.}
\end{equation}
A gradient error of norm $b$ causes function-value error at the scale
$\frac{b^2}{\alpha}$ under strong convexity, so accuracy $\eps$ requires gradients with
accuracy $\sqrt{\alpha\eps}$. The radius at which the bias of the estimate
equals $\sqrt{\alpha\eps}$, and the adversarial level with the same effect, are
\begin{equation}
\label{eq:heps}
{
h_\eps=\Bigl(\frac{\alpha\eps\,d^\tau}{L_\beta^2}\Bigr)^{\frac1{2(\beta-1)}},
\hspace{-0.1em}
\delta_\eps=\frac{\sqrt{\alpha\eps}\,h_\eps}{\sqrt d}
=\sigma d^{\frac12}T_\star(\eps)^{-\frac12}.}
\end{equation}
{
The last equality requires $\sigma>0$. The scale $\delta_\eps$ equals the
standard-error scale associated with $T_\star(\eps)/d$ centered observations;
the oblivious component of \cref{def:oracle} may remain dependent across
oracle calls.
}

\section{Main results}
\label{sec:results}

{This section gives oracle-call, round, and persistent-error bounds for
strongly convex objectives under bounded, gradient-dependent, and unrestricted curvature.}

\begin{theorem}[{Oracle calls, rounds and admissible noise}]
\label{thm:main}
Let $\beta>2$, $0<\alpha<L<\infty$, $\kappa=\frac L\alpha$ and $\sigma>0$, and let
$\Fsc$ be defined with either seminorm. There are constants
$0<c_\beta\le C_\beta$ such that the following holds.
\begin{enumerate}[label=(\alph*),leftmargin=*,itemsep=1pt,topsep=2pt]
\item \emph{(Upper bound.)} Let
$T\ge C_\beta d\sqrt\kappa\,\log_2\bigl(C_\beta d\kappa\,(1+\frac{LR^2}{\Phi_1})\bigr)$.
The accelerated method of \cref{alg:stencil} with radius $h_T$ and target
{$C_\beta\Phi_T$} uses at most $T$ {oracle calls} in
$N\le(1+5\sqrt\kappa)(1+\log_2^+\frac{LR^2}{\Phi_T})$ rounds, and for every
$f\in\Fsc$ and every noise admissible at levels $(M,\sigma,\delta)$
\begin{equation}
\label{eq:main-upper}
\E[f(\hat x)-\fstar]\le C_\beta\Bigl(\Phi_T+\frac{M^2}{\alpha}
+\frac{T\delta^2}{d\sigma^2}\,\Phi_T\Bigr).
\end{equation}
\item \emph{(Lower bound.)} Let $\rho=\min\{\alpha,\frac{L-\alpha}{2}\}$ and
$\Phi_0=\min\{\alpha R^2,\rho^{\frac{2(\beta-1)}{\beta-2}}\alpha^{-1}L_\beta^{-\frac{2}{\beta-2}}\}$.
If $M\le\alpha R$, then for every $T\ge1$ every method with $T$ {oracle calls} in any
number of rounds satisfies
$\sup_{f,\eta}\E[f(\hat x)-\fstar]\ge c_\beta(\min\{\Phi_T,\Phi_0\}+\frac{M^2}{\alpha})$
under Gaussian noise, where $f$ ranges over $\Fsc$ and $\eta$ over the
$M$-Lipschitz systematic errors.
\item \emph{(Admissible levels.)} For every $\eps\in(0,LR^2)$, this method
with target $\frac\eps2$ and radius $h=c_\beta h_\eps$ is $\eps$-accurate with
$C_\beta[T_\star(\eps)+d\sqrt\kappa\,(1+\log\frac{LR^2}{\eps})]$ {oracle calls} in
at most $C_\beta\sqrt\kappa\,(1+\log\frac{LR^2}{\eps})$ rounds whenever
$M\le c_\beta\sqrt{\alpha\eps}$ and $\delta\le c_\beta\delta_\eps$. Conversely,
$M_{\max}(\eps,T)\le\sqrt{2\alpha\eps}$ for all $T$ and $\eps<\frac{\alpha R^2}{2}$, and
$\delta_{\max}(\eps,T)\le C_\beta\min\{\sqrt d,\sqrt{\log(4T)}\}\,\delta_\eps$
for all $T$ and $\eps\le\eps_0$, where $\eps_0>0$ depends on the class
parameters and $d$, but not on $T$.
\end{enumerate}
\end{theorem}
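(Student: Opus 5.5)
The proof splits along the three parts of the statement. Each part reduces to one estimator-level inequality or one information-theoretic construction.

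\emph{Upper bound (a).} First we analyze one call of the random-frame higher-order difference estimator of \cref{sec:method} with radius $h$. Draw a random orthonormal frame $\{e_i\}$ and, for each direction, apply a one-dimensional stencil of order $q$ whose weights kill the Taylor terms of orders $2,\dots,q$. The per-coordinate bias is then $O(L_\beta h^{\beta-1})$ times the $q$-th directional remainder. Under operator smoothness this sums to a squared bias of order $L_\beta^2h^{2(\beta-1)}d$. Under Hilbert--Schmidt smoothness, averaging over the random frame gains the factor $d^{-\tau}$ of \eqref{eq:tau}. This is the delicate calculation: even versus odd $q$ decides whether the leading remainder is a symmetric contraction, which averages like $\norm{\cdot}_\HS^2/d^{\dots}$, or not. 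The stochastic part comes from the noise components. The oblivious and centered noise contribute variance $O(d^2\sigma^2/(mh^2))$ per $m$ frames, because the stencil weights sum to zero on odd moments and obliviousness decouples the noise from $\Theta_k$. The systematic error $\eta$ contributes a bias $O(M\sqrt d\cdot h/h)$, which we will show is actually $O(M)$ by projecting the Lipschitz difference along orthonormal directions. The adversarial term contributes a bias $O(\sqrt d\,\delta/h)$.

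Plugging these bounds into a restarted Nesterov scheme gives the result. Each restart stage runs $O(\sqrt\kappa)$ rounds with $T/N$ calls per round, and an inexact-gradient analysis for accelerated methods with additive bias $b$ and variance $v$ halves the gap down to $C(b^2+v)/\alpha$. This yields $\Phi_T+M^2/\alpha+d\delta^2/(\alpha h_T^2)$, and the last term equals $\frac{T\delta^2}{d\sigma^2}\Phi_T$ by \eqref{eq:rate}. The lower bound on $T$ guarantees at least $d$ calls per round, so that one frame fits in each round.

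\emph{Admissible levels (c), achievability.} This follows from the same bound with $h=c_\beta h_\eps$ in place of $h_T$, using \eqref{eq:heps} and the call count $T_\star(\eps)$.

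\emph{Lower bounds (b), (c).} For the $\Phi_T$ term, use a dense Assouad-type family around a quadratic $\frac{\alpha'}2\norm x^2$. Perturb it by $\sum_j\omega_j\,r\,h^\beta\psi(\inner{u_j}{x}/h)$ along an orthonormal basis and add a dense rank-structured term. The point of the dense term is that each query at distance $h$ reveals only about a $d^{-1}$ fraction of the gradient signal; it is what raises the power from $d^{\beta/(\beta-1)}$ to $d^s$. Concretely, take perturbations $\sum_j\omega_j g(\inner{v_j}{x})$ with $v_j$ spread over random directions, and bound the KL divergence between neighbouring hypotheses by $T\cdot$(signal)$^2/(d\sigma^2)$ through Gaussian chain rule. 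Under Hilbert--Schmidt smoothness the admissible amplitude shrinks by $d^{\tau/2}$, and this produces $d^s$. The admissible amplitude is also limited by $\rho$, which gives the cap $\Phi_0$.

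The $M^2/\alpha$ term and $M_{\max}\le\sqrt{2\alpha\eps}$ come from a two-point argument. Take $f_\pm=\frac\alpha2\norm{x\mp a e_1}^2$ with $\eta_\pm=-f_\pm+\frac\alpha2\norm x^2$ made $M$-Lipschitz, possibly after a Huber-type truncation outside $B(R)$. Then the oracle outputs are identical under both hypotheses, so one of the two must incur error $\ge\alpha a^2/2$ with $a=M/\alpha$.

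For $\delta_{\max}$, build the same two-point or Assouad family at radius $h_\eps$. The adversary cancels the difference $f_+-f_-$ wherever it is at most $\delta$, and it is below $\delta$ except in a slab of width $\sim h_\eps$ where the signal scales as $\sqrt{\alpha\eps}h_\eps/\sqrt d$ in random directions. Queries have $\sqrt{\log T}$ concentration of projections, which gives the factor $\min\{\sqrt d,\sqrt{\log(4T)}\}$. The hardest steps are matching the exact $d^s$ for odd $q$ in both the estimator bias and the construction, and controlling this adversarial cancellation uniformly over adaptive queries.
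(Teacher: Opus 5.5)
There is a genuine gap in the lower bound (b). Your concrete family $\sum_j\omega_j g(\inner{v_j}{x})$ localizes each ridge only in its own variable $\inner{v_j}{x}$, so it is the separable construction in disguise; for orthonormal $v_j$ it is literally a rotation of it. A single query with $|\inner{v_j}{x}|\asymp h$ for all $j$, hence $\norm x\asymp h\sqrt d$, sees every sign at full strength. Assouad then only forces $Ta^2h^2\gtrsim\sigma^2$ per sign, and with $a\asymp L_\beta h^{\beta-1}$ you get back the separable bound $d^{\beta/(\beta-1)}\eps^{-\beta/(\beta-1)}$, whatever amplitude you give each ridge. The ``dense rank-structured term'' that should repair this is never specified, and it is the entire content of (b).

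The paper uses one perturbation under one radial cutoff, $f_\omega(x)=\frac\mu2\norm x^2+a\inner{\omega/\sqrt d}{x}\,\chi(\norm x^2/h^2)$. Its operator seminorm is at most $K_\beta ah^{1-\beta}$, with no power of $d$. Each coordinate of the minimizer $-(a/\mu)\omega/\sqrt d$ carries one sign at scale $a/\sqrt d$, so a wrong sign costs $a^2/(8\alpha d)$. Since the perturbation vanishes for $\norm x\ge h$, every query satisfies $\sum_j\bigl(f_\omega(x)-f_{\omega^{(j)}}(x)\bigr)^2\le4a^2h^2/d$. Assouad then requires $Ta^2h^2\gtrsim d^2\sigma^2$, which gives exactly $\Phi_T$. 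Under the Hilbert--Schmidt seminorm this radial cutoff costs $d^{\lfloor q/2\rfloor/2}$. That equals $d^{\tau/2}$ for odd $q$, but is larger for even $q$ when $\gamma<1$; there you need a ridge $\vartheta(\inner{v_\omega}{x}/w)$ of width $w=hd^{-1/4}$ under the same cutoff. So it is even $q$, not odd $q$, that needs an extra idea, and the upper bound also needs one there: an additional difference at radius $hd^{-1/4}$ in the bias estimate.

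Part (a) does not close as written either. A squared bias of order $dL_\beta^2h^{2\beta-2}$ under operator smoothness is a factor $d$ too large. By $L_\beta^2d^{-\tau}h_T^{2\beta-2}=\alpha\Phi_T$ it contributes $d\Phi_T$ at $h_T$, and re-optimizing $h$ gives $d^{2+1/(\beta-1)}$ calls instead of $d^2$. Your bound is the deterministic per-frame one; the randomness of the frame is what removes the factor $d$, through two facts:
\begin{itemize}
\item By Flaxman's identity the mean of one frame is $G_h(z)=\nabla\mathcal S_hf(z)$ with $\mathcal S_hf=\sum_ja_jt_j\E f(\cdot+ht_jW)$. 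These signed ball averages annihilate the Taylor terms of degrees $1,\dots,q-1$ of $\nabla f$, so $\norm{G_h(z)-\nabla f(z)}\le c_1L_\beta h^{\beta-1}$.
\item The noiseless error $Z=\sum_ie_f(U_i)U_i$ has $\E\norm Z^2=d\,\E e_f(U)^2\le\frac d{d-1}c_1^2L_\beta^2h^{2\beta-2}$ by the Poincar\'e inequality on $\Sph$, because $e_f$ is odd with $\norm{\nabla e_f}\le c_1L_\beta h^{\beta-1}$.
\end{itemize}
The same two facts give the $O(M)$ bound for $\eta$ that you only promise; projecting along a fixed orthonormal set yields only $\sqrt d\,A_1M$. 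Pizzetti's formula with iterated Poincar\'e gives the factor $d^{-\tau}$.

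Your batching also loses a logarithm. With a constant batch of $T/N$ calls per round, the variance floor $3\sqrt\kappa v^2/L$ of the inexact accelerated method is of order $J\Phi_T$, where $J\asymp\log(LR^2/\Phi_T)$ is the number of restarts, because each restart discards the samples of earlier stages. This is the logarithmic loss of earlier work. The statement avoids it by doubling the number of frames per round from stage to stage, so that the total sample count is a geometric sum dominated by the last stage.

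By contrast, your $M_{\max}$ pair is the paper's. No truncation is needed, since $\eta_\pm=\pm Mx_1-M^2/(2\alpha)$ is linear. Your $\delta_{\max}$ sketch is right once the queries are made independent of $v$. Run the method on the exact values of $f_0$ and couple on the event that none of its $T$ queries enters the cone $\{|\inner vx|>\theta\norm x\}$; this event has probability at least $\tfrac12$ by the cap bound and a union bound.
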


\paragraph{When the bounds match.}
Let $\delta=0$ and $M\le\alpha R$; the latter is needed, since the output $0$
has error at most $\frac{LR^2}{2}$. As $\Phi_T\le\Phi_0$ if and only if
$T\ge T_\star(\Phi_0)$, (a) and (b) give the minimax error
$\Theta(\Phi_T+\frac{M^2}{\alpha})$ over $\Fsc$ and the noises admissible at levels
$(M,\sigma,0)$, jointly in all parameters, for
$T\ge\max\{T_\star(\Phi_0),C_\beta d\sqrt\kappa\log_2(C_\beta d\kappa(1+\frac{LR^2}{\Phi_1}))\}$,
where {$T_\star(\Phi_0)=\Theta(d^s)$} for fixed other parameters.
{Part (b) also applies below this threshold; for
$L\ge3\alpha$, \cref{thm:rounds-lower}(b) gives the corresponding small-budget
lower bound for exact values.} With
$\alpha=1$, $L=3$, $R=L_\beta=\sigma=1$ and $M=\delta=0$ the minimax error is
$\Theta(\min\{1,(\frac{d^{{s}}}{T})^{\frac{\beta-1}{\beta}}\})$, for all
$d\ge2$ and $T\ge1$ (\cref{cor:normalized}).

\paragraph{Admissible errors.}
{Part (c) compares errors at the statistically optimal
budget $T=\Theta(T_\star(\eps))$: the systematic level is sharp in every
dimension and, for $\eps\le\eps_0$, the adaptive level is characterized
within $\min\{\sqrt d,\sqrt{\log(4T)}\}$. For fixed $d$, once
$T\ge T_\star(\eps)$ and $\log(4T)\ge d$, the same bounds give the factor
$\sqrt d$.}
Both admissible levels also hold under two-point feedback. The adversarial
converse extends to the high-probability criterion of \citet{Pasechnyuk2026MALN}. The two-phase
method of \cref{sec:method} attains \eqref{eq:main-upper} in
$O(\kappa(1+\log^+\frac{\alpha R^2}{\Phi_T}))$ rounds if $d\kappa$ replaces
$d\sqrt\kappa$ in (a).

\paragraph{Rounds.}
Let $\Risk_{N,n}(\mathcal C;\sigma)$ be the minimax risk of methods with $N$
rounds of at most $n$ {oracle calls} each, under Gaussian noise of variance $\sigma^2$
or exact values ($\sigma=0$).

\begin{theorem}[Lower bounds on the number of rounds]
\label{thm:rounds-lower}
Let $\beta>2$, $L_\beta,\alpha,R>0$, $L\ge3\alpha$, let $\Fsc=\Fsc(\beta,L_\beta;\alpha,L,R)$
be defined with either seminorm, and let $N,n\ge1$.
\begin{enumerate}[label=(\alph*),leftmargin=*,itemsep=1pt,topsep=2pt]
\item If $\sigma>0$, then
$\Risk_{1,n}(\Fsc;\sigma)\ge c_1n^{-\frac{2(\beta-1)}{2\beta+d}}$ with $c_1>0$
independent of $n$.
\item For every $\sigma\ge0$,
$\Risk_{N,n}(\Fsc;\sigma)\ge c_Nn^{-\nu_N}$ with
$\nu_N=\frac{2(\beta-1)((\beta-1)^N-1)}{d(\beta-2)}$, where $c_N>0$ does not depend
on $n$.
\item Let $\sigma>0$ and fix $d$ and all parameters except $n$. A method with
$N$ rounds of at most $n$ {oracle calls} whose worst-case error on $\Fsc$ is
$O(n^{-\frac{\beta-1}{\beta}}\log^k(n+1))$ for some $k$ (\emph{rate-optimal}) has
$N\ge N_\beta(d):=\max\{2,\lceil\log_{\beta-1}(1+\frac{(\beta-2)d}{2\beta})\rceil\}$.
\end{enumerate}
\end{theorem}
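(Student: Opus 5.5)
The plan is to prove all three parts with one family of localized bump constructions on top of a fixed strongly convex base. Fix $f_0(x)=\frac{\alpha'}{2}\norm{x-x_0}^2$ with $\alpha'=2\alpha$, which is possible because $L\ge3\alpha$. This leaves a curvature margin of order $\alpha$ on both sides. To $f_0$ we add perturbations $\phi(x)=a\,\psi((x-z)/r)$ with $\psi$ a smooth compactly supported bump. The class constraints then reduce to three conditions:
\[
a r^{-\beta}\lesssim L_\beta,\qquad a r^{-2}\lesssim\alpha,\qquad \text{minimizer stays in }B(R).
\]
The first uses either seminorm; for a bounded number of overlapping bumps the HS and operator constants differ only by factors depending on $\beta$ and $d$, and $d$ is fixed here. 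Saturating both curvature conditions gives $a\asymp r^\beta$ with $r$ small. A bump placed with a negative dip near a candidate point $z$ moves the minimizer to within $O(r)$ of $z$ and lowers the value by $\asymp a$. Any output farther than about $r$ from $z$ then pays about $a$ in suboptimality, by strong convexity and the margin.

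For (a), take one round with $n$ non-adaptive queries. Tile a ball of radius $\asymp1$ around $x_0$ by $m\asymp r^{-d}$ disjoint cells and let $f_j=f_0-\phi_j$, so that the hypothesis index $j$ is uniform. Since the queries are chosen before any feedback, each query falls into at most one cell's support. Under Gaussian noise, the KL divergence between $P_j$ and $P_0$ (the law under $f_0$) equals $\frac{a^2}{2\sigma^2}\,n_j$, where $n_j$ is the number of queries in cell $j$. Averaging gives $\sum_j n_j\le n$. By Fano's inequality, or more simply by a two-point argument over the least-queried half of the cells, the error stays $\gtrsim a$ whenever $a^2 n/m\lesssim\sigma^2$. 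Substituting $a=r^\beta$ and $m=r^{-d}$ gives $r^{2\beta+d}\asymp 1/n$, and hence a risk of order
\[
a=r^{\beta}\asymp n^{-\beta/(2\beta+d)}.
\]
This exponent is not yet the claimed one, because the stated exponent $2(\beta-1)/(2\beta+d)$ points to a different scaling. The fix is to parametrize the suboptimality gap differently: use a tilt of size $g\asymp a/r\asymp r^{\beta-1}$, which under strong convexity gives a gap $g^2/\alpha\asymp r^{2(\beta-1)}$. I will redo the bookkeeping with that gap and check that it reproduces exactly $n^{-2(\beta-1)/(2\beta+d)}$.

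For (b), which covers exact values ($\sigma=0$), I will run a multi-scale needle-in-a-haystack argument across rounds. In round $k$ the current candidate cell has radius $r_{k-1}$. Inside it we place a grid of $m_k\asymp (r_{k-1}/r_k)^d$ subcells, only one of which contains the next-scale dip. A deterministic query set of size $n$ can only detect the perturbation if it hits the true subcell. With $m_k\ge 2n$, an adversary can keep the responses identical to those of the base function on all but a small fraction of hypotheses. This forces $r_k\gtrsim r_{k-1}n^{-1/d}$, and randomization is handled by Yao's principle.

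The new ingredient is the smoothness budget. Nesting dips of heights $a_k$ requires $a_k\lesssim r_k^\beta$, but the dip at scale $k$ only needs to be visible relative to the curvature already committed at scale $k-1$. Writing $a_k\asymp r_k^\beta$ with the scaling relation $r_k\asymp r_{k-1}^{\beta-1} n^{-1/d}$, which comes from the gradient-scale gap above, gives a recursion whose solution grows geometrically in $\beta-1$. This matches the geometric sum in $\nu_N$:
\[
r_N\asymp n^{-((\beta-1)^N-1)/(d(\beta-2))},\qquad \text{error}\asymp r_N^{2(\beta-1)},
\]
which is exactly $n^{-\nu_N}$. With noise, the same construction holds a fortiori. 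Finding this recursion and justifying the exponent $\beta-1$ per round is the main obstacle. I will try to derive it by requiring that the $k$-th perturbation have gradient size $r_k^{\beta-1}$, compatible with the $q$-th derivative Hölder bound after localization to the previous cell.

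For (c), compare exponents. A rate-optimal method has error exponent $\frac{\beta-1}{\beta}$, so it needs $\nu_N\le\frac{\beta-1}{\beta}$ to be false, that is, $\nu_N\ge\frac{\beta-1}{\beta}$ up to logarithms. Writing this out,
\[
\nu_N\ge\frac{\beta-1}{\beta}\iff\frac{2((\beta-1)^N-1)}{d(\beta-2)}\ge\frac1\beta\iff(\beta-1)^N\ge 1+\frac{(\beta-2)d}{2\beta},
\]
which gives $N\ge\log_{\beta-1}\bigl(1+\frac{(\beta-2)d}{2\beta}\bigr)$. For the bound $N\ge2$, part (a) with $N=1$ gives the exponent $\frac{2(\beta-1)}{2\beta+d}<\frac{\beta-1}{\beta}$ for every $d\ge1$. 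Logarithmic factors cannot close a polynomial gap, so $N=1$ is excluded. Combining the two gives $N\ge N_\beta(d)$.
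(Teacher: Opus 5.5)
\textbf{Part (a): your family is not hard.} You fix the base $f_0=\frac{\alpha'}{2}\norm{x-x_0}^2$ and put the perturbation $\phi_j$ in a cell away from $x_0$. Every $f_j$ still satisfies $\nabla^2f_j\succeq\alpha I$, so its minimizer is its unique critical point. Since $\nabla f_j(x_0)=\nabla f_0(x_0)=0$, that minimizer is $x_0$ for all but $O(1)$ of the $m\asymp r^{-d}$ hypotheses. The constant output $x_0$ therefore has essentially zero Bayes risk.

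Your claim that a dip near $z$ ``moves the minimizer to within $O(r)$ of $z$'' is false under the lower curvature bound. A localized perturbation of slope $g\lesssim L_\beta r^{\beta-1}$ displaces the minimizer by only $O(g/\alpha)$, and only if it sits at the minimizer. Switching to tilts, as you propose in order to fix the exponent, does not change this.

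The missing idea is to move the whole instance with the hidden location. The paper takes
$f_{z,\omega}(x)=\alpha\norm{x-z}^2+\omega a\inner{e_1}{x-z}\chi(\norm{x-z}^2/r^2)$, with $a=L_\beta r^{\beta-1}/K_\beta$, $z$ uniform on $B(R/2)$ and $\omega$ a uniform sign. The algorithm may well learn $z$ from the quadratic part, so the cell index is not what is hard, and Fano over cells is the wrong reduction. What one round cannot learn is $\omega$, which affects only values in $B(z,r)$. For fixed $z$,
$\KL(P_{z,1}\Vert P_{z,-1})\le\frac{2a^2r^2}{\sigma^2}\sum_i\PP(\norm{X_i-z}<r)$. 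Because the queries do not depend on $z$, averaging over $z$ bounds the sum by $n(2r/R)^d$. A wrong sign costs $a^2/(8\alpha)\asymp r^{2(\beta-1)}$, and choosing $r^{2\beta+d}\asymp1/n$ gives the claimed exponent.

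\textbf{Part (b): same defect, and the recursion is asserted rather than derived.} Subcells of a previous cell of radius $r_{k-1}$, with $m_k\gtrsim n$, give $r_k\asymp n^{-1/d}r_{k-1}$, not $n^{-1/d}r_{k-1}^{\beta-1}$. The exponent $\beta-1$ comes from a different geometry.

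\begin{itemize}
\item In the paper the base is $\alpha\norm x^2$. The $j$-th perturbation is a tilt of radius $r_j$, centred at the previous minimizer $Z_{j-1}$, with slope $2\alpha V_j$ and $\norm{V_j}\le\varrho_j$.
\item Its seminorm is at most $2\alpha K_\beta\varrho_jr_j^{1-\beta}$ and must stay below $L_\beta/(N+1)$. So it can move the minimizer only by $\varrho_j\asymp r_j^{\beta-1}$.
\item Hence the next centre $Z_j=Z_{j-1}+\varrho_jW_j$ is uniform on a ball of radius $\varrho_j$, not $r_j$.
\item The next tilt must have radius $r_{j+1}=(4Nn)^{-1/d}\varrho_j$. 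Then the $n$ queries of round $j$, which are independent of $W_j$, enter $B(Z_j,r_{j+1})$ with probability at most $1/(4N)$. This is the recursion $r_{j+1}\asymp n^{-1/d}r_j^{\beta-1}$.
\end{itemize}

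To make the construction work you also need $\sum_{l>j}\varrho_l\le r_{j+1}/31$. This keeps every later tilt inside the linear part of the earlier ones, so $f=F_k$ off $B(Z_k,r_{k+1})$ and the minimizer is exactly $Z_{N+1}=\sum_jV_j$. You also need a coupling with the run that is answered by $F_k$ in round $k$. A final sign tilt $V_{N+1}=\omega\varrho_{N+1}e_1$ then gives the loss $\frac{3\alpha}{8}\varrho_{N+1}^2=c_Nn^{-\nu_N}$.

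\textbf{Part (c)} is correct and matches the paper: you compare $\nu_N$ and $\frac{2(\beta-1)}{2\beta+d}$ with $\frac{\beta-1}{\beta}$. It goes through once (a) and (b) are established.
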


The hard functions satisfy $\alpha I\preceq\nabla^2f\preceq3\alpha I$. Part (c)
concerns fixed $d$ and growing {oracle calls} per round; see \cref{app:discussion} on
its constants and on finite budgets.

\paragraph{{Gradient-dependent curvature.}}
\label{sec:hessian}
The gradient-dependent condition below controls the initial region while
allowing unrestricted global curvature:
$\norm{\nabla^2f(x)}_{\op}\le L_0+L_1\norm{\nabla f(x)}$
\citep{Zhang2020Clipping}. It implies $\nabla^2f\preceq L_{\mathrm{eff}}I$ on $B(R)$ for
$L_{\mathrm{eff}}=\min\{L_0e^{2L_1R},C_\beta(L_0(1+L_1R)^{q-2}+L_\beta R^{\beta-2})\}$,
and above an explicit budget the projected two-phase method with
$L_{\mathrm{eff}}$ in place of $L$ satisfies \eqref{eq:main-upper} in
$O(\kappa_{\mathrm{eff}}(1+\log^+\frac{\alpha R^2}{\Phi_T}))$ rounds,
$\kappa_{\mathrm{eff}}=\frac{L_{\mathrm{eff}}}{\alpha}$; the lower bounds hold with $L=L_0$
if $L_0>\alpha$. {The explicit budget and proof appear in \cref{thm:gs}.}

{For the full class
$\Fsc(\beta,L_\beta;\alpha,\infty,R)$, let}
$\Risk_T(\mathcal C;\sigma)$ be the minimax error over $\mathcal C$ of methods
with at most $T$ {oracle calls} in any number of rounds, under centered noise
\eqref{eq:noise-centered} with $\eta=\xi^{\mathrm o}=\delta_t=0$.

\begin{theorem}[{Minimax rates under unrestricted curvature}]
\label{thm:nh-main}\label{thm:nh-batched-main}
Let $\beta>2$, $\alpha,L_\beta,R,\sigma>0$, let
$\mathcal C=\Fsc(\beta,L_\beta;\alpha,\infty,R)$ be defined with either
seminorm, and set $K_q=\binom{d+q}{q}$.
\begin{enumerate}[label=(\alph*),leftmargin=*,itemsep=1pt,topsep=2pt]
\item There is $C_0=C_0(\beta,\alpha,L_\beta,R,\sigma)$ such that, if
$T\ge C_0d^2$ for $2<\beta\le4$, or $T\ge C_0d^q$ for $\beta>4$, then
$c_\beta\Phi_T\le\Risk_T(\mathcal C;\sigma)\le C_\beta\Phi_T$, and the upper
bound is attained in at most $C_\beta T/d$ rounds.
\item {There are $C_1=C_1(\beta,\alpha,L_\beta,R,\sigma)$ and
$C_2=C_2(\beta,\alpha,L_\beta,R)$ such that a batched method attains
$\Risk_T(\mathcal C;\sigma)\le C_\beta\Phi_T$ in at most
$C_2(1+\log T)$ rounds whenever
$T\ge C_1K_q^{\,2+q/\gamma}\log(2+d+T)$.}
\end{enumerate}
\end{theorem}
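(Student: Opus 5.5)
The proof has three parts: a lower bound, a sequential upper bound with $O(T/d)$ rounds, and a batched upper bound with logarithmically many rounds. I would take them in that order.

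\emph{Lower bound in (a).} I would reduce to \cref{thm:main}(b). The lower-bound construction of \cref{thm:main}(b) uses hard instances with $\alpha I\preceq\nabla^2f\preceq3\alpha I$, so they lie in $\mathcal C=\Fsc(\beta,L_\beta;\alpha,\infty,R)$. Gaussian noise is centered in the sense of \eqref{eq:noise-centered}. Taking $L=3\alpha$ gives $\Risk_T(\mathcal C;\sigma)\ge c_\beta\min\{\Phi_T,\Phi_0\}$, and $\Phi_T\le\Phi_0$ once $T\ge T_\star(\Phi_0)=\Theta(d^s)$. Since $s\le2\le q$ for $\beta>4$ and $s\le2$ always, this threshold is absorbed into $C_0d^2$ or $C_0d^q$ after enlarging $C_0$.

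\emph{Upper bound in (a).} The missing ingredient is an upper bound on the Hessian. I would use a two-phase scheme.
\begin{enumerate}
\item Use a small fraction of the budget to localize. Run a robust method with centered noise, e.g.\ projected stochastic gradient steps with the random-frame estimator of \cref{sec:method} on $B(2R)$. Strong convexity alone, with no upper curvature bound, yields $\E\norm{x-\xstar}^2\le r^2$, where $r$ is a constant depending only on $(\beta,\alpha,L_\beta,R,\sigma)$.
\item On $B(\xstar,r)$, control the Hessian by Taylor expansion of $\nabla^2f$ at a point. Here the main difficulty appears: $\nabla^2f(\xstar)$ itself is not bounded a priori. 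So I would first estimate $\nabla^2f$ near the localized point, using order $d^2$ calls for $2<\beta\le4$. For $\beta>4$ I would instead estimate all derivatives up to order $q$, i.e.\ a polynomial fit with $K_q\asymp d^q$ coefficients, using order $d^q$ calls.
\item Precondition with the estimated Hessian, which yields an effective condition number $O(1)$ on the local ball.
\item Run the accelerated method of \cref{thm:main}(a) on the preconditioned local problem, projected to the ball. This gives error $C_\beta\Phi_T$ with centered noise.
\end{enumerate}
The preconditioning changes the smoothness seminorm, so I would check that $[f]_\beta$ stays controlled after the linear change of variables; the bound $\alpha I\preceq\nabla^2f$ gives this. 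The $O(T/d)$ rounds come from each step using $O(d)$ parallel calls.

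\emph{Part (b).} Replace the sequential localization by a batched scheme with geometrically increasing batches.
\begin{enumerate}
\item In each round, query a grid or design of $\asymp K_q$ points in the current trust region.
\item Fit a degree-$q$ polynomial by least squares; its error is controlled by the Hölder remainder $L_\beta r^\beta$ plus the noise $\sigma\sqrt{K_q/n}$.
\item Minimize the surrogate exactly and shrink the radius by a constant factor.
\end{enumerate}
After $O(\log T)$ rounds the radius reaches $h_T$-scale accuracy. The requirement $T\gtrsim K_q^{2+q/\gamma}\log(2+d+T)$ makes the design conditioning and the union bound over rounds work. Strong convexity converts the surrogate's gradient error into the rate $\Phi_T$, and the final round applies the estimator of \cref{thm:main}(a) with one batch.

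\emph{Main obstacle.} I expect the hardest step to be the polynomial-fit error analysis in high dimension: bounding the least-squares design's smallest eigenvalue uniformly over the trust region, with the needed polynomial dependence on $K_q$. I would try random designs on the sphere together with matrix Bernstein.
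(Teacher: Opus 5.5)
Your lower bound is correct and is the paper's own argument: apply \cref{thm:main}(b) with $L=3\alpha$. The hard instances lie in $\mathcal C$ and Gaussian errors are centered. Both upper bounds, however, have genuine gaps.

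\emph{Part (a).} The step ``precondition with the estimated Hessian, which yields an effective condition number $O(1)$ on the local ball'' fails, for three reasons.
First, for $q\ge4$ the curvature of $f$ is not approximately constant on any ball of fixed radius, because the derivatives of orders $3,\dots,q$ are unrestricted. Take $f(x)=\frac12\norm x^2+\frac{k^2}{12}x_1^4$, which lies in $\mathcal C$ for $\alpha\le1$ and $4<\beta\le5$ (\cref{rem:gs-sharp}). It has $[f]_\beta=0$ and $\nabla^2f(\xstar)=I$, yet $\nabla^2f(re_1)=I+k^2r^2e_1e_1^\top$. So no single linear metric is good on $B(\xstar,r)$ for any fixed $r>0$.
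Your localization step does not help. It is vacuous, since already $\norm{0-\xstar}\le R$. It is also unjustified, since projected SGD needs a bound on $\norm{\nabla f}$ to control $\nu_k^2\norm{g_k}^2$.
For $q=3$ the linear term $D^3f(x_0)[x-x_0]$ of your Taylor expansion of $\nabla^2f$ is likewise unbounded. What rescues a linear metric there is the Glaeser-type inequality $\frac12H_0-\bar cI\preceq\nabla^2f\preceq\frac32H_0+\bar cI$ on $B(R)$ (\cref{lem:nh-relative-curvature}). It uses $\nabla^2f\succeq\alpha I$ to bound $D^3f$ relative to the Hessian, and your plan has no substitute for it.

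Second, even where a linear metric is legitimate, a Hessian estimate from $O(d^2)$ noisy values has operator-norm error of order $d^{a'}$, with $a'=\frac34+\frac1{2\beta}$ (\cref{lem:nh-hessian-pilot}). The effective condition number is therefore a power of $d$, not $O(1)$. The budget $C_0d^2$ suffices only because $a'<1$, and this must be tracked. Likewise, a degree-$q$ fit from $O(K_q)$ calls leaves residual curvature of order $\sqrt{K_q}$, which is why the threshold $C_0d^q$ is needed.

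Third, $f(0)-\fstar$ and $\norm{0-\xstar}_{\mathbf S}$ are unbounded over $\mathcal C$. A restarted accelerated method started at $0$, whose number of stages grows with $\log(\bar V_0/\eps)$, therefore gives no bound uniform over the class.

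The paper avoids all three problems as follows. It keeps the fitted polynomial $P$ nonlinear inside a proximal forward--backward--forward step (\cref{lem:nh-fbf-step}). Stencils then estimate only $\nabla(f-P)$, and the recursion runs in the Euclidean norm from $\norm{z_1-\xstar}\le R$. A value guard (\cref{lem:nh-guard}) handles the event where the pilot fails. For $3<\beta\le4$ it instead combines \cref{lem:nh-relative-curvature}, the spectral initialization of \cref{lem:nh-spectral}, and projected steps in the calibrated metric.

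\emph{Part (b).} Your shrinking trust region has no way to start.
A least-squares fit on a ball of radius $r$ is controlled in sup norm only up to the factor $\sqrt{2K_q}$ times the nodal error, and that nodal error includes the Taylor remainder $L_\beta r^\beta/q!$. So the surrogate's minimizer is only known to lie within $(\sqrt{K_q}\,L_\beta r^\beta/\alpha)^{1/2}$ of $\xstar$. This is a contraction only once $r\lesssim(\alpha/(\sqrt{K_q}L_\beta))^{1/(\beta-2)}$, and nothing in your proposal reaches that scale from $r\asymp R$.
Moreover, a fit spreads its calls over $K_q\asymp d^q$ coefficients. Under unbounded curvature, getting within $h_T$ of $\xstar$, or taking one batched gradient estimate at the end, does not by itself give error $\Phi_T$ with the factor $d^s$.

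The paper's device is different.
\begin{itemize}
\item It runs one pilot on a tiny ball of radius $a\asymp K_q^{-1/(2\gamma)}$ with nodal accuracy $\theta\asymp K_q^{-(q/\gamma+1)/2}$.
\item It extrapolates with $\sup_{B(\rho)}|p|\le C^{\rm pol}_q(\rho/a)^q\sup_{B(a)}|p|$ (\cref{lem:nh-poly}(b)). On the good event this gives $\sup_{B(2\bar R)}|P-T_qf(0;\cdot)|\le1$.
\item Since $f-T_qf(0;\cdot)$ already has Hessian at most $L_\beta(2\bar R)^{\beta-2}/(q-2)!$ on $B(R)$, the residual $f-P$ has dimension-free curvature on the whole ball, with no localization at all (\cref{lem:batch-pilot}).
\item It then runs $J\asymp\log T$ epochs, each of a constant number of corrected steps, with $2^{j-1}$ frames in epoch $j$.
\end{itemize}
The threshold $K_q^{2+q/\gamma}$ is the pilot cost $2K_qr\ell$, with $\ell\asymp\sigma^2/\theta^2$ repetitions per node. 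It is not a design-conditioning or union-bound requirement: the design is the deterministic maximum-volume one, with explicit constant $\sqrt{2K_q}$ (\cref{lem:nh-maxvol,lem:nh-fit}).
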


For integer $\beta$, the threshold in (b) is at most a $\beta$-dependent
constant times $d^{\beta^2-1}\log(2+d+T)$.

{Both methods assume centered noise and first fit the
unrestricted low-order polynomial component of $f$; every pilot oracle call
is included in $T$. Part (a) uses $1+d$ pilot rounds for $3<\beta\le4$.
For $\beta>4$, and for $\beta>3$ in part (b), global polynomial minimization
is treated as exact computation in the oracle model.} For $\beta=3$ and Hilbert--Schmidt
smoothness, (a) recovers the $d^{\frac32}\eps^{-\frac32}$ oracle-call
complexity of \citet{Yu2024} on their
class and noise for all $T\ge C_0d^2$. The converse bounds of \cref{thm:main}(c)
hold here with $L=3\alpha$, since the hard instances have
$\nabla^2f\preceq3\alpha I$.

\section{{Gradient estimation}}
\label{sec:method}

{This section isolates the random-frame gradient estimate used by a standard
strongly convex optimizer.}

\paragraph{{Higher-order finite differences.}}
Let $m=\lceil\frac q2\rceil$, $t_j=\frac jm$ for $j\le m$, and let $a_1,\dots,a_m$
solve $\sum_{j=1}^ma_jt_j^{2i+1}=\mathbf 1\{i=0\}$, $i=0,\dots,m-1$. Then
$\frac1{2h}\sum_ja_j(\varphi(ht_j)-\varphi(-ht_j))=\varphi'(0)$ for every
polynomial $\varphi$ of degree at most $2m\ge q$. For $m=1$ this is the
central difference, and for $m=2$ the weights are $(\frac83,-\frac13)$. The constants
$S_1=\sum_j\abs{a_j}$, $S_2=\sum_ja_j^2$, $A_1=\sum_j\abs{a_j}t_j$ and
$C_\sigma=(S_1+(\frac{S_2}{2})^{\frac12})^2$ {measure the amplification of value errors}. A
\emph{frame} is a uniformly random orthonormal basis of $\R^d$; differences
along random directions are standard
\citep{Polyak1990,NesterovSpokoiny2017,Gasnikov2022Survey}.

{The update in the second block below is Nesterov's
standard constant-momentum scheme \citep[Section~2.2]{Nesterov2004}. The
wrapper adds restarts, growing batches, and calls to the random-frame
estimate.}

\begin{algorithm}[h]
\caption{{Random-frame finite differences with restarted Nesterov descent}}
\label{alg:stencil}
\begin{algorithmic}[1]
\Require radius $h>0$, target $\eps>0$; $\alpha,L,R,\sigma$; $\kappa=\frac L\alpha$
\Statex \emph{{Random-frame estimate $\gbar_{h,B}(z)$, one round of $2mdB$ oracle calls:}}
\State Draw $B$ independent frames $(U^{(\ell)}_i)_{i=1}^d$
\State In one round query all $z\pm ht_jU^{(\ell)}_i$, receive $Y^\pm_{ij\ell}$
\State $\hat g^{(\ell)}=\sum_{i=1}^d\frac1{2h}\sum_{j=1}^m a_j(Y^+_{ij\ell}-Y^-_{ij\ell})U^{(\ell)}_i$
\State \Return $\gbar=\frac1B\sum_{\ell=1}^B\hat g^{(\ell)}$
\Statex \emph{{Restarted Nesterov scheme, $J$ stages of $N_0$ rounds:}}
\State $N_0=\lceil5\sqrt\kappa\,\rceil$, $\theta=\frac{\sqrt\kappa-1}{\sqrt\kappa+1}$, $x^{(0)}=0$
\State $\bar V=\frac{(L+\alpha)R^2}2$, $J=\lceil\log_2\frac{\bar V}{\eps}\rceil$
\For{$j=1,\dots,J$}
\State $B_j=\max\bigl\{1,\bigl\lceil\frac{12\sqrt\kappa\,C_\sigma d\sigma^2\,2^j}{Lh^2\bar V}\bigr\rceil\bigr\}$
\State $x_{-1}=x_0=x^{(j-1)}$
\For{$k=0,\dots,N_0-1$} \Comment{one round each}
\State $y_k=x_k+\theta(x_k-x_{k-1})$, $x_{k+1}=y_k-L^{-1}\gbar_{h,B_j}(y_k)$
\EndFor
\State $x^{(j)}=x_{N_0}$
\EndFor
\State \Return $\hat x=x^{(J)}$
\end{algorithmic}
\end{algorithm}

\begin{proposition}[Random-frame error decomposition]
\label{prop:estimator}
Let $[f]_\beta\le L_\beta$ and $d\ge2$, and let the center $z$ and the radius $h$ of
round $k$ be $\mathcal F_{k-1}$-measurable. For every noise admissible at
levels $(M,\sigma,\delta)$, the {random-frame estimate in} \cref{alg:stencil} is
$\gbar=\nabla f(z)+b+\zeta+\Delta$, where $b$ is $\mathcal F_{k-1}$-measurable,
$\E[\zeta\mid\mathcal F_{k-1}]=0$, and
\begin{gather*}
\norm b\le C_\beta L_\beta d^{-\frac\tau2}h^{\beta-1}+A_1M,\qquad
\norm{\Delta}\le\frac{\sqrt d\,S_1\delta}h,\\
\E\bigl[\norm\zeta^2\mid\mathcal F_{k-1}\bigr]\le
\frac{C_\beta L_\beta^2h^{2\beta-2}}{Bd^{\tau}}+\frac{4A_1^2M^2}B
+\frac{C_\sigma d\sigma^2}{Bh^2},
\end{gather*}
{The estimate relies only on higher-order smoothness, independently of the
Hessian magnitude.}
\end{proposition}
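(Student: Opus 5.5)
We split each observed difference according to the components of \cref{def:oracle} and collect the four contributions into $b$, $\zeta$ and $\Delta$. For frame $\ell$ and direction $i$, write $D_{ij\ell}=Y^+_{ij\ell}-Y^-_{ij\ell}$ as the sum of $f(z+ht_jU_i)-f(z-ht_jU_i)$, the systematic part $\eta(z+ht_jU_i)-\eta(z-ht_jU_i)$, the centered and oblivious parts $\xi^{\pm}$, and the adversarial parts $\delta^\pm$. The stencil is exact on polynomials of degree $\le 2m\ge q$. We therefore Taylor-expand $\varphi_i(r)=f(z+rU_i)$ to order $q$ with the H\"older remainder. The smooth part then equals $\inner{\nabla f(z)}{U_i}$ plus a remainder $\rho_i$, which is a combination of $h^{-1}(D^qf(z+\theta ht_jU_i)-D^qf(z))[U_i^{\otimes q}](ht_j)^q/q!$. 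Summing $\sum_i\inner{\nabla f(z)}{U_i}U_i=\nabla f(z)$ over an orthonormal frame gives exactly the gradient. Everything else is error.

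\emph{Smooth remainder.} Define $b_f=\E[\sum_i\rho_iU_i\mid\mathcal F_{k-1}]$ and put the fluctuation $\sum_i\rho_iU_i-b_f$ into $\zeta$; averaging over $B$ independent frames divides its conditional variance by $B$. In operator norm ($\tau=0$), $\abs{\rho_i}\le C_\beta L_\beta h^{\beta-1}$. Hence $\norm{\sum_i\rho_iU_i}^2=\sum_i\rho_i^2\le C d L_\beta^2h^{2\beta-2}$, which is off by a factor $d$.

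The cure is the observation that the leading remainder is a $q$-linear (or, after Taylor expansion of $D^qf$ to $q+1$ terms when useful, polynomial) form in $U_i$ with coefficients of size $L_\beta h^{\beta-1}$. For a uniformly random frame, the vector $\sum_i A[U_i^{\otimes q}]U_i$ has mean given by a contraction of $A$ (of norm $\lesssim\norm A_{\op}$). Its fluctuation is controlled by moments of Haar-distributed columns, $\E\abs{A[U^{\otimes q}]}^2\lesssim \norm A^2/d^{\,\cdot}$. I would bound the bias by $\norm{\E\sum_i\rho_iU_i}=d\,\norm{\E[\rho_1U_1]}$ and use $\E[\rho_1U_1]=\sup_{\norm w=1}\E[\rho_1\inner{U_1}{w}]$. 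Cauchy--Schwarz with $\E\inner{U_1}{w}^2=1/d$ and $\abs{\rho_1}\le CL_\beta h^{\beta-1}$ does not suffice. This is the \textbf{main obstacle}: the H\"older increment $D^qf(z+\theta ht_jU)-D^qf(z)$ depends on $U$ nonlinearly. In the operator case I expect the crude route to work. Write $\E[\rho_1U_1]$ as an expectation over the sphere of a function bounded by $L_\beta h^{\beta-1}$ times the odd-symmetric factor $U$; then $\norm{d\,\E[\rho_1U_1]}\le d\,L_\beta h^{\beta-1}\E\abs{\inner{U}{w}}\cdot C\le C\sqrt d L_\beta h^{\beta-1}$. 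That still loses $\sqrt d$, so instead I would argue via a smoothed-function representation. The frame estimator is the gradient of a ball average of $f$ along the stencil, $d\,\E[\varphi(hU)U]/h=\nabla\hat f_h$. Its deviation from $\nabla f$ is the average of $\nabla f(z+hv)-P_q(hv)$-type terms over the ball, which is $O(L_\beta h^{\beta-1})$ with no $d$ factor. In the Hilbert--Schmidt case, isotropic averaging of $\Delta A[v^{\otimes(q-1)}]$ gains $d^{-\tau/2}$ through $\E\abs{A[v^{\otimes k}]}\lesssim\norm A_{\HS}d^{-k/2}$, with the parity of $q$ determining $\tau$. For the variance, the same moment bounds give $\E\norm{\sum_i\rho_iU_i-b_f}^2\le\sum_i\E\rho_i^2\lesssim L_\beta^2h^{2\beta-2}d^{-\tau}$. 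The point is that $\E\rho_i^2$ carries the extra $d^{-1-\tau}$ from spherical moments rather than the worst case.

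\emph{Noise terms.} The systematic part is $\mathcal F_{k-1}$-measurable only in expectation. Each $\abs{\eta(z+ht_jU_i)-\eta(z-ht_jU_i)}\le2Mht_j$, so its frame term is $\sum_i e_iU_i$ with $\abs{e_i}\le A_1M$ after the $\frac1{2h}\sum_j\abs{a_j}$ weighting. We take its conditional mean into $b$; to get a mean bounded by $A_1M$ we use the ball-average representation again, since $\eta$ $M$-Lipschitz makes its smoothed gradient at most $M$ in norm. The fluctuation is bounded by $4A_1^2M^2/B$ through the same Lipschitz-concentration-of-smoothing argument.

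The centered noise is orthogonal across calls by \eqref{eq:noise-centered}. The oblivious noise is independent of $\Theta_k$, so $\E[\xi^{\mathrm o}U_i\mid\mathcal F_{k-1}]=\E[\xi^{\mathrm o}]\E[U_i]=0$, using $\E U_i=0$ and $\E U_iU_{i'}^\top=\delta_{ii'}I/d$. Both noises therefore contribute to $\zeta$ with variance $\le d\sigma^2\bigl(S_1+(S_2/2)^{1/2}\bigr)^2/(Bh^2)$, where the $S_1$ part accounts for the correlated oblivious noise and the $S_2$ part for the centered noise. Finally, the adversarial part gives $\Delta=\sum_i e_iU_i$ with $\abs{e_i}\le S_1\delta/h$, so by orthonormality $\norm\Delta\le\sqrt d S_1\delta/h$. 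No step uses $\nabla^2f$ bounds.
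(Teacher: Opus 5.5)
\textbf{Where the proposal agrees with the paper.} Your decomposition and several of its pieces match the paper. The mean of a frame is the gradient of the signed ball-average surrogate $\sum_ja_jt_j\,\E f(z+ht_jW)$, and the operator-norm bias follows by Taylor-expanding $\nabla f$ inside that average. Also $\norm{\nabla\mathcal S_h\eta}\le A_1M$ because ball averages preserve Lipschitz constants, $\norm\Delta\le\sqrt d\,S_1\delta/h$ follows by orthonormality, and the centered and oblivious parts combine by Minkowski into $C_\sigma d\sigma^2/(Bh^2)$.

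\textbf{Gap 1: second moment of the noiseless frame.} The missing piece is $d\,\E e_f(U)^2\le C_\beta L_\beta^2h^{2\beta-2}d^{-\tau}$ with $e_f(u)=D_{h,u}f(z)-\inner{\nabla f(z)}{u}$, which is the heart of the proposition. You correctly see that the pointwise bound $\abs{\rho_i}\le CL_\beta h^{\beta-1}$ loses a factor $d$. Your proposed cure, ``moments of Haar columns'' of a multilinear form, cannot work, for two reasons:
\begin{itemize}
\item The remainder is not a fixed form in $U$; this is the obstacle you name yourself.
\item Even a fixed form need not decay, because of its trace part. For example $A=I$ gives $A[U,U]=\norm U^2=1$, although $\norm A_{\HS}d^{-1}=d^{-1/2}$.
\end{itemize}
The missing idea is a Lipschitz bound in the direction $u$. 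One has $\nabla e_f(u)=\frac12\sum_ja_jt_j\bigl(\nabla f(z+ht_ju)+\nabla f(z-ht_ju)\bigr)-\nabla f(z)$. After Taylor expansion of $\nabla f$, the stencil moments cancel every polynomial term, so $\norm{\nabla e_f(u)}\le c_1L_\beta h^{\beta-1}$ for $\norm u=1$. Since $e_f$ is odd, the Poincar\'e inequality on $\Sph$ (spectral gap $d-1$) gives $\E e_f(U)^2\le c_1^2L_\beta^2h^{2\beta-2}/(d-1)$. Hence $\E\norm Z^2=d\,\E e_f(U)^2\le2c_1^2L_\beta^2h^{2\beta-2}$. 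This is exactly the ``Lipschitz concentration'' you invoke for $\eta$. For $f$ it needs the cancellation bound on $\nabla e_f$, which your proposal does not supply.

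\textbf{Gap 2: the Hilbert--Schmidt case.} The statement covers $\tau>0$, but you only assert it. Your heuristic $\E\abs{A[v^{\otimes k}]}\lesssim\norm A_{\HS}d^{-k/2}$ fails for the same trace reason, and the traces are precisely what must be controlled.
\begin{itemize}
\item For odd $q=2k+1$, the paper expands spherical averages by a finite Pizzetti formula, so only the $k$-fold trace $\Delta^k\nabla f$ remains in the remainder. That trace has H\"older constant $d^{k/2}L_\beta$ but enters with weight $s^{2k}/(2^kk!\,d^k)$, a net gain of $d^{-k/2}$.
\item For even $q=2k$, the scalar trace $\Delta^kf$ is only $\gamma$-H\"older. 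The gradient bound then needs an extra difference at radius $hd^{-1/4}$.
\item The variance needs Poincar\'e iterated through the derivatives $D^je_f$, with the means of the odd derivatives bounded by the same Pizzetti argument.
\end{itemize}

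\textbf{Smaller omission: cross terms.} To add the three variance contributions with the constants $4A_1^2$ and $C_\sigma$, you must show that the frame part $\sum_i(e_f+e_\eta)(U_i)U_i-b$ is orthogonal to both noise parts. For the centered part this holds because the frame part is a function of $\Theta_k$ and $\mathcal F_{k-1}$, hence $\mathcal E_t$-measurable. For the oblivious part, condition on the oblivious errors and use the oddness of $e_f+e_\eta$. The proposal omits these cross terms.
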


The oblivious noise multiplies each direction $U^{(\ell)}_i$ by a coefficient
independent of it, and a uniformly random direction has mean zero. A
systematic error is a frame sum with an odd $A_1M$-Lipschitz coefficient
function on the sphere, so by the Poincar\'e inequality its second moment is
at most $\frac d{d-1}A_1^2M^2$. If those coefficients depended on the sampled
frame, the shift could scale as $\sqrt d\,M$; a bounded adaptive error can
align all $d$ differences.

\paragraph{Descent.}
{The second block of \cref{alg:stencil} restarts
Nesterov's constant-momentum scheme}. Each stage contracts the error by a
constant factor, while the growing batch keeps the estimation variance at the
new target scale. Unlike general inexact oracles
\citep{Devolder2014,dAspremont2008,Cohen2018}, a fixed gradient bias of norm
$b$ contributes only an absolute constant times $\frac{b^2}{\alpha}$. At
radius $h_T$, smoothing bias and centered noise each contribute $\Phi_T$;
systematic and adaptive errors contribute $\frac{M^2}{\alpha}$ and
$\frac{d\delta^2}{\alpha h_T^2}$, respectively. At $h_\eps$, the bias is
$\sqrt{\alpha\eps}$, which gives \cref{thm:main}(c). The
\emph{two-phase} method (\cref{alg:descent}) takes projected steps $\frac1{4L}$
and then decreasing steps with averaging, and needs the Hessian bound only on
$B(R)$.

\paragraph{{Dimension dependence.}}
Under Hilbert--Schmidt smoothness, ball averaging contracts the tensors that
determine the finite-difference bias. A $k$-fold trace of
$D^qf$ has H\"older constant at most $d^{\frac k2}L_\beta$ under the
Hilbert--Schmidt bound but enters with the weight $\frac{h^{2k}}{d^k}$: for odd
$q=2k+1$ this gains $d^{-\frac k2}=d^{-\frac\tau2}$, and for even $q=2k$, where the
$k$-fold trace is only $\gamma$-H\"older, one more difference at the radius
$hd^{-\frac14}$ gains $d^{-\frac{\beta-1}{4}}=d^{-\frac\tau2}$. The hard functions mirror
this: a radial cutoff raises the Hilbert--Schmidt seminorm by
$d^{\frac12\lfloor\frac q2\rfloor}$, which matches odd $q$, and even $q$ needs a ridge of
width $hd^{-\frac14}$. The same parity dependence appears in the matching
hard instances.

\section{{Lower bounds}}
\label{sec:lower}

{This section gives the proof ideas behind the oracle-call,
persistent-error, and round lower bounds.}

Every hard instance is a quadratic plus a smooth perturbation supported on a
ball, with a cutoff $\chi=1$ on $[0,\frac14]$ and $\chi=0$ on
$[1,\infty)$. A perturbation of slope $a$ and radius $h$ has seminorm scale
$ah^{1-\beta}$ and Hessian scale $a/h$; hence
$a=c_\beta L_\beta h^{\beta-1}$, while $L-\alpha$ bounds $h$ from above.

\paragraph{{Oracle calls.}}
For $\alpha<\mu\le2\alpha$ and $\omega\in\{\pm1\}^d$, define the hard functions
$f_\omega(x)=\frac\mu2\norm x^2+\frac{a}{\sqrt d}\inner\omega x\,\chi(\frac{\norm x^2}{h^2})$
for \cref{thm:main}(b). Each coordinate of the
minimizer $-\frac{a\omega}{\mu\sqrt d}$ carries one sign, and a wrong sign costs
$\frac{a^2}{8\alpha d}$. Flipping $\omega_i$ changes a value by at most
$\frac{2a\abs{x_i}}{\sqrt d}$, and the squares of these changes sum over $i$ to at
most $\frac{4a^2h^2}{d}$ at every query. By Assouad's lemma a constant fraction of the
signs are wrong unless {$Ta^2h^2\ge c_\beta d^2\sigma^2$}; this spreading of the
signal over all coordinates produces the factor $d^2$. Balancing
$Ta^2h^2$ with $d^2\sigma^2$ gives the error $\Phi_T$. For small $T$ this
radius violates the curvature or the radius constraint, and the bound
saturates at $\Phi_0$. Under Hilbert--Schmidt smoothness the cutoff, or for
even $q$ a ridge, changes the power of $d$ (\cref{sec:method}).

\paragraph{{Persistent errors.}}
A Lipschitz error can be observationally indistinguishable from a change of
the objective: for any unit vector $v$, the
instances $f_\pm(x)=\frac\alpha2\norm{x\mp\frac M\alpha v}^2$ with
$\eta_\pm(x)=\pm M\inner vx$ give the same values, and one of them has error
at least $\frac{M^2}{2\alpha}$ whatever the number of {oracle calls}. For the adversarial
converse, a perturbation of slope of order $\sqrt{\alpha\eps}$ and radius of
order $h_\eps$ lies along a uniformly random unit vector $v$. With
$\theta=\Theta(\min\{1,\sqrt{\frac{\log(4T)}{d}}\})$, an adversary at a level $\delta$ of
order $\theta\sqrt d\,\delta_\eps$ returns the unperturbed value outside the cone
$\{\abs{\inner vx}>\theta\norm x\}$, which the $T$ queries of a run miss with
probability at least $\frac12$; then $v$ and $-v$ cannot be told apart, as for
convex Lipschitz functions in \citet{Pasechnyuk2026MALN}.

\paragraph{Rounds.}
For \cref{thm:rounds-lower}(a), the queries of one round hit a perturbation of
radius $r$ at a random center in $B(\frac R2)$ with probability at most
$(\frac{2r}{R})^d$ each. For (b), the minimizer is hidden behind $N+1$ perturbations
with nested random centers, as in \citet[Appendix~E]{Dorn2026GradientFree} for
exact values: with probability at least $\frac34$ the queries of round $k$ miss
the ball where $f$ differs from the sum of the first $k$ perturbations, and
the recurrence has scale $r_{k+1}=c_\beta n^{-\frac1d}r_k^{\beta-1}$, which gives
$\nu_N$, and (c) follows by comparing $\nu_N$ with $\frac{\beta-1}{\beta}$.

\section{Convex functions}
\label{sec:convex}

{This section gives convex upper and lower bounds and identifies a curvature
regime in which central differences are minimax.}

Without strong convexity a gradient error of norm $b$ costs $bR$ in value.
Set $V=L$, or $V=\min\{L,L_\beta(2R)^{\beta-2}\}$ for operator smoothness
when $2<\beta\le3$. At the \emph{gradient scale}
$h=\Theta((\frac{\eps d^{\frac\tau2}}{L_\beta R})^{\frac1{\beta-1}})$, the
finite-difference bias is at most $\frac\eps R$. At the \emph{value scale}
$h=\Theta((\frac\eps V)^{\frac12})$, central differences estimate the gradient of
$f_h(x)=\E f(x+hW)$, $W\sim\Unif(B(1))$. Here only the oscillation of $f_h-f$
matters, and $\abs{\tr\nabla^2f(x)-\tr\nabla^2f(y)}\le dV$ controls it.

\begin{theorem}[Convex upper bounds]
\label{thm:convex-upper}
Let $\beta>2$, $0<L<\infty$, $f\in\Fcvx(\beta,L_\beta;L,R)$, and let the noise be
admissible at levels $(M,\sigma,\delta)$. An accelerated projected method on
$B(R)$ with the estimate of \cref{alg:stencil} at radius $h$ outputs
$\hat x\in B(R)$ with $\E[f(\hat x)-\fstar]\le\eps$ in
$\lceil(\frac{32LR^2}{\eps})^{\frac12}\rceil$ rounds, as follows.
\begin{enumerate}[label=(\alph*),leftmargin=*,itemsep=1pt,topsep=2pt]
\item \emph{(Gradient scale.)} If
$h=c_\beta(\frac{\eps d^{\frac\tau2}}{L_\beta R})^{\frac1{\beta-1}}$, $M\le c_\beta\frac\eps R$
and $\delta\le c_\beta\frac{\eps h}{R\sqrt d}$, it uses
$T\le C_\beta\frac{\sigma^2}{\eps^2}d^{\,2-\frac{\tau}{\beta-1}}
(\frac{L_\beta R^\beta}{\eps})^{\frac2{\beta-1}}+C_\beta d(1+\frac{LR^2}{\eps})$
    {oracle calls}.
\item \emph{(Value scale.)} With central differences ($m=1$, $a_1=t_1=1$)
for every $\beta$, if $h=(\frac{\eps}{8V})^{\frac12}$, $M\le\frac{\eps}{16R}$,
$\delta\le\frac{\eps h}{16R\sqrt d}$, and $\eps\le8L_\beta(2R)^\beta$ when $V<L$, it
    uses $T\le C\frac{d^2\sigma^2VR^2}{\eps^3}+32d(1+\frac{LR^2}{\eps})$ {oracle calls},
with an absolute constant $C$.
\end{enumerate}
\end{theorem}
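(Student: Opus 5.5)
The plan is to run a standard accelerated projected gradient method for $L$-smooth convex functions on $B(R)$ (for instance the inexact-oracle analysis of Nesterov's method with a bias term and a zero-mean variance term) with the random-frame estimate of \cref{prop:estimator} substituted for the gradient. With stepsize $\frac1L$ and $K=\lceil(\frac{32LR^2}{\eps})^{1/2}\rceil$ iterations, one per round, the inexact analysis will give
\[
\E[f(\hat x)-\fstar]\le\frac{2LR^2}{K^2}+C\,R\sup_k\norm{b_k+\Delta_k}+C\sum_k\frac{(k+1)^2}{LK^2}\,\E\norm{\zeta_k}^2 .
\]
Here the projection onto $B(R)$ keeps the iterates bounded, so any persistent error costs at most $2R$ times its norm. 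I will choose batches $B_k$ growing like $(k+1)$, or more simply a constant $B\asymp\frac{K\,\E\norm{\zeta}^2_{B=1}}{L\eps}$. This gives a total of $T=2mdKB$ calls, plus a term $2md K$ coming from the constraint $B\ge1$. That second term is where $C_\beta d(1+\frac{LR^2}\eps)$ comes from.

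\emph{Part (a).} By \cref{prop:estimator}, the bias satisfies $\norm b\le C_\beta L_\beta d^{-\tau/2}h^{\beta-1}+A_1M$, and the adaptive part satisfies $\norm\Delta\le\sqrt dS_1\delta/h$. With the stated $h$, $M$ and $\delta$, each of these is at most $c\,\eps/R$, so their contribution is at most $\eps/4$. The dominant variance term is $C_\sigma d\sigma^2/(Bh^2)$, and the smoothing and systematic variance terms are smaller by the same choices. Requiring $K\cdot\frac{d\sigma^2}{Bh^2 L}\lesssim\eps$ gives $B\asymp\frac{K d\sigma^2}{L\eps h^2}$. Multiplying by $dK$ and using $K^2\asymp LR^2/\eps$ gives
\[
T\asymp\frac{d^2\sigma^2R^2}{\eps^2h^2}.
\]
Substituting $h^{-2}=(\frac{L_\beta R}{\eps d^{\tau/2}})^{\frac2{\beta-1}}$ yields $\frac{\sigma^2}{\eps^2}d^{2-\frac{\tau}{\beta-1}}(\frac{L_\beta R^\beta}{\eps})^{\frac2{\beta-1}}$, which is the claimed bound.

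\emph{Part (b).} Take $m=1$ and write $\gbar$ as the central-difference estimate of $\nabla f_h$, where $f_h(x)=\E f(x+hW)$. I will use two facts. First, for a uniform random orthonormal frame, $\frac d{2h}\E[(f(z+hU)-f(z-hU))U]=\nabla f_h(z)$, by the usual Stokes identity applied to each frame vector marginally. Here $d\cdot$ with one direction matches the frame sum of $d$ directions. Second, $f_h$ is convex and $L$-smooth, and it satisfies $0\le f_h-f\le\frac{h^2}{2(d+2)}\sup\tr\nabla^2 f$. More sharply, only the oscillation of $f_h-f$ over $B(R)$ matters, and it is at most $\frac{h^2}{2(d+2)}\,dV\le\frac{h^2V}{2}$. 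This uses $\abs{\tr\nabla^2f(x)-\tr\nabla^2f(y)}\le dV$: when $V=L$ this holds because $0\preceq\nabla^2 f\preceq LI$, and when $V=L_\beta(2R)^{\beta-2}$ with $\beta\le3$ it holds by Hölder continuity of the Hessian on $B(2R)$.

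I will then optimize $f_h$ with an unbiased gradient. This leaves only the noise, with variance of order $d\sigma^2/(Bh^2)$ plus a term of order $d L^2h^2/B$ coming from the function variation. On top of that come the $M$ and $\delta$ biases, each at most $\frac{\eps}{16R}$ per unit. The optimization error of $f_h$ is transferred to $f$ at the cost of the oscillation $\frac{h^2V}2=\frac{\eps}{16}$. Repeating the batch count with $h^2=\eps/(8V)$ then gives $T\lesssim\frac{d^2\sigma^2R^2}{\eps^2h^2}=C\frac{d^2\sigma^2VR^2}{\eps^3}$.

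The main obstacle is the bound on the oscillation of $f_h-f$ by $h^2V/2$, uniformly without a factor of $d$. In particular, one must check that the case $V<L$ with the Hölder Hessian gives a trace oscillation of $dL_\beta(2R)^{\beta-2}$ over the region reached by the queries, namely $B(R+h)\subset B(2R)$. This is where the condition $\eps\le8L_\beta(2R)^\beta$ enters, since it ensures $h\le R$. A second, smaller obstacle is matching the variance terms of the inexact accelerated analysis with the constants $32$ and $C$.
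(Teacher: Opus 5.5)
The gap is in part (b): the variance of the frame without noise. Part (a) is essentially the paper's argument (\cref{cor:direct-convex} with the accelerated method of \cref{lem:convex-agd}). Your fixed step with a constant batch $B\asymp Kv^2/(L\eps)$, in place of the tuned step, changes only constants.

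\textbf{The variance in part (b).} You take this variance to be of order $dL^2h^2/B$. That is the crude bound obtained by summing $d$ squared errors of size $O(Lh)$, where $e_f(u)=\frac{f(z+hu)-f(z-hu)}{2h}-\inner{\nabla f(z)}{u}$. You then drop this term from the call count. With your batch rule it costs
\[
\frac{dR^2}{\eps^2}\cdot dL^2h^2=\frac{d^2L^2R^2}{8V\eps}
\]
calls, up to constants. This is neither $O\bigl(d(1+LR^2/\eps)\bigr)$ nor absorbed by $d^2\sigma^2VR^2/\eps^3$ when $\sigma\lesssim\eps$. The statement allows $\sigma=0$, where it claims $T\le32d(1+LR^2/\eps)$.

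What is needed is the dimension-free bound of \cref{lem:m1-smooth-frame}. The function $e_f$ is odd and $\nabla e_f(u)=\frac12\bigl(\nabla f(z+hu)+\nabla f(z-hu)\bigr)-\nabla f(z)$, so $e_f$ is $Lh$-Lipschitz on the unit ball. The Poincar\'e inequality on the sphere, \eqref{eq:odd-frame}, then gives $\E\norm{\sum_ie_f(U_i)U_i}^2=d\,\E e_f(U)^2\le\frac d{d-1}L^2h^2$. The odd $M$-Lipschitz part $e_\eta$ must be handled the same way, or it contributes $dM^2$. This settles the branch $V=L$.

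In the branch $V=L_\beta(2R)^{\beta-2}<L$ this is still not enough. Since $L^2h^2=L^2\eps/(8V)$, it leads to order $\frac LV\cdot\frac{dLR^2}{\eps}$ calls, and $L/V$ is unbounded. There you need the H\"older version $\E\norm{\sum_ie_f(U_i)U_i}^2\le2L_\beta^2h^{2\beta-2}$. It follows in the same way from $\norm{\nabla e_f(u)}\le L_\beta h^{\beta-1}$; this is the operator case of \cref{prop:noiseless} with $m=1$. You also need $L_\beta^2h^{2\beta-2}=\frac{\eps V}8\bigl(\frac h{2R}\bigr)^{2(\beta-2)}\le\frac{\eps V}8$. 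This is why the paper carries $\min\{4L^2h^2,4L_\beta^2h^{2\beta-2}\}$ in $v_h^2$.

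\textbf{Smaller points.} The step you call the main obstacle is fine, and your version is slightly sharper than the paper's. By \eqref{eq:pizzetti-ball} with $k=1$, $(f_h-f)(x)$ is a positive linear functional applied to $\tr\nabla^2f(x+\cdot)$ on $B(h)$; it has total mass $\frac{h^2}{2(d+2)}$ and is the same for every $x$ (as in \eqref{eq:Eh-holder}). So the oscillation of $f_h-f$ on $B(R)$ is at most $\frac{dh^2}{2(d+2)}V$, with no Taylor remainder. \Cref{lem:smoothing-values} instead pays an extra $2c_\gamma L_\beta h^\beta$.

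However, the H\"older bound on $\Fcvx$ is global, and only points $x+v,y+v$ with $x,y\in B(R)$ are compared, so where the queries lie is irrelevant. Also, $\eps\le8L_\beta(2R)^\beta$ gives $h\le2R$, not $h\le R$. Its actual role in your route is the variance bound above.

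Finally, the bias costs only $O(R\sup_k\norm{b_k+\Delta_k})$ for a scheme like \cref{lem:convex-agd}, where it enters through $\rho_k\inner{b_k}{u-v_k}$ with $v_k$ projected onto $B(R)$. A generic inexact-oracle analysis of Nesterov's method accumulates it to order $KRb$ (\cref{rem:no-accumulation}), so you must specify that scheme.
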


Part (a) has the leading term of \citet{Bach2016}; (b) with $V=L$ is
classical smoothing \citep{Flaxman2005,SahaTewari2011} and uses central
differences because ball averages preserve convexity. Both hold on a
compact convex set $Q$, with a minimizer possibly on its boundary and the
diameter of $Q$ in place of $2R$ where it enters $h$, $V$, $M$ and $\delta$.

\begin{theorem}[Convex lower bounds]
\label{thm:convex-lower-main}
\label{thm:smooth-convex-hidden}
Let $\beta>2$ and $0<L\le\infty$.
\begin{enumerate}[label=(\alph*),leftmargin=*,itemsep=1pt,topsep=2pt]
\item Let $d\ge256$, $0<\eps\le c'_\beta\min\{L_\beta R^\beta,LR^2\}$, and let the
noise be Gaussian. Every method that is $\eps$-accurate on
$\Fcvx(\beta,L_\beta;L,R)$ with the operator seminorm, in any number of
rounds, must use
\[
T\ge c_\beta\frac{\sigma^2}{\eps^2}\min\Bigl\{d^{\frac52},\ d^2
\Bigl(\frac{L_\beta R^\beta}{\eps}\Bigr)^{\frac2\beta},\ \frac{d^2LR^2}{\eps}\Bigr\}.
\]
\item If $\eps<\frac{LR^2}2$ and $M>\frac{2\eps}R$, no method is $\eps$-accurate on
$\Fcvx(\beta,L_\beta;L,R)$. For every $T$ covered by \cref{thm:convex-upper}(b),
$\frac{\eps}{16R}\le M_{\max}(\eps,T)\le\frac{2\eps}R$.
\end{enumerate}
\end{theorem}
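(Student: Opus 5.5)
We prove the three terms of the minimum in part (a) by three hard families, each reduced to Assouad's lemma under Gaussian noise as in the sketch of \cref{thm:main}(b). Part (b) is a two-point indistinguishability argument. For each family, $\eps$-accuracy forces the method to recover a constant fraction of hidden signs $\omega\in\{\pm1\}^k$. Assouad's lemma then requires the summed KL divergence $\frac{1}{2\sigma^2}\sum_t\sum_i(f_\omega(X_t)-f_{\omega^{(i)}}(X_t))^2$ to be of order $k$, where $\omega^{(i)}$ flips the $i$th sign. This gives $T\ge c\,k\sigma^2/\max_x\sum_i\Delta_i(x)^2$.

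\textbf{The $d^2(L_\beta R^\beta/\eps)^{2/\beta}$ and $d^2LR^2/\eps$ terms.} Take the convex analogue of the strongly convex family. Let $f_\omega(x)=\psi(\norm x)+\frac{a}{\sqrt d}\inner\omega x\,\chi(\norm x^2/h^2)$, where $\psi$ is a flat convex radial well whose curvature $\mu$ is much smaller than $\alpha$ in \cref{thm:main}. The minimizer lies at distance of order $a/\mu\le R$ in direction $-\omega$. Getting a constant fraction of signs right is necessary for error below $\eps\asymp a^2/\mu$. Convexity needs $\mu\gtrsim a/h$, and the seminorm constraint gives $a\lesssim L_\beta h^{\beta-1}$. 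The flips satisfy $\sum_i\Delta_i^2\le 4a^2h^2/d$, so $T\gtrsim d^2\sigma^2/(a^2h^2)$ with $k=d$. Now optimize under $\eps\asymp a^2/\mu$, $a/\mu\le R$, $\mu\gtrsim a/h$, $\mu\le L$, and $a\le L_\beta h^{\beta-1}$. When the seminorm constraint binds, taking $\mu=a/R$ forces $a\asymp\eps/R$ and $h$ as large as allowed with $a/h\lesssim\mu$, so $h\asymp R$; this looks too good, so I will recheck it. The two regimes should be these. If curvature binds, $\mu= L$ gives $a^2h^2\asymp(\eps L)\cdot(\eps/L)=\eps^2$ once $h\asymp a/\mu$, and this should produce the $d^2LR^2/\eps^3$ term after accounting for $R$. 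If smoothness binds, $a=L_\beta h^{\beta-1}$ gives the $(L_\beta R^\beta/\eps)^{2/\beta}$ term. I will fix the exponents by solving this small linear program in logarithms. The constraint $\eps\le c'_\beta\min\{L_\beta R^\beta,LR^2\}$ makes the parameters feasible.

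\textbf{The $d^{5/2}$ term.} Use a nonsmooth-at-scale family as in \citet{Pasechnyuk2026MALN}: $f_\omega$ is a smoothed version of $\frac{\eps}{R\sqrt d}\inner\omega x$ on $B(R)$, realized as $g(\inner\omega x/\sqrt d)$ plus a convex barrier, with $g$ being $\beta$-smooth at scale $R$. Queries at radius up to $R$ make $\sum_i\Delta_i^2$ of order $\eps^2/R^2\cdot\norm x^2/d$, which gives only $d^2$. The extra $\sqrt d$ must come from concentration. A random sign vector $\omega$ has $\abs{\inner\omega x}\lesssim\norm x$ with high probability, whereas the minimizer sits near a vertex direction at distance $R$. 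So I would use a Huber-type $g$ whose curvature region has width $\sim R/\sqrt d$, making the linear part visible only there. I expect this to be the main obstacle: the per-query information must be bounded by $\eps^2/(d^{3/2}\sigma^2)$ while $d\ge256$ keeps the construction valid. I would first try the ridge $g(\inner\omega x/\sqrt d)$ with transition width $R d^{-1/4}$, mirroring the even-$q$ ridge of \cref{sec:method}.

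\textbf{Part (b).} This is immediate from the \emph{Persistent errors} construction with linear functions made convex and smooth. Let $f_\pm(x)=\frac{\lambda}{2}\norm{x\mp Rv}^2$ with $\lambda\le L$ small and $\eta_\pm=\pm\lambda R\inner vx$, so $M=\lambda R$. The observed values coincide, and one instance has error at least $\lambda R^2/2$. Choosing $\lambda=2\eps/R^2$ exceeds $\eps$ exactly when $M>2\eps/R$; the condition $\eps<LR^2/2$ ensures $\lambda\le L$, and the seminorm is zero. The lower inequality $\frac{\eps}{16R}\le M_{\max}$ is \cref{thm:convex-upper}(b).
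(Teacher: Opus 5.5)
Part (a) has a genuine gap: none of its three terms is established. The bound of the theorem equals $d^2\sigma^2/\eps^2$ times $\min\{d^{1/2},(L_\beta R^\beta/\eps)^{2/\beta},LR^2/\eps\}$. This factor is bounded below by a positive constant in the stated range, and it is unbounded above. Your radial family cannot go beyond order $d^2\sigma^2/\eps^2$, whatever the well $\psi$ and however you solve the logarithmic linear program.

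The reason is a ceiling on the loss. For any two sign vectors, $\sup_x\abs{f_\omega(x)-f_{\omega'}(x)}\le2ah$. For convex instances this gives $f_\omega(x^\star_{\omega'})-\min f_\omega\le2\sup_x\abs{f_\omega-f_{\omega'}}\le4ah$. The output $x^\star_{-\omega}$ gets every sign wrong, so Assouad can certify an error above $\eps$ only if $ah\gtrsim\eps$. In that case the information condition $T\gtrsim d^2\sigma^2/(a^2h^2)$ is at most of order $d^2\sigma^2/\eps^2$.

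Your own computation $a^2h^2\asymp\eps^2$ in the case ``curvature binds'' already shows this. The case ``smoothness binds'' is the same: $\mu\asymp a/h$ gives $\eps\asymp ah$ and again $a^2h^2\asymp\eps^2$. So this family yields \cref{cor:convex}, not \cref{thm:convex-lower-main}(a).

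For the $d^{5/2}$ term you give no argument. Moreover, your reduction $T\ge ck\sigma^2/\max_x\sum_i\Delta_i(x)^2$ takes the worst query point. It therefore cannot use the only mechanism that produces the extra factor: the algorithm does not know where the informative region is. That term comes from \citet{RajaramanHan2026}; \citet{Pasechnyuk2026MALN} concerns admissible adversarial levels, not the number of calls.

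The missing idea is to hide where the curvature lives. The paper smooths the construction of \citet{RajaramanHan2026}. On $\R^{2d_0}$ the function is $\vartheta$ times a log-sum-exp of two branches:
\begin{itemize}
\item the tube $\sqrt{1+\norm{Wx/r-y/(8\vartheta)}^2}-1$, with an unknown Gaussian matrix $W$;
\item the branch $\frac12\norm{x}^2-\inner{x}{u}$, with the hidden target $u$.
\end{itemize}
A $C^{q+2}$ hinge joins this to a cone outside the unit ball (\cref{lem:extension,lem:derivatives}). A query is informative about $u$ only near the tube, whose position is unknown.

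The proof of \cref{thm:information} is Bayesian rather than Assouad. A posterior Cram\'er--Rao bound shows that recovering $u$ forces about $d_0/2$ eigenvalues of the accumulated information to be large. On the other side, the slow growth of the posterior Fisher information about $W$, together with a small-ball (Hardy) inequality, bounds the weight each query can put on the informative branch. This gives $T\gtrsim\nu^{-2}\min\{d_0^{5/2},d_0^2/r^2\}$, with $\nu\asymp\eps/\sigma$ after scaling.

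Taking $\vartheta\asymp(\eps/(L_\beta R^\beta))^{1/\beta}$ and $r=\max\{16\vartheta,d_0^{-1/4}\}$, this single family yields the first two terms. The third term comes from the same family with $L_\beta$ lowered to $\ell$, so that its Hessian cap $C_{\mathrm{cap}}\ell^{2/\beta}\eps^{1-2/\beta}$ is at most $L$; it is not a separate regime of a radial family.

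Part (b) is the paper's argument (\cref{prop:lipschitz-floor} with $p=2$), with one slip. The choice $\lambda=2\eps/R^2$ gives error only $\ge\eps$. Take instead $\lambda\in(2\eps/R^2,\min\{L,M/R\}]$; this interval is nonempty because $\eps<LR^2/2$ and $M>2\eps/R$.
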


{
Part (a) extends \citet{RajaramanHan2026} to H\"older derivatives of every
order. The hard instances have curvature
$L_\eps=C_\beta L_\beta^{2/\beta}\eps^{1-2/\beta}$; flattening them when
$L<L_\eps$ yields the last term. For $L\ge L_\eps$ and
$L_\beta R^\beta d^{-\beta/4}\le\eps\le c'_\beta L_\beta R^\beta$, the lower
bound improves the separable bound \citep{Akhavan2020,Akhavan2024} by at least
$d^{(\beta-2)/\beta}$. At higher accuracy its leading term is
$d^{5/2}\sigma^2\eps^{-2}$, a factor $d^{1/2}$ larger than the previous bound.
Under Hilbert--Schmidt smoothness a $d^2\sigma^2\eps^{-2}$ lower bound holds
in the regime of \cref{cor:convex}.

For $d$ above the threshold in \cref{app:discussion} and
$L\le C_\beta L_\eps$, central differences attain the minimax complexity
$d^2\sigma^2LR^2\eps^{-3}$ over the stated accuracy range without requiring
$\beta$ or $L_\beta$ (\cref{cor:natural-curvature}(a)).
}

\section{{Norms and constraints}}
\label{sec:other}

{This section extends the convex lower bounds to general norms,
$\ell_p$ and $\ell_1$ geometries, and functional constraints.}

Let $f$ be convex with a minimizer in $\mathcal K$, and let
$\norm\cdot_{\mathcal B}$ be a norm on $\R^d$ with unit ball $\mathcal B$.
Define $[f]_{\beta,\mathcal B}$ by replacing the Euclidean norm in
$[f]_{\beta,\op}$ for $x-y$ and the arguments of $D^qf$ by $\norm\cdot_{\mathcal B}$.

\begin{definition}[Image ball]
\label{def:image-ball}
A triple $(P,z,r)$ with a linear $P:\R^d\to\R^n$, $z\in\R^n$ and $r>0$ is an
\emph{image ball} of $\mathcal K$ if $\norm{Pu}\le\norm u_{\mathcal B}$ for all
$u$ and $z+rB_2^n\subseteq P(\mathcal K)$, with $B_2^n$ the Euclidean unit ball.
\end{definition}

For a Euclidean hard function $g$ on $\R^n$, $f=g\circ P$ is convex, has
$[f]_{\beta,\mathcal B}\le[g]_{\beta,\op}$ and a minimizer in $\mathcal K$, and a
method for $f$ induces a method for $g$ with identical {oracle calls}, rounds,
and noise. Thus the Euclidean lower bounds transfer to $\mathcal K$, as
\citet{Dorn2026GradientFree} showed for exact values.

\begin{corollary}[Any norm and known constraints]
\label{thm:geometry-main}
Let $(P,z,r)$ be an image ball of $\mathcal K$ in $\R^n$, and let the noise be
Gaussian. On convex functions with $[f]_{\beta,\mathcal B}\le L_\beta$ and a
minimizer in $\mathcal K$, \cref{thm:convex-lower-main}(a) with $L=\infty$
holds with $(n,r)$ in place of $(d,R)$, and every $\eps$-accurate method needs
$c_\beta n^2\sigma^2\eps^{-2}$ {oracle calls} whenever
$\eps\le c'_\beta L_\beta r^\beta$, for every $n\ge1$. This remains true for
restricted queries and outputs and for known functional constraints.
\end{corollary}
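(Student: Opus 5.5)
The plan is to reduce \cref{thm:geometry-main} to \cref{thm:convex-lower-main}(a) on $\R^n$ through the image-ball transfer described before \cref{def:image-ball}. Take a Euclidean hard family $\{g_\omega\}$ on $\R^n$: the instances from the proof of \cref{thm:convex-lower-main}(a) with $L=\infty$, dimension $n$ and radius $r$, translated so that they are centred at $z$. They have $[g_\omega]_{\beta,\op}\le L_\beta$ and minimizers in $z+rB_2^n$. Put $f_\omega=g_\omega\circ P$.

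\begin{itemize}
\item \emph{Convexity.} Each $f_\omega$ is convex, being a convex function of a linear map.
\item \emph{Smoothness.} Since $D^qf_\omega(x)[u_1,\dots,u_q]=D^qg_\omega(Px)[Pu_1,\dots,Pu_q]$ and $\norm{Pu}\le\norm u_{\mathcal B}$, we get $[f_\omega]_{\beta,\mathcal B}\le[g_\omega]_{\beta,\op}\le L_\beta$.
\item \emph{Minimizer in $\mathcal K$.} A minimizer $y^\star$ of $g_\omega$ lies in $z+rB_2^n\subseteq P(\mathcal K)$, so some $x^\star\in\mathcal K$ has $Px^\star=y^\star$. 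This $x^\star$ minimizes $f_\omega$, and $\min f_\omega=\min g_\omega$.
\end{itemize}

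\emph{Simulation.} Given a method $\mathcal A$ for $\{f_\omega\}$, build a method $\mathcal A'$ for $\{g_\omega\}$. Whenever $\mathcal A$ queries $x$, $\mathcal A'$ queries $Px$ and forwards the answer; Gaussian noise at $Px$ has the same law as Gaussian noise at $x$. The randomness, the round structure and the number of calls are therefore identical. When $\mathcal A$ outputs $\hat x$, $\mathcal A'$ outputs $P\hat x$, which has $g_\omega(P\hat x)-g^\star=f_\omega(\hat x)-f^\star$.

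Consequently, if $\mathcal A$ were $\eps$-accurate on the constrained class, $\mathcal A'$ would be $\eps$-accurate on $\{g_\omega\}$. That contradicts the Assouad-based bound of \cref{thm:convex-lower-main}(a) in dimension $n$ with $(n,r)$ in place of $(d,R)$. The argument applies directly to the finite hard family, so it does not need the Euclidean class to be exactly $\Fcvx$.

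\emph{Restricted queries, outputs and known constraints.} Requiring queries or outputs in $\mathcal K$, or in a set defined by known functional constraints, only restricts $\mathcal A$. $\mathcal A'$ is never asked to meet a constraint, and unconstrained Euclidean methods are already covered. The constraints are known and independent of $\omega$, so they reveal nothing about $\omega$.

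\emph{The $n^2\sigma^2\eps^{-2}$ bound for every $n\ge1$.} This is the main obstacle, because \cref{thm:convex-lower-main}(a) requires $d\ge256$. For $n\ge256$, use the first term of the minimum: $n^{5/2}\ge n^2$. The second term $n^2(L_\beta r^\beta/\eps)^{2/\beta}\ge n^2$ because $\eps\le c'_\beta L_\beta r^\beta$ and $c'_\beta\le1$. The third term is infinite since $L=\infty$.

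For small $n$, I would rerun the Assouad construction of \cref{sec:lower} without strong convexity. Take $g_\omega(y)=\frac{a}{\sqrt n}\inner{\omega}{y-z}\,\chi$ plus a convex barrier, or alternatively $n$ separable one-dimensional instances of slope $\eps/r$ and curvature chosen to respect $L_\beta$. Each coordinate then needs about $\sigma^2/\eps^2$ calls, and since each call reveals one value the total is $n\cdot n\sigma^2\eps^{-2}$ once the signal is spread as $a/\sqrt n$.

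I expect this low-dimensional case to need the most care. Convexity of the perturbed instances must hold without a quadratic term, by making the perturbation a function of a convex ridge. I would first try the one-dimensional two-point test of slope $\pm\eps/(r\sqrt n)$ per coordinate, combined through Assouad's lemma. If necessary, constants for $n<256$ can be absorbed into $c_\beta$ by embedding $\R^n$ into a $\max\{n,1\}$-dimensional separable construction.
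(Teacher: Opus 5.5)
Your reduction is the paper's: the three bullet points and the simulation are \cref{lem:pullback}, and for $n\ge256$ reading $n^2$ off \cref{thm:convex-lower-main}(a) with $L=\infty$ is the comparison the paper makes after \cref{cor:lp-lower}. Two small corrections.

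First, \cref{thm:convex-lower-main}(a) is not an Assouad argument over a finite family. It is the Rajaraman--Han Bayesian argument under a Gaussian prior on $(W,u)$. This does no harm: the pull-back sends every convex $g$ with $[g]_{\beta,\op}\le L_\beta$ and a minimizer in $z+rB_2^n$ into the target class, so the simulated method is $\eps$-accurate on the whole Euclidean class, as in \cref{thm:geometry-lower}.

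Second, for functional constraints the image ball must be one of the feasible set $X_\phi$. Only then are the hard minimizers feasible and $\min_{X_\phi}f=\min f$. Noisy constraint values are then simulated with extra independent randomness (\cref{cor:geometry-constrained}).

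The genuine gap is the claim for $n<256$. You give no hard family there, and the principle you propose, that convexity must be obtained without a quadratic term, is mistaken. For $\beta>2$ a quadratic has constant $D^q$, so it costs nothing in the H\"older seminorm, and with $L=\infty$ its curvature is unrestricted.

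The missing step is \cref{cor:convex}. In the radial family \eqref{eq:radial-family}, take $\mu=2\bar K_2a/h$, twice the Hessian bound of the tilt. Then:
\begin{itemize}
\item $(\mu/2)I\preceq\nabla^2f_\omega\preceq(3\mu/2)I$, so each $f_\omega$ is convex;
\item the minimizer $-(a/\mu)v_\omega$ has norm $h/(2\bar K_2)$;
\item a wrong sign costs $a^2/(4\mu n)=ah/(8\bar K_2n)$;
\item the information bound of Step~3 of \cref{thm:radial-lower} is unchanged, because the quadratic cancels between $\omega$ and $\omega^{(j)}$.
\end{itemize}
With $ah=n\sigma/(4\sqrt T)$ the error is at least $3n\sigma/(256\bar K_2\sqrt T)$. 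The requirement $h\le2\bar K_2r$ fails only for small $T$, and monotonicity of the minimax risk in $T$ turns this into the range $\eps\le c'_\beta L_\beta r^\beta$. The bound holds for every $n\ge1$, and after pull-back it is the bound (V) behind \cref{thm:geometry-lower}(b).

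Your fallback of absorbing $n<256$ into $c_\beta$ would also work once you have a dimension-free $\sigma^2/\eps^2$ bound, since $n^2<2^{16}$. But that bound needs the same convexified two-point instance. The embedding into a ``$\max\{n,1\}$-dimensional'' construction is vacuous, because $\max\{n,1\}=n$.
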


For the $\ell_p$ ball $\mathcal K=B_p(R)$, $1\le p<2$, and $[f]_{\beta,p}\le L_\beta$
($[f]_{\beta,\mathcal B}$ for the $\ell_p$ norm), coordinate
projections are image balls with $r=Rn^{\frac12-\frac1p}$, and the bound becomes
$T\ge c_\beta\frac{\sigma^2}{\eps^2}\min\{n^{\frac52},n^{3-\frac2p}(\frac{L_\beta R^\beta}{\eps})^{\frac2\beta}\}$ for
$256\le n\le d$ and $\eps\le c'_\beta L_\beta r^\beta$. Let $f$ be convex with
$[f]_{\beta,1}\le L_\beta$, a minimizer in $B_1(R)$ and
$\norm{\nabla f}_\infty\le G_\infty$ there. Under centered, conditionally
sub-Gaussian noise, entropic mirror descent
\citep{NemirovskiYudin1983,BeckTeboulle2003} with {coordinate finite differences}
and independently resampled signs is $\eps$-accurate with
$T\le C_\beta d\log(2d)\eps^{-2}(G_\infty^2R^2+\sigma^2\log(2d)
(\frac{L_\beta R^\beta}{\eps})^{\frac2{\beta-1}})$ {oracle calls} for $M\le c_\beta\frac\eps R$ and
$\delta\le c_\beta\eps(\frac{\eps}{L_\beta R^\beta})^{\frac1{\beta-1}}$. If $G_\infty\ge C_\beta L_\beta R^{\beta-1}d^{-\frac{\beta-2}2}$, the noise is
Gaussian and $L_\beta R^\beta d^{-\frac{3\beta}4}\le\eps\le c'_\beta L_\beta R^\beta d^{-\frac\beta2}$,
every method needs $c_\beta d(\frac{L_\beta R^\beta}{\eps})^{\frac2\beta}\sigma^2\eps^{-2}$ {oracle calls}.
{The bounds are linear in $d$ up to $\log^2(2d)$, with accuracy exponents
$2\beta/(\beta-1)$ and $2(\beta+1)/\beta$;} the interval is nonempty above
the threshold in \cref{app:discussion}.

{
For functional constraints, stochastic Mirror--Prox \citep{JNT2011} with
random-frame finite differences controls objective error and constraint
violation under calibrated values, bounded multipliers, and a Lipschitz
primal--dual operator. Each oracle call returns the objective and
$n_{\mathrm c}$ constraint values; \cref{app:functional} gives the details.
}

\section{Experiments}
\label{sec:experiments}

{
\paragraph{Setup.}
We use smooth strongly convex test functions with known minimizers.
Panels~(a)--(d) evaluate: admissible systematic and adaptive errors at $d=20$
and $T=1.024\cdot10^6$; round complexity on quadratic--log-cosh objectives
with $\kappa=10,100$ and $T=1{,}310{,}720$; oracle calls needed for
$\eps=2\cdot10^{-4}$ over $8\le d\le64$ (and $d=128$ for the separable case);
and central, fourth-order frame, and kernel estimators at equal budgets in
$d=16$. Noise is i.i.d. Gaussian with $\sigma=0.03$ ($0.1$ in panel~(b)). Each
point averages at least $20$ runs, and $40$ for $d\le16$ in panel~(c).

\paragraph{Results.}
Panel~(a) scales $M$ and $\delta$ by the thresholds in
\cref{thm:main}. The error doubles at normalized levels $1.07$ and $1.30$, close to the
predictions $1.11$ and $1.37$. At fixed $T$, panel~(b) comes within $3.5$
times the statistical level in $16$ and $32$ accelerated rounds, whereas
two-phase descent takes $256$ and $2048$ rounds to come within $1.7$ times
that level.
Panel~(c) tests \eqref{eq:Tstar}: the matched operator and Hilbert--Schmidt
designs give dimension exponents $2.03\pm0.03$ and $1.62\pm0.03$, close to
$2.00$ and $1.59$. At equal oracle budgets, panel~(d) gives slopes near
$T^{-4/5}$ for the higher-order estimates and $T^{-2/3}$ for central
differences. \Cref{app:experiments} provides
full protocols, five additional experiments, and code.
}

\begin{figure}[h]
\centering
\begin{tikzpicture}
\begin{groupplot}[
  group style={group size=2 by 2, horizontal sep=1.42cm, vertical sep=1.45cm},
  scale only axis, width=2.85cm, height=2.6cm,
  tick label style={font=\scriptsize}, label style={font=\footnotesize},
  title style={font=\footnotesize, yshift=-1.0ex},
  xlabel style={yshift=0.6ex}, ylabel style={yshift=-0.6ex},
  legend style={font=\scriptsize, draw=none, fill=white, fill opacity=0.85, text opacity=1,
    inner sep=1pt, row sep=-2.5pt, cells={anchor=west}},
  every axis plot/.append style={mark size=1.2pt},
]
\nextgroupplot[xmode=log, ymode=log, xlabel={normalized level}, ylabel={relative error},
  title={(a) admissible levels}, xmin=0.02, xmax=50, ymin=0.5, ymax=3000,
  ytick={1,10,100,1000}, yticklabels={$1$,,$10^2$,},
  legend style={at={(0.02,0.98)}, anchor=north west}]
\addplot[thick, MidnightBlue, mark=*, error bars/.cd, y dir=both, y explicit] table[x=x_M, y=err_lip, y error=se_lip] {experiments/exp3_levels.dat};
\addlegendentry{systematic}
\addplot[thick, BrickRed, mark=triangle*, error bars/.cd, y dir=both, y explicit] table[x=x_delta, y=err_adv, y error=se_adv] {experiments/exp3_levels.dat};
\addlegendentry{adversarial}
\addplot[gray!70!black, dashed] table[x=x_M, y=pred_lip] {experiments/exp3_levels.dat};
\addlegendentry{predicted}
\addplot[gray!70!black, dashed] table[x=x_delta, y=pred_adv] {experiments/exp3_levels.dat};
\nextgroupplot[xmode=log, ymode=log, log basis x=2, xlabel={rounds ($T$ fixed)},
  ylabel={error / benchmark}, title={(b) rounds},
  xmin=6, xmax=20000, ymin=1, ymax=1e5, ytick={1,10,100,1000,10000,100000},
  yticklabels={$1$,,$10^2$,,$10^4$,}, xtick={8,64,512,4096},
  legend style={at={(0.98,0.98)}, anchor=north east}]
\addplot[thick, MidnightBlue, mark=*, error bars/.cd, y dir=both, y explicit] table[x=N, y=rel_tp, y error=se_rel_tp] {experiments/exp4_rounds_kappa10.dat};
\addlegendentry{two-phase}
\addplot[thick, BrickRed, mark=triangle*, restrict expr to domain={\thisrow{new_acc}}{1:1}, error bars/.cd, y dir=both, y explicit] table[x=rounds_acc, y=rel_acc, y error=se_rel_acc] {experiments/exp4_rounds_kappa10.dat};
\addlegendentry{accelerated}
\addplot[thick, MidnightBlue, densely dashed, mark=o, mark options={solid}, error bars/.cd, y dir=both, y explicit, error bar style={solid}] table[x=N, y=rel_tp, y error=se_rel_tp] {experiments/exp4_rounds_kappa100.dat};
\addplot[thick, BrickRed, densely dashed, mark=triangle, mark options={solid}, restrict expr to domain={\thisrow{new_acc}}{1:1}, error bars/.cd, y dir=both, y explicit, error bar style={solid}] table[x=rounds_acc, y=rel_acc, y error=se_rel_acc] {experiments/exp4_rounds_kappa100.dat};
\addplot[gray!70!black, dotted, domain=6:20000, samples=2] {1};
\nextgroupplot[xmode=log, ymode=log, log basis x=2, xlabel={dimension $d$}, ylabel={{oracle calls}},
  title={(c) dimension}, xmin=6, xmax=170, ymin=5e3, ymax=3e8,
  ytick={1e4,1e5,1e6,1e7,1e8}, yticklabels={$10^4$,,$10^6$,,$10^8$},
  legend style={at={(0.02,0.98)}, anchor=north west}]
\addplot[thick, MidnightBlue, mark=*, error bars/.cd, y dir=both, y explicit] table[x=d, y=calls_ns, y error=se_ns] {experiments/exp5_dimension.dat};
\addlegendentry{non-sep., op}
\addplot[thick, MidnightBlue, densely dashed, mark=o, mark options={solid}, error bars/.cd, y dir=both, y explicit, error bar style={solid}] table[x=d, y=calls_ns_hs, y error=se_ns_hs] {experiments/exp5_dimension.dat};
\addlegendentry{non-sep., HS}
\addplot[thick, BrickRed, mark=triangle*, error bars/.cd, y dir=both, y explicit] table[x=d, y=calls_sep, y error=se_sep] {experiments/exp5_dimension.dat};
\addlegendentry{sep., HS}
\addplot[thick, BrickRed, densely dashed, mark=triangle, mark options={solid}, error bars/.cd, y dir=both, y explicit, error bar style={solid}] table[x=d, y=calls_sep_op, y error=se_sep_op] {experiments/exp5_dimension.dat};
\addlegendentry{sep., op}
\nextgroupplot[xmode=log, ymode=log, xlabel={{oracle calls $T$}}, ylabel={$\E f(\hat x)-\fstar$},
  title={(d) higher order}, xmin=8e3, xmax=2.6e6, ymin=6e-6, ymax=3e-2,
  ytick={1e-5,1e-4,1e-3,1e-2}, yticklabels={$10^{-5}$,,$10^{-3}$,},
  legend style={at={(0.98,0.98)}, anchor=north east}]
\addplot[thick, MidnightBlue, mark=*, error bars/.cd, y dir=both, y explicit] table[x=T, y=err_m2, y error=se_m2] {experiments/exp7_highorder.dat};
\addlegendentry{order-4 frame}
\addplot[thick, BrickRed, mark=triangle*, error bars/.cd, y dir=both, y explicit] table[x=T, y=err_m1, y error=se_m1] {experiments/exp7_highorder.dat};
\addlegendentry{central frame}
\addplot[thick, ForestGreen, densely dashed, mark=square, mark options={solid}, error bars/.cd, y dir=both, y explicit, error bar style={solid}] table[x=T, y=err_ker, y error=se_ker] {experiments/exp7_highorder.dat};
\addlegendentry{kernel}
\end{groupplot}
\end{tikzpicture}
\caption{Checks of (a) admissible errors, (b) rounds, (c) dimension scaling,
and (d) higher-order estimators. Bars: one standard error; op/HS denote the
smoothness norm.}
\label{fig:experiments}
\end{figure}

\FloatBarrier
\label{page:main-end}
\clearpage
\section*{AI Use Statement}
{\color{revisionpink}Generative AI tools were used to improve the clarity and
presentation of the manuscript and to help with the implementation and
checking of the numerical experiments. The authors reviewed all AI-assisted
changes, independently verified the reported results, and take full
responsibility for the final content.}

\bibliographystyle{plainnat}
\bibliography{refs}
\section*{Checklist}

\begin{enumerate}

  \item For all models and algorithms presented, check if you include:
  \begin{enumerate}
    \item A clear description of the mathematical setting, assumptions, algorithm, and/or model. [Yes] Sections~\ref{sec:setup}--\ref{sec:other} and the algorithm descriptions specify the model and assumptions.
    \item An analysis of the properties and complexity (time, space, sample size) of any algorithm. [Yes] Calls and sequential rounds are counted, including the calls of the polynomial fit; for $\beta>4$ (for $\beta>3$ in the method with logarithmically many rounds) the methods without a Hessian bound use global polynomial minimization, whose cost is not analyzed.
    \item (Optional) Anonymized source code, with specification of all dependencies, including external libraries. [Yes] The supplementary code lists its dependencies (NumPy; Matplotlib only for an optional preview), the seeds and the commands that reproduce the figure.
  \end{enumerate}

  \item For any theoretical claim, check if you include:
  \begin{enumerate}
    \item Statements of the full set of assumptions of all theoretical results. [Yes] Each result specifies its function class, noise model, and budget regime.
    \item Complete proofs of all theoretical results. [Yes] Proofs of the new results are in the supplementary material; standard results are cited.
    \item Clear explanations of any assumptions. [Yes] The two tensor seminorms, the components of the noise and the calibration of constraint values are defined and discussed.
  \end{enumerate}

  \item For all figures and tables that present empirical results, check if you include:
  \begin{enumerate}
    \item The code, data, and instructions needed to reproduce the main experimental results (either in the supplemental material or as a URL). [Yes] The supplementary material contains the code, the data tables behind the figure and the run log.
    \item All the training details (e.g., data splits, hyperparameters, how they were chosen). [Yes] Appendix~\ref{app:experiments} gives every objective, parameter, budget and tuning rule; there are no data splits.
    \item A clear definition of the specific measure or statistics and error bars (e.g., with respect to the random seed after running experiments multiple times). [Yes] Error bars are one standard error over independent runs; Appendix~\ref{app:experiments} gives the number of runs for each experiment.
    \item A description of the computing infrastructure used. (e.g., type of GPUs, internal cluster, or cloud provider). [Yes] All experiments run on a CPU; Appendix~\ref{app:experiments} reports the processor, the run time and the software versions.
  \end{enumerate}

  \item If you are using existing assets (e.g., code, data, models) or curating/releasing new assets, check if you include:
  \begin{enumerate}
    \item Citations of the creator If your work uses existing assets. [Yes] NumPy is cited.
    \item The license information of the assets, if applicable. [Yes] NumPy is distributed under a permissive, BSD-compatible license.
    \item New assets either in the supplemental material or as a URL, if applicable. [Yes] The code and the synthetic data are supplied.
    \item Information about consent from data providers/curators. [Not Applicable] All experimental data are generated synthetically.
    \item Discussion of sensible content if applicable, e.g., personally identifiable information or offensive content. [Not Applicable] The data contain only synthetic numerical values.
  \end{enumerate}

  \item If you used crowdsourcing or conducted research with human subjects, check if you include:
  \begin{enumerate}
    \item The full text of instructions given to participants and screenshots. [Not Applicable] No human subjects or crowdsourcing are involved.
    \item Descriptions of potential participant risks, with links to Institutional Review Board (IRB) approvals if applicable. [Not Applicable] No human subjects or crowdsourcing are involved.
    \item The estimated hourly wage paid to participants and the total amount spent on participant compensation. [Not Applicable] No human subjects or crowdsourcing are involved.
  \end{enumerate}

\end{enumerate}

\clearpage
\onecolumn
\aistatstitle{Supplementary Material}
\appendix
\numberwithin{equation}{section}
\etocdepthtag.toc{appendixpart}
\etocsettagdepth{mainpart}{none}
\etocsettagdepth{appendixpart}{section}
\renewcommand{\contentsname}{Contents of the supplementary material}
\tableofcontents
\clearpage
\section{Guide to the proofs}
\label{app:overview}

The notation and the noise model are those of \cref{sec:setup}. Constants
$c_\beta,C_\beta$ depend only on $\beta$ and on the fixed one-dimensional
stencil and cutoff functions; other dependences are stated where they occur.
Expectations are over the randomness of the method and of the noise. For
unconstrained problems
the loss is $f(\hat x)-\min_{x\in\R^d}f(x)$; over a known compact set $Q$ it
is $f(\hat x)-\min_{x\in Q}f(x)$ with $\hat x\in Q$; for functional constraints the objective
gap and the constraint violation are bounded separately.

\begin{center}
\small
\renewcommand{\arraystretch}{1.12}
\begin{tabular}{@{}p{0.43\textwidth}p{0.52\textwidth}@{}}
\toprule
Statement in the main text & Proof\\
\midrule
\Cref{prop:estimator} (stencil estimate) & \cref{app:estimator,app:estimator-unified}, \cref{prop:unified-estimator}\\
{\Cref{alg:stencil}, random-frame estimate and restarted Nesterov descent} & \cref{app:accelerated-parameters}: \cref{lem:multistage,thm:acc-explicit}\\
\Cref{thm:main}(a) and the achievable levels in (c) & \cref{app:rounds}: \cref{thm:sc-rounds,thm:acc-explicit}\\
\Cref{thm:main}(b) and the converse levels in (c) & \cref{app:lower}: \cref{thm:radial-lower,thm:ridge-lower,prop:lipschitz-floor}; \cref{thm:adv-unified}\\
Matching bounds and normalized minimax error (\cref{sec:results}) & \cref{thm:radial-lower,thm:ridge-lower,cor:normalized}\\
\Cref{thm:rounds-lower} (rounds) & \cref{app:rounds-lower}: \cref{thm:rl-one-round,thm:rl-nested,cor:rl-threshold}\\
Hessian growing with the gradient (\cref{sec:hessian}) & \cref{app:gs}: \cref{thm:gs}\\
\Cref{thm:nh-main}(a), no Hessian bound & \cref{app:no-hessian}: \cref{thm:nh-upper-general,thm:nh-upper-quadratic}; lower bound from \cref{thm:radial-lower,thm:ridge-lower}\\
\Cref{thm:nh-main}(b), logarithmic rounds & \cref{app:batched-no-hessian}: \cref{thm:nh-batched}\\
Converse levels on the classes of \cref{thm:gs} ($L=L_0$) and \cref{thm:nh-main} ($L=3\alpha$) & \cref{prop:lipschitz-floor}; \cref{thm:adv-unified}; proof of \cref{thm:gs} in \cref{app:gs}\\
Two-point feedback (\cref{sec:results}) & \cref{rem:adv-twopoint,rem:adv-probability}\\
\Cref{thm:convex-upper} (convex methods) & \cref{app:convex-upper}: \cref{cor:direct-convex,thm:value-analysis}\\
\Cref{thm:convex-lower-main} and the fixed class (\cref{sec:convex}) & \cref{app:convex-lower,app:convex-matching}: \cref{cor:natural-curvature,cor:mmax-cvx}; \cref{prop:lipschitz-floor}\\
Convex lower bound for $d<256$ and Hilbert--Schmidt smoothness & \cref{cor:convex}\\
Convex adversarial levels (\cref{tab:noise}) & \cref{app:adv-cvx,app:adv-cvx-upper}\\
\Cref{thm:geometry-main}, $\ell_p$ and $\ell_1$ balls & \cref{app:geometry}: \cref{thm:geometry-lower,cor:geometry-constrained,cor:lp-lower,thm:l1-upper,rem:l1-local}\\
Functional constraints (\cref{sec:other}) & \cref{app:functional}: \cref{prop:fc-obstruction,cor:fc-stencil}\\
Comparison with other works (\cref{tab:results,tab:convex-comparison,tab:noise}) & \cref{app:prior-convex}, \cref{app:related}\\
Experiments (\cref{sec:experiments}) & \cref{app:experiments}\\
Comparison with concurrent work & \cref{app:concurrent}\\
\bottomrule
\end{tabular}
\end{center}

\Cref{rem:centered-filtration,rem:adaptive-lipschitz} show that two features of
\cref{def:oracle} cannot be dropped. \Cref{app:batched-no-hessian} also gives a
convex method without a Hessian bound (\cref{thm:convex-nh-direct}). The sizes of the constants quoted in
\cref{app:discussion} are discussed in \cref{rem:rl-comparison}, after
\cref{thm:nh-batched} and in \cref{rem:lp-upper}.

\section{Preliminaries}
\label{app:prelim}

We write $B(z,r)=z+B(r)$ for the closed Euclidean ball of radius $r$
around $z$, and $e_1,\dots,e_d$ for the standard basis of $\R^d$.

\subsection{Tensors}
\label{app:tensors}

A $k$-tensor on $\R^d$ is an array $A=(A_{i_1\dots i_k})$; it is symmetric
if its entries do not change under permutations of the indices. We write
$\inner{A}{B}_{\HS}=\sum A_{i_1\dots i_k}B_{i_1\dots i_k}$, so that
$\norm{A}_{\HS}^2=\inner{A}{A}_{\HS}$, and $A[v_1,\dots,v_k]$ for the
multilinear form. For $j\le k$, $A[v_1,\dots,v_j]$ is the $(k-j)$-tensor
obtained by contracting the first $j$ indices. For a $k$-tensor $A$ and an
$l$-tensor $A'$, $A\otimes A'$ is the $(k+l)$-tensor with entries
$A_{i_1\dots i_k}A'_{i_{k+1}\dots i_{k+l}}$, and
$v^{\otimes j}=v\otimes\dots\otimes v$ ($j$ factors). The identity matrix
$I$ is a $2$-tensor, and $I^{\otimes m}$ has the entries
$\prod_{r=1}^m\mathbf 1\{i_{2r-1}=i_{2r}\}$.

\begin{alemma}[Elementary tensor facts]
\label{lem:tensor-facts}
Let $A$ be a $k$-tensor with $k\ge1$, and $v\in\R^d$.
\begin{enumerate}[label=(\alph*)]
\item $\norm{A}_{\op}\le\norm{A}_{\HS}\le d^{(k-1)/2}\norm{A}_{\op}$, and
$\norm{A[v]}_{\HS}\le\norm{v}\,\norm{A}_{\HS}$.
\item $\norm{I^{\otimes m}}_{\HS}=d^{m/2}$, and
$\abs{\inner{A}{I^{\otimes m}\otimes B}_{\HS}}\le d^{m/2}\norm{A}_{\HS}
\norm{B}_{\HS}$ for every $(k-2m)$-tensor $B$.
\end{enumerate}
\end{alemma}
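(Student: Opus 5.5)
The plan proves the four claims of \cref{lem:tensor-facts} in turn, working coordinatewise with orthonormal bases and Cauchy--Schwarz.

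For (a), the lower bound $\norm A_{\op}\le\norm A_{\HS}$ comes from writing $A[v_1,\dots,v_k]=\inner{A}{v_1\otimes\cdots\otimes v_k}_{\HS}$. Cauchy--Schwarz for $\inner\cdot\cdot_{\HS}$ then gives $\abs{A[v_1,\dots,v_k]}\le\norm A_{\HS}\prod_i\norm{v_i}$, since $\norm{v_1\otimes\cdots\otimes v_k}_{\HS}=\prod_i\norm{v_i}$.

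For the upper bound, expand
\[
\norm A_{\HS}^2=\sum_{i_1,\dots,i_{k-1}}\norm{A[e_{i_1},\dots,e_{i_{k-1}}]}^2 .
\]
Each vector $A[e_{i_1},\dots,e_{i_{k-1}}]$ is the $1$-tensor obtained by contracting the first $k-1$ slots. Its Euclidean norm equals $\sup_{\norm w\le1}A[e_{i_1},\dots,e_{i_{k-1}},w]\le\norm A_{\op}$. There are $d^{k-1}$ terms, so $\norm A_{\HS}^2\le d^{k-1}\norm A_{\op}^2$.

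For the contraction bound in (a), the entries of $A[v]$ are $\sum_i v_iA_{i\,i_2\dots i_k}$. Cauchy--Schwarz in $i$ gives $\abs{A[v]_{i_2\dots i_k}}^2\le\norm v^2\sum_iA_{i\,i_2\dots i_k}^2$. Summing over $(i_2,\dots,i_k)$ yields $\norm{A[v]}_{\HS}^2\le\norm v^2\norm A_{\HS}^2$.

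For (b), the entries of $I^{\otimes m}$ are products of indicators. Summing their squares therefore factorizes into $m$ copies of $\sum_{i,j}\mathbf 1\{i=j\}=d$, so $\norm{I^{\otimes m}}_{\HS}^2=d^m$. For the pairing, the tensor product of arrays satisfies the multiplicativity $\norm{I^{\otimes m}\otimes B}_{\HS}=\norm{I^{\otimes m}}_{\HS}\norm B_{\HS}$, because the sum of squared entries factorizes over the two index groups. Cauchy--Schwarz for $\inner\cdot\cdot_{\HS}$ then gives the stated bound $d^{m/2}\norm A_{\HS}\norm B_{\HS}$.

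None of the steps poses a real obstacle. The only point needing care is the upper bound in (a): the norm of the contracted vector must be bounded by the operator norm, using unit basis vectors in the first $k-1$ slots. Everything else is Cauchy--Schwarz for $\inner\cdot\cdot_{\HS}$ together with the factorization of sums of squares over independent index groups. Symmetry of $A$ is not needed for any of these claims.
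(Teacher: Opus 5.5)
Your proof is correct and follows the paper's argument. Cauchy--Schwarz for $\inner\cdot\cdot_{\HS}$ gives the lower bound in (a), the contraction bound and (b), once the entries of $I^{\otimes m}$ are counted and $\norm{I^{\otimes m}\otimes B}_{\HS}=\norm{I^{\otimes m}}_{\HS}\norm{B}_{\HS}$ is noted; for the upper bound in (a) you contract all $k-1$ leading slots with basis vectors at once, which is the unrolled form of the paper's induction on $k$ via $\norm{A}_{\HS}^2=\sum_{i}\norm{A[e_i]}_{\HS}^2$ and $\norm{A[e_i]}_{\op}\le\norm{A}_{\op}$.
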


\begin{proof}
Exactly $d^m$ entries of $I^{\otimes m}$ equal one and the others vanish,
and $\norm{I^{\otimes m}\otimes B}_{\HS}=\norm{I^{\otimes m}}_{\HS}\norm{B}_{\HS}$.
With this, the first inequality in (a), the contraction bound and (b)
follow from the Cauchy--Schwarz inequality. The second inequality in (a)
follows by induction on $k$ from
$\norm{A}_{\HS}^2=\sum_{i=1}^d\norm{A[e_i]}_{\HS}^2$ and
$\norm{A[e_i]}_{\op}\le\norm{A}_{\op}$.
\end{proof}

\subsection{Taylor remainders}
\label{app:taylor}

For $f\in C^q$ and $z,v\in\R^d$ let
$T_qf(z;v)=\sum_{j=0}^q D^jf(z)[v^{\otimes j}]/j!$ be the Taylor polynomial of
degree $q$, and $T_{q-j}D^jf(z;v)$ the Taylor polynomial of degree $q-j$ of
the tensor-valued map $v\mapsto D^jf(z+v)$.

\begin{alemma}[Taylor remainders]
\label{lem:taylor}
Let $\beta=q+\gamma$ with $q\ge1$ and $\gamma\in(0,1]$, let $f\in C^q(\R^d)$,
and let $z,v\in\R^d$.
\begin{enumerate}[label=(\alph*)]
\item $\abs{f(z+v)-T_qf(z;v)}\le[f]_{\beta,\op}\norm{v}^{\beta}/q!$.
\item For $1\le j\le q$,
$\norm{D^jf(z+v)-T_{q-j}D^jf(z;v)}_{\HS}\le[f]_{\beta,\HS}\norm{v}^{\beta-j}/(q-j)!$.
\item The bound (b) holds with $\op$ in place of $\HS$ on both sides. For
$j=1$ the left-hand side is then the Euclidean norm of
$\nabla f(z+v)-T_{q-1}\nabla f(z;v)$.
\end{enumerate}
\end{alemma}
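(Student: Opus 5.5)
The plan is to reduce all three parts to one-dimensional Taylor expansions along segments, combined with the integral form of the remainder and the Hölder condition on $D^qf$.

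For (a), set $\varphi(t)=f(z+tv)$ on $[0,1]$. Then $\varphi^{(j)}(t)=D^jf(z+tv)[v^{\otimes j}]$, and Taylor's formula with integral remainder of order $q-1$ gives
\[
f(z+v)-T_qf(z;v)=\int_0^1\frac{(1-t)^{q-1}}{(q-1)!}\bigl(D^qf(z+tv)-D^qf(z)\bigr)[v^{\otimes q}]\,dt .
\]
The integrand is bounded by $[f]_{\beta,\op}t^\gamma\norm v^{\gamma}\norm v^q\,(1-t)^{q-1}/(q-1)!$. Since $\int_0^1(1-t)^{q-1}t^\gamma dt\le\int_0^1(1-t)^{q-1}dt=1/q$, we obtain $[f]_{\beta,\op}\norm v^\beta/q!$. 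For $q=1$ this reduces to the mean value form.

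For (b), apply the same argument to the tensor-valued map $g(t)=D^jf(z+tv)$. Its $i$-th derivative is $D^{j+i}f(z+tv)[v^{\otimes i}]$, where $v$ is contracted into the extra indices, and this equals the corresponding term of $T_{q-j}D^jf(z;v)$. Two cases arise.
\begin{itemize}
\item If $j=q$, the claim is exactly the Hölder condition in HS norm: $\norm{D^qf(z+v)-D^qf(z)}_{\HS}\le[f]_{\beta,\HS}\norm v^\gamma$, with $0!=1$.
\item If $j<q$, the integral remainder of order $q-j-1$ gives
\[
\int_0^1\frac{(1-t)^{q-j-1}}{(q-j-1)!}\bigl(D^qf(z+tv)-D^qf(z)\bigr)[v^{\otimes(q-j)}]\,dt .
\]
\end{itemize}
In the second case I would bound the HS norm using the contraction bound of \cref{lem:tensor-facts}(a), $\norm{A[v]}_{\HS}\le\norm v\norm A_{\HS}$, applied $q-j$ times. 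The integrand is then at most $[f]_{\beta,\HS}t^\gamma\norm v^{\gamma+q-j}(1-t)^{q-j-1}/(q-j-1)!$, and integrating gives $[f]_{\beta,\HS}\norm v^{\beta-j}/(q-j)!$. The HS norm is applied to a Bochner/entrywise integral, which is legitimate because the norm is convex and continuous.

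For (c), repeat the argument with operator norms. The needed contraction bound $\norm{A[v]}_{\op}\le\norm v\norm A_{\op}$ is immediate from the definition as a supremum over unit vectors. For $j=1$, the operator norm of a $1$-tensor, i.e. a vector, is its Euclidean norm, which yields the stated form.

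The only step needing care is identifying the Taylor coefficients of the tensor-valued map with contractions of the $D^{j+i}f(z)$ in the right index positions. Because $D^qf$ is symmetric, the order of contraction is immaterial, so this step is routine.
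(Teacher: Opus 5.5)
Your proposal is correct and is essentially the paper's proof. You write the remainder in integral form along the segment, bound $D^qf(z+sv)-D^qf(z)$ by the H\"older seminorm, and use that contraction with $v$ multiplies the operator and Hilbert--Schmidt norms by at most $\norm v$ (\cref{lem:tensor-facts}(a)). Then $\int_0^1(1-s)^{q-j-1}s^\gamma\,ds\le1/(q-j)$ finishes the bound. The only difference is that the paper obtains (a) as the case $j=0$ (with $D^0f=f$) of the same formula, whereas you treat it separately.
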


\begin{proof}
For $j=q$, (b) and (c) are the definitions of the seminorms. For
$0\le j<q$, with $D^0f=f$, the integral form of the remainder is
\[
D^jf(z+v)-T_{q-j}D^jf(z;v)=\int_0^1\frac{(1-s)^{q-j-1}}{(q-j-1)!}
\bigl(D^qf(z+sv)-D^qf(z)\bigr)[v^{\otimes(q-j)}]\,ds .
\]
A contraction with $v$ multiplies the operator norm, and by
\cref{lem:tensor-facts}(a) the Hilbert--Schmidt norm, by at most $\norm v$.
Since $\int_0^1(1-s)^{q-j-1}s^\gamma\,ds\le1/(q-j)$, this gives (a)
($j=0$), (b) and (c).
\end{proof}

\subsection{Spherical and ball averages}
\label{app:pizzetti}

Throughout, $U$ is uniform on $\Sph$ and $W$ is uniform on the unit ball
$B(1)$. For a continuous scalar, vector or tensor valued function $g$, a
center $z\in\R^d$ and $s\ge0$ let
\[
S_g(s)=\E\,g(z+sU),\qquad B_g(s)=\E\,g(z+sW).
\]
As a function of the center we write $\mathcal A_sg(z)=B_g(s)$ for the ball
average. Since $\norm W$ has density $dr^{d-1}$ on $[0,1]$,
$B_g(s)=d\int_0^1r^{d-1}S_g(sr)\,dr$. The Laplacian $\Delta=\sum_i\partial_i^2$
acts entrywise. For continuous $a$ on $[0,s_0]$ define the operator
\begin{equation}
\label{eq:Jd}
(J_da)(s)=\int_0^st^{1-d}\int_0^tu^{d-1}a(u)\,du\,dt .
\end{equation}
It is positive ($a\ge0$ implies $J_da\ge0$), and
$J_d(s^{2j})=s^{2j+2}/((2j+2)(d+2j))$, so that
$(J_d^j1)(s)=s^{2j}/(2^jj!\prod_{i=0}^{j-1}(d+2i))$.

\begin{alemma}[Finite Pizzetti formula]
\label{lem:pizzetti}
Let $k\ge1$ and let $g$ be $2k$ times continuously differentiable near
$B(z,s_0)$. For $0\le s\le s_0$,
\begin{equation}
\label{eq:pizzetti-sphere}
S_g(s)=\sum_{j=0}^{k-1}\frac{s^{2j}\,\Delta^jg(z)}{2^jj!\prod_{i=0}^{j-1}(d+2i)}
+\bigl(J_d^kS_{\Delta^kg}\bigr)(s),
\end{equation}
where empty products equal one, and
\begin{equation}
\label{eq:pizzetti-ball}
B_g(s)=\sum_{j=0}^{k-1}\frac{s^{2j}\,\Delta^jg(z)}{2^jj!\prod_{i=1}^{j}(d+2i)}
+d\int_0^1r^{d-1}\bigl(J_d^kS_{\Delta^kg}\bigr)(sr)\,dr .
\end{equation}
The norm of the last term is at most
$s^{2k}\sup_{\norm{x-z}\le s}\norm{\Delta^kg(x)}\big/\bigl(2^kk!\prod_{i=1}^k(d+2i)\bigr)$.
\end{alemma}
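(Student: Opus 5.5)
The plan is to prove the sphere formula \eqref{eq:pizzetti-sphere} first, by induction on $k$, starting from the basic ODE satisfied by spherical means, and then to integrate it radially to get the ball formula. By Green's theorem (divergence theorem on the ball $B(z,t)$), for $g\in C^2$ we have
\[
\frac{d}{dt}S_g(t)=\frac{t}{d}\,B_{\Delta g}(t)=t^{1-d}\int_0^tu^{d-1}S_{\Delta g}(u)\,du .
\]
Here the second equality uses $B_h(t)=d\,t^{-d}\int_0^tu^{d-1}S_h(u)\,du$, which is the polar-coordinates identity already recorded before \eqref{eq:Jd}, rescaled. Integrating from $0$ to $s$ with $S_g(0)=g(z)$ gives $S_g=g(z)+J_dS_{\Delta g}$. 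This is the case $k=1$. It is stated componentwise, so it also covers vector- and tensor-valued $g$.

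For the inductive step, apply the $k=1$ identity to $\Delta^{j}g$ for $j=1,\dots,k-1$. This requires $\Delta^jg\in C^2$, which holds since $g\in C^{2k}$. Each application gives $S_{\Delta^jg}=\Delta^jg(z)+J_dS_{\Delta^{j+1}g}$. Substituting these successively and using that $J_d$ is linear yields
\[
S_g=\sum_{j=0}^{k-1}\Delta^jg(z)\,J_d^j1+J_d^kS_{\Delta^kg}.
\]
The closed form $(J_d^j1)(s)=s^{2j}/(2^jj!\prod_{i=0}^{j-1}(d+2i))$ was noted after \eqref{eq:Jd}. It follows by induction from $J_d(s^{2j})=s^{2j+2}/((2j+2)(d+2j))$, and that identity is a direct two-line integration. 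This gives \eqref{eq:pizzetti-sphere}.

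For the ball, I insert \eqref{eq:pizzetti-sphere} at radius $sr$ into $B_g(s)=d\int_0^1r^{d-1}S_g(sr)\,dr$. The polynomial terms pick up the factor $d\int_0^1r^{d-1+2j}\,dr=d/(d+2j)$. Since $d\prod_{i=0}^{j-1}(d+2i)^{-1}(d+2j)^{-1}$ equals $\prod_{i=1}^j(d+2i)^{-1}$, this turns the index range $i=0,\dots,j-1$ into $i=1,\dots,j$ and gives \eqref{eq:pizzetti-ball} directly.

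For the remainder bound, I use positivity of $J_d$ applied componentwise to a norm bound. The cleanest route is to fix a unit dual functional $\ell$, or a unit tensor in the $\HS$ inner product. Put $K=\sup_{\norm{x-z}\le s}\norm{\Delta^kg(x)}$. Then $\abs{\ell(S_{\Delta^kg}(u))}\le K$ for $u\le s$. Positivity and linearity of $J_d$ give $\abs{\ell(J_d^kS_{\Delta^kg})(t)}\le K(J_d^k1)(t)$ for $t\le s$. Integrating over the ball as above gives $K\,s^{2k}/(2^kk!\prod_{i=1}^k(d+2i))$. Taking the supremum over $\ell$ then bounds the norm.

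The only delicate points are the following. The first is the regularity of $S_g$ at $t=0$ together with the divergence-theorem identity: $\frac{d}{dt}S_g(t)=\E\inner{\nabla g(z+tU)}{U}$ is the flux per surface area, which equals $(t/d)B_{\Delta g}(t)$. The second is checking that the inner integral in $J_d$ converges near $0$; this holds because $t^{1-d}\int_0^tu^{d-1}\,du=O(t)$. I expect the main obstacle, mild as it is, to be stating this flux identity cleanly for tensor-valued $g$, which I would handle entrywise.
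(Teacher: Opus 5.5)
Your proposal is correct and follows the paper's proof step for step: the divergence theorem gives $S_g=g(z)+J_dS_{\Delta g}$, iterating $k$ times yields the sphere formula, radial averaging multiplies the coefficient of $s^{2j}$ by $d/(d+2j)$, and positivity of $J_d$ bounds the remainder. The only case the paper treats separately and you leave implicit is $d=1$, where the flux identity reduces to the fundamental theorem of calculus applied to $S_g(s)=(g(z+s)+g(z-s))/2$ and holds with the same formula.
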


\begin{proof}
For $g\in C^2$ the divergence theorem gives
$S_g'(s)=s^{1-d}\int_0^su^{d-1}S_{\Delta g}(u)\,du$ (for $d=1$ differentiate
$S_g(s)=(g(z+s)+g(z-s))/2$), that is, $S_g=g(z)+J_dS_{\Delta g}$. Iterating
$k$ times gives \eqref{eq:pizzetti-sphere}. Averaging over the radius
multiplies the coefficient of $s^{2j}$ by $d/(d+2j)$, which gives
\eqref{eq:pizzetti-ball}, and positivity of $J_d$ bounds the remainder.
\end{proof}

\Cref{lem:pizzetti} is a finite form, with an explicit remainder, of
Pizzetti's formula \citep{Ovall2016,Toma2008}.

\begin{alemma}[Gradients of ball averages]
\label{lem:ball-gradient}
Let $g:\R^d\to\R$ be continuous and $s>0$. Then $\mathcal A_sg$ is
continuously differentiable and
\[
\nabla\mathcal A_sg(z)=\frac ds\,\E\bigl[g(z+sU)U\bigr].
\]
If $g$ is Lipschitz, then $\mathcal A_sg$ is Lipschitz with the same
constant. If $g\in C^1$, then $\nabla\mathcal A_sg(z)=B_{\nabla g}(s)$.
\end{alemma}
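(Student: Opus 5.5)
We prove the three claims of \cref{lem:ball-gradient} in turn, reducing to the case of smooth $g$ by approximation. The main tool is the divergence theorem on $B(z,s)$.

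\textbf{Step 1: the gradient formula for $g\in C^1$.} Write
$\mathcal A_sg(z)=\frac{1}{|B(s)|}\int_{B(z,s)}g(y)\,dy$. Differentiating under the integral sign in the translated form $\E g(z+sW)$ gives
$\nabla\mathcal A_sg(z)=\E\nabla g(z+sW)=B_{\nabla g}(s)$, which is the last claim. Applying the divergence theorem to $g$ on $B(z,s)$ gives
\[
\int_{B(z,s)}\nabla g\,dy=\int_{\partial B(z,s)}g\,\nu\,dS .
\]
Here the outward normal is $\nu=U$. The ratio of the surface measure $|\partial B(s)|=d\,|B(s)|/s$ to the volume turns the right-hand side into $\frac ds\E[g(z+sU)U]$. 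So the formula holds for $C^1$ functions.

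\textbf{Step 2: extension to continuous $g$.} Mollify: let $g_\epsilon=g*\phi_\epsilon$, which converges to $g$ uniformly on compact sets.

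1. The right-hand side $G(z)=\frac ds\E[g(z+sU)U]$ is continuous in $z$, because $g$ is uniformly continuous on compacts.
2. $G_\epsilon\to G$ locally uniformly.
3. $\mathcal A_sg_\epsilon\to\mathcal A_sg$ locally uniformly.

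A locally uniform limit of $C^1$ functions whose gradients also converge locally uniformly is $C^1$, with the limiting gradient. Hence $\mathcal A_sg\in C^1$ with $\nabla\mathcal A_sg=G$.

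\textbf{Step 3: the Lipschitz claim.} If $g$ is $\Lambda$-Lipschitz, then
\[
|\mathcal A_sg(z)-\mathcal A_sg(z')|\le\E|g(z+sW)-g(z'+sW)|\le\Lambda\norm{z-z'}
\]
directly, with no differentiation needed.

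The only delicate point is the justification of the limit interchange in Step 2, but it is standard. An alternative avoids mollification. Compute the difference quotient of $\int_{B(z,s)}g$ as the flux through the boundary of a moving domain (Reynolds transport): the derivative in direction $e$ equals $\int_{\partial B(z,s)}g\,\inner{\nu}{e}\,dS$, and this is continuous in $z$.
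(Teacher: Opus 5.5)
Your proof is correct and follows essentially the paper's route. The paper cites the Stokes/divergence-theorem identity of \citet[Lemma~1]{Flaxman2005} and notes that its right-hand side is continuous in $z$; your $C^1$ computation plus mollification just makes the passage to merely continuous $g$ explicit. The Lipschitz claim and $\nabla\mathcal A_sg=B_{\nabla g}(s)$ come, as in the paper, from $\mathcal A_sg(z)=\E g(z+sW)$ by averaging and by differentiation under the integral.
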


\begin{proof}
The identity is \citet[Lemma~1]{Flaxman2005}, and its right-hand side is
continuous in $z$. The other two claims follow from
$\mathcal A_sg(z)=\E g(z+sW)$, by averaging and by differentiation under
the integral.
\end{proof}

We use the Poincar\'e inequality on the sphere: for $d\ge2$ and a
differentiable function $\phi$ on a neighborhood of $\Sph$ with
$\E\phi(U)=0$,
\begin{equation}
\label{eq:poincare}
\E\phi(U)^2\le\frac{1}{d-1}\,\E\norm{\nabla_{\mathbb S}\phi(U)}^2,
\end{equation}
where $\nabla_{\mathbb S}$ is the tangential part of the gradient. This is
the statement that the first nonzero eigenvalue of the Laplace--Beltrami
operator on $\Sph$ equals $d-1$ \citep[Chapter~1]{DaiXu2013Spheres}. Applied
to each component, it holds for vector and tensor valued $\phi$, with the
sum of the squared component gradients on the right. By mollification it
also gives $\E\phi(U)^2-(\E\phi(U))^2\le\ell^2/(d-1)$ for every
$\ell$-Lipschitz $\phi:\R^d\to\R$.
In particular, if $e:\R^d\to\R$ is odd and $\ell$-Lipschitz and
$(U_i)_{i=1}^d$ is a uniformly random orthonormal basis, then $\E e(U)=0$
and, by orthonormality,
\begin{equation}
\label{eq:odd-frame}
\E\Bigl\|\sum_{i=1}^de(U_i)U_i\Bigr\|^2=d\,\E e(U)^2\le\frac{d\,\ell^2}{d-1}.
\end{equation}

\subsection{Testing inequalities}
\label{app:testing}

For probability measures $P,Q$ let $\TV(P,Q)=\sup_A\abs{P(A)-Q(A)}$ and
$\KL(P\Vert Q)=\int\log(dP/dQ)\,dP$ ($+\infty$ without absolute
continuity).

\begin{alemma}[Testing and Gaussian observations]
\label{lem:testing}\leavevmode
\begin{enumerate}[label=(\alph*)]
\item Every test between $P$ and $Q$ has error probabilities summing to at
least $1-\TV(P,Q)$, and $\TV(P,Q)\le\sqrt{\KL(P\Vert Q)/2}$.
\item Let an algorithm interact with $f$ or with $g$ through independent
$N(0,\sigma^2)$ noise, and let $P_f,P_g$ be the laws of its seed and
transcript. If $f-g$ is bounded, then
\[
\KL(P_f\Vert P_g)=\frac1{2\sigma^2}\E_f\sum_{t=1}^T\bigl(f(X_t)-g(X_t)\bigr)^2 .
\]
\end{enumerate}
\end{alemma}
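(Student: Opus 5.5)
For part (a) I would apply the testing reduction in its two-hypothesis form. Fix any test $\psi$, meaning a measurable $\{P,Q\}$-valued function of the observation. Its two error probabilities are $P(\psi=Q)$ and $Q(\psi=P)$. Set $A=\{\psi=Q\}$. Then the sum of the errors is
\[
P(A)+Q(A^c)=1-\bigl(Q(A)-P(A)\bigr)\ge1-\TV(P,Q),
\]
straight from the definition of $\TV$ as a supremum over events. For the Pinsker part, $\TV(P,Q)\le\sqrt{\KL(P\Vert Q)/2}$, I would just cite the standard inequality. If a self-contained proof is wanted, I would do it in two steps. First reduce to Bernoulli laws through the data-processing inequality applied to the indicator of the optimal event $\{dP/dQ\ge1\}$. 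Then check the scalar inequality $p\log\frac pq+(1-p)\log\frac{1-p}{1-q}\ge2(p-q)^2$: differentiate in $q$ and use $q(1-q)\le\frac14$. When $\KL=\infty$ there is nothing to prove.

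For part (b) I would write out the joint density of the seed and transcript, then use the chain rule. Let the seed have law $\mu$, the same under both $f$ and $g$. The queries $X_t$ are deterministic functions of the seed, the earlier answers, and the fresh round randomness $\Theta_k$. I would absorb $\Theta_k$ into the seed, or into the transcript with a law that does not depend on the objective. Given all of this, $Y_t$ has law $N(f(X_t),\sigma^2)$ under $P_f$ and $N(g(X_t),\sigma^2)$ under $P_g$. Within a round, queries chosen jointly from $(\mathcal F_{k-1},\Theta_k)$ have independent noises, so ordering the calls sequentially still leaves each $Y_t$ conditionally Gaussian given everything before it. The likelihood ratio therefore factorizes:
\[
\log\frac{dP_f}{dP_g}=\sum_{t=1}^T\Bigl(\frac{(Y_t-g(X_t))^2-(Y_t-f(X_t))^2}{2\sigma^2}\Bigr).
\]
The seed and $\Theta$ terms cancel because they have the same law under both hypotheses.

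To finish, take $\E_f$ term by term. Condition on the history up to the query $X_t$ and substitute $Y_t=f(X_t)+\xi_t$. The conditional expectation of the $t$-th summand is then $\bigl(f(X_t)-g(X_t)\bigr)^2/(2\sigma^2)$, since the cross term $2\xi_t(f-g)(X_t)$ has conditional mean zero. This is the Gaussian KL divergence $(m_1-m_2)^2/(2\sigma^2)$. The tower property and summing over $t$ give the identity.

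The only delicate step is integrability, which is needed to justify exchanging expectation and sum and to make sure the log-likelihood is integrable. This is where I would use the assumption that $f-g$ is bounded. Each summand is bounded in absolute value by $\norm{f-g}_\infty\abs{\xi_t}/\sigma^2+\norm{f-g}_\infty^2/(2\sigma^2)$, which is integrable. The formula for KL as the expectation of the log density ratio then applies without trouble, and Fubini justifies the conditional computation.
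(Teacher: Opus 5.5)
Your proposal is correct and follows the paper's argument. For (a), the paper simply cites the two-point testing bound and Pinsker's inequality, and your self-contained versions of both are sound. For (b), your explicit factorization of the likelihood ratio, with the cross term $\xi_t(f-g)(X_t)$ conditionally centered, is the chain rule for the Kullback--Leibler divergence that the paper invokes, written out by hand, and you correctly use the boundedness of $f-g$ only to justify integrability.
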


Part (a) is the two-point testing bound and Pinsker's inequality
\citep[Section~2.4 and Lemma~2.5]{Tsybakov2009}. Part (b) is the chain rule
for the Kullback--Leibler divergence \citep[Theorem~2.5.3]{CoverThomas2006}:
given the seed and the past, $X_t$ is the same under both hypotheses, and the
conditional divergence of $Y_t$ is $(f(X_t)-g(X_t))^2/(2\sigma^2)$.

We use Assouad's lemma in the following form \citep[Section~2.7.2]{Tsybakov2009}.
Let $\{P_\omega:\omega\in\{-1,1\}^d\}$ be laws of transcripts and let
$\omega^{(j)}$ denote $\omega$ with its $j$-th coordinate flipped. For any
estimator $\hat\omega$ of $\omega$,
\begin{equation}
\label{eq:assouad}
\frac1{2^d}\sum_\omega\E_\omega\sum_{j=1}^d\mathbf 1\{\hat\omega_j\ne\omega_j\}
\ge\frac d2\Bigl(1-\frac1{d2^d}\sum_{\omega,j}\TV\bigl(P_\omega,P_{\omega^{(j)}}\bigr)\Bigr),
\end{equation}
which follows from \cref{lem:testing}(a) applied to each pair
$(\omega,\omega^{(j)})$.

\subsection{Spherical caps}
\label{app:caps}

\begin{alemma}[Caps]
\label{lem:caps}
Let $d\ge2$, $0<\theta<1$, $w$ uniform on $\Sph$, and $x\ne0$. Then
\[
\PP\bigl(\abs{\inner{w}{x}}>\theta\norm x\bigr)\le2e^{-d\theta^2/2}\le2e^{-\theta^2(d-1)/2}.
\]
\end{alemma}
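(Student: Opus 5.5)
The plan is to reduce to a one-dimensional tail bound for a single coordinate and then apply a Gaussian-type concentration estimate on the sphere. By rotation invariance of the uniform law on $\Sph$, $\inner{w}{x}/\norm x$ has the same distribution as the first coordinate $w_1$, so it suffices to show $\PP(\abs{w_1}>\theta)\le2e^{-d\theta^2/2}$. The second inequality in the statement is immediate, since $d\ge d-1$ gives $e^{-d\theta^2/2}\le e^{-\theta^2(d-1)/2}$.

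For the main bound I would use the explicit density of $w_1$. It is proportional to $(1-t^2)^{(d-3)/2}$ on $[-1,1]$, so
\[
\PP(w_1>\theta)=\frac{\int_\theta^1(1-t^2)^{(d-3)/2}\,dt}{\int_{-1}^1(1-t^2)^{(d-3)/2}\,dt}.
\]
For $d=2$ the exponent is negative, so I would treat $d=2$ (and possibly $d=3$) directly. For $d=2$ one has $\PP(\abs{w_1}>\theta)=1-\tfrac{2}{\pi}\arcsin\theta$, and a short calculus check shows this is at most $2e^{-\theta^2}$. For $d=3$, $w_1$ is uniform on $[-1,1]$, so the probability is $1-\theta\le 2e^{-3\theta^2/2}$; this follows by comparing the two sides at the endpoints and using convexity.

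For general $d$, a cleaner route is the classical cap estimate. The set $\{w:w_1>\theta\}$ lies in the cone over a cap, and the normalized cap measure is bounded by the volume ratio of the ball $B(\tfrac{\theta}{2}e_1,\ \cdot)$. Concretely, I would use the standard argument (Ball's lecture notes): the cone from the origin over the cap is contained in a ball of radius $\sqrt{1-\theta^2}$ for $\theta\ge1/\sqrt2$, and of radius $1/(2\theta)$ otherwise. The ratio of volumes gives $(1-\theta^2)^{d/2}\le e^{-d\theta^2/2}$ in the first case. The second case needs separate handling.

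Alternatively, I would write $w=G/\norm G$ with $G$ standard Gaussian and use a Chernoff bound on the event $G_1^2>\theta^2\sum_i G_i^2$. This is
\[
\PP\Bigl((1-\theta^2)G_1^2-\theta^2\textstyle\sum_{i\ge2}G_i^2>0\Bigr)\le\E e^{\lambda(\cdot)}=(1-2\lambda(1-\theta^2))^{-1/2}(1+2\lambda\theta^2)^{-(d-1)/2}.
\]
Optimizing over $\lambda$ yields a bound of the form $\bigl(\theta^{-2}\bigr)^{1/2}\bigl(\tfrac{1-\theta^2}{1-1/d}\bigr)^{\dots}$, which is approximately $(1-\theta^2)^{(d-1)/2}\sqrt{d}$. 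The stray $\sqrt d$ factor is the main obstacle.

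I therefore expect the main difficulty to be getting the exact constant $2$ and exponent $d\theta^2/2$, rather than the weaker $d-1$. I would first try the density route: substitute $t=\theta+u$, bound $(1-t^2)^{(d-3)/2}\le e^{-(d-3)t^2/2}$, and bound the normalizing integral from below by $\sqrt{2\pi/(d-1)}\cdot c$. If the constants fail for small $d$, I would cover $d\le$ some small constant by direct computation, as in the $d=2,3$ cases above.
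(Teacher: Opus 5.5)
Your proposal does not yet contain a proof. You discard the Chernoff route yourself, because of the extra factor of order $\sqrt d$.

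The density route is only sketched. The comparison $(1-t^2)^{(d-3)/2}\le e^{-(d-3)t^2/2}$ produces the exponent $(d-3)\theta^2/2$ instead of $d\theta^2/2$. A factor up to $e^{3/2}$ must therefore be recovered from the prefactors by a case analysis in $\theta$ and $d$ that you do not carry out. The small dimensions to be checked by hand are also left unspecified.

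The cone argument is exactly the paper's route. The paper cites Ball's Lemma~2.2, which bounds each of the caps $\{\inner{w}{\pm x}\ge\theta\norm x\}$ by $e^{-d\theta^2/2}$, and adds the two. You state this argument with the two cases swapped, and as stated it fails.
\begin{itemize}
\item For $\theta>1/\sqrt2$, no ball of radius $\sqrt{1-\theta^2}$ contains the cone. It would have to contain the boundary sphere $\{y\in\Sph:y_1=\theta\}$, which has that radius, so it would be centered at $\theta e_1$. The apex $0$ then lies at distance $\theta>\sqrt{1-\theta^2}$.
\item For $\theta<1/2$, the radius $1/(2\theta)$ exceeds $1$ and gives nothing.
\end{itemize}
The case you leave for ``separate handling'' is therefore exactly where the argument has to close.

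With the cases in the right order, the argument closes in two lines for every $d\ge2$, so $d=2,3$ need no special treatment. The normalized measure of the cap $\{y\in\Sph:y_1\ge\theta\}$ equals the normalized volume of the cone $\{ty:\ y\in\Sph,\ y_1\ge\theta,\ 0\le t\le1\}$. This cone lies in every convex set containing $0$ and the cap.
\begin{itemize}
\item If $\theta\le1/\sqrt2$, every $y$ in the cap satisfies $\norm{y-\theta e_1}^2=1-2\theta y_1+\theta^2\le1-\theta^2$, and $\norm{\theta e_1}=\theta\le\sqrt{1-\theta^2}$. Hence the cone lies in $B(\theta e_1,\sqrt{1-\theta^2})$, and the measure is at most $(1-\theta^2)^{d/2}\le e^{-d\theta^2/2}$.
\item If $\theta\ge1/\sqrt2$, every $y$ in the cap satisfies $\norm{y-e_1/(2\theta)}^2=1-y_1/\theta+1/(4\theta^2)\le1/(4\theta^2)$, and the origin is at distance $1/(2\theta)$ from $e_1/(2\theta)$. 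Hence the cone lies in $B(e_1/(2\theta),1/(2\theta))$, and the measure is at most $(2\theta)^{-d}$. This is at most $e^{-d\theta^2/2}$ because $\log(2\theta)-\theta^2/2$ is increasing on $(0,1)$ and positive at $1/\sqrt2$.
\end{itemize}
Your reduction to $w_1$ by rotation invariance, the union over the two signs, and the final comparison $e^{-d\theta^2/2}\le e^{-\theta^2(d-1)/2}$ are all fine.
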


By \citet[Lemma~2.2]{Ball1997}, each of the caps
$\{w:\inner{w}{\pm x}\ge\theta\norm x\}$ has probability at most
$e^{-d\theta^2/2}$.

\section{{The finite-difference estimate with exact values}}
\label{app:estimator}

This appendix analyzes one frame of \cref{alg:stencil} with exact values;
\cref{app:estimator-unified} adds the noise of \cref{def:oracle} and
proves \cref{prop:estimator,prop:unified-estimator}.

Throughout, $\beta>2$, $q=\lceil\beta\rceil-1\ge2$, $\gamma=\beta-q$,
$m=\lceil q/2\rceil$ and $t_j=j/m$, and $L_\beta$ bounds $[f]_{\beta,\op}$ (with
$\tau=0$) or $[f]_{\beta,\HS}$ (with $\tau=\tau_\beta$ from \eqref{eq:tau}).
The weights $a_1,\dots,a_m$ solve
\begin{equation}
\label{eq:stencil-moments}
\sum_{j=1}^m a_jt_j^{2i+1}=\mathbf 1\{i=0\},\qquad i=0,1,\dots,m-1 .
\end{equation}
The matrix $(t_j^{2i+1})$ is a Vandermonde matrix in the distinct numbers
$t_j^2$ times the diagonal matrix $\mathrm{diag}(t_j)$, so the solution
exists, is unique and depends only on $\beta$. The constants
\begin{gather}
\label{eq:A1}
S_1=\sum_j\abs{a_j},\qquad S_2=\sum_ja_j^2,\qquad A_1=\sum_j\abs{a_j}t_j,\\
\label{eq:stencil-constants}
c_i=\frac{1}{(q-i)!}\sum_j\abs{a_j}t_j^{\beta}\qquad(0\le i\le q)
\end{gather}
also depend only on $\beta$, and $A_1\le S_1$ since $t_j\le1$. The values of
$S_1,S_2,A_1$ for $m\le4$ are in \cref{tab:unified-constants}. We use the notation $U$,
$W$, $S_g$, $B_g$, $\mathcal A_s$ and $J_d$ of \cref{app:pizzetti}. For a
function $g$, a center $z$, a radius $h>0$ and $u\in\R^d$ let
\begin{equation}
\label{eq:directional}
D_{h,u}g(z)=\frac{1}{2h}\sum_{j=1}^ma_j\bigl(g(z+ht_ju)-g(z-ht_ju)\bigr).
\end{equation}
With exact values, one frame $(U_1,\dots,U_d)$ of \cref{alg:stencil}
returns $\sum_{i=1}^dD_{h,U_i}f(z)U_i$.

\subsection{Exactness and the mean}

\begin{alemma}[Exactness]
\label{lem:exact}
For every polynomial $\varphi$ on $\R$ of degree at most $2m$ and every
$h>0$,
$\frac1{2h}\sum_ja_j\bigl(\varphi(ht_j)-\varphi(-ht_j)\bigr)=\varphi'(0)$.
If $[f]_{\beta,\op}\le L_\beta$ and $\norm u=1$, then
$\abs{D_{h,u}f(z)-\inner{\nabla f(z)}{u}}\le c_0L_\beta h^{\beta-1}$.
\end{alemma}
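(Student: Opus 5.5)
The proof has two parts: exactness of the stencil on polynomials of degree at most $2m$, and a Taylor bound for the directional difference.

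\emph{Exactness.} Write $\varphi(s)=\sum_{k=0}^{2m}b_ks^k$. In $\varphi(ht_j)-\varphi(-ht_j)$ the even powers cancel, leaving $2\sum_{i=0}^{m-1}b_{2i+1}h^{2i+1}t_j^{2i+1}$. The degree $2m$ is even, so it contributes nothing. Multiplying by $a_j/(2h)$ and summing over $j$ gives $\sum_i b_{2i+1}h^{2i}\sum_ja_jt_j^{2i+1}$. By the moment conditions \eqref{eq:stencil-moments}, this reduces to $b_1=\varphi'(0)$.

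\emph{Error bound.} Set $\varphi(s)=f(z+su)$ with $\norm u=1$, and let $P(s)=T_qf(z;su)$ be its Taylor polynomial of degree $q$. Since $q\le 2m$, exactness applies to $P$, and $P'(0)=\inner{\nabla f(z)}{u}$. Hence
\[
D_{h,u}f(z)-\inner{\nabla f(z)}{u}=\frac1{2h}\sum_ja_j\bigl(R(ht_j)-R(-ht_j)\bigr),
\]
where $R=\varphi-P$. By \cref{lem:taylor}(a), $\abs{R(\pm ht_j)}\le L_\beta(ht_j)^\beta/q!$. Therefore the error is at most
\[
\frac1{2h}\sum_j\abs{a_j}\,\frac{2L_\beta h^\beta t_j^\beta}{q!}=c_0L_\beta h^{\beta-1},
\]
with $c_0=\frac1{q!}\sum_j\abs{a_j}t_j^\beta$ as in \eqref{eq:stencil-constants}.

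Neither step is hard. The only points to check are that $2m\ge q$ holds, so the Taylor polynomial is covered by exactness, and that \cref{lem:taylor}(a) is the operator-norm bound, which applies under either seminorm because $\norm\cdot_{\op}\le\norm\cdot_{\HS}$.
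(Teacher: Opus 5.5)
Your proof is correct and follows the paper's argument essentially step for step. The symmetric difference kills the even powers, and the moment conditions isolate $\varphi'(0)$. Applying exactness to the degree-$q$ Taylor polynomial, with $q\le 2m$, reduces the error to the remainders at $\pm ht_ju$, which \cref{lem:taylor}(a) bounds to give $c_0L_\beta h^{\beta-1}$.
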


\begin{proof}
The symmetric difference annihilates even powers of $s$ and maps
$s^{2i+1}$, $0\le i\le m-1$, to $h^{2i}\sum_ja_jt_j^{2i+1}=\mathbf 1\{i=0\}$.
For the second claim apply the first to $\varphi(s)=T_qf(z;su)$, which has
degree $q\le2m$ and $\varphi'(0)=\inner{\nabla f(z)}{u}$. By
\cref{lem:taylor}(a) the remainders at $\pm ht_ju$ are at most
$L_\beta(ht_j)^\beta/q!$, and $\frac1{2h}\sum_j\abs{a_j}\,2L_\beta(ht_j)^\beta/q!=c_0L_\beta h^{\beta-1}$.
\end{proof}

\begin{alemma}[The mean]
\label{lem:mean}
For a continuous $g:\R^d\to\R$ and $h>0$ define the \emph{surrogate}
$\mathcal S_hg$, and for $f\in C^1$ the vector $G_h(z)$, by
\begin{equation}
\label{eq:mean}
\mathcal S_hg(z)=\sum_{j=1}^ma_jt_j\,\E g(z+ht_jW),\qquad
G_h(z)=\nabla\mathcal S_hf(z)=\sum_{j=1}^ma_jt_j\,B_{\nabla f}(ht_j).
\end{equation}
Then $\mathcal S_hg$ is continuously differentiable and
$d\,\E[D_{h,U}g(z)\,U]=\nabla\mathcal S_hg(z)$. Consequently, for a
uniformly random orthonormal basis $(U_i)$,
$\E\sum_iD_{h,U_i}g(z)U_i=\nabla\mathcal S_hg(z)$. For $g=f$ this says
that the noiseless part of one frame has mean $G_h(z)$.
\end{alemma}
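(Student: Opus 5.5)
The plan is to reduce the directional stencil to a one-dimensional identity along the ray $s\mapsto g(z+su)$, average over $u$ with \cref{lem:ball-gradient} to identify the mean, and then sum over the frame.

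\emph{Step 1 (one direction).} Fix $h>0$ and $j$. Since $U$ is uniform on $\Sph$, $U$ and $-U$ have the same law, so
\[
\E\bigl[g(z-ht_jU)U\bigr]=-\E\bigl[g(z+ht_jU)U\bigr].
\]
Hence
\[
\E\bigl[(g(z+ht_jU)-g(z-ht_jU))U\bigr]=2\,\E\bigl[g(z+ht_jU)U\bigr].
\]
By \cref{lem:ball-gradient} with $s=ht_j>0$, the right-hand side equals $\frac{2ht_j}{d}\nabla\mathcal A_{ht_j}g(z)$, and $\mathcal A_{ht_j}g$ is $C^1$.

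Multiplying by $\frac{d\,a_j}{2h}$ and summing over $j$ gives
\[
d\,\E[D_{h,U}g(z)U]=\sum_ja_jt_j\nabla\mathcal A_{ht_j}g(z).
\]
The right-hand side is $\nabla\mathcal S_hg(z)$, because $\mathcal S_hg=\sum_ja_jt_j\mathcal A_{ht_j}g$ by definition, i.e. $\E g(z+ht_jW)=\mathcal A_{ht_j}g(z)$. A finite sum of $C^1$ functions is $C^1$, which gives the differentiability claim.

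\emph{Step 2 (frame).} Each $U_i$ of a uniformly random orthonormal basis is marginally uniform on $\Sph$, since the Haar measure on $O(d)$ is invariant. Linearity of expectation then gives
\[
\E\sum_iD_{h,U_i}g(z)U_i=d\,\E[D_{h,U}g(z)U]=\nabla\mathcal S_hg(z).
\]
No independence between the $U_i$ is needed.

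\emph{Step 3 ($g=f\in C^1$).} The last claim of \cref{lem:ball-gradient} gives $\nabla\mathcal A_{ht_j}f(z)=B_{\nabla f}(ht_j)$, so $\nabla\mathcal S_hf(z)=G_h(z)$. The noiseless frame output is exactly $\sum_iD_{h,U_i}f(z)U_i$, so its mean is $G_h(z)$.

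The only delicate point is the symmetrization in Step 1. It must use the symmetry of $U$ rather than of $g$, and it must cite the ball-average gradient identity with the correct factor $d/s$, which yields the weights $a_jt_j$ in the definition of the surrogate. Integrability is automatic because $g$ is continuous on a compact set.
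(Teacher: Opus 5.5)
Your proposal is correct and follows essentially the same route as the paper: symmetrize with $U\mapsto -U$, apply the ball-gradient identity $\nabla\mathcal A_sg(z)=\frac ds\E[g(z+sU)U]$ at $s=ht_j$, and use that each column of a Haar frame is uniform on $\Sph$; its last claim then gives the $G_h$ expression for $g=f$. You also spell out the $C^1$ claim, that $\mathcal S_hg$ is a finite combination of $C^1$ ball averages, which the paper leaves implicit in the cited lemma.
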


\begin{proof}
By the symmetry $U\mapsto-U$ and \cref{lem:ball-gradient},
\[
d\,\E[D_{h,U}g(z)U]=\frac dh\sum_ja_j\E[g(z+ht_jU)U]
=\sum_ja_jt_j\nabla\mathcal A_{ht_j}g(z)=\nabla\mathcal S_hg(z),
\]
and for $g=f\in C^1$ the last claim of \cref{lem:ball-gradient} gives the
second expression for $G_h$. Each column of a uniformly random orthonormal
basis is uniform on $\Sph$.
\end{proof}

Since $\sum_ja_jt_j=1$ by \eqref{eq:stencil-moments}, $G_h$ is an average
of $\nabla f$ over balls with signed weights of total mass one.

\subsection{Bias}

\begin{alemma}[Bias]
\label{lem:bias}
For all $z$ and $h>0$:
\begin{enumerate}[label=(\alph*)]
\item if $[f]_{\beta,\op}\le L_\beta$, then $\norm{G_h(z)-\nabla f(z)}\le c_1L_\beta h^{\beta-1}$;
\item if $[f]_{\beta,\HS}\le L_\beta$ and $q=2k+1$ is odd, then
$\norm{G_h(z)-\nabla f(z)}\le\frac{q!\,c_0}{2^kk!}\,L_\beta\,d^{-k/2}h^{\beta-1}$;
\item if $[f]_{\beta,\HS}\le L_\beta$ and $q=2k$ is even, then
$\norm{G_h(z)-\nabla f(z)}\le C_\beta\,L_\beta\,d^{-(\beta-1)/4}h^{\beta-1}$.
\end{enumerate}
Hence $\norm{G_h(z)-\nabla f(z)}\le\bfC L_\beta d^{-\tau/2}h^{\beta-1}$ in both
geometries, where $\bfC$ depends only on $\beta$, and $\bfC=c_1$ in the
operator case.
\end{alemma}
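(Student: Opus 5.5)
The plan is to write $G_h(z)=\sum_ja_jt_jB_{\nabla f}(ht_j)$ (\cref{lem:mean}) and expand $\nabla f$ around $z$. Ball averaging kills the odd-degree parts. The moment identities \eqref{eq:stencil-moments}, $\sum_ja_jt_j\,t_j^{2i}=\mathbf 1\{i=0\}$ for $i\le m-1$, then cancel every even-degree term $s^{2i}$ with $1\le i\le m-1$ and leave $\nabla f(z)$ from $i=0$. So only a remainder survives, and the three parts differ only in how that remainder is estimated.

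\emph{Part (a).} Write $\nabla f(z+v)=T_{q-1}\nabla f(z;v)+R(v)$. Its polynomial part has even-degree terms of degree $2i\le q-1$, and $i\le\lfloor (q-1)/2\rfloor\le m-1$, so after the ball average and the weighted sum over $j$ it contributes exactly $\nabla f(z)$. By \cref{lem:taylor}(c), $\norm{R(v)}\le L_\beta\norm v^{\beta-1}/(q-1)!$. Hence
\[
\norm{G_h(z)-\nabla f(z)}\le\sum_j\abs{a_j}t_j(ht_j)^{\beta-1}L_\beta/(q-1)!,
\]
which equals $c_1L_\beta h^{\beta-1}$.

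\emph{Part (b), odd $q=2k+1$.} Here $m=k+1$. Apply \cref{lem:pizzetti} (ball form, order $k$) to the vector function $g=\nabla f\in C^{2k}$. The polynomial terms $s^{2j}\Delta^j\nabla f(z)$ with $j\le k-1$ cancel in the weighted sum, except $j=0$. In the remainder $d\int_0^1r^{d-1}(J_d^kS_{\Delta^k\nabla f})(sr)\,dr$, I will subtract the constant $\Delta^k\nabla f(z)$. Its contribution is a multiple of $s^{2k}$, and it cancels because $i=k\le m-1$.

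Now $\Delta^k\nabla f$ is a $k$-fold trace of $D^qf$, i.e.\ a contraction with $I^{\otimes k}$. By \cref{lem:tensor-facts}(b),
\[
\norm{\Delta^k\nabla f(x)-\Delta^k\nabla f(z)}\le d^{k/2}L_\beta\norm{x-z}^\gamma .
\]
Positivity of $J_d$, together with $J_d^k(s^\gamma)\le s^{2k+\gamma}/(2^kk!\prod_i(d+2i))\lesssim s^{\beta-1}d^{-k}$, gives the factor $d^{k/2-k}=d^{-k/2}$. The constant $q!c_0/(2^kk!)$ comes from collecting the stencil weights.

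\emph{Part (c), even $q=2k$.} This is the main obstacle. Here $\nabla f$ is only $C^{2k-1,\gamma}$, so Pizzetti is available only to order $k-1$. Its remainder involves $S_\phi(s)-\phi(z)$ for the vector field $\phi=\Delta^{k-1}\nabla f$, which is $C^{1,\gamma}$ with $D\phi$ a $(k-1)$-fold trace of $D^qf$. That trace is $\gamma$-H\"older with HS constant $d^{(k-1)/2}L_\beta$, and the factor $J_d^{k-1}$ contributes $s^{2k-2}d^{-(k-1)}$.

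To bound $S_\phi(s)-\phi(z)$ I will mollify at scale $\rho$, setting $\phi_\rho=\mathcal A_\rho\phi$:
\begin{itemize}
\item The replacement errors $\phi-\phi_\rho$ (at $z$ and on the sphere) are controlled by a first-order Taylor/symmetry argument. They are of order $\rho^{1+\gamma}d^{(k-1)/2}L_\beta$, with a dimension gain from averaging the H\"older increment of $D\phi$ against a random direction.
\item For the smooth part, Pizzetti of order one gives $S_{\phi_\rho}(s)-\phi_\rho(z)\lesssim\frac{s^2}{d}\sup\norm{\Delta\phi_\rho}$. \Cref{lem:ball-gradient} expresses $\Delta\phi_\rho$ as $\frac d\rho\E[(D\phi(x+\rho U)-D\phi(x))U]$. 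Contracting with a random $U$ gains $d^{-1/2}$ in HS norm, so this yields $s^2\rho^{\gamma-1}d^{(k-1)/2-1/2}L_\beta$.
\end{itemize}
Balancing at $\rho=sd^{-1/4}$ (the ``one more difference at radius $hd^{-1/4}$'') gives $s^{1+\gamma}d^{(k-1)/2}d^{-(1+\gamma)/4}L_\beta$. Multiplying by the Pizzetti factor $s^{2k-2}d^{-(k-1)}$ yields $L_\beta h^{\beta-1}d^{-(2k-1+\gamma)/4}=L_\beta h^{\beta-1}d^{-(\beta-1)/4}$. The delicate points are getting the $d^{-1/2}$ gains from random contractions rigorously, via second moments of $\inner{A}{U}$-type contractions, and checking that the constant term cancels through the moment identities. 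The unified bound with $\bfC$ then follows from $\tau=0$, $k$, and $(\beta-1)/2$ in the three cases.
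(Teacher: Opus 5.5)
Your parts (a) and (b) are the paper's argument: the Taylor expansion of $\nabla f$ with the moment identities for (a); for (b), Pizzetti to order $k$ on $\nabla f$, subtraction of $\Delta^k\nabla f(z)$, the trace bound and positivity of $J_d$. In (c), your reduction is also sound: you reduce to a bound $\norm{S_\phi(u)-\phi(z)}\lesssim\Lambda u^{1+\gamma}d^{-(1+\gamma)/4}$ with $\phi=\Delta^{k-1}\nabla f$ and $\Lambda=d^{(k-1)/2}L_\beta$. Your replacement error $\norm{\phi-\phi_\rho}\lesssim\Lambda\rho^{1+\gamma}$ is also correct, and it needs no dimension gain, only $\norm\cdot_{\op}\le\norm\cdot_{\HS}$.

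\textbf{The gap is the smooth part.} You need $\sup_x\norm{\Delta\phi_\rho(x)}\lesssim\sqrt d\,\Lambda\rho^{\gamma-1}$, which you justify by ``contracting with a random $U$''. But in $\frac d\rho\E[(D\phi(x+\rho U)-D\phi(x))U]$ the matrix depends on $U$, so $\E\norm{AU}^2=\norm A_{\HS}^2/d$ is not available. For $\gamma<1$ the bound is in fact false.

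Take $k=1$ (so $\phi=\nabla f$ and $\Lambda=L_\beta$), set $\omega=d^{1/4}/\rho$, and let $f(y)=a_0\cos(\omega y_1)\norm{y'}^2/2$ with $y'=(y_2,\dots,y_d)$, cut off smoothly outside $B(3\rho)$.
\begin{itemize}
\item Since $\omega\rho=d^{1/4}$, every part of $\nabla^2f$ has Hilbert--Schmidt norm $O(a_0\sqrt d)$ and directional derivatives $O(a_0\sqrt d\,\omega)$. This includes the entry $-a_0\omega^2\cos(\omega y_1)\norm{y'}^2/2$. Hence $[f]_{\beta,\HS}\lesssim a_0\sqrt d\,\omega^\gamma$.
\item Near the origin, $\Delta f=(d-1)a_0\cos(\omega y_1)-a_0\omega^2\cos(\omega y_1)\norm{y'}^2/2$, and the second term has gradient $O(a_0\sqrt d\,\omega)$. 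A ball of radius $\rho$ moves $y_1$ only by $O(\rho/\sqrt d)\ll1/\omega$.
\item So at $x=(\pi/(2\omega),0,\dots,0)$ you get $\norm{\Delta\phi_\rho(x)}=\norm{\nabla\mathcal A_\rho\Delta f(x)}\gtrsim a_0d\,\omega$. This exceeds $\sqrt d\,[f]_{\beta,\HS}\rho^{\gamma-1}$ by $(\omega\rho)^{1-\gamma}=d^{(1-\gamma)/4}$.
\end{itemize}
At your choice $\rho=sd^{-1/4}$ the smooth part is off by exactly this factor, so the balancing does not yield $d^{-(\beta-1)/4}$. For $\gamma=1$ your bound does hold, because then $\abs{\partial_e\Delta^kf}\le d^{k/2}L_\beta$ pointwise.

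\textbf{The missing idea is the paper's: never differentiate a ball average.} The paper applies \cref{lem:pizzetti} to order $k$ to the scalar $f$. Then $E_h=\mathcal S_hf-f$ is a combination of positive functionals of mass $O(h^{2k}d^{-k})$ applied to translates of $\Delta^kf$. Since $\Delta^kf$ is $\gamma$-H\"older with constant $d^{k/2}L_\beta$, $E_h$ is $\gamma$-H\"older with constant $C_1L_\beta d^{-k/2}h^{2k}$. Here the dimension gain is rigorous, because it enters only through a scalar trace.

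Next, $\inner e{G_h(z)-\nabla f(z)}=\partial_eE_h(z)$ is replaced by the one-dimensional stencil $D_{h',e}E_h(z)$ at $h'=hd^{-1/4}$. By \cref{lem:exact} this costs $c_0(1+A_1)L_\beta h'^{\beta-1}$, using only the operator seminorms of $f$ and $\mathcal S_hf$. The difference quotient itself is at most $S_1C_1L_\beta d^{-k/2}h^{2k}h'^{\gamma-1}$. Both terms are $O(L_\beta d^{-(\beta-1)/4}h^{\beta-1})$.

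A difference quotient along a fixed $e$ at scale $h'$ costs $1/h'$, whereas the gradient of a ball average of radius $\rho$ costs about $\sqrt d/\rho$; that is what your mollification loses. You can repair your reduction with the same device applied to $S_\psi(u)-\psi$, where $\psi=\Delta^{k-1}f$, since $S_\phi(u)-\phi(z)=\nabla_z(S_\psi(u)-\psi)(z)$.
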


Part (a) is the argument of \citet[Lemma~6]{Akhavan2024}, written for the
stencil.

\begin{proof}
In (a) and (b) we expand the ball averages in \eqref{eq:mean} around $z$.
Terms proportional to $s^{2i}$ with $1\le i\le m-1$ cancel at $s=ht_j$ after
summation with the weights $a_jt_j$, because
$\sum_ja_jt_j(ht_j)^{2i}=h^{2i}\sum_ja_jt_j^{2i+1}=0$, and the constant term
gives $\sum_ja_jt_j\nabla f(z)=\nabla f(z)$.

\emph{(a) Operator norm.} By \cref{lem:taylor}(c),
$\nabla f(z+v)=T_{q-1}\nabla f(z;v)+\rho(v)$ with
$\norm{\rho(v)}\le L_\beta\norm v^{\beta-1}/(q-1)!$. The ball average of the term
of degree $r$ of $T_{q-1}\nabla f(z;sW)$ vanishes for odd $r$ and is
proportional to $s^r$ for even $r$; since $r\le q-1\le2m-1$, only $r=0$
survives. As $\E\norm W^{\beta-1}\le1$, the remainder contributes at most
$\sum_j\abs{a_j}t_jL_\beta(ht_j)^{\beta-1}/(q-1)!=c_1L_\beta h^{\beta-1}$.

\emph{(b) Hilbert--Schmidt norm, $q$ odd.} Let $g=\nabla f\in C^{2k}$ and
$H=\Delta^kg$. For a unit vector $e$,
$\inner{e}{H(x)-H(y)}=\inner{D^qf(x)-D^qf(y)}{e\otimes I^{\otimes k}}_{\HS}$,
so \cref{lem:tensor-facts}(b) gives
\begin{equation}
\label{eq:trace-holder}
\norm{H(x)-H(y)}\le d^{k/2}L_\beta\norm{x-y}^{\gamma}.
\end{equation}
By \eqref{eq:pizzetti-sphere} with $k$ steps, $S_g(s)$ is a polynomial in
$s^2$ of degree $k$ with constant term $\nabla f(z)$, plus
$J_d^k[S_H-H(z)](s)$. Ball averaging keeps the polynomial a polynomial in
$s^2$, and since $k=m-1$, the weights leave only $\nabla f(z)$. By
positivity of $J_d$, \eqref{eq:trace-holder} and
$(J_d^k1)(s)\le s^{2k}/(2^kk!\,d^k)$,
\[
\norm{J_d^k[S_H-H(z)](s)}\le d^{k/2}L_\beta s^\gamma(J_d^k1)(s)\le
\frac{L_\beta d^{-k/2}s^{\beta-1}}{2^kk!},
\]
and ball averaging preserves this bound. Summation over $j$ with $s=ht_j$
gives (b).

\emph{(c) Hilbert--Schmidt norm, $q$ even.} Here $m=k$, and by
\cref{lem:tensor-facts}(b) the scalar function $\Delta^kf$ is
$\gamma$-H\"older with constant $d^{k/2}L_\beta$. Let $E_h=\mathcal S_hf-f$. By
\eqref{eq:pizzetti-ball} with $k$ steps, the terms with $1\le i\le k-1$ cancel
and the term $i=0$ gives $f(z)$, so
$E_h(z)=\sum_ja_jt_j\,\mathcal R(ht_j;z)$, where $\mathcal R(s;z)$ is the
remainder of \eqref{eq:pizzetti-ball}. For two centers $z,z'$,
$\mathcal R(s;z)-\mathcal R(s;z')$ is a positive linear functional, of
total mass $s^{2k}/(2^kk!\prod_{i=1}^k(d+2i))$, applied to
$\Delta^kf(z+\cdot)-\Delta^kf(z'+\cdot)$. Hence
\begin{equation}
\label{eq:Eh-holder}
\abs{E_h(z)-E_h(z')}\le C_1L_\beta d^{-k/2}h^{2k}\norm{z-z'}^\gamma ,\qquad
C_1=\frac1{2^kk!}\sum_j\abs{a_j}t_j^{2k+1}.
\end{equation}
Fix a unit vector $e$ and let $h'=hd^{-1/4}$. Since
$[f]_{\beta,\op}\le L_\beta$ and $[\mathcal S_hf]_{\beta,\op}\le A_1L_\beta$,
\cref{lem:exact} along the line $z+se$ gives
$\abs{D_{h',e}v(z)-\partial_ev(z)}\le c_0L_\beta h'^{\beta-1}$ for $v=f$ and
$\le c_0A_1L_\beta h'^{\beta-1}$ for $v=\mathcal S_hf$. By \eqref{eq:Eh-holder},
\[
\abs{D_{h',e}E_h(z)}\le\frac1{2h'}\sum_j\abs{a_j}\abs{E_h(z+h't_je)-E_h(z-h't_je)}
\le S_1C_1L_\beta d^{-k/2}h^{2k}h'^{\gamma-1}.
\]
Since $D_{h',e}$ is linear and $\partial_e\mathcal S_hf=\inner{e}{G_h}$,
\[
\abs{\inner{e}{G_h(z)-\nabla f(z)}}
\le c_0(1+A_1)L_\beta h'^{\beta-1}+S_1C_1L_\beta d^{-k/2}h^{2k}h'^{\gamma-1}
=\bigl(c_0(1+A_1)+S_1C_1\bigr)L_\beta d^{-(\beta-1)/4}h^{\beta-1},
\]
because $2k+\gamma-1=\beta-1$. Take the supremum over $e$.

Finally, $\tau=0$ in (a), $\tau_\beta=k$ in (b) and
$\tau_\beta=(\beta-1)/2$ in (c).
\end{proof}

\subsection{Second moment}

Fix $z$ and $h$ and let $e_f(u)=D_{h,u}f(z)-\inner{\nabla f(z)}{u}$ for
$u\in\R^d$, an odd $C^q$ function. For a uniformly random orthonormal basis
$(U_i)$ let $Z=\sum_ie_f(U_i)U_i$ be the noiseless error of the estimate. By
orthonormality, \cref{lem:mean} and $d\,\E[UU^\top]=I$,
\begin{equation}
\label{eq:frame-moments}
\E\norm{Z}^2=d\,\E e_f(U)^2,\qquad \E Z=d\,\E[e_f(U)U]=G_h(z)-\nabla f(z).
\end{equation}

\begin{alemma}[Derivatives of the stencil error]
\label{lem:diff-cancel}
For $\norm u=1$: if $[f]_{\beta,\op}\le L_\beta$, then
$\norm{\nabla e_f(u)}\le c_1L_\beta h^{\beta-1}$; if $[f]_{\beta,\HS}\le L_\beta$, then
$\norm{D^je_f(u)}_{\HS}\le c_jL_\beta h^{\beta-1}$ for $1\le j\le q$.
\end{alemma}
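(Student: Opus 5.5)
The plan is to differentiate $e_f$ in $u$ directly. Each derivative splits into a Taylor polynomial part, which vanishes identically by the exactness of the stencil (\cref{lem:exact}), and a remainder part, which is controlled by \cref{lem:taylor}. Throughout, $z$ and $h$ are fixed and $u$ ranges over all of $\R^d$; we only evaluate at $\norm u=1$ at the end.

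\emph{Step 1: the polynomial part vanishes identically in $u$.} Let $P(v)=T_qf(z;v)$ and $R(v)=f(z+v)-P(v)$. For any $u\in\R^d$ (not necessarily unit), $\varphi(s)=P(su)$ is a polynomial in $s$ of degree at most $q\le2m$ with $\varphi'(0)=\inner{\nabla f(z)}{u}$. The first claim of \cref{lem:exact} therefore gives
\[
\frac1{2h}\sum_ja_j\bigl(P(ht_ju)-P(-ht_ju)\bigr)=\inner{\nabla f(z)}{u}\qquad\text{for all }u .
\]
Hence
\[
e_f(u)=\frac1{2h}\sum_ja_j\bigl(R(ht_ju)-R(-ht_ju)\bigr)
\]
holds identically on $\R^d$. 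Since both sides are $C^q$ functions of $u$, the identity can be differentiated $j$ times for $1\le j\le q$. Identifying the correct object to differentiate is the only delicate point, and I expect this step to be the main obstacle. One must use the exactness identity as a polynomial identity in $u$ on all of $\R^d$, not merely on the sphere. This is why the homogeneity argument above, applied to arbitrary $u$, is needed before any derivative is taken.

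\emph{Step 2: differentiate the remainders.} By the chain rule,
\[
D^j_u\bigl[R(\pm ht_ju)\bigr]=(\pm ht_j)^j\bigl(D^jf(z\pm ht_ju)-D^jP(\pm ht_ju)\bigr).
\]
Since $P$ is the degree-$q$ Taylor polynomial, $D^jP(v)=T_{q-j}D^jf(z;v)$. So each bracket is exactly the remainder
\[
D^jf(z+v)-T_{q-j}D^jf(z;v)\qquad\text{at }v=\pm ht_ju .
\]
By \cref{lem:taylor}(b), at $\norm u=1$ this remainder has Hilbert--Schmidt norm at most $L_\beta(ht_j)^{\beta-j}/(q-j)!$. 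In the operator case with $j=1$, \cref{lem:taylor}(c) gives the same bound for the Euclidean norm of the gradient remainder.

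\emph{Step 3: sum.} The triangle inequality gives
\[
\norm{D^je_f(u)}_{\HS}\le\frac1{2h}\sum_j\abs{a_j}(ht_j)^j\cdot\frac{2L_\beta(ht_j)^{\beta-j}}{(q-j)!}
=\frac{L_\beta h^{\beta-1}}{(q-j)!}\sum_j\abs{a_j}t_j^\beta=c_jL_\beta h^{\beta-1},
\]
using the definition \eqref{eq:stencil-constants} of $c_j$. The same computation with $j=1$ and $\op$ in place of $\HS$ yields $\norm{\nabla e_f(u)}\le c_1L_\beta h^{\beta-1}$ under $[f]_{\beta,\op}\le L_\beta$.
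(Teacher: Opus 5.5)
Your proof is correct and is essentially the paper's argument. The paper differentiates the stencil $j$ times first and then checks term by term, via \eqref{eq:stencil-moments}, that the Taylor terms of degree $q-j$ of $D^jf(z\pm v)$ cancel (with the term $j+r=1$ cancelling $\nabla f(z)$); you remove $T_qf(z;\cdot)$ once by \cref{lem:exact}, used as an identity for all $u\in\R^d$, and then differentiate the remainder, which gives the same cancellation and the same bound with $c_j$ from \cref{lem:taylor}(b),(c). Only fix the notation: rename the stencil summation index (the paper uses $l$), since in Steps~2 and~3 it collides with the derivative order $j$.
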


\begin{proof}
Differentiating \eqref{eq:directional} $j$ times in $u$,
\[
D^je_f(u)=\frac{h^{j-1}}2\sum_la_lt_l^j\bigl(D^jf(z+ht_lu)-(-1)^jD^jf(z-ht_lu)\bigr)
-\mathbf 1\{j=1\}\nabla f(z).
\]
Replace $D^jf(z\pm v)$ by its Taylor polynomial of degree $q-j$. Its term
of degree $r$ survives the signed sum only if $j+r$ is odd, with weight
$h^{j+r-1}\sum_la_lt_l^{j+r}$. This weight vanishes for $3\le j+r\le q$ by
\eqref{eq:stencil-moments}, and the term $j+r=1$ cancels $\nabla f(z)$. The
two Taylor remainders are bounded by \cref{lem:taylor}(b),(c).
\end{proof}

\begin{alemma}[Means of odd derivatives]
\label{lem:tensor-means}
Let $[f]_{\beta,\HS}\le L_\beta$, let $3\le j<q$ be odd and $k=\lfloor(q-j)/2\rfloor$.
Then $\norm{\E D^je_f(U)}_{\HS}\le C_\beta L_\beta h^{\beta-1}d^{-k/2}$.
\end{alemma}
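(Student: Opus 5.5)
The plan is to compute $\E D^je_f(U)$ explicitly as a signed combination of spherical averages of the tensor field $D^jf$. Then we expand those averages by the finite Pizzetti formula (\cref{lem:pizzetti}) and check that the stencil moments \eqref{eq:stencil-moments} kill every polynomial term. The only survivor is a Pizzetti remainder involving $k$ Laplacians, which carries the factor $d^{-k}$ from $J_d^k$ against a factor $d^{k/2}$ from the trace.

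\emph{Step 1: the mean as spherical averages.} By the formula in the proof of \cref{lem:diff-cancel}, for odd $j\ge3$ the indicator term is absent. Since $(-1)^j=-1$, we get
\[
D^je_f(u)=\frac{h^{j-1}}2\sum_la_lt_l^j\bigl(D^jf(z+ht_lu)+D^jf(z-ht_lu)\bigr).
\]
By the symmetry $U\mapsto-U$ this gives $\E D^je_f(U)=h^{j-1}\sum_la_lt_l^jS_g(ht_l)$ with $g=D^jf\in C^{q-j}$, a tensor-valued function. Since $2k\le q-j$, \cref{lem:pizzetti} applies with $k$ steps when $k\ge1$; for $k=0$ we simply write $S_g(s)=g(z)+(S_g(s)-g(z))$. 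Rewriting the remainder, we obtain
\[
S_g(s)=\sum_{i=0}^{k}\frac{s^{2i}\Delta^ig(z)}{2^ii!\prod_{i'<i}(d+2i')}+J_d^k\bigl[S_{\Delta^kg}-\Delta^kg(z)\bigr](s).
\]

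\emph{Step 2: cancellation of the polynomial part.} The term $s^{2i}$ contributes $h^{j-1+2i}\sum_la_lt_l^{j+2i}$. Here $j+2i$ is odd with $3\le j+2i\le j+2k\le q\le2m$, hence $j+2i\le2m-1$. By \eqref{eq:stencil-moments} (with index $(j+2i-1)/2\in[1,m-1]$) this weight vanishes.

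\emph{Step 3: the remainder.} The entries of $\Delta^kg$ are contractions $\inner{D^{j+2k}f}{I^{\otimes k}\otimes\cdot}_{\HS}$. By \cref{lem:tensor-facts}(b), for any $(j+2k)$-tensor $A$ the contracted tensor has HS norm at most $d^{k/2}\norm A_{\HS}$. We now split on the parity of $q-j$.
\begin{itemize}
\item If $q-j$ is even, then $j+2k=q$. Hence $\norm{\Delta^kg(x)-\Delta^kg(z)}_{\HS}\le d^{k/2}L_\beta\norm{x-z}^\gamma$.
\item If $q-j$ is odd, then $j+2k=q-1$. Contracting the $j=q-1$ case of \cref{lem:taylor}(b) with $I^{\otimes k}$ gives
\[
\norm{\Delta^kg(z+v)-\Delta^kg(z)-D(\Delta^kg)(z)[v]}_{\HS}\le d^{k/2}L_\beta\norm v^{1+\gamma}.
\]
The linear term has zero spherical average. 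So in both cases $\norm{S_{\Delta^kg}(u)-\Delta^kg(z)}_{\HS}\le d^{k/2}L_\beta u^{\beta-j-2k}$.
\end{itemize}
By positivity of $J_d$ and $(J_d^k1)(s)\le s^{2k}/(2^kk!d^k)$, the remainder at $s=ht_l$ is at most $L_\beta d^{-k/2}(ht_l)^{\beta-j}/(2^kk!)$. Multiplying by $h^{j-1}\abs{a_l}t_l^j$ and summing over $l$ gives $C_\beta L_\beta h^{\beta-1}d^{-k/2}$.

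The main point to verify carefully is Step 2's index range, namely that every surviving power $j+2i$ is at most $2m-1$ so that the moment conditions apply. The other point is the odd-$(q-j)$ case, where the extra first-order Taylor term must be removed by the symmetry of $U$.
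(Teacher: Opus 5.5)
Your proposal is correct and follows the paper's proof essentially step for step: it writes the mean as $h^{j-1}\sum_la_lt_l^jS_{D^jf}(ht_l)$, expands with $k$ Pizzetti steps, kills the even-power terms with the odd stencil moments, removes the linear Taylor term by spherical symmetry when $q-j$ is odd, and bounds the remainder using positivity of $J_d$ and the $d^{k/2}$ trace estimate. The only difference is cosmetic. You move the constant term $\Delta^kD^jf(z)\,(J_d^k1)(s)$ into the polynomial sum as the $i=k$ term, whereas the paper subtracts it inside the remainder; both kill it with the same moment condition $\sum_la_lt_l^{j+2k}=0$.
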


\begin{proof}
For odd $j\ge3$ the formula in the proof of \cref{lem:diff-cancel} gives
$\E D^je_f(U)=h^{j-1}\sum_la_lt_l^jS_{D^jf}(ht_l)$. If $k\ge1$, expand
$S_{D^jf}$ by \eqref{eq:pizzetti-sphere} with $k$ steps; this uses
$2k\le q-j$ derivatives of $D^jf$. The polynomial terms are proportional to
$s^{2r}$, $r\le k-1$, and receive the weights
$h^{j+2r-1}\sum_la_lt_l^{j+2r}$, which vanish because $j+2r$ is odd and
$3\le j+2r\le q$. In the remainder (for $k=0$, in $S_{D^jf}$ itself), the
tensor $\Delta^kD^jf$ is a $k$-fold trace of $D^{j+2k}f$, where
$q-j-2k\in\{0,1\}$. Subtract from it its Taylor polynomial at $z$ of degree
$q-j-2k$. A linear term has zero spherical mean, and the constant term
$\Delta^kD^jf(z)$ is multiplied by $(J_d^k1)(s)\propto s^{2k}$ and receives
the weight $\sum_la_lt_l^{j+2k}=0$, because $j+2k\in\{q-1,q\}$ is odd and
at least $3$. By \cref{lem:taylor}(b) and \cref{lem:tensor-facts}(b), the
rest has Hilbert--Schmidt norm at most $d^{k/2}L_\beta\norm{v}^{\beta-j-2k}$ at
$z+v$. Positivity of $J_d$ bounds the remaining term by
\[
d^{k/2}L_\beta s^{\beta-j-2k}(J_d^k1)(s)\le L_\beta s^{\beta-j}d^{-k/2}/(2^kk!),
\]
and $s=ht_l$ gives the claim.
\end{proof}

\begin{aproposition}[Noiseless error]
\label{prop:noiseless}
There is a constant $C_Z$, depending only on $\beta$, with $C_Z=2c_1^2$
in the operator case, such that $\E\norm Z^2\le C_ZL_\beta^2d^{-\tau}h^{2\beta-2}$.
\end{aproposition}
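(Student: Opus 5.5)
We start from \eqref{eq:frame-moments}: $\E\norm Z^2=d\,\E e_f(U)^2$. We split $e_f(U)$ into its spherical mean and its fluctuation. Since $e_f$ is odd, $\E e_f(U)=0$. So $\E e_f(U)^2$ equals the variance of $e_f(U)$ on the sphere, and we bound it with the Poincaré inequality \eqref{eq:poincare}.

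\emph{Operator case.} By \eqref{eq:poincare} applied to $\phi=e_f$,
\[
d\,\E e_f(U)^2\le\frac{d}{d-1}\,\E\norm{\nabla e_f(U)}^2 .
\]
\Cref{lem:diff-cancel} gives $\norm{\nabla e_f(u)}\le c_1L_\beta h^{\beta-1}$ on the sphere. Using $d/(d-1)\le2$ for $d\ge2$, we get $\E\norm Z^2\le2c_1^2L_\beta^2h^{2\beta-2}$. This is the claim with $C_Z=2c_1^2$ and $\tau=0$.

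\emph{Hilbert--Schmidt case.} Here we must gain the factor $d^{-\tau}$. We do this by iterating Poincaré on the successive derivatives, each step gaining a factor of order $1/d$. Write $D^je_f(U)=\E D^je_f(U)+(\text{fluctuation})$. For a tensor-valued function, Poincaré gives
\[
\E\norm{D^je_f(U)-\E D^je_f(U)}_{\HS}^2\le\frac1{d-1}\,\E\norm{D^{j+1}e_f(U)}_{\HS}^2 ,
\]
since the tangential gradient is dominated by the full gradient. Combining the mean and the fluctuation,
\[
\E\norm{D^je_f(U)}_{\HS}^2\le\norm{\E D^je_f(U)}_{\HS}^2+\frac{1}{d-1}\,\E\norm{D^{j+1}e_f(U)}_{\HS}^2 .
\]
We run this recursion from $j=1$ upward. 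Parity fixes the means:
\begin{itemize}
\item For even $j$, $D^je_f$ is even-symmetric in the sense that $D^je_f(-u)=-D^je_f(u)$. So its spherical mean vanishes.
\item For odd $j\ge3$, \cref{lem:tensor-means} bounds the mean by $C_\beta L_\beta h^{\beta-1}d^{-\lfloor(q-j)/2\rfloor/2}$.
\item For $j=1$, the mean is $\E\nabla e_f(U)$. This relates to $d\,\E[e_f(U)U]$ through integration by parts on the sphere, and hence to $G_h-\nabla f$, which \cref{lem:bias} bounds by $d^{-\tau/2}h^{\beta-1}$. Alternatively, we handle $j=0$ directly: $\E[e_f U]$ is the bias divided by $d$.
\end{itemize}
The recursion terminates at $j=q$ using \cref{lem:diff-cancel}, which bounds $\norm{D^qe_f}_{\HS}\le c_qL_\beta h^{\beta-1}$.

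Unrolling, $d\,\E e_f^2$ is at most a sum of two kinds of terms. The first is $\norm{\E D^je_f}_{\HS}^2\,d^{-(j-1)}$ for odd $j$, which is $\lesssim L_\beta^2h^{2\beta-2}d^{-(j-1)-\lfloor(q-j)/2\rfloor}$. The second is the terminal term, $\lesssim L_\beta^2h^{2\beta-2}d^{-(q-1)}$. Since $(j-1)+\lfloor(q-j)/2\rfloor\ge\lfloor(q-1)/2\rfloor$, every such term is at most $d^{-\lfloor (q-1)/2\rfloor}$. For odd $q=2k+1$ this equals $d^{-k}=d^{-\tau}$, as required.

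\emph{Main obstacle: even $q=2k$.} Here $\tau=(\beta-1)/2$ is not an integer, and the integer powers above give only $d^{-(k-1)}$ or thereabouts. We expect this step to need the ridge-type argument used in \cref{lem:bias}(c), namely a comparison at the intermediate scale $h'=hd^{-1/4}$. The plan is to split $e_f$ into a polynomial part with exactly computable spherical moments and a Hölder remainder of order $\gamma$. We then interpolate between the bound $d^{-(k-1)}h^{2\beta-2}$ obtained from the recursion and the finer control of the top trace $\Delta^kf$, which is $\gamma$-Hölder with constant $d^{k/2}L_\beta$, to reach $d^{-\tau}$. Verifying that the exponents combine to exactly $(\beta-1)/2$ is the delicate part. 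If interpolation fails, we would fall back on computing $\E e_f(U)^2$ via the Funk–Hecke/Pizzetti expansion of $e_f(u)e_f(u)$ as a degree-$2q$ form.
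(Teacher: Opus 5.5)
Your operator-norm argument is the paper's. The Hilbert--Schmidt part, however, has a genuine gap at the $j=1$ term, and this term alone creates your ``even-$q$ obstacle.''

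\textbf{Where the gap is.} Unrolling from $e_f$ itself leaves the term $\frac{d}{d-1}\norm{\mathsf a_0}^2$ with $\mathsf a_0=\E\nabla e_f(U)$, and you have no valid bound on it. Integration by parts on the sphere gives $\E\nabla_{\mathbb S}e_f(U)=(d-1)\E[e_f(U)U]$. But $\E\nabla e_f(U)$ also contains the radial part $\E[\inner{\nabla e_f(U)}{U}U]$. So $\mathsf a_0=\sum_la_lt_lS_{\nabla f}(ht_l)-\nabla f(z)$ is a \emph{spherical} weighted average, not $G_h(z)-\nabla f(z)$, and \cref{lem:bias} does not apply to it.

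What your unrolling actually uses at $j=1$ is a \cref{lem:tensor-means}-type bound. That lemma is stated only for $j\ge3$, so its proof would have to be extended. Even then, it gives $d^{-\lfloor(q-1)/2\rfloor}$ after squaring. This suffices for odd $q$, but for $q=2k$ it gives only $d^{-(k-1)}$.

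Every other term already has exponent at least $\lceil q/2\rceil=k\ge(\beta-1)/2=\tau_\beta$. For $j=3$ the exponent is $2+\lfloor(2k-3)/2\rfloor=k$, and the terminal term has $2k-1\ge k$. So no interpolation, ridge argument, or Funk--Hecke computation is needed in the variance.

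\textbf{The missing idea: never bound $\mathsf a_0$.}
\begin{enumerate}
\item Split $\E\norm Z^2=\norm{\E Z}^2+\E\norm{Z-\E Z}^2$. By \eqref{eq:frame-moments}, $\E Z=\mathsf a_*=G_h(z)-\nabla f(z)$. So the first term is at most $\bfC^2L_\beta^2d^{-\tau}h^{2\beta-2}$ by \cref{lem:bias}. Part~(c) of that lemma already contains the scale-$hd^{-1/4}$ argument that produces the non-integer exponent.
\item For the second term, $Z-\E Z=\sum_i(e_f(U_i)-\inner{\mathsf a_*}{U_i})U_i$. Hence $\E\norm{Z-\E Z}^2=d\,\E(e_f(U)-\inner{\mathsf a_*}{U})^2$.
\item Since $\inner{\mathsf a_*}{\cdot}$ is the $L^2(\Sph)$ projection of $e_f$ onto linear functions, you may replace $\mathsf a_*$ by $\mathsf a_0$ and only increase this quantity.
\item Apply Poincar\'e to the odd function $e_f-\inner{\mathsf a_0}{\cdot}$, and then to the centered $\nabla e_f-\mathsf a_0$. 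This gives $d\Lambda_2/(d-1)^2$ with $\Lambda_2=\E\norm{D^2e_f(U)}_{\HS}^2$, whatever $\mathsf a_0$ is.
\item From there your recursion finishes both parities: even $j$ are centered, odd $j\ge3$ use \cref{lem:tensor-means}, and $j=q$ uses \cref{lem:diff-cancel}.
\end{enumerate}

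Your one-line remark about handling $j=0$ via $\E[e_f(U)U]$ points at this, but it must \emph{replace} the $j=1$ mean rather than sit beside it. A spherical analogue of \cref{lem:bias} would also close the gap, but it would have to be proved.

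\textbf{Minor point.} Treat $d=1$ separately, since Poincar\'e is unavailable there. In that case $Z=e_f(1)$ is deterministic, and $\abs{e_f(1)}\le c_0L_\beta h^{\beta-1}$ by \cref{lem:exact}.
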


\begin{proof}
For $d=1$ the error $Z=e_f(1)$ is deterministic, since
$e_f(-1)\cdot(-1)=e_f(1)$, and $\abs{e_f(1)}\le c_0L_\beta h^{\beta-1}$ by
\cref{lem:exact}. Let $d\ge2$. We apply the Poincar\'e inequality
\eqref{eq:poincare} once in the operator case, and repeatedly along the
derivatives of $e_f$ in the Hilbert--Schmidt case.

\emph{Operator norm.} Since $e_f$ is odd, $\E e_f(U)=0$, and
\eqref{eq:poincare} with \cref{lem:diff-cancel} gives
$\E\norm Z^2=d\,\E e_f(U)^2\le\frac{d}{d-1}c_1^2L_\beta^2h^{2\beta-2}\le2c_1^2L_\beta^2h^{2\beta-2}$.

\emph{Hilbert--Schmidt norm.} Write
$\E\norm Z^2=\norm{\E Z}^2+\E\norm{Z-\E Z}^2$; the first term is at most
$\bfC^2L_\beta^2d^{-\tau}h^{2\beta-2}$ by \eqref{eq:frame-moments} and
\cref{lem:bias}. The vector $\mathsf a_*=\E Z=d\,\E[e_f(U)U]$ is the
coefficient of the $L^2(\Sph)$ projection of $e_f$ onto linear functions,
and $Z-\E Z=\sum_i(e_f(U_i)-\inner{\mathsf a_*}{U_i})U_i$. Hence, with
$\mathsf a_0=\E\nabla e_f(U)$ and $\Lambda_j=\E\norm{D^je_f(U)}_{\HS}^2$,
\[
\E\norm{Z-\E Z}^2=d\,\E\bigl(e_f(U)-\inner{\mathsf a_*}{U}\bigr)^2
\le d\,\E\bigl(e_f(U)-\inner{\mathsf a_0}{U}\bigr)^2\le\frac{d\,\Lambda_2}{(d-1)^2},
\]
where the last step applies \eqref{eq:poincare} to the odd function
$e_f-\inner{\mathsf a_0}{\cdot}$ and then to $\nabla e_f-\mathsf a_0$. For
$j<q$, \eqref{eq:poincare} applied to $D^je_f-\E D^je_f$ gives
$\Lambda_j\le\norm{\E D^je_f(U)}_{\HS}^2+\Lambda_{j+1}/(d-1)$, and
$\E D^je_f(U)=0$ for even $j$, since $D^je_f$ is then odd. Iterating,
\[
\E\norm{Z-\E Z}^2\le\sum_{\substack{3\le j<q\\ j\ \mathrm{odd}}}
\frac{d\,\norm{\E D^je_f(U)}_{\HS}^2}{(d-1)^j}+\frac{d\,\Lambda_q}{(d-1)^q}.
\]
By \cref{lem:diff-cancel,lem:tensor-means} and $d/(d-1)^j\le2^jd^{1-j}$,
the $j$-th term is at most
$C_\beta L_\beta^2h^{2\beta-2}d^{-(j-1+\lfloor(q-j)/2\rfloor)}$, and the last
is at most $C_\beta L_\beta^2h^{2\beta-2}d^{-(q-1)}$. Both exponents are at least
$\lceil q/2\rceil\ge\tau_\beta$, since
$j-1+\lfloor(q-j)/2\rfloor=\lfloor(q+j-2)/2\rfloor\ge\lfloor(q+1)/2\rfloor$
for $j\ge3$, and $q-1\ge\lceil q/2\rceil$ for $q\ge2$.
\end{proof}

\section{{The random-frame estimate under the unified noise}}
\label{app:estimator-unified}

This appendix proves \cref{prop:unified-estimator}, which decomposes the
estimate of one round of \cref{alg:stencil} under the noise of
\cref{def:oracle}. For $d\ge2$, its parts (i)--(iii) are
\cref{prop:estimator}. \Cref{rem:centered-filtration,rem:adaptive-lipschitz}
show that two features of \cref{def:oracle} cannot be dropped: in (C) the
$\sigma$-algebra $\mathcal E_t$ contains the error components of the earlier
queries of the round, and in (L) the function $\eta$ is fixed before the
run.

\subsection{Setting}
\label{app:unified-setting}

We use the notation of \cref{app:estimator}, in particular
$D_{h,u}$, $\mathcal S_h$, $G_h$, $e_f$, the constants \eqref{eq:A1},
$\bfC$ of \cref{lem:bias} and $C_Z$ of \cref{prop:noiseless}.

\paragraph{One round.}
Fix a round $k$. Its center $z$ and radius $h>0$ are
$\mathcal F_{k-1}$-measurable, and $\Theta_k$ consists of the $B\ge1$
independent frames $U^{(\ell)}=(U^{(\ell)}_1,\dots,U^{(\ell)}_d)$ of
\cref{alg:stencil}. The query at
$z\pm ht_jU^{(\ell)}_i$ is labeled $(\ell,i,j,\pm)$, and the queries are
ordered lexicographically; any order that does not depend on $\Theta_k$
would do. Its observed value is $Y^\pm_{ij\ell}$, with error components
$\xi^{\mathrm c,\pm}_{ij\ell}$, $\xi^{\mathrm o,\pm}_{ij\ell}$ and
$\delta^\pm_{ij\ell}$, and $\boldsymbol\xi^{\mathrm o}_k$ is the vector of the
oblivious errors of the round. All query points are functions of
$(z,h,\Theta_k)$. Some results of \cref{app:geometry} use, in addition, the
conditional sub-Gaussian condition
\begin{equation}
\label{eq:SG}\tag{SG}
\E\bigl[\exp(s\xi^{\mathrm c}_t)\bigm|\mathcal E_t\bigr]\le\exp(s^2\sigma^2/2)
\qquad(s\in\R)\quad\text{for all queries }t,
\end{equation}
which implies (C). Independent $N(0,\sigma^2)$ errors satisfy it.

\paragraph{Localization.}
Since $h$ may be small with positive probability, the random vectors below
need not be integrable. Let
$E_n=\{1/n\le h\le n,\ \norm z\le n\}\in\mathcal F_{k-1}$; these events
increase to the whole space. On $E_n$ all query points lie in $B(2n)$, where
$f$ and $\eta$ are bounded, $1/h\le n$, and all error components are square
integrable by (C), (O) and (A). A statement such as
$\E[\zeta\mid\mathcal F_{k-1}]=0$ means that
$\E[\mathbf 1_{E_n}\zeta\mid\mathcal F_{k-1}]=0$ for every $n$, and a
conditional second moment is the increasing limit of those of
$\mathbf 1_{E_n}\zeta$. This agrees with the usual definitions for
integrable variables, since
$\mathbf 1_{E_n}\E[X\mid\mathcal F_{k-1}]=\E[\mathbf 1_{E_n}X\mid\mathcal F_{k-1}]$.
We prove every statement on a fixed $E_n$ without further mention. For a
random vector $X$ let $\mathsf N(X)=(\E[\norm X^2\mid\mathcal F_{k-1}])^{1/2}$;
then $\mathsf N(X+X')\le\mathsf N(X)+\mathsf N(X')$ almost surely.

\begin{alemma}[Independence within a round]
\label{lem:round-facts}\leavevmode
\begin{enumerate}[label=(\alph*)]
\item $\Theta_k$ is independent of $\mathcal F_{k-1}\vee\sigma(\boldsymbol\xi^{\mathrm o}_k)$.
Hence, conditionally on $\mathcal F_{k-1}\vee\sigma(\boldsymbol\xi^{\mathrm o}_k)$,
the frames are independent uniformly random orthonormal bases, and every
column $U^{(\ell)}_i$ is uniform on $\Sph$.
\item Let $\mathcal H$ be a $\sigma$-algebra independent of $\Theta_k$,
let $X$ be $\mathcal H$-measurable, and let $\phi$ be jointly measurable
with $\phi(X,\Theta_k)$ integrable. Then
$\E[\phi(X,\Theta_k)\mid\mathcal H]=\Phi(X)$ almost surely, where
$\Phi(x)=\E\phi(x,\Theta_k)$.
\end{enumerate}
\end{alemma}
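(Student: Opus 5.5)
For (a), I will use the definition of the round structure in \cref{sec:setup}: $\Theta_k$ is drawn fresh, independently of $\mathcal F_{k-1}$. Condition (O) says that, given $\mathcal F_{k-1}$, the vector $\boldsymbol\xi^{\mathrm o}_k$ is conditionally independent of $\Theta_k$. I will combine these two facts by a direct computation on product events. Take $A\in\mathcal F_{k-1}$, a Borel set $C$ for $\boldsymbol\xi^{\mathrm o}_k$, and a Borel set $D$ for $\Theta_k$. Conditional independence gives
\[
\PP(A\cap\{\boldsymbol\xi^{\mathrm o}_k\in C\}\cap\{\Theta_k\in D\})
=\E\bigl[\mathbf 1_A\,\PP(\boldsymbol\xi^{\mathrm o}_k\in C\mid\mathcal F_{k-1})\,\PP(\Theta_k\in D\mid\mathcal F_{k-1})\bigr].
\]
Since $\Theta_k$ is independent of $\mathcal F_{k-1}$, the last conditional probability equals the constant $\PP(\Theta_k\in D)$. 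The right-hand side therefore reduces to $\PP(A\cap\{\boldsymbol\xi^{\mathrm o}_k\in C\})\PP(\Theta_k\in D)$.

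The sets $A\cap\{\boldsymbol\xi^{\mathrm o}_k\in C\}$ form a $\pi$-system generating $\mathcal F_{k-1}\vee\sigma(\boldsymbol\xi^{\mathrm o}_k)$. Dynkin's $\pi$–$\lambda$ theorem then gives independence of $\Theta_k$ from the full join. The remaining claims of (a) follow at once. The conditional law of $\Theta_k$ given the join equals its unconditional law. By construction in \cref{alg:stencil}, that law is $B$ independent Haar-distributed orthonormal bases. Each column of a Haar-random orthogonal matrix is uniform on $\Sph$, by rotation invariance of Haar measure.

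Part (b) is the standard freezing (substitution) lemma. I will first prove it for $\phi=\mathbf 1_{G\times D}$, a measurable rectangle. In that case $\phi(X,\Theta_k)=\mathbf 1_G(X)\mathbf 1_D(\Theta_k)$. Its conditional expectation given $\mathcal H$ is $\mathbf 1_G(X)\PP(\Theta_k\in D)=\Phi(X)$, using that $X$ is $\mathcal H$-measurable and $\Theta_k\perp\!\!\!\perp\mathcal H$. A monotone class argument extends the identity to bounded jointly measurable $\phi$; along the way, Fubini shows that $\Phi$ is measurable. Truncation then extends it to nonnegative $\phi$ via monotone convergence, and finally to integrable $\phi$ by splitting into positive and negative parts.

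The only delicate step is integrability in that last extension. For nonnegative $\phi$ the identity holds with values in $[0,\infty]$, so $\E\Phi^\pm(X)=\E\phi^\pm(X,\Theta_k)<\infty$. Hence $\Phi(x)$ is finite for almost every value of $X$, and the subtraction $\Phi^+-\Phi^-$ is legitimate almost surely.

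I expect no real obstacle in either part. The one point requiring care is (a): conditional independence given $\mathcal F_{k-1}$ alone is weaker than independence from the join. Closing that gap uses the freshness of $\Theta_k$ relative to $\mathcal F_{k-1}$, which I will state explicitly.
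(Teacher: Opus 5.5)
Your proposal is correct and follows the paper's argument. For (a), the paper likewise combines the freshness of $\Theta_k$ with respect to $\mathcal F_{k-1}$ with the conditional independence in (O), via a product computation $\E[\mathbf 1_F\phi(\boldsymbol\xi^{\mathrm o}_k)\psi(\Theta_k)]=\E[\mathbf 1_F\phi(\boldsymbol\xi^{\mathrm o}_k)]\,\E\psi(\Theta_k)$; your $\pi$--$\lambda$ step makes explicit the extension the paper leaves implicit. For (b), the paper simply cites the substitution rule for conditional expectations, which your monotone-class argument, including the a.s.\ finiteness of $\Phi^\pm(X)$, proves directly.
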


\begin{proof}
(a) $\Theta_k$ is independent of $\mathcal F_{k-1}$, and by (O) it is
conditionally independent of $\boldsymbol\xi^{\mathrm o}_k$ given
$\mathcal F_{k-1}$. Hence, for $F\in\mathcal F_{k-1}$ and bounded measurable
$\phi,\psi$,
$\E[\mathbf 1_F\phi(\boldsymbol\xi^{\mathrm o}_k)\psi(\Theta_k)]
=\E[\mathbf 1_F\phi(\boldsymbol\xi^{\mathrm o}_k)]\,\E\psi(\Theta_k)$, which
proves (a). Part (b) is the substitution rule for conditional
expectations \citep[Section~4.1]{Durrett2019}.
\end{proof}

\subsection{The systematic error}

\begin{alemma}[The systematic error]
\label{lem:lipschitz-part}
Let $\eta$ be $M$-Lipschitz, $z\in\R^d$, $h>0$, and
$e_\eta(u)=D_{h,u}\eta(z)$ for $u\in\R^d$.
\begin{enumerate}[label=(\alph*)]
\item $e_\eta$ is odd and $(A_1M)$-Lipschitz on $\R^d$.
\item $\mathcal S_h\eta$ is $(A_1M)$-Lipschitz, so
$\norm{\nabla\mathcal S_h\eta(z)}\le A_1M$.
\item If $d\ge2$, then $\E e_\eta(U)=0$ and
$\E e_\eta(U)^2\le A_1^2M^2/(d-1)$.
\item If $d\ge2$, then for a uniformly random orthonormal basis $(U_i)$ of
$\R^d$
\[
\E\Bigl\|\sum_{i=1}^de_\eta(U_i)U_i\Bigr\|^2=d\,\E e_\eta(U)^2
\le\frac d{d-1}\,A_1^2M^2 .
\]
If $d=1$, then $\sum_ie_\eta(U_i)U_i=e_\eta(1)$ is deterministic.
\end{enumerate}
\end{alemma}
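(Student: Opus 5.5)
We prove the four parts in order; each reduces to elementary properties of the stencil \eqref{eq:directional} together with the Poincaré inequality \eqref{eq:poincare} and its frame consequence \eqref{eq:odd-frame}.

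\emph{(a)} Oddness is immediate from the definition. Replacing $u$ by $-u$ swaps the two values $\eta(z+ht_ju)$ and $\eta(z-ht_ju)$ in each summand, so $e_\eta(-u)=-e_\eta(u)$.

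For the Lipschitz bound, fix $u,u'\in\R^d$. For each $j$, the $M$-Lipschitz property of $\eta$ gives
\[
\abs{\eta(z\pm ht_ju)-\eta(z\pm ht_ju')}\le Mht_j\norm{u-u'} .
\]
Summing with weights $\abs{a_j}/(2h)$ over both signs yields
\[
\abs{e_\eta(u)-e_\eta(u')}\le\sum_j\abs{a_j}t_jM\norm{u-u'}=A_1M\norm{u-u'} .
\]

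\emph{(b)} Since $\mathcal S_h\eta(z)=\sum_ja_jt_j\E\eta(z+ht_jW)$, each ball average $\mathcal A_{ht_j}\eta$ is $M$-Lipschitz in $z$ by \cref{lem:ball-gradient}. The triangle inequality then gives the constant $\sum_j\abs{a_j}t_jM=A_1M$.

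By \cref{lem:mean}, $\mathcal S_h\eta$ is continuously differentiable. A $C^1$ function that is $\ell$-Lipschitz has gradient norm at most $\ell$ everywhere, which gives $\norm{\nabla\mathcal S_h\eta(z)}\le A_1M$.

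\emph{(c)} Since $U$ and $-U$ have the same law and $e_\eta$ is odd, $\E e_\eta(U)=-\E e_\eta(U)$, so $\E e_\eta(U)=0$. By (a), $e_\eta$ is $A_1M$-Lipschitz on $\R^d$. The mollified form of \eqref{eq:poincare} stated after it, namely
\[
\operatorname{Var}\phi(U)\le\ell^2/(d-1)\quad\text{for $\ell$-Lipschitz }\phi ,
\]
applies with $\ell=A_1M$. Because the mean vanishes, the variance equals $\E e_\eta(U)^2$, which gives $\E e_\eta(U)^2\le A_1^2M^2/(d-1)$.

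\emph{(d)} Orthonormality of the frame gives $\norm{\sum_ie_\eta(U_i)U_i}^2=\sum_ie_\eta(U_i)^2$. Each column $U_i$ is uniform on $\Sph$, so taking expectations yields $d\,\E e_\eta(U)^2$, and (c) bounds this by $\frac{d}{d-1}A_1^2M^2$; this is exactly \eqref{eq:odd-frame}.

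For $d=1$ the frame is $U_1=\pm1$. By oddness, $e_\eta(-1)\cdot(-1)=e_\eta(1)$, so the sum equals $e_\eta(1)$ in both cases and is deterministic.

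The only step that needs care is the application of Poincaré in (c). The function $e_\eta$ is merely Lipschitz rather than differentiable, so we rely on the mollified variant already recorded after \eqref{eq:poincare}; the remaining steps are direct computations.
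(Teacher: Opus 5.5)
Your proposal is correct and follows the paper's proof essentially step for step. It uses the same direct Lipschitz estimate for (a), the same decomposition $\mathcal S_h\eta=\sum_ja_jt_j\mathcal A_{ht_j}\eta$ with \cref{lem:ball-gradient} for (b), the mollified Poincar\'e inequality in its frame form \eqref{eq:odd-frame} for (c)--(d), and the oddness argument for $d=1$.
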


\begin{proof}
(a) Oddness is clear, and
$\abs{e_\eta(u)-e_\eta(u')}\le\frac1{2h}\sum_j\abs{a_j}\,2M ht_j\norm{u-u'}
=A_1M\norm{u-u'}$.
(b) $\mathcal S_h\eta=\sum_ja_jt_j\mathcal A_{ht_j}\eta$, and each
$\mathcal A_{ht_j}\eta$ is $M$-Lipschitz by \cref{lem:ball-gradient}.
(c) and (d) are \eqref{eq:odd-frame} with $e=e_\eta$ and $\ell=A_1M$. For
$d=1$, $e_\eta(U_1)U_1=e_\eta(1)$ for both $U_1=\pm1$, by oddness.
\end{proof}

\subsection{The batched estimate}

\begin{aproposition}[Batched stencil estimate under the unified noise]
\label{prop:unified-estimator}
Let $f\in C^q$ satisfy $[f]_{\beta,\op}\le L_\beta$ (with $\tau=0$) or
$[f]_{\beta,\HS}\le L_\beta$ (with $\tau=\tau_\beta$), and let the noise be
admissible at levels $(M,\sigma,\delta)$. In round $k$ let $z$ and
$h>0$ be $\mathcal F_{k-1}$-measurable, and let $\bar g$ be the output of
\cref{alg:stencil} with $B\ge1$ frames. Then
\[
\bar g=\nabla f(z)+b(z)+\zeta+\Delta ,
\]
where $b(z)$ is given in (i), $\zeta$ and $\Delta$ are defined in
\eqref{eq:unified-decomp} below, and the following hold.
\begin{enumerate}[label=(\roman*)]
\item $b(z)=G_h(z)-\nabla f(z)+\nabla\mathcal S_h\eta(z)$ is
$\mathcal F_{k-1}$-measurable and
$\norm{b(z)}\le \bfC L_\beta d^{-\tau/2}h^{\beta-1}+A_1M$.
\item $\E[\zeta\mid\mathcal F_{k-1}]=0$, and for $d\ge2$
\[
\E\bigl[\norm\zeta^2\bigm|\mathcal F_{k-1}\bigr]\le\frac1B\Bigl(
\Cf L_\beta^2d^{-\tau}h^{2\beta-2}+C_\eta M^2
+C_\sigma\frac{d\sigma^2}{h^2}\Bigr),
\]
with $\Cf=2C_Z$, $C_\eta=4A_1^2$ and
$C_\sigma=(S_1+(S_2/2)^{1/2})^2$. For $d=1$,
$\E[\norm\zeta^2\mid\mathcal F_{k-1}]\le C_\sigma\sigma^2/(Bh^2)$.
\item $\norm\Delta\le\sqrt d\,S_1\delta/h$ almost surely.
\item The vector $\zeta+\Delta=\bar g-\nabla f(z)-b(z)$ is
$\mathcal F_k$-measurable; if $\delta=0$, then $\Delta=0$ and $\zeta$ is
$\mathcal F_k$-measurable. The vector
$\tilde\zeta=\zeta+\Delta-\E[\Delta\mid\mathcal F_{k-1}]$ is
$\mathcal F_k$-measurable, and
\[
\E[\tilde\zeta\mid\mathcal F_{k-1}]=0,\qquad
\mathsf N(\tilde\zeta)\le\mathsf N(\zeta)+\sqrt d\,S_1\delta/h,\qquad
\norm{\E[\Delta\mid\mathcal F_{k-1}]}\le\sqrt d\,S_1\delta/h.
\]
In particular, the vectors $\tilde\zeta$ of different rounds form a
martingale difference sequence with respect to $(\mathcal F_k)_k$.
\end{enumerate}
\end{aproposition}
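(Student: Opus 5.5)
We will write the estimate explicitly as a sum over queries and split it by error source. For the frame $\ell$, put $\hat g^{(\ell)}=\sum_iD_{h,U^{(\ell)}_i}(f+\eta)(z)U^{(\ell)}_i+\sum_i\frac1{2h}\sum_ja_j(\epsilon^+_{ij\ell}-\epsilon^-_{ij\ell})U^{(\ell)}_i$, where $\epsilon=\xi^{\mathrm c}+\xi^{\mathrm o}+\delta$. We set
\begin{equation}
\label{eq:unified-decomp}
\zeta=\frac1B\sum_\ell\Bigl(Z^{(\ell)}_f+Z^{(\ell)}_\eta+N^{(\ell)}_{\mathrm c}+N^{(\ell)}_{\mathrm o}\Bigr),\qquad
\Delta=\frac1B\sum_\ell\sum_i\frac1{2h}\sum_ja_j(\delta^+_{ij\ell}-\delta^-_{ij\ell})U^{(\ell)}_i .
\end{equation}
Here $Z^{(\ell)}_f=\sum_iD_{h,U_i}f(z)U_i-G_h(z)$, $Z^{(\ell)}_\eta=\sum_iD_{h,U_i}\eta(z)U_i-\nabla\mathcal S_h\eta(z)$, and $N_{\mathrm c},N_{\mathrm o}$ are the noise frame sums. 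Adding $b(z)=G_h-\nabla f+\nabla\mathcal S_h\eta$ then gives the identity.

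Part (i) is immediate from \cref{lem:bias} and \cref{lem:lipschitz-part}(b). Measurability holds because $z,h$ are $\mathcal F_{k-1}$-measurable and $\eta$ is fixed.

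Part (iii) uses $\abs{\delta}\le\delta$ and orthonormality. These give $\norm{\sum_ic_iU_i}=(\sum c_i^2)^{1/2}$ with $\abs{c_i}\le S_1\delta/h$, so each frame has norm at most $\sqrt d S_1\delta/h$, and averaging over frames keeps the bound.

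For (ii), the mean and second moment are handled term by term.

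The $f$ and $\eta$ parts have conditional mean zero by \cref{lem:mean} together with \cref{lem:round-facts}(b), since the frames are independent of $\mathcal F_{k-1}$.
\begin{itemize}
\item For $Z_f$, the bound $\E\norm{Z_f}^2\le\E\norm{Z}^2\le C_ZL_\beta^2d^{-\tau}h^{2\beta-2}$ follows from \cref{prop:noiseless}, because centering decreases the second moment.
\item For $Z_\eta$, \cref{lem:lipschitz-part}(d) gives at most $2A_1^2M^2$.
\end{itemize}

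The oblivious part is handled by conditioning on $\mathcal F_{k-1}\vee\sigma(\boldsymbol\xi^{\mathrm o}_k)$. By \cref{lem:round-facts}(a) each $U_i$ is uniform there and independent of the coefficients $c_i$. The pair $(U_i,-U_i)$ has the same law, so $\E[c_iU_i\mid\cdot]=0$ when $c_i$ is independent of $U_i$. The subtle point is that frame columns are dependent. I would write $\E[\sum_ic_iU_i\mid\cdot]=\sum_ic_i\E U_i=0$. For the second moment, orthonormality gives $\norm{\sum_ic_iU_i}^2=\sum c_i^2$, whose expectation is at most $d\cdot\frac{1}{4h^2}\E(\sum_j\abs{a_j}\abs{\xi^+-\xi^-})^2$, which is bounded by $dS_1^2\sigma^2/h^2$ after Cauchy--Schwarz.

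The centered part is conditionally centered query by query through the filtration $\mathcal E_t$. Its coefficient vector is a martingale difference sum in lexicographic order, so the cross terms vanish. A careful grouping of the $\pm$ pairs and the $j$ weights gives the constant $S_2/2$; this is the source of the $(S_2/2)^{1/2}$ in $C_\sigma$.

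Frames are independent given $\mathcal F_{k-1}$, except for the oblivious and centered noise correlations across frames. To get the $1/B$ factor, I would bound the oblivious cross terms by exact mean-zero cancellation. The frame sums $N^{(\ell)}_{\mathrm o}$ for distinct $\ell$ are conditionally uncorrelated because $U^{(\ell)}$ and $U^{(\ell')}$ are independent given the noise. The centered cross terms vanish by the martingale structure.

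Combining the terms with Minkowski's inequality for $\mathsf N$ gives the constants $2C_Z$, $4A_1^2$ and $(S_1+(S_2/2)^{1/2})^2$. For $d=1$, oddness makes $Z_f,Z_\eta$ deterministic, so they are absorbed into $b$ and only the noise term remains.

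Part (iv) follows from $\mathcal F_k$ containing $\Theta_k$ and all observations, together with $\E[\tilde\zeta\mid\mathcal F_{k-1}]=0$ by construction and (iii).

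The main obstacle is the oblivious-noise term. Its coefficients may be biased and correlated across the queries of a round, and the frame columns are not independent. The argument relies on applying sign symmetry and orthonormality under the enlarged conditioning, and the constant bookkeeping that yields $C_\sigma$ must be done there.
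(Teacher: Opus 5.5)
You use the same decomposition and the same individual estimates as the paper: (i) and (iii) as you state them; $\E\norm{Z_f}^2\le C_ZL_\beta^2d^{-\tau}h^{2\beta-2}$ by centering; $\E\norm{Z_\eta}^2\le 2A_1^2M^2$; the oblivious part handled by conditioning on $\mathcal F_{k-1}\vee\sigma(\boldsymbol\xi^{\mathrm o}_k)$; the centered part as a single orthogonal sum over all queries of the round, with $\sum_t\norm{w_t}^2=dS_2/(2Bh^2)$ (no grouping is needed); and Minkowski between the oblivious and centered parts, which is unavoidable because they need not be orthogonal (\cref{rem:centered-filtration}).

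\emph{The gap is in the final combination.} Apply Minkowski between the frame part $\bar Y=B^{-1}\sum_\ell(Z^{(\ell)}_f+Z^{(\ell)}_\eta)$ and the noise part $\bar\Xi=B^{-1}\sum_\ell(N^{(\ell)}_{\mathrm c}+N^{(\ell)}_{\mathrm o})$. This gives only $\mathsf N(\zeta)^2\le(\mathsf N(\bar Y)+\mathsf N(\bar\Xi))^2\le 2\mathsf N(\bar Y)^2+2\mathsf N(\bar\Xi)^2$. The resulting constants are $4C_Z$, $8A_1^2$ and $2C_\sigma$, not the stated $\Cf=2C_Z$, $C_\eta=4A_1^2$ and $C_\sigma$. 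The stated sum requires $\E[\inner{\bar Y}{\bar\Xi}\mid\mathcal F_{k-1}]=0$. Your cross-term discussion treats only correlations across frames within each noise type, so it does not establish this.

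\emph{The fix is short but uses oddness, which you never invoke for the cross terms.}
\begin{itemize}
\item For the centered part: $\bar Y$ depends only on $z$, $h$ and $\Theta_k$, so $\inner{\bar Y}{w_t}$ is $\mathcal E_t$-measurable. Then (C) gives $\E[\inner{\bar Y}{w_t}\xi^{\mathrm c}_t\mid\mathcal F_{k-1}]=0$.
\item For the oblivious part: condition on $\mathcal F_{k-1}\vee\sigma(\boldsymbol\xi^{\mathrm o}_k)$. Under this conditioning the coefficients $\omega^{(\ell)}_i$ are fixed and the frames are independent and Haar (\cref{lem:round-facts}). Frames $\ell'\ne\ell$ contribute zero by independence. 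Within frame $\ell$, orthonormality gives $\inner{Z^{(\ell)}_f+Z^{(\ell)}_\eta}{U^{(\ell)}_i}=D_{h,U^{(\ell)}_i}(f+\eta)(z)-\inner{G_h(z)+\nabla\mathcal S_h\eta(z)}{U^{(\ell)}_i}$. Its mean vanishes because $u\mapsto D_{h,u}(f+\eta)(z)$ is odd and $U^{(\ell)}_i$ is symmetric.
\end{itemize}

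Two smaller points. First, $h$ is only $\mathcal F_{k-1}$-measurable, so the vectors need not be integrable; the paper argues on the events $E_n=\{1/n\le h\le n,\ \norm z\le n\}$. Second, for $d=1$ nothing is ``absorbed'' into $b$, which is fixed by (i). Rather, $Z_f=Z_\eta=0$ exactly, because $G_h(z)=D_{h,1}f(z)$ and $\nabla\mathcal S_h\eta(z)=D_{h,1}\eta(z)$.
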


When $\delta>0$, the vector $\zeta$ need not be $\mathcal F_k$-measurable,
and the vectors $\zeta$ of different rounds need not be orthogonal. An
adversarial error may cancel a centered error, which then stays hidden, and
a centered error of a later round, conditioned only on the past and on the
errors of its own round, may be correlated with the hidden one. The later
appendices therefore use $\zeta$ only within one round, through
$\E[\inner\zeta x\mid\mathcal F_{k-1}]=0$ for $\mathcal F_{k-1}$-measurable
random vectors $x$ and through (ii). Sums over rounds use the decomposition
$\bar g=\nabla f(z)+b(z)+\E[\Delta\mid\mathcal F_{k-1}]+\tilde\zeta$ with the
adapted martingale differences $\tilde\zeta$ of (iv).

\begin{table}[!htbp]
\centering
\small
\renewcommand{\arraystretch}{1.45}
\caption{Constants of \cref{prop:unified-estimator}; $C_\eta=4A_1^2$ and
$C_\sigma=(S_1+(S_2/2)^{1/2})^2$.}
\label{tab:unified-constants}
\begin{tabular}{ccccccc}
\toprule
$m$ & $(a_j)$ & $S_1$ & $S_2$ & $A_1$ & $C_\eta$ & $C_\sigma$\\
\midrule
$1$ & $1$ & $1$ & $1$ & $1$ & $4$ & $2.914$\\
$2$ & $\frac83,-\frac13$ & $3$ & $\frac{65}9$ & $\frac53$ & $\frac{100}9$ & $24.01$\\
$3$ & $\frac92,-\frac9{10},\frac1{10}$ & $\frac{11}2$ & $\frac{2107}{100}$ & $\frac{11}5$ & $\frac{484}{25}$ & $76.49$\\
$4$ & $\frac{32}5,-\frac85,\frac{32}{105},-\frac1{35}$ & $\frac{25}3$ & $\frac{480841}{11025}$ & $\frac{93}{35}$ & $\frac{34596}{1225}$ & $169.1$\\
\bottomrule
\end{tabular}
\end{table}

\begin{proof}
Condition on $\mathcal F_{k-1}$. Let $e_\eta(u)=D_{h,u}\eta(z)$, and for
frame $\ell$ put
\[
Y^{(\ell)}=\sum_i(e_f+e_\eta)(U_i^{(\ell)})U_i^{(\ell)}-b(z),
\]
with $b(z)$ as in (i). For the other three error components put
\[
\omega_i^{(\ell)}=\frac1{2h}\sum_j a_j(\xi^{\mathrm o,+}_{ij\ell}-\xi^{\mathrm o,-}_{ij\ell}),\quad
\chi_i^{(\ell)}=\frac1{2h}\sum_j a_j(\xi^{\mathrm c,+}_{ij\ell}-\xi^{\mathrm c,-}_{ij\ell}),\quad
\varrho_i^{(\ell)}=\frac1{2h}\sum_j a_j(\delta^+_{ij\ell}-\delta^-_{ij\ell}),
\]
and let $\Xi^{\mathrm o,(\ell)}=\sum_i\omega_i^{(\ell)}U_i^{(\ell)}$,
$\Xi^{\mathrm c,(\ell)}=\sum_i\chi_i^{(\ell)}U_i^{(\ell)}$ and
$\Delta^{(\ell)}=\sum_i\varrho_i^{(\ell)}U_i^{(\ell)}$. A bar denotes the
average over the $B$ frames. Since
$\sum_i\inner{\nabla f(z)}{U^{(\ell)}_i}U^{(\ell)}_i=\nabla f(z)$,
\begin{gather}
\label{eq:batched-estimate}
\bar g=\nabla f(z)+b(z)+\zeta+\Delta,\\
\label{eq:unified-decomp}
\zeta=\bar Y+\bar\Xi^{\mathrm o}+\bar\Xi^{\mathrm c},\qquad
\Delta=\frac1B\sum_{\ell=1}^B\Delta^{(\ell)}.
\end{gather}

\emph{(i) and (iii).} The surrogate gradients are continuous in $(z,h)$, so
$b(z)$ is $\mathcal F_{k-1}$-measurable. Its bound follows from
\cref{lem:bias} and \cref{lem:lipschitz-part}(b). Since
$\abs{\varrho_i^{(\ell)}}\le S_1\delta/h$, orthonormality gives
$\norm{\Delta^{(\ell)}}\le\sqrt d\,S_1\delta/h$, hence (iii).

\emph{The frame part.} By \cref{lem:mean}, applied to $f$ and to $\eta$,
$\E Y^{(\ell)}=0$, and the $Y^{(\ell)}$ are independent. For $d\ge2$,
\cref{prop:noiseless}, \cref{lem:lipschitz-part}(d) and $d/(d-1)\le2$ give
\begin{equation}\label{eq:unified-basis-part}
\E[\norm{\bar Y}^2\mid\mathcal F_{k-1}]
\le B^{-1}(2C_ZL_\beta^2d^{-\tau}h^{2\beta-2}+4A_1^2M^2).
\end{equation}
For $d=1$, $(e_f+e_\eta)(U_1)U_1=(e_f+e_\eta)(1)$ is deterministic, hence
$Y^{(\ell)}=0$.

\emph{Random noise.} Conditionally on
$\mathcal F_{k-1}\vee\sigma(\boldsymbol\xi_k^{\mathrm o})$, the coefficients
$\omega_i^{(\ell)}$ are fixed and the frames are independent and uniform
(\cref{lem:round-facts}). So the vectors $\Xi^{\mathrm o,(\ell)}$ have mean
zero and are orthogonal in conditional $L^2$ across frames. Since
$\mathsf N(\omega_i^{(\ell)})\le S_1\sigma/h$, orthonormality yields
\begin{equation}\label{eq:unified-tsybakov}
\E[\norm{\bar\Xi^{\mathrm o}}^2\mid\mathcal F_{k-1}]
\le dS_1^2\sigma^2/(Bh^2).
\end{equation}
The centered part is $\bar\Xi^{\mathrm c}=\sum_tw_t\xi^{\mathrm c}_t$, a sum
over the queries $t=(\ell,i,j,\pm)$ of the round with
$w_t=\pm a_jU_i^{(\ell)}/(2hB)$. Both $w_t$ and the centered errors of the
earlier queries are $\mathcal E_t$-measurable, so by (C) the terms
$w_t\xi^{\mathrm c}_t$ are centered and pairwise orthogonal, and
$\E[\norm{\bar\Xi^{\mathrm c}}^2\mid\mathcal F_{k-1}]\le\sigma^2\sum_t\norm{w_t}^2
=dS_2\sigma^2/(2Bh^2)$. The two parts need not be orthogonal to each other;
Minkowski's inequality gives
\begin{equation}\label{eq:unified-noise-part}
\E[\norm{\bar\Xi^{\mathrm o}+\bar\Xi^{\mathrm c}}^2\mid\mathcal F_{k-1}]
\le (S_1+\sqrt{S_2/2})^2d\sigma^2/(Bh^2).
\end{equation}

\emph{Cross terms.} Since $\inner{\bar Y}{w_t}$ is $\mathcal E_t$-measurable,
(C) gives $\E\inner{\bar Y}{w_t\xi^{\mathrm c}_t}=0$. For the oblivious part,
condition on $\mathcal F_{k-1}\vee\sigma(\boldsymbol\xi_k^{\mathrm o})$.
Different frames give zero by independence, and within frame $\ell$
\[
\E\inner{Y^{(\ell)}}{U_i^{(\ell)}}
=\E(e_f+e_\eta)(U_i^{(\ell)})-\inner{b(z)}{\E U_i^{(\ell)}}=0
\]
by oddness. Thus $\bar Y$ is orthogonal to both noise parts, and adding
\eqref{eq:unified-basis-part} and \eqref{eq:unified-noise-part} proves (ii).
The same computations give $\E[\zeta\mid\mathcal F_{k-1}]=0$.

\emph{(iv).} The vector $\zeta+\Delta=\bar g-\nabla f(z)-b(z)$ is a function
of the observed values, $\Theta_k$ and $\mathcal F_{k-1}$-measurable
quantities, hence $\mathcal F_k$-measurable. Subtracting the $\mathcal F_{k-1}$-measurable vector
$\E[\Delta\mid\mathcal F_{k-1}]$ gives $\tilde\zeta$, which is centered, and
\[
\mathsf N(\Delta-\E[\Delta\mid\mathcal F_{k-1}])^2
=\E[\norm\Delta^2\mid\mathcal F_{k-1}]
-\norm{\E[\Delta\mid\mathcal F_{k-1}]}^2
\le dS_1^2\delta^2/h^2
\]
proves (iv) by Minkowski's inequality. Martingale differences adapted to
$(\mathcal F_k)_k$ are orthogonal across rounds by the tower property.
\end{proof}

\begin{aremark}[The $\sigma$-algebra in the centered-noise condition]
\label{rem:centered-filtration}
\emph{The modified condition.} Replace $\mathcal E_t$ in (C) by the smaller
$\sigma$-algebra $\mathcal G_t$ generated by $\mathcal F_{k-1}$, $\Theta_k$, the
queries of round $k$, and the observed values of the queries $s<t$ of round
$k$. The decomposition, the bias and the centering in the proof of
\cref{prop:unified-estimator} use only
$\sigma(\mathcal F_{k-1},\Theta_k)\subset\mathcal G_t$, so (i), (iii), (iv)
and $\E[\zeta\mid\mathcal F_{k-1}]=0$ remain valid. The variance argument
loses the orthogonality of $\xi^{\mathrm c}_s$ and $\xi^{\mathrm c}_t$ for
$s<t$, because
$\xi^{\mathrm c}_s=Y_s-f(X_s)-\eta(X_s)-\xi^{\mathrm o}_s-\delta_s$ need not
be $\mathcal G_t$-measurable. Bounding
$\mathsf N(\Xi^{\mathrm c,(\ell)})\le\sqrt d\,S_1\sigma/h$ by Minkowski's
inequality, as for the oblivious part, gives only
\[
\E\bigl[\norm\zeta^2\bigm|\mathcal F_{k-1}\bigr]\le\frac1B\bigl(
\Cf L_\beta^2d^{-\tau}h^{2\beta-2}+C_\eta M^2\bigr)
+\bigl(1+B^{-1/2}\bigr)^2S_1^2\frac{d\sigma^2}{h^2}.
\]
The factor $1/B$ in front of $d\sigma^2/h^2$ cannot be kept with a constant
depending only on $\beta$: an oblivious error can hide a centered error from
the observation, and a later centered error of the round, which may depend on
all frames, can be correlated with the hidden part. The following example
loses a factor of order $\log^2B$.

\emph{Example: the vectors.} Let $k=1$, $z=0$, $h=1$, $f\equiv0$,
$\eta\equiv0$ and $\delta=0$. Fix a node $j_*$ with $a_{j_*}\ne0$, and let
only the queries $t_{j_*}U^{(\ell)}_i$ carry errors. In the sequence space
$\mathsf H=\ell^2(\mathbb Z)$ with unit vectors $\mathbf e_n$ put
\[
\vartheta_n=\frac1{\pi(n+\frac12)}\quad(n\in\mathbb Z),\qquad
\mathbf u_\ell=\sum_n\vartheta_{n-\ell}\mathbf e_n,\quad \ell=1,\dots,B.
\]
These vectors are orthonormal. First,
$\sum_n\vartheta_n^2=8\pi^{-2}\sum_{n\ge0}(2n+1)^{-2}=1$. Second, for
$r\ne0$,
\[
\sum_n\vartheta_n\vartheta_{n+r}=\frac1{\pi^2r}\sum_n\Bigl(\frac1{n+\frac12}
-\frac1{n+r+\frac12}\Bigr)=0;
\]
the series converges absolutely, and its partial sum over $\abs n\le K$
reduces to $2\abs r$ terms of order $1/K$. Let $P_\ell$ be the orthogonal
projection onto $\mathrm{span}\{\mathbf u_1,\dots,\mathbf u_{\ell-1}\}$, and
put
\[
\mathbf p_\ell=(I-P_\ell)\mathbf e_\ell,\qquad
\alpha_\ell=\inner{\mathbf u_\ell}{\mathbf p_\ell},\qquad
r_\ell=\alpha_\ell+(\alpha_\ell^2+1-\norm{\mathbf p_\ell}^2)^{1/2}\ge0 .
\]
The vectors
\[
\mathbf y_\ell=r_\ell\mathbf u_\ell-\mathbf p_\ell,\qquad
\mathbf v_\ell=\mathbf y_\ell+\mathbf e_\ell=r_\ell\mathbf u_\ell+P_\ell\mathbf e_\ell
\]
satisfy
$\norm{\mathbf y_\ell}^2=(r_\ell-\alpha_\ell)^2-\alpha_\ell^2+\norm{\mathbf p_\ell}^2=1$,
$\mathbf y_\ell\perp\mathbf u_1,\dots,\mathbf u_{\ell-1}$ (both $\mathbf u_\ell$ and
$\mathbf p_\ell$ are orthogonal to these vectors), and
$\mathbf v_{\ell'}\in\mathrm{span}\{\mathbf u_1,\dots,\mathbf u_{\ell'}\}$. Hence
$\inner{\mathbf y_\ell}{\mathbf v_{\ell'}}=0$ for $\ell'<\ell$.

\emph{Example: the errors.} Let $\mathbf o^{(1)},\dots,\mathbf o^{(B)}$ be
independent $N(0,I_d)$ vectors, independent of $\Theta_1$, and write
$U^{(\ell)}$ also for the orthogonal matrix with columns $U^{(\ell)}_i$. For
$1\le p\le d$ and $\mathbf x\in\mathsf H$ let
$\mathcal I_p(\mathbf x)=\sum_nx_n\gamma_{p,n}$, where
$\gamma_{p,n}=(U^{(n)}\mathbf o^{(n)})_p$ for $1\le n\le B$ and the other
$\gamma_{p,n}$ are independent $N(0,1)$ variables, independent of
everything else. Conditionally on $\Theta_1$ all $\gamma_{p,n}$ are
independent $N(0,1)$, by rotation invariance, so
\[
\mathrm{Cov}\bigl(\mathcal I_p(\mathbf x),\mathcal I_{p'}(\mathbf x')\bigm|
\Theta_1\bigr)=\mathbf 1\{p=p'\}\inner{\mathbf x}{\mathbf x'}.
\]
At the query $t_{j_*}U^{(\ell)}_i$ set
\[
\xi^{\mathrm o}=\sigma\mathbf o^{(\ell)}_i,\qquad
\xi^{\mathrm c}=\sigma\sum_pU^{(\ell)}_{pi}\mathcal I_p(\mathbf y_\ell),
\]
and let all other errors vanish. Since
$\mathbf o^{(\ell)}_i=\sum_pU^{(\ell)}_{pi}(U^{(\ell)}\mathbf o^{(\ell)})_p
=\sum_pU^{(\ell)}_{pi}\mathcal I_p(\mathbf e_\ell)$, the observed error is
$\nu_{\ell,i}=\sigma\sum_pU^{(\ell)}_{pi}\mathcal I_p(\mathbf v_\ell)$. Condition
(O) holds, since the oblivious errors are functions of
$\mathbf o^{(1)},\dots,\mathbf o^{(B)}$. Conditionally on $\Theta_1$, the error
$\xi^{\mathrm c}$ at $(\ell,i)$ and the observed errors $\nu_{\ell',i'}$ of the
earlier queries are jointly Gaussian with covariance
$\sigma^2\inner{U^{(\ell)}_i}{U^{(\ell')}_{i'}}\inner{\mathbf y_\ell}{\mathbf v_{\ell'}}$.
This covariance vanishes for $\ell'<\ell$ by the orthogonality above, and for
$\ell'=\ell$, $i'<i$ because $U^{(\ell)}_i\perp U^{(\ell)}_{i'}$. So, given
$\mathcal G_t$, $\xi^{\mathrm c}$ is $N(0,\sigma^2)$: (C) holds with
$\mathcal G_t$ in place of $\mathcal E_t$. With $\mathcal E_t$ it cannot hold
for large $B$, since otherwise (ii) would apply.

\emph{Example: the second moment.} Here $b(z)=0$ and $\Delta=0$, so
$\zeta=\bar g$. Since $\sum_iU^{(\ell)}_{p'i}U^{(\ell)}_{pi}=\mathbf 1\{p=p'\}$,
the $p$-th coordinate of $\bar g$ equals
$\frac{a_{j_*}\sigma}{2B}\mathcal I_p(\mathbf v_\Sigma)$, where
$\mathbf v_\Sigma=\sum_\ell\mathbf v_\ell\in\mathrm{span}\{\mathbf u_1,\dots,\mathbf u_B\}$,
and therefore
$\E\norm\zeta^2=a_{j_*}^2\sigma^2d\norm{\mathbf v_\Sigma}^2/(4B^2)$. The inner
product $\inner{\mathbf v_{\ell'}}{\mathbf u_\ell}$ equals $0$ for $\ell'<\ell$,
$r_\ell$ for $\ell'=\ell$, and
$\inner{\mathbf e_{\ell'}}{\mathbf u_\ell}=\vartheta_{\ell'-\ell}$ for $\ell'>\ell$, since
then $P_{\ell'}\mathbf u_\ell=\mathbf u_\ell$. Hence, with
$\sum_{n=1}^K(n+\frac12)^{-1}\ge\log\frac{2K+3}3$,
\[
\inner{\mathbf v_\Sigma}{\mathbf u_\ell}=r_\ell+\sum_{n=1}^{B-\ell}\vartheta_n
\ge\frac1\pi\log\frac{2(B-\ell)+3}3 .
\]
Keeping the at least $B/2$ indices with $2(B-\ell)+3\ge B+2$, we obtain
\[
\E\norm\zeta^2=\frac{a_{j_*}^2\sigma^2d}{4B^2}\sum_{\ell=1}^B
\inner{\mathbf v_\Sigma}{\mathbf u_\ell}^2
\ge\frac{a_{j_*}^2}{8\pi^2}\log^2\Bigl(\frac{B+2}3\Bigr)
\frac{d\sigma^2}{Bh^2}.
\]
For any constant $K$ the right-hand side exceeds $Kd\sigma^2/(Bh^2)$ once
$B$ is large, although $L_\beta=M=0$.

\emph{Example: exact values.} The lower bound above is crude; the norm is
explicit. Since $\alpha_\ell=\vartheta_0=2/\pi$ and
$\norm{\mathbf p_\ell}^2=1-\sum_{n=1}^{\ell-1}\vartheta_n^2$,
\[
\norm{\mathbf v_\Sigma}^2=\sum_{\ell=1}^B\Bigl(r_\ell+\sum_{n=1}^{B-\ell}
\vartheta_n\Bigr)^2,\qquad r_\ell=\frac2\pi+\Bigl(\frac4{\pi^2}
+\sum_{n=1}^{\ell-1}\vartheta_n^2\Bigr)^{1/2}.
\]
A direct computation gives $\norm{\mathbf v_\Sigma}^2/B=3.77$, $5.59$, $7.83$,
$10.47$, $11.94$ for $B=16$, $64$, $256$, $1024$, $2048$; it is $2$ when the
centered and oblivious errors are independent. For $m=1$ the bound of (ii),
$C_\sigma d\sigma^2/(Bh^2)$ with $C_\sigma=(1+2^{-1/2})^2$, is violated from
$B=1797$ on; the displayed lower bound shows this only for $B$ of order
$10^7$.
\end{aremark}

\subsection{Lipschitz noise chosen after the frames}

\begin{aremark}[Lipschitz noise must not see the frames]
\label{rem:adaptive-lipschitz}
Condition (L) requires $\eta$ to be fixed before the run. Suppose instead
that $\eta$ may be chosen after $\Theta_k$ is revealed. We construct such
an $\eta$ that shifts the estimate of one frame by order $\sqrt d\,M$.
Let
$\varkappa_d=d\,\E\abs{\inner{U}{e_1}}=d\,\Gamma(d/2)/(\sqrt\pi\,\Gamma((d+1)/2))$.
The Cauchy--Schwarz inequality and Gautschi's inequality
\citep{Gautschi1959} give $\sqrt{2d/\pi}\le\varkappa_d\le\sqrt d$.

Let $B=1$, $\xi^{\mathrm c}=\xi^{\mathrm o}=0$ and
$\delta=0$, fix a unit vector $v$, and let $\sigma_i=\sgn\inner{U_i}v$
(nonzero almost surely). On the finite set
$\mathcal P=\{z+ht_jsU_i:\ 1\le i\le d,\ 1\le j\le m,\ s=\pm1\}$ of query
points put $\eta(z+x)=M\norm x\sgn\inner xv$, that is,
$\eta(z+ht_jsU_i)=M ht_js\sigma_i$. This assignment is
$\sqrt2M$-Lipschitz on $\mathcal P$. For two points on the same line,
$x=rU_i$ and $y=r'U_i$, the values differ by
$M\abs{r-r'}=M\norm{x-y}$. For $x=rU_i$ and $y=r'U_{i'}$ with
$i\ne i'$, orthogonality gives $\norm{x-y}=(r^2+r'^2)^{1/2}$, and the values
differ by at most $M(\abs r+\abs{r'})\le\sqrt2M\norm{x-y}$. The
McShane extension \citep{McShane1934}
$\eta(x)=\min_{x'\in\mathcal P}(\eta(x')+\sqrt2M\norm{x-x'})$ is
$\sqrt2M$-Lipschitz on $\R^d$ and agrees with the assignment on
$\mathcal P$. Since $\sum_ja_jt_j=1$, we get $e_\eta(U_i)=M\sigma_i$.
The $f$-part of the estimate still has mean $G_h(z)$, so
\[
\E\bar g-G_h(z)=M\sum_i\E\bigl[\sgn\inner{U_i}v\,U_i\bigr]
=M\varkappa_d\,v,\qquad
\norm{\E\bar g-G_h(z)}\ge\sqrt{2/\pi}\,\sqrt d\,M .
\]
For a $\sqrt2M$-Lipschitz $\eta$ fixed before the frames the same norm is at most
$\sqrt2A_1M$ by \cref{prop:unified-estimator}(i). After rescaling
$\eta$ to be $M$-Lipschitz, the lower bound becomes
$\sqrt{d/\pi}\,M$.

\end{aremark}

\section{Lower bounds}
\label{app:lower}

This appendix proves the lower bounds for Gaussian noise and for Lipschitz
noise. \Cref{thm:radial-lower,thm:ridge-lower} give the bound of
\cref{thm:main}(b) for every budget (\cref{app:lower-proofs}), and
\cref{cor:normalized} the resulting minimax rate for normalized parameters.
\Cref{cor:convex} gives the factor $d^2$ for convex objectives used in
\cref{thm:geometry-lower}, and \cref{prop:lipschitz-floor} bounds the
admissible level of Lipschitz noise in every $\ell_p$ geometry; its
Euclidean case enters \cref{thm:main,thm:convex-lower-main}.

\paragraph{Algorithms and minimax risk.}
The lower bounds use independent $N(0,\sigma^2)$ noise or exact values and
allow adaptive randomized algorithms with queries and output anywhere in
$\R^d$. Such an algorithm draws a random seed $\Theta_0$ at the start; each
query is a measurable function of $\Theta_0$ and of the earlier observations,
and the output is a measurable function of $\Theta_0$ and of all
observations. Algorithms with rounds (\cref{sec:setup}) are the special case
$\Theta_0=(\Theta_1,\Theta_2,\dots)$, since the Gaussian noise is independent
of all $\Theta_k$. For a class $\mathcal C$ of functions on $\R^d$, the
Gaussian minimax risk $\Risk_T^{\mathrm G}(\mathcal C)$ is the infimum of
$\sup_{f\in\mathcal C}\E[f(\hat x)-\fstar]$ over these algorithms with at most
$T$ calls under Gaussian noise. It is nonincreasing in $T$, since extra calls
can be ignored.

\subsection{Two localized profiles}

Fix a smooth $\chi:\R\to[0,1]$ with $\chi(s)=1$ for $s\le1/4$ and
$\chi(s)=0$ for $s\ge1$, for instance
$\chi(s)=\phi(1-s)/(\phi(1-s)+\phi(s-1/4))$ with
$\phi(t)=e^{-1/t}\mathbf 1\{t>0\}$; all derivatives of $\chi$ are bounded. For
a unit vector $v$ let
\begin{equation}
\label{eq:radial-profile}
\psi_v(x)=\inner{v}{x}\,\chi(\norm x^2).
\end{equation}

\begin{alemma}[Radial profile]
\label{lem:radial-profile}
Let $q\ge2$, $\gamma\in(0,1]$, $\beta=q+\gamma$. There are constants
$K_\beta,K_2\ge1$, depending only on $\beta$ and $\chi$, such that for all
$d$, all unit $v$ and all $x,y$:
\begin{align*}
\norm{D^q\psi_v(x)-D^q\psi_v(y)}_{\op}&\le K_\beta\norm{x-y}^\gamma,\\
\norm{D^q\psi_v(x)-D^q\psi_v(y)}_{\HS}&\le K_\beta\,d^{\lfloor q/2\rfloor/2}\norm{x-y}^\gamma,\\
\sup_x\norm{\nabla^2\psi_v(x)}_{\op}&\le K_2 .
\end{align*}
\end{alemma}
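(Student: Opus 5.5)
The plan is to compute $D^j\psi_v$ explicitly via the Leibniz rule and the structure of derivatives of $x\mapsto\chi(\norm x^2)$. Write $g(x)=\chi(\norm x^2)$. By Fa\`a di Bruno, with $p(x)=\norm x^2$, whose only nonzero derivatives are $\nabla p=2x$ and $D^2p=2I$, the derivative $D^rg(x)$ is a finite sum, with coefficients depending only on $r$, of symmetrized tensors
\[
\chi^{(a+b)}(\norm x^2)\,\mathrm{Sym}\bigl(x^{\otimes a}\otimes I^{\otimes b}\bigr),\qquad a+2b=r .
\]
Then
\[
D^j\psi_v=\inner vx\,D^jg+j\,\mathrm{Sym}\bigl(v\otimes D^{j-1}g\bigr).
\]
All terms vanish for $\norm x\ge1$, and all derivatives of $\chi$ are bounded. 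It therefore suffices to bound every term on $B(1)$.

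\emph{Operator norms.} On $B(1)$ we have $\norm{x^{\otimes a}\otimes I^{\otimes b}}_{\op}\le1$, and symmetrization does not increase the operator norm. Hence $\norm{D^j\psi_v(x)}_{\op}\le C_j$ for all $j$, uniformly in $d$ and $v$. With $j=2$ this gives $K_2$. For the H\"older bound, if $\norm{x-y}\ge1$ we use $\norm{D^q\psi_v(x)-D^q\psi_v(y)}\le2C_q\le2C_q\norm{x-y}^\gamma$. If $\norm{x-y}<1$, the mean value theorem gives $\norm{D^q\psi_v(x)-D^q\psi_v(y)}\le C_{q+1}\norm{x-y}\le C_{q+1}\norm{x-y}^\gamma$. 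This proves the first inequality.

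\emph{Hilbert--Schmidt norms.} This is the main point. The norm $\norm{x^{\otimes a}\otimes I^{\otimes b}}_{\HS}=\norm x^a d^{b/2}$ by \cref{lem:tensor-facts}(b). Symmetrization is an average of index permutations and is therefore HS-contractive. Tensoring with $v$ preserves the HS norm. In $D^j\psi_v$ the largest number of $I$ factors is $b\le\lfloor j/2\rfloor$ in the first term and $b\le\lfloor (j-1)/2\rfloor$ in the second. Hence $\norm{D^j\psi_v(x)}_{\HS}\le C_jd^{\lfloor j/2\rfloor/2}$ on $B(1)$.

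The same mean value splitting then applies, but we need $D^{q+1}$. Its naive bound carries $d^{\lfloor (q+1)/2\rfloor/2}$, which exceeds the target $d^{\lfloor q/2\rfloor/2}$ when $q$ is odd. This is the expected obstacle. I would handle it by differentiating only the scalar factors $\chi^{(a+b)}(\norm x^2)$ and $x^{\otimes a}$ inside each term of $D^q\psi_v$, rather than passing to $D^{q+1}$. In each term $\inner vx\,\chi^{(a+b)}(\norm x^2)\,\mathrm{Sym}(x^{\otimes a}\otimes I^{\otimes b})$ the structural factor $I^{\otimes b}$ is constant, with $b\le\lfloor q/2\rfloor$. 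For $\norm{x-y}<1$ and $x,y\in B(2)$, the difference of such a term at $x$ and $y$ is bounded by writing it as a telescoping sum: one factor changes at a time, each change costs $C\norm{x-y}$ (Lipschitz in $x$ on $B(2)$, using bounded $\chi^{(r+1)}$), and it is multiplied by $d^{b/2}$. The same holds for the $v\otimes D^{q-1}g$ terms. This yields $C d^{\lfloor q/2\rfloor/2}\norm{x-y}$ for close points. Points on both sides of the support are covered by taking $\chi$-factors evaluated outside $B(1)$ as zero, which is compatible with the bound since $\chi$ and its derivatives vanish smoothly at $1$; alternatively one interpolates along the segment, which stays in $B(2)$ when $\norm{x-y}<1$ and one endpoint lies in $B(1)$. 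For far points we use twice the sup bound.

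Taking $K_\beta$ as the maximum of the constants obtained gives all three inequalities.
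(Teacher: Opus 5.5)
Your proposal is correct and follows the paper's own argument. The paper also expands $D^j(\chi\circ\norm{\cdot}^2)$ into terms $\chi^{(j-i)}(\norm x^2)\,x^{\otimes(j-2i)}\otimes I^{\otimes i}$, bounds them with \cref{lem:tensor-facts}, and removes the odd-$q$ obstacle the way your telescoping does: a derivative of $D^q\psi_v$ in a unit direction $e$ only hits $\inner vx$, a scalar coefficient, or a factor $x$, so $D^{q+1}\psi_v(x)[e]$ keeps at most $\lfloor q/2\rfloor$ identity factors. The only difference is cosmetic: the paper combines the sup bound and this Lipschitz bound through $\min\{A,Bt\}\le A^{1-\gamma}(Bt)^\gamma$, where you split into the cases $\norm{x-y}\ge1$ and $\norm{x-y}<1$.
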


\begin{proof}
Let $s(x)=\norm x^2$, so that $Ds(x)[u]=2\inner xu$, $D^2s[u,w]=2\inner uw$
and $D^3s=0$. By the chain rule, $D^j(\chi\circ s)$ is a sum, over the
partitions of $\{1,\dots,j\}$ into $i$ pairs and $j-2i$ singletons, of index
permutations of
$2^{j-i}\chi^{(j-i)}(\norm x^2)\,x^{\otimes(j-2i)}\otimes I^{\otimes i}$.
These terms vanish unless $\norm x\le1$, and by \cref{lem:tensor-facts} a term
with $i$ pairs has Hilbert--Schmidt norm at most $C_jd^{i/2}$ and operator
norm at most $C_j$, where $C_j$ depends only on $j$ and $\chi$.

By the product rule, $D^q\psi_v$ is the sum of $\inner vx D^q(\chi\circ s)$
and of the terms in which $v$ fills one slot of $D^{q-1}(\chi\circ s)$; each
of its terms has at most $\lfloor q/2\rfloor$ identity factors. A further
derivative in a unit direction $e$ replaces $\inner vx$ by $\inner ve$, turns
a coefficient $\chi^{(r)}(\norm x^2)$ into $2\inner xe\chi^{(r+1)}(\norm x^2)$,
or replaces a factor $x$ by $e$; none of these creates an identity factor.
Hence, with $B_q,E_q$ depending only on $q$ and $\chi$,
\[
\sup_x\norm{D^q\psi_v(x)}_{\HS}\le B_qd^{\lfloor q/2\rfloor/2},
\qquad
\sup_{x,\norm e=1}\norm{D^{q+1}\psi_v(x)[e]}_{\HS}\le E_qd^{\lfloor q/2\rfloor/2}.
\]
Integration along segments gives
$\norm{D^q\psi_v(x)-D^q\psi_v(y)}_{\HS}\le
d^{\lfloor q/2\rfloor/2}\min\{2B_q,E_q\norm{x-y}\}$, and
$\min\{A,Bt\}\le A^{1-\gamma}(Bt)^\gamma$ gives the Hilbert--Schmidt bound. On
unit vectors every factor $\inner xu$ or $\inner uw$ of these tensors is at
most one in absolute value, which gives the operator bound and the Hessian
bound without a power of $d$.
\end{proof}

The second profile replaces the linear factor by a narrow ridge. Let
$\vartheta:\R\to\R$ be smooth and odd, equal to its argument on
$[-1/2,1/2]$ and vanishing outside $[-1,1]$ (for instance
$\vartheta(s)=s\chi(s^2)$), and let $L_\vartheta=\sup\abs{\vartheta'}$. For a
unit vector $v$ put $\tilde\psi_v(x)=\vartheta(d^{1/4}\inner vx)\,\chi(\norm x^2)$,
a ridge of width $d^{-1/4}$.

\begin{alemma}[Ridge profile]
\label{lem:ridge-profile}
For every integer $j\ge0$ there is $\widetilde K_j$, independent of $d$
and $v$, with $\sup_x\norm{D^j\tilde\psi_v(x)}_{\HS}\le\widetilde K_jd^{j/4}$.
Consequently
\[
\norm{D^q\tilde\psi_v(x)-D^q\tilde\psi_v(y)}_{\HS}\le
\widetilde K_\beta d^{\beta/4}\norm{x-y}^\gamma,
\qquad
\widetilde K_\beta=(2\widetilde K_q)^{1-\gamma}\widetilde K_{q+1}^\gamma.
\]
\end{alemma}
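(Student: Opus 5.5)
We will prove the derivative bound by expanding $D^j\tilde\psi_v$ with the Leibniz rule and bounding each term in Hilbert--Schmidt norm by \cref{lem:tensor-facts}; the Hölder bound then follows by the interpolation used in the proof of \cref{lem:radial-profile}. Write $\tilde\psi_v=A\cdot C$ with $A(x)=\vartheta(d^{1/4}\inner vx)$ and $C(x)=\chi(\norm x^2)$. The Leibniz rule gives that $D^j\tilde\psi_v$ is a sum of index permutations of $D^iA\otimes D^{j-i}C$, with binomial multiplicities, and $\norm{X\otimes Y}_{\HS}=\norm X_{\HS}\norm Y_{\HS}$. A permutation of indices does not change the Hilbert--Schmidt norm.

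\emph{The ridge factor.} We have $D^iA(x)=d^{i/4}\vartheta^{(i)}(d^{1/4}\inner vx)\,v^{\otimes i}$. Since $\norm{v^{\otimes i}}_{\HS}=1$ and $\vartheta^{(i)}$ is bounded, we get $\norm{D^iA}_{\HS}\le\norm{\vartheta^{(i)}}_\infty d^{i/4}$.

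\emph{The radial cutoff.} By the chain-rule expansion in the proof of \cref{lem:radial-profile}, $D^{l}C$ is a sum of permutations of $2^{l-p}\chi^{(l-p)}(\norm x^2)x^{\otimes(l-2p)}\otimes I^{\otimes p}$. These terms vanish unless $\norm x\le1$. Their Hilbert--Schmidt norm is at most $C_l d^{p/2}$ with $p\le\lfloor l/2\rfloor$, since $\norm{I^{\otimes p}}_{\HS}=d^{p/2}$ and $\norm{x^{\otimes r}}_{\HS}\le1$ on the support. Thus $\norm{D^lC}_{\HS}\le C'_l d^{\lfloor l/2\rfloor/2}$.

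\emph{Combining.} A term of the Leibniz sum with $i$ ridge derivatives and $l=j-i$ cutoff derivatives has norm at most a constant times $d^{i/4+\lfloor l/2\rfloor/2}$. Since $\lfloor l/2\rfloor/2\le l/4$, the exponent is at most $(i+l)/4=j/4$. Summing the finitely many terms, with constants depending only on $j$, $\chi$ and $\vartheta$, gives $\sup_x\norm{D^j\tilde\psi_v(x)}_{\HS}\le\widetilde K_jd^{j/4}$. We choose $\widetilde K_j$ to be nondecreasing in $j$ and at least one, which is harmless. This exponent check is the key point: each pair of cutoff derivatives costs $d^{1/2}$ through an identity factor, which matches the cost of two ridge derivatives. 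No separate obstacle is expected beyond this bookkeeping.

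\emph{The Hölder bound.} For $x,y$ we have $\norm{D^q\tilde\psi_v(x)-D^q\tilde\psi_v(y)}_{\HS}\le2\widetilde K_qd^{q/4}$. Integrating along the segment, with $\norm{D^{q+1}\tilde\psi_v[e]}_{\HS}\le\norm{D^{q+1}\tilde\psi_v}_{\HS}$ for unit $e$ by \cref{lem:tensor-facts}(a), gives the bound $\widetilde K_{q+1}d^{(q+1)/4}\norm{x-y}$. Then $\min\{a,bt\}\le a^{1-\gamma}(bt)^\gamma$ yields
\[
\norm{D^q\tilde\psi_v(x)-D^q\tilde\psi_v(y)}_{\HS}\le(2\widetilde K_q)^{1-\gamma}\widetilde K_{q+1}^\gamma\,d^{\frac{q(1-\gamma)+(q+1)\gamma}{4}}\norm{x-y}^\gamma .
\]
The exponent of $d$ equals $(q+\gamma)/4=\beta/4$, which is the claimed bound.
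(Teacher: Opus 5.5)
Your proposal is correct and follows the paper's proof essentially step for step. You expand $D^j\tilde\psi_v$ by the product rule, bound the ridge factor by $d^{i/4}$ and the cutoff factor by $d^{p/2}$ with $2p\le j-i$ using the identity-factor count from \cref{lem:radial-profile}, and get the H\"older bound by interpolating $\min\{2\widetilde K_qd^{q/4},\widetilde K_{q+1}d^{(q+1)/4}\norm{x-y}\}$; the details you add (the tensor-product norm identity, the contraction bound of \cref{lem:tensor-facts}(a), support in $\norm x\le1$) are exactly what the paper's short proof leaves implicit.
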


\begin{proof}
In a product-rule term of order $j$ let $l$ derivatives fall on the ridge,
giving $d^{l/4}\vartheta^{(l)}(d^{1/4}\inner vx)v^{\otimes l}$, and the rest on
$\chi(\norm x^2)$, giving terms with $i$ identity factors and Hilbert--Schmidt
norm $O(d^{i/2})$ as in the proof of \cref{lem:radial-profile}, where
$l+2i\le j$. Since $d^{l/4}d^{i/2}\le d^{j/4}$, each term is $O(d^{j/4})$. The
H\"older bound follows as there, from
$\min\{2\widetilde K_qd^{q/4},\widetilde K_{q+1}d^{(q+1)/4}t\}\le
(2\widetilde K_qd^{q/4})^{1-\gamma}(\widetilde K_{q+1}d^{(q+1)/4}t)^\gamma$.
\end{proof}

\subsection{The radial family}

Let $0<\alpha<L$, $\rho=\min\{\alpha,(L-\alpha)/2\}$ and $\mu=\alpha+\rho$, and
let $\nu=0$ for the operator seminorm and $\nu=\lfloor q/2\rfloor$ for the
Hilbert--Schmidt seminorm. For $\omega\in\{-1,1\}^d$ put $v_\omega=\omega/\sqrt d$,
and for $a,h>0$ let
\begin{equation}
\label{eq:radial-family}
f_\omega(x)=\frac\mu2\norm x^2+ah\,\psi_{v_\omega}(x/h)
=\frac\mu2\norm x^2+a\inner{v_\omega}{x}\chi(\norm x^2/h^2),
\qquad a=\frac{L_\beta h^{\beta-1}}{K_\beta d^{\nu/2}} .
\end{equation}

\begin{atheorem}[Radial lower bound]
\label{thm:radial-lower}
Let $\beta>2$, $L_\beta,R,\sigma>0$ and $0<\alpha<L$, let the class
$\Fsc(\beta,L_\beta;\alpha,L,R)$ be defined with the operator or the
Hilbert--Schmidt seminorm, and let $\rho$ and $\nu$ be as above. Let
$K_\beta,K_2$ be the constants of \cref{lem:radial-profile},
$\bar K_2=\max\{1,K_2\}$ and
\begin{equation}
\label{eq:radial-H}
H=\min\Bigl\{\Bigl(\frac{\rho K_\beta d^{\nu/2}}{\bar K_2L_\beta}\Bigr)^{\frac1{\beta-2}},
\Bigl(\frac{\alpha RK_\beta d^{\nu/2}}{L_\beta}\Bigr)^{\frac1{\beta-1}}\Bigr\}.
\end{equation}
For every $T\ge1$,
\begin{equation}
\label{eq:radial-lower}
\Risk_T^{\mathrm G}\bigl(\Fsc(\beta,L_\beta;\alpha,L,R)\bigr)\ge\min\Bigl\{c_\beta\,\frac{L_\beta^{2/\beta}}{\alpha}\,
d^{\,2-\frac{\nu+2}{\beta}}\Bigl(\frac{\sigma^2}{T}\Bigr)^{\frac{\beta-1}{\beta}},\
\frac{3a_H^2}{64\alpha}\Bigr\},
\qquad c_\beta=\frac{3}{64}\,16^{-\frac{\beta-1}\beta}K_\beta^{-\frac2\beta},
\end{equation}
where $a_H=L_\beta H^{\beta-1}/(K_\beta d^{\nu/2})
=\min\{(\rho/\bar K_2)^{\frac{\beta-1}{\beta-2}}(K_\beta d^{\nu/2}/L_\beta)^{\frac1{\beta-2}},\alpha R\}$.
The first term is the smaller one if and only if
$T\ge K_\beta^2d^{\nu+2}\sigma^2/(16L_\beta^2H^{2\beta})$.
\end{atheorem}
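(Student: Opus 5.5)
The plan is an Assouad argument over the radial family \eqref{eq:radial-family}, with the radius $h$ chosen by balancing and capped at $H$.

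\emph{Step 1: membership.} Fix $h\le H$ and $a=L_\beta h^{\beta-1}/(K_\beta d^{\nu/2})$. We check that every $f_\omega$ lies in $\Fsc(\beta,L_\beta;\alpha,L,R)$.
\begin{itemize}
\item \emph{Smoothness.} By \cref{lem:radial-profile} and scaling, $D^q[ah\psi_v(\cdot/h)]$ has H\"older constant at most $ah^{1-\beta}K_\beta d^{\nu/2}=L_\beta$ in the relevant seminorm.
\item \emph{Curvature.} The perturbation has Hessian bounded by $aK_2/h$ in operator norm. The first constraint in \eqref{eq:radial-H} makes $a\bar K_2/h\le\rho$, so $\nabla^2 f_\omega$ lies between $(\mu-\rho)I=\alpha I$ and $(\mu+\rho)I\preceq LI$.
\item \emph{Location of the minimizer.} The second constraint gives $a\le\alpha R$. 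We want the unique minimizer to be $x_\omega=-a v_\omega/\mu$. This requires $\norm{x_\omega}\le a/\mu\le h/2$, which follows from $a/h\le\rho/\bar K_2\le\mu/2$. Inside the ball of radius $h/2$ we have $\chi=1$, so $\nabla f_\omega(x_\omega)=0$, and strong convexity gives uniqueness. Finally $\norm{x_\omega}\le a/\mu\le R$.
\end{itemize}

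\emph{Step 2: separation.} For any output $\hat x$, strong convexity gives $f_\omega(\hat x)-f_\omega^\star\ge\frac\alpha2\norm{\hat x-x_\omega}^2$. Set $\hat\omega_j=-\sgn \hat x_j$. Whenever $\hat\omega_j\ne\omega_j$, the $j$th coordinate contributes at least $(a/(\mu\sqrt d))^2$. Since $\mu\le2\alpha$, the loss is at least $\frac{a^2}{8\alpha d}\sum_j\mathbf 1\{\hat\omega_j\ne\omega_j\}$. (The statement's constant $3/64$ presumably comes from a slightly sharper bookkeeping using $\mu\le 2\alpha$ together with the $3/4$ Assouad factor; I will track constants at the end.)

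\emph{Step 3: information.} Flipping coordinate $j$ changes the function by $2a\abs{x_j}\chi(\norm x^2/h^2)/\sqrt d$. At any point this squared difference is at most $4a^2x_j^2\chi^2/d$, and summing over $j$ gives at most $4a^2h^2/d$, because $\chi$ vanishes outside $\norm x\le h$. By \cref{lem:testing}(b) and Pinsker,
\[
\sum_j\TV(P_\omega,P_{\omega^{(j)}})\le\sum_j\sqrt{\KL/2}\le\sqrt d\Bigl(\sum_j\KL/2\Bigr)^{1/2}.
\]
Averaging over $\omega$ with Jensen's inequality, this is at most $\sqrt d\,(Ta^2h^2/(d\sigma^2))^{1/2}$; the KL is taken under $P_\omega$, and the per-query bound holds pointwise, so the sum over $j$ is controlled uniformly. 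Hence the average over $j$ of the TV distances is at most $\sqrt{T}\,ah/(\sigma d)$. Assouad \eqref{eq:assouad} then yields an expected error at least
\[
\frac d2\Bigl(1-\frac{\sqrt T ah}{\sigma d}\Bigr)\cdot\frac{a^2}{8\alpha d}.
\]
If $\sqrt T ah\le\sigma d/4$, this is at least $\frac{3a^2}{64\alpha}$.

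\emph{Step 4: choosing $h$.} Let $h_0$ solve $\sqrt T a(h_0)h_0=\sigma d/4$. Since $a(h)h=L_\beta h^\beta/(K_\beta d^{\nu/2})$, this gives $h_0^{2\beta}=K_\beta^2d^{\nu+2}\sigma^2/(16L_\beta^2T)$. Take $h=\min\{h_0,H\}$, which keeps the Step 3 condition. If $h=H$, the bound is $3a_H^2/(64\alpha)$. Otherwise $a(h_0)^2=L_\beta^2h_0^{2\beta-2}K_\beta^{-2}d^{-\nu}$. Substituting $h_0$ gives
\[
\frac{3}{64\alpha}L_\beta^{2/\beta}K_\beta^{-2/\beta}16^{-(\beta-1)/\beta}\,d^{(\nu+2)(\beta-1)/\beta-\nu}\Bigl(\frac{\sigma^2}{T}\Bigr)^{(\beta-1)/\beta}.
\]
The exponent of $d$ is $2-(\nu+2)/\beta$, which matches $c_\beta$ in \eqref{eq:radial-lower}. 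Monotonicity of $a(h)$ in $h$ shows that the minimum is attained by the first term exactly when $h_0\le H$, which is the stated threshold on $T$.

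\emph{Main obstacle.} The delicate step is Step 1. The family must satisfy all class constraints simultaneously, and the minimizer must sit inside the region where $\chi=1$, uniformly in $d$. The constants $\bar K_2$ and $\rho$ in \eqref{eq:radial-H} are chosen precisely to guarantee this.
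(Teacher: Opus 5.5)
Your proposal is correct and follows the paper's proof essentially step for step. It proceeds through membership via $h\le H$ (giving $\bar K_2a/h\le\rho$, $a\le\alpha R$, and $\|x_\omega\|=a/\mu\le h/2$), the sign loss $a^2/(8\alpha d)$ per wrong coordinate, the summed Kullback--Leibler bound with Pinsker and Assouad yielding $3d/8$ wrong signs (so $3/64=\tfrac38\cdot\tfrac18$ exactly, without further bookkeeping), and finally $h=\min\{h_\star,H\}$ with monotonicity of $a(h)$; the only detail left implicit is that $\nabla f_\omega(x_\omega)=0$ also uses $\chi'=0$ at $\|x_\omega\|^2/h^2\le 1/4$, which holds because $\chi$ is constant on $(-\infty,1/4]$.
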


\begin{proof}
We show that the functions \eqref{eq:radial-family} belong to the class,
bound the error from below by the number of wrong signs of an estimate of
$\omega$, and apply Assouad's lemma \eqref{eq:assouad}.

\emph{Step 1: membership.} By \cref{lem:radial-profile} and scaling, the
$q$-th derivative of the perturbation $ah\psi_{v_\omega}(\cdot/h)$ has H\"older
constant at most $K_\beta d^{\nu/2}ah^{1-\beta}=L_\beta$ in the chosen norm, and
the quadratic does not contribute since $q\ge2$; hence $[f_\omega]_\beta\le L_\beta$.
Choose
\begin{equation}
\label{eq:radial-h}
h=\min\{H,h_\star\},\qquad
h_\star^{2\beta}=\frac{K_\beta^2d^{\nu+2}\sigma^2}{16L_\beta^2T},
\qquad\text{so that}\qquad a^2h^2\le\frac{d^2\sigma^2}{16T}.
\end{equation}
Since $h\le H$, we have $\bar K_2a/h\le\rho$ and $a\le\alpha R$. The Hessian of the perturbation has operator norm at most
$K_2a/h\le\rho$, so $\alpha I\preceq\nabla^2f_\omega\preceq(\alpha+2\rho)I\preceq LI$.
The point $x_\omega^\star=-(a/\mu)v_\omega$ has norm $a/\mu\le\rho h/\mu\le h/2$,
so $\chi=1$ and $\chi'=0$ at $\norm{x^\star_\omega}^2/h^2$, and
$\nabla f_\omega(x^\star_\omega)=\mu x^\star_\omega+av_\omega=0$. By strong
convexity $x_\omega^\star$ is the minimizer, and
$\norm{x^\star_\omega}\le a/\alpha\le R$. Thus
$f_\omega\in\Fsc(\beta,L_\beta;\alpha,L,R)$.

\emph{Step 2: from the error to signs.} For an output $\hat x$ let
$\hat\omega_j=-\sgn(\hat x_j)$. A wrong sign means
$\abs{\hat x_j-x^\star_{\omega,j}}\ge a/(\mu\sqrt d)$, and strong convexity
with $\mu\le2\alpha$ gives
\begin{equation}
\label{eq:sign-loss}
f_\omega(\hat x)-f_\omega(x_\omega^\star)\ge\frac{\alpha}{2}\norm{\hat x-x_\omega^\star}^2
\ge\frac{a^2}{8\alpha d}\sum_{j=1}^d\mathbf 1\{\hat\omega_j\ne\omega_j\}.
\end{equation}

\emph{Step 3: information.} Let $P_\omega$ be the law of the seed and the
transcript under $f_\omega$, let $\omega^{(j)}$ be $\omega$ with its $j$-th
coordinate flipped, and let $X_{t,j}$ be the $j$-th coordinate of the $t$-th
query $X_t$. By \cref{lem:testing}(b),
\[
\KL(P_\omega\Vert P_{\omega^{(j)}})=\frac{2a^2}{d\sigma^2}\E_\omega\sum_t
X_{t,j}^2\chi(\norm{X_t}^2/h^2)^2 .
\]
Since $\sum_jX_{t,j}^2\chi(\norm{X_t}^2/h^2)^2\le h^2$ for every query,
\[
\frac1{d2^d}\sum_{\omega,j}\KL(P_\omega\Vert P_{\omega^{(j)}})\le
\frac{2Ta^2h^2}{d^2\sigma^2}.
\]
Pinsker's and Jensen's inequalities bound the average total variation by
$ah\sqrt T/(d\sigma)\le1/4$. By \eqref{eq:assouad} the average number of wrong
signs is at least $3d/8$, and \eqref{eq:sign-loss} gives
\[
\max_\omega\E_\omega\bigl[f_\omega(\hat x)-f_\omega(x^\star_\omega)\bigr]\ge3a^2/(64\alpha).
\]
Since $a=L_\beta h^{\beta-1}/(K_\beta d^{\nu/2})$ is increasing in $h$,
substituting $h=h_\star$ and $h=H$ gives the two terms of
\eqref{eq:radial-lower}, and $h_\star\le H$ is the stated condition on $T$.
\end{proof}

\subsection{The ridge family}

\begin{atheorem}[Ridge lower bound]
\label{thm:ridge-lower}
Let $\beta>2$, $L_\beta,R,\sigma>0$ and $0<\alpha<L$, and let the class
$\Fsc(\beta,L_\beta;\alpha,L,R)$ be defined with the Hilbert--Schmidt seminorm.
Let $\rho=\min\{\alpha,(L-\alpha)/2\}$, let $\widetilde K_\beta,\widetilde K_2$
be the constants of \cref{lem:ridge-profile}, $L_\vartheta=\sup\abs{\vartheta'}$,
$\bar K'_2=\max\{1,\widetilde K_2\}$ and
\[
H_0=\min\bigl\{(\rho\widetilde K_\beta/(\bar K'_2L_\beta))^{1/(\beta-2)},
(\alpha R\widetilde K_\beta/L_\beta)^{1/(\beta-1)}\bigr\}.
\]
For every $T\ge1$,
\[
\Risk_T^{\mathrm G}\bigl(\Fsc(\beta,L_\beta;\alpha,L,R)\bigr)\ge\min\Bigl\{\tilde c_\beta\,\frac{L_\beta^{2/\beta}}{\alpha}\,
d^{\,\frac32-\frac3{2\beta}}\Bigl(\frac{\sigma^2}{T}\Bigr)^{\frac{\beta-1}{\beta}},\
\frac{3\tilde a_H^2}{64\alpha}\Bigr\},
\qquad\tilde c_\beta=\frac3{64}(16L_\vartheta^2)^{-\frac{\beta-1}\beta}\widetilde K_\beta^{-\frac2\beta},
\]
where $\tilde a_H=L_\beta H_0^{\beta-1}/\widetilde K_\beta
=\min\{(\rho/\bar K'_2)^{\frac{\beta-1}{\beta-2}}(\widetilde K_\beta/L_\beta)^{\frac1{\beta-2}},\alpha R\}$.
The first term is the smaller one if and only if
$T\ge\widetilde K_\beta^2d^{3/2}\sigma^2/(16L_\vartheta^2L_\beta^2H_0^{2\beta})$.
\end{atheorem}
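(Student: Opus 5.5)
The plan is to repeat the proof of \cref{thm:radial-lower} with the ridge profile of \cref{lem:ridge-profile} in place of the radial one. The family is built at two scales, an outer radius $h$ and a ridge width $w=hd^{-1/4}$. For $\omega\in\{-1,1\}^d$ and $v_\omega=\omega/\sqrt d$ I set
\[
f_\omega(x)=\frac\mu2\norm x^2+aw\,\tilde\psi_{v_\omega}(x/h)
=\frac\mu2\norm x^2+aw\,\vartheta\Bigl(\frac{\inner{v_\omega}{x}}{w}\Bigr)\chi\Bigl(\frac{\norm x^2}{h^2}\Bigr),
\qquad a=\frac{L_\beta w^{\beta-1}}{\widetilde K_\beta},
\]
with $\mu=\alpha+\rho$ as before. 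Near the origin the perturbation equals $a\inner{v_\omega}{x}$. The free parameter is $w$; it plays the role of $h$ in the radial proof, and $H_0$ bounds it.

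\emph{Step 1: membership.} Scaling in \cref{lem:ridge-profile} gives the Hilbert--Schmidt H\"older constant of $D^q$ of the perturbation:
\[
aw\,h^{-\beta}\widetilde K_\beta d^{\beta/4}=a\widetilde K_\beta\,(hd^{-1/4})^{1-\beta}=a\widetilde K_\beta w^{1-\beta}=L_\beta .
\]
The same lemma with $j=2$ bounds the Hessian: $aw\,h^{-2}\widetilde K_2d^{1/2}=\widetilde K_2a/w$. Choosing $w\le H_0$ then gives $\bar K'_2a/w\le\rho$ and $a\le\alpha R$, hence $\alpha I\preceq\nabla^2f_\omega\preceq LI$. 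The candidate minimizer is $x^\star_\omega=-(a/\mu)v_\omega$, with $\norm{x^\star_\omega}\le\rho w/\mu\le w/2\le h/2$. At this point $\chi\equiv1$ nearby and $\abs{\inner{v_\omega}{x}}/w\le1/2$, so $\vartheta$ is the identity there. Hence the gradient of the perturbation is exactly $av_\omega$, the gradient of $f_\omega$ vanishes, and $\norm{x^\star_\omega}\le a/\alpha\le R$.

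\emph{Step 2: sign loss.} This is verbatim \eqref{eq:sign-loss}: a wrong sign in coordinate $j$ costs at least $a^2/(8\alpha d)$.

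\emph{Step 3: information.} This is the only new computation, and the one I would check most carefully. Flipping $\omega_j$ changes $\inner{v_\omega}{x}$ by $2\abs{x_j}/\sqrt d$. Since $\vartheta$ is $L_\vartheta$-Lipschitz and the perturbation vanishes outside $B(h)$, the value changes by at most $2aL_\vartheta\abs{x_j}\chi(\norm x^2/h^2)/\sqrt d$. Summing the squares over $j$ at a single query gives at most
\[
\frac{4a^2L_\vartheta^2h^2}{d}=\frac{4a^2L_\vartheta^2w^2}{\sqrt d}.
\]
Then \cref{lem:testing}(b) and \eqref{eq:assouad} bound the average Kullback--Leibler divergence by $2TL_\vartheta^2a^2w^2/(d^{3/2}\sigma^2)$. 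I choose
\[
w=\min\{H_0,w_\star\},\qquad w_\star^{2\beta}=\frac{\widetilde K_\beta^2d^{3/2}\sigma^2}{16L_\vartheta^2L_\beta^2T},
\]
so that $L_\vartheta^2a^2w^2\le d^{3/2}\sigma^2/(16T)$. Pinsker's inequality then makes the average total variation at most $1/4$. As in the radial proof, this gives at least $3d/8$ wrong signs on average, and hence a worst-case error of at least $3a^2/(64\alpha)$.

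\emph{Step 4: the two terms.} Substituting $w=w_\star$ into $a^2=L_\beta^2w^{2\beta-2}/\widetilde K_\beta^2$ yields the first term. The dimension enters as $d^{\frac32\cdot\frac{\beta-1}{\beta}}=d^{\frac32-\frac3{2\beta}}$, and the constant works out to $\tilde c_\beta$. Substituting $w=H_0$ gives $3\tilde a_H^2/(64\alpha)$. Monotonicity of $a$ in $w$ identifies which term is smaller, and $w_\star\le H_0$ is exactly the stated threshold on $T$.

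The main obstacle is keeping the two scales consistent. The outer cutoff must have radius $h=wd^{1/4}$ so that the information term scales as $h^2/d=w^2/\sqrt d$, while the smoothness, curvature and minimizer-location constraints all depend only on $w$. Step 1 is where this must be verified: it requires that the dilation of the ridge lemma be applied with the correct powers of $d$.
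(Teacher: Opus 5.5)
Your proposal is correct and follows the paper's proof essentially step for step. It uses the same family with amplitude $aw=L_\beta w^\beta/\widetilde K_\beta$, outer radius $h=wd^{1/4}$ and the choice $w=\min\{H_0,w_\star\}$, and your information bound $L_\vartheta^2a^2w^2\le d^{3/2}\sigma^2/(16T)$ is the paper's $L_\vartheta^2a^2h^2\le d^2\sigma^2/(16T)$ rewritten, so $\tilde c_\beta$ and the threshold on $T$ come out exactly as stated.
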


\begin{proof}
We follow the proof of \cref{thm:radial-lower}; only the functions, their
membership and the information bound change. With $\mu,v_\omega$ as there,
$h>0$ and the ridge width $w=hd^{-1/4}$, let
\[
\tilde f_\omega(x)=\frac\mu2\norm x^2+A\,\vartheta\Bigl(\frac{\inner{v_\omega}x}{w}\Bigr)
\chi(\norm x^2/h^2),\qquad
A=\frac{L_\beta w^\beta}{\widetilde K_\beta},\qquad
a=\frac{A}{w}.
\]

\emph{Membership.} The perturbation equals $A\,\tilde\psi_{v_\omega}(x/h)$. By
\cref{lem:ridge-profile}, its $q$-th derivative has Hilbert--Schmidt H\"older
constant at most $Ah^{-\beta}\widetilde K_\beta d^{\beta/4}
=A\widetilde K_\beta w^{-\beta}=L_\beta$, so $[\tilde f_\omega]_{\beta,\HS}\le L_\beta$, and
its Hessian has operator norm at most
$Ah^{-2}\widetilde K_2d^{1/2}=\widetilde K_2a/w$. Choose $w=\min\{H_0,w_\star\}$ with
\[
w_\star^{2\beta}=\widetilde K_\beta^2d^{3/2}\sigma^2/(16L_\vartheta^2L_\beta^2T).
\]
Since $w\le H_0$, we have $\bar K_2'a/w\le\rho$ and $a\le\alpha R$, hence $\alpha I\preceq\nabla^2\tilde f_\omega\preceq LI$. The
point $-(a/\mu)v_\omega$ has norm at most $w/2$; there $\vartheta'=1$,
$\chi=1$ and $\chi'=0$, so the gradient vanishes, and this point is the
minimizer, of norm at most $a/\alpha\le R$. Thus
$\tilde f_\omega\in\Fsc(\beta,L_\beta;\alpha,L,R)$, and \eqref{eq:sign-loss} holds
with $a=A/w$.

\emph{Information.} Since $\vartheta$ is $L_\vartheta$-Lipschitz,
\[
\abs{\tilde f_\omega(x)-\tilde f_{\omega^{(j)}}(x)}\le
\frac{2L_\vartheta a}{\sqrt d}\abs{x_j}\chi(\norm x^2/h^2),
\]
and Step~3 of the proof of \cref{thm:radial-lower} bounds the average total
variation by $L_\vartheta ah\sqrt T/(d\sigma)$. The choice of $w$ gives
$L_\vartheta^2a^2h^2\le d^2\sigma^2/(16T)$, so this bound is at most $1/4$, and
the error is at least $3a^2/(64\alpha)$ with $a^2=L_\beta^2w^{2\beta-2}/\widetilde K_\beta^2$;
the two values $w=w_\star$ and $w=H_0$ give the two terms.
\end{proof}

\subsection{Consequences for the strongly convex and convex classes}
\label{app:lower-proofs}

Let $\beta>2$, $L_\beta,R,\sigma>0$ and $0<\alpha<L$, let the seminorm be either
one, let $\tau$ be as in \eqref{eq:tau} and $\rho=\min\{\alpha,(L-\alpha)/2\}$,
and put
\begin{equation}
\label{eq:Phi0}
\Phi_0=\min\Bigl\{\alpha R^2,\ \frac{\rho^{2(\beta-1)/(\beta-2)}}{\alpha L_\beta^{2/(\beta-2)}}\Bigr\},
\qquad
T_{\mathrm{lb}}=T_\star(\Phi_0)
=\frac{d^{\,2-\tau/(\beta-1)}\sigma^2L_\beta^{2/(\beta-1)}}{(\alpha\Phi_0)^{\beta/(\beta-1)}},
\end{equation}
so that $\Phi_T\le\Phi_0$ if and only if $T\ge T_{\mathrm{lb}}$. For fixed
$\beta,\alpha,L,R,L_\beta,\sigma$, $T_{\mathrm{lb}}$ is of order
$d^{\,2-\tau/(\beta-1)}\le d^2$.

\begin{proof}[Proof of \cref{thm:main}(b)]
In the operator case and in the Hilbert--Schmidt case with odd $q$,
\cref{thm:radial-lower} with $\nu=\tau$ applies, because then
$\tau=\lfloor q/2\rfloor$ or $\tau=0$ and its exponent $2-(\nu+2)/\beta$ is that
of \eqref{eq:rate}; in the Hilbert--Schmidt case with even $q$,
\cref{thm:ridge-lower} applies, because $\tau_\beta=(\beta-1)/2$ and
$\frac32-\frac3{2\beta}=2-(2+\tau_\beta)/\beta$. Since $d^{\nu/2}\ge1$, both
$a_H^2/\alpha$ and $\tilde a_H^2/\alpha$ are at least a constant depending only on
$\beta$ times $\Phi_0$. Hence
$\Risk_T^{\mathrm G}(\Fsc(\beta,L_\beta;\alpha,L,R))\ge c_\beta\min\{\Phi_T,\Phi_0\}$
for every $T\ge1$. The hard functions satisfy
$\alpha I\preceq\nabla^2f\preceq(\alpha+2\rho)I\preceq3\alpha I$; for $L=3\alpha$ one
has $\rho=\alpha$, and the bound holds on $\Fsc(\beta,L_\beta;\alpha,3\alpha,R)$. For
the term $M^2/\alpha$, \cref{prop:lipschitz-floor}(b) with $p=2$ and the same
Gaussian errors gives $M^2/(2\alpha)$: its quadratic instances have Hessian
$\alpha I$, zero H\"older seminorm and minimizer norm $M/\alpha\le R$. The
maximum of the two bounds is at least half their sum.
\end{proof}

With the parameters normalized, the two bounds of \cref{thm:main} match for
every dimension and every budget.

\begin{acorollary}[Normalized minimax rate]
\label{cor:normalized}
Let $\beta>2$, $\alpha=1$, $L=3$, $R=L_\beta=\sigma=1$, let the seminorm be
either one, and put {$p=(\beta-1)/\beta$}. There are
constants $0<c_\beta\le C_\beta$ such that for all integers $d\ge2$ and $T\ge1$
\[
c_\beta\min\Bigl\{1,\Bigl(\frac{d^{{s}}}T\Bigr)^{p}\Bigr\}
\le\Risk_T^{\mathrm G}\bigl(\Fsc(\beta,1;1,3,1)\bigr)
\le C_\beta\min\Bigl\{1,\Bigl(\frac{d^{{s}}}T\Bigr)^{p}\Bigr\},
\]
and the upper bound is attained under every noise admissible at levels
$(0,1,0)$. Consequently there is $\eps_\beta>0$ such that for
$0<\eps\le\eps_\beta$ the least number of calls of an $\eps$-accurate method
under Gaussian noise is of order {$d^s\eps^{-\beta/(\beta-1)}$}, uniformly in
$d\ge2$. Here {$s=2$} for the operator seminorm, {$s=\frac32$} under
Hilbert--Schmidt smoothness with even $q$, and $\frac32<s\le\frac53$ with odd
$q$.
\end{acorollary}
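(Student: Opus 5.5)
The plan is to specialize \cref{thm:main}(a),(b) to $\alpha=1$, $L=3$, $R=L_\beta=\sigma=1$ and handle small budgets separately. With these parameters $\kappa=3$, $\rho=\min\{1,1\}=1$, and $\Phi_0=\min\{1,1\}=1$. By \eqref{eq:rate}, $\Phi_T=d^{2-(2+\tau)/\beta}T^{-(\beta-1)/\beta}$. Since $s=2-\tau/(\beta-1)$, we have $s(\beta-1)/\beta=(2\beta-2-\tau)/\beta=2-(2+\tau)/\beta$, so $\Phi_T=(d^s/T)^p$ exactly.

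\emph{Lower bound.} \cref{thm:main}(b) with $M=0$ gives $\Risk_T^{\mathrm G}\ge c_\beta\min\{\Phi_T,1\}$ for every $T\ge1$ and every $d\ge2$. This is exactly the claimed lower bound.

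\emph{Upper bound.} Split into two regimes.
\begin{itemize}
\item If $T\ge T_1:=C_\beta d\sqrt3\log_2\bigl(3C_\beta d(1+3/\Phi_1)\bigr)$, \cref{thm:main}(a) with $M=\delta=0$ gives error at most $C_\beta\Phi_T$, under every noise admissible at levels $(0,1,0)$. Here $\Phi_1=d^{2-(2+\tau)/\beta}\ge1$, so $T_1\le C'_\beta d\log(2d)$.
\item If $T<T_1$, output $\hat x=0$. Since $\norm{\xstar}\le1$ and $\nabla^2f\preceq3I$, the error is at most $3/2$.
\end{itemize}
In the second regime $d^s/T\ge d^{s}/(C'_\beta d\log(2d))\ge c$, because $s\ge3/2>1$ and $d^{1/2}\gtrsim\log(2d)$. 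Hence $\min\{1,(d^s/T)^p\}\ge c_\beta$, and the trivial bound $3/2$ is at most $C_\beta\min\{1,(d^s/T)^p\}$. In the first regime, if $\Phi_T>1$ we again use $\min\{C_\beta\Phi_T,3/2\}$ by taking the better of the method and the output $0$. The step to check carefully is this comparison of the budget threshold $T_1\approx d\log d$ with $d^s$: it works only because $s>1$ uniformly in $d\ge2$, and the constants must not depend on $d$.

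\emph{Consequence for $\eps$-accuracy.} Set $\eps_\beta=c_\beta/2$. For $\eps\le\eps_\beta$, the upper bound shows that $T=\lceil (C_\beta/\eps)^{1/p}d^s\rceil$ calls suffice. Conversely, $\eps$-accuracy forces $c_\beta\min\{1,(d^s/T)^p\}\le\eps<c_\beta$, so $(d^s/T)^p\le\eps/c_\beta$, that is, $T\ge d^s(c_\beta/\eps)^{\beta/(\beta-1)}$.

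\emph{Values of $s$.} $s=2$ when $\tau=0$. For even $q$, $s=2-\frac12=\frac32$. For odd $q\ge3$, $s=2-\frac{q-1}{2(\beta-1)}$ with $\beta-1\in(q-1,q]$, so $s>2-\frac12=\frac32$. Since $\frac{q-1}{\beta-1}\ge\frac{q-1}{q}\ge\frac23$ for $q\ge3$, $s\le\frac53$.
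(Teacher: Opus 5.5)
Your proposal is correct and follows the paper's proof essentially step by step. You specialize \cref{thm:main}(a),(b) using $\Phi_0=1$ and $\Phi_T=(d^s/T)^p$, return $0$ (error at most $\frac32$) for budgets below a threshold of order $d\log(2d)\lesssim d^s$, and take $\eps_\beta=c_\beta/2$ for the call count. The only difference is cosmetic: the paper raises the threshold to $D'_\beta d^s$ so that $\Phi_T\le1$ holds automatically above it, whereas you keep $T_1$ and, for each $(d,T)$, take whichever of the two methods has the better guaranteed bound, which is equally valid for the minimax risk.
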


\begin{proof}
Here $\rho=1$, $\Phi_0=1$ and $\Phi_T=d^{2-(2+\tau)/\beta}T^{-p}={(d^s/T)^{p}}$,
so the lower bound is \cref{thm:main}(b). For the upper bound, $\Phi_1\ge1$,
so the budget condition of \cref{thm:main}(a) holds if $T\ge D_\beta d\log(2d)$
for a constant $D_\beta$ depending only on $\beta$, and in particular if
$T\ge D'_\beta d^{{s}}$, since {$s\ge\frac32$}; then the error is at most
$C_\beta\Phi_T$ and $\Phi_T\le1$. For $T<D'_\beta d^{{s}}$ the method that returns
$0$ without calls has error $f(0)-\fstar\le\frac32\norm\xstar^2\le\frac32$, while
$\min\{1,(d^{{s}}/T)^p\}\ge(D'_\beta)^{-p}$. For the number of calls, take
$\eps_\beta=c_\beta/2$: an error $\eps\le\eps_\beta$ excludes the first term of
the lower bound and forces $T\ge d^{{s}}(c_\beta/\eps)^{1/p}$, while
$T=\lceil d^{{s}}(C_\beta/\eps)^{1/p}\rceil\le2d^{{s}}(C_\beta/\eps)^{1/p}$ {oracle calls}
suffice. For odd $q$, $\tau=(q-1)/2$ and $q<\beta\le q+1$ give
$\frac32<s\le2-\frac{q-1}{2q}\le\frac53$.
\end{proof}

\begin{acorollary}[Convex objectives: the factor $d^2$]
\label{cor:convex}
Let $\beta>2$ and $L_\beta,R,\sigma>0$, and let $\mathcal C$ be the class of convex
$q$ times continuously differentiable functions $f$ on $\R^d$ with
$[f]_{\beta,\op}\le L_\beta$ and a minimizer in $B(R)$. There are
$c_\beta,c'_\beta>0$ such that every algorithm whose expected error is at
most $\eps$ for every $f\in\mathcal C$ under independent $N(0,\sigma^2)$ noise,
where $\eps\le c'_\beta L_\beta R^\beta$, uses $T\ge c_\beta d^2\sigma^2/\eps^2$
calls. For the Hilbert--Schmidt seminorm (that is, with $[f]_{\beta,\HS}\le L_\beta$
in the definition of $\mathcal C$) the same holds for
$\eps\le c'_\beta L_\beta R^\beta d^{-\lfloor q/2\rfloor/2}$. All hard functions have
$\nabla^2f\preceq C_\beta L_\beta R^{\beta-2}I$.
\end{acorollary}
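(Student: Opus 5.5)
The approach is to reuse the radial family \eqref{eq:radial-family} without the strongly convex quadratic. Take
\[
f_\omega(x)=\frac\mu2\norm x^2+a\inner{v_\omega}{x}\chi(\norm x^2/h^2),\qquad a=\frac{L_\beta h^{\beta-1}}{K_\beta d^{\nu/2}},
\]
but now treat $\mu$ as a free curvature parameter tied to $h$ and $\eps$, instead of $\mu\asymp\alpha$. Membership in $\mathcal C$ needs three things.
\begin{itemize}
\item Convexity: $\mu\ge K_2a/h$ suffices, since the perturbation Hessian is bounded by $K_2a/h$.
\item Seminorm: $[f_\omega]_\beta\le L_\beta$ holds by \cref{lem:radial-profile}, with $\nu=0$ for the operator seminorm and $\nu=\lfloor q/2\rfloor$ for Hilbert--Schmidt.
\item Minimizer location: the minimizer $-(a/\mu)v_\omega$ must lie in $B(R)$ and inside the region $\norm x\le h/2$ where $\chi\equiv1$.
\end{itemize}
I will therefore choose $\mu=\bar K_2a/h$, which makes $\norm{x^\star_\omega}=h/\bar K_2\le h/2$ if I enlarge $\bar K_2\ge2$. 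The constraint $h\le R$ then forces $\eps$ small relative to the natural scale. The Hessian is bounded by $\mu+K_2a/h\lesssim a/h\asymp L_\beta h^{\beta-2}\le C_\beta L_\beta R^{\beta-2}$, which gives the last claim.

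Next comes the loss-to-signs step. As in \eqref{eq:sign-loss}, strong convexity with modulus $\mu/2$ gives
\[
f_\omega(\hat x)-f^\star_\omega\ge\frac{\mu}{4}\norm{\hat x-x^\star_\omega}^2\ge\frac{\mu a^2}{4\mu^2d}\#\{\text{wrong signs}\}.
\]
So a constant fraction of wrong signs yields error $\gtrsim a^2/\mu\asymp ah$. The information step is Step 3 of \cref{thm:radial-lower} verbatim. The average total variation is at most $ah\sqrt T/(d\sigma)$, independent of $\mu$ because the quadratic cancels in $f_\omega-f_{\omega^{(j)}}$. If $ah\le d\sigma/(4\sqrt T)$, Assouad \eqref{eq:assouad} forces error $\ge c\,ah$.

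Calibration is then direct. Set $ah=C\eps$ with $C$ a large constant, so that an $\eps$-accurate method cannot have a constant fraction of wrong signs. This is only possible if the total-variation bound exceeds $1/4$, giving $T\ge d^2\sigma^2/(16a^2h^2)=c\,d^2\sigma^2/\eps^2$. The remaining check is that $h$ solving $ah=L_\beta h^\beta/(K_\beta d^{\nu/2})=C\eps$ satisfies $h\le R$ (after also requiring $h/\bar K_2\le R$). That is, $\eps\le c'_\beta L_\beta R^\beta d^{-\nu/2}$, which is exactly the stated range: no $d$-loss in the operator case, and $d^{-\lfloor q/2\rfloor/2}$ in the Hilbert--Schmidt case.

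The main obstacle is the loss bound under the weak curvature $\mu\asymp a/h$. I must make sure the quadratic lower bound holds globally and not just near the minimizer. Convexity with $\nabla^2 f_\omega\succeq(\mu-K_2a/h)I$ gives this if I take $\mu=2\bar K_2a/h$, leaving modulus $\ge\mu/2$, which is what the display above uses. The constants then adjust but not the scaling.
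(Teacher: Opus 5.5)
Your proposal is correct and matches the paper's proof: the same radial family with curvature $\mu=2\bar K_2a/h$, the same sign-loss bound $a^2/(4\mu d)$ per wrong sign, and the same Step~3 information bound combined with Assouad's lemma. The only difference is the calibration. You fix $h$ from the accuracy through $ah=C\eps$ and argue by contradiction. The paper fixes $h$ from the budget through $ah=d\sigma/(4\sqrt T)$ and then invokes monotonicity of the minimax risk in $T$ to handle small budgets. Your direct calibration makes that monotonicity step unnecessary.
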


For the operator seminorm and $d\ge256$, \cref{thm:convex-lower-main}(a) is
stronger, up to constants by the factor
$\min\{d^{1/2},(L_\beta R^\beta/\eps)^{2/\beta}\}$. \Cref{cor:convex} also covers
$d<256$ and the Hilbert--Schmidt seminorm, and it is the bound transferred in
\cref{thm:geometry-lower}(b).

\begin{proof}[Proof of \cref{cor:convex}]
\emph{The functions.} Take $\nu$ as in \cref{thm:radial-lower} and the
curvature $\mu=2\bar K_2a/h$ in \eqref{eq:radial-family}. The perturbation has
Hessian of operator norm at most $K_2a/h\le\mu/2$, so
$(\mu/2)I\preceq\nabla^2f_\omega\preceq(3\mu/2)I$; in particular $f_\omega$ is
convex. As in Step~1 of the proof of \cref{thm:radial-lower}, the point
$-(a/\mu)v_\omega$, of norm $h/(2\bar K_2)\le h/2$, is the minimizer; its norm
is at most $R$ if $h\le2\bar K_2R$. Then also
$a/h=L_\beta h^{\beta-2}/(K_\beta d^{\nu/2})\le L_\beta(2\bar K_2R)^{\beta-2}/K_\beta$,
which gives the Hessian bound with $C_\beta=3\bar K_2(2\bar K_2)^{\beta-2}/K_\beta$.
As in \eqref{eq:sign-loss}, strong convexity with modulus $\mu/2$ gives
\[
f_\omega(\hat x)-\min f_\omega\ge\frac{a^2}{4\mu d}\sum_j\mathbf1\{\hat\omega_j\ne\omega_j\}.
\]

\emph{Large budgets.} Choose $h$ by $ah=d\sigma/(4\sqrt T)$, that is,
$h^\beta=K_\beta d^{\nu/2+1}\sigma/(4L_\beta\sqrt T)$. Then $h\le2\bar K_2R$ if and only
if $T\ge T_1:=\bigl(K_\beta d^{\nu/2+1}\sigma/(4L_\beta(2\bar K_2R)^\beta)\bigr)^2$. For
such $T$, Step~3 of the proof of \cref{thm:radial-lower} bounds the average
total variation by $1/4$, and
\[
\Risk_T^{\mathrm G}(\mathcal C)\ge\frac{3a^2}{32\mu}=\frac{3ah}{64\bar K_2}=
\frac{3d\sigma}{256\bar K_2\sqrt T}.
\]

\emph{All budgets.} Put $\eps_1=3d\sigma/(256\bar K_2\sqrt{T_1})
=c''_\beta L_\beta R^\beta d^{-\nu/2}$ with $c''_\beta=3(2\bar K_2)^\beta/(64\bar K_2K_\beta)$.
An algorithm with $T<T_1$ calls has at most $T_1$ calls, so by monotonicity
its worst-case error is at least $\Risk^{\mathrm G}_{T_1}(\mathcal C)\ge\eps_1$.
Hence worst-case error $\eps<\eps_1$ forces $T\ge T_1$, and then
$\eps\ge3d\sigma/(256\bar K_2\sqrt T)$, that is,
$T\ge(3/(256\bar K_2))^2d^2\sigma^2/\eps^2$. The corollary follows with
$c_\beta=(3/(256\bar K_2))^2$ and $c'_\beta=c''_\beta/2$.
\end{proof}

\subsection{Lipschitz noise}
\label{app:lipschitz-floor}

The hard instances for Lipschitz noise are pairs $(f_+,\eta_+)$,
$(f_-,\eta_-)$ with $f_++\eta_+=f_-+\eta_-$: a Lipschitz noise cannot be
distinguished from a Lipschitz perturbation of the objective. Here
$1\le p\le\infty$, the Lipschitz noise is measured in $\norm\cdot_p$, that is,
$\abs{\eta(x)-\eta(y)}\le M\norm{x-y}_p$, and $B_p(R)=\{x:\norm x_p\le R\}$.

\begin{aproposition}[Lipschitz noise: lower bounds]
\label{prop:lipschitz-floor}
Let $1\le p\le\infty$, $\beta>2$, $L_\beta\ge0$, $R>0$ and $M>0$. Let the
centered, oblivious and adversarial components of the noise be given by a rule
that does not depend on $(f,\eta)$, for instance exact values, or independent
$N(0,\sigma^2)$ noise and no other component. Consider any algorithm, with any
number of calls and rounds and with queries and output anywhere in $\R^d$.
\begin{enumerate}[label=(\alph*),leftmargin=*]
\item \emph{(Convex objectives.)} Let $L>0$ and $M\le LRd^{-\abs{1-2/p}}$.
There are convex quadratic functions $f_+,f_-$ with
$0\le\nabla^2f_\pm(x)[v,v]\le L\norm v_p^2$ for all $x,v$ and minimizers in
$B_p(R)$, and linear functions $\eta_\pm$ with
$\abs{\eta_\pm(x)-\eta_\pm(y)}\le M\norm{x-y}_p$, such that
\[
\max_\pm\E\bigl[f_\pm(\hat x)-\min f_\pm\bigr]\ge\frac{M R}2 .
\]
\item \emph{(Strongly convex objectives.)} Let $\alpha>0$, $M\le\alpha R$
and $L\ge\alpha d^{\abs{2/p-1}}$. There are quadratic functions $f_+,f_-$ with
$\alpha\norm v_p^2\le\nabla^2f_\pm(x)[v,v]\le
L\norm v_p^2$ for all $x,v$ and minimizers in $B_p(R)$, and linear functions
$\eta_\pm$ with $\abs{\eta_\pm(x)-\eta_\pm(y)}\le M\norm{x-y}_p$, such that
\[
\max_\pm\E\bigl[f_\pm(\hat x)-\min f_\pm\bigr]\ge\frac{M^2}{2\alpha} .
\]
For $p=2$ the pair is $f_\pm(x)=\frac\alpha2\norm{x\mp(M/\alpha)v}^2$,
$\eta_\pm(x)=\pm M\inner vx$ with a unit vector $v$.
\end{enumerate}
In both parts $D^qf_\pm$ is constant, so
$[f_\pm]_{\beta,p}=[f_\pm]_{\beta,\op}=[f_\pm]_{\beta,\HS}=0$, where
$[f]_{\beta,p}$ is the H\"older seminorm of $D^qf$ with respect to
$\norm\cdot_p$ (\cref{app:geometry}).
Consequently, if the Lipschitz noise is measured in $\norm\cdot_p$ and
$\sigma,\delta\ge0$ are arbitrary, then for every number of calls $T$:
\begin{itemize}
\item if $\eps<LR^2d^{-\abs{1-2/p}}/2$, then $M_{\max}(\eps,T)\le2\eps/R$ on
every class that contains the convex quadratic functions $f$ with
$\nabla^2f[v,v]\le L\norm v_p^2$ for all $v$ and a minimizer in $B_p(R)$;
\item if $L\ge\alpha d^{\abs{2/p-1}}$ and $\eps<\alpha R^2/2$, then
$M_{\max}(\eps,T)\le\sqrt{2\alpha\eps}$ on every class that contains the
quadratic functions $f$ with $\alpha\norm v_p^2\le\nabla^2f[v,v]\le
L\norm v_p^2$ for all $v$ and a minimizer in $B_p(R)$.
\end{itemize}
Such classes are the convex $q$ times continuously differentiable $f$ with
$[f]_{\beta,p}\le L_\beta$, $\nabla^2f(x)[v,v]\le L\norm v_p^2$ for all $x,v$ and a
minimizer in $B_p(R)$ in the first case, the analogous class with
$\alpha\norm v_p^2\le\nabla^2f(x)[v,v]\le L\norm v_p^2$ in the second case,
and, for $p=2$, $\Fcvx(\beta,L_\beta;L,R)$ and $\Fsc(\beta,L_\beta;\alpha,L,R)$ with
either seminorm.
\end{aproposition}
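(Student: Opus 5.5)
The approach is an indistinguishability argument. Set $g=f_++\eta_+=f_-+\eta_-$. Every oracle value is $g(X_t)$ plus error components drawn by a rule that does not depend on $(f,\eta)$. Hence the joint law of seed, queries, observations and output $\hat x$ is the same under both instances. It therefore suffices to build the pairs and to show that for every point $x$
\[
\bigl(f_+(x)-\min f_+\bigr)+\bigl(f_-(x)-\min f_-\bigr)\ge 2\,\mathrm{target}.
\]
Averaging over the common law of $\hat x$ then gives $\max_\pm\E[\cdot]\ge$ target. I will make $f_\pm$ quadratic, so $D^qf_\pm$ is constant for $q\ge2$ and all H\"older seminorms vanish.

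\emph{Strongly convex part (b).} Take $f_\pm(x)=\frac\alpha2\norm{x\mp c\,v}^2$ and $\eta_\pm(x)=\pm\alpha c\inner vx$. Then $f_+-f_-=-2\alpha c\inner vx$, and $f_++\eta_+=f_-+\eta_-$ up to equal constants. The constants $\frac{\alpha c^2}2$ cancel, so the sum is exact.

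For $p=2$, take $v$ a unit vector and $c=M/\alpha$. Then $\eta_\pm$ is $M$-Lipschitz and the minimizers have norm $M/\alpha\le R$. The sum of errors is $\frac\alpha2(\norm{x-cv}^2+\norm{x+cv}^2)\ge\alpha c^2=M^2/\alpha$, which gives $M^2/(2\alpha)$.

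For general $p$, choose $v$ to balance three constraints at once:
\begin{itemize}
\item the Lipschitz constant of $\eta_\pm$, which is $\alpha c\norm v_{p^*}$;
\item the minimizer constraint $c\norm v_p\le R$;
\item the $\ell_p$ curvature bounds $\alpha\norm u_p^2\le\alpha\norm u_2^2\cdot(\cdot)$.
\end{itemize}
The Euclidean Hessian $\alpha I$ satisfies $\alpha\norm u_2^2\ge\alpha\norm u_p^2 d^{-\abs{1-2/p}}$. This is why I rescale: use Hessian $\alpha' I$ with $\alpha'=\alpha d^{\max(0,\,2/p-1)}$ if needed, so that the lower bound holds. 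The upper bound then needs $L\ge\alpha d^{\abs{2/p-1}}$, which is exactly the hypothesis.

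With $v=e_1$, all $\ell_p$ norms of $v$ equal $1$. Then $\eta_\pm=\pm M x_1$ is $M$-Lipschitz in every $\ell_p$ norm, since $\abs{x_1-y_1}\le\norm{x-y}_p$. The minimizers $\pm(M/\alpha)e_1$ lie in $B_p(R)$, and the sum of errors is at least $\alpha c^2$. So I will use $e_1$ and put the norm-equivalence factors only into the Hessian scaling. The main point to check is the $\ell_p$ strong-convexity lower bound $\alpha\norm u_p^2\le\nabla^2f[u,u]$ with the rescaled $\alpha'$. With $\alpha'\ge\alpha$, the error sum is $\alpha'c^2$ with $c=M/\alpha'$, i.e. $M^2/\alpha'$. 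This is smaller than needed, so a direct diagonal choice must instead satisfy $\nabla^2 f$ only in direction $e_1$ tightly. I expect this to be the main obstacle. My first attempt will be a non-isotropic Hessian: $\alpha$ along $e_1$ and larger elsewhere, tuned so that $\alpha\norm u_p^2\le\nabla^2f[u,u]\le L\norm u_p^2$. Since $\norm u_p\ge\abs{u_1}$ fails, I will compute it carefully.

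\emph{Convex part (a).} Take $f_\pm(x)=\frac{\lambda}{2}\inner{w}{x\mp R' w}^2$-type rank-one quadratics, or linear-plus-quadratic ones with $\eta_\pm=\pm M\inner vx$. Choose the curvature $\lambda=M/R$ in direction $v$ so that the minimizers are $\pm Rv$. The sum of errors is then at least $\lambda R^2=MR$, giving $MR/2$. The curvature constraint $\lambda\le L\cdot$(norm factor) is exactly $M\le LRd^{-\abs{1-2/p}}$.

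\emph{Consequences.} If $M>2\eps/R$ (respectively $M>\sqrt{2\alpha\eps}$), then the instances with a slightly smaller admissible $M'$ force an error larger than $\eps$. The $\eps$ restrictions guarantee that such an $M'$ still satisfies the construction constraints. Membership in the listed classes is immediate because the seminorms vanish.
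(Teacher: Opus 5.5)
Your indistinguishability step and the pointwise lower bound on the sum of the two errors are the paper's argument. With the direction $e_1$ they settle part (a) for every $p$ and part (b) for $p\ge2$. In (a) the curvature condition along $e_1$ is in fact milder than $M\le LRd^{-\abs{1-2/p}}$: the rank-one pair $f_\pm(x)=\frac{M}{2R}(x_1\mp R)^2$, $\eta_\pm=\pm Mx_1$ needs only $M\le LR$. So your ``exactly'' is inaccurate, but harmless. Once (a) and (b) hold, the consequences for $M_{\max}$ follow as you sketch, with $M'=\min\{M,LRd^{-\abs{1-2/p}}\}$, respectively $M'=\min\{M,\alpha R\}$.

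The genuine gap is part (b) for $1\le p<2$, and the anisotropic repair you propose cannot work.

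Take a pair $f_\pm(x)=\frac12(x\mp w)^\top H(x\mp w)$, $\eta_\pm(x)=\pm\inner{Hw}x$. Put $g=Hw$, so $\norm g_{p^*}\le M$ with $p^*=p/(p-1)$. The guaranteed error is $\frac12w^\top Hw=\frac12g^\top H^{-1}g$. The condition $u^\top Hu\ge\alpha\norm u_p^2$ gives, by conjugacy,
$g^\top H^{-1}g=\sup_u(2\inner gu-u^\top Hu)\le\norm g_{p^*}^2/\alpha$.
So $M^2/(2\alpha)$ is reached only if this is an equality at $g$.

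For $g=e_1$, equality forces the maximizer $H^{-1}e_1$ to be a multiple of $e_1$ and $H_{11}=\alpha$, that is, $He_1=\alpha e_1$. But then $u=e_1+te_2$ gives $u^\top Hu=\alpha+t^2H_{22}<\alpha(1+\abs t^p)^{2/p}=\alpha\norm u_p^2$ for small $t\ne0$, because $p<2$. Hence no Hessian with $\alpha$ along $e_1$ satisfies the $\ell_p$ strong-convexity bound. No Hessian at all attains the constant with $\eta$ along $e_1$. The isotropic rescaling loses exactly the factor $d^{2/p-1}$, as you observed.

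The missing idea is to move the perturbation to the direction in which both norm comparisons are tight: take $w=(M/\alpha)d^{-1/p}\mathbf 1$ and $H=\alpha d^{2/p-1}I$. Then:
\begin{itemize}
\item $\norm w_p=M/\alpha\le R$;
\item $\norm{Hw}_{p^*}=Md^{1/p-1}\norm{\mathbf 1}_{p^*}=M$;
\item $\frac12w^\top Hw=\frac{M^2}{2\alpha}$;
\item $\alpha\norm u_p^2\le\alpha d^{2/p-1}\norm u^2\le L\norm u_p^2$, by $\norm u\le\norm u_p\le d^{1/p-1/2}\norm u$ and $L\ge\alpha d^{2/p-1}$.
\end{itemize}
This is the paper's choice. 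Without it, the strongly convex consequence $M_{\max}(\eps,T)\le\sqrt{2\alpha\eps}$ also remains unproved for $p<2$.
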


\begin{proof}
\emph{Two indistinguishable instances.} For $\mu>0$ and $w\in\R^d$ put
$f_\pm(x)=\frac\mu2\norm{x\mp w}^2$ and $\eta_\pm(x)=\pm\mu\inner wx$. Then
$f_\pm+\eta_\pm=\frac\mu2(\norm x^2+\norm w^2)$ for both signs. As the
remaining noise does not depend on $(f,\eta)$, induction over the queries
shows that the joint law of all queries, observations and the output $\hat x$
is the same under both instances. Since $\min f_\pm=f_\pm(\pm w)=0$, the
parallelogram law gives
\[
\frac12\sum_\pm\E f_\pm(\hat x)=\frac\mu2\E\bigl[\norm{\hat x}^2+\norm w^2\bigr]
\ge\frac\mu2\norm w^2 .
\]
The functions $f_\pm$ have Hessian $\mu I$ and constant $D^qf_\pm$, and by
H\"older's inequality $\eta_\pm$ is $\mu\norm w_{p^*}$-Lipschitz with respect
to $\norm\cdot_p$, where $p^*=p/(p-1)$.

\emph{Choice of $\mu$ and $w$.} With $\mathbf 1=(1,\dots,1)$, take
\[
\begin{array}{lll}
&p\le2&p\ge2\\[0.3ex]
\text{(a)}\quad&w=Rd^{-1/p}\mathbf 1,\ \ \mu=M d^{2/p-1}/R\qquad
&w=Re_1,\ \ \mu=M/R\\[0.3ex]
\text{(b)}\quad&w=(M/\alpha)d^{-1/p}\mathbf 1,\ \ \mu=\alpha d^{2/p-1}\qquad
&w=(M/\alpha)e_1,\ \ \mu=\alpha
\end{array}
\]
In each case $\mu\norm w_{p^*}=M$; moreover $\norm w_p=R$ and
$\mu\norm w^2/2=M R/2$ in (a), and $\norm w_p=M/\alpha\le R$ and
$\mu\norm w^2/2=M^2/(2\alpha)$ in (b). Since
$\norm v\le\norm v_p\le d^{1/p-1/2}\norm v$ for $p\le2$ and
$d^{1/p-1/2}\norm v\le\norm v_p\le\norm v$ for $p\ge2$, the assumption on
$M$ in (a) gives $\mu\norm v^2\le L\norm v_p^2$, and the assumption on $L$
in (b) gives $\alpha\norm v_p^2\le\mu\norm v^2\le L\norm v_p^2$.

\emph{Admissible levels.} Let the centered, oblivious and adversarial
components vanish; this is admissible at levels $(M,\sigma,\delta)$ for
all $\sigma,\delta\ge0$. In the convex case let $M>2\eps/R$ and
$M'=\min\{M,LRd^{-\abs{1-2/p}}\}$. Then $M'R/2>\eps$ because
$\eps<LR^2d^{-\abs{1-2/p}}/2$, an $M'$-Lipschitz noise is admissible at
level $M$, and the functions of (a) with $M'$ belong to every class
of the first kind; so no method is $\eps$-accurate at level $M$ on such a
class. The strongly convex case is the same, with $M>\sqrt{2\alpha\eps}$,
$M'=\min\{M,\alpha R\}$ and (b).
\end{proof}

For $p=2$ the bound in (b) is attained by the exact minimizer of the observed
function: if $\hat x$ minimizes $f+\eta$, then $\norm{\nabla f(\hat x)}\le M$
because $\eta$ is $M$-Lipschitz, and strong convexity gives
$f(\hat x)-\fstar\le M^2/(2\alpha)$.

\section{Strongly convex objectives: {oracle calls}, rounds and Lipschitz noise}
\label{app:rounds}

This appendix proves the upper bound of \cref{thm:main}(a) and the
achievable levels in \cref{thm:main}(c). Both methods of \cref{alg:descent}
call the estimate of {Procedure~\ref{alg:stencil}} once per round. The
two-phase method is analyzed in \cref{thm:sc-rounds}; it is projected onto
$B(R)$ and uses an upper bound on the Hessian only there, which
\cref{app:gs} needs. The accelerated method is analyzed in
\cref{app:accelerated-parameters}. The optimization lemmas hold for any
oracle with a bounded bias and conditionally centered noise.

\begin{algorithm}[!htbp]
\caption{Strongly convex methods with random-frame estimates}
\label{alg:descent}
\begin{algorithmic}[1]
\Require radius $h$; $\alpha,L,R$; $\kappa=L/\alpha$; choose (a) with $N,B$,
 or (b) with $J,(B_j)_{j=1}^J$ from \cref{thm:acc-explicit}
\State \textbf{(a) two phases} ($N$ even): $z_1=0$, $k_0=16\kappa$, $P=\Proj_{B(R)}$
\For{$k=1,\dots,N$} \Comment{one round each}
\State step $\nu_k=\frac1{4L}$ if $k\le\frac N2$, else $\nu_k=\frac4{\alpha(k-N/2+k_0)}$
\State $z_{k+1}=P\bigl(z_k-\nu_k\,\gbar_{h,B}(z_k)\bigr)$
\EndFor
\State \Return $\hat x=\sum_{k=N/2+1}^{N}w_kz_k\big/\sum_{k=N/2+1}^{N}w_k$, $w_k=k-\frac N2+k_0-1$
\State \textbf{(b) accelerated} ($J$ stages): $x^{(0)}=0$, $\theta=\frac{\sqrt\kappa-1}{\sqrt\kappa+1}$, $N_0=\lceil5\sqrt\kappa\,\rceil$
\For{stages $j=1,\dots,J$} \Comment{the number of frames $B_j$ doubles}
\State $x_{-1}=x_0=x^{(j-1)}$
\For{$k=0,\dots,N_0-1$}
\State $y_k=x_k+\theta(x_k-x_{k-1})$, $x_{k+1}=y_k-\gbar_{h,B_j}(y_k)/L$
\EndFor
\State $x^{(j)}=x_{N_0}$
\EndFor
\State \Return $\hat x=x^{(J)}$
\end{algorithmic}
\end{algorithm}

\paragraph{Setting.}
We use the class $\Fsc(\beta,L_\beta;\alpha,L,R)$ with either seminorm,
$\kappa=L/\alpha$, and the noise of \cref{def:oracle}. The methods also know
$d$, $T$ and the stencil constants. Since $h_T^{2\beta}=d^{2+\tau}\sigma^2/(L_\beta^2T)$ by \eqref{eq:rate},
\begin{equation}
\label{eq:hT-identities}
L_\beta^2d^{-\tau}h_T^{2\beta-2}=\frac{d^2\sigma^2}{Th_T^2}=\alpha\Phi_T ,
\end{equation}
and $\Phi_T=\Phi_1T^{-(\beta-1)/\beta}$, where $\Phi_1$ is the value of
$\Phi_T$ at $T=1$.

\paragraph{The estimate.}
Consider round $k$ of {Procedure~\ref{alg:stencil}} with an
$\mathcal F_{k-1}$-measurable center $z$, a deterministic radius $h>0$ and
$B\ge1$ frames; it makes $2mdB$ calls. By \cref{prop:unified-estimator}, it
returns $\bar g=\nabla f(z)+b(z)+\zeta+\Delta$, where:
\begin{enumerate}[label=(E\arabic*),leftmargin=*]
\item\label{E:bias} $b(z)$ is $\mathcal F_{k-1}$-measurable and
$\norm{b(z)}\le b_f+b_\eta$, with $b_f:=\bfC L_\beta d^{-\tau/2}h^{\beta-1}$ and
$b_\eta:=A_1M$;
\item\label{E:var} $\E[\zeta\mid\mathcal F_{k-1}]=0$ and
$\E[\norm\zeta^2\mid\mathcal F_{k-1}]\le v_h^2/B$, with
$v_h^2:=\Cf L_\beta^2d^{-\tau}h^{2\beta-2}+C_\eta M^2+C_\sigma d\sigma^2/h^2$
(also for $d=1$);
\item\label{E:adv} $\norm\Delta\le b_\delta:=\sqrt d\,S_1\delta/h$ almost surely.
\end{enumerate}
The constants are those of \cref{prop:unified-estimator}. Only the sum
$\zeta+\Delta$ is $\mathcal F_k$-measurable. As explained after
\cref{prop:unified-estimator}, we use $\zeta$ only through \ref{E:var} and
through $\E\inner{\zeta}{w}=0$ for square-integrable
$\mathcal F_{k-1}$-measurable random vectors $w$.

\subsection{Projected descent in two phases}

The first phase contracts the initial distance with a constant step, and the
second uses decreasing steps and weighted averaging, as in
\citet[Section~3.2]{LacosteJulien2012}, with an offset that keeps the steps
small. The following computation is shared by all two-phase methods of the
paper.

\begin{alemma}[Two-phase recursion]
\label{lem:recursion}
Let $a,c>0$, $\mathcal B,\mathcal V\ge0$ and $k_0\ge1$, and let numbers
$\Delta_k,F_k\ge0$ and steps $\mu_k>0$ satisfy, for the indices $k$ specified
below,
\begin{equation}
\label{eq:recursion}
\Delta_{k+1}\le(1-a\mu_k)\Delta_k-c\mu_kF_k+\mu_k\mathcal B+\mu_k^2\mathcal V .
\end{equation}
\begin{enumerate}[label=(\alph*)]
\item Let \eqref{eq:recursion} hold for $k=1,\dots,N$ with
$\mu_k=2/(a(k+k_0))$, and let $w_k=k+k_0-1$ and
$\mathcal W=\sum_{k=1}^Nw_k$. Then
\[
\frac1{\mathcal W}\sum_{k=1}^Nw_kF_k\le\frac{ak_0^2\Delta_1}{cN^2}
+\frac{\mathcal B}c+\frac{4\mathcal V}{acN}.
\]
\item Let \eqref{eq:recursion} hold for $k=1,\dots,N_1+N_2$, with a constant
step $\mu_k=\mu$ for $k\le N_1$, where $a\mu\le\min\{1,2/k_0\}$, and
$\mu_{N_1+k}=2/(a(k+k_0))$ for $1\le k\le N_2$. If $N_2\ge k_0$, then, with
$w_k$ and $\mathcal W$ as in (a) for $N=N_2$,
\[
\frac1{\mathcal W}\sum_{k=1}^{N_2}w_kF_{N_1+k}\le
\frac{ak_0^2\Delta_1}{cN_2^2}\,e^{-a\mu N_1}+\frac{2\mathcal B}c
+\frac{6\mathcal V}{acN_2}.
\]
\end{enumerate}
\end{alemma}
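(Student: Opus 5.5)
The plan is the standard weighted-averaging argument of \citet[Section~3.2]{LacosteJulien2012}: part (a) is a telescoping sum, and part (b) chains a geometric contraction with part (a) applied from the shifted start $\Delta_{N_1+1}$.

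\emph{Part (a).} First rearrange \eqref{eq:recursion} as
\[
cF_k\le\Bigl(\frac1{\mu_k}-a\Bigr)\Delta_k-\frac1{\mu_k}\Delta_{k+1}+\mathcal B+\mu_k\mathcal V .
\]
Then multiply by $w_k=k+k_0-1$. With $\mu_k=2/(a(k+k_0))$ we have $1/\mu_k-a=a(k+k_0-2)/2$. Put $\phi_k=a(k+k_0-1)(k+k_0-2)/2$. The coefficient of $\Delta_k$ is then $\phi_k$, and that of $\Delta_{k+1}$ is $a(k+k_0-1)(k+k_0)/2=\phi_{k+1}$.

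Summing over $k=1,\dots,N$ telescopes the $\Delta$ terms to
\[
\phi_1\Delta_1-\phi_{N+1}\Delta_{N+1}\le ak_0(k_0-1)\Delta_1/2\le ak_0^2\Delta_1/2,
\]
where we drop $\Delta_{N+1}\ge0$. The remaining terms are handled as follows.
\begin{itemize}
\item The bias terms contribute $\mathcal W\mathcal B$.
\item Since $w_k\mu_k=2(k+k_0-1)/(a(k+k_0))\le2/a$, the variance terms contribute at most $2N\mathcal V/a$.
\item Finally $\mathcal W=N(N+2k_0-1)/2\ge N^2/2$.
\end{itemize}
Dividing by $c\mathcal W$ gives exactly the three terms of (a).

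\emph{Part (b).} In the constant-step phase, drop $-c\mu F_k\le0$. Since $a\mu\le1$, the factor $1-a\mu$ is nonnegative, so iterating $\Delta_{k+1}\le(1-a\mu)\Delta_k+\mu\mathcal B+\mu^2\mathcal V$ for $N_1$ steps and summing the geometric series gives
\[
\Delta_{N_1+1}\le e^{-a\mu N_1}\Delta_1+\frac{\mathcal B+\mu\mathcal V}{a}.
\]
The shifted sequence $\Delta_{N_1+k},F_{N_1+k}$ satisfies the hypothesis of (a) with $N=N_2$. Applying (a) to it bounds the weighted average of the $F_{N_1+k}$ by
\[
\frac{ak_0^2}{cN_2^2}\Bigl(e^{-a\mu N_1}\Delta_1+\frac{\mathcal B+\mu\mathcal V}{a}\Bigr)+\frac{\mathcal B}c+\frac{4\mathcal V}{acN_2}.
\]
The assumption $N_2\ge k_0$ controls the extra terms in two places.
\begin{itemize}
\item Since $k_0^2/N_2^2\le1$, the added bias term is at most $\mathcal B/c$, for a total of $2\mathcal B/c$.
\item Using $\mu\le2/(ak_0)$ (from $a\mu\le2/k_0$), the added variance term is $k_0^2\mu\mathcal V/(cN_2^2)\le2k_0\mathcal V/(acN_2^2)\le2\mathcal V/(acN_2)$, for a total of $6\mathcal V/(acN_2)$.
\end{itemize}

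The only delicate point is checking that the weights $w_k=k+k_0-1$ make the $\Delta$ terms telescope exactly, i.e.\ that $w_k(1/\mu_k-a)=\phi_k$ and $w_k/\mu_k=\phi_{k+1}$. The rest is bookkeeping of constants.
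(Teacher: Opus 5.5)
Your proof is correct and follows the paper's argument step for step. In (a) you use the same weighted telescoping with $w_k=k+k_0-1$; your identity $w_k/\mu_k=\phi_{k+1}$ is the paper's $w_{k+1}(w_{k+1}-1)=w_k(w_k+1)$. In (b) you iterate the constant-step phase using $1-a\mu\ge0$, insert the bound on $\Delta_{N_1+1}$ into (a), and absorb the extra terms via $N_2\ge k_0$ and $\mu\le2/(ak_0)$, exactly as the paper does.
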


\begin{proof}
(a) Since $1/(c\mu_k)=a(w_k+1)/(2c)$, dividing \eqref{eq:recursion} by
$c\mu_k$ gives
\[
F_k\le\frac a{2c}\bigl((w_k-1)\Delta_k-(w_k+1)\Delta_{k+1}\bigr)
+\frac{\mathcal B}c+\frac{\mu_k\mathcal V}c .
\]
Multiply by $w_k$ and sum. The distance terms telescope because
$w_{k+1}(w_{k+1}-1)=w_k(w_k+1)$, and at most
$\frac a{2c}k_0(k_0-1)\Delta_1$ remains. Moreover $w_k\mu_k\le2/a$ and
$\mathcal W=N(N+2k_0-1)/2\ge N^2/2$.

(b) In the first phase $0\le1-a\mu<1$, so the recursion can be iterated;
dropping $F_k\ge0$ and summing the geometric series gives
$\Delta_{N_1+1}\le(1-a\mu)^{N_1}\Delta_1+\mathcal B/a+\mu\mathcal V/a$,
and $\mu\mathcal V/a\le2\mathcal V/(a^2k_0)$. Insert this into (a), applied to
the second phase, and use $(1-a\mu)^{N_1}\le e^{-a\mu N_1}$ and
$N_2\ge k_0$.
\end{proof}

\begin{alemma}[Two-phase projected descent with a biased oracle]
\label{lem:two-phase}
Let $\mathcal K\subseteq\R^d$ be closed and convex, and let $f:\R^d\to\R$ be
twice continuously differentiable and $\alpha$-strongly convex, with its
minimizer $\xstar$ in $\mathcal K$ and $\nabla^2f(x)\preceq LI$ for
$x\in\mathcal K$; put $\kappa=L/\alpha$. Let
$(\mathcal F_k)_{k\ge0}$ be a filtration and let $z_1\in\mathcal K$ be
$\mathcal F_0$-measurable with $\norm{z_1-\xstar}\le R$. Suppose that at every
$\mathcal F_{k-1}$-measurable center $z_k\in\mathcal K$ an oracle returns an
$\mathcal F_k$-measurable vector
\[
g_k=\nabla f(z_k)+b_k+\zeta_k+\Delta_k,
\quad
\norm{b_k}\le b,\ \ \norm{\Delta_k}\le b_D\ \text{a.s.},\ \
\E[\zeta_k\mid\mathcal F_{k-1}]=0,\ \
\E[\norm{\zeta_k}^2\mid\mathcal F_{k-1}]\le v^2 .
\]
Fix integers $N_1\ge0$ and $N_2\ge16\kappa$, let $k_0=16\kappa$, and run,
with the Euclidean projection $\Proj_{\mathcal K}$ onto $\mathcal K$,
\[
z_{k+1}=\Proj_{\mathcal K}\bigl(z_k-\nu_k\,g_k\bigr),\qquad
\nu_k=\begin{cases}\dfrac1{4L},&k=1,\dots,N_1,\\[1.2ex]
\dfrac4{\alpha(k-N_1+k_0)},&k=N_1+1,\dots,N_1+N_2 .\end{cases}
\]
Output
\[
\hat x=\mathcal W^{-1}\sum_{k=N_1+1}^{N_1+N_2}w_kz_k,
\qquad w_k=k-N_1+k_0-1,
\qquad \mathcal W=\sum_{k=N_1+1}^{N_1+N_2}w_k .
\]
Then
\begin{equation}
\label{eq:tp-final}
\E[f(\hat x)-\fstar]\le\frac{\alpha R^2}{2}\,e^{-N_1/(8\kappa)}
+\frac{7(b+b_D)^2}{\alpha}+\frac{48\,v^2}{\alpha N_2}.
\end{equation}
\end{alemma}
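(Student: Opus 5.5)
The plan is to derive a one-step recursion of the form \eqref{eq:recursion} for $\Delta_k=\E\norm{z_k-\xstar}^2$ and $F_k=\E[f(z_k)-\fstar]$, and then to apply \cref{lem:recursion}(b) with $\mu=\nu$ in the first phase and $\mu_k=\nu_k$ in the second. Convexity of $f$ turns the weighted average of $F_k$ into a bound on $f(\hat x)-\fstar$.

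\emph{One step.} Since $\xstar\in\mathcal K$, the projection is nonexpansive, so
$\norm{z_{k+1}-\xstar}^2\le\norm{z_k-\xstar}^2-2\nu_k\inner{g_k}{z_k-\xstar}+\nu_k^2\norm{g_k}^2$.
I split $g_k=\nabla f(z_k)+e_k+\zeta_k$ with $e_k=b_k+\Delta_k$ and $\norm{e_k}\le b+b_D$. The term $\E\inner{\zeta_k}{z_k-\xstar}$ vanishes because $z_k$ is $\mathcal F_{k-1}$-measurable. The deterministic part $\inner{\nabla f(z_k)}{z_k-\xstar}$ is bounded below by $f(z_k)-\fstar+\frac\alpha2\norm{z_k-\xstar}^2$. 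For the perturbation I use Young's inequality, $2\abs{\inner{e_k}{z_k-\xstar}}\le\frac\alpha4\norm{z_k-\xstar}^2+\frac4\alpha(b+b_D)^2$. For the quadratic term, $\norm{g_k}^2\le 3\norm{\nabla f(z_k)}^2+3(b+b_D)^2+3\norm{\zeta_k}^2$, up to cross terms that I will handle the same way. Since the Hessian is bounded by $L$ only on $\mathcal K$, I need $\norm{\nabla f(z)}^2\le 2L(f(z)-\fstar)$ for $z\in\mathcal K$. I would prove it by integrating along the segment $[z,\xstar]\subset\mathcal K$: this gives $\norm{\nabla f(z)}\le L\norm{z-\xstar}$, and co-coercivity on the segment then gives $\inner{\nabla f(z)}{z-\xstar}\ge\norm{\nabla f(z)}^2/L$. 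I expect this to be the main obstacle, because the standard descent-lemma route evaluates $f$ outside $\mathcal K$. A safe fallback is to bound $3\nu_k^2\norm{\nabla f(z_k)}^2\le 3\nu_k^2L\inner{\nabla f(z_k)}{z_k-\xstar}$ and absorb it, which requires $3\nu_kL\le1$. Both step regimes satisfy this: $\nu_k=\frac1{4L}$ in the first phase, and $\nu_k\le\frac4{\alpha k_0}=\frac1{4L}$ in the second.

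\emph{Resulting recursion.} After absorption, with $\nu_k L\le\frac14$, I expect
$\Delta_{k+1}\le(1-\tfrac\alpha2\nu_k)\Delta_k-\nu_kF_k+\nu_k\tfrac{C(b+b_D)^2}{\alpha}+3\nu_k^2v^2$,
with $C$ absolute (about $4$, plus $3\nu_k\alpha\le1$ for the bias squared). This is \eqref{eq:recursion} with $a=\alpha/2$, $c$ of order $1$, $\mathcal B=C(b+b_D)^2/\alpha$ and $\mathcal V=3v^2$. The second-phase steps $\frac4{\alpha(k+k_0)}=\frac2{a(k+k_0)}$ match \cref{lem:recursion}. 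In the first phase, $a\mu=\frac1{8\kappa}\le\min\{1,2/k_0\}=\frac1{8\kappa}$ holds exactly, and $N_2\ge k_0$ is assumed.

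\emph{Conclusion.} \cref{lem:recursion}(b) gives a bound on the weighted average of $F_{N_1+k}$ of the form $\frac{ak_0^2R^2}{cN_2^2}e^{-N_1/(8\kappa)}+\frac{2\mathcal B}{c}+\frac{6\mathcal V}{acN_2}$. Using $N_2\ge k_0$, the first term is at most $\frac{\alpha R^2}{2c}e^{-N_1/(8\kappa)}$. Jensen's inequality for convex $f$ then bounds $f(\hat x)-\fstar$ by this weighted average. The final step is to track the constants so that they land within $\frac{\alpha R^2}{2}$, $\frac{7(b+b_D)^2}{\alpha}$ and $\frac{48v^2}{\alpha N_2}$, adjusting the Young split if necessary.
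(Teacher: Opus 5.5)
Your skeleton is the paper's: nonexpansive projection, the segment inequality $\norm{\nabla f(z)}^2\le L\inner{\nabla f(z)}{z-\xstar}$ for $z\in\mathcal K$, then \cref{lem:recursion}(b) and Jensen. That inequality is your ``fallback''; it follows from $\nabla f(z)=Q(z-\xstar)$ with $Q=\int_0^1\nabla^2f(\xstar+t(z-\xstar))\,dt$ and $\alpha I\preceq Q\preceq LI$, hence $Q^2\preceq LQ$. Note that the segment argument you sketch proves this inequality, not $\norm{\nabla f(z)}^2\le2L(f(z)-\fstar)$, so use it directly.

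The gap is in the recursion you ``expect''. Put $\bar b=b+b_D$. With your split $\norm{g_k}^2\le3\norm{\nabla f(z_k)}^2+3\bar b^2+3\norm{\zeta_k}^2$ and $L\nu_k\le\frac14$, the coefficient of $\E\inner{\nabla f(z_k)}{z_k-\xstar}$ is only $-2\nu_k+3L\nu_k^2\le-\frac54\nu_k$. You then use only $\inner{\nabla f(z)}{z-\xstar}\ge f(z)-\fstar+\frac\alpha2\norm{z-\xstar}^2$, and your Young term costs $\frac\alpha4\nu_k\norm{z_k-\xstar}^2$. Together these give $\Delta_{k+1}\le(1-\frac38\alpha\nu_k)\Delta_k-\frac54\nu_kF_k+\dots$, not $1-\frac\alpha2\nu_k$.

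The exact value matters. The telescoping in \cref{lem:recursion} with the weights $w_k=k+k_0-1$ needs $\mu_k=2/(a(k+k_0))$. So the second-phase steps $4/(\alpha(k+k_0))$ force $a=\alpha/2$.

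You can restore $a=\alpha/2$ by also using $\inner{\nabla f(z)}{z-\xstar}\ge\alpha\norm{z-\xstar}^2$ on the surplus $\frac14\nu_k$. But \cref{lem:recursion} needs $c\ge1$ to keep the first term at $\frac{\alpha R^2}2$ when $N_2=k_0$. With $c\ge1$ the contraction before Young is then at most $\frac34\alpha\nu_k$, so the Young term may cost at most $\frac14\alpha\nu_k\Delta_k$. This forces $\mathcal B\ge4\bar b^2/\alpha$ and a bias constant of at least $8$; with your $\mathcal V=3(\bar b^2+v^2)$ it is $8+\frac{36}{16}$. No re-tuning of the Young split alone reaches the constant $7$ of \eqref{eq:tp-final}.

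What is missing is a different norm split, as in the paper. Use $\norm{g_k}^2\le2\norm{\nabla f(z_k)}^2+4\bar b^2+4\norm{\zeta_k}^2$, which gives the coefficient $-2\nu_k+2L\nu_k^2\le-\frac32\nu_k$. Combine it with $\frac32\inner{\nabla f(z)}{z-\xstar}\ge f(z)-\fstar+\alpha\norm{z-\xstar}^2$, which is strong convexity plus half of strong monotonicity. Add $2\bar b\norm{z_k-\xstar}\le\frac\alpha2\norm{z_k-\xstar}^2+\frac2\alpha\bar b^2$ to get
\[
\Delta_{k+1}\le\Bigl(1-\frac{\alpha\nu_k}2\Bigr)\Delta_k-\nu_kF_k+\frac{2\nu_k\bar b^2}\alpha+4\nu_k^2(\bar b^2+v^2),
\]
that is, $a=\alpha/2$, $c=1$, $\mathcal B=2\bar b^2/\alpha$, $\mathcal V=4(\bar b^2+v^2)$. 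Then \cref{lem:recursion}(b) yields $\frac{\alpha R^2}2e^{-N_1/(8\kappa)}+\frac{4\bar b^2}\alpha+\frac{48(\bar b^2+v^2)}{\alpha N_2}$. Finally $N_2\ge16\kappa\ge16$ bounds $\frac{48\bar b^2}{\alpha N_2}$ by $\frac{3\bar b^2}\alpha$, which gives exactly the constants $7$ and $48$.
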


\begin{proof}
Write $D_k=\E\norm{z_k-\xstar}^2$, $F_k=\E[f(z_k)-\fstar]$,
$u_k=b_k+\Delta_k$ and $\bar b=b+b_D$, so that $\norm{u_k}\le\bar b$. By
induction, $z_k\in\mathcal K$ is $\mathcal F_{k-1}$-measurable and square
integrable, so $\E\inner{\zeta_k}{z_k-\xstar}=0$.

For $z\in\mathcal K$ the segment from $\xstar$ to $z$ lies in $\mathcal K$,
so $\nabla f(z)=Q(z-\xstar)$ with
$Q=\int_0^1\nabla^2f(\xstar+t(z-\xstar))\,dt$ and $\alpha I\preceq Q\preceq LI$.
Hence $Q^2\preceq LQ$, and strong convexity gives
\begin{equation}
\label{eq:tp-facts}
\norm{\nabla f(z)}^2\le L\inner{\nabla f(z)}{z-\xstar},\qquad
\tfrac32\inner{\nabla f(z)}{z-\xstar}\ge f(z)-\fstar+\alpha\norm{z-\xstar}^2 ,
\end{equation}
the second by adding $\inner{\nabla f(z)}{z-\xstar}\ge f(z)-\fstar+\frac\alpha2\norm{z-\xstar}^2$
to half of $\inner{\nabla f(z)}{z-\xstar}\ge\alpha\norm{z-\xstar}^2$.
Since $\xstar\in\mathcal K$, $D_{k+1}\le\E\norm{z_k-\nu_kg_k-\xstar}^2$, and
$\norm{g_k}^2\le2\norm{\nabla f(z_k)}^2+4\bar b^2+4\norm{\zeta_k}^2$. Every
step satisfies $\nu_k\le1/(4L)$, since $4/(\alpha(k_0+1))<1/(4L)$, so the
coefficient of $\E\inner{\nabla f(z_k)}{z_k-\xstar}\ge0$ is at most
$-2\nu_k+2L\nu_k^2\le-\frac32\nu_k$. Young's inequality
$2\bar b\norm{z_k-\xstar}\le\alpha\norm{z_k-\xstar}^2/2+2\bar b^2/\alpha$ and
\eqref{eq:tp-facts} give
\begin{equation}
\label{eq:tp-onestep}
D_{k+1}\le\Bigl(1-\frac{\alpha\nu_k}2\Bigr)D_k-\nu_kF_k
+\frac{2\nu_k\bar b^2}{\alpha}+4\nu_k^2(\bar b^2+v^2).
\end{equation}
This is \eqref{eq:recursion} with $a=\alpha/2$, $c=1$,
$\mathcal B=2\bar b^2/\alpha$, $\mathcal V=4(\bar b^2+v^2)$ and the steps of
\cref{lem:recursion}(b) with $\mu=1/(4L)$, for which $a\mu=2/k_0=1/(8\kappa)\le1$. Since
$D_1\le R^2$, $N_2\ge k_0$ and $N_2\ge16$, \cref{lem:recursion}(b) and the
convexity of $f$ give \eqref{eq:tp-final}.
\end{proof}

\subsection{{Oracle calls} and rounds without acceleration}

\begin{atheorem}[Strongly convex objectives: {oracle calls} and rounds]
\label{thm:sc-rounds}
Let $f\in\Fsc$ and let the noise be admissible at levels
$(M,\sigma,\delta)$. Let $N\ge32\kappa$ be even, $B\ge1$ an integer and
$h>0$. Run \cref{lem:two-phase} with $\mathcal K=B(R)$, $z_1=0$ and
$N_1=N_2=N/2$, where the oracle of round $k$ is the estimate of
{Procedure~\ref{alg:stencil}} with center $z_k$, radius $h$ and $B$ frames; this
is variant (a) of \cref{alg:descent}. The method uses $N$ rounds and
$T'=2mdNB$ calls, and
\begin{equation}
\label{eq:sc-general}
\begin{aligned}
\E[f(\hat x)-\fstar]\le{}&\frac{\alpha R^2}2e^{-N/(16\kappa)}
+\frac1\alpha\Bigl[(21C_{\mathrm b}^2+3C_{\mathrm f})L_\beta^2d^{-\tau}h^{2\beta-2}
+\frac{192mC_\sigma d^2\sigma^2}{T'h^2}\Bigr]\\
&+(21A_1^2+3C_\eta)\frac{M^2}{\alpha}
+21S_1^2\frac{d\delta^2}{\alpha h^2}.
\end{aligned}
\end{equation}
\begin{enumerate}[label=(\alph*),leftmargin=*]
\item Let $\sigma>0$ and $T\ge2mdN$. With $B=\lfloor T/(2mdN)\rfloor$ and $h=h_T$
the method uses at most $T$ calls, and
\begin{equation}
\label{eq:sc-rate}
\E[f(\hat x)-\fstar]\le\frac{\alpha R^2}2e^{-N/(16\kappa)}+C_\Phi\Phi_T
+C_M\frac{M^2}{\alpha}+C_\delta\frac{d\delta^2}{\alpha h_T^2},
\end{equation}
where
\[
C_\Phi=21C_{\mathrm b}^2+3C_{\mathrm f}+384mC_\sigma,\qquad
C_M=21A_1^2+3C_\eta,\qquad
C_\delta=21S_1^2 .
\]
Moreover $d\delta^2/(\alpha h_T^2)=(T\delta^2/(d\sigma^2))\Phi_T$.
\item Let $\sigma>0$ and
\[
N_T=2\bigl\lceil\max\{16\kappa,\ 8\kappa\log(\alpha R^2/(2\Phi_T))\}\bigr\rceil .
\]
If $T\ge2mdN_T$, then (a) with $N=N_T$ uses
$N_T\le2+32\kappa+16\kappa\log^+(\alpha R^2/(2\Phi_T))$ rounds, at most $T$ calls,
and
\[
\E[f(\hat x)-\fstar]\le(C_\Phi+1)\Phi_T+C_M\frac{M^2}{\alpha}
+C_\delta\frac{d\delta^2}{\alpha h_T^2}.
\]
The condition $T\ge2mdN_T$, which is equivalent to $B\ge1$, holds whenever
\begin{equation}
\label{eq:sc-minT}
T\ge8md+64md\kappa\Bigl[1+\log(64md\kappa)+\log^+\frac{\alpha R^2}{2\Phi_1}\Bigr].
\end{equation}
\end{enumerate}
The same holds if the upper bound $\nabla^2f\preceq LI$ is assumed only on
$B(R)$.
\end{atheorem}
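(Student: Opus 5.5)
The plan is to feed the estimate properties \ref{E:bias}--\ref{E:adv} into \cref{lem:two-phase} and then substitute the parameters. Take $\mathcal K=B(R)$, $z_1=0$, $N_1=N_2=N/2$. Since $\xstar\in B(R)$, we have $\norm{z_1-\xstar}\le R$. The condition $N\ge32\kappa$ gives $N_2\ge16\kappa=k_0$. The Hessian bound is only needed on $\mathcal K$, which gives the final sentence of the statement.

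The oracle of round $k$ has an $\mathcal F_{k-1}$-measurable center $z_k$ and a deterministic radius, so \cref{prop:unified-estimator} applies. Its $\zeta$ plays the role of $\zeta_k$, and its bias and adversarial parts give $b=b_f+b_\eta$ and $b_D=b_\delta$ in the lemma. I will check that the lemma's proof uses $\zeta_k$ only through $\E\inner{\zeta_k}{z_k-\xstar}=0$ and the conditional second moment, as the remark after \cref{prop:unified-estimator} permits. Only $g_k$ as a whole needs to be $\mathcal F_k$-measurable, and that holds by (iv).

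\eqref{eq:tp-final} with $v^2=v_h^2/B$ then gives
\[
\frac{\alpha R^2}{2}e^{-N/(16\kappa)}+\frac{7(b_f+b_\eta+b_\delta)^2}{\alpha}+\frac{96 v_h^2}{\alpha NB}.
\]
Apply $(x+y+z)^2\le3(x^2+y^2+z^2)$, which produces the factors $21$. In the variance term, use $96/(NB)\le 3$ on the $L_\beta^2$ and $M^2$ parts, since $N\ge32$. For the $\sigma$ part, write $NB=T'/(2md)$, giving $96\cdot 2md\cdot C_\sigma d\sigma^2/(T'h^2)=192mC_\sigma d^2\sigma^2/(T'h^2)$. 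This proves \eqref{eq:sc-general}.

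For part (a), take $B=\lfloor T/(2mdN)\rfloor\ge1$. Then $T'\le T$ and $T'\ge T/2$ because $B\ge \frac{T}{2mdN}/2$, so $d^2\sigma^2/(T'h^2)\le 2d^2\sigma^2/(Th^2)$. Substituting $h=h_T$ and using \eqref{eq:hT-identities} turns both the smoothing terms and this noise term into multiples of $\alpha\Phi_T$, which gives $C_\Phi$. For the last identity, \eqref{eq:hT-identities} gives $h_T^2=d^2\sigma^2/(T\alpha\Phi_T)$, hence $d\delta^2/(\alpha h_T^2)=T\delta^2\Phi_T/(d\sigma^2)$.

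For part (b), $N_T\ge16\kappa\log(\alpha R^2/(2\Phi_T))$ makes the exponential term at most $\Phi_T$. $N_T$ is even and at least $32\kappa$. The round count follows from $2\lceil x\rceil\le 2x+2$.

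The main obstacle is the sufficient condition \eqref{eq:sc-minT}. The difficulty is that $N_T$ depends on $T$ through $\log(1/\Phi_T)=\log(1/\Phi_1)+\frac{\beta-1}{\beta}\log T$, so the requirement $T\ge2mdN_T$ is implicit. I plan to bound $2mdN_T\le 4md+64md\kappa[1+\log^+\frac{\alpha R^2}{2\Phi_1}+\log T]$. It then suffices that $T/2\ge 32md\kappa\log T$ and $T/2\ge$ the remaining terms. The first holds once $T\ge 64md\kappa\log(64md\kappa)$ up to constants, via the standard fact that $x\ge 2a\log a$ implies $x\ge a\log x$ for $a\ge e$. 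Tuning constants so that the displayed threshold suffices exactly is the routine but delicate part.
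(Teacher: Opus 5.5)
Your derivation of \eqref{eq:sc-general} and of parts (a) and (b) is the paper's argument. You apply \cref{lem:two-phase} with $b=b_f+b_\eta$, $b_D=b_\delta$ and $v^2=v_h^2/B$, then use $(x+y+z)^2\le3(x^2+y^2+z^2)$, $96/(NB)=192md/T'\le3$, $T/2\le T'\le T$ and \eqref{eq:hT-identities}. The step that does not go through as planned is the sufficient condition \eqref{eq:sc-minT}, and it fails in two places.

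\emph{The coefficient.} Your displayed bound on $2mdN_T$ puts $64md\kappa$ in front of the logarithms. But $N_T\le2+32\kappa+16\kappa\log^+(\alpha R^2/(2\Phi_T))$ and $\log^+(\alpha R^2/(2\Phi_T))\le\log^+(\alpha R^2/(2\Phi_1))+\log T$, so the correct bound is
\[
2mdN_T\le4md+64md\kappa+32md\kappa\Bigl[\log^+\frac{\alpha R^2}{2\Phi_1}+\log T\Bigr].
\]
With the doubled coefficient, the stated threshold is unreachable.

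\emph{The split.} Requiring $T/2\ge32md\kappa\log T$ on its own fails at the threshold. Put $a=64md\kappa\ge64$ and take $\log^+(\alpha R^2/(2\Phi_1))=0$. Since $8md\le a/8$, the right side of \eqref{eq:sc-minT} lies in $[a(1+\log a),a(\frac98+\log a)]$. At such $T$,
\[
a\log T-T\ge a\bigl(\log(1+\log a)-\tfrac98\bigr)>0,
\]
because $\log(1+\log64)>1.6$. So the route through ``$x\ge2a\log a$ implies $x\ge a\log x$'' certifies only a threshold with larger constants, not \eqref{eq:sc-minT}.

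The missing idea is the tangent-line bound for the concave logarithm, $\log T\le T/a+\log a-1$ with $a=64md\kappa$. It lets the whole $\log T$ term be absorbed by $T/2$ at the price of the additive constant $\log a-1$:
\[
32md\kappa\log T\le\frac T2+32md\kappa\bigl(\log(64md\kappa)-1\bigr).
\]
Together with the corrected bound above, this gives
\[
2mdN_T\le\frac T2+4md+32md\kappa\Bigl[1+\log(64md\kappa)+\log^+\frac{\alpha R^2}{2\Phi_1}\Bigr],
\]
and $T\ge2mdN_T$ then follows exactly from \eqref{eq:sc-minT}. This is how the paper concludes.
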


For $\delta=0$, $L>\alpha$ and $T\ge T_{\mathrm{lb}}$ (see \eqref{eq:Phi0}),
\cref{thm:main}(b) shows that the bound in (b) is optimal up to a factor
depending only on $\beta$; the same holds for the accelerated bound
\eqref{eq:acc-a} below.

\begin{proof}
Apply \cref{lem:two-phase} with $b=b_f+b_\eta$, $b_D=b_\delta$,
$v^2=v_h^2/B$ and $N_1=N_2=N/2$. Its hypotheses follow from
\ref{E:bias}--\ref{E:adv}, since each center is measurable before its round,
and they involve the Hessian only on $\mathcal K=B(R)$; the estimate itself
uses no Hessian bound. Using $(b_f+b_\eta+b_\delta)^2\le3(b_f^2+b_\eta^2+b_\delta^2)$ and
$96/(NB)=192md/T'\le3$ gives \eqref{eq:sc-general}.

For (a), $\lfloor x\rfloor\ge x/2$ for $x\ge1$ gives $T/2\le T'\le T$;
substitute \eqref{eq:hT-identities}. For (b), the definition of $N_T$ makes
the initial-error term at most $\Phi_T$. Moreover
$\Phi_T\ge\Phi_1/T$ and
$\log T\le T/(64md\kappa)+\log(64md\kappa)-1$ imply
$2mdN_T\le T/2+4md+32md\kappa
[1+\log(64md\kappa)+\log^+(\alpha R^2/(2\Phi_1))]$, so \eqref{eq:sc-minT}
suffices.
\end{proof}

\subsection{An accelerated method with inexact gradients}

The method of the next lemma is the constant-momentum scheme of
\citet[Section~2.2]{Nesterov2004}, run with inexact gradients.

\begin{alemma}[Accelerated method with inexact gradients]
\label{lem:agd-noisy}
Let $\varphi:\R^d\to\R$ be differentiable and $\mu$-strongly convex with an
$L_\varphi$-Lipschitz gradient. Put $\kappa_\varphi=L_\varphi/\mu$,
$\omega=\kappa_\varphi^{-1/2}$, $\theta=(1-\omega)/(1+\omega)$, and let
$x_\varphi^\star$ be the minimizer of $\varphi$ and
$\varphi^\star=\varphi(x_\varphi^\star)$. Let $(\mathcal F_k)_{k\ge0}$ be a
filtration, let $x_0$ be $\mathcal F_0$-measurable and square integrable,
$x_{-1}=x_0$, and for $k\ge0$
\[
y_k=x_k+\theta(x_k-x_{k-1}),\qquad
x_{k+1}=y_k-\frac{g_k}{L_\varphi},\qquad
g_k=\nabla\varphi(y_k)+u_k+\zeta_k ,
\]
where $g_k$ is $\mathcal F_{k+1}$-measurable, $\norm{u_k}\le b$ almost surely,
$\E[\zeta_k\mid\mathcal F_k]=0$ and $\E[\norm{\zeta_k}^2\mid\mathcal F_k]\le v^2$.
Let $\mathcal V_0=\varphi(x_0)-\varphi^\star+\frac\mu2\norm{x_0-x_\varphi^\star}^2$.
Then for every $K\ge0$
\begin{equation}
\label{eq:agd-bound}
\E[\varphi(x_K)-\varphi^\star]\le\Bigl(1-\frac{1}{2\sqrt{\kappa_\varphi}}\Bigr)^K
\E\mathcal V_0+\frac{3b^2}{\mu}+\frac{3\sqrt{\kappa_\varphi}\,v^2}{L_\varphi}.
\end{equation}
\end{alemma}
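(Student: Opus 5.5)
I will use the standard estimate-sequence (Lyapunov) analysis of Nesterov's constant-momentum scheme for strongly convex functions, adapted to inexact gradients. Put $\omega=\kappa_\varphi^{-1/2}$ and introduce the auxiliary sequence $v_k=x_k+\omega^{-1}(x_k-x_{k-1})$, so that $y_k=(1-\omega)x_k/(1+\omega)\cdots$. Equivalently, $y_k$ is the convex combination of $x_k$ and $v_k$ with weights $\frac{1}{1+\omega}$ and $\frac{\omega}{1+\omega}$, which reproduces $\theta=(1-\omega)/(1+\omega)$. Take the potential
\[
\mathcal V_k=\varphi(x_k)-\varphi^\star+\frac\mu2\norm{v_k-x_\varphi^\star}^2 .
\]
Since $x_{-1}=x_0$ gives $v_0=x_0$, this matches the $\mathcal V_0$ of the statement. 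The goal is the one-step inequality
\[
\E[\mathcal V_{k+1}\mid\mathcal F_k]\le\Bigl(1-\frac\omega2\Bigr)\mathcal V_k+\frac{c_1b^2}{\mu}\omega+\frac{c_2v^2}{L_\varphi}.
\]
Unrolling it, with the geometric sum $\sum(1-\omega/2)^i\le2/\omega$, gives the constants $3b^2/\mu$ and $3\sqrt{\kappa_\varphi}v^2/L_\varphi$ once $c_1,c_2$ are checked to be at most $3/2$. Since $\varphi(x_K)-\varphi^\star\le\mathcal V_K$, this yields \eqref{eq:agd-bound}.

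The one-step inequality comes from three standard steps.

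\emph{Descent step.} By $L_\varphi$-smoothness,
\[
\varphi(x_{k+1})\le\varphi(y_k)-\frac1{L_\varphi}\inner{\nabla\varphi(y_k)}{g_k}+\frac1{2L_\varphi}\norm{g_k}^2 .
\]
Expand $g_k=\nabla\varphi(y_k)+u_k+\zeta_k$. The cross term with $\zeta_k$ vanishes in conditional expectation, because $y_k$ is $\mathcal F_k$-measurable. What remains is $-\frac1{2L_\varphi}\norm{\nabla\varphi(y_k)}^2$ plus terms bounded by $\frac{1}{L_\varphi}(b^2+v^2)$ up to a constant.

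\emph{Momentum variable.} Its update is $v_{k+1}=(1-\omega)v_k+\omega y_k-\frac{\omega}{\mu}g_k$. Expanding $\frac\mu2\norm{v_{k+1}-x_\varphi^\star}^2$ produces the term $-\omega\inner{g_k}{v'_k-x_\varphi^\star}$, where $v'_k=(1-\omega)v_k+\omega y_k$. The $\zeta_k$-part of this term has zero conditional mean. The $\zeta_k$-part of the quadratic term contributes $\frac{\omega^2}{2\mu}v^2=\frac{v^2}{2L_\varphi}$. The deterministic gradient part is combined exactly as in the exact-gradient proof, with strong convexity at $y_k$ toward $x_\varphi^\star$ and convexity toward $x_k$. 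This gives contraction by $(1-\omega)$, up to the error terms.

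\emph{Bias handling (the main obstacle).} The bias creates the cross term $\omega\inner{u_k}{v'_k-x_\varphi^\star}$. A naive treatment would let the bias accumulate as $b^2/(\mu\omega)$, that is, with a factor $\sqrt{\kappa_\varphi}$. To avoid this, bound it by Young's inequality as
\[
\frac{\omega\mu}{4}\norm{v'_k-x_\varphi^\star}^2+\frac{\omega b^2}{\mu}.
\]
The first piece is absorbed by surrendering half of the contraction, which turns $1-\omega$ into $1-\omega/2$. This absorption needs $\norm{v'_k-x_\varphi^\star}^2$ to be controlled by $\norm{v_k-x_\varphi^\star}^2$ and $\norm{y_k-x_\varphi^\star}^2$. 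The latter is covered by the strong-convexity term $\frac{\omega\mu}{2}\norm{y_k-x_\varphi^\star}^2$ that appears in the exact analysis; I will keep that term rather than discarding it as is often done.

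The bias per step is then $\omega b^2/\mu$. Summed against $2/\omega$, this gives $O(b^2/\mu)$ without a $\sqrt{\kappa_\varphi}$ factor, as claimed. The bias inside the descent step, $b^2/L_\varphi\le\omega b^2/\mu$ (using $\omega\ge\omega^2=\mu/L_\varphi$), is of the same order.

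Finally, I take total expectations. Integrability holds because $x_0$ is square integrable and the $u_k,\zeta_k$ are square integrable. A careful tally of constants should give $3$ for both the bias and the variance coefficients; if the constants come out slightly larger, I will adjust the Young splitting.
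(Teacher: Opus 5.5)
Your framework (the potential $\varphi(x_k)-\varphi^\star+\frac\mu2\norm{v_k-x_\varphi^\star}^2$, giving up half of the contraction to pay for the bias, unrolling with $\sum_i(1-\omega/2)^i\le2/\omega$) is the paper's. The step that carries the lemma, however, does not go through as you describe it.

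First, there is no spare strong-convexity term to ``keep''. In the exact analysis, bounding $\frac\mu2\norm{v'_k-x_\varphi^\star}^2$ by convexity of the squared norm produces $+\frac{\mu\omega}2\norm{y_k-x_\varphi^\star}^2$. This is cancelled exactly by the $-\frac{\mu\omega}2\norm{y_k-x_\varphi^\star}^2$ from strong convexity at $y_k$. So after your Young step a net $+\frac{\mu\omega^2}4\norm{y_k-x_\varphi^\star}^2$ is left uncovered. This is repairable, since its coefficient is $O(\omega^2)$ and $\norm{y_k-x_\varphi^\star}^2$ can be bounded through $\mathcal V_k$, but not by the mechanism you name.

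Second, and more seriously, you have lost a bias term. The quadratic part $\frac{\omega^2}{2\mu}\norm{g_k}^2=\frac1{2L_\varphi}\norm{g_k}^2$ of the momentum expansion cancels the $-\frac1{2L_\varphi}\norm{\nabla\varphi(y_k)}^2$ of the descent step only up to $\frac1{L_\varphi}\inner{\nabla\varphi(y_k)}{u_k+\zeta_k}+\frac1{2L_\varphi}\norm{u_k+\zeta_k}^2$. The $\zeta_k$-part of the cross term is centered. But $\frac1{L_\varphi}\inner{\nabla\varphi(y_k)}{u_k}$ is linear in $\norm{\nabla\varphi(y_k)}$, is not bounded by any multiple of $b^2/L_\varphi$, and nothing in your bookkeeping absorbs it. So the bias does not enter only through $\omega\inner{u_k}{v'_k-x_\varphi^\star}$ and $b^2/L_\varphi$, and the per-step cost $\omega b^2/\mu$ is not established.

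The missing idea is to pair the error with the post-step point rather than with $v'_k=m_k$. Write $e_k=u_k+\zeta_k$. The paper uses the exact identity $\norm{g_k}^2-\norm{\nabla\varphi(y_k)}^2+\norm{e_k}^2=2\inner{e_k}{g_k}$ together with $g_k/L_\varphi=\omega(m_k-v_{k+1})$. These collect every error term into
\[
\mathcal V_{k+1}\le(1-\omega)\mathcal V_k-\omega\inner{e_k}{v_{k+1}-x_\varphi^\star}.
\]
Since $v_{k+1}-x_\varphi^\star$ is an $\mathcal F_k$-measurable vector minus $(\omega/\mu)e_k$, the $\zeta_k$-part has expectation $(\E\inner{\zeta_k}{u_k}+\E\norm{\zeta_k}^2)/L_\varphi\le(bv+v^2)/L_\varphi$. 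The $u_k$-part is bounded pathwise by $\frac\omega2\mathcal V_{k+1}+\omega b^2/\mu$ and absorbed on the left, which gives $(1-\frac\omega2)\E\mathcal V_{k+1}\le(1-\omega)\E\mathcal V_k+\omega b^2/\mu+(bv+v^2)/L_\varphi$. Then $(1-\omega)/(1-\frac\omega2)\le1-\frac\omega2$, $2bv\le b^2+v^2$ and $\omega L_\varphi\ge\mu$ give exactly the constants $3$.

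Equivalently, you could pair $u_k$ with the exact-gradient update $m_k-\frac\omega\mu\nabla\varphi(y_k)$. Its $\frac\mu2$-weighted squared distance to $x_\varphi^\star$ is at most $(1-\omega)\mathcal V_k$, and this merges the lost cross term into the one you do treat.

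Finally, your explicit formula for $v_k$ is off. The sequence with $(1+\omega)y_k=x_k+\omega v_k$ is $v_k=x_k+\frac{1-\omega}\omega(x_k-x_{k-1})$. Your convex-combination description and your update rule are the correct ones.
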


\begin{proof}
Write $\xstar=x_\varphi^\star$, $\nabla_k=\nabla\varphi(y_k)$ and
$e_k=u_k+\zeta_k$. Put $v_0=x_0$,
$m_k=(1-\omega)v_k+\omega y_k$ and
$v_{k+1}=m_k-(\omega/\mu)g_k$. By induction, $(1+\omega)y_k=x_k+\omega v_k$, so
$m_k-\xstar=y_k-\xstar+(1-\omega)(y_k-x_k)/\omega$. The vectors $y_k$, $v_k$
and $m_k$ are $\mathcal F_k$-measurable and square integrable.
For $\mathcal V_k=\varphi(x_k)-\varphi^\star
+(\mu/2)\norm{v_k-\xstar}^2$, the estimate-sequence argument of
\citet[Section~2.2]{Nesterov2004}, with the error retained, gives
\begin{equation}
\label{eq:agd-onestep}
\mathcal V_{k+1}\le(1-\omega)\mathcal V_k
-\omega\inner{e_k}{v_{k+1}-\xstar}.
\end{equation}
Indeed, smoothness gives
$\varphi(x_{k+1})\le\varphi(y_k)-\norm{\nabla_k}^2/(2L_\varphi)
+\norm{e_k}^2/(2L_\varphi)$. Expand $\norm{v_{k+1}-\xstar}^2$, bound
$\norm{m_k-\xstar}^2$ by convexity of the squared norm, and use
$\omega^2/\mu=1/L_\varphi$, the formula for $m_k$, strong convexity at $y_k$
and convexity between $y_k$ and $x_k$. This leaves
\[
\mathcal V_{k+1}\le(1-\omega)\mathcal V_k
+\frac{\norm{g_k}^2-\norm{\nabla_k}^2+\norm{e_k}^2}{2L_\varphi}
-\omega\inner{e_k}{m_k-\xstar}.
\]
The numerator equals $2\inner{e_k}{g_k}$, and
$g_k/L_\varphi=\omega(m_k-v_{k+1})$; this proves \eqref{eq:agd-onestep}.

Since $v_{k+1}-\xstar=(m_k-\xstar-(\omega/\mu)\nabla_k)-(\omega/\mu)e_k$, where
the first vector is $\mathcal F_k$-measurable, conditional centering and
the Cauchy--Schwarz inequality give
\[
-\omega\E\inner{\zeta_k}{v_{k+1}-\xstar}
=\frac{\E\inner{\zeta_k}{u_k}+\E\norm{\zeta_k}^2}{L_\varphi}
\le\frac{bv+v^2}{L_\varphi}.
\]
Pathwise, Young's inequality gives
$-\omega\inner{u_k}{v_{k+1}-\xstar}
\le(\omega/2)\mathcal V_{k+1}+\omega b^2/\mu$. Hence
$(1-\omega/2)\E\mathcal V_{k+1}\le(1-\omega)\E\mathcal V_k+c$,
where $c=\omega b^2/\mu+(bv+v^2)/L_\varphi$. With
$\rho=(1-\omega)/(1-\omega/2)\le1-\omega/2$, iteration yields
\[
\E\mathcal V_K\le\rho^K\E\mathcal V_0+2c/\omega
\le(1-\omega/2)^K\E\mathcal V_0
+3b^2/\mu+3v^2/(\omega L_\varphi),
\]
where we used $2bv\le b^2+v^2$ and $\omega L_\varphi\ge\mu$. The claim
follows because the function gap is at most $\mathcal V_K$.
\end{proof}

\subsection{Restarts with growing batches}

Restarting an accelerated method in stages is a standard way to obtain
linear convergence for strongly convex problems
\citep[cf.\ the multistage method of][]{GhadimiLan2013}. In the next lemma the
batch size doubles from stage to stage, and the bias and the residual
variance are unknown to the method.

\begin{alemma}[Restarts with growing batches]
\label{lem:multistage}
Let $\varphi,\mu,L_\varphi,\kappa_\varphi$ be as in \cref{lem:agd-noisy}, let
$x_0\in\R^d$ be deterministic, and let $\bar{\mathcal V}_0$ be a known number with
$\bar{\mathcal V}_0\ge\varphi(x_0)-\varphi^\star+\frac\mu2\norm{x_0-x_\varphi^\star}^2$.
Let $0<\eps<\bar{\mathcal V}_0$, let $v_{\mathrm{sam}}^2\ge0$ be a known number, and put
\[
J=\Bigl\lceil\log_2\frac{\bar{\mathcal V}_0}{\eps}\Bigr\rceil,\quad
N_0=\lceil5\sqrt{\kappa_\varphi}\,\rceil,\quad
\eps_j=2^{-j}\bar{\mathcal V}_0,\quad
B_j=\max\Bigl\{1,\Bigl\lceil\frac{12\sqrt{\kappa_\varphi}\,v_{\mathrm{sam}}^2}{L_\varphi\eps_j}\Bigr\rceil\Bigr\}.
\]
Let $x^{(0)}=x_0$. For $j=1,\dots,J$ (stage $j$), run $N_0$ iterations of the method
of \cref{lem:agd-noisy} started at $x_0=x_{-1}=x^{(j-1)}$, each iteration using
a batch of $B_j$ samples, and let $x^{(j)}$ be its last iterate. Number the
iterations of all stages consecutively by $i=0,1,\dots,JN_0-1$. Let
$(\mathcal F_i)_{i\ge0}$ be a filtration such that the gradient of iteration $i$
is $\mathcal F_{i+1}$-measurable and, if iteration $i$ belongs to stage $j$, its
error is $u_i+\zeta_i$ with
\[
\norm{u_i}\le b\text{ almost surely},\qquad
\E[\zeta_i\mid\mathcal F_i]=0,\qquad
\E[\norm{\zeta_i}^2\mid\mathcal F_i]\le v_{\mathrm{sam}}^2/B_j+r^2,
\]
where $b,r\ge0$ are unknown. Then
\begin{equation}
\label{eq:ms-bound}
\E[\varphi(x^{(J)})-\varphi^\star]\le\frac34\eps+\frac{4b^2}{\mu}
+\frac{4\sqrt{\kappa_\varphi}\,r^2}{L_\varphi}.
\end{equation}
The procedure uses $JN_0\le(1+\log_2(\bar{\mathcal V}_0/\eps))(1+5\sqrt{\kappa_\varphi})$
iterations (rounds) and $\sum_{j=1}^JN_0B_j\le JN_0+288v_{\mathrm{sam}}^2/(\mu\eps)$ samples. In
particular, if $r=0$ and $b^2\le\mu\eps/16$, then
$\E[\varphi(x^{(J)})-\varphi^\star]\le\eps$.
\end{alemma}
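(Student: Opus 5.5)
We apply \cref{lem:agd-noisy} stage by stage and then unroll the resulting linear recursion over the $J$ stages. Let $W_j=\E\mathcal V^{(j)}$, where $\mathcal V^{(j)}=\varphi(x^{(j)})-\varphi^\star+\frac\mu2\norm{x^{(j)}-x^\star_\varphi}^2$ is the Lyapunov value at the start of stage $j+1$.

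\emph{One stage.} In stage $j$ the noise satisfies $\E[\norm{\zeta_i}^2\mid\mathcal F_i]\le v_j^2$ with $v_j^2=v_{\mathrm{sam}}^2/B_j+r^2$. The start $x^{(j-1)}$ is $\mathcal F$-measurable at the beginning of the stage and square integrable. So \cref{lem:agd-noisy} applies, with the filtration shifted to start at the first iteration of the stage. It bounds the function gap, but its proof actually controls $\E\mathcal V_K\le\rho^K\E\mathcal V_0+2c/\omega$. We will use this Lyapunov form, since it restarts cleanly:
\[
W_j\le\Bigl(1-\tfrac{1}{2\sqrt{\kappa_\varphi}}\Bigr)^{N_0}W_{j-1}+\frac{3b^2}{\mu}+\frac{3\sqrt{\kappa_\varphi}\,v_j^2}{L_\varphi}.
\]
With $N_0\ge5\sqrt{\kappa_\varphi}$ the contraction factor is at most $e^{-5/2}<\frac1{12}$. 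The choice of $B_j$ gives $v_{\mathrm{sam}}^2/B_j\le L_\varphi\eps_j/(12\sqrt{\kappa_\varphi})$, so the sampling term is at most $\eps_j/4$. Hence
\[
W_j\le\tfrac1{12}W_{j-1}+\tfrac14\eps_j+\frac{3b^2}{\mu}+\frac{3\sqrt{\kappa_\varphi}r^2}{L_\varphi}.
\]

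\emph{Unrolling.} We prove by induction that $W_j\le\eps_j+E$ with $E=\frac{4b^2}{\mu}+\frac{4\sqrt{\kappa_\varphi}r^2}{L_\varphi}$. The base case is $W_0\le\bar{\mathcal V}_0=\eps_0$. For the step, $\frac1{12}(\eps_{j-1}+E)+\frac14\eps_j+\frac34E=\frac16\eps_j+\frac14\eps_j+\frac56E\le\eps_j+E$. This crude induction only gives $\eps_J+E$. To reach the stated constant $\frac34\eps$, we sum the geometric series directly. The contribution of $\eps_i$ to $W_J$ is $\sum_i 12^{-(J-i)}\cdot\frac14\eps_i=\frac14\eps_J\sum_k 6^{-k}\le\frac{3}{10}\eps_J$. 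The initial term is $12^{-J}\bar{\mathcal V}_0=6^{-J}\eps_J$. The constant terms sum to at most $\frac{12}{11}\cdot\frac34E<E$. Since $\eps_J=2^{-J}\bar{\mathcal V}_0\le\eps$, the total is at most $(\frac3{10}+\frac16)\eps+E\le\frac34\eps+E$, and the function gap at $x^{(J)}$ is at most $W_J$. The \emph{in particular} claim follows because $4b^2/\mu\le\eps/4$ when $b^2\le\mu\eps/16$.

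\emph{Counting.} The number of rounds is $JN_0$, with $J\le1+\log_2(\bar{\mathcal V}_0/\eps)$ and $N_0\le1+5\sqrt{\kappa_\varphi}$. For samples, $B_j\le1+12\sqrt{\kappa_\varphi}v_{\mathrm{sam}}^2 2^j/(L_\varphi\bar{\mathcal V}_0)$, so
\[
\sum_j N_0B_j\le JN_0+N_0\cdot\frac{12\sqrt{\kappa_\varphi}\,v_{\mathrm{sam}}^2}{L_\varphi\bar{\mathcal V}_0}\,2^{J+1}.
\]
Using $2^{J}\le2\bar{\mathcal V}_0/\eps$, $N_0\le6\sqrt{\kappa_\varphi}$ and $\kappa_\varphi/L_\varphi=1/\mu$, the second term is at most $288\,v_{\mathrm{sam}}^2/(\mu\eps)$, as claimed.

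The main obstacle is the first step, specifically the need for the Lyapunov form of \cref{lem:agd-noisy}. Restarting needs $\E\mathcal V$ at the end of a stage, not just the function gap, and the stated bound \eqref{eq:agd-bound} controls only the gap. I would first re-read the proof of \cref{lem:agd-noisy}, which ends by noting that the gap is at most $\mathcal V_K$, and extract the bound on $\E\mathcal V_K$ there. One subtlety remains: at the restart $v_0=x_0$, whereas $\mathcal V_K$ contains $\norm{v_K-x^\star}$ and not $\norm{x_K-x^\star}$. Strong convexity gives $\frac\mu2\norm{x_K-x^\star}^2\le\varphi(x_K)-\varphi^\star$, so the fresh Lyapunov value is at most twice the gap. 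If that factor of $2$ is what must be tracked, the per-stage contraction becomes $2e^{-5/2}<\frac16$ and the bias terms double. The constants $\frac34$ and $4$ then have to be re-checked with these adjusted numbers.
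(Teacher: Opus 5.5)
There is a genuine gap: your central recursion $W_j\le\bigl(1-\frac1{2\sqrt{\kappa_\varphi}}\bigr)^{N_0}W_{j-1}+\frac{3b^2}{\mu}+\frac{3\sqrt{\kappa_\varphi}\,v_j^2}{L_\varphi}$ does not follow from \cref{lem:agd-noisy}. The potential whose expectation that proof controls is $\mathcal V_K=\varphi(x_K)-\varphi^\star+\frac\mu2\norm{v_K-x^\star_\varphi}^2$, which uses the auxiliary sequence $v_K$. Your $W_j$ uses $x^{(j)}=x_{N_0}$ instead, and the restart resets the auxiliary sequence to $v_0=x^{(j)}$. So $\E\mathcal V_{N_0}$ at the end of stage $j$ is not the initial potential of stage $j+1$. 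No inequality between $\norm{x_{N_0}-x^\star_\varphi}$ and $\norm{v_{N_0}-x^\star_\varphi}$ bridges the two, so everything built on the factor $\frac1{12}$ is unsupported.

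You flag this at the end, but the repair you sketch does not close it. Suppose the factor $2$ from strong convexity is applied to the whole right-hand side and the final gap is still bounded by $W_J$. With $G=\frac{3b^2}\mu+\frac{3\sqrt{\kappa_\varphi}r^2}{L_\varphi}$ you get $W_j\le\frac16W_{j-1}+\frac12\eps_j+2G$. Already for $J=1$, where $\bar{\mathcal V}_0=2\eps_1$ and $\eps_1$ may equal $\eps$, this gives $\frac56\eps+\frac{6b^2}\mu+\dots$, which exceeds both constants of \eqref{eq:ms-bound}.

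The missing observation is that restarting needs only the function gap, contrary to what your last paragraph says. The initial potential of stage $j+1$ satisfies $\varphi(x^{(j)})-\varphi^\star+\frac\mu2\norm{x^{(j)}-x^\star_\varphi}^2\le2\bigl(\varphi(x^{(j)})-\varphi^\star\bigr)$ by strong convexity. So recurse on $E_j=\E[\varphi(x^{(j)})-\varphi^\star]$, using \eqref{eq:agd-bound} exactly as stated. The factor $2$ then multiplies only the contraction, $2e^{-5/2}<\frac14$, and not the bias and sampling terms. This gives $E_1\le\frac18\bar{\mathcal V}_0+\frac14\eps_1+G$ and $E_j\le\frac14E_{j-1}+\frac14\eps_j+G$ for $j\ge2$. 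Unrolling with $\eps_j=2^{J-j}\eps_J$ gives $E_J\le\frac14\eps_J+\frac12\eps_J+\frac43G$. This is \eqref{eq:ms-bound}, since $\eps_J\le\eps$ and $\frac43G=\frac{4b^2}\mu+\frac{4\sqrt{\kappa_\varphi}r^2}{L_\varphi}$; it is the paper's argument.

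The rest of your proposal is correct as written: the per-stage contraction $e^{-5/2}$, the sampling term bounded by $\eps_j/4$ through the choice of $B_j$, the \emph{in particular} claim, and the round and sample counts.
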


\begin{proof}
Let $E_j=\E[\varphi(x^{(j)})-\varphi^\star]$ and
$G=3b^2/\mu+3\sqrt{\kappa_\varphi}\,r^2/L_\varphi$. The starting point of
each stage is measurable before the stage, so \cref{lem:agd-noisy} applies to
every stage. In it,
$(1-1/(2\sqrt{\kappa_\varphi}))^{N_0}\le e^{-5/2}<1/8$, the batch size bounds
the sampling term by $\eps_j/4$, and, after the first stage, strong
convexity bounds the initial potential by twice the starting gap. Thus
\[
E_1\le\bar{\mathcal V}_0/8+G+\eps_1/4,\qquad
E_j\le E_{j-1}/4+G+\eps_j/4\quad(j\ge2).
\]
Unrolling and using $\eps_j=2^{J-j}\eps_J$ gives
$E_J\le\eps_J/4+(4/3)G+\eps_J/2$, which is \eqref{eq:ms-bound} since
$\eps_J\le\eps$.

The round count is $JN_0$. Moreover
$B_j\le1+12\sqrt{\kappa_\varphi}v_{\mathrm{sam}}^2\,2^j/(L_\varphi\bar{\mathcal V}_0)$,
$\sum_{j=1}^J2^j<4\bar{\mathcal V}_0/\eps$ and
$N_0\le6\sqrt{\kappa_\varphi}$. Therefore
$\sum_jN_0B_j\le JN_0+288v_{\mathrm{sam}}^2/(\mu\eps)$.
For $r=0$ and $b^2\le\mu\eps/16$, the error bound is at most $\eps$.
\end{proof}

\subsection{Accelerated rounds with {random-frame estimates}}
\label{app:accelerated-parameters}

The batch sizes are chosen from the stochastic part
$v_{\mathrm{sam}}^2=C_\sigma d\sigma^2/h^2$ of the variance in \ref{E:var}. The remaining
part $r^2=\Cf L_\beta^2d^{-\tau}h^{2\beta-2}+C_\eta M^2$ is not divided by the
batch size in the analysis, which keeps the batch sizes independent of the
unknown level $M$.

\begin{atheorem}[Accelerated method with explicit constants]
\label{thm:acc-explicit}
Let $\beta>2$, $L_\beta,\alpha,R>0$, $L\ge\alpha$, $\kappa=L/\alpha$ and $\sigma>0$,
and let $\Fsc=\Fsc(\beta,L_\beta;\alpha,L,R)$ be defined with either seminorm. Put
\[
A=1152mC_\sigma,\qquad C^{\mathrm{acc}}_\Phi=A+12C_{\mathrm b}^2+4C_{\mathrm f},
\qquad C^{\mathrm{acc}}_M=12A_1^2+4C_\eta,\qquad
C^{\mathrm{acc}}_\delta=12S_1^2,
\]
and $\bar{\mathcal V}_0=(L+\alpha)R^2/2$.
\begin{enumerate}[label=(\alph*),leftmargin=*]
\item \emph{Fixed budget.} For $T\ge1$, put $h=h_T$, $\eps=A\Phi_T$ and
$v_{\mathrm{sam}}^2=C_\sigma d\sigma^2/h_T^2$. If $\bar{\mathcal V}_0\le\eps$, output
$\hat x=0$ and set $J=0$. Otherwise run \cref{lem:multistage} with
$\varphi=f$, $\mu=\alpha$, $L_\varphi=L$, $x_0=0$ and
these $\bar{\mathcal V}_0,\eps,v_{\mathrm{sam}}^2$, that is, variant (b) of
\cref{alg:descent} with
\[
N_0=\lceil5\sqrt\kappa\,\rceil,\quad
J=\lceil\log_2(\bar{\mathcal V}_0/\eps)\rceil,\quad
\eps_j=2^{-j}\bar{\mathcal V}_0,\quad
B_j=\max\{1,\lceil12\sqrt\kappa\,v_{\mathrm{sam}}^2/(L\eps_j)\rceil\};
\]
each gradient estimate is {Procedure~\ref{alg:stencil}} at the center $y_k$,
with radius $h_T$ and $B_j$ frames. If $T\ge4mdN_0J$, the method uses at
most $T$ calls and
$N=N_0J\le(1+5\sqrt\kappa)(1+\log_2^+(LR^2/(A\Phi_T)))$ rounds. For every
$f\in\Fsc$ and noise admissible at levels $(M,\sigma,\delta)$,
\begin{equation}
\label{eq:acc-a}
\E[f(\hat x)-\fstar]\le C^{\mathrm{acc}}_\Phi\Phi_T
+C^{\mathrm{acc}}_M\frac{M^2}{\alpha}
+C^{\mathrm{acc}}_\delta\frac{d\delta^2}{\alpha h_T^2}.
\end{equation}
A sufficient condition for $T\ge4mdN_0J$ is
\begin{equation}
\label{eq:acc-minT}
T\ge48md\sqrt\kappa\,\Bigl[\log_2(51md\sqrt\kappa)
+\log_2^+\frac{\bar{\mathcal V}_0}{A\Phi_1}\Bigr].
\end{equation}
\item \emph{Target accuracy and admissible errors.} Let $\sigma\ge0$,
$0<\eps<LR^2$ and
\[
h=\Bigl(\frac{\alpha\eps\,d^\tau}{8(12C_{\mathrm b}^2+4C_{\mathrm f})L_\beta^2}\Bigr)^{\frac1{2\beta-2}} .
\]
Run \cref{lem:multistage} as in (a), but with target $\eps/2$, radius $h$ and
$v_{\mathrm{sam}}^2=C_\sigma d\sigma^2/h^2$. The method uses $N_0J$ rounds, where
$J=\lceil\log_2((L+\alpha)R^2/\eps)\rceil\le2+\log_2(LR^2/\eps)$, and
\begin{equation}
\label{eq:accelerated-target-cost}
T\le2mdN_0J+\frac{1152mC_\sigma d^2\sigma^2}{\alpha\eps h^2}
\le C_\beta\Bigl(T_\star(\eps)+d\sqrt\kappa\Bigl[1+\log\frac{LR^2}{\eps}\Bigr]\Bigr)
\end{equation}
calls. It satisfies $\E[f(\hat x)-\fstar]\le\eps$ for every $f\in\Fsc$ and
noise admissible at levels $(M,\sigma,\delta)$ with
\[
M\le\Bigl(\frac{\alpha\eps}{8C^{\mathrm{acc}}_M}\Bigr)^{1/2},\qquad
\delta\le h\Bigl(\frac{\alpha\eps}{8C^{\mathrm{acc}}_\delta d}\Bigr)^{1/2}
=c_\beta\delta_\eps ,
\]
where $c_\beta$ depends only on $\beta$. The batch sizes do not depend on
$M$ or $\delta$.
\end{enumerate}
\end{atheorem}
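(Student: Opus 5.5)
The plan is to reduce both parts to \cref{lem:multistage} with $\varphi=f$, $\mu=\alpha$, $L_\varphi=L$ and $x_0=0$. The ingredients are the decomposition \ref{E:bias}--\ref{E:adv} of the random-frame estimate at the centers $y_k$, and the variance split $v_h^2/B_j\le v_{\mathrm{sam}}^2/B_j+r^2$ with
\[
v_{\mathrm{sam}}^2=\frac{C_\sigma d\sigma^2}{h^2},\qquad r^2=\Cf L_\beta^2d^{-\tau}h^{2\beta-2}+C_\eta M^2,
\]
which uses $B_j\ge1$.

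\emph{Fitting the lemma.} Each iteration is one round. The center $y_k$ is $\mathcal F_{k-1}$-measurable, so \cref{prop:unified-estimator}(iv) gives the decomposition
\[
\bar g=\nabla f(y_k)+\bigl(b(y_k)+\E[\Delta\mid\mathcal F_{k-1}]\bigr)+\tilde\zeta .
\]
Here the bias term has norm at most $b:=b_f+b_\eta+b_\delta$, and $\tilde\zeta$ is a martingale difference. Minkowski's inequality gives
\[
\mathsf N(\tilde\zeta)^2\le 2v_h^2/B_j+2b_\delta^2 .
\]
I will either absorb the factor $2$ into the constants, or add $2b_\delta^2$ to $r^2$ and $b$. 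The constants $4C_{\mathrm f}$ and $12S_1^2$ suggest the authors did exactly this.

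The global Lipschitz gradient with constant $L$ holds on $\Fsc$. The bound $\bar{\mathcal V}_0\ge f(0)-\fstar+\frac\alpha2\norm{\xstar}^2$ follows from $\norm{\xstar}\le R$. \Cref{lem:multistage} then gives
\[
\E[f(\hat x)-\fstar]\le\tfrac34\eps+\frac{4b^2}{\alpha}+\frac{4\sqrt\kappa\,r^2}{L}.
\]
Since $\sqrt\kappa/L\le1/\alpha$, the last term is at most $4r^2/\alpha$. Expanding $b^2\le3(b_f^2+b_\eta^2+b_\delta^2)$ yields the coefficients $12\bfC^2+4\Cf$, $12A_1^2+4C_\eta$ and $12S_1^2$.

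\emph{Part (a).} With $h=h_T$, \eqref{eq:hT-identities} turns $L_\beta^2d^{-\tau}h^{2\beta-2}$ into $\alpha\Phi_T$, which gives \eqref{eq:acc-a}. The term $\frac34\eps=\frac34A\Phi_T$ is absorbed into $C^{\mathrm{acc}}_\Phi$; if $\bar{\mathcal V}_0\le\eps$, output $0$ and its error is at most $\eps$.

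For the call count, the sample bound of \cref{lem:multistage} gives at most
\[
2md\Bigl(JN_0+\frac{288v_{\mathrm{sam}}^2}{\alpha\eps}\Bigr)
\]
calls. Using $d\sigma^2/h_T^2=\alpha\Phi_T T/d$, the second term equals
\[
\frac{576mC_\sigma\, T\Phi_T}{A\Phi_T}=\frac T2 ,
\]
so $A=1152mC_\sigma$ is chosen exactly so that this is $T/2$. The first term is at most $T/2$ when $T\ge4mdN_0J$. The round count is then immediate, using $\bar{\mathcal V}_0\le LR^2$.

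The sufficient condition \eqref{eq:acc-minT} is the fiddliest step. Since $\Phi_T\ge\Phi_1/T$,
\[
J\le1+\log_2^+\frac{\bar{\mathcal V}_0}{A\Phi_1}+\log_2T .
\]
The inequality $T\ge c\log_2T+c'$ with $c=24md\sqrt\kappa$ then has to be solved. I will use $\log_2T\le T/(2c)+\log_2(2c/e\ln2)$, which gives a constant like $51md\sqrt\kappa$.

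\emph{Part (b).} Choose $h$ so that $(12\bfC^2+4\Cf)L_\beta^2d^{-\tau}h^{2\beta-2}/\alpha=\eps/8$. The $M$- and $\delta$-terms are each at most $\eps/8$ by the stated thresholds. With target $\eps/2$ the total error is at most
\[
\tfrac38\eps+\tfrac38\eps\le\eps .
\]
The call count is $2md(JN_0+288v_{\mathrm{sam}}^2\cdot2/(\alpha\eps))$, which gives the first inequality of \eqref{eq:accelerated-target-cost}. Substituting $h=c_\beta h_\eps$ gives
\[
\frac{d^2\sigma^2}{\alpha\eps h^2}\asymp\frac{d^2\sigma^2}{\alpha\eps h_\eps^2}=T_\star(\eps)
\]
by \eqref{eq:heps} and \eqref{eq:Tstar}, and $J\lesssim1+\log(LR^2/\eps)$. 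The identity $\delta$-threshold $=c_\beta\delta_\eps$ is the definition of $\delta_\eps$. If $\sigma=0$ the sampling term vanishes.

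The main obstacle is the measurability bookkeeping in \cref{lem:agd-noisy}: $\zeta$ is not $\mathcal F_k$-measurable, so one must work with $\tilde\zeta$ and fold $\E[\Delta\mid\mathcal F_{k-1}]$ into the bias. Getting the exact constants through that step is the other point that needs care.
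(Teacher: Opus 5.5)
Your reduction to \cref{lem:multistage} matches the paper, and so do these pieces: the identity making the sampling cost exactly $T/2$, the round count, the logarithmic estimate behind \eqref{eq:acc-minT} (it is the paper's $a=48md\sqrt\kappa/\log2$), and the cost and threshold identities of part (b) via $h=c_\beta h_\eps$.

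The step that fails is the passage to $\tilde\zeta$ of \cref{prop:unified-estimator}(iv). It gives only $\E[\norm{\tilde\zeta}^2\mid\mathcal F_{k-1}]\le2v_{\mathrm{sam}}^2/B_j+2r^2+2b_\delta^2$. \Cref{lem:multistage}, however, needs $v_{\mathrm{sam}}^2/B_j+r^2$, with the \emph{same} $v_{\mathrm{sam}}^2$ that defines $B_j$ and a stage-independent $r^2$. None of the ways out recovers the statement:
\begin{itemize}
\item The surplus $v_{\mathrm{sam}}^2/B_j$ is up to $L\eps_j/(12\sqrt\kappa)$ and depends on the stage, so it cannot go into $r^2$.
\item Doubling $v_{\mathrm{sam}}^2$ in $B_j$ changes the algorithm and makes the sampling cost $T$ instead of $T/2$.
\item Re-unrolling the stage recursion with a doubled sampling term gives $\eps$ instead of $\frac34\eps$. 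It also gives the coefficients $8\Cf$, $8C_\eta$, $20S_1^2$ instead of $4\Cf$, $4C_\eta$, $12S_1^2$.
\item No split $(1+\lambda)\mathsf N(\zeta)^2+(1+1/\lambda)b_\delta^2$ recovers $12S_1^2$, because $b_\delta$ then enters both the bias and the variance.
\end{itemize}
So \eqref{eq:acc-a} with the stated constants is not proved. Part (b) also breaks, because its $h$, $M$ and $\delta$ thresholds are calibrated to those constants. Under your accounting the $M$- and $\delta$-terms can reach $\frac{11}{56}\eps$ and $\frac5{24}\eps$ instead of $\frac18\eps$ each. Add a leading term that can be close to $\frac12\eps$, and the total can exceed $\eps$. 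Your remark that $4\Cf$ and $12S_1^2$ reflect such a doubling is backwards.

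The remedy is that the obstacle you name is not one. \Cref{lem:agd-noisy}, and hence \cref{lem:multistage}, only needs the \emph{total} estimate to be measurable at the end of its round. The bias $u_k$ enters pathwise through $\norm{u_k}\le b$ (Young's inequality) and through $\E\inner{\zeta_k}{u_k}\le bv$ (Cauchy--Schwarz). The noise $\zeta_k$ enters only through $\E[\zeta_k\mid\mathcal F_k]=0$ tested against $\mathcal F_k$-measurable vectors, and through its conditional second moment. The paper therefore takes $u=b(y)+\Delta$, with $\norm u\le b_f+b_\eta+b_\delta$ almost surely, and the original $\zeta$ of \ref{E:var}, with $\E[\norm\zeta^2\mid\mathcal F_{k-1}]\le v_{\mathrm{sam}}^2/B_j+r^2$. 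The lemma then applies as stated and gives $\frac34\eps+12(b_f^2+b_\eta^2+b_\delta^2)/\alpha+4r^2/\alpha$. This yields exactly $C^{\mathrm{acc}}_\Phi$ (using $\frac34A\le A$), $C^{\mathrm{acc}}_M$ and $C^{\mathrm{acc}}_\delta$. In part (b) the total is $\frac38\eps+\frac18\eps+\frac14\eps<\eps$.
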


The accelerated method is not projected: its centers, queries and output may
lie outside $B(R)$, as \cref{def:oracle} allows, and the proof uses the
Hessian and estimator bounds on all of $\R^d$.

\begin{proof}[Proof of \cref{thm:acc-explicit}(a) and of the upper bound in \cref{thm:main}(a)]
Iteration $i$ of \cref{lem:multistage} is round $k=i+1$, so the filtration
of the lemma is $(\mathcal F_{k-1})_{k\ge1}$ of \cref{def:oracle}. By
\ref{E:bias}--\ref{E:adv}, the oracle of a round $k$ of stage $j$, with
center $y$, has error $u+\zeta$, where $u=b(y)+\Delta$ satisfies $\norm u\le b_f+b_\eta+b_\delta$,
$\E[\zeta\mid\mathcal F_{k-1}]=0$, and
$\E[\norm\zeta^2\mid\mathcal F_{k-1}]\le v_{\mathrm{sam}}^2/B_j+r^2$, where
$r^2=C_{\mathrm f}L_\beta^2d^{-\tau}h_T^{2\beta-2}+C_\eta M^2$.
By \eqref{eq:hT-identities},
\begin{equation}
\label{eq:acc-identities}
\frac{b_f^2}{\alpha}=C_{\mathrm b}^2\Phi_T,\quad
\frac{b_\delta^2}{\alpha}=S_1^2\frac{d\delta^2}{\alpha h_T^2},\quad
\frac{v_{\mathrm{sam}}^2}{\alpha}=C_\sigma\frac Td\Phi_T,\quad
\frac{r^2}{\alpha}=C_{\mathrm f}\Phi_T+C_\eta\frac{M^2}{\alpha}.
\end{equation}
Smoothness bounds the initial potential by $\bar{\mathcal V}_0$.
If $\bar{\mathcal V}_0\le\eps$, then $f(0)-\fstar\le\eps=A\Phi_T$, which implies
\eqref{eq:acc-a}. Otherwise
\cref{lem:multistage}, $\sqrt\kappa/L\le1/\alpha$ and
$(b_f+b_\eta+b_\delta)^2\le3(b_f^2+b_\eta^2+b_\delta^2)$ prove
\eqref{eq:acc-a}. The same lemma bounds the number of frames by
$N_0J+288v_{\mathrm{sam}}^2/(\alpha A\Phi_T)=N_0J+T/(4md)$, so the method makes at most
$2mdN_0J+T/2\le T$ calls when $T\ge4mdN_0J$. Since
$\bar{\mathcal V}_0\le LR^2$, $J\le1+\log_2^+(LR^2/(A\Phi_T))$.

A direct computation with $N_0\le6\sqrt\kappa$, $\Phi_T\ge\Phi_1/T$ and
$\log T\le T/a+\log a-1$ for $a=48md\sqrt\kappa/\log2$ shows that
\eqref{eq:acc-minT} implies $T\ge4mdN_0J$.

\Cref{thm:main}(a) follows with
$C_\beta\ge\max\{51m,C^{\mathrm{acc}}_\Phi,C^{\mathrm{acc}}_M,
C^{\mathrm{acc}}_\delta\}$: since $A\ge1$ and $\bar{\mathcal V}_0\le LR^2$,
its budget condition implies \eqref{eq:acc-minT}, and
$d\delta^2/(\alpha h_T^2)=(T\delta^2/(d\sigma^2))\Phi_T$ by
\eqref{eq:hT-identities}.
\end{proof}

\begin{proof}[Proof of \cref{thm:acc-explicit}(b) and of the achievable levels in \cref{thm:main}(c)]
As in the proof of (a), with target $\eps/2$ and radius $h$, the bound
\eqref{eq:ms-bound} is at most $3\eps/8$ plus
$(12C_{\mathrm b}^2+4C_{\mathrm f})L_\beta^2d^{-\tau}h^{2\beta-2}/\alpha=\eps/8$,
the contribution of the smoothing bias and of its residual variance, plus
$C^{\mathrm{acc}}_MM^2/\alpha+C^{\mathrm{acc}}_\delta d\delta^2/(\alpha h^2)\le\eps/4$;
the sum is less than $\eps$. The sample bound of \cref{lem:multistage} gives
\eqref{eq:accelerated-target-cost}, since $h/h_\eps$ depends only on $\beta$
and {$T_\star(\eps)=d^2\sigma^2
(\alpha\eps)^{-1}h_\eps^{-2}$} by \eqref{eq:Tstar} and
\eqref{eq:heps}; for the same reason
$h(\alpha\eps/d)^{1/2}=(h/h_\eps)\delta_\eps$. The converse bounds in
\cref{thm:main}(c) are proved in \cref{prop:lipschitz-floor} and
\cref{app:adversarial-unified}.
\end{proof}

\section{Lower bounds on the number of rounds}
\label{app:rounds-lower}

This appendix bounds from below the number of rounds needed on $\Fsc$ by
algorithms that request at most $n$ values per round, with exact values or
with Gaussian noise. \Cref{app:rl-one-round} treats one round and
\cref{app:rl-nested} treats $N$ rounds, both with the tilts of
\cref{app:rl-tilts}; \cref{app:rl-threshold} deduces the necessary number of
rounds (\cref{cor:rl-threshold}), and \cref{app:rl-discussion} compares it
with the upper bound. For $\beta=2$ and noisy values, \citet{DRY2018} proved
the one-round bound $n^{-2/(d+4)}$ on smooth strongly convex functions, which
is the exponent of \cref{thm:rl-one-round} at $\beta=2$.

\paragraph{Setting.}
Throughout this appendix $\beta>2$, $q=\lceil\beta\rceil-1\ge2$ and
$\gamma=\beta-q\in(0,1]$; the numbers $L_\beta,\alpha,R>0$ and $L\ge3\alpha$ are
fixed, and $\Fsc=\Fsc(\beta,L_\beta;\alpha,L,R)$ is defined with the operator
seminorm (the Hilbert--Schmidt seminorm is treated in \cref{rem:rl-hs}). We
write $\mathrm{vol}$ for the Lebesgue measure on $\R^d$ and $\fstar=\min f$.

\begin{adefinition}[Rounds with a bounded number of calls]
\label{def:rl-rounds}
Let $N,n\ge1$ be integers. An \emph{algorithm with $N$ rounds of at most $n$
calls} draws independent random elements $\Theta_1,\dots,\Theta_N$ with laws
fixed by the algorithm. In round $k$ it makes at most $n$ calls at points of
$\R^d$ that are measurable functions of $\Theta_1,\dots,\Theta_k$ and of the
answers of rounds $1,\dots,k-1$. The output $\hat x$ is a measurable function
of $\Theta_1,\dots,\Theta_N$ and of all answers. The answer to a call at $X_t$
is either the exact value $Y_t=f(X_t)$, or $Y_t=f(X_t)+\xi_t$ with independent
$N(0,\sigma^2)$ variables $\xi_t$, independent of $\Theta_1,\dots,\Theta_N$
(the \emph{Gaussian model}). For a class $\mathcal C$ and $\sigma\ge0$ let
\[
\Risk_{N,n}(\mathcal C;\sigma)=\inf\,\sup_{f\in\mathcal C}\E\bigl[f(\hat x)-\fstar\bigr],
\]
where the infimum is over all algorithms with $N$ rounds of at most $n$ calls,
with exact values if $\sigma=0$ and in the Gaussian model if $\sigma>0$.
\end{adefinition}

This is the round structure of \cref{sec:setup}, and the Gaussian model is
the oracle of \cref{def:oracle} with only the centered component. Calls whose
answers are ignored do not change the law of the output, so in the proofs we
may assume that every round has exactly $n$ calls. An algorithm with $N$
rounds and $T$ calls in total has at most $T$ calls per round, so every lower
bound on $\Risk_{N,n}$ below also holds, with $n=T$, for such algorithms.

\subsection{Tilts}
\label{app:rl-tilts}

Let $\chi$ be the cutoff and $K_\beta,K_2\ge1$ the constants of
\cref{lem:radial-profile}. For $z,v\in\R^d$ and $r>0$ the \emph{tilt} with
center $z$, slope $v$ and radius $r$ is
\begin{equation}
\label{eq:rl-tilt}
P_{z,v,r}(x)=-\inner{v}{x-z}\,\chi\bigl(\norm{x-z}^2/r^2\bigr).
\end{equation}

\begin{alemma}[Translated and rescaled tilts]
\label{lem:rl-tilts}
Let $z,v\in\R^d$ and $r>0$. The function $P_{z,v,r}$ is infinitely
differentiable and vanishes where $\norm{x-z}\ge r$; where
$\norm{x-z}\le r/2$ it satisfies $P_{z,v,r}(x)=-\inner{v}{x-z}$ and
$\nabla P_{z,v,r}(x)=-v$. Moreover
\begin{equation}
\label{eq:rl-tilt-bounds}
[P_{z,v,r}]_{\beta,\op}\le K_\beta\norm v\,r^{1-\beta},
\qquad
\sup_{x\in\R^d}\norm{\nabla^2P_{z,v,r}(x)}_{\op}\le K_2\norm v/r .
\end{equation}
\end{alemma}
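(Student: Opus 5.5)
The lemma reduces to \cref{lem:radial-profile} by a change of variables. Write the tilt as a rescaled and translated radial profile. If $v=0$ there is nothing to prove, so let $v\ne0$ and $e=v/\norm v$. With $y=(x-z)/r$,
\[
P_{z,v,r}(x)=-\norm v\,r\,\inner{e}{y}\chi(\norm y^2)=-\norm v\,r\,\psi_e\bigl((x-z)/r\bigr),
\]
where $\psi_e$ is the radial profile \eqref{eq:radial-profile}. Smoothness follows from the smoothness of $\chi$ and of polynomials.

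For the support claims, use the properties of $\chi$ directly. If $\norm{x-z}\ge r$, then $\norm y^2\ge1$, so $\chi=0$ and $P_{z,v,r}(x)=0$. If $\norm{x-z}\le r/2$, then $\norm y^2\le1/4$, and $\chi$ is identically $1$ on a neighborhood of $[0,1/4]$. Strictly, $\chi=1$ holds for $s\le1/4$, so at the boundary point $s=1/4$ the derivative $\chi'$ vanishes as a one-sided limit of a constant function. This is enough, because $\chi\in C^\infty$ and $\chi'=0$ on $(-\infty,1/4)$ force $\chi'(1/4)=0$ by continuity. Hence $P=-\inner{v}{x-z}$ on this ball. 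Its gradient is $-v\chi-2\inner{v}{x-z}\chi'\cdot(x-z)/r^2=-v$ there, computed directly so that the closed-ball boundary causes no issue.

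For the bounds \eqref{eq:rl-tilt-bounds}, use the scaling $D^jP(x)=-\norm v\,r^{1-j}D^j\psi_e((x-z)/r)$. For $j=q$, the H\"older difference gives
\[
\norm{D^qP(x)-D^qP(x')}_{\op}\le\norm v\,r^{1-q}K_\beta\norm{(x-x')/r}^\gamma=K_\beta\norm v\,r^{1-\beta}\norm{x-x'}^\gamma,
\]
using the operator bound of \cref{lem:radial-profile} and $q+\gamma=\beta$. For $j=2$, the Hessian bound $\sup\norm{\nabla^2\psi_e}_{\op}\le K_2$ gives $\sup_x\norm{\nabla^2P(x)}_{\op}\le K_2\norm v/r$.

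There is no real obstacle. The only point needing care is the bookkeeping of the powers of $r$ under the chain rule, which is routine.
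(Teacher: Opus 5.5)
Your proposal is correct and follows essentially the same route as the paper: rewrite $P_{z,v,r}(x)=-\norm v\,r\,\psi_u((x-z)/r)$, read off the local form and support from $\chi$, and transfer the operator-norm bounds of \cref{lem:radial-profile} through $D^kP_{z,v,r}(x)=-\norm v\,r^{1-k}D^k\psi_u((x-z)/r)$. Drop the phrase saying $\chi$ is identically $1$ on a neighborhood of $[0,1/4]$: for the cutoff defined in the paper, $\chi<1$ just above $1/4$. Your follow-up argument that $\chi'(1/4)=0$ by continuity of $\chi'$ is the correct justification.
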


\begin{proof}
The support and the local form follow from $\chi=0$ on $[1,\infty)$ and
$\chi=1$, $\chi'=0$ on $(-\infty,1/4]$. For $v\ne0$ let $u=v/\norm v$; with the
profile $\psi_u$ of \eqref{eq:radial-profile},
\begin{equation}
\label{eq:rl-tilt-scaling}
P_{z,v,r}(x)=-\norm v\,r\,\psi_u\bigl((x-z)/r\bigr),\qquad
D^kP_{z,v,r}(x)=-\norm v\,r^{1-k}\,D^k\psi_u\bigl((x-z)/r\bigr),
\end{equation}
and the operator-norm bounds of \cref{lem:radial-profile} give
\eqref{eq:rl-tilt-bounds}.
\end{proof}

\subsection{One round}
\label{app:rl-one-round}

\begin{atheorem}[One round with Gaussian noise]
\label{thm:rl-one-round}
Let $\sigma>0$ and let $n\ge1$ be an integer. Put
\[
\bar r=\min\Bigl\{\frac R2,\Bigl(\frac{\alpha K_\beta}{K_2L_\beta}\Bigr)^{\frac1{\beta-2}}\Bigr\},
\qquad
r=\min\Bigl\{\bar r,\Bigl(\frac{K_\beta^2\sigma^2R^d}{16\cdot2^d\,nL_\beta^2}\Bigr)^{\frac1{2\beta+d}}\Bigr\}.
\]
Then
\begin{equation}
\label{eq:rl-one-round}
\Risk_{1,n}(\Fsc;\sigma)\ge\frac{3L_\beta^2}{64\,\alpha K_\beta^2}\,r^{2(\beta-1)} .
\end{equation}
Consequently $\Risk_{1,n}(\Fsc;\sigma)\ge c_1n^{-2(\beta-1)/(2\beta+d)}$ for
every $n\ge1$, where
\[
c_1=\frac{3L_\beta^2}{64\,\alpha K_\beta^2}\min\Bigl\{\bar r,
\Bigl(\frac{K_\beta^2\sigma^2R^d}{16\cdot2^dL_\beta^2}\Bigr)^{\frac1{2\beta+d}}\Bigr\}^{2(\beta-1)}
\]
does not depend on $n$, and $2(\beta-1)/(2\beta+d)<(\beta-1)/\beta$ for
every $d\ge1$.
\end{atheorem}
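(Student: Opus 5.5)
The plan is a localization (``needle in a haystack'') argument: a tilt is hidden at a random center in $B(\frac R2)$, and a single round of $n$ calls, chosen before any answer is seen, is compared against that random center. Let $\mu=2\alpha$, so $L\ge3\alpha$ leaves room for curvature. Fix the radius $r\le\bar r$ from the statement and the slope $a=L_\beta r^{\beta-1}/K_\beta$. For a center $z$ and a sign $\epsilon\in\{\pm1\}$ define
\[
f_{z,\epsilon}(x)=\frac\alpha2\norm x^2+\epsilon\,P_{z,a e_1,r}(x)\quad\text{or a close variant.}
\]
By \cref{lem:rl-tilts}, $[f]_{\beta,\op}\le K_\beta a r^{1-\beta}=L_\beta$. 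The Hessian of the tilt is at most $K_2a/r=K_2L_\beta r^{\beta-2}/K_\beta\le\alpha$ because $r\le\bar r$. Hence $\alpha I\preceq\nabla^2f\preceq$ some multiple of $\alpha$ inside the allowed range; I will adjust constants, for instance curvature $2\alpha$ and a tilt Hessian at most $\alpha$, so that $\nabla^2f\preceq3\alpha I\preceq LI$.

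The minimizer is chosen to be exactly $z$. Take the base quadratic to be $\frac\mu2\norm{x-z}^2$ composed with something independent of $z$. That fails, because the base would then depend on $z$ and leak information. Instead I place the minimizer near the tilt center: with base $\frac\alpha2\norm x^2$, the stationary condition inside the inner ball $\norm{x-z}\le r/2$ is $\alpha x=\epsilon a e_1$, which lies inside the tilt ball only if $z$ is near $\epsilon a e_1/\alpha$. So I will instead use the comparison pair ``$f_0=\frac\alpha2\norm x^2$'' against ``$f_z=f_0+$ tilt centered at $z$ with slope $v=\alpha z$''. For $f_z$, the point $z$ satisfies $\nabla f_z(z)=\alpha z-v=0$, so $z$ is the minimizer, with $\norm z\le R/2\le R$. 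The slope norm must obey $\alpha\norm z\le$ the allowed $a$, which is restrictive. The cleaner version is the paper's: two hypotheses sharing a random center $z$, with signs $\pm$, so that the minimizers $z\pm(a/\mu)e_1$ are separated by $2a/\mu$ and any output has error at least of order $a^2/\alpha$ on one of them. The base is then $\frac\mu2\norm{x-z}^2$ minus a linear correction; the $z$-dependence of the base is handled by making the base identical for both signs and conditioning on $z$ being revealed. Revealing $z$ only helps the algorithm, but the queries were chosen before $z$ is known, so it does not affect the query locations. This is the point to get right.

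Information step. Conditionally on $z$ and the seed, the two sign hypotheses differ only at queries inside $B(z,r)$, each by at most $2ar$. By \cref{lem:testing}(b), the KL divergence is at most $\frac{2a^2r^2}{\sigma^2}\E\,\#\{t:X_t\in B(z,r)\}$. Since the queries are independent of $z\sim\Unif(B(\frac R2))$, each query lies in $B(z,r)$ with probability at most $(2r/R)^d$. The expected count is therefore at most $n(2r/R)^d$, and averaging gives
\[
\KL\le\frac{2n a^2r^2 2^d r^d}{\sigma^2R^d}=\frac{2nL_\beta^2 2^d r^{2\beta+d}}{K_\beta^2\sigma^2R^d}\le\frac18
\]
by the choice of $r$. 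Pinsker then gives an average total variation of at most $\frac14$. With \cref{lem:testing}(a), the sign is wrong with probability at least $\frac38$, costing at least $\frac{a^2}{8\alpha}$. This yields $\frac{3a^2}{64\alpha}=\frac{3L_\beta^2r^{2(\beta-1)}}{64\alpha K_\beta^2}$, matching \eqref{eq:rl-one-round}.

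The consequence follows by substituting the minimum defining $r$. Since $n\ge1$, we have $\min\{\bar r,Cn^{-1/(2\beta+d)}\}\ge n^{-1/(2\beta+d)}\min\{\bar r,C\}$. The exponent comparison $2\beta-2<\ldots$ reduces to $2\beta(\beta-1)<(\beta-1)(2\beta+d)$, which holds for $d\ge1$. The main obstacle I expect is making the hard pair's base function and minimizer depend on $z$ while keeping the two signs identical outside $B(z,r)$ and membership in $\Fsc$ (minimizer in $B(R)$, $\nabla^2f\preceq LI$). If the $z$-dependent base causes trouble, I will fall back to revealing $z$ to the algorithm after its queries are fixed.
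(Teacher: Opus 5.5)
Your final construction is the paper's proof: a base $\alpha\norm{x-z}^2$ (your $\mu=2\alpha$) and a tilt $\pm a\inner{e_1}{x-z}\chi(\norm{x-z}^2/r^2)$ with $a=L_\beta r^{\beta-1}/K_\beta$ at a uniform center $z\in B(\frac R2)$, a sign test that may use $z$, the averaged bound $\frac{2a^2r^2}{\sigma^2}\,n(2r/R)^d\le\frac18$, and the error $\frac38\cdot\frac{a^2}{8\alpha}$, all with the same constants. The $z$-dependence of the base needs no fallback, so your two earlier candidate constructions can be dropped: the test is needed only for each fixed $z$, and because the single round's queries have a law independent of $(z,\epsilon)$, the answers carry information about the sign only through queries in $B(z,r)$.
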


\begin{proof}
We hide a sign perturbation at a uniformly random point of $B(R/2)$, bound
the information that one round gives about the sign, and reduce the error to
a test of the sign.

\emph{Step 1: hard functions.} Let $a=L_\beta r^{\beta-1}/K_\beta$. For
$z\in B(R/2)$ and $\omega\in\{-1,1\}$ let
\[
f_{z,\omega}(x)=\alpha\norm{x-z}^2+P_{z,-\omega ae_1,r}(x)
=\alpha\norm{x-z}^2+\omega a\inner{e_1}{x-z}\,\chi\bigl(\norm{x-z}^2/r^2\bigr).
\]
By \cref{lem:rl-tilts}, $[f_{z,\omega}]_{\beta,\op}\le K_\beta ar^{1-\beta}=L_\beta$,
and since $r\le\bar r$ the Hessian of the tilt has operator norm at most
$K_2a/r=K_2L_\beta r^{\beta-2}/K_\beta\le\alpha$. Hence
$\alpha I\preceq\nabla^2f_{z,\omega}\preceq3\alpha I\preceq LI$, and
$a\le\alpha r/K_2\le\alpha r$. The point
$x^\star_{z,\omega}=z-\omega ae_1/(2\alpha)$ satisfies
$\norm{x^\star_{z,\omega}-z}=a/(2\alpha)\le r/2$, so by \cref{lem:rl-tilts}
$\nabla f_{z,\omega}(x^\star_{z,\omega})=2\alpha(x^\star_{z,\omega}-z)+\omega ae_1=0$.
By strong convexity $x^\star_{z,\omega}$ is the minimizer, and
$\norm{x^\star_{z,\omega}}\le R/2+r/2\le3R/4$. Thus $f_{z,\omega}\in\Fsc$.

\emph{Step 2: information.} Fix an algorithm with one round of $n$ calls at
$X_1,\dots,X_n$, which are functions of $\Theta_1$. Let $Z$ be uniform on
$B(R/2)$ and $\omega$ uniform on $\{-1,1\}$, independent of each other, of
$\Theta_1$ and of the noise, and let $P_{z,\omega}$ be the joint law of
$\Theta_1$ and of the answers when the objective is $f_{z,\omega}$. Since
\[
\abs{f_{z,1}(x)-f_{z,-1}(x)}=2a\abs{\inner{e_1}{x-z}}\chi(\norm{x-z}^2/r^2)
\le2ar\,\mathbf 1\{\norm{x-z}<r\}
\]
and the law of $(X_1,\dots,X_n)$ does not depend on $(z,\omega)$,
\cref{lem:testing}(b) gives
\[
\KL(P_{z,1}\Vert P_{z,-1})\le\frac{2a^2r^2}{\sigma^2}\sum_{i=1}^n
\PP\bigl(\norm{X_i-z}<r\bigr).
\]
For every fixed $x$, $\PP(\norm{x-Z}<r)\le\mathrm{vol}(B(r))/\mathrm{vol}(B(R/2))=(2r/R)^d$,
and $Z$ is independent of the queries. Let $\E_Z$ denote the average over $z$
distributed as $Z$. By Pinsker's inequality (\cref{lem:testing}(a)) and
Jensen's inequality,
\[
\E_Z\TV(P_{Z,1},P_{Z,-1})\le\Bigl(\tfrac12\E_Z\KL(P_{Z,1}\Vert P_{Z,-1})\Bigr)^{1/2}
\le\frac{ar}{\sigma}\Bigl(n\Bigl(\frac{2r}{R}\Bigr)^d\Bigr)^{1/2}
=\Bigl(\frac{2^dnL_\beta^2r^{2\beta+d}}{K_\beta^2\sigma^2R^d}\Bigr)^{1/2}\le\frac14
\]
by the choice of $r$.

\emph{Step 3: from the error to a test.} Given $z$, let $\hat\omega=1$ if
$\hat x_1<z_1$ and $\hat\omega=-1$ otherwise. The first coordinate of
$x^\star_{z,\omega}$ is $z_1-\omega a/(2\alpha)$, so $\hat\omega\ne\omega$
implies $\norm{\hat x-x^\star_{z,\omega}}\ge a/(2\alpha)$, and then
$f_{z,\omega}(\hat x)-\min f_{z,\omega}\ge a^2/(8\alpha)$ by strong convexity.
For fixed $z$, $\hat\omega$ is a test between $P_{z,1}$ and $P_{z,-1}$, so by
\cref{lem:testing}(a) and Step~2,
$\PP(\hat\omega\ne\omega)\ge\frac12(1-\E_Z\TV(P_{Z,1},P_{Z,-1}))\ge\frac38$. The
worst-case error over $\Fsc$ is at least the average over $(Z,\omega)$, so
\[
\sup_{f\in\Fsc}\E\bigl[f(\hat x)-\fstar\bigr]
\ge\frac{a^2}{8\alpha}\cdot\frac38=\frac{3L_\beta^2r^{2\beta-2}}{64\,\alpha K_\beta^2}.
\]
The algorithm was arbitrary, which proves \eqref{eq:rl-one-round}.

\emph{Step 4: the power of $n$.} Since $n^{-1/(2\beta+d)}\le1$,
$r\ge\min\{\bar r,(K_\beta^2\sigma^2R^d/(16\cdot2^dL_\beta^2))^{1/(2\beta+d)}\}
\,n^{-1/(2\beta+d)}$, which gives the bound with $c_1$. Finally
$2(\beta-1)/(2\beta+d)<(\beta-1)/\beta$ is equivalent to $d>0$.
\end{proof}

The queries of one round cannot concentrate near a perturbation at a
uniformly random location.

\subsection{Several rounds: nested tilts}
\label{app:rl-nested}

The construction adapts the smooth nested perturbations of
\citet[Appendix~E]{Dorn2026GradientFree} to rounds and to a fixed H\"older seminorm.

\begin{atheorem}[Several rounds]
\label{thm:rl-nested}
Let $N,n\ge1$ be integers and put
\[
r_1=\min\Bigl\{R,\Bigl(\frac{\alpha K_\beta}{8K_2L_\beta}\Bigr)^{\frac1{\beta-2}}\Bigr\},
\qquad
\vartheta_N=\frac{L_\beta}{2\alpha K_\beta(N+1)},
\]
and define recursively
\begin{equation}
\label{eq:rl-recursion}
\varrho_i=\vartheta_Nr_i^{\beta-1}\quad(1\le i\le N+1),\qquad
r_{i+1}=(4Nn)^{-1/d}\varrho_i\quad(1\le i\le N).
\end{equation}
Then, for every $\sigma\ge0$,
\begin{equation}
\label{eq:rl-nested}
\Risk_{N,n}(\Fsc;\sigma)\ge\Risk_{N,n}(\Fsc;0)\ge\frac{3\alpha}{8}\,\varrho_{N+1}^2,
\qquad
\varrho_{N+1}=r_1\bigl(\vartheta_Nr_1^{\beta-2}\bigr)^{\frac{(\beta-1)^{N+1}-1}{\beta-2}}
(4Nn)^{-\frac{(\beta-1)((\beta-1)^N-1)}{d(\beta-2)}} .
\end{equation}
In particular $\Risk_{N,n}(\Fsc;\sigma)\ge c_Nn^{-\nu_N}$ for every $n\ge1$,
where
\begin{equation}
\label{eq:rl-exponent}
\nu_N=\frac{2(\beta-1)\bigl((\beta-1)^N-1\bigr)}{d(\beta-2)},\qquad
c_N=\frac{3\alpha}{8}\,r_1^2\bigl(\vartheta_Nr_1^{\beta-2}\bigr)^{\frac{2((\beta-1)^{N+1}-1)}{\beta-2}}(4N)^{-\nu_N},
\end{equation}
and $c_N$ does not depend on $n$.
\end{atheorem}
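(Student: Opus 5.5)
We will prove the bound for exact values ($\sigma=0$). This suffices, because Gaussian answers can be simulated from exact ones by adding independent noise, so $\Risk_{N,n}(\Fsc;\sigma)\ge\Risk_{N,n}(\Fsc;0)$.

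\emph{The hard functions.} They are a quadratic plus $N+1$ nested tilts from \cref{lem:rl-tilts}. We draw random centers $z_1,\dots,z_{N+1}$ with $z_1$ uniform in $B(R/2)$. Given $z_i$, the next center $z_{i+1}$ is uniform in the ball $B(z_i,\varrho_i)$, or in a slightly smaller concentric ball so that the support of the next tilt stays inside the region $\norm{x-z_i}\le r_i/2$ where tilt $i$ is exactly linear. We put
\[
f(x)=\alpha\norm{x-z_1}^2+\sum_{i=1}^{N+1}P_{z_i,v_i,r_i}(x),
\]
and choose the slopes so that inside $B(z_i,r_i/2)$ the gradient of $f$ has norm of order $\alpha\varrho_i$. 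Then the minimizer lies near $z_{N+1}$, at distance about $\varrho_{N+1}$ from it. The last tilt carries a random sign $\omega$, which moves the minimizer to $z_{N+1}\mp\varrho_{N+1}e_1/2$.

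\emph{Membership and error.} Each tilt has slope $\norm{v_i}\lesssim\alpha\varrho_i=\alpha\vartheta_Nr_i^{\beta-1}$. By \cref{lem:rl-tilts} its Hölder seminorm is therefore at most $K_\beta\alpha\vartheta_N r_i^{\beta-1}r_i^{1-\beta}=L_\beta/(2(N+1))$, so the $N+1$ tilts together stay within $L_\beta$. Their Hessians are at most $K_2\alpha\vartheta_Nr_i^{\beta-2}\le\alpha/(N+1)$, using $r_i\le r_1$ and the choice of $r_1$. This gives $\alpha I\preceq\nabla^2 f\preceq3\alpha I$, and all minimizers lie in $B(R)$. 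A wrong guess of the sign of $\omega$ costs $\tfrac\alpha2(\varrho_{N+1}/2)^2$ by strong convexity, which is where the factor $\tfrac{3\alpha}{8}\varrho_{N+1}^2$ comes from once the testing probability $3/4$ is included.

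\emph{Information argument (main step).} Let $E_k$ be the event that no query of round $k$ falls in $B(z_{k+1},r_{k+1})$. Outside this ball, $f$ agrees with the sum of the first $k$ tilts plus the quadratic. We will show by induction that, conditionally on $E_1\cap\dots\cap E_{k-1}$, all answers up to round $k-1$ are functions of $z_1,\dots,z_k$ alone. Hence the queries of round $k$ are independent of $z_{k+1}$ given $z_k$. Since $z_{k+1}$ is uniform in a ball of radius comparable to $\varrho_k$, each of the $n$ queries hits $B(z_{k+1},r_{k+1})$ with probability at most $(2r_{k+1}/\varrho_k)^d$, up to a constant. By \eqref{eq:rl-recursion} this is of order $1/(4Nn)$, and a union bound over the $nN$ queries gives $\PP(\cup_k E_k^c)\le1/4$.

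On $\cap_k E_k$ the transcript does not depend on $\omega$, so any estimate of the sign errs with probability at least $\frac12\cdot\frac34$. Combined with the cost of a wrong sign, this yields \eqref{eq:rl-nested}. The delicate points are the nesting constants, including the factor $2^d$ in the uniform-ball volume ratio, which must be absorbed into the definition of $r_{i+1}$ or into $\varrho_i$. Unrolling the recursion $r_{i+1}=(4Nn)^{-1/d}\vartheta_Nr_i^{\beta-1}$ then gives the closed form and the exponent $\nu_N$.
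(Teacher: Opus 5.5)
Your construction is the paper's up to an index shift. Your tilt $i$ at $z_i$, with slope pointing to $z_{i+1}$, is the paper's $P_{Z_{i-1},2\alpha V_i,r_i}$. Your events $E_k$ and the hitting bound are its Step~4, and your conditional induction is what the paper makes rigorous with a truncated run answered by $F_k$. The random $z_1\in B(R/2)$ is unnecessary but harmless.

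As written, however, the argument does not reach the stated constant $\frac{3\alpha}{8}$, and hence not the stated $c_N$. Your last tilt moves the minimizer to $z_{N+1}\mp\varrho_{N+1}e_1/2$. A wrong sign then costs $\frac\alpha2(\varrho_{N+1}/2)^2=\frac\alpha8\varrho_{N+1}^2$, and with error probability $\frac12\cdot\frac34$ you get $\frac{3\alpha}{64}\varrho_{N+1}^2$. The last sentence of your ``Membership and error'' paragraph is therefore an arithmetic slip, off by a factor $8$.

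The fix has two parts.
\begin{enumerate}
\item Give the last tilt the slope $2\alpha\omega\varrho_{N+1}e_1$. Its seminorm is $2\alpha K_\beta\vartheta_N=L_\beta/(N+1)$, like the others, and the minimizer becomes $z_{N+1}+\omega\varrho_{N+1}e_1$.
\item Replace the sign test by averaging. On $\mathcal G=\bigcap_kE_k$ the output equals that of the truncated run. That output, together with $\mathbf 1_{\mathcal G}$ and $z_{N+1}$, is independent of $\omega$. The identity $\frac12\sum_{o=\pm1}\norm{y-o\varrho_{N+1}e_1}^2=\norm y^2+\varrho_{N+1}^2$ then gives $\frac\alpha2\varrho_{N+1}^2\,\PP(\mathcal G)\ge\frac{3\alpha}8\varrho_{N+1}^2$.
\end{enumerate}
A sign test with the full shift would still lose a factor $2$.

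Two further points concern constants that the statement fixes.

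First, there is no factor $2^d$. You also could not absorb one into $r_{i+1}$, since the recursion is part of the statement. If $z_{k+1}$ is uniform on the full ball $B(z_k,\varrho_k)$, a fixed query lands in $B(z_{k+1},r_{k+1})$ with probability at most $(r_{k+1}/\varrho_k)^d=1/(4Nn)$ exactly. The $2^d$ in the one-round theorem comes from its radius $R/2$. Shrinking the ball, as you offer, would cost a factor exponential in $d$ and break the union bound.

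Shrinking is also unnecessary. The bound $\vartheta_Nr_1^{\beta-2}\le 1/(16K_2(N+1))\le 1/32$ gives $r_{i+1}\le\varrho_i\le r_i/32$ by induction, hence $\sum_{j\ge i}\varrho_j\le r_i/31$. This yields both the localization of $f$ off $B(z_{k+1},r_{k+1})$ and the location of the minimizer in every linear region.

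Second, with your quadratic $\alpha\norm{x-z_1}^2$ (Hessian $2\alpha I$), the slopes of the first $N$ tilts are forced to be exactly $2\alpha(z_{i+1}-z_i)$. Only then does the gradient vanish at a point lying in all the nested linear regions. These slopes have norm up to $2\alpha\varrho_i$, not $\alpha\varrho_i$ as in your bookkeeping. Each tilt then uses seminorm $L_\beta/(N+1)$ and Hessian at most $\alpha/(8(N+1))$, so the budget $L_\beta$ closes with no slack.
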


\begin{proof}
We hide the minimizer behind $N+1$ tilts with nested random centers
$Z_1,Z_2,\dots$. With probability at least $3/4$, every query of round $k$
misses the ball around $Z_k$ in which $f$ differs from the sum $F_k$ of the
first $k$ tilts. The algorithm then behaves as if it saw only $F_k$ in round
$k$, and it learns nothing about the sign of the last tilt.

\emph{Step 1: scales.} By the choice of $r_1$, and since $K_2\ge1$ and
$N\ge1$,
\begin{equation}
\label{eq:rl-small}
\vartheta_Nr_1^{\beta-2}\le\frac{1}{16K_2(N+1)}\le\frac1{32}.
\end{equation}
If $r_i\le r_1$, then $\varrho_i/r_i=\vartheta_Nr_i^{\beta-2}\le1/32$ and
$r_{i+1}=(4Nn)^{-1/d}\varrho_i\le\varrho_i\le r_i/32\le r_1$. By induction,
$r_{i+1}\le\varrho_i\le r_i/32$ for $1\le i\le N$ and $\varrho_{N+1}\le r_{N+1}/32$;
hence $\varrho_{i+1}\le\varrho_i/32$, and for $1\le i\le N+1$
\begin{equation}
\label{eq:rl-tails}
\sum_{j=i}^{N+1}\varrho_j\le\frac{32}{31}\varrho_i\le\frac{r_i}{31}.
\end{equation}

\emph{Step 2: hard functions.} Let $W_1,\dots,W_N$ be uniform on $B(1)$ and
$\omega$ uniform on $\{-1,1\}$, all independent. Put $V_i=\varrho_iW_i$ for
$i\le N$, $V_{N+1}=\omega\varrho_{N+1}e_1$, $Z_0=0$ and $Z_i=Z_{i-1}+V_i$ for
$1\le i\le N+1$, and
\begin{equation}
\label{eq:rl-nested-f}
F_k(x)=\alpha\norm x^2+\sum_{j=1}^kP_{Z_{j-1},2\alpha V_j,r_j}(x)
\quad(0\le k\le N+1),\qquad f=F_{N+1}.
\end{equation}
Since $\norm{V_j}\le\varrho_j$, \cref{lem:rl-tilts} bounds the seminorm of the
$j$-th tilt by $2\alpha K_\beta\varrho_jr_j^{1-\beta}=2\alpha K_\beta\vartheta_N=L_\beta/(N+1)$,
and the quadratic contributes nothing, so $[f]_{\beta,\op}\le L_\beta$. The Hessian
of the $j$-th tilt has operator norm at most
$2\alpha K_2\varrho_j/r_j=2\alpha K_2\vartheta_Nr_j^{\beta-2}\le\alpha/(8(N+1))$
by \eqref{eq:rl-small}, so $\alpha I\preceq\nabla^2f\preceq3\alpha I\preceq LI$.
By \eqref{eq:rl-tails}, $\norm{Z_{N+1}-Z_{j-1}}\le\sum_{l=j}^{N+1}\varrho_l\le r_j/31$,
so by \cref{lem:rl-tilts} the $j$-th tilt has gradient $-2\alpha V_j$ at
$Z_{N+1}$, and
$\nabla f(Z_{N+1})=2\alpha(Z_{N+1}-\sum_{j=1}^{N+1}V_j)=0$. Thus every
realization of $f$ has the unique minimizer $Z_{N+1}$, of norm at most
$r_1/31\le R$, and lies in $\Fsc$.

\emph{Step 3: localization.} For $1\le k\le N$,
\begin{equation}
\label{eq:rl-local}
f(x)=F_k(x)\qquad\text{whenever}\qquad\norm{x-Z_k}\ge r_{k+1}.
\end{equation}
The difference $f-F_k$ is the sum of the tilts $j=k+1,\dots,N+1$. The tilt $j=k+1$
has center $Z_k$ and radius $r_{k+1}$. For $j\ge k+2$, \eqref{eq:rl-tails}
gives $\norm{Z_{j-1}-Z_k}\le r_{k+1}/31$, hence
$\norm{x-Z_{j-1}}\ge\frac{30}{31}r_{k+1}>r_{k+1}/32\ge r_{k+2}\ge r_j$. All
these tilts vanish at $x$.

\emph{Step 4: a truncated run.} Fix an algorithm with $N$ rounds of $n$ calls
and exact values, with $\Theta_1,\dots,\Theta_N$ independent of
$(W_1,\dots,W_N,\omega)$. We allow its queries in round $k$ to depend also on
$V_1,\dots,V_{k-1}$ and its output on $V_1,\dots,V_N$, which only enlarges the
class of algorithms. In the \emph{truncated run}, with the same
$\Theta_1,\dots,\Theta_N$ and $V_1,\dots,V_N$, the calls of round $k$ are
answered with the values of $F_k$ instead of $f$; let
$\tilde X_{k,1},\dots,\tilde X_{k,n}$ be its queries in round $k$ and $\tilde x$
its output. Since $F_s$ depends only on $V_1,\dots,V_s$, induction on $k$
shows that $\tilde X_{k,i}$ is a function of
$(\Theta_1,\dots,\Theta_k,V_1,\dots,V_{k-1})$ and $\tilde x$ a function of
$(\Theta_1,\dots,\Theta_N,V_1,\dots,V_N)$.

For $1\le k\le N$ let $E_k$ be the event that
$\norm{\tilde X_{k,i}-Z_k}<r_{k+1}$ for some $i\le n$. Given
$\Theta_1,\dots,\Theta_N,W_1,\dots,W_{k-1}$, which determine $\tilde X_{k,i}$
and $Z_{k-1}$, the vector $W_k$ is uniform on $B(1)$ and
$Z_k=Z_{k-1}+\varrho_kW_k$; hence each query satisfies
$\norm{\tilde X_{k,i}-Z_k}<r_{k+1}$ with conditional probability at most
$(r_{k+1}/\varrho_k)^d=1/(4Nn)$. A union bound over the $Nn$ queries gives
\begin{equation}
\label{eq:rl-hit}
\PP\Bigl(\bigcup_{k=1}^NE_k\Bigr)\le\frac14 .
\end{equation}
On $\mathcal G=\bigcap_{k=1}^NE_k^c$ the actual and the truncated runs
coincide: if they agree before round $k$, the queries of round $k$ agree, and
on $E_k^c$ the answers agree by \eqref{eq:rl-local}. In particular
$\hat x=\tilde x$ on $\mathcal G$.

\emph{Step 5: the error with exact values.} By strong convexity,
$f(\hat x)-\fstar\ge\frac\alpha2\norm{\hat x-Z_{N+1}}^2$, where
$Z_{N+1}=Z_N+\omega\varrho_{N+1}e_1$. The variables $\mathbf 1_{\mathcal G}$,
$\tilde x$ and $Z_N$ are functions of $(\Theta_1,\dots,\Theta_N,W_1,\dots,W_N)$,
which is independent of $\omega$, and
$\frac12\sum_{o\in\{-1,1\}}\norm{y-o\varrho_{N+1}e_1}^2=\norm y^2+\varrho_{N+1}^2$
for every $y\in\R^d$. Conditioning on these variables and using
\eqref{eq:rl-hit},
\[
\E\bigl[f(\hat x)-\fstar\bigr]
\ge\frac\alpha2\E\bigl[\mathbf 1_{\mathcal G}\norm{\tilde x-Z_N-\omega\varrho_{N+1}e_1}^2\bigr]
\ge\frac\alpha2\varrho_{N+1}^2\,\PP(\mathcal G)\ge\frac{3\alpha}8\varrho_{N+1}^2 .
\]
The left-hand side averages the error over realizations of $f$ in $\Fsc$,
so the worst-case error is at least $\frac{3\alpha}8\varrho_{N+1}^2$. As the
algorithm was arbitrary, $\Risk_{N,n}(\Fsc;0)\ge\frac{3\alpha}8\varrho_{N+1}^2$.

\emph{Step 6: Gaussian noise.} An algorithm with exact values can simulate a
given algorithm for the Gaussian model with the same rounds and calls: it
draws the noise variables as part of $\Theta_k$ and adds them to the exact
answers. Its queries and output have the same joint law, hence the same error
on every $f$, so $\Risk_{N,n}(\Fsc;0)\le\Risk_{N,n}(\Fsc;\sigma)$.

\emph{Step 7: the endpoint.} By \eqref{eq:rl-recursion},
$\log\varrho_{i+1}=(\beta-1)\log\varrho_i+\log\vartheta_N-\frac{\beta-1}d\log(4Nn)$
and $\log\varrho_1=\log\vartheta_N+(\beta-1)\log r_1$. Solving this linear
recursion gives
\[
\varrho_{N+1}=\vartheta_N^{\frac{(\beta-1)^{N+1}-1}{\beta-2}}\,r_1^{(\beta-1)^{N+1}}\,
(4Nn)^{-\frac{(\beta-1)((\beta-1)^N-1)}{d(\beta-2)}},
\]
which is \eqref{eq:rl-nested} because
$(\beta-1)^{N+1}=1+(\beta-2)\frac{(\beta-1)^{N+1}-1}{\beta-2}$. Squaring gives
$\frac{3\alpha}8\varrho_{N+1}^2=c_Nn^{-\nu_N}$.
\end{proof}

The shifts $\varrho_i=\vartheta_Nr_i^{\beta-1}$ are much smaller than the radii
$r_i$, so the radii shrink faster than geometrically. This is forced by the
fixed H\"older seminorm: by \eqref{eq:rl-tilt-scaling}, a tilt with slope of
norm $2\alpha\varrho$ and radius $r$ has seminorm
$2\alpha\varrho r^{1-\beta}[\psi_u]_{\beta,\op}$, where $[\psi_u]_{\beta,\op}>0$
does not depend on $u$, so shifts proportional to the radii would give tilts
with seminorms growing like $r_i^{2-\beta}$. For $\sigma>0$ and $N=1$,
\cref{thm:rl-one-round} is stronger than \cref{thm:rl-nested}, whose exponent
is $\nu_1=2(\beta-1)/d>2(\beta-1)/(2\beta+d)$; with exact values,
\cref{thm:rl-nested} is the only bound of this appendix.

\subsection{Rounds needed for the sequential rate}
\label{app:rl-threshold}

\begin{acorollary}[A necessary number of rounds]
\label{cor:rl-threshold}
Let $\sigma>0$, let $d$, $\beta$, $L_\beta$, $\alpha$, $R$ and $L\ge3\alpha$ be
fixed, and let $N\ge1$ be an integer such that $N=1$ or
\begin{equation}
\label{eq:rl-threshold}
(\beta-1)^N<1+\frac{d(\beta-2)}{2\beta}.
\end{equation}
Then for every $k\ge0$,
\[
\lim_{n\to\infty}n^{\frac{\beta-1}\beta}\bigl(\log(n+1)\bigr)^{-k}\,
\Risk_{N,n}(\Fsc;\sigma)=\infty .
\]
Consequently, if there are $C,k\ge0$ such that for all sufficiently large $n$
some algorithm with $N$ rounds of at most $n$ calls satisfies
$\sup_{f\in\Fsc}\E[f(\hat x)-\fstar]\le Cn^{-(\beta-1)/\beta}(\log(n+1))^k$,
then
\begin{equation}
\label{eq:rl-Nmin}
N\ge N_\beta(d):=\max\Bigl\{2,\Bigl\lceil\log_{\beta-1}\Bigl(1+\frac{d(\beta-2)}{2\beta}\Bigr)\Bigr\rceil\Bigr\}.
\end{equation}
\end{acorollary}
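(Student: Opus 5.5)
The argument compares the exponents of the two round lower bounds already established, \cref{thm:rl-one-round} and \cref{thm:rl-nested}, with the sequential exponent $\frac{\beta-1}{\beta}$. Fix $N$ and put $p=\frac{\beta-1}{\beta}$. If some lower bound $\Risk_{N,n}(\Fsc;\sigma)\ge c\,n^{-\nu}$ holds with $c>0$ independent of $n$ and $\nu<p$, then
\[
n^{p}(\log(n+1))^{-k}\Risk_{N,n}\ge c\,n^{p-\nu}(\log(n+1))^{-k}\to\infty,
\]
because a positive power beats every power of the logarithm. It therefore suffices to find such a $\nu$ in each case of the hypothesis.

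\emph{Case $N=1$.} Since $\sigma>0$, \cref{thm:rl-one-round} gives $\nu=\frac{2(\beta-1)}{2\beta+d}$, and that theorem already records $\nu<p$ for every $d\ge1$. This case needs no condition on $d$, which is why $N=1$ is listed separately and why $N_\beta(d)\ge2$.

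\emph{Case \eqref{eq:rl-threshold}.} \Cref{thm:rl-nested} applies for every $\sigma\ge0$, so it applies here, with $\nu_N=\frac{2(\beta-1)((\beta-1)^N-1)}{d(\beta-2)}$. Divide by $\beta-1>0$ and multiply by $d(\beta-2)/2>0$ (recall $\beta>2$). Then $\nu_N<p$ becomes $(\beta-1)^N-1<\frac{d(\beta-2)}{2\beta}$, which is exactly \eqref{eq:rl-threshold}. The constant $c_N$ is independent of $n$ by that theorem.

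\emph{The consequence.} Suppose some method with $N$ rounds achieves the rate $Cn^{-p}(\log(n+1))^k$ for all large $n$. Then $n^{p}(\log(n+1))^{-k}\Risk_{N,n}\le C$ eventually, which contradicts the limit. Hence $N\ne1$ and \eqref{eq:rl-threshold} fails, that is, $N\ge2$ and $(\beta-1)^N\ge1+\frac{d(\beta-2)}{2\beta}$. Taking $\log_{\beta-1}$ (base $\beta-1>1$) gives $N\ge\log_{\beta-1}(1+\frac{d(\beta-2)}{2\beta})$. Since $N$ is an integer, $N$ is at least the ceiling of this quantity, which yields \eqref{eq:rl-Nmin}.

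This is routine once the two theorems are available. The only points requiring care are the direction of the inequalities under division by $\beta-2$ and the log base, and the fact that at equality in \eqref{eq:rl-threshold} the exponents coincide and give no contradiction. The latter is handled correctly because \eqref{eq:rl-threshold} is a strict inequality, so equality falls into the case where the bound $N\ge N_\beta(d)$ already holds.
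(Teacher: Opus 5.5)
Your proposal is correct and follows the paper's own proof: you apply \cref{thm:rl-one-round} for $N=1$ and \cref{thm:rl-nested} otherwise, check that \eqref{eq:rl-threshold} is equivalent to $\nu_N<\frac{\beta-1}{\beta}$, and obtain \eqref{eq:rl-Nmin} by contraposition, using that $N$ is an integer and that $\beta-1>1$. Nothing is missing.
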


\begin{proof}
If $N=1$, \cref{thm:rl-one-round} gives
$\Risk_{1,n}(\Fsc;\sigma)\ge c_1n^{-2(\beta-1)/(2\beta+d)}$, with an exponent
smaller than $(\beta-1)/\beta$. Otherwise \eqref{eq:rl-threshold} is
equivalent to $\nu_N<(\beta-1)/\beta$ for the exponent $\nu_N$ of
\eqref{eq:rl-exponent}, and \cref{thm:rl-nested} gives
$\Risk_{N,n}(\Fsc;\sigma)\ge c_Nn^{-\nu_N}$. In both cases the risk is at least
a positive constant, independent of $n$, times $n^{-e}$ with
$e<(\beta-1)/\beta$, and $n^{(\beta-1)/\beta-e}(\log(n+1))^{-k}\to\infty$.

Under the assumption of the second claim,
$\Risk_{N,n}(\Fsc;\sigma)\le Cn^{-(\beta-1)/\beta}(\log(n+1))^k$ for all large
$n$, so neither $N=1$ nor \eqref{eq:rl-threshold} holds. Thus $N\ge2$ and
$N\ge\log_{\beta-1}(1+d(\beta-2)/(2\beta))$, which gives \eqref{eq:rl-Nmin}
because $N$ is an integer.
\end{proof}

For $\beta=3$, for instance, $N_\beta(d)=\max\{2,\lceil\log_2(1+d/6)\rceil\}$,
which is $2$ for $d\le18$ and $5$ for $d=100$. For fixed $\beta$, $N_\beta(d)$
grows like $\log d/\log(\beta-1)$, and as $\beta\downarrow2$ the logarithm in
\eqref{eq:rl-Nmin} tends to $d/4$. By the remark after
\cref{def:rl-rounds}, the corollary also holds with $n$ replaced by the total
number of calls $T$.

\subsection{Discussion}
\label{app:rl-discussion}

\begin{aremark}[Hilbert--Schmidt seminorm]
\label{rem:rl-hs}
For a $q$-tensor $A$, $\norm A_{\HS}\le d^{(q-1)/2}\norm A_{\op}$
(\cref{lem:tensor-facts}(a)). Hence the class $\Fsc(\beta,L_\beta;\alpha,L,R)$
defined with the Hilbert--Schmidt seminorm contains the class defined with the
operator seminorm and the constant $L_\beta d^{-(q-1)/2}$, and its minimax risk is at
least as large. So \cref{thm:rl-one-round,thm:rl-nested} hold for the
Hilbert--Schmidt seminorm with $L_\beta$ replaced by $L_\beta d^{-(q-1)/2}$ in all
parameters. For fixed $d$ this changes only the constants $c_1$ and $c_N$,
and \cref{cor:rl-threshold} holds unchanged.
\end{aremark}

\begin{aremark}[Size of the constants]
\label{rem:rl-comparison}
The constants of \cref{thm:rl-nested} are small: by \eqref{eq:rl-small}, the
factor $(\vartheta_Nr_1^{\beta-2})^{2((\beta-1)^{N+1}-1)/(\beta-2)}$ in
\eqref{eq:rl-exponent} is at most $32^{-2((\beta-1)^{N+1}-1)/(\beta-2)}$, which
is below $10^{-45}$ for $\beta=3$ and $N=3$.
\end{aremark}

\section{Adversarial and Lipschitz errors}
\label{app:adversarial-unified}

This appendix proves the bound on $\delta_{\max}$ in \cref{thm:main}(c) and
the adversarial levels for convex objectives stated at the end of
\cref{sec:convex}. Achievable levels follow from the descent results of
\cref{app:rounds,app:convex-upper}: for strongly convex objectives they are
\cref{thm:acc-explicit}(b). The impossibility bounds use a hiding argument in
which the adversarial error is a fixed function of the query point:
\cref{thm:adv-unified} for strongly convex and
\cref{prop:adv-convex-upper} for convex objectives.
\Cref{app:adv-twopoint} treats two-point feedback and a probability
criterion.

\subsection{Setting}
\label{app:adv-setting}

\paragraph{Admissible levels.}
Let $\mathcal C$ be a class, $\eps>0$, $T\ge1$ and $M,\sigma\ge0$. Then
$\delta_{\max}(\eps,T)$ is the supremum of the $\delta\ge0$ for which some
algorithm with at most $T$ oracle calls, in any number of rounds, is $\eps$-accurate
on $\mathcal C$ at levels $(M,\sigma,\delta)$. The level
$M_{\max}(\eps,T)$ is defined in the same way, with $\sigma$ and $\delta$
fixed. For $M=0$, respectively $\delta=0$, these are the admissible
levels of \cref{sec:setup}. Noise admissible at some levels is admissible at
all larger levels, so both quantities are nonincreasing in the levels held
fixed.

\paragraph{Classes and scales.}
Let $\beta>2$, $q=\lceil\beta\rceil-1$ and $m=\lceil q/2\rceil$, and let
$[f]_\beta$ and $\tau$ be as in \cref{sec:setup}. We use
$\Fsc=\Fsc(\beta,L_\beta;\alpha,L,R)$ with $0<\alpha<L$ and $\kappa=L/\alpha$, and
$\Fcvx=\Fcvx(\beta,L_\beta;L,R)$. For $\eps>0$, the quantities $T_\star(\eps)$,
$h_\eps$ and $\delta_\eps$ of \eqref{eq:Tstar} and \eqref{eq:heps} satisfy
\begin{equation}
\label{eq:adv-scales}
{T_\star(\eps)=d^2\sigma^2(\alpha\eps)^{-1}h_\eps^{-2}},\qquad
L_\beta^2d^{-\tau}h_\eps^{2\beta-2}=\alpha\eps,\qquad
\frac{d\delta_\eps^2}{\alpha h_\eps^2}=\eps,
\end{equation}
and for $\sigma>0$, \eqref{eq:rate} gives $\Phi_{T_\star(\eps)}=\eps$ and
$h_T=h_{\Phi_T}$.

\subsection{Strongly convex objectives}
\label{app:adv-sc}

\paragraph{Constants.}
Put $\rho=\min\{\alpha,(L-\alpha)/2\}$; let $\chi$,
$K_\beta$ and $K_2$ be as in \cref{lem:radial-profile}, and $\vartheta$,
$L_\vartheta$, $\widetilde K_\beta$ and $\widetilde K_2$ as in
\cref{lem:ridge-profile}.
\begin{itemize}
\item In the operator case, and in the Hilbert--Schmidt case with odd $q$, put
$r_\beta=(\sqrt{32}K_\beta)^{\frac1{\beta-1}}$, $\ell_*=1$, $K_*=\max\{1,K_2\}$ and
$\nu_*=\tau$.
\item In the Hilbert--Schmidt case with even $q$, put
$r_\beta=(\sqrt{32}\widetilde K_\beta)^{\frac1{\beta-1}}$, $\ell_*=L_\vartheta$,
$K_*=\max\{1,\widetilde K_2\}$ and $\nu_*=0$.
\end{itemize}
Define
\begin{equation}
\label{eq:adv-eps0}
\eps_0=\min\Bigl\{\frac{\alpha R^2}{32},\ \frac1\alpha
\Bigl(\frac{\rho\,r_\beta}{\sqrt{32}\,K_*}\Bigr)^{\frac{2(\beta-1)}{\beta-2}}
\frac{d^{\nu_*/(\beta-2)}}{L_\beta^{2/(\beta-2)}}\Bigr\},\qquad
C_{\mathrm{hid}}=2\sqrt{32}\,\ell_*r_\beta .
\end{equation}
\begin{atheorem}[Hiding adversarial errors, strongly convex case]
\label{thm:adv-unified}
Let $\beta>2$, $0<\alpha<L$, $R,L_\beta>0$ and $\mathcal C=\Fsc(\beta,L_\beta;\alpha,L,R)$
with either seminorm, and let $\eps_0$ and $C_{\mathrm{hid}}$ be as in
\eqref{eq:adv-eps0}. For every $\eps\in(0,\eps_0]$, every $T\ge1$ and all
$\sigma\ge0$,
\[
\delta_{\max}(\eps,T)\le C_{\mathrm{hid}}\min\bigl\{\sqrt d,\sqrt{\log(4T)}\bigr\}\,
\delta_\eps .
\]
The adversarial error that achieves this is a fixed function of the query
point, and the other noise components vanish.
\end{atheorem}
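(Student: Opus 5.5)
We will construct, for each $\eps\in(0,\eps_0]$, a pair of instances $f_{\pm v}\in\mathcal C$ together with a single adversarial error $\delta(\cdot)$, a fixed function of the query point, such that the observed values $f_{\pm v}(x)+\delta_{\pm v}(x)$ agree with a common function at every point outside a thin double cone around the line $\R v$. Here $v$ is a uniformly random unit vector. Take the radial profile of \cref{lem:radial-profile} (the ridge profile of \cref{lem:ridge-profile} for even $q$ under the Hilbert--Schmidt seminorm) and set
\[
f_{\pm v}(x)=\tfrac\mu2\norm x^2\pm a\,\psi(x)\chi(\norm x^2/h^2),
\]
with $\mu=\alpha+\rho$, radius $h=r_\beta h_\eps$ and slope $a$ calibrated so that $a^2/\alpha$ is a constant multiple of $\eps$, about $32\eps$. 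The constants $r_\beta=(\sqrt{32}K_\beta)^{1/(\beta-1)}$ and $\nu_*$ in \eqref{eq:adv-eps0} are chosen exactly so that the seminorm equals $L_\beta$, as in Step~1 of \cref{thm:radial-lower} and \cref{thm:ridge-lower}. The two bounds defining $\eps_0$ ensure two things: the Hessian perturbation is at most $\rho$, by the $\beta-2$ exponent term, and the minimizer $\mp(a/\mu)v$ lies in $B(R)$ and in the region where $\chi=1$. Membership then follows verbatim from those proofs. The two minimizers are at distance $2a/\mu$, so by the parallelogram argument of \cref{prop:lipschitz-floor} any output has error greater than $\eps$ on one of them whenever the two transcripts have the same law.

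Next we define the adversary. Fix $\theta=\min\{1,c\sqrt{\log(4T)/d}\}$ and consider the cone $K_\theta=\{x:\abs{\inner vx}\le\theta\norm x\}$. On $K_\theta$ set $\delta_{\pm v}(x)=\mp a\psi(x)\chi(\norm x^2/h^2)$, so that both instances return $\frac\mu2\norm x^2$ there. Off $K_\theta$ set $\delta=0$. Since the perturbation is bounded by $a\abs{\inner vx}$ (by $\ell_*a\abs{\inner vx}$ for the ridge) on the ball $\norm x\le h$, we get $\abs{\delta}\le\ell_*a\theta h$ on $K_\theta$. With $a\asymp\sqrt{\alpha\eps}$ and $h\asymp h_\eps$, this gives
\[
\abs\delta\le C\,\theta\sqrt d\,\delta_\eps=C\min\{\sqrt d,\sqrt{\log(4T)}\}\,\delta_\eps,
\]
because $\delta_\eps=\sqrt{\alpha\eps}\,h_\eps/\sqrt d$. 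Tracking constants should give $C=C_{\mathrm{hid}}=2\sqrt{32}\ell_*r_\beta$. When $\theta=1$ the cone is all of $\R^d$, the two instances are exactly indistinguishable, and this yields the $\sqrt d$ branch.

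Let $\theta<1$. Run the algorithm against the common observed function $g(x)=\frac\mu2\norm x^2$, with the other noise components set to zero, or to Gaussian noise independent of $v$. Its queries $\tilde X_1,\dots,\tilde X_T$ and its output then do not depend on $v$. By \cref{lem:caps} and a union bound, the probability that some $\tilde X_t\neq0$ leaves $K_\theta$ is at most $2Te^{-d\theta^2/2}$, which is at most $\frac12$ for a suitable $c$. On the complementary event, the real runs under $f_v$ and $f_{-v}$ coincide with this simulated run, by induction over queries as in Step~4 of \cref{thm:rl-nested}. So with probability at least $\frac12$ the output $\hat x$ is independent of the sign. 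The parallelogram identity then gives an average error of at least $\frac12\cdot\frac\alpha2(a/\mu)^2$ over $\pm$. The calibration of $a$ makes this exceed $\eps$, so no method is $\eps$-accurate once the level exceeds $C_{\mathrm{hid}}\min\{\sqrt d,\sqrt{\log(4T)}\}\delta_\eps$.

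The main obstacle is the ridge case together with the constant bookkeeping. For the ridge we must check that $\abs{\vartheta(d^{1/4}\inner vx/w)}w\le L_\vartheta\abs{\inner vx}$ still gives the factor $\theta h$ rather than $\theta w$. We also need $\nu_*=0$ to be consistent with $h_\eps$, which contains $d^{\tau}$ with $\tau=(\beta-1)/2$. The resolution should be that the ridge width $w=hd^{-1/4}$ absorbs the power of $d$, giving $w^{\beta-1}\asymp d^{-\tau/2}h^{\beta-1}$ as in \cref{thm:ridge-lower}. A second point to verify is that the derived error $\frac{\alpha a^2}{4\mu^2}\ge\frac{a^2}{16\alpha}$ indeed exceeds $\eps$ with the chosen factor $32$.
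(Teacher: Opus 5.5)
Your proposal is correct and follows the paper's proof essentially step for step: it uses the same radial family (the ridge family for even $q$ under Hilbert--Schmidt smoothness) with $a=\sqrt{32\alpha\eps}$ and radius $r_\beta h_\eps$, an adversary that returns $\frac\mu2\norm x^2$ on the cone $\{\abs{\inner vx}\le\theta\norm x\}$ and is bounded there by $\ell_*a\theta r_\beta h_\eps$, \cref{lem:caps} with a union bound so that the simulated run stays in the cone with probability at least $\frac12$, and the parallelogram bound $\frac{\alpha a^2}{4\mu^2}\ge\frac{a^2}{16\alpha}=2\eps$. The differences are cosmetic: you define the error directly on the cone instead of thresholding $\abs{f_v-f_0}$ at $\delta_{\mathrm{hid}}$, and your $\theta=\min\{1,\sqrt{2\log(4T)/d}\}$ (permitted by the first inequality of \cref{lem:caps}, and automatically equal to $1$ when $d=1$) replaces the paper's $\sqrt{2\log(4T)/(d-1)}$ and even gives the factor $\sqrt2$ instead of $2$, well within $C_{\mathrm{hid}}$.
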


By \cref{thm:acc-explicit}(b), the level $c_\beta\delta_\eps$ is admissible
with at most ${C_\beta}\bigl(T_\star(\eps)+d\sqrt\kappa\,[1+\log(LR^2/\eps)]\bigr)$
oracle calls, so the
bound is attained up to the factor $C_\beta\min\{\sqrt d,\sqrt{\log(4T)}\}$.

\begin{proof}[Proof of \cref{thm:adv-unified}]
\emph{Hard functions.} Put $a=\sqrt{32\alpha\eps}$, $\mu=\alpha+\rho$,
$f_0(x)=\mu\norm x^2/2$ and $\bar h=r_\beta h_\eps$, and let $v\in\Sph$. In the
operator case and in the Hilbert--Schmidt case with odd $q$, let
\[
f_v(x)=f_0(x)+a\inner vx\chi(\norm x^2/\bar h^2),
\qquad a=\frac{L_\beta\bar h^{\beta-1}}{K_\beta d^{\tau/2}}.
\]
In the Hilbert--Schmidt case with even $q$, let $\bar w=\bar hd^{-1/4}$ and
\[
f_v(x)=f_0(x)+a\bar w\,\vartheta(\inner vx/\bar w)\chi(\norm x^2/\bar h^2),
\qquad a=\frac{L_\beta\bar w^{\beta-1}}{\widetilde K_\beta}.
\]
In both cases the second expression for $a$ equals $\sqrt{32\alpha\eps}$, by
\eqref{eq:adv-scales} and the choice of $r_\beta$, and the condition
$\eps\le\eps_0$ is equivalent to $a\le\alpha R$ together with
$K_*a/\bar h\le\rho$ in the first case, or $K_*a/\bar w\le\rho$ in the second.
As in the membership proofs of \cref{thm:radial-lower,thm:ridge-lower},
$[f_v]_\beta\le L_\beta$, $\alpha I\preceq\nabla^2f_v\preceq(\alpha+2\rho)I\preceq LI$
and $\xstar_v=-(a/\mu)v\in B(R)$. Both families satisfy
$f_{-v}-f_0=-(f_v-f_0)$, $\xstar_{-v}=-\xstar_v$ and
\begin{equation}
\label{eq:adv-unified-diff}
\abs{f_v(x)-f_0(x)}\le \ell_*a\abs{\inner vx}\,
\mathbf1\{\norm x\le\bar h\}\qquad(x\in\R^d).
\end{equation}

\emph{Hiding.} Let $\theta=\min\{1,\sqrt{2\log(4T)/(d-1)}\}$ for $d\ge2$ and
$\theta=1$ for $d=1$, let $A_v=\{x:\abs{\inner vx}>\theta\norm x\}$, and put
$\delta_{\mathrm{hid}}=\ell_*a\theta\bar h$. The oracle returns $Y_v(x)=f_0(x)$ if
$\abs{f_v(x)-f_0(x)}\le\delta_{\mathrm{hid}}$ and $Y_v(x)=f_v(x)$ otherwise.
Its error $Y_v-f_v$ is a Borel function of the query point bounded by
$\delta_{\mathrm{hid}}$, hence admissible at level $\delta_{\mathrm{hid}}$ for any
round structure; the other noise components are zero. By
\eqref{eq:adv-unified-diff}, $Y_v=f_0$ outside $A_v$.

Let $v$ be uniform on $\Sph$, independent of the seed $\Theta$ of the
algorithm. Run the algorithm on the exact values of $f_0$, and let
$X^0_1,\dots,X^0_T$ be its queries and $\tilde x$ its output; they depend only
on $\Theta$. On $\Omega_v=\{X^0_t\notin A_v\text{ for every }t\le T\}$, the run
on $Y_v$ has the same queries and answers, by induction over rounds, so
$\hat x=\tilde x$. If $\theta=1$, then $A_v$ is empty. Otherwise
\cref{lem:caps}, conditionally on $\Theta$, gives
$\PP(X^0_t\in A_v)\le2e^{-(d-1)\theta^2/2}=1/(2T)$. In either case a union
bound gives $\PP(\Omega_v)\ge1/2$.

\emph{Error.} The event $\Omega_v$ does not change under $v\mapsto-v$, and
$(\norm{y-x}^2+\norm{y+x}^2)/2\ge\norm x^2$. Applied with $x=\xstar_v$ and
$y=\tilde x$, strong convexity and $\mu\le2\alpha$ give, with $\E_v$ the
average over $v$,
\[
\E_v\E[f_v(\hat x)-f_v(\xstar_v)]
\ge\frac\alpha2\frac{a^2}{\mu^2}\PP(\Omega_v)
\ge\frac{a^2}{16\alpha}=2\eps,
\]
so some instance has error larger than $\eps$. Finally
$\theta\le2\min\{1,\sqrt{\log(4T)/d}\}$ for every $d\ge1$ and
$a\bar h=\sqrt{32}r_\beta\sqrt d\,\delta_\eps$, hence
$\delta_{\max}(\eps,T)\le\delta_{\mathrm{hid}}
\le C_{\mathrm{hid}}\min\{\sqrt d,\sqrt{\log(4T)}\}\delta_\eps$.
\end{proof}

\subsection{Convex objectives: achievable levels}
\label{app:adv-cvx}

For convex objectives the gradient must be known to accuracy of order
$\eps/R$ instead of $\sqrt{\alpha\eps}$, and both methods of
\cref{thm:convex-upper} are $\eps$-accurate for adversarial levels of order
$\eps h/(R\sqrt d)$, the gradient accuracy times the radius divided by
$\sqrt d$. At the gradient scale $h\asymp(\eps d^{\tau/2}/(L_\beta R))^{1/(\beta-1)}$, and
at the value scale $h=(\eps/(8V))^{1/2}$ with $V$ as in \cref{sec:convex}. For
$\tau=0$ and $u=\eps/(L_\beta R^\beta)\le1$ the first radius is of order
$Ru^{1/(\beta-1)}$. For $\beta\in(2,3]$, $V\le L_\beta(2R)^{\beta-2}$ gives a value-scale
radius of at least $(2^{\beta+1})^{-1/2}Ru^{1/2}$, which is larger for
$\beta\in(2,3)$ and $u<1$, so there the value scale tolerates the larger
level up to constants. For $\beta>3$ only $V=L$ is available, and the value
scale tolerates the larger level if and only if $L\lesssim\eps/h^2$, with $h$
the gradient-scale radius.

\subsection{Convex objectives: an impossibility bound}
\label{app:adv-cvx-upper}

For minimization over a ball with queries in the ball, the obstruction of
\citet{Pasechnyuk2026MALN}, built from linear functions, gives a bound of the
same order for every class that contains linear functions. Linear functions
have no minimizer on $\R^d$, so for $\Fcvx$ we combine the localized functions
of \cref{cor:convex} with the hiding argument of \cref{thm:adv-unified}.

\begin{aproposition}[An impossibility bound for convex objectives]
\label{prop:adv-convex-upper}
Let $\beta>2$, $L,R,L_\beta>0$ and $\mathcal C=\Fcvx(\beta,L_\beta;L,R)$, with either the
operator seminorm ($\nu=0$) or the Hilbert--Schmidt seminorm
($\nu=\lfloor q/2\rfloor$). Let $K_\beta,K_2$ be the constants of
\cref{lem:radial-profile}, $\bar K_2=\max\{1,K_2\}$, and
\[
\eps_c=\min\Bigl\{\frac{L_\beta(2\bar K_2R)^\beta}{32\bar K_2K_\beta d^{\nu/2}},\
\Bigl(\frac{L}{96\bar K_2^2}\Bigr)^{\frac\beta{\beta-2}}
\Bigl(\frac{32\bar K_2K_\beta d^{\nu/2}}{L_\beta}\Bigr)^{\frac2{\beta-2}}\Bigr\}.
\]
For every $\eps\in(0,\eps_c]$, every $T\ge1$, every number of rounds and all
$M,\sigma\ge0$,
\[
\delta_{\max}(\eps,T)\le64\bar K_2\,\eps\min\Bigl\{1,\sqrt{\frac{\log(4T)}d}\Bigr\}
=64\bar K_2\min\bigl\{\sqrt d,\sqrt{\log(4T)}\bigr\}\frac{\eps}{\sqrt d}.
\]
\end{aproposition}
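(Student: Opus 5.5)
The plan is to combine the localized convex instances from the proof of \cref{cor:convex} with the hiding argument of \cref{thm:adv-unified}. In that argument the adversarial error is a fixed function of the query point, and all other noise components vanish. Such noise is admissible at levels $(M,\sigma,\delta)$ for every $M,\sigma\ge0$, which explains why the bound holds for all $M,\sigma$ and every round structure.

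\emph{Hard instances.} For $h>0$ put $a=L_\beta h^{\beta-1}/(K_\beta d^{\nu/2})$, $\mu=2\bar K_2a/h$ and $f_0(x)=\frac\mu2\norm x^2$. For $v\in\Sph$ let
\[
f_v(x)=f_0(x)+a\inner vx\chi(\norm x^2/h^2).
\]
As in the proof of \cref{cor:convex} (and Step~1 of \cref{thm:radial-lower}), these functions satisfy:
\begin{itemize}
\item $[f_v]_\beta\le L_\beta$ in the chosen seminorm, by \cref{lem:radial-profile};
\item $\frac\mu2I\preceq\nabla^2f_v\preceq\frac{3\mu}2I$;
\item the minimizer is $\xstar_v=-(a/\mu)v$, of norm $h/(2\bar K_2)$;
\item the symmetries $f_{-v}-f_0=-(f_v-f_0)$ and $\xstar_{-v}=-\xstar_v$ hold, and $\abs{f_v-f_0}\le a\abs{\inner vx}\mathbf 1\{\norm x\le h\}$.
\end{itemize}
I fix the scale by $ah=32\bar K_2\eps$, that is, $h^\beta=32\bar K_2K_\beta d^{\nu/2}\eps/L_\beta$. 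Membership in $\Fcvx(\beta,L_\beta;L,R)$ needs two conditions. The first is $h/(2\bar K_2)\le R$, i.e.\ $h\le2\bar K_2R$, which is exactly the first term of $\eps_c$. The second is $\frac32\mu=3\bar K_2a/h=96\bar K_2^2\eps/h^2\le L$, which after substituting $h$ is exactly the second term of $\eps_c$.

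\emph{Hiding.} I take $\theta=\min\{1,\sqrt{2\log(4T)/(d-1)}\}$ (and $\theta=1$ for $d=1$), the cone $A_v=\{\abs{\inner vx}>\theta\norm x\}$, and $\delta_{\mathrm{hid}}=a\theta h$. The oracle returns $f_0(x)$ when $\abs{f_v(x)-f_0(x)}\le\delta_{\mathrm{hid}}$, and $f_v(x)$ otherwise. Outside $A_v$ this answer equals $f_0$. I then draw $v$ uniformly, independent of the seed, and compare with the reference run on the exact values of $f_0$, whose queries $X^0_t$ and output $\tilde x$ depend only on the seed. By \cref{lem:caps} and a union bound, the event $\Omega_v=\{X^0_t\notin A_v\ \forall t\}$ has probability at least $\frac12$. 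On $\Omega_v$, induction over rounds shows that the run on $f_v$ coincides with the reference run, so $\hat x=\tilde x$.

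\emph{Error.} The event $\Omega_v$ is invariant under $v\mapsto-v$, and $f_v$ is $\frac\mu2$-strongly convex. Averaging over $\pm v$ with the parallelogram inequality gives
\[
\E[f_v(\hat x)-\min f_v]\ge\tfrac12\cdot\tfrac\mu4\norm{\xstar_v}^2=\frac{a^2}{8\mu}=\frac{ah}{16\bar K_2}=2\eps>\eps .
\]
Hence no method is $\eps$-accurate at level $\delta_{\mathrm{hid}}$. Finally, $\theta\le2\min\{1,\sqrt{\log(4T)/d}\}$, so
\[
\delta_{\mathrm{hid}}=32\bar K_2\eps\,\theta\le64\bar K_2\,\eps\min\{1,\sqrt{\log(4T)/d}\},
\]
which is the claimed bound.

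The main obstacle is the bookkeeping: choosing the curvature $\mu$ proportional to $a/h$ so that convexity, the bound $L$, and the constraint $\norm{\xstar_v}\le R$ all reduce to $\eps\le\eps_c$ with exactly the stated constants. Once that is fixed, the probabilistic hiding step transfers verbatim from \cref{thm:adv-unified}.
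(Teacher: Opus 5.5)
Your proposal is correct and follows the paper's proof essentially step for step. It uses the same instances from \cref{cor:convex} with $\mu=2\bar K_2a/h$ and the scale $ah=32\bar K_2\eps$, and it identifies the two terms of $\eps_c$ with $\norm{\xstar_v}\le R$ and $\frac32\mu\le L$; it then runs the hiding argument of \cref{thm:adv-unified} with $\delta_{\mathrm{hid}}=a\theta h$ and strong convexity modulus $\mu/2$, which gives the averaged error $ah/(16\bar K_2)=2\eps$.
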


\begin{proof}
Put $\bar h=(32\bar K_2K_\beta d^{\nu/2}\eps/L_\beta)^{1/\beta}$,
$a=L_\beta\bar h^{\beta-1}/(K_\beta d^{\nu/2})$ and $\mu=2\bar K_2a/\bar h$, so that
$a\bar h=32\bar K_2\eps$ and $\mu=64\bar K_2^2\eps/\bar h^2$. For $v\in\Sph$ let
$f_0(x)=\mu\norm x^2/2$ and $f_v(x)=f_0(x)+a\inner vx\chi(\norm x^2/\bar h^2)$.
As in the proof of \cref{cor:convex}, with $v$ in place of $v_\omega$ and
$\bar h$ in place of $h$, $[f_v]_\beta\le L_\beta$,
$(\mu/2)I\preceq\nabla^2f_v\preceq(3\mu/2)I$, and $f_v$ has the minimizer
$\xstar_v=-(a/\mu)v$, of norm $\bar h/(2\bar K_2)$. The two conditions in
$\eps\le\eps_c$ are $\norm{\xstar_v}\le R$ and $3\mu/2\le L$, so
$f_v\in\mathcal C$.

The proof of \cref{thm:adv-unified} applies with the same
$\theta,A_v,Y_v$ and $\delta_{\mathrm{hid}}=a\theta\bar h$. Strong convexity
with modulus $\mu/2$ in its last step gives
\[
\E_v\E[f_v(\hat x)-f_v(\xstar_v)]
\ge\frac\mu4\frac{a^2}{\mu^2}\frac12
=\frac{a\bar h}{16\bar K_2}=2\eps .
\]
Since $a\bar h=32\bar K_2\eps$ and $\theta\le2\min\{1,\sqrt{\log(4T)/d}\}$,
the asserted bound follows.
\end{proof}

\subsection{Two-point feedback and the probability criterion}
\label{app:adv-twopoint}

\begin{aremark}[Two-point oracles]
\label{rem:adv-twopoint}
In two-point feedback a call specifies two points $x,x'$ and returns
$f(x)+\eta(x)+\xi+\delta_t$ and $f(x')+\eta(x')+\xi'+\delta'_t$, where
$\xi,\xi'$ are stochastic (centered plus oblivious) errors and the
adversarial errors $\delta_t,\delta'_t$ are bounded by $\delta$ and chosen as
in \eqref{eq:noise-adv}.

(i) \emph{Achievability.} Suppose that the stochastic part is shared,
$\xi=\xi'$, and that one call queries $z+ht_jU_i$ and $z-ht_jU_i$. The
stochastic error then cancels in every difference of the stencil, and the
Lipschitz and adversarial parts are as before, so the estimate is that of
\cref{prop:unified-estimator} at levels $(M,0,\delta)$. Hence
\cref{thm:acc-explicit}(b) and \cref{thm:convex-upper} hold with $\sigma=0$,
with $T$ counting oracle calls and $md$ oracle calls per frame. In particular, the levels
$c_\beta\sqrt{\alpha\eps}$ and $c_\beta\delta_\eps$ are attained with
$C_\beta md\sqrt\kappa\,[1+\log(LR^2/\eps)]$ oracle calls, and $\delta_\eps$ does not depend
on $\sigma$. Without sharing, a call consists of two one-point queries of the
same round, and the one-point results apply.

(ii) \emph{Impossibility.} The two instances of \cref{prop:lipschitz-floor}
have the same function $f_\pm+\eta_\pm$, so its proof applies to every
feedback that depends on the objective only through $f+\eta$ and whose other
noise components do not depend on $(f,\eta)$, in particular to two-point
feedback. The hard instances of \cref{thm:adv-unified} and
\cref{prop:adv-convex-upper} have $M=\sigma=0$ and an adversarial error
that is a fixed function of the point. Feedback that returns these values at
$r$ points per call is simulated by $r$ one-point queries, so the proofs apply
with $rT$ queries: for $d\ge2$ let
$\theta=\min\{1,\sqrt{2\log(4rT)/(d-1)}\}$, and $\theta=1$ for $d=1$. If
$\theta=1$, the cone $A_v$ is empty and $\PP(\Omega_v^c)=0$. Otherwise
$\theta^2=2\log(4rT)/(d-1)$, and the union bound for the event $\Omega_v$ of
the proof of \cref{thm:adv-unified} and \cref{lem:caps} give
\[
\PP(\Omega_v^c)\le2rTe^{-(d-1)\theta^2/2}=1/2 .
\]
Both bounds hold with $\log(4rT)$ in place of $\log(4T)$. For $r=2$,
$\log(8T)\le\frac32\log(4T)$, so the constants grow by at most $\sqrt{3/2}$.
\end{aremark}

\begin{aremark}[The probability criterion of \citet{Pasechnyuk2026MALN}]
\label{rem:adv-probability}
\citet{Pasechnyuk2026MALN} require
$\PP\{f(\hat x)-\inf f\le\eps\}\ge1-\varpi$ for a fixed $\varpi\in(0,1/2)$
(called $\beta$ there). The hiding argument also applies to this criterion,
with $r\ge1$ points per call as in \cref{rem:adv-twopoint}(ii). For $d\ge2$
take $\theta=\min\{1,\sqrt{2\log(4rT/(1-2\varpi))/(d-1)}\}$, and $\theta=1$
for $d=1$; then $\E_v\PP(\Omega_v^c)\le(1-2\varpi)/2$, by the same two cases
($A_v=\emptyset$ if $\theta=1$, the union bound otherwise), where $\PP$ refers to the
randomness of the algorithm for fixed $v$ and $\E_v$ averages over $v$
uniform on $\Sph$.

On $\Omega_v$ the outputs for $v$ and $-v$ coincide, and one point $x$ cannot
solve both instances. Otherwise strong convexity would give
$\norm{x-\xstar_v}^2,\norm{x+\xstar_v}^2\le2\eps/\alpha$ in
\cref{thm:adv-unified}, and $\le4\eps/\mu$ in \cref{prop:adv-convex-upper},
and the parallelogram identity would give $\norm{\xstar_v}^2\le2\eps/\alpha$,
respectively $\le4\eps/\mu$; in these constructions, however,
$\norm{\xstar_v}^2\ge8\eps/\alpha$, respectively $=16\eps/\mu$. Hence, for each
$v$, the success probabilities for $v$ and $-v$ sum to at most
$1+\PP(\Omega_v^c)$. Averaging over $v$, the mean success probability is at
most $(1+\E_v\PP(\Omega_v^c))/2\le(3-2\varpi)/4<1-\varpi$, so some instance is
solved with probability less than $1-\varpi$. Therefore the levels of
\cref{thm:adv-unified} and \cref{prop:adv-convex-upper}, with
$\log(4rT/(1-2\varpi))$ in place of $\log(4T)$, are not admissible for this
criterion. This covers the deterministic oracle of
\citet{Pasechnyuk2026MALN} with unrestricted queries.
\end{aremark}

\section{Hessian growing with the gradient}
\label{app:gs}

This appendix states and proves \cref{thm:gs}, which \cref{sec:hessian}
summarizes. Throughout, $f$ belongs to
$\Fsc(\beta,L_\beta;\alpha,\infty,R)$, with either seminorm, and satisfies
\begin{equation}
\label{eq:gs-condition}
\norm{\nabla^2f(x)}_{\op}\le L_0+L_1\norm{\nabla f(x)}\qquad(x\in\R^d)
\end{equation}
with $L_0>0$ and $L_1\ge0$, the condition of \citet{Zhang2020Clipping}. At
the minimizer $\nabla f(\xstar)=0$ and $\nabla^2f(\xstar)\succeq\alpha I$, so
$\alpha\le L_0$. The method is the projected two-phase method of
\cref{thm:sc-rounds}, which needs an upper bound on the Hessian only on
$B(R)$; the two lemmas below provide it.

\begin{atheorem}[Hessian growing with the gradient]
\label{thm:gs}
Let $\beta>2$, $\alpha,L_\beta,R,L_0,\sigma>0$, $L_1\ge0$, and let
$f\in\Fsc(\beta,L_\beta;\alpha,\infty,R)$, with either seminorm, satisfy
\eqref{eq:gs-condition}. Put
$L_{\mathrm{eff}}=\min\{L_0e^{2L_1R},C_\beta(L_0(1+L_1R)^{q-2}+L_\beta R^{\beta-2})\}$ and
$\kappa_{\mathrm{eff}}=L_{\mathrm{eff}}/\alpha$. If
$T\ge C_\beta d\kappa_{\mathrm{eff}}(1+\log(d\kappa_{\mathrm{eff}})+\log^+(\alpha R^2/\Phi_1))$,
the projected two-phase method with $L_{\mathrm{eff}}$ in place of $L$ and radius
$h_T$ satisfies \eqref{eq:main-upper} for every noise admissible at levels
$(M,\sigma,\delta)$, in at most
$C_\beta\kappa_{\mathrm{eff}}(1+\log^+(\alpha R^2/\Phi_T))$ rounds. If
$L_0>\alpha$, the lower bounds of \cref{thm:main}(b), (c) hold with
$L=L_0$.
\end{atheorem}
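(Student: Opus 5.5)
The argument reduces \cref{thm:gs} to \cref{thm:sc-rounds}. That theorem needs the Hessian upper bound only on $\mathcal K=B(R)$, and the random-frame estimate of \cref{prop:unified-estimator} uses no Hessian bound at all. So the whole task is to show $\nabla^2f\preceq L_{\mathrm{eff}}I$ on $B(R)$, with each of the two expressions in the minimum proved separately. Once this holds, \cref{thm:sc-rounds}(b) with $L_{\mathrm{eff}}$ and $\kappa_{\mathrm{eff}}$ in place of $L$ and $\kappa$ gives \eqref{eq:main-upper}, the round count $N_T\le2+32\kappa_{\mathrm{eff}}+16\kappa_{\mathrm{eff}}\log^+(\alpha R^2/(2\Phi_T))$, and the budget condition \eqref{eq:sc-minT}. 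The latter is implied by the stated $T\ge C_\beta d\kappa_{\mathrm{eff}}(\dots)$ after absorbing $m$ and the logarithms into $C_\beta$.

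\emph{First bound, $L_0e^{2L_1R}$.} Fix $x\in B(R)$ and the segment $\gamma(t)=\xstar+t(x-\xstar)$, whose length is at most $2R$. Put $g(t)=\norm{\nabla f(\gamma(t))}$. Then $g(0)=0$ and, by \eqref{eq:gs-condition}, $\abs{g'}\le\norm{\nabla^2f}_{\op}\norm{x-\xstar}\le(L_0+L_1g)\,2R$. Grönwall's inequality then gives $L_0+L_1g(t)\le L_0e^{2L_1Rt}$, and evaluating at $t=1$ bounds the Hessian at $x$ by $L_0e^{2L_1R}$. (When $L_1=0$ the bound is simply $L_0$.)

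\emph{Second bound, $C_\beta(L_0(1+L_1R)^{q-2}+L_\beta R^{\beta-2})$.} This is the step I expect to be the main obstacle. The plan is to Taylor-expand $\nabla^2f$ around $\xstar$ up to order $q-2$ using \cref{lem:taylor}(c) with $j=2$. This gives $\norm{\nabla^2f(x)}_{\op}\le\sum_{i=0}^{q-2}\norm{D^{2+i}f(\xstar)}_{\op}(2R)^i/i!+L_\beta(2R)^{\beta-2}/(q-2)!$. It therefore remains to control the derivatives at the minimizer, aiming for $\norm{D^{2+i}f(\xstar)}_{\op}\le C_\beta L_0L_1^i$ plus a remainder term of size $L_\beta R^{\beta-2-i}$. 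The intended route is to differentiate \eqref{eq:gs-condition} along lines through $\xstar$. Near $\xstar$ the gradient is $O(\norm{x-\xstar})$, so the condition forces the Hessian to grow at most like $L_0+L_1L_0r$ at distance $r$. Comparing this with the Taylor polynomial of the Hessian on the small ball $B(\xstar,r)$ and applying Markov-type inequalities for polynomials of degree $q-2$ on intervals controls the coefficients up to the Hölder remainder. Choosing $r\asymp R$ with scale $\min\{R,1/L_1\}$ then produces the factor $(1+L_1R)^{q-2}$. If that route turns out to be too delicate, I would fall back on a bootstrap along the segment: bound $g$ on pieces of length $1/L_1$, which is what the polynomial factor reflects.

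\emph{Lower bounds.} For the lower-bound statement, note that when $L_0>\alpha$ the hard instances of \cref{thm:radial-lower,thm:ridge-lower,prop:lipschitz-floor,thm:adv-unified}, built with $L=L_0$, satisfy $\nabla^2f\preceq L_0I$. Hence they obey \eqref{eq:gs-condition} with any $L_1\ge0$ and lie in the present class. The lower bounds of \cref{thm:main}(b),(c) therefore transfer verbatim.
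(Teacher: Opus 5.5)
Your proposal is correct and follows the paper's proof. It matches the paper at every step: the reduction to \cref{thm:sc-rounds} through a Hessian bound on $B(R)$ alone, Gr\"onwall along the segment from $\xstar$ for $L_0e^{2L_1R}$, the polynomial bound (the Gr\"onwall estimate $\norm{\nabla^2f}_{\op}\le eL_0$ within distance $1/L_1$ of $\xstar$, the H\"older Taylor remainder of the Hessian, and a polynomial extrapolation inequality), and the lower bounds via the inclusion of $\Fsc(\beta,L_\beta;\alpha,L_0,R)$ are exactly \cref{lem:gs-rays,lem:gs-poly} and the proof of \cref{thm:gs}.

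The differences are minor. The paper extrapolates $t\mapsto D^2f(\xstar+tu)[v,w]$ from $[0,\min\{s,1/L_1\}]$ to $s$ by Lagrange interpolation at $n+1$ equispaced nodes, instead of bounding Taylor coefficients by Markov's inequality. On your route the coefficient target must be $\norm{D^{2+i}f(\xstar)}_{\op}\lesssim L_0\rho^{-i}+L_\beta\rho^{\beta-2-i}$ with $\rho=\min\{R,1/L_1\}$, not $L_0L_1^i$ plus a remainder, which fails in general when $L_1R<1$ (e.g.\ $L_1=0$). Your fallback bootstrap on pieces of length $1/L_1$ would only reproduce the exponential factor.
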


\begin{alemma}[Growth along rays]
\label{lem:gs-rays}
For every $x$, with $s=\norm{x-\xstar}$,
\[
\norm{\nabla f(x)}\le\frac{L_0}{L_1}\bigl(e^{L_1s}-1\bigr)\quad(L_0s\ \text{if}\ L_1=0),
\qquad
\norm{\nabla^2f(x)}_{\op}\le L_0e^{L_1s}.
\]
\end{alemma}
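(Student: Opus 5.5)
I will work along the segment from $\xstar$ to $x$. Put $u=(x-\xstar)/s$ for $s>0$ (the case $s=0$ is trivial, since $\nabla f(\xstar)=0$ and $\norm{\nabla^2f(\xstar)}_{\op}\le L_0$ by \eqref{eq:gs-condition}). Define $g(t)=\nabla f(\xstar+tu)$ for $t\in[0,s]$ and $\phi(t)=\norm{g(t)}$. The function $g$ is $C^1$ because $f\in C^q$ with $q\ge2$, and $g(0)=0$. The plan is to derive a scalar differential inequality for $\phi$ from \eqref{eq:gs-condition}, integrate it by Gr\"onwall's lemma to get the gradient bound, and then insert the gradient bound back into \eqref{eq:gs-condition} to get the Hessian bound.

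\emph{Differential inequality.} Since $g'(t)=\nabla^2f(\xstar+tu)u$ and $\norm u=1$, we have $\norm{g'(t)}\le L_0+L_1\phi(t)$. Differentiability of the norm fails where $g$ vanishes, so I will avoid it. Instead, integrate: $\phi(t)\le\int_0^t\norm{g'(r)}\,dr\le L_0t+L_1\int_0^t\phi(r)\,dr$. This is the integral form needed for Gr\"onwall's inequality, and $\phi$ is continuous.

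\emph{Gr\"onwall.} If $L_1=0$, the integral bound gives $\phi(s)\le L_0s$ directly. If $L_1>0$, compare with $\psi(t)=\frac{L_0}{L_1}(e^{L_1t}-1)$, which solves $\psi(t)=L_0t+L_1\int_0^t\psi$. Set $w=\phi-\psi$; then $w(t)\le L_1\int_0^tw(r)\,dr$. The standard Gr\"onwall argument applies: let $W(t)=\int_0^tw$, so $(e^{-L_1t}W)'\le0$ and $W(0)=0$, hence $W\le0$ and then $w\le0$. This gives $\norm{\nabla f(x)}=\phi(s)\le\psi(s)$.

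\emph{Hessian.} By \eqref{eq:gs-condition} and the gradient bound, $\norm{\nabla^2f(x)}_{\op}\le L_0+L_1\psi(s)=L_0e^{L_1s}$. For $L_1=0$ the bound is $L_0=L_0e^{0}$, consistent with the formula. None of these steps is a real obstacle. The only care needed is to use the integral form rather than differentiating $\phi$ at zeros of $g$, and to note that the argument needs only $C^2$ regularity along the segment, which $f\in C^q$ provides.
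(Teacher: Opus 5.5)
Your proof is correct and follows essentially the paper's argument: integrate the Hessian along the ray from $\xstar$ to get $\norm{\nabla f(\xstar+tu)}\le L_0t+L_1\int_0^t\norm{\nabla f(\xstar+ru)}\,dr$, close this by Gr\"onwall, and insert the gradient bound into \eqref{eq:gs-condition} for the Hessian bound. The only difference is that the paper applies the integrating factor $e^{-L_1t}$ directly to $w(t)=L_0t+L_1\int_0^t\norm{\nabla f(\xstar+ru)}\,dr$, using $w'\le L_0+L_1w$, instead of comparing with the explicit solution $\psi$ as you do.
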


\begin{proof}
Write $x=\xstar+su$ with $\norm u=1$ and put $g(t)=\norm{\nabla f(\xstar+tu)}$.
Integration of the Hessian along the segment and \eqref{eq:gs-condition} give
$g(t)\le w(t):=L_0t+L_1\int_0^tg$. Then $w(0)=0$ and $w'\le L_0+L_1w$, and
multiplication by $e^{-L_1t}$ and integration bound $w$, hence $g$, by the
first claim. Inserting it into \eqref{eq:gs-condition} gives the second.
\end{proof}

\begin{alemma}[Polynomial growth]
\label{lem:gs-poly}
For every $x$, with $s=\norm{x-\xstar}$: if $q=2$, then
$\norm{\nabla^2f(x)}_{\op}\le L_0+L_\beta s^{\beta-2}$; if $q\ge3$, then, with
$n=q-2$ and $C_n=(n+1)n^n$,
\[
\norm{\nabla^2f(x)}_{\op}\le eC_nL_0\max\{1,L_1s\}^{n}+\frac{C_n+1}{n!}\,L_\beta s^{\beta-2}.
\]
\end{alemma}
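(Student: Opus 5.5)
The plan is to Taylor-expand the Hessian along the ray from $\xstar$ to $x$ and control the Taylor polynomial by extrapolation from a short initial segment. On that segment, \cref{lem:gs-rays} keeps the Hessian below $eL_0$.

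\emph{Case $q=2$.} Here $\gamma=\beta-2$, and $[f]_{\beta,\op}\le L_\beta$ says that $\nabla^2f$ is $(\beta-2)$-H\"older with constant $L_\beta$. Since $\nabla f(\xstar)=0$, \eqref{eq:gs-condition} gives $\norm{\nabla^2f(\xstar)}_{\op}\le L_0$. The triangle inequality then yields $\norm{\nabla^2f(x)}_{\op}\le L_0+L_\beta s^{\beta-2}$.

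\emph{Case $q\ge3$, reduction to a scalar polynomial.} Let $n=q-2$ and write $x=\xstar+su$ with $\norm u=1$. Fix unit vectors $w,w'$ and set
\[
p(t)=\sum_{j=0}^{n}\frac{t^j}{j!}D^{2+j}f(\xstar)[u^{\otimes j},w,w'],
\qquad
\phi(t)=\nabla^2f(\xstar+tu)[w,w'] .
\]
By \cref{lem:taylor}(c) with $j=2$, the degree-$(q-2)$ Taylor remainder of the tensor map $v\mapsto D^2f(\xstar+v)$ is bounded by $L_\beta\norm v^{\beta-2}/(q-2)!$. Since $n=q-2$, this reads $\abs{\phi(t)-p(t)}\le L_\beta t^{\beta-2}/n!$ for all $t\ge0$. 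It therefore suffices to bound $\abs{p(s)}$, uniformly in $w,w'$, and then take the supremum.

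\emph{Extrapolation.} Put $r=\min\{s,1/L_1\}$, with $r=s$ if $L_1=0$. For $t\le r$, \cref{lem:gs-rays} gives $\abs{\phi(t)}\le L_0e^{L_1r}\le eL_0$. Hence on $[0,r]$
\[
\abs{p(t)}\le eL_0+\frac{L_\beta r^{\beta-2}}{n!}.
\]
Next I would write $p$ by Lagrange interpolation at the nodes $t_k=kr/n$, $k=0,\dots,n$, and evaluate it at $s\ge r$. Each basis polynomial satisfies
\[
\abs{\ell_k(s)}\le\frac{s^n}{(r/n)^n\,k!\,(n-k)!}.
\]
Summing the $n+1$ terms, I expect a bound of the form
\[
\abs{p(s)}\le C_n(s/r)^n\sup_{[0,r]}\abs p,
\qquad C_n=(n+1)n^n
\]
(or a comparable constant depending only on $n$). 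The choice of $r$ gives $(s/r)^n=\max\{1,L_1s\}^n$. In the term carrying $L_\beta$, we have $r^{\beta-2}(s/r)^n=s^nr^\gamma\le s^{\beta-2}$, because $\beta-2=n+\gamma$ and $r\le s$. Adding the remainder $L_\beta s^{\beta-2}/n!$ then produces
\[
\norm{\nabla^2f(x)}_{\op}\le eC_nL_0\max\{1,L_1s\}^n+\frac{C_n+1}{n!}L_\beta s^{\beta-2}.
\]

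\emph{Main obstacle.} The delicate step is the interpolation constant. Extrapolating a degree-$n$ polynomial from $[0,r]$ to a point $s\gg r$ must cost only $(s/r)^n$ times a constant depending on $n$ alone. The Lagrange bound above gives exactly this. If the bookkeeping does not reproduce $C_n=(n+1)n^n$ exactly, I would use Chebyshev extremality, $\abs{p(s)}\le T_n(2s/r-1)\sup_{[0,r]}\abs p\le(4s/r)^n\sup_{[0,r]}\abs p$, which changes only the constant and not the structure of the bound.
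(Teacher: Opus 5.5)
Your proposal is correct and follows the paper's proof essentially step for step. You compare the bilinear form of the Hessian along the ray with its degree-$n$ Taylor polynomial, use \cref{lem:gs-rays} to bound it by $eL_0$ on $[0,r]$ with $r=\min\{s,1/L_1\}$, extrapolate to $s$ by Lagrange interpolation at the nodes $kr/n$, and absorb the remainder via $(s/r)^nr^{\beta-2}\le s^{\beta-2}$.

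Your per-node bound $n^n(s/r)^n/(k!\,(n-k)!)$ is slightly sharper than the paper's $(ns/r)^n$, since it sums to $2^nn^n(s/r)^n/n!\le C_n(s/r)^n$. The only missing piece is the trivial case $s=0$. There the nodes collapse, but $\nabla f(\xstar)=0$ and \eqref{eq:gs-condition} give the bound directly.
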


\begin{proof}
Both seminorms bound $[f]_{\beta,\op}$. If $q=2$, then
$\norm{\nabla^2f(\xstar)}_{\op}\le L_0$ by \eqref{eq:gs-condition}, and
$\norm{\nabla^2f(x)-\nabla^2f(\xstar)}_{\op}\le L_\beta s^{\beta-2}$. Let $q\ge3$
and $s>0$, write $x=\xstar+su$ with $\norm u=1$, fix unit vectors $v,w$ and
put $\varphi(t)=D^2f(\xstar+tu)[v,w]$. Then $\varphi\in C^n$, and
$\varphi^{(n)}(t)=D^qf(\xstar+tu)[v,w,u,\dots,u]$ is $\gamma$-H\"older with
constant $L_\beta$. By \cref{lem:taylor}(a) in dimension one, the Taylor
polynomial $P$ of degree $n$ of $\varphi$ at $0$ satisfies
$\abs{\varphi(t)-P(t)}\le St^{\,n+\gamma}$ for $t\ge0$, with $S=L_\beta/n!$, and
$n+\gamma=\beta-2$. Let $r=\min\{s,1/L_1\}$ ($r=s$ if $L_1=0$). On $[0,r]$,
\cref{lem:gs-rays} gives $\abs\varphi\le eL_0$, hence
$\abs P\le eL_0+Sr^{n+\gamma}$. Lagrange interpolation at the nodes $ir/n$,
$0\le i\le n$, whose basis polynomials are bounded at $s\ge r$ by
$(ns/r)^n$, gives $\abs{P(s)}\le(n+1)(ns/r)^n(eL_0+Sr^{n+\gamma})$. Since
$(s/r)^nr^{n+\gamma}\le s^{n+\gamma}$ and $s/r=\max\{1,L_1s\}$,
\[
\abs{\varphi(s)}\le\abs{P(s)}+Ss^{n+\gamma}
\le eC_nL_0\max\{1,L_1s\}^n+(C_n+1)Ss^{\beta-2}.
\]
The supremum over $v,w$ is the operator norm of the symmetric matrix
$\nabla^2f(x)$. For $s=0$ the claim follows from \eqref{eq:gs-condition}.
\end{proof}

For $x\in B(R)$ we have $\norm{x-\xstar}\le2R$, so the two lemmas give
$\nabla^2f\preceq L_{\mathrm{eff}}I$ on $B(R)$ with
\begin{equation}
\label{eq:gs-Leff}
L_{\mathrm{eff}}=\min\Bigl\{L_0e^{2L_1R},\ eC_nL_0\max\{1,2L_1R\}^{n}
+\frac{C_n+1}{n!}L_\beta(2R)^{\beta-2}\Bigr\}
\end{equation}
for $q\ge3$, and with the second term replaced by $L_0+L_\beta(2R)^{\beta-2}$ for
$q=2$. Since $\max\{1,2t\}\le2(1+t)$, this is at most the value
$\min\{L_0e^{2L_1R},C_\beta(L_0(1+L_1R)^{q-2}+L_\beta R^{\beta-2})\}$ used in
\cref{thm:gs}, and both are at least $L_0\ge\alpha$.

\begin{proof}[Proof of \cref{thm:gs}]
\emph{Upper bound.} Apply \cref{thm:sc-rounds}(b) with $L_{\mathrm{eff}}$ in place of $L$ and
$\kappa_{\mathrm{eff}}=L_{\mathrm{eff}}/\alpha\ge1$ in place of $\kappa$. By the
remark after that theorem, it needs the Hessian bound only on $B(R)$, which
\eqref{eq:gs-Leff} provides; the estimate of {Procedure~\ref{alg:stencil}} uses
no bound on the Hessian, even at queries outside $B(R)$. The budget
condition \eqref{eq:sc-minT}, the number of rounds and the bound
$(C_\Phi+1)\Phi_T+C_MM^2/\alpha+C_\delta d\delta^2/(\alpha h_T^2)$ take the form
stated in \cref{thm:gs} after enlarging $C_\beta$, since
$d\delta^2/(\alpha h_T^2)=(T\delta^2/(d\sigma^2))\Phi_T$.

\emph{Lower bounds.} A function with $\nabla^2f\preceq L_0I$ satisfies \eqref{eq:gs-condition}
for every $L_1\ge0$. If $L_0>\alpha$, the class therefore contains
$\Fsc(\beta,L_\beta;\alpha,L_0,R)$, on which \cref{thm:main}(b) and the converse
bounds of \cref{thm:main}(c) hold.
\end{proof}

\begin{aremark}[The power of $L_1R$]
\label{rem:gs-sharp}
For $4<\beta\le5$, so that $q=4$ and $n=2$, the power $n$ in
\cref{lem:gs-poly} cannot be lowered. For $k>0$ let
$f_k(x)=\frac12\norm x^2+\frac{k^2}{12}x_1^4$. Then
$\nabla^2f_k(x)=I+k^2x_1^2e_1e_1^\top\succeq I$, $D^4f_k$ is constant, so
$[f_k]_\beta=0$, and the minimizer is $0$. With $t=k\abs{x_1}$,
$\norm{\nabla^2f_k(x)}_{\op}=1+t^2\le2+3t+t^3\le2+3k\norm{\nabla f_k(x)}$, so
\eqref{eq:gs-condition} holds with $L_0=2$ and $L_1=3k$, while at
$x=Re_1$ the Hessian has norm $1+(L_1R/3)^2$. Hence no bound of the form
$C\max\{1,L_1R\}^a+C'$ on $B(R)$ with $a<2$ and $C,C'$ independent of $L_1$
holds on these classes.
\end{aremark}

\begin{aremark}[Rounds]
\label{rem:gs-rounds}
The accelerated method of \cref{thm:acc-explicit} is not projected: its
extrapolated points may leave $B(R)$, where \eqref{eq:gs-Leff} does not
bound the Hessian. Whether a round bound proportional to
$\sqrt{\kappa_{\mathrm{eff}}}\log(1/\eps)$ suffices under
\eqref{eq:gs-condition} is open.
\end{aremark}

\section{{Polynomial-pilot methods for strongly convex objectives}}
\label{app:no-hessian}

This appendix proves \cref{thm:nh-main}, except for the method with
logarithmically many rounds, which is treated in
\cref{app:batched-no-hessian}. In this class, the curvature of $f$ and the
coefficients of its Taylor polynomial of degree $q$ may have arbitrary
magnitude. The methods below estimate this polynomial from
function values and then minimize the remainder. The upper bounds are
\cref{thm:nh-upper-general} ($\beta>4$, $T\ge C_0d^q$) and
\cref{thm:nh-upper-quadratic} ($2<\beta\le4$, $T\ge C_0d^2$), where $C_0$
depends only on $\beta,\alpha,R,L_\beta,\sigma$; \cref{app:nh-proof} combines them
with the lower bound of \cref{thm:main}(b).

\subsection{Setting}
\label{app:nh-setting}

Throughout this appendix $\beta>2$, $q=\lceil\beta\rceil-1\ge2$,
$\gamma=\beta-q\in(0,1]$, $m=\lceil q/2\rceil$ and $\alpha,L_\beta,R>0$. We write
$B(r)=\{x:\norm x\le r\}$, $\bar R=R+1$, $\fstar=\min f$,
$K_j=\binom{d+j}{j}$ for the dimension of the space of polynomials of degree
at most $j$ on $\R^d$, and $p_\beta=(\beta-1)/\beta$, so that $\Phi_T$ of
\eqref{eq:rate} is proportional to $T^{-p_\beta}$. As in \cref{sec:setup},
either $L_\beta$ bounds $[f]_{\beta,\op}$ and $\tau=0$, or $L_\beta$ bounds
$[f]_{\beta,\HS}$ and $\tau=\tau_\beta$. In both cases
$[f]_{\beta,\op}\le L_\beta$, and only the stencil bounds \eqref{eq:nh-bh} depend
on the choice.

\begin{adefinition}[Class, centered noise and minimax risk]
\label{def:nh-class}\label{def:nh-noise}
The class $\Fsc(\beta,L_\beta;\alpha,\infty,R)$ is the class of \cref{sec:setup}
with $L=\infty$: $\nabla^2f\succeq\alpha I$, $[f]_\beta\le L_\beta$ and
$\xstar\in B(R)$, while derivatives of orders $2,\dots,q$ may have arbitrary
magnitude. The noise
is \emph{centered at level $\sigma\ge0$} if $\eta\equiv0$,
$\xi^{\mathrm o}\equiv0$, $\delta_t\equiv0$ and the centered component
satisfies \eqref{eq:noise-centered}; a query $X_t$ then returns
$Y_t=f(X_t)+\xi_t$ with
\[
\E[\xi_t\mid\mathcal E_t]=0,\qquad\E[\xi_t^2\mid\mathcal E_t]\le\sigma^2 .
\]
For a class $\mathcal C$, $\Risk_T(\mathcal C;\sigma)=\inf\sup\E[f(\hat x)-\fstar]$,
where the infimum is over all algorithms that know $d,T,\beta,\alpha,R,L_\beta,\sigma$
and make at most $T$ oracle calls in any number of rounds, and the supremum is over
$f\in\mathcal C$ and all noises centered at level $\sigma$.
\end{adefinition}

A function with $\nabla^2f\succeq\alpha I$ is $\alpha$-strongly convex, so its
minimizer exists and is unique. Adding a polynomial of degree at most $q$ to
$f$ adds a constant to $D^qf$ and changes neither $[f]_{\beta,\op}$ nor
$[f]_{\beta,\HS}$. Exact values and independent $N(0,\sigma^2)$ errors that
are independent of $\Theta_1,\Theta_2,\dots$ are centered noise at level
$\sigma$. For centered noise the error $\xi_t=Y_t-f(X_t)$ is a function of
the query and the answer. Hence $\mathcal F_{k-1}$ contains the errors of the
rounds before round $k$, and $\mathcal E_t$ contains $\mathcal F_{k-1}$,
$\Theta_k$ and the errors of the earlier queries of round $k$.

\paragraph{The stencil under centered noise.}
Let the center $z$ of round $k$ be bounded and $\mathcal F_{k-1}$-measurable,
and let the radius $h>0$ be deterministic. We write $\gbar_h(z)=\gbar_{h,1}(z)$
for one round of \cref{alg:stencil} with one frame ($2md$ oracle calls).
\Cref{prop:unified-estimator}(i), (ii) and (iv), with $M=\delta=0$,
$\xi^{\mathrm o}\equiv0$ and $B=1$, give
\begin{equation}
\label{eq:nh-estimate}
\gbar_h(z)=\nabla f(z)+b(z)+\zeta,\qquad
\norm{b(z)}\le b_h,\qquad
\E[\zeta\mid\mathcal F_{k-1}]=0,\qquad
\E\bigl[\norm\zeta^2\bigm|\mathcal F_{k-1}\bigr]\le\varsigma_h^2,
\end{equation}
where $b(z)=G_h(z)-\nabla f(z)$ is $\mathcal F_{k-1}$-measurable, $\zeta$ is
$\mathcal F_k$-measurable, and
\begin{equation}
\label{eq:nh-bh}
b_h=\bfC L_\beta d^{-\tau/2}h^{\beta-1},\qquad
\varsigma_h^2=\Cf L_\beta^2d^{-\tau}h^{2\beta-2}+C_\sigma\frac{d\sigma^2}{h^2};
\end{equation}
for $d=1$ the variance bound holds a fortiori. Since
$\E[\norm{b(z)+\zeta}^2\mid\mathcal F_{k-1}]=\norm{b(z)}^2
+\E[\norm\zeta^2\mid\mathcal F_{k-1}]$, for every bounded
$\mathcal F_{k-1}$-measurable random vector $w$ and every event
$G\in\mathcal F_{k-1}$,
\begin{equation}
\label{eq:nh-estimate-moments}
\E\bigl[\inner{\zeta}{w}\mathbf 1_G\bigr]=0,\qquad
\E\bigl[\norm{\gbar_h(z)-\nabla f(z)}^2\mathbf 1_G\bigr]\le
(b_h^2+\varsigma_h^2)\,\PP(G).
\end{equation}
With $B$ frames, \eqref{eq:nh-estimate} and \eqref{eq:nh-estimate-moments}
hold with $\varsigma_h^2/B$ in place of $\varsigma_h^2$, and so does
\cref{lem:nh-fbf-step} below.

\paragraph{Conventions.}
Minimizers and projections of known continuous functions over compact sets
are chosen by \cref{lem:nh-selection}, so they are Borel functions of the
data; computation is not charged (\cref{rem:nh-computation}). We write
$\norm X_{L^v}=(\E\abs X^v)^{1/v}$ and
$\mathrm{clip}_{[-b,b]}(x)=\max\{-b,\min\{b,x\}\}$ for $b>0$. In statements,
$C_\beta$ and $c_\beta$ depend only on $\beta$. In proofs, $C$ and $c$ denote
positive numbers that depend only on $\beta,\alpha,R,L_\beta,\sigma$ and on
integers fixed in the proof as functions of $\beta$; they never depend on $d$
or $T$, and their values may change from line to line.

\subsection{Tools}
\label{app:nh-tools}

\begin{alemma}[Borel choice of minimizers]
\label{lem:nh-selection}
Let $X$ be a metric space, $Y\subset\R^n$ compact,
$\mathcal K\subseteq X\times Y$ closed with nonempty sections
$\mathcal K_x=\{y:(x,y)\in\mathcal K\}$, and $\psi:\mathcal K\to\R$
continuous. There is a Borel map $\pi:X\to Y$ with $\pi(x)\in\mathcal K_x$
and $\psi(x,\pi(x))=\min_{y\in\mathcal K_x}\psi(x,y)$ for every $x$.
\end{alemma}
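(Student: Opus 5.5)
This is a measurable selection statement, and I will derive it from the Kuratowski--Ryll-Nardzewski selection theorem applied to the argmin correspondence. For $x\in X$ put
\[
\Gamma(x)=\Bigl\{y\in\mathcal K_x:\ \psi(x,y)=\min_{y'\in\mathcal K_x}\psi(x,y')\Bigr\}.
\]
Since $\mathcal K$ is closed and $Y$ is compact, each section $\mathcal K_x$ is a nonempty compact set. The restriction of $\psi$ to it is continuous, so the minimum is attained. Hence $\Gamma(x)$ is a nonempty compact subset of $Y$. It therefore suffices to show that $\Gamma$ is weakly measurable: for every open $O\subseteq Y$, the set $\{x:\Gamma(x)\cap O\neq\emptyset\}$ must be Borel. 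Once this holds, the selection theorem for closed-valued correspondences into the Polish space $Y$ gives a Borel $\pi$ with $\pi(x)\in\Gamma(x)$, which is exactly the claim.

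The first step is upper semicontinuity of the sections. Because $Y$ is compact and $\mathcal K$ is closed, the correspondence $x\mapsto\mathcal K_x$ has closed graph with values in a compact space. It is therefore upper hemicontinuous, so $\{x:\mathcal K_x\cap F\neq\emptyset\}$ is closed for every closed $F\subseteq Y$. I would check this with sequences: if $x_n\to x$ and $y_n\in\mathcal K_{x_n}\cap F$, a subsequence of $(y_n)$ converges in $Y$, and closedness of $\mathcal K$ and $F$ puts the limit in $\mathcal K_x\cap F$.

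The second step, which I expect to be the main obstacle, handles the value function $V(x)=\min_{\mathcal K_x}\psi(x,\cdot)$. Here $\psi$ lives only on $\mathcal K$, and $x\mapsto\mathcal K_x$ need not be lower hemicontinuous, so $V$ need not be continuous. I would only show that $V$ is lower semicontinuous, hence Borel. Take $x_n\to x$ and minimizers $y_n$. Pass to a subsequence along which $V(x_n)$ tends to $\liminf V(x_n)$ and $y_n\to y$. Then $y\in\mathcal K_x$, and continuity of $\psi$ on $\mathcal K$ gives $V(x)\le\psi(x,y)=\lim\psi(x_n,y_n)=\liminf V(x_n)$.

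The third step proves weak measurability. Write the open set $O$ as a countable union of compact sets $F_j$, for instance closed balls intersected with $Y$. It is enough to show that $E_j=\{x:\Gamma(x)\cap F_j\neq\emptyset\}$ is Borel. Define $V_j(x)=\min_{\mathcal K_x\cap F_j}\psi(x,\cdot)$, with the value $+\infty$ on the closed set where this section is empty. Applying the argument of the second step to the closed set $\mathcal K\cap(X\times F_j)$ shows that $V_j$ is lower semicontinuous. Then $E_j=\{x:V_j(x)\le V(x)\}$, which is Borel because both functions are Borel, and $\{x:\Gamma(x)\cap O\neq\emptyset\}=\bigcup_jE_j$. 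Invoking the selection theorem completes the proof.
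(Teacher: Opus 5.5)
Your proof is correct, but it takes a more hands-on route than the paper. The paper first extends $\psi$ from the closed set $\mathcal K$ to a continuous function on $X\times Y$ by Tietze's theorem, so that $\psi$ becomes a Carath\'eodory integrand. It then shows that $x\mapsto\mathcal K_x$ is weakly measurable with nonempty compact values, by the same observation as your first step: $\{x:\mathcal K_x\cap F\neq\emptyset\}$ is the projection of the closed set $\mathcal K\cap(X\times F)$ along the compact factor $Y$, hence closed. Finally it quotes the measurable maximum theorem (Aliprantis--Border, Theorem~18.19) for $-\psi$, which gives measurability of the argmin correspondence and a Borel selector in one step.

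You instead prove the content of that theorem directly in this jointly continuous setting. You show that $V$ and the truncated value functions $V_j$ are lower semicontinuous, and you establish the identity $\{x:\Gamma(x)\cap F_j\neq\emptyset\}=\{V_j\le V\}$, which equals $\{V_j=V\}$ since $V_j\ge V$. After that you need only Kuratowski--Ryll-Nardzewski. Your route avoids extending $\psi$ off $\mathcal K$, and it shows that the value function is lower semicontinuous, which is more than Borel. The paper's route is a two-line citation, and it would also cover integrands that are only Borel in $x$ and continuous in $y$, under any weakly measurable compact-valued constraint.

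One small slip: the set where $\mathcal K_x\cap F_j=\emptyset$ is the complement of the closed set from your first step, so it is open, not closed. Openness is exactly what lower semicontinuity of $V_j$ requires at those points. Your sequential argument already proves it: a subsequence with finite $V_j(x_n)$ produces a point of $\mathcal K_x\cap F_j$. So nothing breaks.
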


\begin{proof}
Extend $\psi$ to a continuous function on $X\times Y$ by Tietze's theorem.
For closed $F\subseteq Y$, the set $\{x:\mathcal K_x\cap F\ne\emptyset\}$ is
the projection of the closed set $\mathcal K\cap(X\times F)$ along the
compact factor $Y$, hence closed. Every open subset of $Y$ is a countable
union of closed sets, so $x\mapsto\mathcal K_x$ is weakly measurable with
nonempty compact values. The measurable maximum theorem
\citep[Theorem~18.19]{AliprantisBorder2006}, applied to $-\psi$ on the Borel
$\sigma$-algebra of $X$, gives $\pi$.
\end{proof}

The next lemma concerns the median of block means
\citep{NemirovskiYudin1983,HsuSabato2016}, for errors that are martingale
differences rather than independent.

\begin{alemma}[Medians of block means under martingale noise]
\label{lem:nh-median}\leavevmode
\begin{enumerate}[label=(\alph*)]
\item Let $r$ be odd, $j=(r+1)/2$, and let $Z_1,\dots,Z_r$ be real random
variables adapted to a filtration $(\mathcal G_i)_{i=0}^r$ with
$\E[Z_i^2\mid\mathcal G_{i-1}]\le v$ for all $i$. Let $\mathrm{med}$ be their
median. Then
\[
\abs{\mathrm{med}}^{2j}\le\sum_S\prod_{i\in S}Z_i^2,
\]
the sum being over all $S\subseteq\{1,\dots,r\}$ with $\abs S=j$. Hence
$\E\abs{\mathrm{med}}^{2j}\le\binom rjv^j$, and
$\PP(\abs{\mathrm{med}}>u)\le\binom rj(v/u^2)^j\le2^r(v/u^2)^j$ for $u>0$.
\item Let $\mathrm{med}_1,\dots,\mathrm{med}_J$ be as in (a), each with its
own filtration, and $E_0=\max_i\abs{\mathrm{med}_i}$. For $u>0$ and $0<a<2j$,
\[
\PP(E_0>u)\le J2^r\Bigl(\frac v{u^2}\Bigr)^j,\qquad
\E\bigl[E_0^a\mathbf 1\{E_0>u\}\bigr]\le J2^rv^j\,\frac{2j}{2j-a}\,u^{a-2j}.
\]
\item Let the noise be centered at level $\sigma$, and let the
queries of round $k$ be made in an order that is fixed before the round.
Let $I_1,\dots,I_r$ be disjoint sets of queries of the round such that every
query of $I_i$ precedes every query of $I_{i+1}$, let $w_t$ be deterministic
numbers, and put $Z_i=\sum_{t\in I_i}w_t\xi_t$. Let $\mathcal G_i$ be the
$\sigma$-algebra generated by $\mathcal F_{k-1}$, $\Theta_k$ and the errors
of all queries of the round up to the last query of $I_i$, and
$\mathcal G_0=\mathcal F_{k-1}\vee\sigma(\Theta_k)$. Then $(Z_i)$ is adapted
to $(\mathcal G_i)$, $\E[Z_i\mid\mathcal G_{i-1}]=0$ and
$\E[Z_i^2\mid\mathcal G_{i-1}]\le\sigma^2\sum_{t\in I_i}w_t^2$.
\end{enumerate}
\end{alemma}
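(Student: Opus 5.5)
The proof has three parts, taken in the order (a), (b), (c). Part (a) carries the main argument, which is combinatorial and deterministic. Part (b) follows from (a) by Markov's inequality and union bounds. Part (c) is a routine martingale check.

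For (a), I would first prove the pathwise inequality. Since $r=2j-1$, the median $\mathrm{med}$ satisfies $\abs{\mathrm{med}}\le\abs{Z_i}$ for at least $j$ indices. If $\mathrm{med}\ge0$, at least $j$ of the $Z_i$ are $\ge\mathrm{med}$; if $\mathrm{med}\le0$, at least $j$ are $\le\mathrm{med}$. Take a set $S_0$ of $j$ such indices. Then $\abs{\mathrm{med}}^{2j}\le\prod_{i\in S_0}Z_i^2$, and this is at most the sum over all $S$, since every term of that sum is nonnegative. For the moment bound I would show $\E\prod_{i\in S}Z_i^2\le v^{\abs S}$ for each fixed $S$. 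Order $S=\{i_1<\dots<i_j\}$ and condition on $\mathcal G_{i_j-1}$. The first $j-1$ factors are $\mathcal G_{i_j-1}$-measurable, because the sequence is adapted and $i_{j-1}\le i_j-1$. The last factor then contributes $\E[Z_{i_j}^2\mid\mathcal G_{i_j-1}]\le v$, and induction gives the claim. Only conditional second moments are used, so the martingale structure enters only through adaptedness. Summing over the $\binom rj$ sets gives $\E\abs{\mathrm{med}}^{2j}\le\binom rjv^j$. Markov's inequality applied to $\abs{\mathrm{med}}^{2j}$ then gives the tail bound, and $\binom rj\le2^r$.

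For (b), the tail bound for $E_0$ is the union bound over the $J$ medians. For the truncated moment, I would use the layer-cake formula
\[
\E[E_0^a\mathbf 1\{E_0>u\}]=u^a\PP(E_0>u)+\int_u^\infty as^{a-1}\PP(E_0>s)\,ds .
\]
Inserting the tail bound $J2^rv^js^{-2j}$ gives $J2^rv^j\bigl(u^{a-2j}+\frac{a}{2j-a}u^{a-2j}\bigr)$, which equals the stated $J2^rv^j\frac{2j}{2j-a}u^{a-2j}$. The condition $a<2j$ is exactly what makes the integral converge.

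For (c), adaptedness holds because $Z_i$ depends only on errors up to the last query of $I_i$. For $t\in I_i$, the $\sigma$-algebra $\mathcal E_t$ contains $\mathcal G_{i-1}$ together with the errors of the earlier queries of $I_i$. This uses two facts: the query order is fixed before the round, so "earlier" does not depend on the data, and every query of $I_{i-1}$ precedes every query of $I_i$. By (C), the terms $w_t\xi_t$, taken in query order, are then martingale differences with conditional variances at most $w_t^2\sigma^2$. Their cross terms vanish by the tower property, which gives $\E[Z_i\mid\mathcal G_{i-1}]=0$ and $\E[Z_i^2\mid\mathcal G_{i-1}]\le\sigma^2\sum_{t\in I_i}w_t^2$.

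I expect the only delicate point to be the $\sigma$-algebra bookkeeping in (c). The issue is checking that $\mathcal G_{i-1}\subseteq\mathcal E_t$. Under centered noise each error is a function of its query and answer, and queries are functions of $\mathcal F_{k-1}$ and $\Theta_k$; with these facts the inclusion holds.
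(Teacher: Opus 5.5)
Your proposal is correct and follows the paper's proof essentially line by line. The steps match throughout: in (a), the pathwise bound from the $j$ indices with $\abs{Z_i}\ge\abs{\mathrm{med}}$, then successive conditioning on $\mathcal G_{i_j-1},\dots,\mathcal G_{i_1-1}$ and Markov's inequality; in (b), the union bound and the layer-cake identity; in (c), the inclusion $\mathcal G_{i-1}\subseteq\mathcal E_t$ with the tower property for the cross terms. Your closing remark is not needed: that inclusion follows directly from the definitions of the two $\sigma$-algebras and the ordering of the blocks, not from errors being functions of queries and answers.
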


\begin{proof}
(a) At least $j$ of the $Z_i$ have absolute value at least
$\abs{\mathrm{med}}$, which gives the pointwise bound. For
$S=\{i_1<\cdots<i_j\}$, conditioning successively on
$\mathcal G_{i_j-1},\ldots,\mathcal G_{i_1-1}$ gives
$\E\prod_{i\in S}Z_i^2\le v^j$; the integrands are nonnegative. Summation
over $S$ and Markov's inequality give the moment and tail bounds.

(b) The first bound is a union bound, and the second follows from it and
$\E[E_0^a\mathbf1\{E_0>u\}]=u^a\PP(E_0>u)+\int_u^\infty at^{a-1}\PP(E_0>t)\,dt$.

(c) For $t\in I_i$ we have $\mathcal G_{i-1}\subseteq\mathcal E_t$, so the
tower property gives $\E[\xi_t\mid\mathcal G_{i-1}]=0$ and
$\E[\xi_t^2\mid\mathcal G_{i-1}]\le\sigma^2$. For $t<t'$ in $I_i$, $\xi_t$ is
$\mathcal E_{t'}$-measurable, so $\E[\xi_t\xi_{t'}\mid\mathcal G_{i-1}]=0$.
Expanding the square proves the claim.
\end{proof}

\begin{alemma}[Maximum-volume design]
\label{lem:nh-maxvol}
Let $\phi=(\phi_1,\dots,\phi_{K})$ be a basis of the polynomials of degree
at most $q$ on $\R^d$, $K=K_q$, let $\mathcal D\subset\R^d$ be a closed
ball of positive radius, and let $J\ge K$. Let $x_1,\dots,x_J\in\mathcal D$
maximize $\det\mathbf G$, where $\mathbf G=\sum_{i=1}^J\phi(x_i)\phi(x_i)^\top$.
Then $\mathbf G\succ0$ and
$\sup_{x\in\mathcal D}\phi(x)^\top\mathbf G^{-1}\phi(x)\le K/(J-K+1)$.
\end{alemma}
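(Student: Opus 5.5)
This is the classical Kiefer--Wolfowitz equivalence argument for D-optimal designs, in its discrete form. Two facts drive it. The first is the trace identity
\[
\sum_{i=1}^J\phi(x_i)^\top\mathbf G^{-1}\phi(x_i)=\tr(\mathbf G^{-1}\mathbf G)=K .
\]
The second is the rank-one determinant formula used for exchanging a single design point.

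First I show $\mathbf G\succ0$. The polynomials of degree at most $q$ restricted to a ball of positive radius are linearly independent, because a polynomial vanishing on an open set is zero. Hence the evaluation vectors $\phi(x)$, $x\in\mathcal D$, span $\R^K$. Since $J\ge K$, one can pick $K$ of the points so that $\phi(x_1),\dots,\phi(x_K)$ is a basis, which gives a positive-definite $\mathbf G$. The maximum of $\det\mathbf G$ over the compact set $\mathcal D^J$ exists by continuity, and it is therefore positive, so at a maximizer $\mathbf G\succ0$.

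Next, fix $x\in\mathcal D$ and consider replacing $x_i$ by $x$. This gives $\mathbf G'=\mathbf G-\phi_i\phi_i^\top+\phi\phi^\top$, where $\phi_i=\phi(x_i)$ and $\phi=\phi(x)$. Write $a=\phi^\top\mathbf G^{-1}\phi$, $a_i=\phi_i^\top\mathbf G^{-1}\phi_i$ and $b_i=\phi^\top\mathbf G^{-1}\phi_i$. The matrix determinant lemma for a rank-two update gives
\[
\det\mathbf G'/\det\mathbf G=(1+a)(1-a_i)+b_i^2 .
\]
Maximality of the design means this ratio is at most $1$ for every $i$. So $(1+a)(1-a_i)+b_i^2\le1$, and dropping $b_i^2\ge0$ yields $(1+a)(1-a_i)\le1$ for each $i$.

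I then sum over $i$. By the trace identity $\sum_ia_i=K$, so $(1+a)(J-K)\le J$, which gives $a\le K/(J-K)$. This is slightly weaker than the claim $K/(J-K+1)$, so the main obstacle is to keep the $b_i^2$ term and use it. I use the identity
\[
\sum_ib_i^2=\phi^\top\mathbf G^{-1}\Bigl(\sum_i\phi_i\phi_i^\top\Bigr)\mathbf G^{-1}\phi=a .
\]
Summing the exact inequalities $(1+a)(1-a_i)+b_i^2\le1$ over $i$ then gives $(1+a)(J-K)+a\le J$. This rearranges to $a(J-K+1)\le K$, which is exactly the claimed bound, and taking the supremum over $x\in\mathcal D$ finishes the proof.
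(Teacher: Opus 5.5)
Your proof is correct, and it takes a different, self-contained route. The paper uses maximality only to say that the rows $\phi(x_i)^\top$ form a dominant submatrix of the matrix with the extra row $\phi(x)^\top$. It then cites Mikhalev and Oseledets (Theorem~4.2) for coefficients $c$ with $\phi(x)=\sum_ic_i\phi(x_i)$ and $\norm c^2\le K/(J-K+1)$. Finally it notes that the least such $\norm c^2$ equals $\phi(x)^\top\mathbf G^{-1}\phi(x)$.

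You bound the quadratic form directly. The rank-two determinant identity gives $(1+a)(1-a_i)+b_i^2\le1$ for each exchange, and summing with $\sum_ia_i=K$ and $\sum_ib_i^2=a$ gives $a(J-K+1)\le K$. Your $b_i=\phi(x_i)^\top\mathbf G^{-1}\phi(x)$ are exactly the entries of the paper's minimal-norm coefficient vector $c$. So your identity $\sum_ib_i^2=a$ is the paper's last step, and your exchange inequality is essentially the argument behind the cited rectangular-maxvol bound, written in Gram-matrix form.

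Your version gives a complete, elementary proof; your existence and positivity argument matches the paper's. The paper's version is shorter and phrases the bound through the coefficients $c$, the form in which the design is used for the stability of the least-squares fit in \cref{lem:nh-fit}.
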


\begin{proof}
The maximum exists by compactness, and it is positive because the vectors
$\phi(x)$, $x\in\mathcal D$, span $\R^K$: a polynomial that vanishes on a
ball is zero. Fix $x\in\mathcal D$ and append the row $\phi(x)^\top$ to the
$J\times K$ matrix with rows $\phi(x_i)^\top$. By maximality, replacing one
of the first $J$ rows by the last one does not increase $\det\mathbf G$, so
the first $J$ rows form a dominant submatrix, and
\citet[Theorem~4.2]{MikhalevOseledets2018} gives $c\in\R^J$ with
$\phi(x)=\sum_ic_i\phi(x_i)$ and $\norm c^2\le K/(J-K+1)$. The smallest
$\norm c^2$ among such $c$ equals $\phi(x)^\top\mathbf G^{-1}\phi(x)$.
\end{proof}

\begin{alemma}[Polynomials on balls]
\label{lem:nh-poly}
Let $p$ be a polynomial of degree at most $q$ on $\R^d$ and $\rho>0$.
\begin{enumerate}[label=(\alph*)]
\item $\norm{\nabla^2p(x)}_{\op}\le q^4\rho^{-2}\sup_{B(2\rho)}\abs p$ for
$\norm x\le\rho$.
\item $\sup_{B(\rho)}\abs p\le C^{\rm pol}_q(\rho/a)^q\sup_{B(a)}\abs p$ for
$0<a\le\rho$, where $C^{\rm pol}_q=\sum_{j=0}^qq^{2j}/j!$.
\end{enumerate}
\end{alemma}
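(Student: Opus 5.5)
Both parts reduce to one-dimensional restrictions of $p$ along lines, combined with the Markov brothers' inequality: a univariate polynomial $g$ of degree at most $q$ with $\abs g\le S$ on an interval $[c-\ell,c+\ell]$ satisfies $\abs{g'}\le q^2S/\ell$ on that interval. Iterating it $j$ times, and using at each step that $g^{(i)}$ again has degree at most $q$ (so the factor $q^2$ remains valid, if wasteful), gives
\[
\sup_{[c-\ell,c+\ell]}\abs{g^{(j)}}\le q^{2j}\ell^{-j}\sup_{[c-\ell,c+\ell]}\abs g .
\]
No multivariate polynomial inequality is needed.

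\emph{Part (a).} Fix $x$ with $\norm x\le\rho$ and a unit vector $u$, and put $g(t)=p(x+tu)$ for $t\in[-\rho,\rho]$. Since $\norm{x+tu}\le2\rho$, we have $\abs g\le S:=\sup_{B(2\rho)}\abs p$ on this interval. The iterated Markov bound with $j=2$ and $\ell=\rho$ gives
\[
\abs{\nabla^2p(x)[u,u]}=\abs{g''(0)}\le q^4\rho^{-2}S .
\]
Since $\nabla^2p(x)$ is symmetric, its operator norm is $\sup_{\norm u=1}\abs{u^\top\nabla^2p(x)u}$, and taking the supremum over $u$ proves (a). A sharper constant $q^2(q-1)^2$ is available but not needed.

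\emph{Part (b).} Let $y\in B(\rho)$ with $y\ne0$ (the case $y=0$ is trivial since $\rho/a\ge1$). Put $u=y/\norm y$ and $g(s)=p(su)$, so that $\abs g\le S_a:=\sup_{B(a)}\abs p$ on $[-a,a]$.
\begin{itemize}
\item The iterated Markov bound with $\ell=a$ gives $\abs{g^{(j)}(0)}\le q^{2j}a^{-j}S_a$ for $0\le j\le q$.
\item Since $g$ equals its Taylor polynomial of degree $q$ at $0$,
\[
\abs{p(y)}=\abs{g(\norm y)}\le\sum_{j=0}^q\frac{q^{2j}}{j!}\Bigl(\frac{\norm y}{a}\Bigr)^jS_a .
\]
\item Because $\norm y/a\le\rho/a$ and $\rho/a\ge1$, each $(\norm y/a)^j$ is at most $(\rho/a)^q$, which gives $\abs{p(y)}\le C^{\rm pol}_q(\rho/a)^qS_a$.
\end{itemize}
Taking the supremum over $y$ proves (b).

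\emph{Remaining check.} There is no real obstacle. The only point to get right is the iterated Markov step: each derivative is again a polynomial of degree at most $q$ on the same interval, so the Markov constant can be applied repeatedly.
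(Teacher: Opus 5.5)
Your proof is correct and is the same argument as the paper's. Both restrict $p$ to lines and apply the univariate Markov inequality, twice for (a) and $j$ times for (b), and in (b) both insert these derivative bounds into the Taylor expansion at $0$ and use $(\rho/a)^j\le(\rho/a)^q$.
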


\begin{proof}
By Markov's inequality \citep[Chapter~5]{BorweinErdelyi1995}, a univariate
polynomial $\varphi$ of degree at most $q$ satisfies
$\sup_{[-t,t]}\abs{\varphi'}\le(q^2/t)\sup_{[-t,t]}\abs\varphi$ for $t>0$.
(a) For a unit vector $u$, apply it twice to $\varphi(t)=p(x+tu)$ on
$[-\rho,\rho]$; the points $x+tu$ lie in $B(2\rho)$, and
$\varphi''(0)=u^\top\nabla^2p(x)u$. (b) For a unit vector $v$, applied $j$
times to $\varphi(t)=p(tv)$ on $[-a,a]$ it gives
$\abs{\varphi^{(j)}(0)}\le(q^2/a)^j\sup_{B(a)}\abs p$. Insert this into the
Taylor expansion of $\varphi$ at $0$ and use $(\rho/a)^j\le(\rho/a)^q$.
\end{proof}

For data $y\in\R^J$ at the nodes of the design of \cref{lem:nh-maxvol} let
$\Pi y=\mathbf c^\top\phi$, with $\mathbf c=\mathbf G^{-1}\sum_iy_i\phi(x_i)$,
be the least-squares fit by a polynomial of degree at most $q$. It is linear
in $y$ and reproduces polynomials of degree at most $q$.

\begin{alemma}[Stability of the fit]
\label{lem:nh-fit}
Let the design of \cref{lem:nh-maxvol} be taken on $\mathcal D=B(s)$, $s>0$,
with $J=2K_q$, and put $\Gamma_0=\sqrt{2K_q}$. If
$\max_i\abs{y_i}\le\bar y$, then $\sup_{\mathcal D}\abs{\Pi y}\le\Gamma_0\bar y$.
If $s=2\bar R$, then also $\norm{\nabla^2\Pi y(x)}_{\op}\le\Gamma_2\bar y$ for
$\norm x\le\bar R$, where $\Gamma_2=\Gamma_0q^4/\bar R^2$.
\end{alemma}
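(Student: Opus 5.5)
The first claim should follow from writing the fit in the basis expansion and applying Cauchy--Schwarz against the design bound of \cref{lem:nh-maxvol}. For $x\in\mathcal D$,
\[
\Pi y(x)=\phi(x)^\top\mathbf G^{-1}\sum_{i=1}^Jy_i\phi(x_i)=\sum_{i=1}^Jc_i(x)\,y_i,\qquad c_i(x)=\phi(x)^\top\mathbf G^{-1}\phi(x_i).
\]
Setting $\Psi$ for the $J\times K$ matrix with rows $\phi(x_i)^\top$, so that $\mathbf G=\Psi^\top\Psi$, the coefficient vector is $c(x)=\Psi\mathbf G^{-1}\phi(x)$. Hence
\[
\norm{c(x)}^2=\phi(x)^\top\mathbf G^{-1}\Psi^\top\Psi\mathbf G^{-1}\phi(x)=\phi(x)^\top\mathbf G^{-1}\phi(x)\le\frac{K_q}{J-K_q+1}\le1,
\]
using $J=2K_q$ in the last step. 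Cauchy--Schwarz then gives $\abs{\Pi y(x)}\le\norm{c(x)}\,\norm y\le\sqrt J\,\bar y=\Gamma_0\bar y$, because $\norm y\le\sqrt J\max_i\abs{y_i}$. (The slack $K_q/(K_q+1)<1$ is not needed.)

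For the second claim, take $s=2\bar R$. Then $\Pi y$ is a polynomial of degree at most $q$ with $\sup_{B(2\bar R)}\abs{\Pi y}\le\Gamma_0\bar y$ by the first part. Applying \cref{lem:nh-poly}(a) with $\rho=\bar R$ gives, for $\norm x\le\bar R$,
\[
\norm{\nabla^2\Pi y(x)}_{\op}\le q^4\bar R^{-2}\,\Gamma_0\bar y=\Gamma_2\bar y.
\]

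The only step that needs care is the identity $\norm{c(x)}^2=\phi(x)^\top\mathbf G^{-1}\phi(x)$. Here I would check that $\mathbf G\succ0$, which \cref{lem:nh-maxvol} guarantees, and that the design nodes lie in $\mathcal D=B(s)$, so that the bound applies at every $x\in\mathcal D$. Everything else is routine.
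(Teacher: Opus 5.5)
Your proof is correct and follows the paper's argument: Cauchy--Schwarz combined with the maximum-volume bound $\phi(x)^\top\mathbf G^{-1}\phi(x)\le K_q/(K_q+1)$ gives the sup bound, and \cref{lem:nh-poly}(a) with $\rho=\bar R$ gives the Hessian bound. The only cosmetic difference is where Cauchy--Schwarz is applied: you apply it in $\R^J$ to $c(x)=\Psi\mathbf G^{-1}\phi(x)$, while the paper applies it in the $\mathbf G$-inner product and bounds $\mathbf c^\top\mathbf G\mathbf c=\sum_i\Pi y(x_i)^2\le J\bar y^2$ by the projection property.
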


\begin{proof}
The fitted values $(\Pi y(x_i))_i$ are the orthogonal projection of $y$ onto
a subspace of $\R^J$, so
$\mathbf c^\top\mathbf G\mathbf c=\sum_i\Pi y(x_i)^2\le J\bar y^2$. By the
Cauchy--Schwarz inequality and \cref{lem:nh-maxvol},
$\abs{\Pi y(x)}^2\le(\mathbf c^\top\mathbf G\mathbf c)\,
\phi(x)^\top\mathbf G^{-1}\phi(x)\le2K_q\bar y^2$ on $\mathcal D$. The second
claim follows from \cref{lem:nh-poly}(a) with $\rho=\bar R$.
\end{proof}

\subsection{Pilot fit, guard and corrected residual method}
\label{app:nh-pilot}

\paragraph{The pilot.}
Fix a radius $s>0$, the design $x_1,\dots,x_J$ of \cref{lem:nh-fit} on
$B(s)$ ($J=2K_q$ nodes), an odd integer $r\ge3$ and an integer $\ell\ge1$.
In one round, query every node $r\ell$ times, node after node, and split the
$r\ell$ values at $x_i$ into $r$ consecutive blocks of $\ell$ values. Let
$\tilde f_i$ be the median of the $r$ block means at $x_i$, let
$P=\Pi\tilde f$ be the fit of these medians, and put
$\nu_i=\tilde f_i-f(x_i)$ and $E_0=\max_i\abs{\nu_i}$. The pilot uses
$n_0=2K_qr\ell$ oracle calls in one round, at points fixed in advance. Outside
\cref{app:batched-no-hessian} we take $s=2\bar R$. Let
$\varrho_j=L_\beta(2\bar R)^{\beta-j}/(q-j)!$ for $j=0,2$, and for a tolerance
$\theta\ge1$ put
\begin{equation}
\label{eq:nh-UL}
\Upsilon_\theta=\varrho_0+\Gamma_0(\varrho_0+\theta),\qquad
\Lambda_\theta=\varrho_2+\Gamma_2(\varrho_0+\theta).
\end{equation}
Since $\bar R\ge1$ and $\theta\ge1$, both numbers are at most
$C\sqrt{K_q}\,\theta$.

\begin{alemma}[Accuracy of the pilot]
\label{lem:nh-pilot}
Let $f\in C^q$ with $[f]_{\beta,\op}\le L_\beta$, and let $\mathfrak t=T_qf(0;\cdot)$
be its Taylor polynomial of degree $q$ at $0$ (\cref{app:taylor}). On every
realization, $\sup_{B(s)}\abs{P-\mathfrak t}\le\Gamma_0(L_\beta s^\beta/q!+E_0)$.
If $s=2\bar R$, then on every realization
\begin{equation}
\label{eq:nh-fit-error}
\sup_{B(R)}\abs{f-P}\le\varrho_0+\Gamma_0(\varrho_0+E_0),\qquad
\sup_{x\in B(R)}\norm{\nabla^2(f-P)(x)}_{\op}\le\varrho_2+\Gamma_2(\varrho_0+E_0);
\end{equation}
in particular, on the event $\{E_0\le\theta\}$ the residual $g=f-P$
satisfies $\abs g\le\Upsilon_\theta$ and
$\norm{\nabla^2g}_{\op}\le\Lambda_\theta$ on $B(R)$. If the noise is
centered at level $\sigma$, then for $u>0$
\begin{equation}
\label{eq:nh-pilot-tails}
\PP(E_0>u)\le J2^r\Bigl(\frac{\sigma^2}{\ell u^2}\Bigr)^{(r+1)/2},\qquad
\E\bigl[E_0\mathbf 1\{E_0>u\}\bigr]\le2J2^r\Bigl(\frac{\sigma^2}{\ell}\Bigr)^{(r+1)/2}u^{-r}.
\end{equation}
\end{alemma}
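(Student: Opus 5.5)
The plan has three parts: approximate $f$ on $B(s)$ by its Taylor polynomial $\mathfrak t$, feed the reproduction property and the stability of the fit $\Pi$ from \cref{lem:nh-fit} into the first two claims, and obtain the tails from \cref{lem:nh-median}.

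\emph{Deterministic part.} Write $\tilde f_i=f(x_i)+\nu_i=\mathfrak t(x_i)+(f-\mathfrak t)(x_i)+\nu_i$. Since $\Pi$ is linear and reproduces polynomials of degree at most $q$, we get $P-\mathfrak t=\Pi y$ with $y_i=(f-\mathfrak t)(x_i)+\nu_i$. By \cref{lem:taylor}(a) with center $0$, every node satisfies $\abs{(f-\mathfrak t)(x_i)}\le L_\beta s^\beta/q!$, and $\abs{\nu_i}\le E_0$. Hence $\max_i\abs{y_i}\le L_\beta s^\beta/q!+E_0$, and the first claim follows from \cref{lem:nh-fit}.

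For $s=2\bar R$, note that $L_\beta s^\beta/q!\le\varrho_0$ because $q!\ge(q-0)!$; indeed $\varrho_0=L_\beta(2\bar R)^\beta/q!$ exactly. On $B(R)\subseteq B(s)$ we decompose $f-P=(f-\mathfrak t)-\Pi y$. The first term is bounded by $\varrho_0$, since $\norm x\le R\le 2\bar R$. The second term is bounded by $\Gamma_0(\varrho_0+E_0)$. This gives the first bound in \eqref{eq:nh-fit-error}.

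For the Hessian, $\nabla^2(f-\mathfrak t)(x)=\nabla^2f(x)-T_{q-2}\nabla^2f(0;x)$. \Cref{lem:taylor}(c) with $j=2$ bounds this in operator norm by $L_\beta\norm x^{\beta-2}/(q-2)!\le\varrho_2$. The Hessian of $\Pi y$ is bounded on $B(\bar R)\supseteq B(R)$ by $\Gamma_2(\varrho_0+E_0)$, using the second part of \cref{lem:nh-fit}. The statements on $\{E_0\le\theta\}$ follow from these two bounds by monotonicity in $E_0$.

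\emph{Tails.} For each node $i$, the $r$ block means minus $f(x_i)$ are $Z_{i,b}=\ell^{-1}\sum_{t\in I_b}\xi_t$, where the blocks are consecutive in the prescribed order. \Cref{lem:nh-median}(c), with weights $w_t=1/\ell$, shows that these variables form martingale differences with $\E[Z_{i,b}^2\mid\mathcal G_{b-1}]\le\sigma^2/\ell$. Their median is $\nu_i$, because subtracting the constant $f(x_i)$ commutes with taking the median. \Cref{lem:nh-median}(b) then applies with $J$ medians, $v=\sigma^2/\ell$ and $j=(r+1)/2$, which gives the probability bound in \eqref{eq:nh-pilot-tails}.

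For the expectation, apply the second bound of (b) with $a=1<2j$. The factor $2j/(2j-1)=(r+1)/r$ is at most $2$, and $u^{1-2j}=u^{-r}$. This gives the second bound.

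The only delicate points are checking that the filtration in (c) fits within a single round for each node separately (the nodes are queried in a fixed order, so this holds), and that the Taylor remainder of the Hessian uses the expansion at $0$ rather than at $x$. Neither is a real obstacle.
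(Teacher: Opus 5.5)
Your proof is correct and follows the paper's argument essentially step by step. For the deterministic bounds, both use linearity and polynomial reproduction of $\Pi$, then \cref{lem:taylor}(a),(c) at the center $0$ together with both parts of \cref{lem:nh-fit}. For the tails, both apply \cref{lem:nh-median}(c) with weights $1/\ell$ and then \cref{lem:nh-median}(b) with $v=\sigma^2/\ell$, $j=(r+1)/2$ and $a=1$.
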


\begin{proof}
The fit is linear and reproduces $\mathfrak t$, so
$P-\mathfrak t=\Pi\bigl((f-\mathfrak t)(x_i)\bigr)_i+\Pi\nu$. By
\cref{lem:taylor}(a), $\abs{f-\mathfrak t}\le L_\beta\norm x^\beta/q!$, which is at
most $L_\beta s^\beta/q!$ at the nodes, and \cref{lem:nh-fit} gives the first claim.
For $s=2\bar R$ write $f-P=(f-\mathfrak t)-(P-\mathfrak t)$. On $B(R)$,
$\abs{f-\mathfrak t}\le\varrho_0$ and, by \cref{lem:taylor}(c) with $j=2$ and
$\nabla^2\mathfrak t(x)=T_{q-2}D^2f(0;x)$,
$\norm{\nabla^2(f-\mathfrak t)}_{\op}\le\varrho_2$; \cref{lem:nh-fit} bounds
$P-\mathfrak t$ and its Hessian. This gives \eqref{eq:nh-fit-error}.

For the tails, $\nu_i$ is the median of the $r$ block means of the errors at
$x_i$. By \cref{lem:nh-median}(c) with weights $1/\ell$, these means satisfy
the assumption of \cref{lem:nh-median}(a) with $v=\sigma^2/\ell$.
\Cref{lem:nh-median}(b) with $j=(r+1)/2$ and $a=1$, where
$2j/(2j-1)\le2$, gives \eqref{eq:nh-pilot-tails}.
\end{proof}

\begin{alemma}[Polynomial-value guard]
\label{lem:nh-guard}
Let $P$ be continuous, let $x_P$ minimize $P$ over $B(R)$, let
$z\in B(R)$ and $\Upsilon>0$. Output $\hat x=z$ if $P(z)\le P(x_P)+2\Upsilon$,
and $\hat x=x_P$ otherwise. If $\sup_{B(R)}\abs{f-P}\le\Upsilon$, then
$f(\hat x)\le f(z)$. On every realization,
$f(\hat x)-\fstar\le2\Upsilon+2\sup_{B(R)}\abs{f-P}$.
\end{alemma}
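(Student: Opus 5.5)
The plan is a direct case analysis on the two branches of the guard. Write $\epsilon_P=\sup_{B(R)}\abs{f-P}$. Both $z$ and $x_P$ lie in $B(R)$, so every comparison between $f$ and $P$ made below is at a point where the bound $\epsilon_P$ applies.

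\emph{First claim.} Assume $\epsilon_P\le\Upsilon$. If the guard returns $\hat x=z$, then $f(\hat x)=f(z)$ and there is nothing to prove. Otherwise $P(z)>P(x_P)+2\Upsilon$. In that case
\[
f(x_P)\le P(x_P)+\Upsilon<P(z)-\Upsilon\le f(z),
\]
so again $f(\hat x)\le f(z)$.

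\emph{Second claim.} This holds on every realization, with no assumption on $\epsilon_P$. The key observation is $\xstar\in B(R)$, because the class puts the minimizer in $B(R)$. Since $x_P$ minimizes $P$ over $B(R)$, we get $P(x_P)\le P(\xstar)\le\fstar+\epsilon_P$.

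If $\hat x=z$, the guard condition $P(z)\le P(x_P)+2\Upsilon$ gives
\[
f(z)\le P(z)+\epsilon_P\le P(x_P)+2\Upsilon+\epsilon_P\le\fstar+2\Upsilon+2\epsilon_P.
\]
If $\hat x=x_P$, then
\[
f(x_P)\le P(x_P)+\epsilon_P\le\fstar+2\epsilon_P,
\]
which is even smaller than the required bound $2\Upsilon+2\epsilon_P$.

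No step is a real obstacle. The only point that needs care is noting that $\xstar\in B(R)$, so that $\min_{B(R)}P$ can be compared with $\fstar$. If $P$ is continuous and the minimizer $x_P$ is chosen as a Borel function of the data by \cref{lem:nh-selection}, then $\hat x$ is measurable as well, though this is not needed for the pathwise inequalities.
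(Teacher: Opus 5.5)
Your proof is correct and follows the paper's argument. The first claim uses the same comparison $f(z)-f(x_P)\ge P(z)-P(x_P)-2\Upsilon>0$ when $z$ is rejected. The second uses $\xstar\in B(R)$ to get $P(x_P)\le P(\xstar)$ and then compares $f$ with $P$ at $\hat x$ and at $\xstar$; the paper only avoids your case split by noting that $P(\hat x)\le P(x_P)+2\Upsilon$ holds in both branches.
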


\begin{proof}
If $\sup_{B(R)}\abs{f-P}\le\Upsilon$ and $z$ is rejected, then
$f(z)-f(x_P)\ge P(z)-P(x_P)-2\Upsilon>0$. In both cases $\hat x\in B(R)$
and $P(\hat x)\le P(x_P)+2\Upsilon\le P(\xstar)+2\Upsilon$, because
$\xstar\in B(R)$. Comparing $f$ with $P$ at $\hat x$ and at $\xstar$ gives
the second claim.
\end{proof}

\paragraph{A corrected residual method.}
Let $P$ be a polynomial of degree at most $q$ that is
$\mathcal F_{k_1}$-measurable, where $k_1$ is the last round of the pilot,
let $g=f-P$, and fix a radius $h>0$. At a center $x\in B(R)$ the
\emph{residual oracle} makes one round of \cref{alg:stencil} with radius $h$
and returns $\hat v(x)=\gbar_h(x)-\nabla P(x)$. By \eqref{eq:nh-estimate},
$\hat v(x)=\nabla g(x)+b(x)+\zeta$: the residual gradient is estimated with
the error of $\gbar_h$, and no coefficient of $P$ enters the bounds. With
step sizes $\mu_k>0$ and a starting point $z_1\in B(R)$ that is measurable
with respect to the rounds before the first iteration, iteration $k$ uses two
rounds:
\begin{equation}
\label{eq:nh-fbf}
\begin{aligned}
y_k&\in\argmin_{x\in B(R)}\Bigl\{P(x)+\inner{\hat v(z_k)}{x}
+\frac{\norm{x-z_k}^2}{2\mu_k}\Bigr\},\\
z_{k+1}&=\Proj_{B(R)}\bigl(y_k-\mu_k(\hat v(y_k)-\hat v(z_k))\bigr).
\end{aligned}
\end{equation}
The first line treats the known polynomial exactly; the second corrects the
step with a fresh estimate at $y_k$, as in the forward--backward--forward
method of \citet{Tseng2000FBF}. By \cref{lem:nh-selection}, $y_k$ is a Borel
function of $P$, $\hat v(z_k)$, $z_k$ and $\mu_k$. Hence $z_k$ is measurable
with respect to the rounds before iteration $k$, $y_k$ with respect to these
and the first round of iteration $k$, and both lie in $B(R)$. If
$\norm{\nabla^2g}_{\op}\le\Lambda$ on $B(R)$ and $\mu_k\Lambda\le1/2$, the
objective of the first line has Hessian
$\nabla^2f-\nabla^2g+\mu_k^{-1}I\succeq(\alpha+\Lambda)I$ on $B(R)$, so $y_k$
is unique although $P$ need not be convex.

\begin{alemma}[One corrected step]
\label{lem:nh-fbf-step}
Let $f\in C^q$ with $[f]_\beta\le L_\beta$, $\nabla^2f\succeq\alpha I$ for some
$\alpha\ge0$, and a minimizer $\xstar\in B(R)$. Let $G\in\mathcal F_{k_1}$ be
an event on which $\norm{\nabla^2g(x)}_{\op}\le\Lambda$ for all $x\in B(R)$,
where $\Lambda>0$ is a number. Let $\Delta_k=\E[\norm{z_k-\xstar}^2\mathbf 1_G]$,
$F_k=\E[(f(y_k)-\fstar)\mathbf 1_G]$ and $V_h=b_h^2+\varsigma_h^2$. If
$\mu_k\Lambda\le1/2$, then
\[
\Delta_{k+1}\le\Delta_k-2\mu_kF_k+4\mu_kRb_h+8\mu_k^2V_h ,
\]
and if moreover $\alpha>0$ and $\alpha\mu_k\le1$, then
\[
\Delta_{k+1}\le\Bigl(1-\frac{\alpha\mu_k}4\Bigr)\Delta_k-2\mu_kF_k
+\frac{2\mu_kb_h^2}{\alpha}+8\mu_k^2V_h .
\]
\end{alemma}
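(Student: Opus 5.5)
The plan is to run Tseng's forward--backward--forward argument pathwise on the event $G$, with the known polynomial $P$ entering only through the first-order optimality condition of the prox step. After that, $\nabla P$ and the residual-oracle errors are combined into $\nabla f$ plus estimator errors, and expectations are taken with \eqref{eq:nh-estimate-moments}.

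\emph{Step 1: the deterministic three-point inequality.} Write $v_z=\hat v(z_k)$ and $v_y=\hat v(y_k)$, and put $w=y_k-\mu_k(v_y-v_z)$. Since $y_k$ minimizes a differentiable function over the convex set $B(R)$, the necessary first-order condition holds whether or not $P$ is convex:
\[
\inner{y_k-z_k+\mu_k(\nabla P(y_k)+v_z)}{x-y_k}\ge0\qquad(x\in B(R)).
\]
We take $x=\xstar$. Since $\xstar\in B(R)$, nonexpansiveness of $\Proj_{B(R)}$ gives $\norm{z_{k+1}-\xstar}\le\norm{w-\xstar}$. Expand $\norm{w-\xstar}^2$ and use $\norm{y_k-\xstar}^2=\norm{z_k-\xstar}^2-\norm{y_k-z_k}^2+2\inner{y_k-z_k}{y_k-\xstar}$. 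Combining with the optimality condition then yields
\[
\norm{z_{k+1}-\xstar}^2\le\norm{z_k-\xstar}^2-\norm{y_k-z_k}^2-2\mu_k\inner{\nabla P(y_k)+v_y}{y_k-\xstar}+\mu_k^2\norm{v_y-v_z}^2 .
\]

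\emph{Step 2: splitting the errors.} By \eqref{eq:nh-estimate}, $\nabla P(y_k)+v_y=\nabla f(y_k)+b(y_k)+\zeta_{y}$ and $v_y-v_z=\nabla g(y_k)-\nabla g(z_k)+e_y-e_z$, where $e=b+\zeta$ denotes the error of each estimate.

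On $G$, the segment between $y_k$ and $z_k$ lies in $B(R)$, so $\norm{\nabla g(y_k)-\nabla g(z_k)}\le\Lambda\norm{y_k-z_k}$. Consequently
\[
\mu_k^2\norm{v_y-v_z}^2\le2\mu_k^2\Lambda^2\norm{y_k-z_k}^2+4\mu_k^2(\norm{e_y}^2+\norm{e_z}^2).
\]
Since $\mu_k\Lambda\le\frac12$, the first term is at most $\frac12\norm{y_k-z_k}^2$ and is absorbed. In the gradient term, convexity gives $\inner{\nabla f(y_k)}{y_k-\xstar}\ge f(y_k)-\fstar$, and strong convexity adds $\frac\alpha2\norm{y_k-\xstar}^2$ to this lower bound. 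The bias term satisfies $\abs{2\mu_k\inner{b(y_k)}{y_k-\xstar}}\le4\mu_kRb_h$.

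\emph{Step 3: expectations on $G$.} Multiply by $\mathbf 1_G$ and take expectations. This is where the measurability bookkeeping matters, and I expect it to be the only delicate point. The event $G\in\mathcal F_{k_1}$ and the point $y_k$ are both measurable before the round that produces $\hat v(y_k)$. Hence $\E[\inner{\zeta_y}{y_k-\xstar}\mathbf 1_G]=0$ by \eqref{eq:nh-estimate-moments}, and likewise $\E[\norm{e_y}^2\mathbf 1_G]\le V_h\PP(G)$. The same holds for $e_z$ with the round before, using that $z_k$ is measurable before it. The error terms therefore contribute at most $8\mu_k^2V_h$, and the first inequality follows after dropping $-\frac12\norm{y_k-z_k}^2$.

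For the strongly convex version, the strong-convexity term gives $-\alpha\mu_k\norm{y_k-\xstar}^2$. Instead of the crude $4\mu_kRb_h$, use Young's inequality in the form $2\mu_kb_h\norm{y_k-\xstar}\le\frac{\alpha\mu_k}2\norm{y_k-\xstar}^2+\frac{2\mu_kb_h^2}\alpha$. What remains is $-\frac{\alpha\mu_k}2\norm{y_k-\xstar}^2-\frac12\norm{y_k-z_k}^2$. Because $\alpha\mu_k\le1$, this is at most $-\frac{\alpha\mu_k}4\norm{z_k-\xstar}^2$ by $\norm{z_k-\xstar}^2\le2\norm{y_k-\xstar}^2+2\norm{y_k-z_k}^2$, which gives the contraction factor $1-\frac{\alpha\mu_k}4$.
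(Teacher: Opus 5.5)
Your proposal is correct and is essentially the paper's proof: you derive the pathwise forward--backward--forward inequality from the prox optimality condition and the nonexpansiveness of $\Proj_{B(R)}$, absorb $2\mu_k^2\Lambda^2\norm{y_k-z_k}^2$ using $\mu_k\Lambda\le1/2$ on $G$, and take expectations through \eqref{eq:nh-estimate-moments}, using the same Young's inequality and the bound $\norm{z_k-\xstar}^2\le2\norm{y_k-\xstar}^2+2\norm{y_k-z_k}^2$ for the contraction. The only difference is bookkeeping: you expand $\norm{y_k-\xstar}^2$ around $z_k$, whereas the paper adds $\inner{c}{y-u}$ to the optimality condition and expands $\norm{y-c-u}^2$, which is the same computation.
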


\begin{proof}
Omit the index $k$ and write $u=\xstar$, $w=z-y$, $c=\mu(\hat v(y)-\hat v(z))$,
$e_z=\hat v(z)-\nabla g(z)$ and $e_y=\hat v(y)-\nabla g(y)$. Steps~1 and~2
hold on every realization in $G$.

\emph{Step 1: the first line of \eqref{eq:nh-fbf}.} The optimality condition
at $y$ over the convex set $B(R)\ni u$ gives
$\inner{w}{y-u}\ge\mu\inner{\nabla P(y)+\hat v(z)}{y-u}$. Add
$\inner c{y-u}$ to both sides. Since $\nabla P+\nabla g=\nabla f$ and
$\nabla^2f\succeq\alpha I$,
\[
\inner{w+c}{y-u}\ge\mu\inner{\nabla f(y)+e_y}{y-u}
\ge\mu\Bigl(f(y)-f(u)+\frac\alpha2\norm{y-u}^2+\inner{e_y}{y-u}\Bigr);
\]
the error $e_z$ has canceled.

\emph{Step 2: the second line.} Since $y-c-u=(z-u)-(w+c)$, we have
$\norm{y-c-u}^2=\norm{z-u}^2-2\inner{w+c}{y-u}-\norm w^2+\norm c^2$. The
projection onto $B(R)\ni u$ is nonexpansive, so Step~1 gives
\[
\norm{z_{k+1}-u}^2\le\norm{z-u}^2-2\mu\bigl(f(y)-f(u)\bigr)
-\alpha\mu\norm{y-u}^2-2\mu\inner{e_y}{y-u}-\norm w^2+\norm c^2 .
\]
On $G$, $\nabla g$ is $\Lambda$-Lipschitz on $B(R)$, and $\mu\Lambda\le1/2$
gives $\norm c^2\le2\mu^2\Lambda^2\norm w^2+4\mu^2(\norm{e_y}^2+\norm{e_z}^2)
\le\frac12\norm w^2+4\mu^2(\norm{e_y}^2+\norm{e_z}^2)$.

\emph{Step 3: expectations.} Multiply by $\mathbf 1_G$ and take
expectations. The centers $z$ and $y$ are bounded and measurable with respect
to the past of their rounds, and $G$ belongs to that past. By
\eqref{eq:nh-estimate} and \eqref{eq:nh-estimate-moments},
$e_y=b(y)+\zeta_y$ with $\E[\inner{\zeta_y}{y-u}\mathbf 1_G]=0$, and
$\E[(\norm{e_y}^2+\norm{e_z}^2)\mathbf 1_G]\le2V_h$. For the first claim,
bound $-2\mu\inner{b(y)}{y-u}\le4\mu Rb_h$ and drop the remaining nonpositive
terms. For the second, bound
$-2\mu\inner{b(y)}{y-u}\le\frac{\alpha\mu}2\norm{y-u}^2+\frac{2\mu b_h^2}{\alpha}$;
this leaves the term
$-\E[(\frac{\alpha\mu}2\norm{y-u}^2+\frac12\norm w^2)\mathbf 1_G]$, and
$\norm{z-u}^2\le2\norm{y-u}^2+2\norm w^2$ with $\alpha\mu\le1$ gives
$\frac{\alpha\mu}2\norm{y-u}^2+\frac12\norm w^2\ge\frac{\alpha\mu}4\norm{z-u}^2$.
\end{proof}

Both the restarted method below and the metric method of
\cref{app:nh-quadratic} lead to the recursion \eqref{eq:recursion}, which is
solved by \cref{lem:recursion}.

\begin{aproposition}[Restarted corrected method]
\label{prop:nh-restart}
Let $f\in\Fsc(\beta,L_\beta;\alpha,\infty,R)$, let $G$ and $\Lambda$ be as in
\cref{lem:nh-fbf-step}, and put $k_0=\max\{7,16\Lambda/\alpha\}$. Run $N_1\ge0$
iterations of \eqref{eq:nh-fbf} from $z_1=0$ with the step
$8/(\alpha(k_0+1))$, and then $N_2\ge1$ iterations with the steps
$8/(\alpha(k+k_0))$, $k=1,\dots,N_2$. Output
$\bar y=\mathcal W^{-1}\sum_{k=1}^{N_2}w_ky_{N_1+k}$, where $w_k=k+k_0-1$ and
$\mathcal W=\sum_kw_k$. The method uses $2(N_1+N_2)$ rounds of $2md$ oracle calls
each, and if $N_2\ge k_0$, then
\[
\E\bigl[(f(\bar y)-\fstar)\mathbf 1_G\bigr]\le
\frac{\alpha k_0^2R^2}{8N_2^2}\,e^{-2N_1/(k_0+1)}
+\frac{2b_h^2}{\alpha}+\frac{96(b_h^2+\varsigma_h^2)}{\alpha N_2}.
\]
\end{aproposition}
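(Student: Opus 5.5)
The plan is to feed the second bound of \cref{lem:nh-fbf-step} into \cref{lem:recursion}(b), after checking that the chosen steps satisfy both step conditions of that lemma. Then convexity of $f$ transfers the weighted-average bound from $F_k$ to $\bar y$.

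\emph{Step conditions.} The constant step is $\mu=8/(\alpha(k_0+1))$, and the later steps $8/(\alpha(k+k_0))$ are smaller. Since $k_0\ge7$, we get $\alpha\mu\le1$. Since $k_0\ge16\Lambda/\alpha$, we get $\mu\Lambda\le8\Lambda/(\alpha\cdot16\Lambda/\alpha)=1/2$. So \cref{lem:nh-fbf-step} applies at every iteration and gives
\[
\Delta_{k+1}\le\Bigl(1-\frac{\alpha\mu_k}{4}\Bigr)\Delta_k-2\mu_kF_k+\frac{2\mu_kb_h^2}{\alpha}+8\mu_k^2V_h .
\]

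\emph{Matching \cref{lem:recursion}.} This is \eqref{eq:recursion} with $a=\alpha/4$, $c=2$, $\mathcal B=2b_h^2/\alpha$ and $\mathcal V=8V_h$. The second-phase steps are $2/(a(k+k_0))=8/(\alpha(k+k_0))$, as required. In the first phase $a\mu=2/(k_0+1)\le\min\{1,2/k_0\}$.

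\emph{Applying \cref{lem:recursion}(b).} With $N_2\ge k_0$ and $\Delta_1\le\E\norm{0-\xstar}^2\le R^2$, the lemma gives
\[
\frac1{\mathcal W}\sum_k w_kF_{N_1+k}\le\frac{ak_0^2R^2}{cN_2^2}e^{-a\mu N_1}+\frac{2\mathcal B}{c}+\frac{6\mathcal V}{acN_2}.
\]
The three terms evaluate as follows:
\begin{itemize}
\item $\frac{ak_0^2R^2}{cN_2^2}e^{-a\mu N_1}=\frac{\alpha k_0^2R^2}{8N_2^2}e^{-2N_1/(k_0+1)}$;
\item $\frac{2\mathcal B}{c}=\frac{2b_h^2}{\alpha}$;
\item $\frac{6\mathcal V}{acN_2}=\frac{6\cdot8V_h\cdot4}{2\alpha N_2}=\frac{96V_h}{\alpha N_2}$.
\end{itemize}
These are exactly the stated constants.

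\emph{Passing to $\bar y$.} Each $y_{N_1+k}$ lies in $B(R)$, and $f$ is convex. Jensen's inequality with the weights $w_k/\mathcal W$, applied on each realization and multiplied by $\mathbf 1_G$, gives $(f(\bar y)-\fstar)\mathbf 1_G\le\mathcal W^{-1}\sum_kw_k(f(y_{N_1+k})-\fstar)\mathbf 1_G$. Taking expectations finishes the bound.

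\emph{Round count.} Each iteration of \eqref{eq:nh-fbf} uses two rounds, one estimate at $z_k$ and one at $y_k$. Each round is one frame, so $2md$ calls. This gives $2(N_1+N_2)$ rounds.

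\emph{Expected obstacle.} The only delicate point is that \cref{lem:recursion} is stated for deterministic numbers $\Delta_k,F_k\ge0$. Here they are expectations restricted to $G$, which is harmless because $G\in\mathcal F_{k_1}$ is fixed before the iterations. Also $F_k\ge0$ holds because $f\ge\fstar$. The one-step inequality from \cref{lem:nh-fbf-step} is already stated in these restricted quantities, so nothing further is needed.
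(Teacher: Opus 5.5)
Your proposal is correct and follows the paper's proof essentially line by line. You check $\mu_k\Lambda\le1/2$ and $\alpha\mu_k\le1$, then feed the second bound of \cref{lem:nh-fbf-step} into \cref{lem:recursion}(b) with $a=\alpha/4$, $c=2$, $\mathcal B=2b_h^2/\alpha$ and $\mathcal V=8(b_h^2+\varsigma_h^2)$; you then pass to $\bar y$ by convexity, and your evaluation of the three constants reproduces the stated bound.
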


\begin{proof}
All steps are at most $8/(\alpha(k_0+1))$, hence $\mu_k\Lambda\le1/2$ and
$\alpha\mu_k\le1$. \Cref{lem:nh-fbf-step} gives \eqref{eq:recursion} with
$a=\alpha/4$, $c=2$, $\mathcal B=2b_h^2/\alpha$ and
$\mathcal V=8(b_h^2+\varsigma_h^2)$, and the steps are those of
\cref{lem:recursion}(b), with $a\mu=2/(k_0+1)\le\min\{1,2/k_0\}$. Since $\Delta_1\le R^2$ and, by convexity,
$f(\bar y)-\fstar\le\mathcal W^{-1}\sum_kw_k(f(y_{N_1+k})-\fstar)$,
\cref{lem:recursion}(b) gives the claim.
\end{proof}

\subsection{Budget of order \texorpdfstring{$d^q$}{d to the q} for \texorpdfstring{$\beta>4$}{beta > 4}}
\label{app:nh-general}

\begin{algorithm}[t]
\caption{Pilot fit and corrected residual method ($\beta>4$)}
\label{alg:nh-general}
\begin{algorithmic}[1]
\Require budget $T$; $d,\beta,\alpha,R,L_\beta,\sigma$; the stencil of \cref{alg:stencil}
\State $q\gets\lceil\beta\rceil-1$, $\omega\gets(q-2)/(4q)$, $\theta\gets T^{\omega}$
\State choose an odd $r\ge3$ with $\omega(r-1)\ge p_\beta+3$, and $\ell\gets\max\{1,\lceil16\sigma^2\rceil\}$
\State run the pilot of \cref{app:nh-pilot} on $B(2\bar R)$ (one round, $n_0=2K_qr\ell$ oracle calls)
\State $P\gets$ fit of the medians of block means
\State $\Lambda\gets\Lambda_\theta$, $\Upsilon\gets\Upsilon_\theta$ from \eqref{eq:nh-UL}, $k_0\gets\max\{7,16\Lambda/\alpha\}$
\State $N_{\mathrm{it}}\gets\lfloor(T-n_0)/(4md)\rfloor$, $N_1\gets\lfloor N_{\mathrm{it}}/2\rfloor$, $N_2\gets N_{\mathrm{it}}-N_1$
\State $h\gets\bigl(d^{1+\tau}\sigma^2/(L_\beta^2N_2)\bigr)^{1/(2\beta)}$
\State run \cref{prop:nh-restart} with the residual oracle $\hat v(x)=\gbar_h(x)-\nabla P(x)$; obtain $\bar y$
\State \Return the output of the guard of \cref{lem:nh-guard} with candidate $\bar y$ and level $\Upsilon$
\end{algorithmic}
\end{algorithm}

\begin{atheorem}[{Polynomial-pilot upper bound, $\beta>4$}]
\label{thm:nh-upper-general}
Let $\beta>4$, $\alpha,R,L_\beta,\sigma>0$, and let the noise be centered
at level $\sigma$. There is $C_0$, depending only on
$\beta,\alpha,R,L_\beta,\sigma$, such that for every $d\ge1$ and every
$T\ge C_0d^q$, \cref{alg:nh-general} uses at most $T$ oracle calls in
$1+2N_{\mathrm{it}}$ rounds and satisfies
\[
\E[f(\hat x)-\fstar]\le C_\beta\,\Phi_T\qquad\text{for every }
f\in\Fsc(\beta,L_\beta;\alpha,\infty,R),
\]
for the operator seminorm with $\tau=0$ and for the Hilbert--Schmidt
seminorm with $\tau=\tau_\beta$.
\end{atheorem}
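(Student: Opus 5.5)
The proof splits the error according to the pilot event $G=\{E_0\le\theta\}$ with $\theta=T^\omega$. On $G$ we use \cref{lem:nh-pilot}: the residual $g=f-P$ has $\abs g\le\Upsilon_\theta$ and $\norm{\nabla^2g}_{\op}\le\Lambda_\theta$ on $B(R)$. Hence the hypotheses of \cref{lem:nh-fbf-step} hold with $\Lambda=\Lambda_\theta\le C\sqrt{K_q}\,T^\omega$.

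\Cref{prop:nh-restart} then bounds $\E[(f(\bar y)-\fstar)\mathbf 1_G]$. We need $N_2\ge k_0$, which we arrange by choosing the budget; note $k_0\le C\sqrt{K_q}T^\omega/\alpha$ and $K_q\le Cd^q$. Since $N_{\mathrm{it}}\asymp T/d$ once $T\ge C_0d^q$ (so that $n_0=2K_qr\ell\le T/2$), the initial term $k_0^2R^2N_2^{-2}e^{-2N_1/(k_0+1)}$ decays like $\exp(-cT^{1-\omega}d^{-1}K_q^{-1/2})$. It is therefore below $\Phi_T$ as soon as $T^{1-\omega}\gtrsim d^{1+q/2}\log T$. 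With $\omega=(q-2)/(4q)$ we have $1-\omega=(3q+2)/(4q)$, and $T\ge C_0d^q$ gives $T^{1-\omega}\ge C_0^{1-\omega}d^{(3q+2)/4}$. This dominates $d^{1+q/2}=d^{(q+2)/2}\cdot d^{0}$ precisely because $(3q+2)/4\ge(q+2)/2$ for $q\ge2$; the logarithm is absorbed by enlarging $C_0$. The exponent bookkeeping here is where $\beta>4$, and the choice of $\omega$, should be checked carefully.

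The remaining two terms on $G$ are handled by the choice of $h$, namely $h^{2\beta}=d^{1+\tau}\sigma^2/(L_\beta^2N_2)$. By \eqref{eq:nh-bh} this makes $b_h^2/\alpha$ and $\varsigma_h^2/(\alpha N_2)$ both of order $L_\beta^{2/\beta}\alpha^{-1}d^{-\tau}(d^{1+\tau}\sigma^2/N_2)^{(\beta-1)/\beta}$. Substituting $N_2\asymp T/(md)$, this is $\asymp\Phi_T$, matching \eqref{eq:rate} up to the constant $m^{(\beta-1)/\beta}$. The guard of \cref{lem:nh-guard} only lowers $f$ on $G$, because $\sup_{B(R)}\abs{f-P}\le\Upsilon$ there.

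Off $G$ we do not use the descent at all and rely only on the guard's pathwise bound $f(\hat x)-\fstar\le2\Upsilon+2\sup_{B(R)}\abs{f-P}\le C\sqrt{K_q}(\theta+E_0)$, which follows from \eqref{eq:nh-fit-error}. Taking expectations, the contribution is at most $C\sqrt{K_q}\bigl(\theta\PP(E_0>\theta)+\E[E_0\mathbf 1\{E_0>\theta\}]\bigr)$. By \eqref{eq:nh-pilot-tails} with $\ell\ge16\sigma^2$, this is at most $C K_q^{3/2}2^r T^{-\omega r}$ up to constants. Since $\omega(r-1)\ge p_\beta+3$, this is at most $CK_q^{3/2}T^{-p_\beta-3-\omega}$.

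To compare with $\Phi_T\ge cT^{-p_\beta}$, it suffices that $K_q^{3/2}\le T^3$. This holds since $K_q^{3/2}\le Cd^{3q/2}\le T^{3/2}$. This bad-event term is the main obstacle: one must make sure the polynomial factors in $d$ coming from $\sqrt{K_q}$ and $J=2K_q$ are absorbed, which is exactly what the margin $+3$ in the choice of $r$ provides.

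Finally, the count of oracle calls is $n_0+4mdN_{\mathrm{it}}\le T$ by the definition of $N_{\mathrm{it}}$. The number of rounds is one for the pilot plus two per iteration, giving $1+2N_{\mathrm{it}}$. Both seminorm cases are identical, since only \eqref{eq:nh-bh} depends on $\tau$.
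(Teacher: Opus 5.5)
Your proposal has the same structure as the paper's proof. On $G=\{E_0\le\theta\}$ you apply \cref{prop:nh-restart} with $\Lambda=\Lambda_\theta$ and get the rate terms from the choice of $h$, and the guard does not increase the value there. On $G^c$ you use only the guard's pathwise bound and the pilot tails \eqref{eq:nh-pilot-tails}; the margin $\omega(r-1)\ge p_\beta+3$ together with $K_q\le2^qd^q\le2^qT/C_0$ absorbs $K_q^{3/2}$. Your call and round counts also agree with the paper.

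The step that does not work as written is the absorption of $\log T$ in the initial-distance term. From $T\ge C_0d^q$ you derive $T^{1-\omega}\ge C_0^{1-\omega}d^{(3q+2)/4}$. That is a lower bound in $d$ alone: it gives $T^{1-\omega}/d^{1+q/2}\ge C_0^{1-\omega}d^{(q-2)/4}$, which cannot dominate $\log T$ uniformly over $T\ge C_0d^q$ (fix $d$ and let $T\to\infty$). So enlarging $C_0$ does not close the argument. The inequality $(3q+2)/4\ge(q+2)/2$ you cite is also the wrong reason: it still holds at $q=2$, where the conclusion fails along $T=C_0d^2$, $d\to\infty$.

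The fix is to eliminate $d$ rather than $T$. Using $d\le(T/C_0)^{1/q}$ and $\tfrac12-\tfrac1q=2\omega$,
\[
\frac{N_{\mathrm{it}}}{k_0+1}\ge c\,\frac{T^{1-\omega}}{d^{1+q/2}}\ge c\,C_0^{\frac12+\frac1q}\,T^{\omega}.
\]
Hence the initial term is at most $\frac{\alpha R^2}{8}e^{-cT^\omega}$, because $N_2\ge k_0$. This is below $\Phi_T\ge\alpha^{-1}L_\beta^{2/\beta}\sigma^{2p_\beta}T^{-p_\beta}$ once $T\ge C_0$ is large, and this is how the paper argues. Alternatively, keep your worst case $T=C_0d^q$ and add that $T^{1-\omega}/\log T$ increases in $T$; then the strictly positive power $d^{(q-2)/4}$ absorbs $\log(C_0d^q)$.

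Either way, what is actually used is $\omega>0$, both here and in the decay of $\theta^{-r}$ on $G^c$. That already holds for $q=3$. The hypothesis $\beta>4$ reflects that a pilot of size $d^q$ would exceed the $d^2$ budget that \cref{thm:nh-upper-quadratic} achieves for $\beta\le4$; it is not a point where this argument breaks.
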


\begin{proof}
Write $p=p_\beta$. Here $q\ge4$, so $\omega=(q-2)/(4q)\in(0,1/4)$ and
$\theta=T^\omega\ge1$. The integer $r$ depends only on $\beta$, $\ell$ only
on $\sigma$, and $\sigma^2/\ell\le1/16$. Let $G=\{E_0\le\theta\}$; it is
determined by the pilot round.

\emph{Step 1: oracle calls.} Each factor of $K_q=\prod_{i=1}^q(d+i)/i$ is at most
$d+1$, so $K_q\le2^qd^q$ and $n_0\le2^{q+1}r\ell d^q$. Let
$C_0\ge\max\{2^{q+2}r\ell,32m\}$. Then $n_0\le T/2$ and $T/(8md)\ge4$, so
$N_{\mathrm{it}}\ge T/(16md)\ge2$, $N_2\ge T/(32md)$ and
$N_1\ge N_{\mathrm{it}}/4$. The method uses at most
$n_0+4mdN_{\mathrm{it}}\le T$ oracle calls.

\emph{Step 2: the radius.} Put $X=L_\beta^2d^{-\tau}h^{2\beta-2}$. The choice of
$h$ gives $X=d\sigma^2/(N_2h^2)=L_\beta^{2/\beta}d^{-\tau/\beta}(d\sigma^2/N_2)^p$,
and $N_2\ge T/(32md)$ gives $X\le(32m)^p\alpha\Phi_T$, because
$2p-\tau/\beta=2-(2+\tau)/\beta$. By \eqref{eq:nh-bh}, $b_h^2=\bfC^2X$ and
$\varsigma_h^2=(\Cf+C_\sigma N_2)X$. Hence
\begin{equation}
\label{eq:nh-rate-terms}
\frac{2b_h^2}{\alpha}+\frac{96(b_h^2+\varsigma_h^2)}{\alpha N_2}
\le C_1(32m)^p\,\Phi_T,\qquad
C_1=98\bfC^2+96\Cf+96C_\sigma .
\end{equation}

\emph{Step 3: the event $G$.} By \cref{lem:nh-pilot}, on $G$ the residual
$g=f-P$ satisfies $\norm{\nabla^2g}_{\op}\le\Lambda_\theta$ and
$\abs g\le\Upsilon_\theta$ on $B(R)$. Since
$\Lambda_\theta\le C\sqrt{K_q}\,\theta$, we have
$k_0+1\le Cd^{q/2}T^\omega$, and $T\ge C_0d^q$ gives $d\le T^{1/q}$. Hence
\[
\frac{N_{\mathrm{it}}}{k_0+1}\ge c\,\frac{T^{1-\omega}}{d^{1+q/2}}
\ge c\,T^{\frac12-\frac1q-\omega}=c\,T^{\omega},
\]
because $\frac12-\frac1q=2\omega$. After increasing $C_0$, $N_2\ge k_0$ and
$2N_1/(k_0+1)\ge cT^\omega$. On $G$ the guard does not increase the value
(\cref{lem:nh-guard}), so \cref{prop:nh-restart} with $\Lambda=\Lambda_\theta$
and \eqref{eq:nh-rate-terms} give
\[
\E\bigl[(f(\hat x)-\fstar)\mathbf 1_G\bigr]\le
\frac{\alpha R^2}8e^{-cT^\omega}+C_1(32m)^p\,\Phi_T .
\]

\emph{Step 4: the complement.} On $G^c$ we have $E_0>\theta\ge1$, so
\cref{lem:nh-guard} and \eqref{eq:nh-fit-error} give
$f(\hat x)-\fstar\le2\Upsilon_\theta+2\varrho_0+2\Gamma_0(\varrho_0+E_0)
\le C\sqrt{K_q}\,E_0$. By \eqref{eq:nh-pilot-tails} with
$\sigma^2/\ell\le1/16$, $\E[E_0\mathbf 1_{G^c}]\le J2^{-r}\theta^{-r}
\le2K_q\theta^{-r}$. Since $K_q\le2^qd^q\le2^qT$ and $\omega r\ge p+3$,
\[
\E\bigl[(f(\hat x)-\fstar)\mathbf 1_{G^c}\bigr]\le CK_q^{3/2}\theta^{-r}
\le CT^{3/2-\omega r}\le CT^{-p-1}.
\]

\emph{Step 5: conclusion.} The exponent $2-(2+\tau)/\beta$ of $d$ in
$\Phi_T$ is positive, because $\tau\le(\beta-1)/2$; hence
$\Phi_T\ge\alpha^{-1}L_\beta^{2/\beta}\sigma^{2p}T^{-p}$. After increasing $C_0$,
the remainders of Steps~3 and~4 are at most $\Phi_T$ in total, and
$\E[f(\hat x)-\fstar]\le(C_1(32m)^p+1)\Phi_T$, where the constant depends only
on $\beta$. The pilot uses only $[f]_{\beta,\op}\le L_\beta$, which holds in both
cases, and $\tau$ enters only through \eqref{eq:nh-bh}.
\end{proof}

\subsection{Budget of order \texorpdfstring{$d^2$}{d squared} for \texorpdfstring{$2<\beta\le4$}{2 < beta <= 4}}
\label{app:nh-quadratic}

For $q\in\{2,3\}$ the fit of \cref{app:nh-general} needs of order $d^q$
values, and its curvature bound $\Lambda_\theta$ grows like $\sqrt{K_q}$,
which is too large when $T$ is of order $d^2$. We replace it by an estimate
of $H_0=\nabla^2f(0)$ whose error in operator norm grows more slowly than
$d$. For $2<\beta\le3$ the quadratic model built from this estimate is
accurate on $B(R)$, and the method of \cref{app:nh-general} applies. For
$3<\beta\le4$ the cubic part of $f$ is not estimated. Instead, convexity
bounds the variation of $\nabla^2f$ relative to $H_0$
(\cref{lem:nh-relative-curvature}), and projected steps in a metric
calibrated by the estimate of $H_0$ replace the corrected method.

\begin{alemma}[Robust estimate of a symmetric matrix]
\label{lem:nh-robust-matrix}
Let $M_\star$ be a deterministic symmetric $d\times d$ matrix, $\sigma_0>0$,
and $v\ge2$ an integer. Suppose that observations $Y_1,Y_2,\dots$ are made in
an order fixed in advance, each referring to a pair $(i,j)$ with $i\le j$,
and that $Y_n=(M_\star)_{i_nj_n}+\tilde\xi_n$, where $(\tilde\xi_n)$ is adapted
to a filtration $(\mathcal H_n)$ with $\E[\tilde\xi_n\mid\mathcal H_{n-1}]=0$
and $\E[\tilde\xi_n^2\mid\mathcal H_{n-1}]\le\sigma_0^2$. From $2v$
observations of every pair one can compute a symmetric matrix $\hat M$ with
\[
\bigl(\E\norm{\hat M-M_\star}_{\op}^v\bigr)^{1/v}\le C_v\,\sigma_0\sqrt{d\log(2d)},
\]
where $C_v$ depends only on $v$.
\end{alemma}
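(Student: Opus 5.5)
The approach is a median-of-means estimator combined with matrix concentration for martingale differences. For each pair $(i,j)$ we split its $2v$ observations into two groups of $v$ in the fixed order. A direct entrywise median would lose a factor of $d$ in operator norm. Instead, we average within blocks so that every entry is estimated once per block, with a martingale error. We then control the operator norm of each block error by a matrix inequality, and finally take a geometric median of the blocks.

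\emph{Construction.} For $b=1,\dots,2v$ let $\hat M^{(b)}$ be the symmetric matrix whose $(i,j)$ entry is the $b$-th observation of that pair. Its error is $E^{(b)}=\sum_{n\in I_b}\tilde\xi_n S_n$, where $I_b$ is the set of observations used in $\hat M^{(b)}$ and $S_n=e_{i_n}e_{j_n}^\top+e_{j_n}e_{i_n}^\top$ (or $e_ie_i^\top$ on the diagonal). This is a matrix martingale in the fixed order with deterministic coefficient matrices $S_n$. Its predictable quadratic variation satisfies
\[
\sum_{n\in I_b}\E[\tilde\xi_n^2\mid\mathcal H_{n-1}]S_n^2\preceq\sigma_0^2\sum_{i\le j}S_{ij}^2\preceq 2d\sigma_0^2 I .
\]
Rosenthal--Burkholder-type noncommutative martingale inequalities (matrix Freedman for bounded increments, or the moment form of Junge--Xu/Tropp) would give $\E\norm{E^{(b)}}_{\op}^2\lesssim\sigma_0^2 d\log(2d)$. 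Only second conditional moments are available, however. The plan is therefore to use the matrix Burkholder--Rosenthal inequality in its $L^2$ form with Schatten-$p$ norm for $p\asymp\log(2d)$. It bounds $\E\norm{E}_{S_p}^2$ by $p$ times the square-function term plus a term involving $\E\max_n\abs{\tilde\xi_n}^2\norm{S_n}^2$. That second term is at most $\sum_n\sigma_0^2\le d^2\sigma_0^2$, which is too large.

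\emph{Main obstacle.} The heavy-tail term above is where I expect the difficulty. My first attempt is to truncate each $\tilde\xi_n$ at level $\lambda=\sigma_0\sqrt{d/\log(2d)}$ and recenter conditionally. The truncated martingale has increments bounded by $2\lambda$, so matrix Freedman yields $\norm{\cdot}_{\op}\lesssim\sigma_0\sqrt{d\log(2d)}$ with constant probability. The truncation bias and the large part are controlled by Chebyshev: their Frobenius norm has second moment at most $\sum_n\E\tilde\xi_n^2\mathbf1\{\abs{\tilde\xi_n}>\lambda\}$. That quantity is not small under only a variance bound. So I fall back on a probability-$3/4$ statement for each block. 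The Frobenius-norm route gives $\E\norm{E^{(b)}}_{\HS}^2\le d^2\sigma_0^2$, which is also too weak. Hence the truncation must be balanced through the quadratic variation. The plan is to show
\[
\PP\bigl(\norm{E^{(b)}}_{\op}>C\sigma_0\sqrt{d\log(2d)}\bigr)\le\tfrac14
\]
by splitting each increment into its truncated and large parts. The large parts are handled via the operator norm of a sparse random matrix: each row receives contributions from only $d$ entries, so the expected row $\ell^1$-type sum is bounded by $\sigma_0^2 d/\lambda$ in a Schur-test bound.

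\emph{Boosting to $v$-th moments.} With each of the $2v$ blocks good with probability at least $3/4$, and goodness adapted along the fixed order, take $\hat M$ to be any block matrix within distance $2C\sigma_0\sqrt{d\log(2d)}$ of more than half the blocks, or of the majority in the usual median-of-matrices sense. As in \cref{lem:nh-median}(a), failure requires at least $v$ bad blocks. Conditioning successively yields $\PP(\text{fail})\le\binom{2v}{v}4^{-v}$, with tail bounds at every larger threshold $u$ decaying like $u^{-2v}$ from Chebyshev-type per-block tails. Integrating these tails then gives the $L^v$ bound with a constant $C_v$.
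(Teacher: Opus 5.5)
There is a genuine gap, at the step you flag as the main obstacle. Under only a conditional variance bound, the per-block claim $\PP(\|E^{(b)}\|_{\op}>C\sigma_0\sqrt{d\log(2d)})\le\frac14$ is false for every constant $C$. Moreover, an estimator that aggregates whole block matrices does not satisfy the lemma.

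Take independent errors with $\tilde\xi_n=\pm a$ with probability $\sigma_0^2/(2a^2)$ each and $0$ otherwise; these meet all hypotheses. Since $\|E\|_{\op}\ge\abs{E_{ij}}$, a single nonzero entry in a block already gives $\|E^{(b)}\|_{\op}\ge a$.
\begin{itemize}
\item For $a=2C\sigma_0\sqrt{d\log(2d)}$, the expected number of nonzero entries among the $d(d+1)/2$ pairs of one block is of order $d/(C^2\log(2d))\to\infty$. So every block is bad with probability tending to $1$.
\item For $a=\sigma_0 d/2$, each block contains such an entry with probability at least $1-e^{-2}$, independently across blocks. With high probability no two of these entries share a position. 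On that event, a chosen block has an entry of size $a$. Any data-dependent convex combination of blocks (such as a Frobenius geometric median) has one of size at least $a/(2v)$, since some weight is at least $1/(2v)$. Hence its $L^v$ error is at least $c_v\sigma_0 d$.
\end{itemize}
Your Schur-test step fails for the same reason. It needs the \emph{maximum} over rows of the row sums, whereas only the expectation of one row sum is of order $d\sigma_0^2/\lambda$; the row carrying the outlier has sum $a$. Note also that truncating $\tilde\xi_n$ at $\lambda$ is not available to an estimator, since $M_\star$ is unknown and may be large.

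The missing idea is that outliers sit at random positions, so they must be removed \emph{entrywise}, across the repeated observations of the same pair. This has to be done without letting the bias of a robust entrywise estimate add up coherently in operator norm. The paper does this as follows.
\begin{itemize}
\item The median $c_{ij}$ of the first $2v-1$ observations of a pair is used only as a pilot center. It sets $\hat M_{ij}=c_{ij}+\mathrm{clip}_{[-b,b]}(Y'_{ij}-c_{ij})$, where $Y'_{ij}$ is the last observation and $b=\sigma_0\sqrt{d/\log(2d)}$ is exactly your $\lambda$. The pilot is what makes the truncation implementable.
\item The error is written as its conditional mean $\varpi_{ij}$, given the past of $Y'_{ij}$, plus a remainder. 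The remainders form a matrix martingale with increments bounded by $2b$ and predictable quadratic variation $\preceq d\sigma_0^2 I$. Matrix Freedman then gives $C_v(\sigma_0\sqrt{d\log(2d)}+b\log(2d))$ in $L^v$.
\item Where the pilot error satisfies $\abs{\omega_{ij}}\le b/2$, centering and the variance bound give $\abs{\varpi_{ij}}\le2\sigma_0^2/b$. These entries therefore contribute operator norm at most $2d\sigma_0^2/b$.
\item Elsewhere $\abs{\varpi_{ij}}\le3\abs{\omega_{ij}}$. These entries are controlled in Frobenius norm through $\E\omega_{ij}^{2v}\le\binom{2v-1}{v}\sigma_0^{2v}$ from \cref{lem:nh-median}(a); this is where the $L^v$ moment comes from.
\end{itemize}
The level $b$ balances $d\sigma_0^2/b$ against $b\log(2d)$.
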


\begin{proof}
Let $\ell_d=\log(2d)$ and $b=\sigma_0\sqrt{d/\ell_d}$. For every pair, let
$c_{ij}$ be the median of the first $2v-1$ observations, let $Y'_{ij}$ be the
last one, and put
\[
\hat M_{ij}=\hat M_{ji}=c_{ij}+\mathrm{clip}_{[-b,b]}(Y'_{ij}-c_{ij}).
\]
We write $\hat M-M_\star$ as a bias matrix plus a matrix martingale.

\emph{Step 1: one entry.} By \cref{lem:nh-median}(a) with $r=2v-1$,
$\omega_{ij}=c_{ij}-(M_\star)_{ij}$ satisfies
$\E\omega_{ij}^{2v}\le\binom{2v-1}{v}\sigma_0^{2v}$. Let $\mathcal H'_{ij}$
be the $\sigma$-algebra before $Y'_{ij}$ and $\tilde\xi$ the error of
$Y'_{ij}$. Then
$\hat M_{ij}-(M_\star)_{ij}=\omega_{ij}+\mathrm{clip}_{[-b,b]}(\tilde\xi-\omega_{ij})
=\varpi_{ij}+\chi_{ij}$, where $\varpi_{ij}$ is the conditional mean given
$\mathcal H'_{ij}$. Clipping is $1$-Lipschitz, and a $1$-Lipschitz function
of a real random variable has at most its variance; applied to the
conditional law of $\tilde\xi$, this gives
$\E[\chi_{ij}^2\mid\mathcal H'_{ij}]\le\sigma_0^2$. Moreover
$\abs{\chi_{ij}}\le2b$. Since $\E[\tilde\xi\mid\mathcal H'_{ij}]=0$,
\[
\varpi_{ij}=\E\bigl[\mathrm{clip}_{[-b,b]}(\tilde\xi-\omega_{ij})
-(\tilde\xi-\omega_{ij})\bigm|\mathcal H'_{ij}\bigr].
\]
If $\abs{\omega_{ij}}\le b/2$, then
$\abs{\mathrm{clip}_{[-b,b]}(x)-x}=(\abs x-b)_+$ and
$(\abs{\tilde\xi-\omega_{ij}}-b)_+\le(\abs{\tilde\xi}-b/2)_+\le2\tilde\xi^2/b$,
so $\abs{\varpi_{ij}}\le2\sigma_0^2/b$. Otherwise
$\abs{\varpi_{ij}}\le\abs{\omega_{ij}}+b\le3\abs{\omega_{ij}}$.

\emph{Step 2: the bias matrix.} The entries $\varpi_{ij}$ with
$\abs{\omega_{ij}}\le b/2$ form a matrix of operator norm at most
$2d\sigma_0^2/b$. The others form a matrix of operator norm at most
$F=(\sum_{i,j}9\omega_{ij}^2\mathbf 1\{\abs{\omega_{ij}}>b/2\})^{1/2}$, the
sum running over ordered pairs. Since
$\abs\omega^v\mathbf 1\{\abs\omega>b/2\}\le(2/b)^v\abs\omega^{2v}$,
Minkowski's inequality in $L^{v/2}$ gives
$\norm F_{L^v}^2\le36\binom{2v-1}{v}^{2/v}d^2\sigma_0^4/b^2$. Hence the bias
matrix $\varpi$ satisfies $\norm{\norm{\varpi}_{\op}}_{L^v}\le C_vd\sigma_0^2/b$.

\emph{Step 3: the martingale part.} Let $E_{ii}=e_ie_i^\top$ and
$E_{ij}=e_ie_j^\top+e_je_i^\top$ for $i<j$, and
$\mathbf Y=\sum_{i\le j}\chi_{ij}E_{ij}$, summed in the order of the
observations $Y'_{ij}$. This is a martingale of symmetric matrices whose
increments have operator norm at most $2b$, and its predictable quadratic
variation is at most $\sigma_0^2\sum_{i\le j}E_{ij}^2=d\sigma_0^2I$. The
matrix Freedman inequality \citep[Theorem~1.2]{Tropp2011MatrixFreedman},
applied to $\mathbf Y$ and to $-\mathbf Y$, gives
\[
\PP(\norm{\mathbf Y}_{\op}\ge u)\le2d\exp\Bigl(-\frac{u^2}{2d\sigma_0^2+4bu/3}\Bigr)
\le2de^{-u^2/(4d\sigma_0^2)}+2de^{-3u/(8b)}.
\]
With $u_0=2\sigma_0\sqrt{d\ell_d}+3b\ell_d$, the right-hand side at
$u=u_0+s$, $s\ge0$, is at most $e^{-s^2/(4d\sigma_0^2)}+e^{-3s/(8b)}$, and
integrating the tail gives
$\norm{\norm{\mathbf Y}_{\op}}_{L^v}\le C_v(u_0+\sigma_0\sqrt d+b)$.

\emph{Step 4: conclusion.} We have $\hat M-M_\star=\varpi+\mathbf Y$,
$d\sigma_0^2/b=b\ell_d=\sigma_0\sqrt{d\ell_d}$ and
$\sigma_0\sqrt d+b\le C\sigma_0\sqrt{d\ell_d}$.
\end{proof}

\begin{alemma}[Hessian pilot]
\label{lem:nh-hessian-pilot}
Let $2<\beta\le4$, $f\in C^q$ with $[f]_{\beta,\op}\le L_\beta$, $H_0=\nabla^2f(0)$,
let the noise be centered at level $\sigma>0$, and let $v\ge2$ be
an integer. For $\rho>0$ and $i<j$ consider the difference quotients
\[
\frac{Y(\rho e_i)-2Y(0)+Y(-\rho e_i)}{\rho^2},\qquad
\frac{Y(\rho(e_i+e_j))-Y(\rho(e_i-e_j))-Y(\rho(e_j-e_i))+Y(-\rho(e_i+e_j))}{4\rho^2},
\]
where $Y(x)$ is a fresh value at $x$ each time, and let $\hat H$ be the
matrix of \cref{lem:nh-robust-matrix} computed from them with
$\rho=(\sigma/L_\beta)^{1/\beta}(\log(2d)/d)^{1/(2\beta)}$. This uses at most
$8vd^2$ oracle calls in one round, and
\[
\bigl(\E\norm{\hat H-H_0}_{\op}^v\bigr)^{1/v}\le C_vd^{a'},\qquad
a'=\frac34+\frac1{2\beta}<1,
\]
where $C_v$ depends only on $v,\beta,L_\beta,\sigma$.
\end{alemma}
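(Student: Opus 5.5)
The plan is to write each difference quotient as $(H_0)_{ij}$ plus a deterministic bias plus a conditionally centered noise term, and then feed these observations into \cref{lem:nh-robust-matrix}. That lemma is stated for a centered observation model around a deterministic matrix, but here the quotient for the pair $(i,j)$ is centered at some $\tilde M_{ij}$, which is the exact (noiseless) difference quotient of $f$. So we will apply \cref{lem:nh-robust-matrix} to $M_\star=\tilde M$ and bound $\norm{\tilde M-H_0}_{\op}$ separately.

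\emph{Noise.} Each quotient is a fixed linear combination of fresh values. The queries are made in one round, in an order fixed in advance. By \cref{lem:nh-median}(c), the combined errors are martingale differences for the filtration generated by $\mathcal F_{k-1}$, $\Theta_k$ and the errors of the earlier queries, and their conditional variance is at most $\sigma_0^2=C\sigma^2/\rho^4$. The weights are $(1,-2,1)/\rho^2$ and four terms with weight $1/(4\rho^2)$, so $C=6$ suffices. \Cref{lem:nh-robust-matrix} then gives an $L^v$ error of order $C_v(\sigma/\rho^2)\sqrt{d\log(2d)}$.

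\emph{Call count.} Each of at most $d(d+1)/2$ pairs receives $2v$ quotients, and each quotient uses at most four values. This gives at most $4v\,d(d+1)\le8vd^2$ calls.

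\emph{Bias.} Write $\tilde M_{ij}-(H_0)_{ij}$ through Taylor expansion with \cref{lem:taylor}(a) along the directions $u=e_i$ or $u=(e_i\pm e_j)$. Symmetric second differences annihilate odd-degree terms and reproduce the quadratic term exactly. What remains depends on $q$.
\begin{itemize}
\item For $q=2$, the remainders contribute entries of size $O(L_\beta\rho^{\beta-2})$.
\item For $q=3$, the cubic term also cancels by oddness, so the entrywise bias is again $O(L_\beta\rho^{\beta-2})$.
\end{itemize}
The crude bound $\norm{\cdot}_{\op}\le d\max|\text{entry}|$ would cost a factor $d$, which is too much. The main obstacle is therefore to bound the bias matrix in operator norm by roughly $\sqrt d\,L_\beta\rho^{\beta-2}$ rather than $d\,L_\beta\rho^{\beta-2}$. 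Here is what I would try first. Apply the polarization identity to write the bias as $B_{ij}=\tfrac18[\phi(e_i+e_j)+\phi(-e_i-e_j)-\phi(e_i-e_j)-\phi(e_j-e_i)]$, where $\phi(u)=\rho^{-2}R(\rho u)$ and $R$ is the Taylor remainder of order $q$. Then use the fact that for $u=e_i\pm e_j$ the remainder is governed by $D^qf$ evaluated on the two-dimensional coordinate plane $\mathrm{span}(e_i,e_j)$. Expressing $B$ through the symmetric Hölder tensor increment $D^qf(\cdot)-D^qf(0)$ contracted with sparse vectors, one bounds $\sum_{ij}B_{ij}x_iy_j$ by $\norm{D^qf(\xi)-D^qf(0)}_{\op}$ acting on $x,y$ and on diagonal-type vectors whose norms are at most $\sqrt d$.

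If that argument fails, I would fall back on the Frobenius bound $\norm B_{\op}\le d\,L_\beta\rho^{\beta-2}$ and check whether the stated $\rho$ still yields an exponent below $1$. It does not: with $\rho^{\beta}\approx d^{-1/2}$ the fallback gives $d\cdot d^{-(\beta-2)/(2\beta)}$, whose exponent exceeds $1/2$ and can exceed $3/4$. Hence the $\sqrt d$ operator bound on the bias is genuinely needed for $a'=\tfrac34+\tfrac1{2\beta}$.

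\emph{Balancing.} With the bias bounded by $\sqrt d\,L_\beta\rho^{\beta-2}$ and the noise term by $(\sigma/\rho^2)\sqrt{d\log(2d)}$, the stated choice $\rho^\beta=(\sigma/L_\beta)(\log(2d)/d)^{1/2}$ equalizes neither term exactly. Instead, it yields the noise term $\sigma^{1-2/\beta}L_\beta^{2/\beta}d^{1/2+1/\beta}\log(2d)^{1/2-1/\beta}$ and a bias term of no larger order. After absorbing the logarithm into $C_v$ (using $\log(2d)\le C d^{\epsilon}$), both terms are at most $C_vd^{a'}$. The inequality $a'<1$ follows from $\beta>2$.
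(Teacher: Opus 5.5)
Your noise step (the quotient errors are martingale differences by \cref{lem:nh-median}(c) with $\sigma_0^2=6\sigma^2/\rho^4$, and \cref{lem:nh-robust-matrix} is applied around the noiseless means $\tilde M$) agrees with the paper. So do your call count and your entrywise bias bound $\abs{\tilde M_{ij}-(H_0)_{ij}}=O(L_\beta\rho^{\beta-2})$. The paper only writes $\tilde M_{ij}$ as averages of $\partial_{ij}f$ instead of Taylor remainders.

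The gap is the step you call the main obstacle. Your sketch cannot give a $\sqrt d$ bound. The entry $B_{ij}$ involves $D^qf$ only on the plane $\mathrm{span}(e_i,e_j)$, which is a different set for each pair. So no single increment $D^qf(\xi)-D^qf(0)$ controls $\sum_{ij}B_{ij}x_iy_j$.

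In fact the bound is false. Let $G$ be smooth and bounded with bounded derivatives, with $G=0$ on $(-\infty,\frac14]$ and $G(1)>0$. Let $\chi$ be the cutoff used in \cref{lem:radial-profile}, write $x_{-ij}$ for $x$ with coordinates $i,j$ deleted, and put
\[
f(x)=c\rho^\beta\sum_{i<j}G(x_i/\rho)\,G(x_j/\rho)\,\chi\bigl(\norm{x_{-ij}}^2/\rho^2\bigr).
\]
The derivatives of a summand can be nonzero at $x$ only if $x_i,x_j\ge\rho/4$ and $\norm{x_{-ij}}\le\rho$. Then at most $18$ coordinates of $x$ are $\ge\rho/4$, so at most $153$ summands are active at any point. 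Each summand has $[\cdot]_{\beta,\op}$ bounded independently of $d$ and $\rho$: the cutoff is handled as in \cref{lem:radial-profile}, and $x\mapsto\rho^\beta\Psi(x/\rho)$ has the seminorm of $\Psi$. Hence some $c$ independent of $d$ gives $[f]_{\beta,\op}\le L_\beta$. But $f$ vanishes near $0$ and at $\pm\rho e_i$, $\pm\rho(e_i-e_j)$ and $-\rho(e_i+e_j)$, and equals $c\rho^\beta G(1)^2$ at $\rho(e_i+e_j)$. Therefore $\tilde M-H_0=\frac c4G(1)^2\rho^{\beta-2}(\mathbf 1\mathbf 1^\top-I)$, whose operator norm is of order $dL_\beta\rho^{\beta-2}$.

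So the crude bound $\norm{\tilde M-H_0}_{\op}\le d\max_{i,j}\abs{\tilde M_{ij}-(H_0)_{ij}}\le2dL_\beta\rho^{\beta-2}$ is sharp for this design, and it is exactly what the paper uses. You discarded it through a comparison slip. At the stated $\rho$ its exponent is $a=\frac12+\frac1\beta<1$, which you compared with $\frac34$. The actual target is $a'=\frac{1+a}2$, and $a'-a=\frac{\beta-2}{4\beta}>0$.

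The stated $\rho$ is not a choice that equalizes neither term. It is precisely the solution of $\sigma\rho^{-2}\sqrt{d\log(2d)}=dL_\beta\rho^{\beta-2}$, and both sides equal $L_\beta^{2/\beta}\sigma^{1-2/\beta}d^{a}(\log(2d))^{1-a}$. Then $\log(2d)\le2\sqrt d$ gives $d^a(\log(2d))^{1-a}\le2d^{(1+a)/2}=2d^{a'}$. The slack between $a$ and $a'$ exists only to absorb this logarithm. With this bias bound in place of yours, your argument becomes the paper's proof.
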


\begin{proof}
The query points do not depend on any observation, so they form one round
with an order fixed in advance; there are $2v\,d(d+1)/2$ quotients of at most
$4$ oracle calls each. The error of a quotient is a combination of at most four
errors of consecutive queries whose squared coefficients sum to at most
$6/\rho^4$. Hence \cref{lem:nh-median}(c) gives the assumptions of
\cref{lem:nh-robust-matrix} with $\sigma_0^2=6\sigma^2/\rho^4$.

The means of the quotients are
\[
(M_\star)_{ii}=\int_{-1}^1(1-\abs s)\,\partial_{ii}f(s\rho e_i)\,ds,\qquad
(M_\star)_{ij}=\frac14\int_{-1}^1\int_{-1}^1\partial_{ij}f(\rho(se_i+te_j))
\,ds\,dt .
\]
For $q=2$, $\norm{\nabla^2f(y)-H_0}_{\op}\le L_\beta\norm y^{\beta-2}$. For $q=3$,
\cref{lem:taylor}(c) gives
$\norm{\nabla^2f(y)-H_0-D^3f(0)[y]}_{\op}\le L_\beta\norm y^{\beta-2}$, and the
linear term averages to zero, because both weightings are symmetric under
$y\mapsto-y$. The points have norm at most $\sqrt2\rho$ and $\beta-2\le2$, so
every entry of $M_\star-H_0$ is at most $2L_\beta\rho^{\beta-2}$ in absolute value
and $\norm{M_\star-H_0}_{\op}\le2dL_\beta\rho^{\beta-2}$. With
\cref{lem:nh-robust-matrix},
\[
\bigl(\E\norm{\hat H-H_0}_{\op}^v\bigr)^{1/v}\le C_v\frac{\sqrt6\,\sigma}{\rho^2}
\sqrt{d\log(2d)}+2dL_\beta\rho^{\beta-2}.
\]
For the given $\rho$ both terms are constants times
$L_\beta^{2/\beta}\sigma^{1-2/\beta}d^{a}(\log(2d))^{1-a}$ with
$a=\frac12+\frac1\beta<1$, and $\log(2d)\le2\sqrt d$ gives
$d^a(\log(2d))^{1-a}\le2d^{(1+a)/2}=2d^{a'}$.
\end{proof}

The next lemma, used for $3<\beta\le4$, is a H\"older version of an
inequality of \citet[Lemme~I]{glaeser1963racine}; a related bound is
\citet[Lemma~3]{nesterov2021implementable}.

\begin{alemma}[Relative curvature]
\label{lem:nh-relative-curvature}
Let $3<\beta\le4$, so that $q=3$, and let $f\in C^3$ with
$\nabla^2f\succeq\alpha I$ and $[f]_{\beta,\op}\le L_\beta$. Put
$\varkappa=\beta-2\in(1,2]$, $\upsilon=1-1/\varkappa\in(0,1/2]$,
$H_0=\nabla^2f(0)$ and, for a unit vector $w$, $g_w(x)=w^\top\nabla^2f(x)w$.
\begin{enumerate}[label=(\alph*)]
\item $\norm{\nabla g_w(x)}\le2L_\beta^{1/\varkappa}g_w(x)^{\upsilon}$ for all $x$.
\item For $\varrho>0$ and $\norm x\le\varrho$,
$C_\varrho^{-1}H_0\preceq\nabla^2f(x)\preceq C_\varrho H_0$ with
$C_\varrho=\bigl[1+\frac{2\varrho}\varkappa(L_\beta/\alpha)^{1/\varkappa}\bigr]^\varkappa$.
\item For $\norm x\le R$,
$\frac12H_0-\bar cI\preceq\nabla^2f(x)\preceq\frac32H_0+\bar cI$ with
$\bar c=3L_\beta R^\varkappa$.
\end{enumerate}
\end{alemma}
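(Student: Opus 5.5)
The plan is to work with a fixed unit vector $w$ throughout, prove (a) by a one-dimensional Glaeser-type argument, integrate (a) along segments to get (b), and then turn the resulting additive comparison into (c) with an elementary convexity inequality. Every bound is for the quadratic forms $g_w$ uniformly in $w$, so it passes to the Loewner order at the end. The constants $(2\varrho/\varkappa)$ and $\bar c=3L_\beta R^\varkappa$ suggest that no more than this is needed.

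\emph{(a).} Since $q=3$, $g_w\in C^1$ and $\nabla g_w(x)=D^3f(x)[w,w,\cdot]$. By the operator seminorm, $\abs{(D^3f(x)-D^3f(y))[w,w,u]}\le L_\beta\norm{x-y}^{\gamma}$ for unit $u$, with $\gamma=\beta-3=\varkappa-1$. Fix $x$ with $\nabla g_w(x)\ne0$, let $u=\nabla g_w(x)/\norm{\nabla g_w(x)}$, and set $\phi(t)=g_w(x-tu)$. Then $\phi'(t)=-\inner{\nabla g_w(x-tu)}{u}\le-\norm{\nabla g_w(x)}+L_\beta t^{\gamma}$. Integrating and using $g_w\ge\alpha>0$ gives
\[
0\le\phi(t)\le g_w(x)-t\norm{\nabla g_w(x)}+\frac{L_\beta t^{\varkappa}}{\varkappa}.
\]
Hence $\norm{\nabla g_w(x)}\le g_w(x)/t+L_\beta t^{\varkappa-1}$ for every $t>0$. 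The choice $t=(g_w(x)/L_\beta)^{1/\varkappa}$ gives $\norm{\nabla g_w(x)}\le2L_\beta^{1/\varkappa}g_w(x)^{1-1/\varkappa}$.

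\emph{(b).} Let $\psi(s)=g_w(sx)^{1/\varkappa}$ for $s\in[0,1]$. By (a), and since $\upsilon+1/\varkappa-1=0$,
\[
\abs{\psi'(s)}\le\frac1\varkappa g_w^{1/\varkappa-1}\norm{\nabla g_w}\,\norm x\le\frac{2\norm x}{\varkappa}L_\beta^{1/\varkappa}.
\]
So $\abs{g_w(x)^{1/\varkappa}-g_w(0)^{1/\varkappa}}\le c:=\frac{2\varrho}{\varkappa}L_\beta^{1/\varkappa}$. Since $g_w(0)\ge\alpha$,
\[
g_w(x)^{1/\varkappa}\le g_w(0)^{1/\varkappa}\bigl(1+c\,\alpha^{-1/\varkappa}\bigr)=g_w(0)^{1/\varkappa}\,C_\varrho^{1/\varkappa}.
\]
Raising to the power $\varkappa$ gives $g_w(x)\le C_\varrho g_w(0)$. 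The symmetric argument, using $g_w(x)\ge\alpha$, gives $g_w(0)\le C_\varrho g_w(x)$. As $w$ is arbitrary, this is the Loewner statement.

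\emph{(c).} With $\varrho=R$, write $a=g_w(0)^{1/\varkappa}$ and $b=g_w(x)^{1/\varkappa}$, so that $\abs{a-b}\le c$. I would use the convexity inequality
\[
(a+c)^\varkappa\le(1+\epsilon)^{\varkappa-1}a^\varkappa+(1+1/\epsilon)^{\varkappa-1}c^\varkappa
\]
with $\epsilon=\frac12$. Then $(3/2)^{\varkappa-1}\le\frac32$, and the remainder is $3^{\varkappa-1}(2/\varkappa)^\varkappa L_\beta R^\varkappa$. This gives the upper bound $g_w(x)\le\frac32g_w(0)+\bar c$. Applying the same inequality with the roles of $a$ and $b$ swapped gives $g_w(0)\le\frac32g_w(x)+\bar c$. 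Dividing by $\frac32$ yields $g_w(x)\ge\frac23g_w(0)-\frac23\bar c\ge\frac12g_w(0)-\bar c$, using $g_w(0)\ge0$.

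The only step I expect to need care is this constant check: I must verify that $\sup_{\varkappa\in(1,2]}3^{\varkappa-1}(2/\varkappa)^\varkappa\le3$. The function equals $2$ as $\varkappa\to1$, about $2.67$ at $\varkappa=3/2$, and $3$ at $\varkappa=2$. To settle it I would show that its logarithm, $(\varkappa-1)\log3+\varkappa\log2-\varkappa\log\varkappa$, is increasing on $(1,2]$. Its derivative is $\log6-\log\varkappa-1$, which is positive there since $\varkappa\le2<6/e$. If that failed, a different choice of $\epsilon$ would absorb the slack, since the factor $\frac32$ on $H_0$ is not tight for $\varkappa<2$.
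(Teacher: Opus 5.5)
Your proof is correct. Parts (a) and (b) follow the paper's argument. In (a) you use the one-sided expansion of $g_w$ in the direction $-\nabla g_w(x)$ with step $(g_w(x)/L_\beta)^{1/\varkappa}$. In (b) you use the Lipschitz bound $\frac2\varkappa L_\beta^{1/\varkappa}$ for $g_w^{1/\varkappa}$ along segments, followed by the comparison with $\alpha$.

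For (c) your route differs from the paper's. The paper does not go back through (b). It expands $g_w$ to first order at the origin, $\abs{g_w(x)-g_w(0)}\le\abs{\inner{\nabla g_w(0)}{x}}+\frac{L_\beta}{\varkappa}R^\varkappa$. It bounds the linear term by (a) at the origin as $2L_\beta^{1/\varkappa}R\,g_w(0)^\upsilon$. It then absorbs the fractional power with $2z^\upsilon\le z/2+2$ for $z=g_w(0)/(L_\beta R^\varkappa)$. This yields the symmetric estimate $\abs{g_w(x)-g_w(0)}\le\frac12g_w(0)+3L_\beta R^\varkappa$, which gives both inequalities at once.

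You instead convert the additive bound on $g_w^{1/\varkappa}$ from (b) back to $g_w$. You do this with the convexity inequality for $t\mapsto t^\varkappa$, applied in both directions. What this buys:
\begin{itemize}
\item (b) and (c) follow from a single estimate.
\item Your lower bound $\frac23g_w(0)-\frac23\bar c$ is slightly stronger than the stated one.
\item The parameter $\epsilon$ lets you trade the factor $\frac32$ against the additive constant.
\end{itemize}
The price is the one-variable check $3^{\varkappa-1}(2/\varkappa)^\varkappa\le3$ on $(1,2]$. You carry it out correctly, and it is tight at $\varkappa=2$.

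The paper's version invokes (a) only at the origin and never takes roots of $g_w$. Yours uses (a) along the whole segment $[0,x]$ and needs $g_w>0$ there to differentiate $g_w^{1/\varkappa}$. This is harmless under $\nabla^2f\succeq\alpha I$.
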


\begin{proof}
(a) The gradient $\nabla g_w(x)=D^3f(x)[w,w,\cdot]$ is
$(\varkappa-1)$-H\"older with constant $L_\beta$, so for a unit vector $u$ and
$s\ge0$,
\[
g_w(x+su)\le g_w(x)+s\inner{\nabla g_w(x)}u+\frac {L_\beta}\varkappa s^\varkappa .
\]
If $\nabla g_w(x)\ne0$, take $u=-\nabla g_w(x)/\norm{\nabla g_w(x)}$ and
$s=(g_w(x)/L_\beta)^{1/\varkappa}$. Since $g_w>0$, this gives
$s\norm{\nabla g_w(x)}\le(1+1/\varkappa)g_w(x)\le2g_w(x)$, which is (a).

(b) By (a), $\norm{\nabla(g_w^{1/\varkappa})}=\varkappa^{-1}g_w^{-\upsilon}
\norm{\nabla g_w}\le\frac2\varkappa L_\beta^{1/\varkappa}$, so
$\abs{g_w(x)^{1/\varkappa}-g_w(0)^{1/\varkappa}}\le\frac{2\varrho}\varkappa
L_\beta^{1/\varkappa}$ for $\norm x\le\varrho$. Since $g_w\ge\alpha$, this gives
$g_w(x)\le C_\varrho g_w(0)$ and $g_w(0)\le C_\varrho g_w(x)$ for every unit
$w$.

(c) Let $\norm x\le R$ and $h_w=g_w(0)$. By the H\"older bound and (a),
\[
\abs{g_w(x)-h_w}\le\abs{\inner{\nabla g_w(0)}x}+\frac {L_\beta}\varkappa R^\varkappa
\le2L_\beta^{1/\varkappa}Rh_w^\upsilon+L_\beta R^\varkappa .
\]
With $z=h_w/(L_\beta R^\varkappa)$ we have
$2L_\beta^{1/\varkappa}Rh_w^\upsilon=L_\beta R^\varkappa\cdot2z^\upsilon$, and
$2z^\upsilon\le z/2+2$ for $z\ge0$, because $z^\upsilon\le\max\{1,\sqrt z\}$
and $2\sqrt z\le z/2+2$. Hence $\abs{g_w(x)-h_w}\le h_w/2+3L_\beta R^\varkappa$ for
every unit $w$, which is (c).
\end{proof}

\begin{alemma}[Spectral initialization]
\label{lem:nh-spectral}
Let $f$ be as in \cref{lem:nh-relative-curvature} with minimizer
$\xstar\in B(R)$. Let $A\succeq\alpha I$ be a symmetric matrix with
eigenvalues $a_1\ge\dots\ge a_d$ and orthonormal eigenvectors
$o_1,\dots,o_d$, let $E_A=\norm{A-H_0}_{\op}$, write $x_i=\inner x{o_i}$, and
let $\mathcal C_A=\{x:\abs{x_i}\le R\ \forall i\}$, which contains $\xstar$.
Put $R_d=R\sqrt d$ and $\Xi_d=1+2\sqrt2L_\beta^{1/\varkappa}R_d+(L_\beta/\varkappa)R_d^\varkappa$.
Starting from $x_0=0$, make $d$ updates
\[
x_{t+1,i}=\mathrm{clip}_{[-R,R]}\bigl(x_{t,i}-a_i^{-1}\hat g_i(x_t)\bigr),
\qquad i=1,\dots,d,
\]
where $\hat g(x_t)=\nabla f(x_t)+\zeta_t$ is any gradient estimate and
$\hat g_i=\inner{\hat g}{o_i}$. Let $u$ be the projection of $x_d$ onto $B(R)$
in the norm $\norm y_A=(y^\top Ay)^{1/2}$. Then, on every realization,
\[
\norm{u-\xstar}_A^2\le80R^2d^3\Xi_d^2+12R^2d+12R^2dE_A^2
+4d\sum_{t<d}\norm{\zeta_t}^2 .
\]
\end{alemma}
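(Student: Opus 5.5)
The plan is a pathwise argument: a one-step inequality in the eigenbasis of $A$, iterated over the $d$ updates, and closed by the projection onto $B(R)$.

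\emph{Reduction to $x_d$.} Since $\xstar\in B(R)$ and the projection onto the convex set $B(R)$ in the norm $\norm\cdot_A$ is nonexpansive in that norm, we get $\norm{u-\xstar}_A\le\norm{x_d-\xstar}_A$. It therefore suffices to bound $D_t=\norm{x_t-\xstar}_A^2=\sum_ia_i(x_{t,i}-\xstar_i)^2$ for $t=d$.

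\emph{One step.} In the coordinates $x_i=\inner x{o_i}$, the matrix $A$ is diagonal. The update is therefore the $\norm\cdot_A$-projection of $x_t-A^{-1}\hat g(x_t)$ onto the box $\mathcal C_A$, and $\xstar$ lies in this box. Write $v=x_t-\xstar$ and $Q=\int_0^1\nabla^2f(\xstar+sv)\,ds$, so that $\nabla f(x_t)=Qv$. Coordinatewise, the nonexpansiveness of $\mathrm{clip}_{[-R,R]}$ gives
\[
\abs{x_{t+1,i}-\xstar_i}\le\min\bigl\{2R,\ a_i^{-1}\abs{y_i}\bigr\},\qquad y=(A-Q)v+\zeta_t .
\]
Hence $a_i(x_{t+1,i}-\xstar_i)^2\le\min\{4R^2a_i,\,y_i^2/a_i\}\le2R\abs{y_i}$. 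This min-trick removes all dependence on the size of the $a_i$, and summing over $i$ gives
\[
D_{t+1}\le2R\sqrt d\,\norm y,\qquad\text{and so}\qquad D_{t+1}^2\le4R^2d\bigl(3E_A^2\norm v^2+3\norm{(H_0-Q)v}^2+3\norm{\zeta_t}^2\bigr).
\]
This is the source of the terms $12R^2dE_A^2$ and $d\norm{\zeta_t}^2$, using $\norm v\le2R\sqrt d$ because both points lie in $\mathcal C_A$.

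\emph{Curvature term.} The segment from $\xstar$ to $x_t$ lies in $\mathcal C_A\subseteq B(R_d)$ with $R_d=R\sqrt d$. For a unit $w$, I will bound $w^\top(Q-H_0)v$ through the function $g_w$ of \cref{lem:nh-relative-curvature}. Part (a) of that lemma, combined with the H\"older bound on $D^3f$, gives
\[
\abs{g_w(y)-g_w(0)}\le2L_\beta^{1/\varkappa}\norm y\,g_w(0)^\upsilon+(L_\beta/\varkappa)\norm y^\varkappa .
\]
Polarization extends this to off-diagonal entries, and this is where the $\sqrt2$ in $\Xi_d$ enters. Bounding $g_w(0)^\upsilon\le1+g_w(0)$ then yields an estimate of the form
\[
\norm{(Q-H_0)v}\le\Xi_d\bigl(\norm v+\norm v_{H_0}\ \text{or similar}\bigr),
\]
and the $H_0$-norm of $v$ is controlled by $\norm v_A$ and $E_A\norm v$, that is, by the previous $D_t$.

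\emph{Main obstacle.} The hard step is closing this recursion. Substituting into the one-step bound gives an inequality
\[
D_{t+1}^2\le C R^2d\,\Xi_d^2\,(R^2d+D_t)+\dots ,
\]
and I expect it to stabilize after a few steps rather than contract. My first attempt is to use the crude a priori bound $\norm v\le2R\sqrt d$ throughout, apply Young's inequality $ab\le\frac12a^2+\frac12b^2$ to absorb $D_t$, and iterate $d$ times. I then track the constants to arrive at $80R^2d^3\Xi_d^2$. If the absorption fails, the fallback is to bound $g_w(0)\le C_\varrho g_w(x)$ via part (b) of \cref{lem:nh-relative-curvature}, and to accept the polynomial-in-$d$ factor that $\Xi_d$ already carries.
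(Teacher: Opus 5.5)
Your reduction to $x_d$ and the coordinatewise bound $\abs{x_{t+1,i}-\xstar_i}\le\min\{2R,a_i^{-1}\abs{y_i}\}$ are correct. The curvature step, however, fails for a structural reason, not a matter of constants.

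The min-trick removes $a_i$ only from the denominator. The numerator $y_i$ still depends on the large eigenvalues through $(H_0-Q)v$. This class has no upper Hessian bound, and the only control that \cref{lem:nh-relative-curvature}(a) and polarization give in the eigenbasis is $\abs{(\nabla^2f(y)-H_0)_{ij}}\le2L_\beta^{1/\varkappa}R_d(\max\{a_i,a_j\}+E_A)^\upsilon+(L_\beta/\varkappa)R_d^\varkappa$ on $\mathcal C_A$. So a row $i$ of small curvature receives, from every direction $j$ of large curvature, a term of order $L_\beta^{1/\varkappa}R_da_j^\upsilon\abs{v_j}$. Until coordinate $j$ has been corrected, $\abs{v_j}$ is controlled only by $2R$.

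Three consequences follow.
\begin{itemize}
\item $\norm y$, and hence your bound $D_{t+1}\le2R\sqrt d\norm y$, depends on $a_1=\norm A_{\op}$, which is unbounded.
\item The estimate $\norm{(Q-H_0)v}\le\Xi_d(\norm v+\norm v_{H_0})$ does not follow. Take $v$ along an eigenvector $o_j$ with $a_j$ small and look at a component $i$ with $a_i$ large.
\item Replacing $g_w(0)^\upsilon$ by $1+g_w(0)$ discards exactly the sublinearity $\upsilon\le\frac12$ that the proof needs.
\end{itemize}
Nor can a recursion in the scalar $D_t$ close the argument. It starts from $D_0=\norm{\xstar}_A^2$, which can be of order $a_1R^2$. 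A geometric contraction or a square-root stabilization over $d$ steps then leaves a residual that grows with $a_1$, whereas the claimed bound is uniform in $\norm A_{\op}$ and $\norm{H_0}_{\op}$.

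The missing idea is the triangular structure of the update in the eigenbasis. Let $J_A=(1+E_A+2d\Xi_d)^2$ and let $k_A$ be the number of $i$ with $a_i>J_A$. For $i\le k_A$ and $a=\max\{a_i,a_j\}$, one has $E_A\le\sqrt a$ and $(a+E_A)^\upsilon\le\sqrt{2a}$. Hence $\abs{(\nabla^2f(y)-A)_{ij}}\le\Xi_d\sqrt{\max\{a_i,a_j\}}$ on $\mathcal C_A$, and the weighted errors $\bar z_i^t=\sqrt{a_i}\,\abs{x_{t,i}-\xstar_i}$ satisfy
\[
\bar z_i^{t+1}\le\Xi_d\sum_{j\le k_A}\frac{\bar z_j^t}{\sqrt{\min\{a_i,a_j\}}}
+\Xi_d\sum_{j>k_A}\abs{x_{t,j}-\xstar_j}+\norm{\zeta_t}\qquad(i\le k_A).
\]
By induction, $\bar z_i^t\le\bar B_t$ for $i\le\min\{t,k_A\}$, where $\bar B_0=0$ and $\bar B_{t+1}=\bar B_t/2+2Rd\Xi_d+\norm{\zeta_t}$.
\begin{itemize}
\item For $j<i$, the already controlled $\bar z_j^t$ is multiplied by $\Xi_d/\sqrt{J_A}\le1/(2d)$.
\item For $j\ge i$, one uses $\bar z_j^t/\sqrt{a_j}=\abs{x_{t,j}-\xstar_j}\le2R$.
\end{itemize}
So the initial size of the high-curvature coordinates never enters. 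This staircase is why exactly $d$ updates are made.

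The rows with $a_i\le J_A$ need no recursion. The box gives $a_i(x_{d,i}-\xstar_i)^2\le4R^2J_A\le12R^2(1+E_A^2+4d^2\Xi_d^2)$, which produces $12R^2d+12R^2dE_A^2+48R^2d^3\Xi_d^2$. The remaining terms come from $d\bar B_d^2\le32R^2d^3\Xi_d^2+4d\sum_{t<d}\norm{\zeta_t}^2$.
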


\begin{proof}
We bound the entries of $\nabla^2f-A$ on $\mathcal C_A$ in the rows of the
large eigenvalues, show by induction that these coordinates contract, and
add the coordinates with small $a_i$ at the end.

\emph{Step 1: entries of the Hessian.} For $i\ne j$, polarization with
$w_\pm=(o_i\pm o_j)/\sqrt2$ gives
\[
D^3f(0)[o_i,o_j,\cdot]=\tfrac12\bigl(\nabla g_{w_+}(0)-\nabla g_{w_-}(0)\bigr),
\qquad
g_{w_\pm}(0)\le w_\pm^\top Aw_\pm+E_A=\frac{a_i+a_j}2+E_A .
\]
With \cref{lem:nh-relative-curvature}(a) this gives, also for $i=j$,
$\norm{D^3f(0)[o_i,o_j,\cdot]}\le2L_\beta^{1/\varkappa}(\max\{a_i,a_j\}+E_A)^\upsilon$.
By the H\"older continuity of $D^3f$,
$\norm{\nabla^2f(y)-H_0-D^3f(0)[y]}_{\op}\le L_\beta\norm y^\varkappa/\varkappa$,
and every $y\in\mathcal C_A$ has $\norm y\le R_d$. Hence, in the basis
$(o_i)$,
\[
\abs{(\nabla^2f(y)-A)_{ij}}\le E_A+2L_\beta^{1/\varkappa}R_d(\max\{a_i,a_j\}+E_A)^\upsilon
+\frac {L_\beta}\varkappa R_d^\varkappa\qquad(y\in\mathcal C_A).
\]
Let $J_A=(1+E_A+2d\Xi_d)^2$ and let $k_A$ be the number of $i$ with
$a_i>J_A$. For $i\le k_A$ and any $j$, $a:=\max\{a_i,a_j\}>J_A\ge1$ gives
$E_A\le\sqrt a$, $a+E_A\le2a$ and, since $\upsilon\le\frac12$,
$(a+E_A)^\upsilon\le\sqrt{2a}$. Hence
\begin{equation}
\label{eq:nh-high-rows}
\abs{(\nabla^2f(y)-A)_{ij}}\le\Xi_d\sqrt{\max\{a_i,a_j\}}
\qquad(i\le k_A,\ y\in\mathcal C_A).
\end{equation}

\emph{Step 2: one update.} Let $e_t=x_t-\xstar$. Both $x_t$ and $\xstar$ lie
in the convex set $\mathcal C_A$, so $\nabla f(x_t)=Ae_t+\mathcal M_te_t$
with $\mathcal M_t=\int_0^1(\nabla^2f(\xstar+se_t)-A)\,ds$, whose entries
obey \eqref{eq:nh-high-rows} in the rows $i\le k_A$. Clipping fixes
$\xstar_i$ and is $1$-Lipschitz, so
$\abs{e_{t+1,i}}\le a_i^{-1}\abs{(\mathcal M_te_t)_i+\zeta_{t,i}}$. Hence,
with $\bar z_i^t=\sqrt{a_i}\abs{e_{t,i}}$, for $i\le k_A$
\[
\bar z_i^{t+1}\le\Xi_d\sum_{j\le k_A}\frac{\bar z_j^t}{\sqrt{\min\{a_i,a_j\}}}
+\Xi_d\sum_{j>k_A}\abs{e_{t,j}}+\norm{\zeta_t},
\]
where for $j>k_A$ we used $a_j\le a_i$, and $a_i>1$.

\emph{Step 3: induction.} Let $\bar B_0=0$ and
$\bar B_{t+1}=\bar B_t/2+2Rd\Xi_d+\norm{\zeta_t}$. We show by induction that
$\bar z_i^t\le\bar B_t$ for $i\le\min\{t,k_A\}$. Let $i\le\min\{t+1,k_A\}$.
In the bound of Step~2, the indices $j\le\min\{t,k_A\}$ satisfy
$\bar z_j^t\le\bar B_t$ and
$\Xi_d/\sqrt{\min\{a_i,a_j\}}\le\Xi_d/\sqrt{J_A}\le1/(2d)$, so they contribute
at most $\bar B_t/2$. The other indices satisfy $j\ge i$, hence
$a_j\le a_i$. Their terms are at most $\Xi_d\abs{e_{t,j}}\le2R\Xi_d$ (for
$j\le k_A$ use $\bar z_j^t/\sqrt{a_j}=\abs{e_{t,j}}$), and there are at most
$d$ of them.

\emph{Step 4: conclusion.} After $d$ updates,
$\bar B_d\le4Rd\Xi_d+\sum_{t<d}2^{-(d-1-t)}\norm{\zeta_t}$. By the
Cauchy--Schwarz inequality, the indices $i\le k_A$ contribute at most
$d\bar B_d^2\le32R^2d^3\Xi_d^2+4d\sum_t\norm{\zeta_t}^2$ to
$\norm{x_d-\xstar}_A^2$. The other indices have
$a_i\le J_A\le3(1+E_A^2+4d^2\Xi_d^2)$ and $\abs{e_{d,i}}\le2R$, so they
contribute at most $12R^2d+12R^2dE_A^2+48R^2d^3\Xi_d^2$. Finally, the
projection onto $B(R)\ni\xstar$ in the norm $\norm\cdot_A$ does not increase
the distance to $\xstar$.
\end{proof}

\begin{alemma}[Projected steps in a calibrated metric]
\label{lem:nh-metric}
Let $f\in\Fsc(\beta,L_\beta;\alpha,\infty,R)$ and let $k_2$ be a round. Let a
nonempty compact convex set $\mathcal T\subseteq B(R)$, a symmetric matrix
$\mathbf S$ and a point $x_1\in\mathcal T$ be $\mathcal F_{k_2}$-measurable,
with $\mathbf S\succeq\Lambda I$ for a number $\Lambda>0$. Let
$\mu_{\mathbf S}\in(0,1]$ be a number and $G\in\mathcal F_{k_2}$ an event on
which $\xstar\in\mathcal T$ and
$\mu_{\mathbf S}\mathbf S\preceq\nabla^2f(x)\preceq\frac32\mathbf S$ for all
$x\in B(R)$. Fix a radius $h>0$, let $k_0=\max\{7,24/\mu_{\mathbf S}\}$, and
after round $k_2$ run, with one round of \cref{alg:stencil} (one frame,
radius $h$) per iteration,
\[
x_{k+1}=\Proj^{\mathbf S}_{\mathcal T}\bigl(x_k-\mu_k\mathbf S^{-1}\gbar_h(x_k)\bigr),
\qquad k=1,\dots,N_1+N_2,
\]
where $\Proj^{\mathbf S}_{\mathcal T}$ is the projection onto $\mathcal T$ in
the norm $\norm y_{\mathbf S}=(y^\top\mathbf Sy)^{1/2}$,
$\mu_k=8/(\mu_{\mathbf S}(k_0+1))$ for $k\le N_1$ and
$\mu_{N_1+k}=8/(\mu_{\mathbf S}(k+k_0))$ for $1\le k\le N_2$. Output
$\hat x=\mathcal W^{-1}\sum_{k=1}^{N_2}w_kx_{N_1+k}$, where $w_k=k+k_0-1$ and
$\mathcal W=\sum_kw_k$. Then $\hat x\in\mathcal T$, and if $N_2\ge k_0$,
\[
\E\bigl[(f(\hat x)-\fstar)\mathbf 1_G\bigr]\le
\frac{\mu_{\mathbf S}k_0^2}{4N_2^2}e^{-2N_1/(k_0+1)}
\E\bigl[\norm{x_1-\xstar}_{\mathbf S}^2\mathbf 1_G\bigr]
+\frac{8b_h^2}{\mu_{\mathbf S}\Lambda}
+\frac{48(b_h^2+\varsigma_h^2)}{\mu_{\mathbf S}\Lambda N_2}.
\]
\end{alemma}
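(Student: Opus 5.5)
The plan is to mimic the proof of \cref{lem:two-phase}, working in the $\mathbf S$-norm instead of the Euclidean one, and to land on the recursion \eqref{eq:recursion} so that \cref{lem:recursion}(b) applies. Everything is done on the event $G$: multiply each inequality by $\mathbf 1_G$ and take expectations. Since $G$, $\mathbf S$ and $\mathcal T$ are $\mathcal F_{k_2}$-measurable and each $x_k$ is measurable before its round, \eqref{eq:nh-estimate-moments} gives $\E[\inner{\zeta}{w}\mathbf 1_G]=0$ for the relevant $w$. Write $\gbar_h(x_k)=\nabla f(x_k)+b_k+\zeta_k$ with $\norm{b_k}\le b_h$, and set $u=\xstar$, $D_k=\E[\norm{x_k-u}_{\mathbf S}^2\mathbf 1_G]$, $F_k=\E[(f(x_k)-\fstar)\mathbf 1_G]$.

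\emph{One step.} On $G$ we have $u\in\mathcal T$, so the $\mathbf S$-projection is nonexpansive in $\norm\cdot_{\mathbf S}$. This gives
\[
\norm{x_{k+1}-u}_{\mathbf S}^2\le\norm{x_k-u}_{\mathbf S}^2-2\mu_k\inner{\gbar_h(x_k)}{x_k-u}+\mu_k^2\norm{\gbar_h(x_k)}_{\mathbf S^{-1}}^2 .
\]
Because $x_k,u\in\mathcal T\subseteq B(R)$, the segment between them lies in $B(R)$. Hence $\nabla f(x_k)=Q(x_k-u)$ with $\mu_{\mathbf S}\mathbf S\preceq Q\preceq\frac32\mathbf S$ on $G$. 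This yields two facts, the analogues of \eqref{eq:tp-facts}:
\begin{itemize}
\item $\norm{\nabla f(x_k)}_{\mathbf S^{-1}}^2\le\frac32\inner{\nabla f(x_k)}{x_k-u}$;
\item $\inner{\nabla f(x_k)}{x_k-u}\ge f(x_k)-\fstar+\frac{\mu_{\mathbf S}}2\norm{x_k-u}_{\mathbf S}^2$, by relative strong convexity along the segment.
\end{itemize}
Next, bound $\norm{\gbar_h}_{\mathbf S^{-1}}^2\le2\norm{\nabla f}_{\mathbf S^{-1}}^2+4\Lambda^{-1}(\norm{b_k}^2+\norm{\zeta_k}^2)$, using $\mathbf S^{-1}\preceq\Lambda^{-1}I$. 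Handle the bias by Young's inequality:
\[
2\abs{\inner{b_k}{x_k-u}}\le\frac{\mu_{\mathbf S}}4\norm{x_k-u}_{\mathbf S}^2+\frac{4}{\mu_{\mathbf S}}\norm{b_k}_{\mathbf S^{-1}}^2\le\frac{\mu_{\mathbf S}}4\norm{x_k-u}_{\mathbf S}^2+\frac{4b_h^2}{\mu_{\mathbf S}\Lambda}.
\]
The step size satisfies $\mu_k\le8/(\mu_{\mathbf S}(k_0+1))\le1/3$ since $k_0\ge24/\mu_{\mathbf S}$. The gradient-squared term therefore costs at most $3\mu_k^2\inner{\nabla f}{x_k-u}\le\mu_k\inner{\nabla f}{x_k-u}$, and half of the descent term survives. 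The result is a recursion
\[
D_{k+1}\le\Bigl(1-\frac{\mu_{\mathbf S}\mu_k}4\Bigr)D_k-\mu_kF_k+\mu_k\frac{4b_h^2}{\mu_{\mathbf S}\Lambda}+\mu_k^2\frac{4(b_h^2+\varsigma_h^2)}{\Lambda},
\]
with constants to be adjusted, possibly $c=1/2$ or $c=1$.

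\emph{Solving the recursion.} This is \eqref{eq:recursion} with $a=\mu_{\mathbf S}/4$, which makes the steps exactly $2/(a(k+k_0))$ and the constant phase $a\mu=2/(k_0+1)\le\min\{1,2/k_0\}$. We also have $\mathcal B\asymp b_h^2/(\mu_{\mathbf S}\Lambda)$ and $\mathcal V\asymp(b_h^2+\varsigma_h^2)/\Lambda$. \Cref{lem:recursion}(b) then bounds the weighted average of the $F_{N_1+k}$. Its three terms are: the exponentially damped initial term $ak_0^2D_1e^{-a\mu N_1}/(cN_2^2)$, giving $\frac{\mu_{\mathbf S}k_0^2}{4N_2^2}e^{-2N_1/(k_0+1)}\E[\norm{x_1-\xstar}_{\mathbf S}^2\mathbf 1_G]$; the bias term $2\mathcal B/c$; and $6\mathcal V/(acN_2)$. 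Convexity of $f$ passes the bound from the weighted average to $\hat x$, which lies in the convex set $\mathcal T$.

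\emph{Main obstacle.} The hardest part is bookkeeping the constants so that they come out as exactly $8/(\mu_{\mathbf S}\Lambda)$ and $48/(\mu_{\mathbf S}\Lambda)$. This requires choosing how much of the contraction to spend on the Young bound for the bias versus on the $\mu_k^2$ gradient term. A further subtlety is that $D_1$ is random; it is kept inside the expectation, which is legitimate because $x_1$ and $\mathbf S$ are $\mathcal F_{k_2}$-measurable. If the constants do not match on the first try, the fallback is to rebalance Young's parameter between the $\mu_{\mathbf S}/4$ and $\mu_{\mathbf S}/2$ splits.
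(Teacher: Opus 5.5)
Your route is the paper's: nonexpansiveness of $\Proj^{\mathbf S}_{\mathcal T}$ on $G$, the two consequences of $\mu_{\mathbf S}\mathbf S\preceq\bar H\preceq\frac32\mathbf S$, and the same Young split with weight $\mu_{\mathbf S}/4$. You also use the bound $\mu_k\le1/3$ to get $-2\mu_k+3\mu_k^2\le-\mu_k$, and then \cref{lem:recursion}(b) with $a=\mu_{\mathbf S}/4$, $c=1$ and $\mathcal B=4b_h^2/(\mu_{\mathbf S}\Lambda)$. This reproduces the first two terms of the bound exactly.

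The one place where your recursion falls short of the stated bound is the $\mu_k^2$ term. Splitting $\norm{b_k+\zeta_k}^2\le2\norm{b_k}^2+2\norm{\zeta_k}^2$ gives $\mathcal V=4(b_h^2+\varsigma_h^2)/\Lambda$. Then $6\mathcal V/(acN_2)=96(b_h^2+\varsigma_h^2)/(\mu_{\mathbf S}\Lambda N_2)$, twice the claimed constant. Your proposed fallback, rebalancing the Young parameter, does not help. It acts only on the contraction and on $\mathcal B$, not on $\mathcal V$, and $a=\mu_{\mathbf S}/4$ is pinned by the step sizes anyway.

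The fix is to keep $\norm{\gbar_h(x_k)}_{\mathbf S^{-1}}^2\le2\norm{\nabla f(x_k)}_{\mathbf S^{-1}}^2+2\Lambda^{-1}\norm{b_k+\zeta_k}^2$. Then use the second relation in \eqref{eq:nh-estimate-moments}, $\E[\norm{b_k+\zeta_k}^2\mathbf 1_G]\le(b_h^2+\varsigma_h^2)\PP(G)$. The cross term vanishes there because $b_k$ and $G$ are $\mathcal F_{k-1}$-measurable and $\zeta_k$ is conditionally centered. This gives $\mathcal V=2(b_h^2+\varsigma_h^2)/\Lambda$, and hence exactly the term $48(b_h^2+\varsigma_h^2)/(\mu_{\mathbf S}\Lambda N_2)$.
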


\begin{proof}
The centers lie in $\mathcal T$ and are measurable with respect to the past
of their rounds, so $\hat x\in\mathcal T$. Let
$\Delta_k=\E[\norm{x_k-\xstar}^2_{\mathbf S}\mathbf 1_G]$,
$F_k=\E[(f(x_k)-\fstar)\mathbf 1_G]$, $V_h=b_h^2+\varsigma_h^2$, and
$\norm y_{\mathbf S^{-1}}=(y^\top\mathbf S^{-1}y)^{1/2}$.

\emph{Step 1: two inequalities on $G$.} As in the proof of
\cref{lem:two-phase}, on $G$ and for $x\in B(R)$ we have
$\nabla f(x)=\bar H(x-\xstar)$ with
$\bar H=\int_0^1\nabla^2f(\xstar+s(x-\xstar))\,ds$, now with
$\mu_{\mathbf S}\mathbf S\preceq\bar H\preceq\frac32\mathbf S$. Hence
$\bar H^{1/2}\mathbf S^{-1}\bar H^{1/2}\preceq\frac32I$, and
\[
\norm{\nabla f(x)}_{\mathbf S^{-1}}^2\le\tfrac32\inner{\nabla f(x)}{x-\xstar},\qquad
\inner{\nabla f(x)}{x-\xstar}\ge f(x)-\fstar+\tfrac{\mu_{\mathbf S}}2
\norm{x-\xstar}_{\mathbf S}^2 .
\]

\emph{Step 2: one step.} Write $\gbar_h(x_k)=\nabla f(x_k)+b_k+\zeta_k$ as in
\eqref{eq:nh-estimate}. On $G$ the projection is nonexpansive in
$\norm\cdot_{\mathbf S}$ and $\xstar\in\mathcal T$, so
\[
\norm{x_{k+1}-\xstar}_{\mathbf S}^2\le\norm{x_k-\xstar}^2_{\mathbf S}
-2\mu_k\inner{\gbar_h(x_k)}{x_k-\xstar}+\mu_k^2\norm{\gbar_h(x_k)}^2_{\mathbf S^{-1}}.
\]
Multiply by $\mathbf 1_G$ and take expectations. We use Step~1,
\eqref{eq:nh-estimate-moments}, $\norm y_{\mathbf S^{-1}}^2\le\norm y^2/\Lambda$,
$\norm{\gbar_h}^2_{\mathbf S^{-1}}\le2\norm{\nabla f}^2_{\mathbf S^{-1}}
+2\norm{b_k+\zeta_k}^2_{\mathbf S^{-1}}$ and
\[
2\mu_k\norm{b_k}_{\mathbf S^{-1}}\norm{x_k-\xstar}_{\mathbf S}\le
\frac{\mu_k\mu_{\mathbf S}}4\norm{x_k-\xstar}_{\mathbf S}^2
+\frac{4\mu_k}{\mu_{\mathbf S}}\norm{b_k}^2_{\mathbf S^{-1}} .
\]
All steps satisfy $\mu_k\le8/(\mu_{\mathbf S}(k_0+1))\le1/3$, so the terms
with $\inner{\nabla f(x_k)}{x_k-\xstar}\ge0$ have total coefficient
$-2\mu_k+3\mu_k^2\le-\mu_k$. This gives \eqref{eq:recursion} with
$a=\mu_{\mathbf S}/4$, $c=1$, $\mathcal B=4b_h^2/(\mu_{\mathbf S}\Lambda)$ and
$\mathcal V=2V_h/\Lambda$, and the steps are those of
\cref{lem:recursion}(b), with $a\mu=2/(k_0+1)\le\min\{1,2/k_0\}$. That lemma and the convexity of $f$ on
$\mathcal T$ prove the claim.
\end{proof}

\paragraph{The method for $2<\beta\le4$.}
\Cref{alg:nh-quadratic} collects the two constructions. Let
$a'=\frac34+\frac1{2\beta}$ be as in \cref{lem:nh-hessian-pilot},
$\omega=(1-a')/4$ and $\theta=T^\omega$. With fresh values for every
quotient, \cref{lem:nh-hessian-pilot} with moment parameter $v$ uses
$n_H(v)=2v(3d+4\binom d2)=2vd(2d+1)$ oracle calls. The numbers $S_d\le Cd^{a'}$
and, for $\beta>3$, $F_d\le Cd^{\beta+1}$ are the explicit moment bounds of
Step~1 of the corresponding part of the proof of
\cref{thm:nh-upper-quadratic}; they depend only on $d$ and the known
parameters.

\begin{algorithm}[t]
\caption{Quadratic startup without a Hessian bound, \(2<\beta\le4\)}
\label{alg:nh-quadratic}
\begin{algorithmic}[1]
\Require \(d,\beta,\alpha,L_\beta,R,\sigma,T\), with the onset of
\cref{thm:nh-upper-quadratic}; the stencil of \cref{alg:stencil}
\If{\(2<\beta\le3\)}
  \State Choose an integer \(v\ge2\) with
  \(\omega(v-1)\ge a'/2+p_\beta+2\)
  \State In one round form \(\hat H\) by \cref{lem:nh-hessian-pilot}
  with parameter \(v\), the median \(\hat c\) of \(2v-1\) values at zero,
  and, for each \(i\), the median \(\hat g_i\) of \(2v-1\) fresh
  differences \((Y(e_i)-Y(-e_i))/2\)
  \State \(P(x)\gets\hat c+\inner{\hat g}{x}+x^\top\hat Hx/2\),
  \(n_0\gets n_H(v)+(2v-1)(2d+1)\),
  \(\Lambda\gets S_d\theta\), \(\Upsilon\gets S_d\theta\)
  \State \(N_{\rm it}\gets\lfloor(T-n_0)/(4md)\rfloor\),
  \(N_1\gets\lfloor N_{\rm it}/2\rfloor\),
  \(N_2\gets N_{\rm it}-N_1\),
  \(h\gets(d^{1+\tau}\sigma^2/(L_\beta^2N_2))^{1/(2\beta)}\)
  \State Run \cref{prop:nh-restart} from zero with this \(P,\Lambda,h,N_1,N_2\);
  \Return its averaged output after the guard of \cref{lem:nh-guard}
  at level \(\Upsilon\)
\Else
  \State Choose an integer \(v\ge2\) with
  \(\omega(v-1)\ge(\beta+1)/2+p_\beta+2\)
  \State In one round form \(\hat H\) by \cref{lem:nh-hessian-pilot}
  with parameter \(2v\); raise every eigenvalue below \(\alpha\) to
  \(\alpha\), obtaining \(A\) with ordered eigenbasis \((o_i)\)
  \State Run the \(d\) updates of \cref{lem:nh-spectral} from zero.
  In update \(t\), estimate coordinate \(i\) by the median of \(2v-1\)
  fresh directional stencils \eqref{eq:directional} at \(x_t\), with
  direction \(o_i\) and radius \(1\); query all coordinates in one round
  \State \(u\gets\Proj^A_{B(R)}x_d\),
  \(n_0\gets n_H(2v)+2m(2v-1)d^2\),
  \(\Lambda_0\gets S_d\theta\), \(K_{\rm tr}\gets F_d\theta\)
  \State \(\mathcal T\gets\{x\in B(R):\norm{x-u}_A^2\le K_{\rm tr}\}\),
  \(\Lambda\gets\alpha+3L_\beta R^{\beta-2}+\Lambda_0\),
  \(\mathbf S\gets A+\Lambda I\), \(\mu_{\mathbf S}\gets\alpha/(4\Lambda)\)
  \State \(N_{\rm it}\gets\lfloor(T-n_0)/(2md)\rfloor\),
  \(N_1\gets\lfloor N_{\rm it}/2\rfloor\),
  \(N_2\gets N_{\rm it}-N_1\),
  \(h\gets(d^{1+\tau}\sigma^2/(L_\beta^2N_2))^{1/(2\beta)}\)
  \State \Return the weighted average of \cref{lem:nh-metric}, run
  from \(x_1=u\) on \(\mathcal T\) with
  \(\mathbf S,\mu_{\mathbf S},h,N_1,N_2\)
\EndIf
\end{algorithmic}
\end{algorithm}

\begin{atheorem}[{Quadratic-pilot upper bound, $2<\beta\le4$}]
\label{thm:nh-upper-quadratic}
Let $2<\beta\le4$, $\alpha,R,L_\beta,\sigma>0$, and let the noise be centered
at level $\sigma$. There is $C_0$, depending only on
$\beta,\alpha,R,L_\beta,\sigma$, such that for every $d\ge1$ and every
$T\ge C_0d^2$, \cref{alg:nh-quadratic} uses at most $T$ oracle calls, in
$1+2N_{\mathrm{it}}$ rounds for $\beta\le3$ and in $1+d+N_{\mathrm{it}}$
rounds for $\beta>3$, and satisfies
\[
\E[f(\hat x)-\fstar]\le C_\beta\Phi_T\qquad\text{for every }
f\in\Fsc(\beta,L_\beta;\alpha,\infty,R),
\]
for the operator seminorm with $\tau=0$ and for the Hilbert--Schmidt
seminorm with $\tau=\tau_\beta$.
\end{atheorem}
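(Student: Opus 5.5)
The proof splits into the two branches of \cref{alg:nh-quadratic} and follows the template of the proof of \cref{thm:nh-upper-general}. In each branch we:
\begin{enumerate}
\item define a good event $G$ determined by the pilot round(s) and show that $\PP(G^c)$ and the loss on $G^c$ are polynomially negligible;
\item apply a descent lemma on $G$ with the radius $h$ chosen from $N_2$;
\item check the call and round counts under $T\ge C_0d^2$.
\end{enumerate}
The call count is routine. For $\beta\le3$, $n_0=n_H(v)+(2v-1)(2d+1)\le Cd^2\le T/2$ once $C_0$ is large, and the iterations use $4md$ calls each. For $\beta>3$, $n_0=n_H(2v)+2m(2v-1)d^2\le T/2$; the pilot takes one round, the spectral updates $d$ rounds, and the metric iterations one round each. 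Step~2 of the proof of \cref{thm:nh-upper-general} gives verbatim that the bias and variance terms are $O(\Phi_T)$, since $N_2\ge cT/d$.

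\emph{Case $2<\beta\le3$.} Let $E=\norm{\hat H-H_0}_{\op}+\abs{\hat c-f(0)}+\norm{\hat g-\nabla f(0)}$, suitably scaled. By \cref{lem:nh-hessian-pilot} and \cref{lem:nh-median}(a),(c), $E$ has $v$-th moment at most $Cd^{a'}$; the gradient part contributes $\sqrt d$ times a constant, using the Taylor bias of the unit-step central difference. Since $q=2$, \cref{lem:taylor} gives on $B(R)$ that $\norm{\nabla^2(f-P)}_{\op}\le\norm{\hat H-H_0}_{\op}+L_\beta R^{\beta-2}$ and $\abs{f-P}\le C(1+E)$. Set $S_d=Cd^{a'}$ and $G=\{E\le d^{a'}\theta\}$. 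On $G$, \cref{prop:nh-restart} applies with $\Lambda=S_d\theta$. Then $k_0\le Cd^{a'}T^\omega$, so that $N_{\rm it}/(k_0+1)\ge cT^{1-\omega}d^{-1-a'}\ge cT^{(1-a')/2-\omega}=cT^{\omega}$, using $d\le\sqrt{T/C_0}$ and $\omega=(1-a')/4$. Hence the exponential term is negligible, and the guard (\cref{lem:nh-guard}) does not increase the value. On $G^c$ the guard bounds the loss by $C(S_d\theta+E)$. Markov's inequality at order $v$ gives $\PP(G^c)\le\theta^{-v}$, and $\E[E\mathbf 1_{G^c}]\le Cd^{a'}\theta^{1-v}$. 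The choice $\omega(v-1)\ge a'/2+p_\beta+2$, together with $d^{a'}\le T^{a'/2}$, makes the loss on $G^c$ at most $T^{-p_\beta-1}\le\Phi_T$ for large $C_0$.

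\emph{Case $3<\beta\le4$.} Here the goal is to verify the hypotheses of \cref{lem:nh-metric} on an event $G$.
\begin{enumerate}
\item Raising eigenvalues below $\alpha$ to $\alpha$ does not increase the distance to $H_0\succeq\alpha I$, so $E_A\le\norm{\hat H-H_0}_{\op}$, which has moments of order $Cd^{a'}$.
\item By \cref{lem:nh-relative-curvature}(c), on $B(R)$ we have $\frac12H_0-\bar cI\preceq\nabla^2f\preceq\frac32H_0+\bar cI$. With $\Lambda=\alpha+\bar c+\Lambda_0$ and $E_A\le\Lambda_0=S_d\theta$ this yields $\nabla^2f\preceq\frac32(A+\Lambda I)=\frac32\mathbf S$. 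It also yields $\nabla^2f\succeq\max\{\alpha,\frac12A-(\bar c+\Lambda_0)\}I\succeq\frac{\alpha}{4\Lambda}\mathbf S$, by splitting according to whether $A\succeq4\Lambda$ in each eigendirection.
\item For $\xstar\in\mathcal T$ we use \cref{lem:nh-spectral}. The stochastic terms $\zeta_t$ are medians of stencils at unit radius, with bias $O(L_\beta)$ and heavy-tailed noise controlled by \cref{lem:nh-median}. This gives $\norm{u-\xstar}_A^2\le Cd^{\beta+1}(1+E_A^2+\sum_t\norm{\zeta_t}^2/d^{\beta})$, with moments bounded by $F_d$, where $d^3\Xi_d^2\lesssim d^{3+\varkappa}$ accounts for the exponent $\beta+1$. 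So $G=\{E_A\le S_d\theta,\ \norm{u-\xstar}_A^2\le F_d\theta\}$.
\item On $G$, \cref{lem:nh-metric} gives a main term $\frac{b_h^2+\varsigma_h^2}{\mu_{\mathbf S}\Lambda N_2}\lesssim\Phi_T/\alpha$, since $\mu_{\mathbf S}\Lambda=\alpha/4$. The initial term is at most $CF_d\theta\,d^{a'}\theta\cdot e^{-cN_{\rm it}/k_0}$ with $k_0\lesssim d^{a'}\theta$, which is again exponentially small in $T^\omega$.
\item On $G^c$, $\hat x\in\mathcal T\subseteq B(R)$, but $f$ has no upper curvature bound there. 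I would bound the loss by $\fstar\le f(\hat x)\le f(\xstar)+\norm{\nabla f}\cdot2R$ using the growth bound of \cref{lem:nh-relative-curvature}(b) with $\varrho=2R$. This gives $f(\hat x)-\fstar\le C\lambda_{\max}(H_0)R^2$, where $\lambda_{\max}(H_0)$ is \emph{not} controlled by the class.
\end{enumerate}

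The last item is the main obstacle. My first attempt would be to insert a polynomial-value guard as in the first case, using $\hat H$, so that the loss becomes polynomial in $E_A$, $d$ and $\theta$. The alternative is to show that $\hat x$ stays within $\norm{\cdot}_A$-distance $\sqrt{K_{\rm tr}}$ of $u$ and compare $A$ with $H_0$ through $E_A$. In either route the loss on $G^c$ is bounded by a polynomial of degree about $(\beta+1)/2$ in $d$ times moments of $E_A$. The choice $\omega(v-1)\ge(\beta+1)/2+p_\beta+2$, together with Markov's inequality at order $2v$, then makes this contribution $O(T^{-p_\beta-1})$, and enlarging $C_0$ completes the proof.
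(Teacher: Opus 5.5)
\paragraph{Verdict.} Your branch $2<\beta\le3$ is essentially the paper's argument: moments of the pilot errors, the event $G$, \cref{prop:nh-restart} on $G$, the value guard on $G^c$, and the choice $\omega(v-1)\ge a'/2+p_\beta+2$. For $3<\beta\le4$, your items 1--4 also follow the paper's structure. The genuine gap is item~5, which you leave open as ``the main obstacle''.

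\paragraph{Why your first route fails.} The theorem is about \cref{alg:nh-quadratic}, whose branch $\beta>3$ has no value guard, so adding one proves a different statement. Moreover, a guard built from the quadratic model $\hat H$ cannot be calibrated uniformly over the class. On $B(R)$, $f-P$ contains the cubic term $\frac16D^3f(0)[x,x,x]$, and \cref{lem:nh-relative-curvature}(a) bounds it only by a multiple of $L_\beta^{1/\varkappa}R^3\lambda_{\max}(H_0)^{\upsilon}$, which is unbounded over the class. Shifted quartics in $d=1$ already show this. Your bound through $\norm{\nabla f(\hat x)}$ fails for the same reason: any estimate that passes through $\lambda_{\max}(H_0)$ is useless.

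\paragraph{The missing step.} Use curvature \emph{relative to} $H_0$, never its absolute size. This is \eqref{eq:nh-trust-guard}.
\begin{itemize}
\item For $x\in\mathcal T\subseteq B(R)$, the segment $[\xstar,x]$ lies in $B(R)$ and $\nabla f(\xstar)=0$. So \cref{lem:nh-relative-curvature}(b) with $\varrho=R$ gives $f(x)-\fstar\le\frac{C_R}2\norm{x-\xstar}_{H_0}^2$.
\item Next, $H_0\preceq A+E_AI$ and $\norm{x-\xstar}_A^2\le2\norm{x-u}_A^2+2\norm{u-\xstar}_A^2\le2K_{\rm tr}+2W_A$, where $W_A=\norm{u-\xstar}_A^2$.
\item Together these give $f(x)-\fstar\le C_R(K_{\rm tr}+W_A+2R^2E_A)$ for every $x\in\mathcal T$, on every realization.
\item The output lies in $\mathcal T$ automatically, as a weighted average of points of this convex set. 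So this pathwise bound is what replaces the guard.
\end{itemize}
This step needs $\norm{W_A}_{L^v}\le F_d\le Cd^{\beta+1}$, not only moments of $E_A$, because $W_A>K_{\rm tr}$ is part of $G^c$. Markov gives $\PP(G^c)\le\theta^{-2v}+\theta^{-v}$; for $W_A$ it is applied at order $v$, not $2v$. Hölder's inequality then gives $\E[(f(\hat x)-\fstar)\mathbf 1_{G^c}]\le CF_d\theta^{1-v}$. With $d\le\sqrt T$ this is at most $CT^{(\beta+1)/2-\omega(v-1)}\le CT^{-p_\beta-2}$, and this is exactly where your choice of $v$ enters.

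\paragraph{Smaller slips.}
\begin{itemize}
\item \emph{Item 3.} \cref{lem:nh-spectral} puts only the factor $d$, not $d^{\beta+1}$, in front of $E_A^2$. Your version would make $F_d$ exceed $d^{\beta+1}$, and then the stated $v$ would no longer suffice.
\item \emph{Item 1.} Non-expansiveness of eigenvalue clipping toward $H_0$ is a Frobenius-norm fact and need not hold in operator norm. The triangle inequality gives $E_A\le2\norm{\hat H-H_0}_{\op}$, which is enough.
\item \emph{Item 2.} You cannot split along eigendirections of $A$. The matrix $\nabla^2f(x)$ need not commute with $A$, and the Löwner order is not a lattice: $M\succeq\alpha I$ and $M\succeq\frac12A-cI$ do not imply that $M$ dominates their spectral maximum. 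Either argue with the scalar $w^\top Aw$ for each unit vector $w$, or take the convex combination with weight $\lambda=\alpha/(2\Lambda)$ as the paper does.
\end{itemize}
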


The algorithm does not use the events $G$ of the proofs. For $\beta>3$ every output lies in the trust set
$\mathcal T$, and \eqref{eq:nh-trust-guard} below takes the place of the
polynomial-value guard.

\begin{proof}[Proof for $2<\beta\le3$]
Here $q=2$, $m=1$, $p=p_\beta$, and $v$ is the integer of
\cref{alg:nh-quadratic}, which depends only on $\beta$. We follow the proof of
\cref{thm:nh-upper-general}.

\emph{Step 1: the pilot.} It uses $n_0\le C_vd^2$ oracle calls in one round. By
\cref{lem:taylor}(a), $\abs{f(x)-T_2f(0;x)}\le L_\beta\norm x^\beta/2$, so the mean
of $(Y(e_i)-Y(-e_i))/2$ is within $L_\beta/2$ of $\partial_if(0)$, and its error
has conditional variance at most $\sigma^2/2$. By \cref{lem:nh-median} and
Minkowski's inequality, $\norm{\hat c-f(0)}_{L^{2v}}\le C\sigma$ and
$\norm{\norm{\hat g-\nabla f(0)}}_{L^v}\le\sqrt d(L_\beta/2+C\sigma)$. On $B(R)$,
\[
\abs{f-P}\le\frac{L_\beta R^\beta}2+\abs{\hat c-f(0)}+R\norm{\hat g-\nabla f(0)}
+\frac{R^2}2\norm{\hat H-H_0}_{\op},\qquad
\norm{\nabla^2(f-P)}_{\op}\le L_\beta R^{\beta-2}+\norm{\hat H-H_0}_{\op}.
\]
With \cref{lem:nh-hessian-pilot} and $\sqrt d\le d^{a'}$, Minkowski's
inequality gives an explicit number $S_d\le Cd^{a'}$ such that
\[
Z=S_d^{-1}\max\Bigl\{1,\sup_{B(R)}\abs{f-P},\sup_{B(R)}\norm{\nabla^2(f-P)}_{\op}\Bigr\}
\]
satisfies $\E Z^v\le1$.

\emph{Step 2: the event $G=\{Z\le\theta\}$.} It is determined by the pilot
round, and on $G$ the residual satisfies
$\norm{\nabla^2(f-P)}_{\op}\le S_d\theta=\Lambda$ and
$\abs{f-P}\le S_d\theta=\Upsilon$ on $B(R)$. As in Steps~1 and~2 of the
proof of \cref{thm:nh-upper-general}, after increasing $C_0$ the oracle calls are at
most $T$, $N_{\mathrm{it}}\ge T/(16md)$, $N_2\ge T/(32md)$, and
\eqref{eq:nh-rate-terms} holds. Now $k_0+1\le Cd^{a'}T^\omega$ and
$d\le\sqrt T$, so
\[
\frac{N_{\mathrm{it}}}{k_0+1}\ge c\,\frac{T^{1-\omega}}{d^{1+a'}}\ge
c\,T^{1-\omega-\frac{1+a'}2}=c\,T^{\omega},
\]
because $\frac{1-a'}2=2\omega$. As in Step~3 there,
$\E[(f(\hat x)-\fstar)\mathbf 1_G]\le\frac{\alpha R^2}8e^{-cT^\omega}
+C_1(32m)^p\Phi_T$.

\emph{Step 3: the complement.} On $G^c$ the guard gives
$f(\hat x)-\fstar\le2S_d\theta+2S_dZ\le4S_dZ$. Since
$Z\mathbf 1\{Z>\theta\}\le Z^v\theta^{1-v}$,
\[
\E\bigl[(f(\hat x)-\fstar)\mathbf 1_{G^c}\bigr]\le4S_d\theta^{1-v}\le
CT^{a'/2-\omega(v-1)}\le CT^{-p-2}.
\]
Both remainders are absorbed into $\Phi_T$ as in Step~5 of the proof of
\cref{thm:nh-upper-general}.
\end{proof}

\begin{proof}[Proof for $3<\beta\le4$]
Here $q=3$, $m=2$, $p=p_\beta$, $\varkappa,\upsilon,\bar c$ are as in
\cref{lem:nh-relative-curvature}, and $v$ is the integer of
\cref{alg:nh-quadratic}. A preliminary stage estimates $H_0$ and a point $u$
near $\xstar$; \cref{lem:nh-metric} then runs on a trust set around $u$, in a
metric calibrated by the estimate of $H_0$.

\emph{Step 1: the preliminary stage.} \Cref{lem:nh-hessian-pilot} with $2v$
in place of $v$ gives $\hat H$ in one round. By Weyl's inequality the
smallest eigenvalue of $\hat H$ is at least $\alpha-\norm{\hat H-H_0}_{\op}$,
so $\norm{A-\hat H}_{\op}\le\norm{\hat H-H_0}_{\op}$ and
$E_A\le2\norm{\hat H-H_0}_{\op}$; the eigenvectors are chosen by
\cref{lem:nh-selection}. In the updates of \cref{lem:nh-spectral}, each value
of the stencil \eqref{eq:directional} with $h=1$ in the direction $o_i$ has mean within
$c_0L_\beta$ of $\inner{\nabla f(x_t)}{o_i}$ (\cref{lem:exact}, with $c_0$ from
\eqref{eq:stencil-constants}), and its error has conditional second moment at
most $S_2\sigma^2/2$ (\cref{lem:nh-median}(c)). By \cref{lem:nh-median}(a)
and Minkowski's inequality, $\E[\norm{\zeta_t}^{2v}\mid\mathcal F]^{1/(2v)}\le
C\sqrt d$ given the past $\mathcal F$ of the round. The stage uses
$n_0\le16vd^2+2m(2v-1)d^2$ oracle calls in $1+d$ rounds. Since
$\Xi_d^2\le Cd^\varkappa$ and $d^3\Xi_d^2\le Cd^{\beta+1}$,
\cref{lem:nh-hessian-pilot,lem:nh-spectral} and Minkowski's inequality give
explicit numbers $S_d$ and $F_d$ with
\begin{equation}
\label{eq:nh-prelim-moments}
\norm{E_A}_{L^{2v}}\le S_d\le Cd^{a'},\qquad
\norm{W_A}_{L^v}\le F_d\le Cd^{\beta+1},\qquad W_A=\norm{u-\xstar}_A^2 .
\end{equation}

\emph{Step 2: the trust set.} Let $\Lambda_0=S_d\theta$,
$K_{\mathrm{tr}}=F_d\theta$ and
$\mathcal T=\{x\in B(R):\norm{x-u}_A^2\le K_{\mathrm{tr}}\}$, a nonempty
compact convex set. On the event
$G=\{E_A\le\Lambda_0,\ W_A\le K_{\mathrm{tr}}\}$ we have $\xstar\in\mathcal T$.
Let $C_R$ be the number $C_\varrho$ of \cref{lem:nh-relative-curvature}(b) at
$\varrho=R$, and $\norm y_{H_0}=(y^\top H_0y)^{1/2}$. For every
$x\in\mathcal T$ and on every realization,
\cref{lem:nh-relative-curvature}(b), $H_0\preceq A+E_AI$ and
$\norm{x-\xstar}_A^2\le2K_{\mathrm{tr}}+2W_A$ give
\begin{equation}
\label{eq:nh-trust-guard}
f(x)-\fstar\le\frac{C_R}2\norm{x-\xstar}_{H_0}^2\le
C_R\bigl(K_{\mathrm{tr}}+W_A+2R^2E_A\bigr).
\end{equation}
By \eqref{eq:nh-prelim-moments} and Markov's inequality,
$\PP(G^c)\le\theta^{-2v}+\theta^{-v}\le2\theta^{-v}$. H\"older's inequality
and \eqref{eq:nh-trust-guard} then give
$\E[(f(x)-\fstar)\mathbf 1_{G^c}]\le CF_d\theta^{1-v}$ for every
$\mathcal T$-valued random point $x$.

\emph{Step 3: the calibrated metric.} Let $\Lambda=\alpha+\bar c+\Lambda_0$,
$\mathbf S=A+\Lambda I$ and $\mu_{\mathbf S}=\alpha/(4\Lambda)\le\frac14$. On
$G$, \cref{lem:nh-relative-curvature}(c) and
$A-\Lambda_0I\preceq H_0\preceq A+\Lambda_0I$ give, for $x\in B(R)$,
$\nabla^2f(x)\preceq\frac32A+(\frac32\Lambda_0+\bar c)I\preceq\frac32\mathbf S$
and $\nabla^2f(x)\succeq\frac12A-(\frac12\Lambda_0+\bar c)I$. The convex
combination of the last bound and $\nabla^2f\succeq\alpha I$ with weights
$\lambda=\alpha/(2\Lambda)$ and $1-\lambda$ gives
\[
\nabla^2f(x)\succeq\frac\lambda2A+\bigl((1-\lambda)\alpha-\lambda(\tfrac12\Lambda_0
+\bar c)\bigr)I\succeq\mu_{\mathbf S}\mathbf S,
\]
because $\lambda/2=\mu_{\mathbf S}$ and
$(1-\lambda)\alpha-\lambda(\frac12\Lambda_0+\bar c)\ge\alpha-\lambda\Lambda
=\frac\alpha2\ge\mu_{\mathbf S}\Lambda$.

\emph{Step 4: optimization.} After increasing $C_0$, $n_0\le T/2$,
$N_{\mathrm{it}}\ge T/(8md)$ and $N_2\ge T/(16md)$, so the choice of $h$
gives, as in Step~2 of the proof of \cref{thm:nh-upper-general},
$b_h^2+\varsigma_h^2/N_2\le(\bfC^2+\Cf+C_\sigma)(16m)^p\alpha\Phi_T$. Since
$\mu_{\mathbf S}\Lambda=\alpha/4$, the last two terms of \cref{lem:nh-metric}
are at most $C_2\Phi_T$, where $C_2$ depends only on $\beta$. Here
$k_0=\max\{7,96\Lambda/\alpha\}\le Cd^{a'}T^\omega$ and $d\le\sqrt T$, so
$N_{\mathrm{it}}/(k_0+1)\ge cT^\omega$ as for $\beta\le3$, and $N_2\ge k_0$
after increasing $C_0$. On $G$,
$\norm{u-\xstar}_{\mathbf S}^2=W_A+\Lambda\norm{u-\xstar}^2\le F_d\theta+4\Lambda R^2$,
so the first term of \cref{lem:nh-metric} is a power of $T$ times
$e^{-cT^\omega}$.

\emph{Step 5: conclusion.} The output lies in $\mathcal T$, so by Step~2 and
$d\le\sqrt T$ the contribution of $G^c$ is at most
$CF_d\theta^{1-v}\le CT^{(\beta+1)/2-\omega(v-1)}\le CT^{-p-2}$. Both
remainders are absorbed into $\Phi_T$ after increasing $C_0$, as in the proof
of \cref{thm:nh-upper-general}. The preliminary stage uses only
$[f]_{\beta,\op}\le L_\beta$.
\end{proof}

\subsection{The minimax rate}
\label{app:nh-proof}

\begin{proof}[Proof of \cref{thm:nh-main}]
\emph{Upper bound.} For $2<\beta\le4$ it is \cref{thm:nh-upper-quadratic},
and for $\beta>4$ it is \cref{thm:nh-upper-general}. Both methods use at most
$C_\beta T/d$ rounds (\cref{rem:nh-rounds}).

\emph{Lower bound.} Apply \cref{thm:main}(b) with $L=3\alpha$ and $M=0$.
Its proof (\cref{app:lower-proofs}) gives $c_\beta>0$ and $C_0'$, depending
only on $\beta,\alpha,R,L_\beta,\sigma$, such that every algorithm with
$T\ge C_0'd^2$ oracle calls, in any number of rounds, has worst-case error at least
$c_\beta\Phi_T$ over the functions with $\alpha I\preceq\nabla^2f\preceq3\alpha I$,
$[f]_\beta\le L_\beta$ and minimizer in $B(R)$, under independent $N(0,\sigma^2)$
errors. These functions belong to $\Fsc(\beta,L_\beta;\alpha,\infty,R)$, and
these errors are centered noise. Taking $C_0\ge C_0'$ in
\cref{thm:nh-upper-general,thm:nh-upper-quadratic}, and since $d^q\ge d^2$,
both bounds hold under the stated budget condition.

\emph{Logarithmically many rounds.} This is \cref{thm:nh-batched}; the size
of its pilot is estimated after its proof.
\end{proof}

\subsection{Remarks}
\label{app:nh-remarks}

\begin{aremark}[Computation]
\label{rem:nh-computation}
The methods of this appendix use exact minimizers of known functions over
balls, and the oracle model does not charge this computation. The
maximum-volume design depends only on $d,q,R$ and is computed once; any
design with the conclusion of \cref{lem:nh-maxvol} can replace it. For
$\beta>4$ the guard needs a global minimizer of a polynomial of degree
$q\ge4$ over a ball, which is a nonconvex problem. On the event where the
analysis takes place, the objective of the first line of \eqref{eq:nh-fbf}
has Hessian at least $(\alpha+\Lambda)I$ on the ball; off that event an exact
global minimizer is still used. For $2<\beta\le3$ the polynomial $P$ is
quadratic, and both minimizations are trust-region subproblems. For
$3<\beta\le4$ the method uses eigendecompositions and projections onto $B(R)$
and onto the intersection of $B(R)$ with an ellipsoid, in the norms
$\norm\cdot_A$ and $\norm\cdot_{\mathbf S}$ of
\cref{lem:nh-spectral,lem:nh-metric}, and minimizes no polynomial of degree
three.
\end{aremark}

\begin{aremark}[Rounds]
\label{rem:nh-rounds}
The pilots for $\beta>4$ and for $2<\beta\le3$ are single rounds, because
their query points are fixed in advance; for $3<\beta\le4$ the preliminary
stage uses $1+d$ rounds. The method of \cref{prop:nh-restart} uses two rounds
of $2md$ oracle calls per iteration, and that of \cref{lem:nh-metric} one. Hence the
number of rounds is of order $T/(md)$: apart from the $2md$ oracle calls of one
stencil, the methods are sequential.
\end{aremark}

\begin{aremark}[Oblivious noise]
\label{rem:nh-oblivious}
The stencil tolerates oblivious noise (\cref{prop:unified-estimator}), but the
analysis of the one-round pilots does not. Condition \eqref{eq:noise-oblivious}
bounds the second moment of an oblivious error only given $\mathcal F_{k-1}$,
so the oblivious errors at all nodes of the pilot round may equal $\pm W$ for
one random variable $W$ with $\E W^2=\sigma^2$. The medians of block means
then have only the tail of $W$, \cref{lem:nh-median}(a) does not apply, and
the event $\{E_0>\theta\}$ is no longer negligible in the proof of
\cref{thm:nh-upper-general}. A bias of order $\sigma$ alone would be
harmless, since the proofs use only $E_0\le\theta$. \Cref{lem:nh-robust-matrix}
uses the centering both in its bias bound and in its martingale part. We
therefore treat centered noise only.
\end{aremark}

\section{{Logarithmic-round polynomial correction}}
\label{app:batched-no-hessian}

The methods of \cref{app:no-hessian} have a small polynomial startup, but
their optimization steps are sequential. Here a more accurate pilot, whose
residual has curvature bounded independently of $d$, allows epochs with
doubling numbers of frames: the statistical rate is the same, the number of
rounds is logarithmic, and the startup is larger. All startup calls are
counted in the budget. We also treat convex objectives without a Hessian
bound (\cref{app:convex-no-hessian}).

The notation is that of \cref{app:nh-setting}, $K=K_q$, and $L_\beta,\sigma,R>0$.
In \cref{app:batch-pilot,app:batch-rounds,app:convex-no-hessian} the noise is
centered at level $\sigma$ (\cref{def:nh-noise}).

\subsection{A pilot with dimension-free residual curvature}
\label{app:batch-pilot}

Let $\Gamma_0=\sqrt{2K}$ and $C_q^{\rm pol}$ be as in
\cref{lem:nh-fit,lem:nh-poly}, and define
\begin{align}
 H_q&=C_q^{\rm pol}(2\bar R)^q,&
 a&=\min\left\{1,\left(\frac{q!}{2L_\beta\Gamma_0H_q}\right)^{1/\gamma}\right\},&
 \theta&=\frac{a^q}{2\Gamma_0H_q},&
 \ell&=\max\{1,\lceil16\sigma^2/\theta^2\rceil\}.
 \label{eq:batch-pilot-design}
\end{align}
The pilot is that of \cref{app:nh-pilot} with design radius $s=a$, this
$\ell$ and an odd $r\ge3$. Let $P$ be its fit, $E_0$ the largest absolute
error of its medians, $G=\{E_0\le\theta\}$, and, with $\varrho_0,\varrho_2$
as in \cref{app:nh-pilot},
\begin{equation}
 U=\varrho_0+1,\qquad
 \Lambda=\varrho_2+\frac{q^4}{\bar R^2}.
 \label{eq:batch-pilot-constants}
\end{equation}
Unlike $\Upsilon_\theta$ and $\Lambda_\theta$ in \cref{app:nh-pilot}, $U$
and $\Lambda$ do not depend on $d$.

\begin{alemma}[A rescaled polynomial pilot]
\label{lem:batch-pilot}
For every $f\in C^q(\R^d)$ with $[f]_{\beta,\op}\le L_\beta$, this pilot
uses $Q_{\rm pil}=2Kr\ell$ calls in one round. On $G$,
\[
 \sup_{B(R)}|f-P|\le U,\qquad
 \sup_{B(R)}\|\nabla^2(f-P)\|_{\op}\le\Lambda.
\]
If $f$ also has a global minimizer in $B(R)$, then for every candidate
in $B(R)$ the output $\hat x$ of the guard of \cref{lem:nh-guard}, with
level $U$, satisfies $\E[(f(\hat x)-\fstar)\mathbf 1_{G^c}]\le4UK2^{-r}$.
\end{alemma}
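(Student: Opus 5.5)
The plan follows the proof of \cref{lem:nh-pilot}, but uses \cref{lem:nh-poly}(b) to pass from the small design ball $B(a)$ to $B(2\bar R)$, and then \cref{lem:nh-poly}(a) for the Hessian. The call count is immediate: the design has $J=2K$ nodes, each queried $r\ell$ times at points fixed in advance, so the pilot uses $Q_{\rm pil}=2Kr\ell$ calls in one round.

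\emph{Step 1: the fit error on $B(a)$.} Let $\mathfrak t=T_qf(0;\cdot)$. By the first claim of \cref{lem:nh-pilot} with $s=a$, every realization satisfies
\[
\sup_{B(a)}\abs{P-\mathfrak t}\le\Gamma_0\bigl(L_\beta a^\beta/q!+E_0\bigr).
\]
On $G$ we have $E_0\le\theta=a^q/(2\Gamma_0H_q)$. Since $a\le(q!/(2L_\beta\Gamma_0H_q))^{1/\gamma}$, also $\Gamma_0L_\beta a^\beta/q!\le a^q/(2H_q)$. Hence $\sup_{B(a)}\abs{P-\mathfrak t}\le a^q/H_q$ on $G$.

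\emph{Step 2: extension to $B(2\bar R)$.} The difference $P-\mathfrak t$ is a polynomial of degree at most $q$, and $a\le1\le2\bar R$. \Cref{lem:nh-poly}(b) with $\rho=2\bar R$ then gives
\[
\sup_{B(2\bar R)}\abs{P-\mathfrak t}\le C_q^{\rm pol}(2\bar R/a)^q\,a^q/H_q=1 .
\]
This is the point of the choice of $H_q$: the $\Gamma_0\sim\sqrt K$ amplification is absorbed at the small radius, so the bound no longer depends on $d$. Next, \cref{lem:nh-poly}(a) with $\rho=\bar R$ gives $\norm{\nabla^2(P-\mathfrak t)}_{\op}\le q^4/\bar R^2$ on $B(\bar R)\supseteq B(R)$. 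As in \cref{lem:nh-pilot}, $\abs{f-\mathfrak t}\le\varrho_0$ and $\norm{\nabla^2(f-\mathfrak t)}_{\op}\le\varrho_2$ on $B(R)$. Writing $f-P=(f-\mathfrak t)-(P-\mathfrak t)$ yields the bounds $U$ and $\Lambda$ on $G$.

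\emph{Step 3: the complement.} Off $G$ there is no a priori bound on $P$, so I would rescale the Step~1 estimate. On every realization, \cref{lem:nh-pilot} gives $\sup_{B(a)}\abs{P-\mathfrak t}\le a^q/(2H_q)+\Gamma_0E_0$. Applying \cref{lem:nh-poly}(b) to this, $\sup_{B(R)}\abs{P-\mathfrak t}\le\frac12+E_0/(2\theta)$, and therefore
\[
\sup_{B(R)}\abs{f-P}\le\varrho_0+\tfrac12+E_0/(2\theta)\le U+E_0/\theta .
\]
The guard inequality of \cref{lem:nh-guard} then gives $f(\hat x)-\fstar\le 4U+2E_0/\theta$, which on $G^c$ (where $E_0/\theta>1$) is at most $(4U+2)E_0/\theta\le 6U\,E_0/\theta$, using $U\ge1$.

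It remains to bound the tail of $E_0$. With $\sigma^2/\ell\le\theta^2/16$, \eqref{eq:nh-pilot-tails} at $u=\theta$ gives
\[
\E[E_0\mathbf 1_G^{\,c}]\le 2J2^r(1/16)^{(r+1)/2}\theta\le 4K2^{-r}\theta\cdot\tfrac14 ,
\]
since $(1/16)^{(r+1)/2}=4^{-(r+1)}$. Multiplying by $6U/\theta$ gives $6UK2^{-r}$. The constant is the main obstacle. To reach the stated $4UK2^{-r}$ I would keep the exact tail constant $\frac{2j}{2j-1}$ from \cref{lem:nh-median}(b) in place of the factor $2$, and check that $4^{-(r+1)}$ with $r\ge3$ leaves enough slack. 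I have not yet verified that this closes the gap.
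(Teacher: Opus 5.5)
Your approach is the paper's. Steps~1 and~2 are correct and match it: the bound on $B(a)$ from \cref{lem:nh-pilot}, the rescaling by $H_qa^{-q}$ via \cref{lem:nh-poly}(b), and \cref{lem:nh-poly}(a) for the Hessian. As written, however, Step~3 proves only $6UK2^{-r}$, and the loss is not in the tail estimate. It comes from your loosening of the pathwise bound $\sup_{B(R)}\abs{f-P}\le\varrho_0+\frac12+E_0/(2\theta)$ to $U+E_0/\theta$.

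If you keep the sharper form, \cref{lem:nh-guard} gives, on $G^c$ (where $E_0/\theta>1$),
\[
f(\hat x)-\fstar\le 2U+2\varrho_0+1+\frac{E_0}{\theta}=4\varrho_0+3+\frac{E_0}{\theta}\le(4\varrho_0+4)\frac{E_0}{\theta}=4U\frac{E_0}{\theta}.
\]
Your tail bound $\E[E_0\mathbf 1_{G^c}]\le K\theta2^{-r}$ then yields exactly $4UK2^{-r}$. This is how the paper concludes.

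Your proposed repair also closes the gap. Replace the factor $2$ by the exact factor $\frac{2j}{2j-1}=\frac{r+1}{r}$ of \cref{lem:nh-median}(b), which is at most $\frac43$ for odd $r\ge3$. The tail becomes $\E[E_0\mathbf 1_{G^c}]\le\frac{r+1}{2r}K\theta2^{-r}\le\frac23K\theta2^{-r}$, and $6U\cdot\frac23=4U$. Either fix is a two-line computation, so nothing beyond bookkeeping is missing.
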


\begin{proof}
Let $\mathfrak t=T_qf(0;\cdot)$. By \cref{lem:nh-pilot} with $s=a$,
$\sup_{B(a)}|P-\mathfrak t|\le\Gamma_0(L_\beta a^\beta/q!+E_0)$. Since
$a\le1\le2\bar R$, \cref{lem:nh-poly}(b) with $\rho=2\bar R$ multiplies this
bound by $H_qa^{-q}$, and \eqref{eq:batch-pilot-design} gives
$H_qa^{-q}\Gamma_0L_\beta a^\beta/q!=H_q\Gamma_0L_\beta a^\gamma/q!\le\frac12$ and
$H_qa^{-q}\Gamma_0=1/(2\theta)$. Hence, on every realization,
\begin{equation}
 \sup_{B(2\bar R)}|P-\mathfrak t|
 \le\frac12+\frac{E_0}{2\theta}.
 \label{eq:batch-pilot-pathwise}
\end{equation}
On $G$ this is at most one. On $B(R)$, \cref{lem:taylor}(a),(c) give
$|f-\mathfrak t|\le\varrho_0$ and $\|\nabla^2(f-\mathfrak t)\|_{\op}\le\varrho_2$,
and \cref{lem:nh-poly}(a) with $\rho=\bar R$ gives
$\|\nabla^2(P-\mathfrak t)\|_{\op}\le q^4/\bar R^2$ on $G$. This proves the
bounds on $G$.

By \eqref{eq:nh-pilot-tails} with $2K$ nodes and $\sigma^2/\ell\le\theta^2/16$,
$\E[E_0\mathbf1_{G^c}]\le K\theta2^{-r}$. By \cref{lem:nh-guard} and
\eqref{eq:batch-pilot-pathwise}, $f(\hat x)-\fstar\le4\varrho_0+3+E_0/\theta$
on every realization, which is at most $4UE_0/\theta$ on $G^c$.
\end{proof}

\subsection{A statistical rate in logarithmically many rounds}
\label{app:batch-rounds}

\Cref{alg:batched-polynomial-corrector} runs a pilot and then $J$ epochs of
the corrected method \eqref{eq:nh-fbf}. Epoch $j$ makes $n_j$ iterations
with $B_j$ frames per stencil, radius $h_j$, steps $\mu_{j,k}$ and averaging
weights $\omega_{j,k}$, and starts from the output of the previous epoch. The
algorithm uses $Q_{\rm pil}+4md\sum_{j=1}^Jn_jB_j$ calls in
$1+2\sum_{j=1}^Jn_j$ rounds: in each iteration the second center depends on
the first stencil, while the $B_j$ frames of one stencil are queried
together. Polynomial minimization and the guard need no calls.

\begin{algorithm}[t]
\caption{Polynomial pilot and batched residual correction}
\label{alg:batched-polynomial-corrector}
\begin{algorithmic}[1]
\Require $T,d,\beta,L_\beta,R,\sigma$ (also $\alpha$ in the strongly convex case);
the pilot, guard level $U$, starting point and schedule
$(n_j,B_j,h_j,\mu_{j,k},\omega_{j,k})$ of \cref{thm:nh-batched} or \cref{thm:convex-nh-direct}
\State In one round acquire all $Q_{\rm pil}$ pilot values and fit $P$
\State Select $x_P\in\argmin_{B(R)}P$; set $z^{(0)}=x_P$ (\cref{thm:nh-batched}) or $z^{(0)}=0$ (\cref{thm:convex-nh-direct})
\For{$j=1,\ldots,J$}
  \State $z_1\gets z^{(j-1)}$
  \For{$k=1,\ldots,n_j$}
    \State $\hat v_z\gets\gbar_{h_j,B_j}(z_k)-\nabla P(z_k)$ from a fresh stencil block
    \State $y_k\gets\argmin_{x\in B(R)}
       \{P(x)+\inner{\hat v_z}{x}+\norm{x-z_k}^2/(2\mu_{j,k})\}$
    \State $\hat v_y\gets\gbar_{h_j,B_j}(y_k)-\nabla P(y_k)$ from a fresh stencil block
    \State $z_{k+1}\gets\Proj_{B(R)}(y_k-\mu_{j,k}(\hat v_y-\hat v_z))$
  \EndFor
  \State $z^{(j)}\gets\sum_{k=1}^{n_j}\omega_{j,k}y_k/\sum_{k=1}^{n_j}\omega_{j,k}$
\EndFor
\State \Return $z^{(J)}$ if $P(z^{(J)})\le P(x_P)+2U$, and $x_P$ otherwise
\end{algorithmic}
\end{algorithm}

\begin{atheorem}[Logarithmically many rounds]
\label{thm:nh-batched}
Let $\beta>2$, $\alpha,L_\beta,\sigma,R>0$, and let
$f\in\Fsc(\beta,L_\beta;\alpha,\infty,R)$ with centered noise at level
$\sigma$. Run \cref{alg:batched-polynomial-corrector} with the pilot of
\cref{app:batch-pilot}, where $r$ is the smallest odd integer $r\ge3$ with
$4UK2^{-r}\le\Phi_T$, with $Q_{\rm pil}=2Kr\ell$, the guard level $U$, the
starting point $z^{(0)}=x_P$, and, for $k_0=\max\{7,16\Lambda/\alpha\}$ and
$n=\lceil2k_0\rceil$,
\[
 n_j=n,\qquad B_j=2^{j-1},\qquad
 h_j=\left(\frac{d^{1+\tau}\sigma^2}{L_\beta^2nB_j}\right)^{1/(2\beta)},\qquad
 \mu_{j,k}=\frac8{\alpha(k+k_0)},\qquad \omega_{j,k}=k+k_0-1,
\]
for the epochs $j=1,\dots,J$, where
\begin{equation}
 J=\left\lfloor\log_2\left(1+
                   \frac{T-Q_{\rm pil}}{4mdn}\right)\right\rfloor.
 \label{eq:batch-epochs}
\end{equation}
If
\begin{equation}
 Q_{\rm pil}\le T/2,\qquad T-Q_{\rm pil}\ge4mdn,
 \qquad 2U16^{-J}\le\Phi_T,
 \label{eq:batch-onset}
\end{equation}
then the algorithm uses at most $T$ calls in exactly $1+2nJ$ rounds, which
is at most a constant multiple of $1+\log T$, with the constant depending
only on $\beta,\alpha,L_\beta,R$, and its output satisfies
\[
 \E[f(\hat x)-\fstar]\le C_\beta\Phi_T.
\]
\end{atheorem}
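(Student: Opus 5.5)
The plan is to split the error on the pilot event $G=\{E_0\le\theta\}$ and its complement, and on $G$ to run an epoch-by-epoch contraction with \cref{prop:nh-restart}-type bounds. The complement is handled directly by \cref{lem:batch-pilot}. The guard there outputs a point with $\E[(f(\hat x)-\fstar)\mathbf 1_{G^c}]\le4UK2^{-r}$, and the choice of $r$ makes this at most $\Phi_T$. The call count is bookkeeping. Epoch $j$ uses $4mdnB_j$ calls, so the epochs use $4mdn(2^J-1)\le T-Q_{\rm pil}$ by \eqref{eq:batch-epochs}. Because $n$ is fixed, the number of rounds is $1+2nJ$. Since $n$ depends only on $\beta,\alpha,L_\beta,R$ through $\Lambda$, and $J\le\log_2 T$, this is $O(1+\log T)$.

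On $G$, \cref{lem:batch-pilot} gives $\norm{\nabla^2(f-P)}_{\op}\le\Lambda$ and $\abs{f-P}\le U$ on $B(R)$, so \cref{lem:nh-fbf-step} applies with $B_j$ frames. In that lemma $\varsigma_h^2$ is replaced by $\varsigma_{h_j}^2/B_j$, and the step condition $\mu_{j,k}\Lambda\le1/2$ holds because $k_0\ge16\Lambda/\alpha$. I then apply \cref{lem:recursion}(a) with $a=\alpha/4$, $c=2$ and $N=n$ to each epoch. With $n\ge2k_0$, the initial term $ak_0^2\Delta_1/(cn^2)$ is at most $\alpha\Delta_1/32$. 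Strong convexity gives $\Delta_1^{(j)}\le(2/\alpha)F^{(j-1)}$ for the averaged point $z^{(j-1)}$, so the gap contracts by a factor $1/16$ per epoch, up to additive terms $C(b_{h_j}^2+\varsigma_{h_j}^2/(nB_j))/\alpha$.

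By the choice of $h_j$ and the computation in Step~2 of \cref{thm:nh-upper-general}, these additive terms are $O(\Phi_{nB_j\cdot d})$ up to constants depending only on $\beta$. More precisely they are $\le C_\beta\alpha^{-1}L_\beta^{2/\beta}d^{-\tau/\beta}(d\sigma^2/(nB_j))^{p}$, which grows geometrically like $2^{-(j-1)p}$ going backwards. Unrolling $F^{(j)}\le F^{(j-1)}/16+e_j$ gives
\[
F^{(J)}\le16^{-J}F^{(0)}+\sum_{j}16^{-(J-j)}e_j\le16^{-J}F^{(0)}+C e_J ,
\]
because $e_j\le 2^{p(J-j)}e_J$ and $2^p<16$. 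Since $nB_J\ge(T-Q_{\rm pil})/(8md)\ge T/(16md)$ by \eqref{eq:batch-epochs} and $Q_{\rm pil}\le T/2$, we get $e_J\le C_\beta\Phi_T$.

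For the starting value, $z^{(0)}=x_P$ minimizes $P$ over $B(R)$, so on $G$ we have $F^{(0)}=\E[(f(x_P)-\fstar)\mathbf 1_G]\le2U$. The onset condition $2U16^{-J}\le\Phi_T$ then kills the first term. The final guard does not increase $f$ on $G$ by \cref{lem:nh-guard}.

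I expect the main obstacle to be the epoch contraction under the indicator $\mathbf 1_G$. One must check that the restart uses quantities of the form $\E[\cdot\,\mathbf 1_G]$ consistently, with $G\in\mathcal F_{k_1}$. The bound $\Delta\le(2/\alpha)F$ holds pathwise on $G$, since strong convexity is global, so this should go through. The other delicate point is keeping $\alpha k_0^2/(8n^2)\cdot(2/\alpha)$ below $1/16$ while remaining compatible with the weighted averaging. I would first try $n=\lceil2k_0\rceil$ as given and verify the constant $1/16$ directly.
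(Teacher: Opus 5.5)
Your proposal is correct and follows the paper's proof essentially step for step. It uses the split on $G$, \cref{lem:nh-fbf-step} with $B_j$ frames fed into \cref{lem:recursion}(a) with $a=\alpha/4$, $c=2$, $N=n$, the $1/16$ contraction from $n\ge2k_0$ and strong convexity, the geometric unrolling via $2^{p_\beta}<16$, the start $A_0\le2U$ from $z^{(0)}=x_P$, the guard on $G$, and \cref{lem:batch-pilot} with the choice of $r$ on $G^c$.

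There is one harmless slip. Maximality of $J$ only gives $2^{J+1}>1+(T-Q_{\rm pil})/(4mdn)$, hence $nB_J>(T-Q_{\rm pil})/(16md)\ge T/(32md)$, which is the paper's $Q_J>(T-Q_{\rm pil})/4$. It does not give $(T-Q_{\rm pil})/(8md)$, but this changes $C_\beta$ only by a factor $2^{p_\beta}$.
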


\begin{proof}
The epochs use $4mdn(2^J-1)\le T-Q_{\rm pil}$ calls, and $n$ depends only on
$\beta,\alpha,L_\beta,R$. Write $p=p_\beta$ and
$A_j=\E[(f(z^{(j)})-\fstar)\mathbf1_G]$. On $G$, $|f-P|\le U$ on $B(R)$
(\cref{lem:batch-pilot}), so $A_0\le2U$.

Consider epoch $j$. Its starting point $z^{(j-1)}\in B(R)$ is measurable
with respect to the earlier rounds, and all its steps are at most
$8/(\alpha(k_0+1))$. Hence \cref{lem:nh-fbf-step}, with $B_j$ frames, gives
\eqref{eq:recursion} with $a=\alpha/4$, $c=2$,
$\mathcal B=2b_{h_j}^2/\alpha$ and $\mathcal V=8V_j$, where
$V_j=b_{h_j}^2+\varsigma_{h_j}^2/B_j$. \Cref{lem:recursion}(a) with
$N=n$ and the convexity of $f$ give
\begin{equation}
 A_j\le\frac{\alpha k_0^2}{8n^2}
       \E[\|z^{(j-1)}-\xstar\|^2\mathbf1_G]
       +\frac{b_{h_j}^2}{\alpha}+\frac{64V_j}{\alpha n}.
 \label{eq:batch-epoch-risk}
\end{equation}
Strong convexity and $n\ge2k_0$ bound the first term by $A_{j-1}/16$. For
$Q_j=4mdnB_j$, the radius $h_j$ gives
\[
 L_\beta^2d^{-\tau}h_j^{2\beta-2}
 =\frac{d\sigma^2}{nB_jh_j^2}
 =(4m)^{p}\alpha\Phi_{Q_j},
\]
so $A_j\le A_{j-1}/16+C_\beta\Phi_{Q_j}$. Since
$\Phi_{Q_{j-1}}=2^{p}\Phi_{Q_j}$ and $2^p/16\le1/8$, induction gives
$A_J\le2U16^{-J}+C_\beta\Phi_{Q_J}$ with a new $C_\beta$. Maximality of $J$
gives $Q_J>(T-Q_{\rm pil})/4\ge T/8$, so \eqref{eq:batch-onset} yields
$A_J\le C_\beta\Phi_T$. On $G$ the guard does not increase the value
(\cref{lem:nh-guard}), and on $G^c$ \cref{lem:batch-pilot} and the choice of
$r$ bound its contribution by $\Phi_T$.
\end{proof}

\paragraph{Size of the pilot.}
For fixed $\beta,\alpha,L_\beta,\sigma,R$ and sufficiently large $K$, the
quantities $a$, $\theta$ and $\ell$ in \eqref{eq:batch-pilot-design} are
bounded above and below by positive multiples of $K^{-1/(2\gamma)}$,
$K^{-(q/\gamma+1)/2}$ and $K^{q/\gamma+1}$, respectively; these multiples
depend only on the fixed parameters. Since $\Phi_T\ge cT^{-p_\beta}$, there is
also such a constant
$C$ for which $r\le C(1+\log(2+d+T))$. Consequently,
\[
 Q_{\rm pil}\le C K^{\,2+q/\gamma}\bigl(1+\log(2+d+T)\bigr).
\]
For integer $\beta$, the right-hand side is at most another such constant
times $d^{\beta^2-1}(1+\log(2+d+T))$, and the onset
conditions \eqref{eq:batch-onset} hold once $T$ exceeds a polynomial in $d$,
up to logarithmic factors. This startup is much larger than those of
\cref{thm:nh-upper-general,thm:nh-upper-quadratic}. The leading constant
$C_\beta$ depends only on $\beta$; the startup and the number of rounds also
depend on $\alpha,L_\beta,\sigma,R$.

\subsection{{A polynomial-pilot method for convex objectives}}
\label{app:convex-no-hessian}

Let $f\in C^q(\R^d)$ be convex with $[f]_\beta\le L_\beta$ and a global minimizer
$\xstar\in B(R)$; strong convexity is not assumed. We use the pilot of
\cref{app:nh-pilot} on $B(2\bar R)$ with tolerance $1$,
$\ell=\max\{1,\lceil16\sigma^2\rceil\}$ and an odd $r\ge3$, so that
$Q_{\rm pil}=2Kr\ell$. Let $U_1=\Upsilon_1$ and $\Lambda_1$ be the numbers
\eqref{eq:nh-UL} with $\theta=1$, and
\[
 A_{\rm pil}=2U_1+2\varrho_0+2\Gamma_0\varrho_0+2\Gamma_0 .
\]

\begin{atheorem}[{Polynomial-pilot upper bound for convex objectives}]
\label{thm:convex-nh-direct}
Under the assumptions just stated, let $N\ge1$ and
\[
 h=\left(\frac{d^{1+\tau}\sigma^2}{L_\beta^2N}\right)^{1/(2\beta)},\quad
 V_h=b_h^2+\varsigma_h^2,\quad
 \mu=\min\left\{\frac1{2\Lambda_1},
                        \frac{R}{\sqrt{8NV_h}}\right\},
\]
with $b_h,\varsigma_h$ from \eqref{eq:nh-bh}. Run
\cref{alg:batched-polynomial-corrector} with this pilot, the guard level
$U_1$, the starting point $z^{(0)}=0$ and one epoch ($J=1$) with $n_1=N$,
$B_1=1$, $h_1=h$, $\mu_{1,k}=\mu$ and $\omega_{1,k}=1$. The output satisfies
\begin{align}
 \E[f(\hat x)-\fstar]
 &\le C_\beta\left\{\frac{\Lambda_1R^2}{N}
 +RL_\beta^{1/\beta}\sigma^{(\beta-1)/\beta}
 d^{(\beta-1-\tau)/(2\beta)}N^{-(\beta-1)/(2\beta)}\right\}
 +A_{\rm pil}K2^{-r}.
 \label{eq:convex-nh-direct}
\end{align}
The algorithm uses $Q_{\rm pil}+4mdN$ calls in $1+2N$ rounds.
\end{atheorem}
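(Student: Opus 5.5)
The proof splits on the pilot event $G=\{E_0\le1\}$, as in \cref{thm:nh-upper-general}. On $G$ we run a convex (non-strongly convex) analysis of the corrected method with a constant step. On $G^c$ we use the guard together with the pilot tails.

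\emph{Step 1: the event $G$.} By \cref{lem:nh-pilot} with $s=2\bar R$ and $\theta=1$, on $G$ the residual $g=f-P$ satisfies $\abs g\le U_1$ and $\norm{\nabla^2 g}_{\op}\le\Lambda_1$ on $B(R)$. Since $\mu\Lambda_1\le1/2$, we apply the first (convex) inequality of \cref{lem:nh-fbf-step} with $\alpha=0$ and constant step $\mu$:
\[
\Delta_{k+1}\le\Delta_k-2\mu F_k+4\mu Rb_h+8\mu^2V_h .
\]
Summing over $k=1,\dots,N$, using $\Delta_1\le R^2$ (because $z_1=0$ and $\xstar\in B(R)$), and using convexity of $f$ for the average $z^{(1)}$, we get
\[
\E[(f(z^{(1)})-\fstar)\mathbf 1_G]\le\frac{R^2}{2\mu N}+2Rb_h+4\mu V_h .
\]

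\emph{Step 2: the step size.} There are two cases for $\mu$. If $\mu=1/(2\Lambda_1)$, the first term is $\Lambda_1R^2/N$; moreover $4\mu V_h\le R\sqrt{2V_h/N}$, because $\mu\le R/\sqrt{8NV_h}$. If $\mu=R/\sqrt{8NV_h}$, the first and last terms together are $O(R\sqrt{V_h/N})$. In either case the bound is at most
\[
\frac{\Lambda_1R^2}{N}+C\bigl(Rb_h+R\sqrt{V_h/N}\bigr).
\]
Now substitute \eqref{eq:nh-bh}. With $X=L_\beta^2d^{-\tau}h^{2\beta-2}$, the choice of $h$ gives $X=d\sigma^2/(Nh^2)$, exactly as in Step~2 of \cref{thm:nh-upper-general}. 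Hence $b_h^2\lesssim X$ and $V_h/N\lesssim X(1+1/N)\lesssim X$. It remains to express $\sqrt X$. We have
\[
X=L_\beta^{2/\beta}d^{-\tau/\beta}(d\sigma^2/N)^{(\beta-1)/\beta},
\]
so
\[
\sqrt X=L_\beta^{1/\beta}\sigma^{(\beta-1)/\beta}d^{(\beta-1-\tau)/(2\beta)}N^{-(\beta-1)/(2\beta)}.
\]
Multiplying by $R$ gives the second term of \eqref{eq:convex-nh-direct}. On $G$ the guard at level $U_1$ does not increase $f$, by \cref{lem:nh-guard}.

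\emph{Step 3: the complement.} On $G^c$, \cref{lem:nh-guard} together with \eqref{eq:nh-fit-error} gives, pathwise,
\[
f(\hat x)-\fstar\le2U_1+2\varrho_0+2\Gamma_0(\varrho_0+E_0).
\]
Since $E_0>1$ on $G^c$, this is at most $A_{\rm pil}E_0$. By \eqref{eq:nh-pilot-tails} with $J=2K$, $u=1$ and $\sigma^2/\ell\le1/16$, we have $\E[E_0\mathbf 1_{G^c}]\le4K2^{r}16^{-(r+1)/2}\le K2^{-r}$. Therefore the contribution of $G^c$ is at most $A_{\rm pil}K2^{-r}$.

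\emph{Counting.} The pilot uses $Q_{\rm pil}=2Kr\ell$ calls in one round, and each iteration makes two stencil rounds of $2md$ calls. This gives $Q_{\rm pil}+4mdN$ calls in $1+2N$ rounds.

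The main obstacle is checking that \cref{lem:nh-fbf-step} applies with $\alpha=0$. The first display of that lemma requires only $\nabla^2 f\succeq0$, which holds, together with measurability of the centers. The step-size arithmetic in Step~2 is routine.
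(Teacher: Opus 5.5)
Your proposal is correct and follows the paper's proof essentially step by step: the split on $G=\{E_0\le1\}$, \cref{lem:nh-fbf-step} with $\alpha=0$ and a constant step, the identity $X=d\sigma^2/(Nh^2)$, and the guard combined with $\E[E_0\mathbf 1_{G^c}]\le K2^{-r}$ on the complement. The only difference is cosmetic: you control the step-size terms by a case split on $\mu$, whereas the paper uses the single bound $1/\mu\le2\Lambda_1+\sqrt{8NV_h}/R$.
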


\begin{proof}
Let $G=\{E_0\le1\}$, $\Delta_k=\E[\|z_k-\xstar\|^2\mathbf1_G]$ and
$F_k=\E[(f(y_k)-\fstar)\mathbf1_G]$. On $G$ the residual Hessian is at most
$\Lambda_1$ on $B(R)$ (\cref{lem:nh-pilot}) and $\mu\Lambda_1\le1/2$, so
\cref{lem:nh-fbf-step} with $\alpha=0$ gives
$\Delta_{k+1}\le\Delta_k-2\mu F_k+4\mu Rb_h+8\mu^2V_h$. Summing over
$k\le N$, and using $\Delta_1\le R^2$ and convexity, we obtain for the
average $\bar y=z^{(1)}$ of the $y_k$
\[
 \E[(f(\bar y)-\fstar)\mathbf1_G]
 \le\frac{R^2}{2\mu N}+2Rb_h+4\mu V_h
 \le\frac{\Lambda_1R^2}{N}
       +2\sqrt2R\sqrt{V_h/N}+2Rb_h .
\]
The radius gives $X:=L_\beta^2d^{-\tau}h^{2\beta-2}=d\sigma^2/(Nh^2)$, hence
$b_h^2=\bfC^2X$, $V_h/N\le(\bfC^2+\Cf+C_\sigma)X$ and
$\sqrt X=L_\beta^{1/\beta}\sigma^{(\beta-1)/\beta}d^{(\beta-1-\tau)/(2\beta)}
N^{-(\beta-1)/(2\beta)}$. On $G$ the guard does not increase the objective.
On $G^c$, \cref{lem:nh-guard} and
\eqref{eq:nh-fit-error} bound its gap by $A_{\rm pil}E_0$, and
\eqref{eq:nh-pilot-tails} gives $\E[E_0\mathbf1_{G^c}]\le K2^{-r}$.
\end{proof}

With the whole budget, let
\[
 S_T=RL_\beta^{1/\beta}\sigma^{(\beta-1)/\beta}
 d^{\,1-(2+\tau)/(2\beta)}T^{-(\beta-1)/(2\beta)},
\]
let $r$ be the smallest odd integer $r\ge3$ with $A_{\rm pil}K2^{-r}\le S_T$,
and $N=\lfloor(T-Q_{\rm pil})/(4md)\rfloor$. If $Q_{\rm pil}\le T/2$,
$T\ge16md$ and $\Lambda_1R^2d/T\le S_T$, then $T/4\le4mdN\le T-Q_{\rm pil}$,
and \eqref{eq:convex-nh-direct} gives $\E[f(\hat x)-\fstar]\le C_\beta S_T$
with at most $T$ calls.

\paragraph{Ellipsoidal norms.}
Let $\Sigma$ be a known invertible matrix and $\|x\|_\Sigma=\|\Sigma x\|$.
\Cref{thm:nh-batched,thm:convex-nh-direct} apply to
$\widetilde f(z)=f(\Sigma^{-1}z)$, with each query $z$ made at
$\Sigma^{-1}z$. Function values and their errors are unchanged, so these
results hold, with the same calls, rounds and noise levels, when strong
convexity, the localization of $\xstar$, the H\"older seminorms
($[f]_{\beta,\op,\Sigma}=[\widetilde f]_{\beta,\op}$ and
$[f]_{\beta,\HS,\Sigma}=[\widetilde f]_{\beta,\HS}$) and the Lipschitz
constant of $\eta$ are
measured in $\|\cdot\|_\Sigma$ and its dual norm. No factor depending on the
condition number of $\Sigma$ appears.

\section{Convex objectives: upper bounds}
\label{app:convex-upper}

This appendix proves the upper bounds of \cref{sec:convex} for minimization
over a compact convex set $Q$, where the minimizer may lie on the boundary
with a nonzero gradient; the class $\Fcvx(\beta,L_\beta;L,R)$ is the case $Q=B(R)$
(\cref{def:cvx-Q}). The gradient-scale bound, \cref{cor:direct-convex},
rests on projected methods with a bounded bias
(\cref{app:convex-first-order,app:convex-gradient-bias}). The value-scale
bound, \cref{thm:value-analysis}, rests on descent on the ball average of $f$
(\cref{app:convex-value}). \Cref{app:convex-proof-main} simplifies their
constants. \Cref{app:convex-matching} proves the upper bound of
\cref{cor:natural-curvature}, whose lower bound is proved in
\cref{app:convex-lower}, and \cref{app:convex-lipschitz} gives the
achievable level of Lipschitz noise.

\paragraph{Setting.}
For a nonempty closed convex set $K\subset\R^d$, $\Proj_K$ is the Euclidean
projection onto $K$, and for $r\ge0$
\[
K_r=K+rB(1)=\{x\in\R^d:\ \operatorname{dist}(x,K)\le r\} ;
\]
this set is closed and convex, and $K_r+sB(1)=K_{r+s}$ for $r,s\ge0$. The random
vector $W$ is uniform on $B(1)$ and $U$ is uniform on $\Sph$. For a function
$g$ defined near $x+hB(1)$ we write $g_h(x)=\E g(x+hW)$. With
$\gamma=\beta-q$ as in \cref{sec:setup}, we put
$c_\gamma=1/((1+\gamma)(2+\gamma))\le1/2$. Rounds and noise obey
\cref{def:oracle}.

\begin{adefinition}[Constrained problems]
\label{def:cvx-Q}
Let $Q\subset\R^d$ be nonempty, closed and convex with diameter
$D=\sup_{x,y\in Q}\norm{x-y}\in(0,\infty)$; thus $Q$ is compact. For a closed
convex set $K\subset\R^d$, let $\Fcvx^K(\beta,L_\beta;L)$ be the set of real
functions $f$ that are defined and $q$ times continuously differentiable on an
open set containing $K$ and satisfy
\[
0\preceq\nabla^2f(x)\preceq LI\quad(x\in K),\qquad
\norm{D^qf(x)-D^qf(y)}\le L_\beta\norm{x-y}^\gamma\quad(x,y\in K),
\]
where the norm of $D^qf(x)-D^qf(y)$ is $\norm\cdot_{\op}$ ($\tau=0$) or
$\norm\cdot_{\HS}$ ($\tau=\tau_\beta$). A \emph{constrained problem} consists
of $Q$, a starting point $x_0\in Q$, a number $R\in(0,D]$ and an objective $f$
with a minimizer $u\in\argmin_Qf$ such that $\norm{x_0-u}\le R$. Each result
below states the smoothness of $f$ near $Q$ that it uses. A method knows $Q$
through $\Proj_Q$, and it knows $x_0$, $R$, $D$ and the parameters of the
class. All its queries lie in $Q_h$, where $h$ is the radius of its stencil,
and it outputs a point $\hat x\in Q$; its error is $\E[f(\hat x)-\min_Qf]$.
\end{adefinition}

By integration along segments, the Hessian inequalities imply that $f$ is
convex with an $L$-Lipschitz gradient on every convex set on which they hold.
If the
projection onto $Q\cap B(x_0,R)$ is available, $Q$ may be replaced by this
intersection and $D$ by its diameter, which is at most $\min\{D,2R\}$.

\paragraph{Known parameters.}
The methods of this appendix know $\Proj_Q$, $x_0$, $\beta$, $L$, $R$, $D$,
$\sigma$, $d$, the target accuracy $\eps$ and the constants of
\cref{prop:unified-estimator} (explicit for $m=1$). They know $L_\beta$, except for
the method of \cref{thm:value-analysis}(b). They do not use $M$,
$\delta$, $f$ or $\eta$; the numbers $\bar M$ below are design values.
We assume $L>0$ and $L_\beta>0$: an affine objective satisfies the assumptions with
every $L>0$, and if $L_\beta=0$, every statement holds with any $L_\beta'>0$ in place of
$L_\beta$. The case $\sigma=0$ is included; the terms containing $\sigma$ then
vanish.

\paragraph{The estimate.}
The next lemma is \cref{prop:unified-estimator} for an objective that is smooth
only near the set that contains the centers.

\begin{alemma}[The estimate near a set]
\label{lem:cvx-local-estimate}
Let $K\subset\R^d$ be nonempty, closed and convex, let $h>0$, and put
$\vartheta=2$ if the H\"older bound is measured in the Hilbert--Schmidt norm
and $q$ is even, and $\vartheta=1$ otherwise. Let $f$ be $q$ times
continuously differentiable on an open set containing $K_{\vartheta h}$, with
$\norm{D^qf(x)-D^qf(y)}\le L_\beta\norm{x-y}^\gamma$ for all
$x,y\in K_{\vartheta h}$, and let the noise be admissible at levels
$(M,\sigma,\delta)$. Consider round $k$ of \cref{alg:stencil} with
$B\ge1$ frames, the deterministic radius $h$ and an
$\mathcal F_{k-1}$-measurable center $z$ with values in $K$. Then all queries
of the round lie in $K_h$, and the conclusions (i)--(iv) of
\cref{prop:unified-estimator} hold. In particular
\begin{equation}
\label{eq:cvx-estimate}
\gbar=\nabla f(z)+b(z)+\zeta+\Delta ,
\end{equation}
where $b(z)$ is $\mathcal F_{k-1}$-measurable, $\zeta+\Delta$ is
$\mathcal F_k$-measurable, $\E[\zeta\mid\mathcal F_{k-1}]=0$,
\[
\norm{b(z)}\le\bfC L_\beta d^{-\tau/2}h^{\beta-1}+A_1M,\qquad
\norm\Delta\le\frac{\sqrt d\,S_1\delta}h\ \text{almost surely},
\]
and
\begin{equation}
\label{eq:cvx-estimate-var}
\E\bigl[\norm\zeta^2\bigm|\mathcal F_{k-1}\bigr]\le\frac1B\Bigl(
\Cf L_\beta^2d^{-\tau}h^{2\beta-2}+C_\eta M^2
+C_\sigma\frac{d\sigma^2}{h^2}\Bigr).
\end{equation}
The bound \eqref{eq:cvx-estimate-var} also holds for $d=1$.
\end{alemma}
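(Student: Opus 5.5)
The plan is to reduce \cref{lem:cvx-local-estimate} to \cref{prop:unified-estimator} by checking that every ingredient of that proof only evaluates $f$, its derivatives, or $\eta$ at points of $K_h$ (respectively $K_{2h}$ in the even-$q$ Hilbert--Schmidt case). First, the localization of the queries: each query is $z\pm ht_jU^{(\ell)}_i$ with $z\in K$, $t_j\le1$ and $\norm{U_i^{(\ell)}}=1$, so it lies in $z+hB(1)\subseteq K_h$. Since $z$ is $\mathcal F_{k-1}$-measurable and $K$-valued, every conditional statement in the proof of \cref{prop:unified-estimator} can be made pointwise in $z\in K$.

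Next I would reread the exact-value lemmas for a fixed center $z\in K$ and note where smoothness is used. \Cref{lem:exact}, \cref{lem:diff-cancel}, \cref{lem:tensor-means} and \cref{prop:noiseless} use Taylor remainders (\cref{lem:taylor}) along segments from $z$ to $z\pm ht_ju$ with $\norm u\le1$, and Pizzetti expansions (\cref{lem:pizzetti}) on balls $B(z,ht_j)$; all of these lie in $B(z,h)\subseteq K_h$, which is convex, so the integral forms of the remainders only need the H\"older bound on $K_h$. Likewise the mean identity \cref{lem:mean} and the ball-average gradient formula only involve $f$ on $B(z,h)$. The bias \cref{lem:bias}(a),(b) therefore goes through verbatim. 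The Lipschitz part \cref{lem:lipschitz-part} uses only the global Lipschitz property of $\eta$, and the noise, cross-term and adversarial parts of the proof of \cref{prop:unified-estimator} are unaffected by the domain of $f$; so (ii)--(iv) follow once (i) is established, including the $d=1$ variance bound (here $Y^{(\ell)}=0$).

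The one step I expect to need care is \cref{lem:bias}(c), the even-$q$ Hilbert--Schmidt bias, whose proof applies \cref{lem:exact} to $f$ and to $\mathcal S_hf$ along the line $z+se$ at radius $h'=hd^{-1/4}\le h$, and uses the H\"older estimate \eqref{eq:Eh-holder} for $E_h$ at centers $z\pm h't_je$. Evaluating $\mathcal S_hf$ at such a center averages $f$ over $B(z\pm h't_je,h)\subseteq B(z,2h)\subseteq K_{2h}$, and \eqref{eq:Eh-holder} compares $\Delta^kf$ at points of that ball; the bound $[\mathcal S_hf]_{\beta,\op}\le A_1L_\beta$ likewise needs the H\"older bound on $K_{2h}$ (take differences of translates inside this convex set). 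This is exactly why $\vartheta=2$ there, and with the hypothesis on $K_{2h}$ the original computation applies unchanged. Assembling the bias bound with \cref{lem:lipschitz-part}(b) gives (i), and the statement follows.
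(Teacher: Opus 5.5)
Your proposal is correct and follows the paper's proof. All queries lie in $z+hB(1)\subseteq K_h$. The Taylor, Pizzetti and Poincar\'e steps of \cref{app:estimator} only use $B(z,h)$, except the extra difference at radius $h'=hd^{-1/4}$ in \cref{lem:bias}(c), which needs $z+(h+h')B(1)\subseteq K_{2h}$. The noise part of \cref{prop:unified-estimator} is unchanged, including the stronger $d=1$ bound.

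The one point the paper states explicitly that you leave implicit is the $\mathcal F_{k-1}$-measurability of $b(z)$. It follows from the continuity of the surrogate gradient on $K$, obtained by dominated convergence on compact subsets.
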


\begin{proof}
All queries lie in $K_h$. The Taylor, Pizzetti and Poincar\'e arguments
in \cref{app:estimator} use only the ball $z+hB(1)$, except for the
Hilbert--Schmidt bias with even $q$: its additional difference of radius
$h'=hd^{-1/4}$ uses $z+(h+h')B(1)\subseteq K_{2h}$. So the stated
neighborhood contains every point used by those proofs.
The surrogate gradient is continuous on $K$ by dominated convergence on
compact subsets, so its value at the predictable center is measurable.
The noise part of the proof of \cref{prop:unified-estimator} is unchanged,
and its stronger $d=1$ variance bound implies \eqref{eq:cvx-estimate-var}.
\end{proof}

For $m=1$, that is, for $\beta\in(2,3]$, the stencil is the central
difference: $t_1=a_1=1$, and $\mathcal S_hf=f_h$, $\mathcal S_h\eta=\eta_h$ in
\eqref{eq:mean}. Hence
\begin{equation}
\label{eq:cvx-estimate-m1}
\nabla f(z)+b(z)=\nabla f_h(z)+\nabla\eta_h(z),\qquad
\norm{\nabla\eta_h(z)}\le M ,
\end{equation}
and $S_1=S_2=A_1=1$, $C_\eta=4$ and $C_\sigma=(1+2^{-1/2})^2$. In the operator
geometry also $\bfC=c_1=1$ and $\Cf=4c_1^2=4$
(\cref{app:estimator-unified} and \eqref{eq:stencil-constants}).

\subsection{Projected methods with a bounded bias}
\label{app:convex-first-order}

In this subsection $Q$ is as in \cref{def:cvx-Q}, and $\varphi$ is
differentiable on an open set containing $Q$, convex on $Q$, with
$\norm{\nabla\varphi(x)-\nabla\varphi(y)}\le L\norm{x-y}$ for $x,y\in Q$.
Let $(\mathcal F_k)_{k\ge0}$ be a filtration; in the applications it is the
filtration of \cref{def:oracle}, and step $k$ is round $k$. In step $k$ an
$\mathcal F_{k-1}$-measurable point $z\in Q$, the \emph{center}, is submitted
to an oracle, which returns an $\mathcal F_k$-measurable vector
\begin{equation}
\label{eq:cvx-oracle}
g_k=\nabla\varphi(z)+b_k+\zeta_k,\qquad
\norm{b_k}\le b\ \text{a.s.},\quad
\E[\zeta_k\mid\mathcal F_{k-1}]=0,\quad
\E[\norm{\zeta_k}^2\mid\mathcal F_{k-1}]\le v_B^2 .
\end{equation}
In the applications $b_k$ contains the adversarial part of the estimate,
which may depend on $\zeta_k$; so $b_k$ and $\zeta_k$ are not assumed to be
$\mathcal F_k$-measurable. The comparator $u\in Q$ is an arbitrary fixed
point. In the applications it minimizes $f$ on $Q$, possibly at a boundary
point with $\nabla f(u)\ne0$, and in \cref{thm:value-analysis} it need not
minimize $\varphi=f_h$. For $x,y\in Q$ the segment $[x,y]$ lies in $Q$, so
integration of $\nabla\varphi$ along it and convexity give
\begin{equation}
\label{eq:cvx-descent}
\varphi(y)\le\varphi(x)+\inner{\nabla\varphi(x)}{y-x}+\frac L2\norm{y-x}^2,
\qquad
\varphi(y)\ge\varphi(x)+\inner{\nabla\varphi(x)}{y-x}.
\end{equation}

\begin{alemma}[Projected stochastic gradient]
\label{lem:convex-sgd}
Let $z_1=x_0\in Q$ and $0<\nu\le1/(2L)$. For $k=1,\dots,N$ submit the center
$z_k$, and let $z_{k+1}=\Proj_Q(z_k-\nu g_k)$. Let
$\bar z_N=\frac1N\sum_{k=2}^{N+1}z_k$. For every $u\in Q$,
\begin{equation}
\label{eq:cvx-sgd}
\E[\varphi(\bar z_N)-\varphi(u)]\le\frac{\norm{x_0-u}^2}{2\nu N}+\nu v_B^2
+Db .
\end{equation}
In particular, let $\norm{x_0-u}\le R$, $\bar v_B\ge v_B$ and
$\nu=\min\{1/(2L),R/(\bar v_B\sqrt{2N})\}$, with $\nu=1/(2L)$ if $\bar v_B=0$.
Then
\begin{equation}
\label{eq:cvx-sgd-tuned}
\E[\varphi(\bar z_N)-\varphi(u)]\le\frac{LR^2}{N}
+\frac{\sqrt2\,R\bar v_B}{\sqrt N}+Db .
\end{equation}
\end{alemma}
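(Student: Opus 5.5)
The plan is the standard one-step analysis of projected stochastic gradient with a biased oracle, followed by telescoping. Fix $u\in Q$ and a step $k$. Since $u\in Q$ and $\Proj_Q$ is nonexpansive, the projection inequality gives
\[
\inner{g_k}{z_{k+1}-u}\le\frac1{2\nu}\bigl(\norm{z_k-u}^2-\norm{z_{k+1}-u}^2-\norm{z_{k+1}-z_k}^2\bigr),
\]
because $z_{k+1}$ minimizes $\inner{g_k}{x}+\norm{x-z_k}^2/(2\nu)$ over $Q$. We evaluate at $z_{k+1}$ rather than at $z_k$, so that the smoothness term can absorb the step. The upper inequality in \eqref{eq:cvx-descent} between $z_k$ and $z_{k+1}$ and the lower one between $z_k$ and $u$ give
\[
\varphi(z_{k+1})-\varphi(u)\le\inner{\nabla\varphi(z_k)}{z_{k+1}-u}+\frac L2\norm{z_{k+1}-z_k}^2 .
\]
We then write $\nabla\varphi(z_k)=g_k-b_k-\zeta_k$ and combine the two displays.

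The error terms are handled as follows.

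\emph{Bias.} We bound $-\inner{b_k}{z_{k+1}-u}\le Db$, since both points lie in $Q$ and $\norm{b_k}\le b$.

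\emph{Noise.} We split $-\inner{\zeta_k}{z_{k+1}-u}=-\inner{\zeta_k}{z_k-u}-\inner{\zeta_k}{z_{k+1}-z_k}$. The first part has zero expectation, because $z_k-u$ is bounded and $\mathcal F_{k-1}$-measurable and $\E[\zeta_k\mid\mathcal F_{k-1}]=0$. This works even though $\zeta_k$ itself need not be $\mathcal F_k$-measurable, since only its conditional mean given $\mathcal F_{k-1}$ is used. For the second part, Young's inequality gives $\norm{\zeta_k}\norm{z_{k+1}-z_k}\le\nu\norm{\zeta_k}^2+\norm{z_{k+1}-z_k}^2/(4\nu)$.

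\emph{Quadratic terms.} The squared-step terms have total coefficient $-\frac1{2\nu}+\frac L2+\frac1{4\nu}\le0$, because $\nu\le1/(2L)$ gives $L/2\le1/(4\nu)$. They can therefore be dropped.

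Taking expectations, each step then satisfies
\[
\E[\varphi(z_{k+1})-\varphi(u)]\le\frac1{2\nu}\E\bigl(\norm{z_k-u}^2-\norm{z_{k+1}-u}^2\bigr)+\nu v_B^2+Db .
\]
Summing over $k=1,\dots,N$ telescopes the distances down to $\norm{x_0-u}^2/(2\nu)$. Dividing by $N$ and applying Jensen's inequality to the convex function $\varphi$ on $Q$ at $\bar z_N=\frac1N\sum_{k=2}^{N+1}z_k\in Q$ gives \eqref{eq:cvx-sgd}.

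For \eqref{eq:cvx-sgd-tuned}, insert $\norm{x_0-u}\le R$, $v_B\le\bar v_B$ and the stated $\nu$. If $\nu=R/(\bar v_B\sqrt{2N})$, the two leading terms each equal $R\bar v_B/\sqrt{2N}$, which sum to $\sqrt2R\bar v_B/\sqrt N$. If instead $\nu=1/(2L)$, the first term is $LR^2/N$, and $\nu\bar v_B^2\le R\bar v_B/\sqrt{2N}$ because $\nu\le R/(\bar v_B\sqrt{2N})$. In both cases the sum is bounded by the right-hand side of \eqref{eq:cvx-sgd-tuned}.

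The only delicate point is the noise cross term. It has to be split at the predictable center $z_k$, not at $z_{k+1}$, and the resulting Young term must fit inside the slack from $\nu\le1/(2L)$; the coefficient check above confirms that it does.
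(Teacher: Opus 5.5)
Your proof is correct and follows the paper's argument essentially line by line. Both use the three-point projection inequality at $z_{k+1}$, the bound from \eqref{eq:cvx-descent}, the bias bounded by $Db$, and the noise split at the predictable center $z_k$, with the Young term absorbed by the slack from $\nu\le1/(2L)$; both then telescope, apply Jensen, and tune the step the same way (the paper bounds $1/\nu\le2L+\bar v_B\sqrt{2N}/R$ instead of splitting into cases). Two small wording points: the three-point inequality comes from the variational characterization of $\Proj_Q$ (your minimization remark), not from nonexpansiveness, and in the tuned case the two terms are each at most, not equal to, $R\bar v_B/\sqrt{2N}$.
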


\begin{proof}
This is the standard analysis of projected stochastic gradient descent for
smooth objectives, with the bias kept. By induction, $z_k\in Q$ is
$\mathcal F_{k-1}$-measurable. Fix $k$. The projection inequality
$\inner{z_k-\nu g_k-z_{k+1}}{u-z_{k+1}}\le0$ gives
\begin{equation}
\label{eq:cvx-three-point}
\nu\inner{g_k}{z_{k+1}-u}\le\frac12\bigl(\norm{z_k-u}^2
-\norm{z_{k+1}-u}^2-\norm{z_{k+1}-z_k}^2\bigr).
\end{equation}
By \eqref{eq:cvx-descent} for the pairs $(z_k,z_{k+1})$ and $(z_k,u)$,
$\varphi(z_{k+1})-\varphi(u)\le\inner{\nabla\varphi(z_k)}{z_{k+1}-u}
+\frac L2\norm{z_{k+1}-z_k}^2$. Write $\nabla\varphi(z_k)=g_k-b_k-\zeta_k$,
bound $\inner{g_k}{z_{k+1}-u}$ by \eqref{eq:cvx-three-point},
$-\inner{b_k}{z_{k+1}-u}$ by $bD$, and
$-\inner{\zeta_k}{z_{k+1}-z_k}$ by
$\nu\norm{\zeta_k}^2+\norm{z_{k+1}-z_k}^2/(4\nu)$. Since $\nu\le1/(2L)$,
the terms with $\norm{z_{k+1}-z_k}^2$ have a nonpositive total coefficient,
and
\begin{equation}
\label{eq:cvx-sgd-step}
\varphi(z_{k+1})-\varphi(u)\le
\frac{\norm{z_k-u}^2-\norm{z_{k+1}-u}^2}{2\nu}
+\nu\norm{\zeta_k}^2+bD-\inner{\zeta_k}{z_k-u}.
\end{equation}
The last term has mean zero, because $z_k-u$ is bounded and
$\mathcal F_{k-1}$-measurable. Summing over $k=1,\dots,N$, taking
expectations and using the convexity of $\varphi$ on $Q$ proves
\eqref{eq:cvx-sgd}. For the tuned step, $1/\nu\le2L+\bar v_B\sqrt{2N}/R$
gives $R^2/(2\nu N)\le LR^2/N+R\bar v_B/\sqrt{2N}$, and
$\nu\bar v_B^2\le R\bar v_B/\sqrt{2N}$.
\end{proof}

The accelerated method below is a variant of the AC-SA method of
\citet{lan2012optimal} for convex objectives, run with a biased oracle.

\begin{alemma}[Accelerated projected method]
\label{lem:convex-agd}
Let $\Gamma_0=0$, $\Gamma_k=k(k+1)/2$ and $\rho_k=k/\Gamma_k=2/(k+1)$ for
$k\ge1$. Let $v_0=x_0\in Q$ and $0<\nu_0\le1/(4L)$. For $k=1,\dots,N$ submit
the center $z_k$, where
\[
z_k=(1-\rho_k)x_{k-1}+\rho_kv_{k-1},\qquad
v_k=\Proj_Q(v_{k-1}-\nu_0k\,g_k),\qquad
x_k=(1-\rho_k)x_{k-1}+\rho_kv_k .
\]
Then for every $u\in Q$,
\begin{equation}
\label{eq:cvx-agd}
\E[\varphi(x_N)-\varphi(u)]\le\frac{\norm{x_0-u}^2}{\nu_0N(N+1)}
+\frac{(2N+1)\,\nu_0v_B^2}{3}+Db .
\end{equation}
In particular, let $\norm{x_0-u}\le R$, $\bar v_B\ge v_B$ and
$\nu_0=\min\{1/(4L),R/(\bar v_BN^{3/2})\}$, with $\nu_0=1/(4L)$ if
$\bar v_B=0$. Then
\begin{equation}
\label{eq:cvx-agd-tuned}
\E[\varphi(x_N)-\varphi(u)]\le\frac{4LR^2}{N^2}+\frac{2R\bar v_B}{\sqrt N}
+Db .
\end{equation}
\end{alemma}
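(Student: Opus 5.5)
We follow the estimate-sequence analysis of AC-SA \citep{lan2012optimal}, keeping the bias term as in \cref{lem:convex-sgd}. First, by induction on $k$, $v_k,x_k\in Q$: $x_k$ is a convex combination of points of $Q$. Moreover $v_{k-1}$, $x_{k-1}$ and $z_k$ are $\mathcal F_{k-1}$-measurable and bounded, so submitting $z_k$ is legitimate. We use $\zeta_k$ only through $\E\inner{\zeta_k}{w}=0$ for bounded $\mathcal F_{k-1}$-measurable $w$ and through the conditional second-moment bound. This matters because $b_k$ and $\zeta_k$ need not be $\mathcal F_k$-measurable separately.

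\emph{One step.} We have $x_k-z_k=\rho_k(v_k-v_{k-1})$. The upper inequality in \eqref{eq:cvx-descent} gives
\[
\varphi(x_k)\le\varphi(z_k)+\inner{\nabla\varphi(z_k)}{x_k-z_k}+\tfrac L2\rho_k^2\norm{v_k-v_{k-1}}^2 .
\]
Split $x_k-z_k=(1-\rho_k)(x_{k-1}-z_k)+\rho_k(v_k-z_k)$ and write $v_k-z_k=(u-z_k)+(v_k-u)$. The lower inequality in \eqref{eq:cvx-descent}, applied at $x_{k-1}$ and at $u$, then yields
\[
\varphi(x_k)\le(1-\rho_k)\varphi(x_{k-1})+\rho_k\varphi(u)+\rho_k\inner{\nabla\varphi(z_k)}{v_k-u}+\tfrac L2\rho_k^2\norm{v_k-v_{k-1}}^2 .
\]
Multiply by $\Gamma_k$ and use $\Gamma_k\rho_k=k$, $\Gamma_k(1-\rho_k)=\Gamma_{k-1}$ and $\Gamma_k\rho_k^2=2k/(k+1)\le2$.

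Next insert $\nabla\varphi(z_k)=g_k-b_k-\zeta_k$. The projection inequality \eqref{eq:cvx-three-point}, with step $\nu_0k$ and base point $v_{k-1}$, gives
\[
k\inner{g_k}{v_k-u}\le\frac1{2\nu_0}\bigl(\norm{v_{k-1}-u}^2-\norm{v_k-u}^2-\norm{v_k-v_{k-1}}^2\bigr).
\]
The bias term satisfies $-k\inner{b_k}{v_k-u}\le kbD$. For the noise, split $-k\inner{\zeta_k}{v_k-u}=-k\inner{\zeta_k}{v_{k-1}-u}-k\inner{\zeta_k}{v_k-v_{k-1}}$. The first part has mean zero. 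The second is at most $\nu_0k^2\norm{\zeta_k}^2+\norm{v_k-v_{k-1}}^2/(4\nu_0)$ by Young's inequality. The total coefficient of $\norm{v_k-v_{k-1}}^2$ is then at most $L-\frac1{2\nu_0}+\frac1{4\nu_0}\le0$, because $\nu_0\le1/(4L)$.

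\emph{Summation.} Taking expectations gives
\[
\Gamma_k\E[\varphi(x_k)-\varphi(u)]\le\Gamma_{k-1}\E[\varphi(x_{k-1})-\varphi(u)]+\frac{\E\norm{v_{k-1}-u}^2-\E\norm{v_k-u}^2}{2\nu_0}+\nu_0k^2v_B^2+kbD .
\]
The first term vanishes at $k=1$ since $\Gamma_0=0$. Telescoping from $k=1$ to $N$ with $v_0=x_0$, dividing by $\Gamma_N$, and using $\sum_kk^2/\Gamma_N=(2N+1)/3$ and $\sum_kk/\Gamma_N=1$ gives \eqref{eq:cvx-agd}.

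\emph{Tuned step.} Since $1/\nu_0\le4L+\bar v_BN^{3/2}/R$, the first term of \eqref{eq:cvx-agd} is at most $4LR^2/N^2+R\bar v_B/\sqrt N$. Also $\nu_0\le R/(\bar v_BN^{3/2})$, so the variance term is at most $R\bar v_B(2N+1)/(3N^{3/2})\le R\bar v_B/\sqrt N$. Together these give \eqref{eq:cvx-agd-tuned}.

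The only delicate point is the noise cross term. One must pair $\zeta_k$ with the predictable vector $v_{k-1}-u$ rather than $v_k-u$, and absorb the remainder into the negative $\norm{v_k-v_{k-1}}^2$ term. This is exactly what forces $\nu_0\le1/(4L)$ instead of $1/(2L)$.
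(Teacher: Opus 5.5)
Your proposal is correct and follows the paper's proof essentially line by line. Both arguments combine smoothness and convexity at $z_k$, the projection inequality for the $v$-step, the bound $bD$ on the bias, and Young's inequality on $\inner{\zeta_k}{v_k-v_{k-1}}$ with the mean-zero predictable part $\inner{\zeta_k}{v_{k-1}-u}$, and both then telescope after multiplying by $\Gamma_k$. The only cosmetic difference is that the paper checks the sign of the $\norm{v_k-v_{k-1}}^2$ coefficient before multiplying by $\Gamma_k$, via $2L\rho_k\nu_0k\le1$, whereas you check it afterwards, via $\Gamma_k\rho_k^2\le2$.
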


\begin{proof}
All iterates lie in $Q$, and $z_k,v_{k-1},x_{k-1}$ are
$\mathcal F_{k-1}$-measurable.
Write $\nu_k=\nu_0k$. Smoothness, convexity, and
$x_k-z_k=\rho_k(v_k-v_{k-1})$ give
\[
\varphi(x_k)-\varphi(u)\le(1-\rho_k)(\varphi(x_{k-1})-\varphi(u))
+\rho_k\inner{\nabla\varphi(z_k)}{v_k-u}
+\tfrac12L\rho_k^2\norm{v_k-v_{k-1}}^2.
\]
Split the oracle error as in the proof of \cref{lem:convex-sgd}: apply the
projection inequality to $\inner{g_k}{v_k-u}$, bound the bias term by $bD$,
and use $-\inner{\zeta_k}{v_k-v_{k-1}}\le\nu_k\norm{\zeta_k}^2
+\norm{v_k-v_{k-1}}^2/(4\nu_k)$.
The remaining coefficient of $\norm{v_k-v_{k-1}}^2$ is nonpositive because
$2L\rho_k\nu_k\le1$. Multiplication by $\Gamma_k$ yields
\begin{align*}
\Gamma_k(\varphi(x_k)-\varphi(u))\le{}&
\Gamma_{k-1}(\varphi(x_{k-1})-\varphi(u))
+\frac{\norm{v_{k-1}-u}^2-\norm{v_k-u}^2}{2\nu_0}\\
&+\nu_0k^2\norm{\zeta_k}^2+kbD
-k\inner{\zeta_k}{v_{k-1}-u}.
\end{align*}
The last term has mean zero. Summing, dropping the final nonpositive
distance, and using $\sum k^2=N(N+1)(2N+1)/6$ proves
\eqref{eq:cvx-agd}. Finally
$1/\nu_0\le4L+\bar v_BN^{3/2}/R$, $(2N+1)/3\le N$, and
$N\nu_0v_B^2\le R\bar v_B/\sqrt N$ give
\eqref{eq:cvx-agd-tuned}.
\end{proof}

\begin{aremark}[The bias does not accumulate]
\label{rem:no-accumulation}
In \cref{lem:convex-agd} the quadratic upper model uses the exact gradient
$\nabla\varphi(z_k)$, and the bias enters only through
$\rho_k\inner{b_k}{u-v_k}\le\rho_kbD$, because $v_k$ is projected onto the
bounded set $Q$. So the accelerated method pays the same bias term $Db$ as
the plain method; for an unbounded feasible set the argument gives no bound.
Treated as an inexact oracle in the sense of \citet{Devolder2014}, a
gradient error of norm $b$ on a set of diameter $D$ has accuracy of order
$bD$, and the fast gradient method accumulates it into a term of order
$NDb$. A bound of the same form as \eqref{eq:cvx-agd}, with noise weighted by
$k/\Gamma_N$ and multiplied by the diameter, is Proposition~3.5 of
\citet{Cohen2018}.
\end{aremark}

\subsection{The stencil as a biased gradient}
\label{app:convex-gradient-bias}

The next corollary uses only \cref{lem:cvx-local-estimate} and the lemmas
above. Its accelerated form is \cref{alg:projected-convex}, which with $m=1$
is also the method of \cref{thm:value-analysis}(d).

\begin{algorithm}[t]
\caption{Projected acceleration with batched scalar stencils}
\label{alg:projected-convex}
\begin{algorithmic}[1]
\Require $Q$, $\Proj_Q$, $x_0\in Q$; $h,m,N,B,\nu_0$ from \cref{cor:direct-convex}(b), or $m=1$ and $h,N,B,\nu_0$ from \cref{thm:value-analysis}(d)
\State $v_0\gets x_0$
\For{$k=1,\ldots,N$}
  \State $\rho_k\gets2/(k+1)$; $z_k\gets(1-\rho_k)x_{k-1}+\rho_kv_{k-1}$
  \State Acquire $\gbar_k$ at $z_k$ by \cref{alg:stencil}, radius $h$, $B$ frames
  \State $v_k\gets\Proj_Q(v_{k-1}-\nu_0k\gbar_k)$
  \State $x_k\gets(1-\rho_k)x_{k-1}+\rho_kv_k$
\EndFor
\State \Return $x_N\in Q$
\end{algorithmic}
\end{algorithm}

\begin{acorollary}[Direct projected methods]
\label{cor:direct-convex}
Let $Q$, $x_0$, $R$ and $D$ be as in \cref{def:cvx-Q}, let $\eps>0$, and put
\[
h=(4\bfC)^{-\frac1{\beta-1}}
\Bigl(\frac{\eps\,d^{\tau/2}}{L_\beta D}\Bigr)^{\frac1{\beta-1}},\quad
\bar M=\frac{\eps}{8A_1D},\quad
\bar v_h^2=\Cf L_\beta^2d^{-\tau}h^{2\beta-2}+C_\eta\bar M^2
+C_\sigma\frac{d\sigma^2}{h^2}.
\]
Let $\vartheta$ be as in \cref{lem:cvx-local-estimate}, and let $f$ be $q$
times continuously differentiable on an open set containing
$Q_{\vartheta h}$, with
\[
0\preceq\nabla^2f(x)\preceq LI\quad(x\in Q),\qquad
\norm{D^qf(x)-D^qf(y)}\le L_\beta\norm{x-y}^\gamma\quad(x,y\in Q_{\vartheta h}),
\]
and with a minimizer $u\in\argmin_Qf$ such that $\norm{x_0-u}\le R$; for
instance, $f\in\Fcvx^{Q_{\vartheta h}}(\beta,L_\beta;L)$ with such a minimizer. Let
the noise be admissible at levels $(M,\sigma,\delta)$. Consider the
following two methods; in both, the oracle of round $k$ is \cref{alg:stencil}
at the center $z_k$, with radius $h$ and $B$ frames.
\begin{enumerate}[label=(\alph*)]
\item Run \cref{lem:convex-sgd} with $\varphi=f$ and $z_1=x_0$, output
$\bar z_N$, and take
\[
N=\Bigl\lceil\frac{8LR^2}{\eps}\Bigr\rceil,\qquad
T_0=\frac{256\,mdR^2\bar v_h^2}{9\eps^2},\qquad
B=\Bigl\lceil\frac{T_0}{2mdN}\Bigr\rceil,\qquad
\nu=\min\Bigl\{\frac1{2L},\frac{R\sqrt B}{\bar v_h\sqrt{2N}}\Bigr\}.
\]
\item Run \cref{lem:convex-agd} with $\varphi=f$ and $v_0=x_0$, output $x_N$,
and take
\[
N=\lceil\sqrt{32LR^2/\eps}\,\rceil,\qquad
T_0=\frac{512\,mdR^2\bar v_h^2}{9\eps^2},\qquad
B=\Bigl\lceil\frac{T_0}{2mdN}\Bigr\rceil,\qquad
\nu_0=\min\Bigl\{\frac1{4L},\frac{R\sqrt B}{\bar v_hN^{3/2}}\Bigr\}.
\]
\end{enumerate}
Suppose $M\le\bar M$ and $\delta\le\eps h/(8S_1D\sqrt d)$. Then
both methods query only points of $Q_h$, both outputs $\hat x$ lie in $Q$ and
satisfy $\E[f(\hat x)-\min_Qf]\le\eps$, and the number of calls is
$T=2mdNB\le T_0+2mdN$, where, explicitly,
\begin{equation}
\label{eq:cvx-direct-T0}
T_0=\theta\Bigl[\frac{256\,m\,C_\sigma(4\bfC)^{\frac2{\beta-1}}}{9}\,
\frac{\sigma^2}{\eps^2}\,d^{\,2-\frac{\tau}{\beta-1}}R^2
\Bigl(\frac{L_\beta D}{\eps}\Bigr)^{\frac2{\beta-1}}
+md\,\frac{R^2}{D^2}\Bigl(\frac{16\Cf}{9\bfC^2}+\frac{4C_\eta}{9A_1^2}\Bigr)
\Bigr],
\end{equation}
with $\theta=1$ in (a) and $\theta=2$ in (b).
\end{acorollary}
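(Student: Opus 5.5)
The approach is to plug the stencil estimate of \cref{lem:cvx-local-estimate} into the two projected methods of \cref{lem:convex-sgd,lem:convex-agd}. Each resulting error term is then made at most a fixed fraction of $\eps$.

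First, centers. In (a) every center $z_k$ lies in $Q$ because it is a projection. In (b) $z_k$ is a convex combination of points of $Q$. So $K=Q$ in \cref{lem:cvx-local-estimate}, and all queries lie in $Q_h$. The smoothness is required on $Q_{\vartheta h}$, which is exactly the hypothesis.

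Next, identify the oracle \eqref{eq:cvx-oracle} with $\varphi=f$. Take $b_k=b(z_k)+\Delta_k$ and $\zeta_k=\zeta$. Then
\[
b\le\bfC L_\beta d^{-\tau/2}h^{\beta-1}+A_1M+\frac{\sqrt d\,S_1\delta}h .
\]
The choice of $h$ gives the first term as $\eps/(4D)$. The conditions $M\le\bar M$ and $\delta\le\eps h/(8S_1D\sqrt d)$ give $\eps/(8D)$ for each of the other two. Hence $Db\le\eps/2$. For the variance, $M\le\bar M$ and \eqref{eq:cvx-estimate-var} give $v_B^2\le\bar v_h^2/B$, so $\bar v_B=\bar v_h/\sqrt B$ is admissible. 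With these identifications the step sizes in (a) and (b) are exactly the tuned steps of \eqref{eq:cvx-sgd-tuned} and \eqref{eq:cvx-agd-tuned}.

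Then bound the two remaining terms of each method.
\begin{itemize}
\item In (a), $N\ge8LR^2/\eps$ gives $LR^2/N\le\eps/8$. Using $B\ge T_0/(2mdN)$ and the definition of $T_0$,
\[
\frac{\sqrt2R\bar v_h}{\sqrt{NB}}\le\sqrt{\frac{4mdR^2\bar v_h^2}{T_0}}=\frac{3\eps}8 .
\]
\item In (b), $N^2\ge32LR^2/\eps$ gives $4LR^2/N^2\le\eps/8$, and
\[
\frac{2R\bar v_h}{\sqrt{NB}}\le\sqrt{\frac{8mdR^2\bar v_h^2}{T_0}}=\frac{3\eps}8 .
\]
\end{itemize}
The totals are at most $\eps/8+3\eps/8+\eps/2=\eps$. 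The comparator $u$ is the minimizer in $Q$, so the bound is for $\E[f(\hat x)-\min_Qf]$. The call count is $2mdNB\le2mdN(T_0/(2mdN)+1)=T_0+2mdN$.

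Finally, the explicit formula \eqref{eq:cvx-direct-T0} comes from substituting into $T_0$, which is $\theta\cdot256\,mdR^2\bar v_h^2/(9\eps^2)$. This substitution is the main bookkeeping obstacle.
\begin{itemize}
\item The $\sigma$ term: $h^{-2}=(4\bfC)^{2/(\beta-1)}(L_\beta D/\eps)^{2/(\beta-1)}d^{-\tau/(\beta-1)}$, so the factor $d\cdot d^{-\tau/(\beta-1)}$ combines with the outer $d$ into $d^{2-\tau/(\beta-1)}$.
\item The smoothing term: $L_\beta^2d^{-\tau}h^{2\beta-2}=\eps^2/(16\bfC^2D^2)$, which gives the contribution $16\Cf/(9\bfC^2)\cdot mdR^2/D^2$.
\item The Lipschitz term: $C_\eta\bar M^2=C_\eta\eps^2/(64A_1^2D^2)$, which gives $4C_\eta/(9A_1^2)\cdot mdR^2/D^2$.
\end{itemize}
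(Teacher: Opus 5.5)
Your proposal is correct and follows essentially the same route as the paper's proof: you apply \cref{lem:cvx-local-estimate} with $K=Q$, fold the adversarial part into the bias so that $Db\le\eps/2$ and $v_B^2\le\bar v_h^2/B$, and plug the tuned bounds \eqref{eq:cvx-sgd-tuned} and \eqref{eq:cvx-agd-tuned} in to get $\eps/8+3\eps/8$. The call count and the explicit formula \eqref{eq:cvx-direct-T0} then follow by the same substitutions for $h^{-2}$, $L_\beta^2d^{-\tau}h^{2\beta-2}$ and $\bar M^2$ that the paper uses.
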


\begin{proof}
Let $z_k$ be the center of round $k$; it is $\mathcal F_{k-1}$-measurable and
lies in $Q$. By \cref{lem:cvx-local-estimate} with $K=Q$, the queries lie in
$Q_h$, and the returned vector is $g_k=\nabla f(z_k)+b_k+\zeta_k$ with
$b_k=b(z_k)+\Delta_k$, $\E[\zeta_k\mid\mathcal F_{k-1}]=0$ and, since
$M\le\bar M$, $\E[\norm{\zeta_k}^2\mid\mathcal F_{k-1}]\le\bar v_h^2/B$.
The choice of $h$ gives $\bfC L_\beta d^{-\tau/2}h^{\beta-1}=\eps/(4D)$, so
\[
\norm{b_k}\le b:=\frac{\eps}{4D}+A_1M+\frac{\sqrt d\,S_1\delta}{h}
\le\frac{\eps}{4D}+\frac{\eps}{8D}+\frac{\eps}{8D}=\frac{\eps}{2D}.
\]
Thus the setting of \cref{app:convex-first-order} holds for $\varphi=f$, which
is convex with an $L$-Lipschitz gradient on $Q$, with $v_B^2=\bar v_h^2/B$ and
$Db\le\eps/2$.

In case (a), \eqref{eq:cvx-sgd-tuned} with $\bar v_B=\bar v_h/\sqrt B$ bounds
the error by $LR^2/N+\sqrt2R\bar v_h/\sqrt{NB}+Db$. Here $LR^2/N\le\eps/8$, and
$NB\ge T_0/(2md)=128R^2\bar v_h^2/(9\eps^2)$ gives
$\sqrt2R\bar v_h/\sqrt{NB}\le3\eps/8$. In case (b), \eqref{eq:cvx-agd-tuned}
bounds the error by $4LR^2/N^2+2R\bar v_h/\sqrt{NB}+Db$, where
$4LR^2/N^2\le\eps/8$ and $NB\ge256R^2\bar v_h^2/(9\eps^2)$ gives
$2R\bar v_h/\sqrt{NB}\le3\eps/8$. In both cases the sum is at most $\eps$, and
$f(u)=\min_Qf$.

Finally, $B\le T_0/(2mdN)+1$ gives $T\le T_0+2mdN$, and inserting
\[
L_\beta^2d^{-\tau}h^{2\beta-2}=\frac{\eps^2}{16\bfC^2D^2},\qquad
\bar M^2=\frac{\eps^2}{64A_1^2D^2},\qquad
\frac1{h^2}=(4\bfC)^{\frac2{\beta-1}}d^{-\frac{\tau}{\beta-1}}
\Bigl(\frac{L_\beta D}{\eps}\Bigr)^{\frac2{\beta-1}}
\]
into $T_0$ gives \eqref{eq:cvx-direct-T0}.
\end{proof}

\subsection{Value analysis of the ball average}
\label{app:convex-value}

The method of this subsection is projected stochastic gradient descent driven
by central differences, analyzed as descent on the ball average $f_h$. It uses
central differences for every $\beta$: for $\beta>3$ the stencil of
\cref{prop:unified-estimator} has $m\ge2$, and its surrogate $\mathcal S_hf$
has weights of both signs and need not be convex.

\begin{alemma}[Values of the ball average]
\label{lem:smoothing-values}
Let $Q$ and $D$ be as in \cref{def:cvx-Q}, and let $h>0$.
\begin{enumerate}[label=(\alph*)]
\item Let $\gamma\in(0,1]$ and $\beta=2+\gamma$, and let $f$ be twice
continuously differentiable on an open set containing $Q_h$, with
$\norm{\nabla^2f(x)-\nabla^2f(y)}_{\op}\le L_\beta\norm{x-y}^\gamma$ for all
$x,y\in Q_h$. Then for $x\in Q$,
\begin{equation}
\label{eq:cvx-value-identity}
f_h(x)=f(x)+\frac{h^2}{2(d+2)}\tr\nabla^2f(x)+r_h(x),\qquad
\abs{r_h(x)}\le\frac{c_\gamma d}{d+\beta}L_\beta h^\beta\le c_\gamma L_\beta h^\beta ,
\end{equation}
and $\abs{\tr\nabla^2f(x)-\tr\nabla^2f(y)}\le dL_\beta\norm{x-y}^\gamma$ for all
$x,y\in Q_h$.
\item Let $f$ be differentiable on an open set containing $Q_h$ and convex on
$Q_h$, with $\norm{\nabla f(x)-\nabla f(y)}\le L\norm{x-y}$ for
$x,y\in Q_h$. Then $f_h$ is defined and continuously differentiable on an
open set containing $Q$, with $\nabla f_h(x)=\E\nabla f(x+hW)$ for $x\in Q$. It is convex
on $Q$, $\norm{\nabla f_h(x)-\nabla f_h(y)}\le L\norm{x-y}$ for $x,y\in Q$,
and
\[
0\le f_h(x)-f(x)\le\frac{d}{2(d+2)}Lh^2\qquad(x\in Q).
\]
If moreover $f$ is twice continuously differentiable on an open set
containing $Q_h$, then $0\le\tr\nabla^2f(x)\le dL$ for $x\in Q$.
\item Under the assumptions of (b), for all $x,y\in Q$,
\begin{equation}
\label{eq:cvx-value-transfer-M}
f(x)-f(y)\le f_h(x)-f_h(y)+\frac{Lh^2}2 .
\end{equation}
Under the assumptions of (a) and (b), for all $x,y\in Q$,
\begin{equation}
\label{eq:cvx-value-transfer}
f(x)-f(y)\le f_h(x)-f_h(y)+S(h),\qquad
S(h):=\min\Bigl\{\frac{Lh^2}2,\ \frac{L_\beta D^\gamma h^2}2+2c_\gamma L_\beta h^\beta\Bigr\}.
\end{equation}
\end{enumerate}
In particular, under (a) and (b), for $x,y\in Q$,
\[
\abs{\tr\nabla^2f(x)-\tr\nabla^2f(y)}\le d\min\{L,L_\beta\norm{x-y}^\gamma\}
\le d\min\{L,L_\beta D^\gamma\}.
\]
\end{alemma}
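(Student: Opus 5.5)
For (a) I will use the finite Pizzetti formula, \cref{lem:pizzetti}, in its ball form \eqref{eq:pizzetti-ball} with $k=1$, applied to $g=f$ at the center $x\in Q$ with $s=h$. That formula needs $g\in C^2$ near $B(x,h)\subseteq Q_h$, which holds. It gives $f_h(x)=f(x)+d\int_0^1r^{d-1}(J_dS_{\Delta f})(hr)\,dr$. I will subtract the constant $\Delta f(x)=\tr\nabla^2f(x)$ inside the remainder. Since $(J_d1)(s)=s^2/(2d)$ and ball averaging multiplies this by $d/(d+2)$, the constant produces exactly $h^2\tr\nabla^2f(x)/(2(d+2))$. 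What is left is $r_h(x)=d\int_0^1r^{d-1}J_d[S_{\Delta f}-\Delta f(x)](hr)\,dr$.

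To bound $r_h$, I use \cref{lem:tensor-facts}(b) with $m=1$ and the operator-norm version: the trace of a symmetric matrix has absolute value at most $d$ times its operator norm. Hence $\abs{\Delta f(y)-\Delta f(x)}\le dL_\beta\norm{y-x}^\gamma$, which is also the second claim of (a). Positivity of $J_d$ and the identity $J_d(u^\gamma)(s)=s^{2+\gamma}/((2+\gamma)(d+\gamma))$ then give $\abs{(J_d\cdots)(s)}\le dL_\beta s^\beta/((2+\gamma)(d+\gamma))$. Averaging over the radius adds the factor $d/(d+\beta)$. It remains to check that $d/((2+\gamma)(d+\gamma))\le 1/((1+\gamma)(2+\gamma))=c_\gamma$, which is equivalent to $d\gamma\le\gamma$; this fails for $d\ge2$. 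So the bookkeeping must be redone carefully with $J_d$ computed exactly. This constant-chasing is where I expect the only real friction, and I would fall back on the cruder but sufficient bound $c_\gamma L_\beta h^\beta$, obtained by averaging the Hölder bound along $s\mapsto$ sphere integrals instead.

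For (b), the ball $x+hB(1)$ lies in $Q_h$, so $f_h$ is defined near $Q$. Differentiation under the integral gives $\nabla f_h=\E\nabla f(\cdot+hW)$, and this is $L$-Lipschitz on $Q$ by averaging. Convexity of $f_h$ on $Q$ follows from averaging the convexity inequalities of the translates $f(\cdot+hw)$, since $Q+hw\subseteq Q_h$ and $Q+hw$ is convex. The two-sided bound comes from $0\le f(x+hw)-f(x)-\inner{\nabla f(x)}{hw}\le L h^2\norm w^2/2$, using $\E W=0$ and $\E\norm W^2=d/(d+2)$. For the trace bound, $0\preceq\nabla^2f\preceq LI$ on $Q$ follows from convexity and the Lipschitz gradient on the open neighbourhood.

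For (c), the first transfer inequality follows from $f\le f_h$ at $y$ and $f_h-f\le Lh^2/2$ at $x$. For the second, I write $f(x)-f(y)-(f_h(x)-f_h(y))=-\tfrac{h^2}{2(d+2)}(\tr\nabla^2f(x)-\tr\nabla^2f(y))-r_h(x)+r_h(y)$. I bound the trace difference by $dL_\beta D^\gamma$ using (a), and each remainder by $c_\gamma L_\beta h^\beta$. Taking the minimum with the first bound gives $S(h)$. The final display combines the trace range $[0,dL]$ from (b) with the Hölder bound from (a).
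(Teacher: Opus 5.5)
\textbf{Part (a) has a genuine gap, and it carries over to (c).} Your treatment of (b), of the trace bounds and the skeleton of (c) is fine. The problem is the constant in the remainder bound of (a). With $\abs{\Delta f(y)-\Delta f(x)}\le dL_\beta\norm{y-x}^\gamma$ and the exact value $(J_du^\gamma)(s)=s^\beta/((2+\gamma)(d+\gamma))$, the Pizzetti route gives
\[
\abs{r_h(x)}\le\frac{d}{(2+\gamma)(d+\gamma)}\cdot\frac{d}{d+\beta}\,L_\beta h^\beta
=\frac{(1+\gamma)d}{d+\gamma}\cdot\frac{c_\gamma d}{d+\beta}\,L_\beta h^\beta .
\]
This exceeds the claimed bound by the factor $\frac{(1+\gamma)d}{d+\gamma}>1$ for every $d\ge2$. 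Redoing the $J_d$ bookkeeping cannot help, because $J_d$ was already evaluated exactly on the majorant. The factor is lost earlier, when the operator-norm H\"older bound is turned into a H\"older bound on the trace.

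For large $d$ this bound is not even below $c_\gamma L_\beta h^\beta$; for $\gamma=1$ it fails once $d\ge5$. So the ``cruder but sufficient'' bound you plan to fall back on does not follow from this route either. As a result, the constant $2c_\gamma$ in $S(h)$ in part (c) is left unproved.

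\textbf{The missing idea is to avoid the Laplacian altogether.} For $x\in Q$ and $\norm v\le h$, the segment $[x,x+v]$ lies in $Q_h$, and
\[
f(x+v)-f(x)-\inner{\nabla f(x)}{v}-\tfrac12\nabla^2f(x)[v,v]
=\int_0^1(1-t)\bigl(\nabla^2f(x+tv)-\nabla^2f(x)\bigr)[v,v]\,dt .
\]
The operator-norm H\"older condition bounds the integrand by $L_\beta t^\gamma\norm v^{2+\gamma}$. Since $\int_0^1(1-t)t^\gamma\,dt=c_\gamma$, the remainder is at most $c_\gamma L_\beta\norm v^\beta$. Now average over $v=hW$, using $\E W=0$, $\E WW^\top=I/(d+2)$ and $\E\norm W^\beta=d/(d+\beta)$. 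This gives the identity with exactly $\abs{r_h(x)}\le\frac{c_\gamma d}{d+\beta}L_\beta h^\beta$. It is the sharper stated bound, not a cruder one, and it is the paper's proof.

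Your ``sphere integral'' fallback would work if it means Taylor-expanding $s\mapsto\E f(x+sU)$ to second order. That function has first derivative $0$ at $s=0$, and its second derivative $\E\nabla^2f(x+sU)[U,U]$ is $\gamma$-H\"older with constant $L_\beta$ and equals $\tr\nabla^2f(x)/d$ at $s=0$. Radial averaging then keeps the factor $d/(d+\beta)$. But this argument has to be carried out, not just asserted.

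\textbf{Two minor points.} In the first inequality of (c) you need $f_h-f\ge0$ at $x$ and $f_h-f\le Lh^2/2$ at $y$, not the other way round. This is harmless, since both bounds hold on all of $Q$. Also, for $0\preceq\nabla^2f(x)\preceq LI$ at $x\in Q$, the neighbourhood to use is the open ball $B(x,h)\subseteq Q_h$, where convexity and the Lipschitz bound are actually assumed.
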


\begin{proof}
(a) For $x\in Q$ and $\norm v\le h$, Taylor's formula with integral remainder
and the H\"older condition give
\[
\left|f(x+v)-f(x)-\inner{\nabla f(x)}v-\tfrac12\nabla^2f(x)[v,v]\right|
\le L_\beta\norm v^{2+\gamma}\int_0^1(1-t)t^\gamma\,dt
=c_\gamma L_\beta\norm v^\beta .
\]
Averaging over $v=hW$, with $\E W=0$, $\E WW^\top=I/(d+2)$ and
$\E\norm W^\beta=d/(d+\beta)$, proves the identity. The trace bound follows
from $\abs{\tr A}\le d\norm A_{\op}$.

(b) By \cref{lem:ball-gradient}, whose proof is local, $f_h$ is
continuously differentiable near $Q$ and $\nabla f_h(x)=\E\nabla f(x+hW)$ for
$x\in Q$. Since $x+hW\in Q_h$ for $x\in Q$, averaging
preserves convexity and the Lipschitz constant of the gradient on $Q$. Averaging
$0\le f(x+v)-f(x)-\inner{\nabla f(x)}v\le L\norm v^2/2$ over $v=hW$, with
$\E\norm W^2=d/(d+2)$, gives the value bounds. When the Hessian exists, its
eigenvalues lie in $[0,L]$ on $Q$, which proves the trace bounds.

(c) The difference $(f(x)-f(y))-(f_h(x)-f_h(y))$ equals
$(f_h-f)(y)-(f_h-f)(x)$. Part (b) bounds it by $Lh^2/2$, and part (a) by
$h^2dL_\beta D^\gamma/(2(d+2))+2c_\gamma L_\beta h^\beta$. The last claim combines the
trace bounds of (a) and (b).
\end{proof}

\begin{alemma}[Central differences for objectives with a Lipschitz gradient]
\label{lem:m1-smooth-frame}
Let $K\subset\R^d$ be nonempty, closed and convex, let $h>0$, let $f$ be
differentiable on an open set containing $K_h$ with
$\norm{\nabla f(x)-\nabla f(y)}\le L\norm{x-y}$ for $x,y\in K_h$, and let the
noise be admissible at levels $(M,\sigma,\delta)$. Consider a round of
\cref{alg:stencil} with $m=1$, an $\mathcal F_{k-1}$-measurable center $z$
with values in $K$, the deterministic radius $h$ and $B$ frames. Then the
queries lie in $K_h$,
\[
\gbar=\nabla f_h(z)+\nabla\eta_h(z)+\zeta+\Delta,\qquad
\norm{\nabla\eta_h(z)}\le M,\qquad
\norm\Delta\le\frac{\sqrt d\,\delta}h\ \text{a.s.},
\]
and $\E[\zeta\mid\mathcal F_{k-1}]=0$. For $d\ge2$,
\begin{equation}
\label{eq:cvx-m1-var}
\E\bigl[\norm\zeta^2\bigm|\mathcal F_{k-1}\bigr]\le\frac1B\Bigl(
\frac d{d-1}(Lh+M)^2+C_\sigma\frac{d\sigma^2}{h^2}\Bigr),
\qquad C_\sigma=(1+2^{-1/2})^2 ;
\end{equation}
for $d=1$ the term with $Lh+M$ is absent. Part (iv) of
\cref{prop:unified-estimator} holds with $\nabla f(z)+b(z)$ replaced by
$\nabla f_h(z)+\nabla\eta_h(z)$ and with $S_1=1$. If moreover
$\beta\in(2,3]$, $f$ is twice continuously differentiable on an open set
containing $K_h$ and
$\norm{\nabla^2f(x)-\nabla^2f(y)}_{\op}\le L_\beta\norm{x-y}^\gamma$ for
$x,y\in K_h$, then $\zeta$ is the vector of \cref{lem:cvx-local-estimate}
with $m=1$ and the operator norm, and \eqref{eq:cvx-estimate-var} holds for
it as well.
\end{alemma}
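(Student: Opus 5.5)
We follow the proof of \cref{prop:unified-estimator} almost verbatim, changing only the treatment of the noiseless frame part. With $m=1$ one frame returns $\sum_i\frac1{2h}(Y^+_i-Y^-_i)U_i$. We split the observed values into the contributions of $f$, $\eta$, the centered, the oblivious and the adversarial components.

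\emph{Mean and decomposition.} By \cref{lem:mean} with $t_1=a_1=1$ we have $\mathcal S_hg=g_h$. The frame parts of $f$ and $\eta$ therefore have means $\nabla f_h(z)$ and $\nabla\eta_h(z)$. This needs only continuity of $f$ and $\eta$ near $z+hB(1)\subseteq K_h$ and the local form of \cref{lem:ball-gradient}. \Cref{lem:lipschitz-part}(b) with $A_1=1$ gives $\norm{\nabla\eta_h(z)}\le M$. We define $\zeta=\bar Y+\bar\Xi^{\mathrm o}+\bar\Xi^{\mathrm c}$ and $\Delta$ exactly as in \eqref{eq:unified-decomp}, now with $Y^{(\ell)}=\sum_i(e_f+e_\eta)(U_i^{(\ell)})U_i^{(\ell)}-\nabla f_h(z)-\nabla\eta_h(z)+\nabla f(z)$. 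Here $e_f(u)=D_{h,u}f(z)-\inner{\nabla f(z)}u$. The bound $\norm\Delta\le\sqrt d\,\delta/h$ follows from orthonormality. The centering of $\zeta$, the noise bound $C_\sigma d\sigma^2/(Bh^2)$ and the orthogonality of cross terms are copied word for word from that proof, since they never use smoothness of $f$. Part (iv) is also unchanged, with $S_1=1$.

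\emph{The frame variance (main step).} We need $\E\norm{\sum_i(e_f+e_\eta)(U_i)U_i-\text{mean}}^2\le d\,\E(e_f+e_\eta)(U)^2$. The key observation is that $u\mapsto D_{h,u}f(z)=\frac1{2h}(f(z+hu)-f(z-hu))$ is odd. Its gradient is $\frac12(\nabla f(z+hu)+\nabla f(z-hu))$, which differs from $\nabla f(z)$ by at most $Lh$ for $\norm u\le1$, because $\nabla f$ is $L$-Lipschitz on $K_h$. Hence $e_f$ is odd and $Lh$-Lipschitz on $B(1)$, and $e_\eta$ is odd and $M$-Lipschitz. Their sum is odd and $(Lh+M)$-Lipschitz. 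After extending it Lipschitzly from the unit ball (McShane), \eqref{eq:odd-frame} gives second moment at most $\frac d{d-1}(Lh+M)^2$, and subtracting the mean only decreases this. Averaging over $B$ independent frames divides by $B$, which proves \eqref{eq:cvx-m1-var}. The one subtlety is that \eqref{eq:odd-frame} needs the function on a neighborhood of the sphere. I would handle it by a mollification remark, or by noting that $e_f+e_\eta$ is defined on $B(1+\epsilon)$ whenever $K_h$'s open neighborhood allows, and otherwise by the Lipschitz extension. For $d=1$ the frame part is deterministic by oddness, so that term is absent.

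\emph{Last claim.} When $\beta\in(2,3]$ we have $q=2$ and $m=1$, so $G_h=\nabla f_h$. The $b(z)$ of \cref{lem:cvx-local-estimate} equals $\nabla f_h(z)-\nabla f(z)+\nabla\eta_h(z)$, and the two decompositions coincide with the same $\zeta$. \Cref{lem:cvx-local-estimate} in the operator case then supplies \eqref{eq:cvx-estimate-var}.
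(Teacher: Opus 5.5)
Your proposal is correct and follows the paper's proof essentially step for step. The mean, noise, adversarial and part~(iv) arguments are inherited from \cref{prop:unified-estimator}. The key step is the bound $Lh$ on the norm of $\frac12(\nabla f(z+hw)+\nabla f(z-hw))-\nabla f(z)$ over the unit ball, followed by an extension and \eqref{eq:odd-frame}, and the last claim comes from identifying the two $\zeta$'s.

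The only difference is the extension. The paper composes $e_f$ with the odd, nonexpansive radial projection onto $B(1)$, so the extension stays odd and \eqref{eq:odd-frame} applies literally. A McShane extension need not be odd off the ball. This is harmless, since only values on $\Sph$ enter, or you can replace the extension $E$ by $\frac12(E(x)-E(-x))$.
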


\begin{proof}
The mean, the stochastic-noise contribution, the adversarial bound and
the adapted decomposition are those of \cref{prop:unified-estimator}
with $m=1$; they use only continuity and oddness on $z+hB(1)$.
It remains to bound the frame variance without a bound on $\nabla f(z)$.

Put $e_f(w)=(f(z+hw)-f(z-hw))/(2h)-\inner{\nabla f(z)}w$.
On the unit ball,
$\nabla e_f(w)=(\nabla f(z+hw)+\nabla f(z-hw))/2-\nabla f(z)$
has norm at most $Lh$, so $e_f$ is odd and $Lh$-Lipschitz there.
Composition with the odd, nonexpansive projection onto that ball extends it
to an odd $Lh$-Lipschitz function on $\R^d$.
The Lipschitz-noise term $e_\eta(w)=(\eta(z+hw)-\eta(z-hw))/(2h)$ is odd
and $M$-Lipschitz, so $e_f+e_\eta$ has Lipschitz constant $Lh+M$.
By \eqref{eq:odd-frame}, the centered second moment of one frame is at most
$d(Lh+M)^2/(d-1)$. Independent frames divide it by $B$; for $d=1$ it
is zero. Adding the orthogonal noise term proves \eqref{eq:cvx-m1-var}.
For the final claim, both constructions define
$\zeta=\gbar-\nabla f_h(z)-\nabla\eta_h(z)-\Delta$; apply
\cref{lem:cvx-local-estimate}.
\end{proof}

\begin{atheorem}[Value analysis]
\label{thm:value-analysis}
Let $Q$, $x_0$, $R$ and $D$ be as in \cref{def:cvx-Q}, and let $h>0$. Let $f$
be differentiable on an open set containing $Q_h$ and convex on $Q_h$, with
$\norm{\nabla f(x)-\nabla f(y)}\le L\norm{x-y}$ for $x,y\in Q_h$, and let $f$
have a minimizer $u\in\argmin_Qf$ such that $\norm{x_0-u}\le R$. Every
$f\in\Fcvx^{Q_h}(\beta,L_\beta;L)$, $\beta>2$, with such a minimizer has these
properties. Let the noise be admissible at levels $(M,\sigma,\delta)$.
The method is projected stochastic gradient descent on $Q$ driven by
central differences:
\begin{itemize}
\item $z_1=x_0$;
\item in round $k=1,\dots,N$, run \cref{alg:stencil} with $m=1$ at the
center $z_k$, with radius $h$ and $B$ frames ($2dB$ calls), obtain
$\gbar_k$, and set $z_{k+1}=\Proj_Q(z_k-\nu\gbar_k)$;
\item output $\bar z_N=\frac1N\sum_{k=2}^{N+1}z_k$.
\end{itemize}
All queries lie in $Q_h$, and $\bar z_N\in Q$.
\begin{enumerate}[label=(\alph*)]
\item \emph{(Error bound.)} For all $N,B\in\N$ and $0<\nu\le1/(2L)$,
\begin{equation}
\label{eq:cvx-value-bound}
\E\Bigl[f(\bar z_N)-\min_Qf\Bigr]\le\frac{R^2}{2\nu N}+\frac{\nu v_h^2}{B}
+D\Bigl(M+\frac{\sqrt d\,\delta}{h}\Bigr)+S(h),
\end{equation}
where
\[
v_h^2=4L^2h^2+4M^2+C_\sigma d\sigma^2/h^2,\qquad S(h)=Lh^2/2 .
\]
If moreover $\beta\in(2,3]$ and $f$ is twice continuously differentiable on
an open set containing $Q_h$, with
$\norm{\nabla^2f(x)-\nabla^2f(y)}_{\op}\le L_\beta\norm{x-y}^\gamma$ for
$x,y\in Q_h$, where $\gamma=\beta-2$, then \eqref{eq:cvx-value-bound} also
holds with
\[
v_h^2=\min\{4L^2h^2,4L_\beta^2h^{2\beta-2}\}+4M^2+C_\sigma d\sigma^2/h^2,
\qquad
S(h)=\min\Bigl\{\frac{Lh^2}2,\ \frac{L_\beta D^\gamma h^2}2+2c_\gamma L_\beta h^\beta\Bigr\}.
\]
\item \emph{(Curvature branch, every $\beta$.)} Let $\eps>0$, and take
\begin{gather*}
h=\sqrt{\frac{\eps}{8L}},\qquad \bar M=\frac{\eps}{8D},\qquad
\bar v_h^2=4L^2h^2+4\bar M^2+C_\sigma\frac{d\sigma^2}{h^2},\\
N=\Bigl\lceil\frac{8LR^2}{\eps}\Bigr\rceil,\qquad
T_0=\frac{256\,dR^2\bar v_h^2}{9\eps^2},\qquad
B=\Bigl\lceil\frac{T_0}{2dN}\Bigr\rceil,\qquad
\nu=\min\Bigl\{\frac1{2L},\frac{R\sqrt B}{\bar v_h\sqrt{2N}}\Bigr\}.
\end{gather*}
If $M\le\bar M$ and $\delta\le\eps h/(8D\sqrt d)$, then
$\E[f(\bar z_N)-\min_Qf]\le\eps$, with $N$ rounds and
\begin{equation}
\label{eq:cvx-value-T-M}
T=2dNB\le\frac{2048\,C_\sigma}{9}\,\frac{d^2\sigma^2LR^2}{\eps^3}
+\frac{272}{9}\,\frac{dLR^2}{\eps}+\Bigl(2+\frac{16R^2}{9D^2}\Bigr)d
\end{equation}
calls. This method uses $L,R,D,\sigma,d,\eps$, but neither $L_\beta$ nor $\beta$.
\item \emph{(H\"older branch, $\beta\in(2,3]$.)} Suppose moreover that $f$
satisfies the H\"older condition of (a) on $Q_h$ and that
$0<\eps\le8L_\beta D^\beta$. Take
\[
h=\sqrt{\frac{\eps}{8L_\beta D^\gamma}},\qquad
\bar M=\frac{\eps}{8D},\qquad
\bar v_h^2=4L_\beta^2h^{2\beta-2}+4\bar M^2+C_\sigma\frac{d\sigma^2}{h^2},
\]
and define $N,T_0,B,\nu$ as in (b), with this $\bar v_h$. Then
$h\le D$, and the conclusion of (b) holds with
\begin{equation}
\label{eq:cvx-value-T-L}
T\le\frac{2048\,C_\sigma}{9}\,\frac{d^2\sigma^2}{\eps^2}\cdot
\frac{L_\beta D^\gamma R^2}{\eps}
+\frac{128}{9}\Bigl(\frac hD\Bigr)^{2\gamma}\frac{dL_\beta D^\gamma R^2}{\eps}
+\frac{16\,dLR^2}{\eps}+\Bigl(2+\frac{16R^2}{9D^2}\Bigr)d .
\end{equation}
\item \emph{(Acceleration.)} In (b) and (c), replace the update by the
method of \cref{lem:convex-agd} with $\varphi=f_h$ and $v_0=x_0$; it queries
at the centers $z_k$ and outputs $x_N\in Q$. Keep $h$ and $\bar v_h$, and take
\[
N=\lceil\sqrt{32LR^2/\eps}\,\rceil,\qquad
T_0=\frac{512\,dR^2\bar v_h^2}{9\eps^2},\qquad
B=\Bigl\lceil\frac{T_0}{2dN}\Bigr\rceil,\qquad
\nu_0=\min\Bigl\{\frac1{4L},\frac{R\sqrt B}{\bar v_hN^{3/2}}\Bigr\}.
\]
Under the same conditions, $\E[f(x_N)-\min_Qf]\le\eps$ with
$T=2dNB\le T_0+2dN$ calls. In the curvature branch this means
\[
T\le\frac{4096\,C_\sigma}{9}\,\frac{d^2\sigma^2LR^2}{\eps^3}
+\frac{256}{9}\,\frac{dLR^2}{\eps}+\frac{32R^2}{9D^2}\,d
+2d\Bigl\lceil\sqrt{32LR^2/\eps}\,\Bigr\rceil ,
\]
and in the H\"older branch
\[
T\le\frac{4096\,C_\sigma}{9}\,\frac{d^2\sigma^2}{\eps^2}\cdot
\frac{L_\beta D^\gamma R^2}{\eps}
+\frac{256}{9}\Bigl(\frac hD\Bigr)^{2\gamma}\frac{dL_\beta D^\gamma R^2}{\eps}
+\frac{32R^2}{9D^2}\,d+2d\Bigl\lceil\sqrt{32LR^2/\eps}\,\Bigr\rceil .
\]
\end{enumerate}
\end{atheorem}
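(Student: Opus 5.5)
The plan is to apply \cref{lem:convex-sgd} (and, for (d), \cref{lem:convex-agd}) with $\varphi=f_h$ rather than $f$, with the comparator $u\in\argmin_Qf$. Then we transfer back to $f$ via \cref{lem:smoothing-values}(c). By \cref{lem:smoothing-values}(b), $f_h$ is convex on $Q$ with an $L$-Lipschitz gradient, so the setting of \cref{app:convex-first-order} holds for $\varphi=f_h$. By \cref{lem:m1-smooth-frame} with $K=Q$, the queries lie in $Q_h$, and $\gbar_k=\nabla f_h(z_k)+b_k+\zeta_k$. Here $b_k=\nabla\eta_h(z_k)+\Delta_k$ satisfies $\norm{b_k}\le M+\sqrt d\,\delta/h$. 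The noise $\zeta_k$ is the adapted centered part, with conditional second moment at most $v_h^2/B$, where $v_h^2=4L^2h^2+4M^2+C_\sigma d\sigma^2/h^2$ follows from \eqref{eq:cvx-m1-var} using $d/(d-1)\le2$ and $(Lh+M)^2\le2L^2h^2+2M^2$.

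As noted after \cref{prop:unified-estimator}, $\zeta_k$ need not be $\mathcal F_k$-measurable when $\delta>0$. The proof of \cref{lem:convex-sgd} only uses $\E\inner{\zeta_k}{z_k-u}=0$ and the second moment, and both remain valid. Alternatively, one can use the decomposition of part (iv), putting $\E[\Delta\mid\mathcal F_{k-1}]$ into the bias and the rest into $\tilde\zeta$; this only changes constants. I will choose the first route, mirroring the proof of \cref{cor:direct-convex}.

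\eqref{eq:cvx-sgd} then gives $\E[f_h(\bar z_N)-f_h(u)]\le R^2/(2\nu N)+\nu v_h^2/B+D(M+\sqrt d\delta/h)$. Adding \eqref{eq:cvx-value-transfer-M}, or \eqref{eq:cvx-value-transfer} in the H\"older case, with $x=\bar z_N$ and $y=u$, yields \eqref{eq:cvx-value-bound}. In the H\"older case the variance alternative $4L_\beta^2h^{2\beta-2}$ comes from the final claim of \cref{lem:m1-smooth-frame}: for $m=1$ and the operator norm, $\Cf=4$ and $\bfC=1$, while \eqref{eq:cvx-estimate-var} gives $4L_\beta^2h^{2\beta-2}+4M^2+\dots$. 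Taking the minimum of the two bounds gives the stated $v_h^2$.

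For (b) and (c) I would plug the choices into the tuned form \eqref{eq:cvx-sgd-tuned} with $\bar v_B=\bar v_h/\sqrt B$, which is valid since $M\le\bar M$. This gives $LR^2/N\le\eps/8$ and $\sqrt2R\bar v_h/\sqrt{NB}\le3\eps/8$ because $NB\ge T_0/(2d)$. The bias satisfies $D\bar M+D\sqrt d\delta/h\le\eps/4$, and the smoothing term satisfies $S(h)\le\eps/16$ in (b) via $Lh^2/2$. In (c) we have $L_\beta D^\gamma h^2/2=\eps/16$, and $2c_\gamma L_\beta h^\beta\le L_\beta D^\gamma h^2\le\eps/8$ using $h\le D$, which follows from $\eps\le8L_\beta D^\beta$. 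The total is at most $\eps$.

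The call counts $T=2dNB\le T_0+2dN$ follow by expanding $T_0$. In (b), $4L^2h^2=\eps L/2$, $4\bar M^2=\eps^2/(16D^2)$ and $1/h^2=8L/\eps$. In (c), $4L_\beta^2h^{2\beta-2}=4L_\beta^2h^{2\gamma}h^2=(h/D)^{2\gamma}\eps L_\beta D^\gamma/2$. In both cases $2dN\le16dLR^2/\eps+2d$. Part (d) is identical, using \eqref{eq:cvx-agd-tuned} with its $4LR^2/N^2\le\eps/8$ and $2R\bar v_h/\sqrt{NB}\le3\eps/8$. The main obstacle is bookkeeping rather than ideas. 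The one delicate point is that the comparator $u$ need not minimize $f_h$, which \cref{lem:convex-sgd} permits. Checking that the H\"older-branch smoothing term $S(h)$ stays below $\eps/8$ uniformly is the step I would verify first.
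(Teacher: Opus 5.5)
Your proposal is correct and follows the paper's proof essentially line by line. Both arguments take the same steps: \cref{lem:m1-smooth-frame} with $K=Q$; \cref{lem:convex-sgd} (or \cref{lem:convex-agd}) applied to $\varphi=f_h$ with the comparator $u\in\argmin_Qf$ and the adversarial part absorbed into $b_k$; the transfer inequalities of \cref{lem:smoothing-values}(c); and the same bookkeeping, namely $13\eps/16$ in (b) and $S(h)\le3\eps/16$ in (c). The only difference is cosmetic: the paper bounds $L_\beta h^\beta$ by $\frac\eps8(\eps/(8L_\beta D^\beta))^{\gamma/2}$, while you use $h^\gamma\le D^\gamma$, which amounts to the same estimate.
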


\begin{proof}
(a) By \cref{lem:m1-smooth-frame} with $K=Q$, in round $k$
\[
\gbar_k=\nabla f_h(z_k)+b_k+\zeta_k,\qquad
b_k=\nabla\eta_h(z_k)+\Delta_k,\qquad
\norm{b_k}\le M+\frac{\sqrt d\,\delta}{h},
\]
where $\gbar_k$ is $\mathcal F_k$-measurable and
$\E[\zeta_k\mid\mathcal F_{k-1}]=0$. Since
$\frac d{d-1}(Lh+M)^2\le2(Lh+M)^2\le4L^2h^2+4M^2$ for
$d\ge2$, \eqref{eq:cvx-m1-var} gives
$\E[\norm{\zeta_k}^2\mid\mathcal F_{k-1}]\le v_h^2/B$. In the H\"older case,
\eqref{eq:cvx-estimate-var} with $m=1$, $\tau=0$ and $\Cf=C_\eta=4$ applies
to the same $\zeta_k$ by the last claim of \cref{lem:m1-smooth-frame}, so the
smaller of the two bounds holds. By \cref{lem:smoothing-values}(b),
$\varphi=f_h$ satisfies the setting of \cref{app:convex-first-order}, and
\cref{lem:convex-sgd} with the comparator $u$ gives
\[
\E[f_h(\bar z_N)-f_h(u)]\le\frac{R^2}{2\nu N}+\frac{\nu v_h^2}{B}
+D\Bigl(M+\frac{\sqrt d\,\delta}h\Bigr).
\]
Both $\bar z_N$ and $u$ lie in $Q$, so \eqref{eq:cvx-value-transfer-M} or
\eqref{eq:cvx-value-transfer} gives
$f(\bar z_N)-f(u)\le f_h(\bar z_N)-f_h(u)+S(h)$, and $f(u)=\min_Qf$. This
proves \eqref{eq:cvx-value-bound}.

(b) Here $S(h)=Lh^2/2=\eps/16$ and $v_h^2\le\bar v_h^2$. As in
\eqref{eq:cvx-sgd-tuned}, the choice of $\nu$ gives
$R^2/(2\nu N)+\nu v_h^2/B\le LR^2/N+\sqrt2R\bar v_h/\sqrt{NB}$. In this bound
$LR^2/N\le\eps/8$, and $NB\ge T_0/(2d)=128R^2\bar v_h^2/(9\eps^2)$ gives
$\sqrt2R\bar v_h/\sqrt{NB}\le3\eps/8$. The conditions on $M$ and
$\delta$ give $D(M+\sqrt d\delta/h)\le\eps/4$, so the error is at most
$13\eps/16$. For the calls, $B\le T_0/(2dN)+1$ and $N\le8LR^2/\eps+1$ give
$T\le T_0+16dLR^2/\eps+2d$. With $h^2=\eps/(8L)$ and
$\bar M=\eps/(8D)$, the parts $4L^2h^2=L\eps/2$,
$4\bar M^2=\eps^2/(16D^2)$ and
$C_\sigma d\sigma^2/h^2=8C_\sigma d\sigma^2L/\eps$ of $\bar v_h^2$, multiplied by
$256dR^2/(9\eps^2)$, give \eqref{eq:cvx-value-T-M}.

(c) Now $L_\beta D^\gamma h^2/2=\eps/16$ and $(h/D)^2=\eps/(8L_\beta D^\beta)\le1$. Since
$\beta/2=1+\gamma/2$,
\[
L_\beta h^\beta=\frac\eps8\Bigl(\frac{\eps}{8L_\beta D^\beta}\Bigr)^{\gamma/2}\le\frac\eps8 ,
\]
and $2c_\gamma\le1$ gives $S(h)\le3\eps/16$. The rest of the argument of (b)
applies, and the error is at most $15\eps/16$. For the calls, insert
\[
L_\beta^2h^{2\beta-2}=\frac\eps8\,L_\beta D^\gamma\Bigl(\frac hD\Bigr)^{2\gamma},
\qquad
C_\sigma\frac{d\sigma^2}{h^2}=\frac{8C_\sigma d\sigma^2L_\beta D^\gamma}{\eps}
\]
and $4\bar M^2=\eps^2/(16D^2)$ into $T_0$; this gives
\eqref{eq:cvx-value-T-L}.

(d) Replace \cref{lem:convex-sgd} by \cref{lem:convex-agd} for $\varphi=f_h$.
By \eqref{eq:cvx-agd-tuned} with $\bar v_B=\bar v_h/\sqrt B$, the optimization
error is at most $4LR^2/N^2+2R\bar v_h/\sqrt{NB}\le\eps/8+3\eps/8$, since
$NB\ge256R^2\bar v_h^2/(9\eps^2)$. The other terms are as in (b) and (c), and
the transfer inequalities apply because $x_N\in Q$. The contributions to
$T_0$ double, and $2dN=2d\lceil\sqrt{32LR^2/\eps}\,\rceil$.
\end{proof}

\subsection{Explicit call constants on a Euclidean ball}
\label{app:convex-proof-main}

\begin{proof}[Proof of the upper bounds of \cref{sec:convex}]
The gradient-scale bound follows from \cref{cor:direct-convex} and the
value-scale bound from \cref{thm:value-analysis}, applied with $Q=B(R)$,
$x_0=0$ and $D=2R$. We simplify the constants for a general $Q$, using
$R\le D$ and $\lceil y\rceil\le y+1$, and write $x=LR^2/\eps$.

\emph{Gradient scale.} \Cref{cor:direct-convex} uses $\lceil8x\rceil$
rounds, or $\lceil\sqrt{32x}\,\rceil$ rounds in the accelerated form; in
both forms $2mdN\le16md(1+x)$, because $\sqrt{32x}\le4+2x$. Since
$R^2(L_\beta D/\eps)^{2/(\beta-1)}=(L_\beta R^{\beta-1}D/\eps)^{2/(\beta-1)}$, the bound
\eqref{eq:cvx-direct-T0} with $\theta\le2$ gives
\[
T\le T_0+2mdN\le C_\beta\frac{\sigma^2}{\eps^2}\,d^{\,2-\frac\tau{\beta-1}}
\Bigl(\frac{L_\beta R^{\beta-1}D}{\eps}\Bigr)^{\frac2{\beta-1}}+C_\beta md(1+x)
\]
with
\[
C_\beta=\max\Bigl\{\frac{512\,mC_\sigma(4\bfC)^{\frac2{\beta-1}}}{9},\
\frac{32\Cf}{9\bfC^2}+\frac{8C_\eta}{9A_1^2}+16\Bigr\},
\]
which depends only on $\beta$. For $D=2R$, $L_\beta R^{\beta-1}D=2L_\beta R^\beta$,
and the radius and the noise conditions of \cref{cor:direct-convex} take the
form stated in \cref{sec:convex}.

\emph{Value scale.} Let $V=L$, or, for $\beta\in(2,3]$ and the operator norm,
$V=\min\{L,L_\beta D^{\beta-2}\}$. If $V=L$ we use \cref{thm:value-analysis}(b). If
$V=L_\beta D^{\beta-2}<L$ we use \cref{thm:value-analysis}(c), which requires
$\eps\le8L_\beta D^\beta$. Part (d) of that theorem gives the accelerated forms. In
all cases the radius is $h=\sqrt{\eps/(8V)}$ and the conditions on the noise
are $M\le\eps/(8D)$ and $\delta\le\eps h/(8D\sqrt d)$. In the H\"older
branch $h\le D$ and $V<L$, so the second term of \eqref{eq:cvx-value-T-L} is
at most $\frac{128}9dx$. Hence, in both branches,
\[
T\le\frac{2048\,C_\sigma}{9}\,\frac{d^2\sigma^2}{\eps^2}\,\frac{VR^2}{\eps}
+\frac{272}{9}\,dx+\frac{34}{9}\,d ,
\]
and for the accelerated form
\[
T\le\frac{4096\,C_\sigma}{9}\,\frac{d^2\sigma^2}{\eps^2}\,\frac{VR^2}{\eps}
+\frac{256}{9}\,dx+\frac{32}{9}\,d+2d\bigl(\sqrt{32x}+1\bigr).
\]
Both are at most
\[
\frac{2^{12}C_\sigma}{9}\,\frac{d^2\sigma^2}{\eps^2}\,\frac{VR^2}{\eps}+32d(1+x);
\]
for the second use $8\sqrt2\sqrt x\le32x/9+9$.
\end{proof}

\subsection{Matching bounds on the unconstrained class}
\label{app:convex-matching}

In this subsection and the next, $Q=B(R)$, $x_0=0$ and $D=2R$, so that the
problems of \cref{def:cvx-Q} are those of the class $\Fcvx(\beta,L_\beta;L,R)$ and
$\min_Qf=\fstar$.

\paragraph{Lower bounds.}
\Cref{thm:convex-lower-main}(a), proved in \cref{app:convex-lower}, gives
constants $c_\beta,c'_\beta>0$, depending only on $\beta$, with the following
property in the operator geometry. Let $d\ge256$, let the noise be
independent $N(0,\sigma^2)$, which is admissible at levels $(0,\sigma,0)$, let
$L\in(0,\infty]$ and $0<\eps\le c'_\beta\min\{L_\beta R^\beta,LR^2\}$. Then every
algorithm with $\E[f(\hat x)-\fstar]\le\eps$ for all
$f\in\Fcvx(\beta,L_\beta;L,R)$ makes
\begin{equation}
\label{eq:cvx-H1}
T\ge c_\beta\frac{\sigma^2}{\eps^2}
\min\Bigl\{d^{5/2},\ d^2\Bigl(\frac{L_\beta R^\beta}{\eps}\Bigr)^{2/\beta},\
\frac{d^2LR^2}{\eps}\Bigr\}
\end{equation}
calls. With $L_\eps=C_{\mathrm{cap}}L_\beta^{2/\beta}\eps^{1-2/\beta}$ as in its proof,
the last term is the smallest one if and only if $L\le L_\eps$ and
$LR^2/\eps\le C_{\mathrm{cap}}\sqrt d$.

\begin{acorollary}[Curvature matched to the hard instances]
\label{cor:natural-curvature}
Consider the operator geometry, let $c_\beta,c'_\beta,C_{\mathrm{cap}}$ be as in
\eqref{eq:cvx-H1}, let $d\ge256$ and $L,\sigma>0$, and for
$M\le\eps/(16R)$ and $\delta\le\eps^{3/2}/(16R\sqrt{8dL})$ let $T^*(\eps;L)$
be the smallest number of calls with which some algorithm achieves
$\E[f(\hat x)-\fstar]\le\eps$ for all $f\in\Fcvx(\beta,L_\beta;L,R)$ and all
noise admissible at levels $(M,\sigma,\delta)$.
\begin{enumerate}[label=(\alph*)]
\item If
\begin{equation}
\label{eq:cvx-window}
\max\Bigl\{\frac{LR^2}{C_{\mathrm{cap}}\sqrt d},\
\Bigl(\frac{L}{C_{\mathrm{cap}}L_\beta^{2/\beta}}\Bigr)^{\frac\beta{\beta-2}}\Bigr\}
\le\eps\le\min\bigl\{c'_\beta L_\beta R^\beta,\ c'_\beta LR^2,\ \sigma\sqrt d\bigr\},
\end{equation}
then
\[
\frac{c_\beta}{C_{\mathrm{cap}}}\,\frac{d^2\sigma^2LR^2}{\eps^3}\le T^*(\eps;L)
\le\frac{2048\,C_\sigma+294}{9}\,\frac{d^2\sigma^2LR^2}{\eps^3},
\]
with $C_\sigma=(1+2^{-1/2})^2$. The upper bound is attained by
\cref{thm:value-analysis}(b) with $\lceil8LR^2/\eps\rceil$ rounds, and with
twice the constant by its accelerated form (d) with
$\lceil\sqrt{32LR^2/\eps}\,\rceil$ rounds; neither uses $\beta$ or $L_\beta$.
\item If $L\ge L_\eps$ and $L_\beta R^\beta d^{-\beta/4}\le\eps\le c'_\beta L_\beta R^\beta$, then
\[
c_\beta\frac{d^2\sigma^2}{\eps^2}\Bigl(\frac{L_\beta R^\beta}{\eps}\Bigr)^{2/\beta}
\le T^*(\eps;L)\le\frac{2^{12}C_\sigma C_{\mathrm{cap}}}9\,\frac{V}{L_\eps}\,
\frac{d^2\sigma^2}{\eps^2}\Bigl(\frac{L_\beta R^\beta}{\eps}\Bigr)^{2/\beta}+32d\Bigl(1+\frac{LR^2}\eps\Bigr),
\]
where $V=L$, or $V=\min\{L,L_\beta(2R)^{\beta-2}\}$ if $2<\beta\le3$ and
$\eps\le8L_\beta(2R)^\beta$.
\end{enumerate}
The window \eqref{eq:cvx-window} is nonempty if and only if its left end is
at most its right end; this requires $LR^2\le c\,L_\beta R^\beta$ and
$d\ge c''$ for numbers $c,c''$ depending only on $\beta$.
\end{acorollary}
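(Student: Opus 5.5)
The plan has two halves, and both are assembled from results already established. The lower bounds come from \eqref{eq:cvx-H1} together with the identification of which term attains the minimum. The upper bounds come from \cref{thm:value-analysis}, or from the explicit ball form derived in \cref{app:convex-proof-main}, with $Q=B(R)$, $x_0=0$ and $D=2R$. First we check that the noise hypotheses of \cref{cor:natural-curvature} fit those of \cref{thm:value-analysis}(b):
\[
M\le\frac{\eps}{16R}=\frac{\eps}{8D},\qquad h=\sqrt{\eps/(8L)},\qquad \frac{\eps h}{8D\sqrt d}=\frac{\eps^{3/2}}{16R\sqrt{8dL}} .
\]
So the assumed levels are exactly the admissible ones. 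Gaussian noise is admissible at $(0,\sigma,0)$, which is below these levels, so the lower bound for that noise is also a lower bound for $T^*$.

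\emph{Part (a).} The left inequality of \eqref{eq:cvx-window} says $LR^2/\eps\le C_{\mathrm{cap}}\sqrt d$ and $L\le C_{\mathrm{cap}}L_\beta^{2/\beta}\eps^{1-2/\beta}=L_\eps$. By the characterization stated after \eqref{eq:cvx-H1}, the last term of the minimum is then the smallest one. The upper end of the window supplies the range $\eps\le c'_\beta\min\{L_\beta R^\beta,LR^2\}$ that \eqref{eq:cvx-H1} requires. This gives $T\ge c_\beta d^2\sigma^2LR^2/\eps^3$, and since $C_{\mathrm{cap}}\ge1$ may be assumed, it implies the stated $c_\beta/C_{\mathrm{cap}}$ form.

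For the upper bound we use \eqref{eq:cvx-value-T-M}. It remains to absorb the lower-order terms $\frac{272}{9}dLR^2/\eps+(2+\frac{16R^2}{9D^2})d$ into a multiple of $d^2\sigma^2LR^2/\eps^3$. Since $\eps\le\sigma\sqrt d$, we have $d\sigma^2/\eps^2\ge1$, hence $dLR^2/\eps\le d^2\sigma^2LR^2/\eps^3$. Since $\eps\le c'_\beta LR^2$ with $c'_\beta\le1$, we have $LR^2/\eps\ge1$, so the constant term $(2+4/9)d$ is also dominated. Adding up gives $272+22=294$ over $9$. For the accelerated form (d) the computation is the same and the constants double.

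\emph{Part (b).} For the lower bound we show that the middle term of \eqref{eq:cvx-H1} is the minimum. The condition $\eps\ge L_\beta R^\beta d^{-\beta/4}$ gives $(L_\beta R^\beta/\eps)^{2/\beta}\le\sqrt d$. The condition $L\ge L_\eps$ gives
\[
\frac{LR^2}{\eps}\ge C_{\mathrm{cap}}L_\beta^{2/\beta}R^2\eps^{-2/\beta}\ge\Bigl(\frac{L_\beta R^\beta}{\eps}\Bigr)^{2/\beta}.
\]
For the upper bound we apply the ball form of the value-scale bound from \cref{app:convex-proof-main},
\[
T\le \frac{2^{12}C_\sigma}{9}\,\frac{d^2\sigma^2VR^2}{\eps^3}+32d\Bigl(1+\frac{LR^2}{\eps}\Bigr),
\]
and write $VR^2/\eps=(V/L_\eps)\cdot L_\eps R^2/\eps=(V/L_\eps)\,C_{\mathrm{cap}}(L_\beta R^\beta/\eps)^{2/\beta}$.

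\emph{Nonemptiness of the window.} Raising the left end to the appropriate powers shows that the constraint $\eps\ge(L/(C_{\mathrm{cap}}L_\beta^{2/\beta}))^{\beta/(\beta-2)}$ is compatible with $\eps\le c'_\beta LR^2$ only if $LR^2\lesssim L_\beta R^\beta$. The constraint $LR^2/(C_{\mathrm{cap}}\sqrt d)\le c'_\beta LR^2$ forces $\sqrt d\ge 1/(C_{\mathrm{cap}}c'_\beta)$, which is the lower bound $d\ge c''$.

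\emph{Main obstacle.} The delicate point is the lower bound: identifying which term of the minimum in \eqref{eq:cvx-H1} is active relies on the precise definition of $L_\eps$ inside the proof of \cref{thm:convex-lower-main}. The upper bounds are bookkeeping of constants.
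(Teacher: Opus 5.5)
Your plan is the paper's: lower bounds from \eqref{eq:cvx-H1} by identifying the active term of the minimum, and upper bounds from \cref{thm:value-analysis}(b),(d) on $B(R)$ with $D=2R$ and from the ball form of \cref{app:convex-proof-main}. The matching of the noise levels and the absorption $272+22=294$ are also the same. (In (d) there is an extra term $2d\lceil\sqrt{32LR^2/\eps}\,\rceil$. With $\lceil\sqrt{32x}\,\rceil\le16x+\frac32$ it still fits into twice the constant, so ``the computation is the same'' needs that one line.)

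There is a gap in the lower bound of (b). The bound \eqref{eq:cvx-H1} is stated only for $\eps\le c'_\beta\min\{L_\beta R^\beta,LR^2\}$. Part (b) assumes only $\eps\le c'_\beta L_\beta R^\beta$ and $L\ge L_\eps$, and these do not imply $\eps\le c'_\beta LR^2$. At $L=L_\eps$ that condition reads $\eps\le(c'_\beta C_{\mathrm{cap}})^{\beta/2}L_\beta R^\beta$. Recall $c'_\beta=\min\{c'_0,(c'_0)^{2/\beta}/C_{\mathrm{cap}}\}$ with $c'_0$ from the proof of \cref{thm:convex-lower-main}(a). The cutoff $(c'_\beta C_{\mathrm{cap}})^{\beta/2}L_\beta R^\beta$ lies strictly below $c'_\beta L_\beta R^\beta$ whenever $c'_\beta=c'_0<(c'_0)^{2/\beta}/C_{\mathrm{cap}}$, which is the typical case since $c'_0$ is tiny. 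So for $L$ near $L_\eps$ and $\eps$ in the upper part of the range of (b), the bound you cite does not apply.

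The fix is to look inside the proof of \cref{thm:convex-lower-main}(a), as the paper does. For $L\ge L_\eps$ the truncated seminorm $\ell=\min\{L_\beta,(L/C_{\mathrm{cap}})^{\beta/2}\eps^{-(\beta-2)/2}\}$ equals $L_\beta$. The hard instances of \cref{thm:convex-lower} then have Hessian cap $C_{\mathrm{cap}}L_\beta^{2/\beta}\eps^{1-2/\beta}=L_\eps\le L$. The only remaining accuracy restriction is $\eps\le c'_0L_\beta R^\beta$, which follows from $\eps\le c'_\beta L_\beta R^\beta$. Equivalently, apply the case $L=\infty$ and use that its hard instances have curvature $L_\eps$.

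A smaller point: ``$C_{\mathrm{cap}}\ge1$ may be assumed'' is not a free normalization. $C_{\mathrm{cap}}$ is fixed by \eqref{eq:cvx-H1} and enters $c'_\beta$, $L_\eps$ and the window. Without it, in (a) the displayed third term $d^2LR^2/\eps$ need not be the minimum, since the window only gives $LR^2/\eps\le C_{\mathrm{cap}}\sqrt d$. It is cleaner to use the unnormalized bound from that proof, whose last term is $d^2LR^2/(C_{\mathrm{cap}}\eps)$ with a constant at least $c_\beta$. There the window of (a) makes that term exactly the minimum, which gives the $c_\beta/C_{\mathrm{cap}}$ form directly. In (b), $L\ge L_\eps$ removes that term altogether, with no assumption on $C_{\mathrm{cap}}$.
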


\begin{proof}
(a) The lower end of the window makes the last term of \eqref{eq:cvx-H1} the
smallest one, which gives the lower bound, since Gaussian noise is
admissible at every level. For the upper bound apply
\cref{thm:value-analysis}(b) on $Q=B(R)$, $D=2R$, with $h=\sqrt{\eps/(8L)}$;
its admissible levels are those defining $T^*(\eps;L)$, and
\eqref{eq:cvx-value-T-M} gives
$T\le\frac{2048C_\sigma}9\frac{d^2\sigma^2LR^2}{\eps^3}+\frac{272}9\frac{dLR^2}\eps+\frac{22}9d$.
Since $\eps\le\sigma\sqrt d$ and $LR^2/\eps\ge1/c'_\beta\ge1$, the last two
terms are at most $\frac{294}9d^2\sigma^2LR^2/\eps^3$. For acceleration use
$\lceil\sqrt{32x}\rceil\le16x+3/2$ with $x=LR^2/\eps$ in part (d) of that
theorem.

(b) Since $L\ge L_\eps$, the proof of \cref{thm:convex-lower-main}(a)
applies with $\ell=L_\beta$ whenever $\eps\le c'_\beta L_\beta R^\beta$, and the lower end
of the range makes the second term of \eqref{eq:cvx-H1} the smallest one. The upper bound is the value-scale bound
of \cref{app:convex-proof-main} with
$d^2\sigma^2VR^2/\eps^3=(V/L_\eps)\,C_{\mathrm{cap}}(d^2\sigma^2/\eps^2)(L_\beta R^\beta/\eps)^{2/\beta}$.
\end{proof}

\subsection{Admissible level of Lipschitz noise}
\label{app:convex-lipschitz}

\begin{acorollary}[Admissible level of Lipschitz noise]
\label{cor:mmax-cvx}
Let $\beta>2$, in either geometry, let $\eps>0$ and $\sigma\ge0$, and let
$M_{\max}(\eps,T)$ be the admissible level of \cref{sec:setup} on
$\Fcvx(\beta,L_\beta;L,R)$. Then $M_{\max}(\eps,T)\ge\eps/(16R)$ for every
\[
T\ge\frac{2048\,C_\sigma}{9}\,\frac{d^2\sigma^2LR^2}{\eps^3}
+\frac{272}{9}\,\frac{dLR^2}{\eps}+\frac{22}{9}\,d .
\]
If $\eps<LR^2/2$, then also $M_{\max}(\eps,T)\le2\eps/R$ for every $T$
(\cref{prop:lipschitz-floor}). The constants $1/16$ and $2$ do not depend on
$d$, $T$, $\sigma$, $\beta$ or the geometry.
\end{acorollary}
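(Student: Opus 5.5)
The lower bound is already in the paper: \cref{prop:lipschitz-floor}(a) with $p=2$. I only need to check that for $\eps<LR^2/2$ the quadratic instances belong to $\Fcvx(\beta,L_\beta;L,R)$ in either geometry, which the final item of that proposition states for $p=2$. This gives $M_{\max}(\eps,T)\le2\eps/R$ for every $T$.

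For the achievable level I would apply \cref{thm:value-analysis}(b) on $Q=B(R)$ with $x_0=0$ and $D=2R$. Every $f\in\Fcvx(\beta,L_\beta;L,R)$ is $C^q$ on $\R^d$ with $0\preceq\nabla^2f\preceq LI$. Hence it is convex with an $L$-Lipschitz gradient on $Q_h$, and its minimizer lies in $B(R)$, so $\min_Qf=\fstar$. The curvature branch uses neither $\beta$ nor $L_\beta$, so the geometry does not matter.

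With $D=2R$, the condition $M\le\bar M=\eps/(8D)$ becomes $M\le\eps/(16R)$. We take $\delta=0$, as the definition of $M_{\max}$ requires, so the condition on $\delta$ holds trivially. The theorem then gives $\E[f(\bar z_N)-\fstar]\le\eps$ for every noise admissible at levels $(M,\sigma,0)$. The number of calls is bounded by \eqref{eq:cvx-value-T-M}. With $D=2R$ the last term is $(2+\frac{16R^2}{9\cdot4R^2})d=(2+\frac49)d=\frac{22}{9}d$, which matches the stated threshold.

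Finally, a method using fewer calls than the budget is still a method with at most $T$ calls. So for every $T$ above the threshold some method is $\eps$-accurate at level $\eps/(16R)$, which gives $M_{\max}(\eps,T)\ge\eps/(16R)$. The only subtlety is that the output must be $\eps$-accurate against the unconstrained minimum $\fstar$, and that holds because the minimizer lies in $Q$. I expect no real obstacle; the main work is matching constants with $D=2R$. Both constants, $1/16$ and $2$, come from the explicit choices $\bar M=\eps/(8D)$ and the quadratic pair, and neither depends on $d$, $T$, $\sigma$, $\beta$ or the geometry.
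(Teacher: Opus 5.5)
Your proposal is correct and matches the paper's proof. The achievable level comes from \cref{thm:value-analysis}(b) on $Q=B(R)$ with $x_0=0$, $D=2R$ and $\delta=0$, which gives $\bar M=\eps/(16R)$ and the additive term $(2+\frac49)d=\frac{22}{9}d$. The converse $M_{\max}(\eps,T)\le2\eps/R$ is the first consequence listed in \cref{prop:lipschitz-floor} with $p=2$. What you call the ``lower bound'' is this impossibility result, that is, the upper bound on $M_{\max}$.
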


\begin{proof}
Apply \cref{thm:value-analysis}(b) with $Q=B(R)$, $x_0=0$, $D=2R$ and
$\delta=0$. It applies to every $f\in\Fcvx(\beta,L_\beta;L,R)$, for every $\beta$
and every $\eps>0$, with $\bar M=\eps/(8D)=\eps/(16R)$, and for $D=2R$
the right-hand side of \eqref{eq:cvx-value-T-M} is the stated number of
calls. The upper bound is the first consequence listed in
\cref{prop:lipschitz-floor}, with $p=2$.
\end{proof}

\section{The smooth convex lower bound}
\label{app:convex-lower}

This appendix proves \cref{thm:convex-lower-main}(a). It uses the hidden-map
construction of \citet{RajaramanHan2026}, whose hard functions are convex and
Lipschitz but not differentiable; theorem and lemma numbers of that paper
refer to its version~3. \Cref{sec:smooth-family} smooths the family: the
Euclidean norm in the hidden tube is replaced by $\sqrt{1+\norm{\cdot}^2}-1$,
and the hinge that joins the function to a conic branch outside the unit ball
\citep[Section~4.2]{RajaramanHan2026} is replaced by a hinge of class
$C^{q+2}$. \Cref{sec:information} carries the information argument of
\citet{RajaramanHan2026} over to this family with explicit constants, and
\cref{sec:convex-lower} rescales the family to the convex classes.

\subsection{A smooth hidden-map family}
\label{sec:smooth-family}

Throughout this appendix $d=2d_0$, $\beta>2$, $q=\lceil\beta\rceil-1$ and
$\gamma=\beta-q$. We write $B_2^n(R)$ for the closed Euclidean ball of
radius $R$ in $\R^n$ and $B_2^n=B_2^n(1)$. Norms of derivative tensors are
multilinear operator norms. The symbols $W$, $u$, $\pi$, $M_j$, $S_k$, $E_k$,
$K_\beta^{\mathrm{tube}}$ and $U_j$ are local to this appendix: $W$ is a
matrix, and the constants are unrelated to the Hessian bound $L$ and to the
stencil constants $S_1,S_2$.

\paragraph{The tube.}
For $0<\vartheta\le1/2$, $r\ge16\vartheta$, a $d_0\times d_0$ matrix $W$, a
vector $u\in\R^{d_0}$ and $z=(x,y)$ with $x,y\in\R^{d_0}$, set
\begin{align}
 B_W(x,y)&=Wx/r-y/(8\vartheta),&
 a_W(x,y)&=\sqrt{1+\norm{B_W(x,y)}^2}-1,\nonumber\\
 b_u(x,y)&=\tfrac12\norm{x}^2-\inner{x}{u},&
 f_{W,u}(x,y)&=\vartheta\{\log(e^{a_W(x,y)}+e^{b_u(x,y)})-\log2\}.
 \label{eq:smooth-tube}
\end{align}
With $\norm{B_W}$ in place of $a_W$ and $\eps=\vartheta$, this is the hard
function of \citet[Section~2]{RajaramanHan2026}. The bounded parameter set is
$E_b=\{\norm{W}_{\op}\le4,\ \norm{u}\le3/8\}$.

\begin{alemma}[Geometry of the smooth tube]\label{lem:tube-geometry}
The function $f=f_{W,u}$ is smooth and convex on $\R^{2d_0}$, and
$f(0)=0$. It has the global minimizer
$z_*=(u,(8\vartheta/r)Wu)$, and for every $z=(x,y)$,
\[
 f(z)-f(z_*)\ge
 \frac{\vartheta}{2(1+\exp(\norm{u}^2/2))}\norm{x-u}^2.
\]
On $E_b$, $\norm{z_*}\le\rho=3\sqrt5/8$ and $f$ is $1$-Lipschitz
on the unit ball.
\end{alemma}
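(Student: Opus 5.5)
The plan is to decompose $f$ as $\vartheta$ times the log-sum-exp $\Psi(s,t)=\log(e^s+e^t)$, evaluated at $(a_W(z),b_u(x))$, minus $\vartheta\log2$. Each claim then follows from elementary properties of $\Psi$, $a_W$ and $b_u$.

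\emph{Smoothness, convexity and $f(0)=0$.} I will first note that $s\mapsto\sqrt{1+\norm s^2}-1$ is smooth and convex, with gradient of norm $\norm s/\sqrt{1+\norm s^2}<1$. Composing it with the linear map $B_W$ gives $a_W\ge0$, smooth and convex, with $a_W=0$ exactly where $B_W=0$. The function $b_u$ is a convex quadratic in $x$. Since $\Psi$ is smooth, convex and nondecreasing in each argument, the composition $f$ is smooth and convex. At the origin $a_W=b_u=0$, so $f(0)=\vartheta(\log2-\log2)=0$.

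\emph{Minimizer and growth.} I will minimize the two arguments separately. The quadratic $b_u$ is minimized at $x=u$, with value $-\norm u^2/2$. The function $a_W$ vanishes when $y=(8\vartheta/r)Wx$. Both minima are attained at $z_*=(u,(8\vartheta/r)Wu)$, so monotonicity of $\Psi$ makes $z_*$ a global minimizer. For the growth bound, I will use $a_W\ge0$ to get
\[
f(z)\ge\vartheta\{\phi(b_u(x))-\log2\},\qquad \phi(t)=\log(1+e^t),
\]
with equality at $z_*$. Moreover $b_u(x)-b_u(u)=\norm{x-u}^2/2$. Since $\phi$ is increasing with $\phi'(t)=1/(1+e^{-t})$, on $t\ge b_u(u)$ we have $\phi'(t)\ge 1/(1+\exp(\norm u^2/2))$. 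The mean value theorem then gives the stated inequality.

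\emph{Bounds on $E_b$.} From $r\ge16\vartheta$ we get $8\vartheta/r\le1/2$. Hence $\norm{(8\vartheta/r)Wu}\le\frac12\cdot4\cdot\frac38=\frac34$, and $\norm{z_*}^2\le\frac9{64}+\frac{36}{64}=\frac{45}{64}$, which gives $\rho=3\sqrt5/8$.

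\emph{Lipschitz bound.} The gradient has the form $\nabla f=\vartheta\{p\nabla a_W+(1-p)\nabla b_u\}$ with $p\in[0,1]$, so $\norm{\nabla f}\le\vartheta\max\{\norm{\nabla a_W},\norm{\nabla b_u}\}$. The Jacobian $[W/r,\ -I/(8\vartheta)]$ of $B_W$ has operator norm at most $4/r+1/(8\vartheta)\le3/(8\vartheta)$. Combined with the unit bound on the gradient of the outer function, this gives $\vartheta\norm{\nabla a_W}\le3/8$. On the unit ball $\norm{\nabla b_u}=\norm{x-u}\le11/8$, and $\vartheta\le1/2$ gives $\vartheta\norm{\nabla b_u}\le11/16$. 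Therefore $\norm{\nabla f}\le1$ on the unit ball, which is convex, so $f$ is $1$-Lipschitz there.

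The only step needing care is the growth bound: the lower bound on $\phi'$ must be taken over the whole range $t\ge b_u(u)$, not at a single point. All other steps are routine.
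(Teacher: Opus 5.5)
Your proposal is correct and follows the paper's proof essentially step by step. Both use monotonicity and convexity of log-sum-exp for the minimizer, $a_W\ge0$ together with a lower bound on $\phi'$ over $t\ge-\norm{u}^2/2$ for the growth inequality, and a convex-combination bound on $\nabla f$ for the Lipschitz claim. The only difference is cosmetic: you bound the operator norm of $B_W$ by $4/r+1/(8\vartheta)$ instead of the paper's $(16/r^2+1/(64\vartheta^2))^{1/2}$. This gives $3/8$ instead of $\sqrt5/8$, which is still below the $11/16$ coming from the quadratic branch.
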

\begin{proof}
The proof follows that of \citet[Lemmas~A.1 and A.2]{RajaramanHan2026}
for the tube $\norm{B_W}$. Both arguments of the log-sum-exp are convex, and log-sum-exp is
convex and coordinatewise increasing; the minima $0$ of $a_W$ and
$-\norm{u}^2/2$ of $b_u$ are both attained at $z_*$. Since $a_W\ge0$,
$b_u(z)+\norm{u}^2/2=\norm{x-u}^2/2$, and $h(t)=\log(1+e^t)$ satisfies
$h'(t)\ge(1+e^{\norm{u}^2/2})^{-1}$ for $t\ge-\norm{u}^2/2$, integrating $h'$
gives the error bound. On $E_b$, $\nabla f$ is a convex combination of
$\vartheta\nabla a_W(z)=\vartheta B_W^\top B_W(z)/(1+\norm{B_W(z)}^2)^{1/2}$,
where $B_W^\top$ is the adjoint of the linear map $B_W$, and of
$\vartheta(x-u,0)$. Their norms are at most
$\vartheta\norm{B_W}_{\op}\le(16\vartheta^2/r^2+1/64)^{1/2}\le\sqrt5/8$ and,
on the unit ball, $\vartheta(1+3/8)\le11/16$. Finally
$\norm{(8\vartheta/r)Wu}\le2\norm{u}$, so $\norm{z_*}\le\sqrt5\,\norm{u}\le\rho$.
\end{proof}

\paragraph{A smooth hinge.}
Let $n=q+1$ and $\lambda_0=1/64$, and define
\[
 \varphi_n(t)=\frac{(2n+1)!}{(n!)^2}t^n(1-t)^n\mathbf1_{[0,1]}(t),
 \quad H_n(t)=\int_{-\infty}^t\varphi_n(v)\,dv,
 \quad \Psi(s)=\lambda_0\int_{-\infty}^{(s+\lambda_0)/\lambda_0}H_n(t)\,dt.
\]
This hinge replaces the $C^2$ hinge of \citet[Appendix~C]{RajaramanHan2026}.
It satisfies $\Psi\in C^{n+1}=C^{q+2}$, $0\le\Psi'\le1$, $\Psi''\ge0$ and
$\Psi(s)=0$ for $s\le-\lambda_0$. Symmetry of $\varphi_n$ gives
$\Psi(s)=s+\lambda_0/2$ for $s\ge0$, and convexity gives
$\Psi(s)\ge s+\lambda_0/2$ for every $s$. Put
\[
 V_n=\max_t\varphi_n(t)=\frac{(2n+1)!}{(n!)^2 4^n},\qquad
 \vartheta_\beta=\min\{1/2,1/V_n\}.
\]
Since $\Psi''(s)=\varphi_n((s+\lambda_0)/\lambda_0)/\lambda_0$, we have
$\lambda_0\vartheta\norm{\Psi''}_\infty\le1$ whenever
$\vartheta\le\vartheta_\beta$. Put $U_1=1$ and, for $2\le j\le q+1$,
$U_j=\lambda_0^{1-j}\norm{\varphi_n^{(j-2)}}_{\infty,[0,1]}$; these are
maxima of fixed polynomials, and $\abs{\Psi^{(j)}}\le U_j$ for
$1\le j\le q+1$.

\paragraph{Global extension.}
Set $\Gamma(z)=\lambda_0+(\norm{z}-1)/2$ and define
\begin{equation}
 g_{W,u}(z)=
 \begin{cases}
 \Gamma(z)+\Psi(\lambda_0 f_{W,u}(z)-\Gamma(z)-2\lambda_0),&\norm{z}<1,\\
 \Gamma(z),&\norm{z}\ge1.
 \end{cases}
 \label{eq:smooth-extension}
\end{equation}
For arbitrary $(W,u)$, the same formula defines the mean $\mu_{W,u}$ of the
auxiliary experiment in \cref{sec:information}.

\begin{alemma}[Smooth extension and global decoding]\label{lem:extension}
For $(W,u)\in E_b$, $g$ is convex, globally $1/2$-Lipschitz, and
$C^{q+1}$ on $\R^{2d_0}$. It equals $\Gamma$ whenever
$\norm{z}\ge30/31$, and $z_*$ is a global minimizer. Let
$\Pi(z)=z/(1\vee\norm{z})$ be the radial projection and $\Pi^1(z)\in\R^{d_0}$
its $x$-component. Then
\begin{align}
 g(z)-g(z_*)&\ge\lambda_0\{f(\Pi(z))-f(z_*)\}\nonumber\\
 &\ge\frac{\lambda_0\vartheta}{2(1+e^{9/128})}
       \norm{\Pi^1(z)-u}^2.
 \label{eq:smooth-decoder}
\end{align}
\end{alemma}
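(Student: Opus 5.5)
The plan is to prove the properties in the order in which the lemma states them, in the spirit of \citet[Appendix~C]{RajaramanHan2026}, working on the open unit ball with the argument $s(z)=\lambda_0 f(z)-\Gamma(z)-2\lambda_0$ of the hinge.

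\emph{Step 1: where the hinge is off.} By \cref{lem:tube-geometry}, $f(0)=0$ and $f$ is $1$-Lipschitz on the unit ball, so $\abs{f(z)}\le\norm z$ there. Then
\[
s(z)\le\lambda_0\norm z-\tfrac{\norm z-1}{2}-3\lambda_0 .
\]
At $\norm z=30/31$ this equals $\lambda_0\cdot\tfrac{30}{31}+\tfrac1{62}-\tfrac3{64}<-\lambda_0$, and the right-hand side is decreasing in $\norm z$ because $\lambda_0<1/2$. Since $\Psi=0$ below $-\lambda_0$, we get $g=\Gamma$ on $30/31\le\norm z<1$. The two branches therefore agree on a neighborhood of the unit sphere.

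\emph{Step 2: smoothness.} Near $\norm z\ge30/31$ we have $g=\Gamma$, which is smooth away from the origin. Inside the ball, $g=\Gamma+\Psi\circ s$ with $\Psi\in C^{q+2}$ and $f$, $\Gamma$ smooth on $z\ne0$. Near $z=0$, the term $\norm z$ in $\Gamma$ is not smooth, but there $s(z)\ge-\lambda_0\norm z+\tfrac12-\tfrac{\norm z}2-3\lambda_0>0$. On that region $\Psi(s)=s+\lambda_0/2$, so $g=\lambda_0f-2\lambda_0+\lambda_0/2+\Gamma-\Gamma$ is smooth. Hence $g\in C^{q+1}$, and in fact $C^{q+2}$, which is stronger.

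\emph{Step 3: convexity and the Lipschitz constant.} Write $g=\Gamma+\Psi(\lambda_0f-\Gamma-2\lambda_0)$. Because $\Psi$ is convex, nondecreasing and $1$-Lipschitz, $g=\max$-like: on the ball,
\[
g=(1-\Psi'(s))\Gamma+\Psi'(s)(\lambda_0 f-2\lambda_0)+\dots
\]
Directly, I would use the representation
\[
\Psi(s)=\sup_{t}\bigl(t s-\Psi^*(t)\bigr),\qquad t\in[0,1].
\]
This gives
\[
g=\sup_{t\in[0,1]}\bigl[(1-t)\Gamma+t(\lambda_0f-2\lambda_0)-\Psi^*(t)\bigr],
\]
a supremum of convex functions on the ball, because $\Gamma$ is convex and $f$ is convex. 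Convexity is local and $g$ agrees with $\Gamma$ near the sphere, so $g$ is convex on $\R^{2d_0}$.

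The gradient is a convex combination of $\nabla\Gamma$ and $\lambda_0\nabla f$, with norms at most $\tfrac12$ and $\lambda_0\le\tfrac12$. So $g$ is $\tfrac12$-Lipschitz.

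\emph{Step 4: minimizer and decoding.} At $z_*$ we have $\norm{z_*}\le\rho<1/2$, and $s(z_*)\ge0$ by the bound of Step 2 applied with $f(z_*)\ge-\norm{z_*}$. So $g=\lambda_0f-\tfrac32\lambda_0$ near $z_*$, and $\nabla g(z_*)=0$; convexity then makes $z_*$ a global minimizer.

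For the decoding inequality \eqref{eq:smooth-decoder}, use $\Psi(s)\ge s+\lambda_0/2$, which gives $g\ge\lambda_0f-\tfrac32\lambda_0$ on the ball, with equality at $z_*$. This yields $g(z)-g(z_*)\ge\lambda_0\{f(z)-f(z_*)\}$ for $\norm z<1$.

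For $\norm z\ge1$, compare with $\Pi(z)=z/\norm z$. We have $g(z)\ge g(\Pi(z))$, since $\Gamma$ increases radially and $g=\Gamma$ at the sphere. Applying the previous inequality at the boundary point, where $g(\Pi(z))=\Gamma\ge\lambda_0 f(\Pi(z))-\tfrac32\lambda_0$ by continuity, completes the first line. The second line is \cref{lem:tube-geometry} with $\norm u^2/2\le9/128$.

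\emph{Main obstacle.} The main obstacle is Step 1. Its numerical margin must be verified carefully, and the region near the origin where $\Gamma$ is nonsmooth must be shown to lie strictly in the linear part of $\Psi$.
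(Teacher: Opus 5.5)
Your proof is correct in substance but is organized differently from the paper's.

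\textbf{The paper's route.} The paper reproves only smoothness, and that argument is your Step~2. For convexity, the radius $30/31$, the Lipschitz constant and the first decoding inequality, it defers to the proof of Lemma~4.3 of \citet{RajaramanHan2026}, after checking the short list of properties of $f$ and $\Psi$ that this proof uses. It then identifies $g$ with $\Gamma+\Psi(\lambda_0\widetilde f-\Gamma-2\lambda_0)$, where $\widetilde f(z)=\inf_{\norm v\le1}\{f(v)+\norm{z-v}\}$ is the convex $1$-Lipschitz extension of $f$ from the unit ball. This single formula is defined and convex on all of $\R^{2d_0}$, and the bound $\widetilde f\le\norm z$ switches the hinge off for $\norm z\ge30/31$.

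\textbf{Your route.} You avoid $\widetilde f$ entirely. The conjugate representation $\Psi(s)=\sup_{t\in[0,1]}(ts-\Psi^*(t))$ is legitimate because $0\le\Psi'\le1$ forces $\mathrm{dom}\,\Psi^*\subseteq[0,1]$. It makes $g$ a supremum of convex functions on the open ball. Since a continuous function that is convex near every point of $\R^{2d_0}$ is convex, you can then patch it with $g=\Gamma$ on $\{\norm z>30/31\}$.

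The bound $\norm{\nabla g}\le1/2$ follows directly from $\nabla g=(1-\Psi'(s))\nabla\Gamma+\Psi'(s)\lambda_0\nabla f$, with $\nabla g=\lambda_0\nabla f$ near the origin. The decoding inequality follows directly from $\Psi(s)\ge s+\lambda_0/2$ and the radial monotonicity of $\Gamma$.

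Your argument is self-contained and also yields the slightly stronger, correct conclusion $g\in C^{q+2}$. The paper's argument is shorter because it reuses an existing proof.

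\textbf{Slips to fix.} Two numerical statements need correcting; neither breaks the logic.
\begin{itemize}
\item At $\norm z=30/31$, your bound on the hinge argument equals $-\lambda_0$ exactly rather than being smaller. Equality suffices, since $\Psi=0$ on $(-\infty,-\lambda_0]$.
\item $\rho=3\sqrt5/8\approx0.84$ is not below $1/2$. What Step~4 actually needs is $\rho<29/33$, so that $s(z_*)\ge(29-33\norm{z_*})/64>0$, and this does hold.
\end{itemize}
Finally, the ``$\max$-like'' display in Step~3 is not an identity; drop it, since the conjugate representation replaces it.
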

\begin{proof}
Apart from the smoothness, these statements are
\citet[Lemma~4.3]{RajaramanHan2026} together with its proof in
\citet[Appendix~C.1]{RajaramanHan2026}, where the radius $30/31$, the
Lipschitz constant $1/2$ and the first inequality in \eqref{eq:smooth-decoder}
are obtained for their tube and hinge. That proof uses only the following
properties, which hold here by \cref{lem:tube-geometry} and the properties of
$\Psi$ listed above: $f$ is convex and $1$-Lipschitz on the unit ball,
$f(0)=0$, and $z_*$ minimizes $f$ with $\norm{z_*}\le\rho<29/33$; $\Psi$ is
convex, $0\le\Psi'\le1$, $\Psi=0$ on $(-\infty,-\lambda_0]$,
$\Psi(s)=s+\lambda_0/2$ on $[0,\infty)$ and $\Psi(s)\ge s+\lambda_0/2$.
In that proof $g$ is written as $\Gamma+\Psi(\lambda_0\widetilde f-\Gamma-2\lambda_0)$
with the convex $1$-Lipschitz extension
$\widetilde f(z)=\inf_{v\in B_2^{2d_0}}\{f(v)+\norm{z-v}\}$, which equals $f$ on
the unit ball. Since $\widetilde f(z)\le\norm{z}$, the hinge argument is at
most $(29-31\norm{z})/64$, hence at most $-\lambda_0$ for $\norm{z}\ge30/31$,
where the hinge term therefore vanishes; so this function agrees with
\eqref{eq:smooth-extension}. The second inequality in
\eqref{eq:smooth-decoder} follows from \cref{lem:tube-geometry} and
$\norm{u}^2/2\le9/128$.

It remains to prove smoothness. For $\norm{z}\le1/4$, $f(z)\ge-\norm{z}$, so
the hinge argument is at least $(29-33\norm{z})/64>0$ and
$g=\lambda_0f-3\lambda_0/2$; the nonsmooth norm in $\Gamma$ cancels. On
$0<\norm{z}<1$ all ingredients are $C^{q+1}$, and $g=\Gamma$ on the collar
$30/31\le\norm{z}\le1$ and outside it. Hence $g\in C^{q+1}(\R^{2d_0})$.
\end{proof}

\paragraph{Derivative constants.}
For $k\ge1$, let $(1/2)_{\underline{j}}$ denote the falling factorial and set
\[
 P_k=\sum_{a+2b=k}\frac{k!\,2^a}{a!b!}
       \abs{(1/2)_{\underline{a+b}}},\qquad M_j=2^{j-1}(j-1)!.
\]
Let $\mathfrak P_k$ be the set of partitions of $\{1,\ldots,k\}$, and define
\begin{align*}
 C_k&=P_k+\sum_{\pi\in\mathfrak P_k}M_{\abs{\pi}}
                   \prod_{A\in\pi}(P_{\abs{A}}+2),\\
 G_k&=\tfrac12P_k4^{k-1},\qquad S_k=\lambda_0 C_k+G_k,\\
 E_k&=G_k+\lambda_0 C_k+
       \sum_{\pi\in\mathfrak P_k}U_{\abs{\pi}}\prod_{A\in\pi}S_{\abs{A}}
       \quad(1\le k\le q+1),\\
 K_\beta^{\mathrm{tube}}&=(2E_q)^{1-\gamma}E_{q+1}^{\gamma}.
\end{align*}

\begin{alemma}[Global smoothness and Hessian cap]\label{lem:derivatives}
For $0<\vartheta\le\vartheta_\beta$, $r\ge16\vartheta$, and $(W,u)\in E_b$,
\[
 \sup_z\norm{D^k g(z)}_{\op}\le E_k\vartheta^{1-k}
 \quad(1\le k\le q+1),\qquad
 [g]_{\beta,\op}\le K_\beta^{\mathrm{tube}}\vartheta^{1-\beta},\qquad
 0\preceq\nabla^2g\preceq E_2\vartheta^{-1}I.
\]
The constants depend only on $\beta$.
\end{alemma}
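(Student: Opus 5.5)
The plan is to bound derivatives of each ingredient of \eqref{eq:smooth-extension} in turn and then combine them with the multivariate Fa\`a di Bruno formula, whose partition sums are exactly the ones in $C_k$ and $E_k$. Throughout, $0<\norm z<1$. On the collar $\norm z\ge30/31$ and outside it $g=\Gamma$; on $\norm z\le1/4$ we have $g=\lambda_0f-3\lambda_0/2$ by \cref{lem:extension}. So it suffices to bound $D^kg$ on $1/4<\norm z<1$ through the formula $\Gamma+\Psi(\lambda_0f-\Gamma-2\lambda_0)$, and on $\norm z\le 1/4$ through $\lambda_0 D^kf$ alone.

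\emph{Step 1: the norm.} Write $\norm z=(\norm z^2)^{1/2}$ and expand by Fa\`a di Bruno, using $D(\norm z^2)[v]=2\inner zv$, $D^2=2\inner\cdot\cdot$ and vanishing higher derivatives. A term with $a$ singletons and $b$ pairs, $a+2b=k$, carries the factor $(1/2)_{\underline{a+b}}\norm z^{1-2(a+b)}$ times $2^a$ and the count $k!/(a!b!\,2^b)$; the remaining $2^b$ from $D^2$ cancels this $2^{-b}$. Since $\norm z\ge1/4$, the powers of $1/\norm z$ contribute at most $4^{k-1}$, and we get $\norm{D^k\Gamma}\le\frac12P_k4^{k-1}=G_k$.

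\emph{Step 2: the tube.} The maps $a_W$ and $b_u$ are smooth. We write $a_W=\psi(B_W)$ with $\psi(w)=\sqrt{1+\norm w^2}-1$, expand $\psi$ as above with $\norm w$ replaced by $(1+\norm w^2)^{1/2}\ge1$, and use $\vartheta\norm{B_W}_{\op}\le1$ from the proof of \cref{lem:tube-geometry}. Together with $\vartheta\norm{D^k b_u}\le 2$, this gives bounds of the form $\norm{D^k(\vartheta a_W)},\ \norm{D^k(\vartheta b_u)}\le(P_k+2)\vartheta^{1-k}$ on the unit ball.

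The log-sum-exp $\ell(s,t)=\log(e^s+e^t)$ has $j$-th directional derivatives bounded by $M_j=2^{j-1}(j-1)!$. This follows from the cumulant-of-a-Bernoulli bound, by induction using $\sigma'=\sigma(1-\sigma)$ for the logistic $\sigma$. Fa\`a di Bruno for $\vartheta\ell(a_W,b_u)$ produces terms indexed by partitions $\pi$; each block $A$ contributes a derivative of $a_W$ or $b_u$ of size $(P_{\abs A}+2)\vartheta^{-\abs A}\cdot$(bounded). Collecting the powers of $\vartheta$, which give $\vartheta\cdot\vartheta^{-k}$ when $\vartheta\le1$, yields $\norm{D^kf}\le C_k\vartheta^{1-k}$. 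The extra summand $P_k$ in $C_k$ absorbs the $k=1$ case.

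\emph{Step 3: composition.} The inner function $\lambda_0f-\Gamma-2\lambda_0$ has $k$-th derivative bounded by $S_k\vartheta^{1-k}$, using $\vartheta\le1$. Since $\abs{\Psi^{(j)}}\le U_j$, Fa\`a di Bruno for $\Psi\circ(\cdot)$ gives $\sum_\pi U_{\abs\pi}\prod_A S_{\abs A}\vartheta^{1-\abs A}$. Adding $\norm{D^k\Gamma}\le G_k$ and, for the inner ball, $\lambda_0C_k\vartheta^{1-k}$ gives $E_k\vartheta^{1-k}$.

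I expect the main obstacle to be the powers of $\vartheta$ in this step. The product $\prod_A\vartheta^{1-\abs A}=\vartheta^{\abs\pi-k}$ exceeds $\vartheta^{1-k}$ when $\abs\pi\ge2$. This is exactly where $U_j$ must be exploited: $U_j=\lambda_0^{1-j}\norm{\varphi_n^{(j-2)}}$ is paired with the factor $\lambda_0^{\abs\pi}$ coming from the $\lambda_0 f$ pieces. I would also need $\vartheta\le\vartheta_\beta$ together with $\lambda_0\vartheta\norm{\Psi''}_\infty\le1$ to trade each extra $\vartheta^{-1}$ against a $\lambda_0$, and I would check that the defined constants indeed absorb this. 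If they do not, the fallback is to bound $\vartheta^{\abs\pi-1}\le1$ only after first rescaling $f=\vartheta\tilde f$, so that the derivatives of $\tilde f$ carry the $\vartheta^{-k}$ scaling.

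\emph{Step 4: conclusions.} The H\"older bound follows by interpolation, as in \cref{lem:radial-profile}: $\norm{D^qg(x)-D^qg(y)}\le\min\{2E_q\vartheta^{1-q},E_{q+1}\vartheta^{-q}\norm{x-y}\}\le(2E_q)^{1-\gamma}E_{q+1}^\gamma\vartheta^{1-\beta}\norm{x-y}^\gamma$. Finally, convexity from \cref{lem:extension} gives $\nabla^2g\succeq0$, and the case $k=2$ gives $\nabla^2g\preceq E_2\vartheta^{-1}I$.
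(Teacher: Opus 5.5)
Your route is the same as the paper's: Fa\`a di Bruno for $\norm z$ and for $\sqrt{1+\norm v^2}$, the bound $M_j$ on the log-sum-exp derivatives, composition with $\Psi$ through $U_j$ and $S_k$, and then interpolation and convexity. The one thing to fix is the ``main obstacle'' in Step~3, which rests on a reversed inequality. Since $0<\vartheta\le\vartheta_\beta\le\frac12$ and $\abs\pi\ge1$,
\[
\prod_{A\in\pi}\vartheta^{1-\abs A}=\vartheta^{\abs\pi-k}=\vartheta^{1-k}\,\vartheta^{\abs\pi-1}\le\vartheta^{1-k},
\]
so a partition with more blocks carries a \emph{smaller} power of $\vartheta^{-1}$. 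This single line is exactly how the paper finishes the annulus.

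Consequently none of your proposed remedies is needed: no pairing of $U_j$ with powers of $\lambda_0$, no use of $\lambda_0\vartheta\norm{\Psi''}_\infty\le1$, and no rescaling of $f$. The factor $U_j=\lambda_0^{1-j}\norm{\varphi_n^{(j-2)}}_{\infty,[0,1]}$ is simply a constant depending only on $\beta$ and is absorbed into $E_k$. The pairing would not even be available, because $S_k=\lambda_0C_k+G_k$ contains the contribution $G_k$ of $\Gamma$, which carries no factor $\lambda_0$. Of the hypothesis $\vartheta\le\vartheta_\beta$, this lemma uses only $\vartheta\le1$; the condition $\lambda_0\vartheta\norm{\Psi''}_\infty\le1$ enters Step~2 of the proof of \cref{thm:information}, not here.

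A minor point: the summand $P_k$ in $C_k$ does not come from the case $k=1$. It accounts for the term $\vartheta a_W$ in the paper's decomposition $f=\vartheta a_W+\vartheta h_0(b_u-a_W)-\vartheta\log2$ with $h_0(t)=\log(1+e^t)$. With your two-variable log-sum-exp it is just slack.
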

\begin{proof}
We bound the derivatives of $f$ on the unit ball, then those of $g$ on the
regions $\norm{z}\le1/4$, $1/4\le\norm{z}\le1$ and $\norm{z}\ge1$, and
interpolate.

\emph{Step 1: derivatives of $f$.} Fa\`a di Bruno's formula for
$h(v)=\sqrt{1+\norm{v}^2}$ has only blocks of size one or two. There are
$k!/(a!b!2^b)$ partitions with $a$ singletons and $b$ pairs, and the sum of
their absolute contributions is at most the corresponding summand of $P_k$
times $(1+\norm{v}^2)^{(1-k)/2}$. Thus $\norm{D^kh(v)}_{\op}\le P_k$, and,
as $\norm{B_W}_{\op}\le\vartheta^{-1}$,
$\norm{D^ka_W}_{\op}\le P_k\vartheta^{-k}$. On the unit ball the nonzero
derivatives of $b_u$ have norms at most $2$. Write
\[
 f=\vartheta a_W+\vartheta h_0(b_u-a_W)-\vartheta\log2,\qquad
 h_0(t)=\log(1+e^t).
\]
Here $h_0'=p$ with $p(t)=e^t/(1+e^t)\in[0,1]$, and each further derivative
is obtained by applying $p(1-p)\,d/dp$ to the preceding polynomial in $p$.
By induction, $h_0^{(j)}$ is a polynomial in $p$ whose absolute
coefficients sum to at most $M_j$. Fa\`a di Bruno's formula for the
composition and $\vartheta\le1$ give
$\norm{D^kf}_{\op}\le C_k\vartheta^{1-k}$ on the unit ball.

\emph{Step 2: derivatives of $g$.} The same computation for $\norm{z}$ shows
$\norm{D^k\Gamma(z)}_{\op}\le G_k$ for $\norm{z}\ge1/4$. On the annulus
$1/4\le\norm{z}\le1$ the derivatives of the hinge argument are at most
$S_k\vartheta^{1-k}$. In Fa\`a di Bruno's formula for $\Psi$ composed with
the hinge argument, the term of a partition $\pi$ with $j=\abs{\pi}$ blocks is
at most $U_j\prod_{A\in\pi}S_{\abs{A}}$ times
$\vartheta^{j-k}\le\vartheta^{1-k}$. This proves the bound $E_k\vartheta^{1-k}$
on the annulus. The same bound holds in the inner ball, where
$g=\lambda_0 f-3\lambda_0/2$, and outside the unit ball, where $g=\Gamma$.

\emph{Step 3: H\"older and Hessian bounds.} With $t=\norm{z-z'}$, the uniform
bounds on $D^qg$ and $D^{q+1}g$ give
\[
 \norm{D^qg(z)-D^qg(z')}_{\op}
 \le\min\{2E_q\vartheta^{1-q},E_{q+1}\vartheta^{-q}t\}
 \le K_\beta^{\mathrm{tube}}\vartheta^{1-\beta}t^\gamma.
\]
Convexity and the case $k=2$ give the Hessian inequalities.
\end{proof}

\subsection{Information lower bound for the smooth tube}
\label{sec:information}

The next theorem is the estimation lower bound of
\citet[Theorem~4.2]{RajaramanHan2026} for the smooth family, with explicit
constants and with the amplitude and the noise level kept separate.

\begin{atheorem}[Smooth-tube information lower bound]
\label{thm:information}
Let $d_0\ge128$, $A,\sigma>0$, $0<\vartheta\le\vartheta_\beta$, and
$r\ge\max\{16\vartheta,d_0^{-1/4}\}$. Let $\pi$ be the prior under which
$\operatorname{vec}(W)\sim N(0,I_{d_0^2}/d_0)$ and
$u\sim N(0,I_{d_0}/(16d_0))$ are
independent, and let $\pi_b=\pi(\cdot\mid E_b)$. The unknown function is
$F_{W,u}=A g_{W,u}$, and a query at $z_t\in\R^{2d_0}$ returns
$F_{W,u}(z_t)+\sigma Z_t$, where $Z_1,Z_2,\dots$ are independent standard
Gaussian variables, independent of $(W,u)$ and of the seed of the algorithm.
Queries and the output $\widehat z$ are unrestricted, and the algorithm may
know $A,\sigma,\vartheta,r,\beta$ and $\pi$. Put
$\nu=A\lambda_0\vartheta/\sigma$. If
\begin{equation}
 T<\frac{1}{65536\nu^2}
       \min\{d_0^{5/2},d_0^2/r^2\},
 \label{eq:information-budget}
\end{equation}
then every adaptive randomized algorithm with at most $T$ queries has Bayes
error $\E[F_{W,u}(\widehat z)-\min F_{W,u}]$ under $\pi_b$ greater than
$A\lambda_0\vartheta/4096$. In particular, its expected error is greater than
$A\lambda_0\vartheta/4096$ for some $(W,u)\in E_b$.
\end{atheorem}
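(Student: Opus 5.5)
The plan is to follow the architecture of \citet[Theorem~4.2]{RajaramanHan2026} and track constants, using the smooth family only through \cref{lem:extension}. First I reduce error to estimation. By \eqref{eq:smooth-decoder}, for $(W,u)\in E_b$,
\[
F_{W,u}(\widehat z)-\min F_{W,u}\ge A\lambda_0\vartheta\norm{\Pi^1(\widehat z)-u}^2/(2(1+e^{9/128})).
\]
So it suffices to show that, under $\pi_b$, every estimator $\hat u=\Pi^1(\widehat z)$ based on $T$ noisy values satisfies $\E\norm{\hat u-u}^2\gtrsim \E\norm u^2\approx1/16$, up to an absolute factor. I also need $\pi(E_b)\ge1/2$, which I take from Gaussian concentration: $\norm W_{\op}\le 2+o(1)$ and $\norm u\le1/4+o(1)$ for $d_0\ge128$. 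This lets me pass between $\pi$ and $\pi_b$ at the price of constants.

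Next I would bound the information. The algorithm is compared with a reference experiment in which the answers carry no information about $u$. The natural choice is the auxiliary mean $\mu_{W,u}$, defined for all $(W,u)$ by the same formula \eqref{eq:smooth-extension}. The reference is the same function with $u$ replaced by an independent copy $u'$, or equivalently a model where $W$ is known to the algorithm and $u$ is resampled. I plan to use a mutual-information (Fano/Bayes-risk) argument:
\[
I(u;\text{transcript}\mid W)\le\frac{A^2}{2\sigma^2}\sum_t\E\bigl[\mathrm{Var}\bigl(g_{W,u}(z_t)\mid W,\text{past}\bigr)\bigr].
\]
A Gaussian-prior estimation bound (the lower bound on the MMSE via $\E\norm{\hat u-u}^2\ge \frac{d_0}{16d_0}e^{-2I/d_0}$-type inequality for Gaussian $u$) then forces the error to be of constant order unless $I\gtrsim d_0$.

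The key estimate is the per-query conditional variance of $g_{W,u}(z)$ in $u$, which enters only through $b_u$ inside the log-sum-exp. Since $\Psi'\le1$ and the log-sum-exp weight on $b_u$ is $e^{b_u}/(e^{a_W}+e^{b_u})\le e^{b_u-a_W}$, the variance is at most $\lambda_0^2\vartheta^2\,\E_u[\inner xu^2\wedge\dots]\cdot(\text{weight})^2$. The weight is small unless $\norm{B_W(z)}\lesssim1$, that is, unless $y\approx 8\vartheta Wx/r$ lies in the hidden tube. Two regimes then give the two terms of \eqref{eq:information-budget}. Outside the tube the weight decays like $e^{-\norm{B_W}}$. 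Because $W$ is a hidden Gaussian matrix, a query not informed by $W$ lands within distance $O(1)$ of the tube with probability controlled by the $d_0$-dimensional Gaussian anticoncentration of $Wx/r$. Inside the tube, $\mathrm{Var}_u\inner xu\le\norm x^2/(16d_0)$, and the algorithm must also learn $W$ to stay in the tube; learning $W$ costs information too, since $W$ enters through $a_W$ with sensitivity $\vartheta\norm x/r$. Combining these yields an average per-query information of order $\nu^2\max\{d_0^{-3/2}, r^2/d_0\}$ relative to the $d_0$ bits needed. With the stated constant $65536$ this gives $I\le d_0/c$, hence the error bound $A\lambda_0\vartheta/4096$.

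The main obstacle is this last step: making the ``must learn $W$ before exploiting $u$'' argument rigorous with explicit constants under adaptivity. My first attempt would be a chain-rule decomposition $I(W,u;\cdot)=I(W;\cdot)+I(u;\cdot\mid W)$, with the tube-hitting probability bounded via the conditional Gaussian law of $W$ given the past. This is exactly the step of \citet{RajaramanHan2026}. The smoothing changes only $\norm{B_W}$ into $\sqrt{1+\norm{B_W}^2}-1$, which has the same gradient bound and an $\Omega(\norm{B_W})$ lower bound for large arguments. I would therefore check their Lemmas line by line with $a_W$ in place of $\norm{B_W}$, and with $\nu=A\lambda_0\vartheta/\sigma$ playing the role of their $\eps/\sigma$.
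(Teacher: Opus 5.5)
Your reduction is sound and matches Step~1 of the paper: decoding through \eqref{eq:smooth-decoder}, then passing to the auxiliary experiment with mean $\mu_{W,u}$. What that passage actually needs is that $E_b^c$ contributes little to $\E\norm{\widehat u-u}^2$, as in \eqref{eq:explicit-tail-transfer}, not merely $\pi(E_b)\ge1/2$.

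The gap is in the step you flag as the main obstacle, and the tool you name for it is not available. The posterior of $W$ given the past is not Gaussian, because the answers depend on $W$ nonlinearly through $a_W$, the log-sum-exp and the hinge. So the probability that an adaptively chosen query lands in the tube cannot be read off a Gaussian small-ball estimate. You need a small-ball bound valid for an arbitrary posterior, together with a measure of what has been learned about $W$ along $x_t$. The paper obtains both from Fisher information:
\begin{itemize}
\item $K_t=d_0^{-2}\operatorname{Tr}_{\rm out}J(\rho_{t,W})$ is a matrix submartingale with $\E\tr K_T\le d_0+\nu^2T/(r^2d_0^2)$;
\item Hardy's inequality gives $\PP(\norm{Z-z}\le s)\le9s^2\tr J(Z)/d_0^2$ for every differentiable density with finite Fisher information;
\item the score contraction \eqref{eq:score-contraction} turns this into $H_t\,\E[p_t^2\mathbf 1_E\mid\mathcal H_{t-1},z_t]\le50r^2$, where $H_t=x_t^\top K_{t-1}^{-1}x_t$.
\end{itemize}
This is also how \citet{RajaramanHan2026} argue. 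So ``checking their lemmas line by line'' would replace your mutual-information framework rather than complete it.

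Second, this hitting bound depends on direction, so no per-query information bound of the order you quote holds uniformly. An algorithm can learn $Wx$ along a few directions and then query inside the tube there, gaining information of order $\nu^2$ per query about $\inner ux$. A single scalar $I(u;\cdot\mid W)$, compared with $d_0$ through $\E\norm{\widehat u-u}^2\ge\frac1{16}e^{-2I/d_0}$, has no mechanism to separate such concentrated gains from the spread-out information that estimating $u$ requires. Your average $\nu^2\max\{d_0^{-3/2},r^2/d_0\}$ is the value at the threshold budget, not a derived bound.

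The paper's argument is directional, in three steps.
\begin{itemize}
\item Mean squared error at most $1/256$ in the auxiliary experiment forces, with probability at least $1/2$, at least $d_0/2$ eigenvalues of $\overline I_T$ to be at least $d_0$. This uses the posterior Fisher information of $u$, Cram\'er--Rao, and a matrix supermartingale bound on $\overline C_T$.
\item That supermartingale bound is where the smooth hinge's second derivative and the condition $\vartheta\le\vartheta_\beta$ enter, through $B_t^2\preceq I_t-I_{t-1}$. Your remark that only the tube function changes overlooks this; the anticoncentration radius also becomes $r\sqrt{s^2+2s}$ instead of $rs$.
\item Measurable projections $Q_t$ remove the directions in which $K_t$ exceeds $\Lambda=16(1+\nu^2T/(r^2d_0^3))$, at expected codimension at most $d_0/16$. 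On their range, $\E\tr(\overline I_TQ_T)\le801(\nu^2Tr^2+\nu^4T^2/d_0^3)$, which contradicts $\E\tr(\overline I_TQ_T)\ge d_0^2/16$ under \eqref{eq:information-budget}.
\end{itemize}
The $d_0^{5/2}$ term of the budget is exactly the $\Lambda$-term here. Without an analogue of this bookkeeping, and of the explicit constants $65536$ and $4096$, the proposal does not yet prove the statement.
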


\begin{proof}
We follow the proof of the estimation bound of
\citet[Theorem~4.2]{RajaramanHan2026} in Appendix~C.3 there. It combines the
reduction of their Lemma~2.7 with their Lemma~C.1, which is the version of
their Lemmas~2.1--2.3 for the auxiliary mean. Dividing the observations by
$\sigma$ gives their model with the mean multiplied by $A/\sigma$, so that
$\nu$ plays the role of their $\lambda\eps$. Apart from the decoding
inequality \eqref{eq:smooth-decoder}, these proofs use the tube and the hinge
only through the parameter derivatives of the mean (Step~2) and an
anti-concentration bound for the weight of the second branch (Step~5). We
verify both for the smooth family and keep track of the constants.

Write $\theta=(W,u)$. Let $\mathcal H_t$ be generated by the seed of the
algorithm and the first $t$ query--response pairs, so that $z_t$ is
$\mathcal H_{t-1}$-measurable. For a density $\rho$ let
$J(\rho)=\int(\nabla\log\rho)(\nabla\log\rho)^\top\rho$. In the
\emph{auxiliary experiment} $\theta\sim\pi$, and a query at $z_t$ returns
$A\mu_\theta(z_t)+\sigma Z_t$; put $h_\theta=A\mu_\theta/\sigma$.

\emph{Step 1: reduction to the auxiliary experiment.} For $g\sim N(0,I_{d_0})$
and $a>1$ the Chernoff bound gives
$\PP(\norm{g}^2\ge a d_0)\le\exp\{-d_0(a-1-\log a)/2\}$, which is at most
$e^{-d_0/2}$ for $a=4$ and at most $e^{-d_0/8}$ for $a=9/4$. For a
$d_0\times d_0$
matrix $G$ with independent standard Gaussian entries and a $1/4$-net
$\mathcal N$ of the unit sphere with $\abs{\mathcal N}\le9^{d_0}$, we have
$\norm{G}_{\op}\le(4/3)\max_{v\in\mathcal N}\norm{Gv}$; the case $a=9$ and a
union bound give $\PP(\norm{G}_{\op}>4\sqrt{d_0})
\le e^{-d_0(4-3\log3)}\le e^{-d_0/2}$.
For $W=d_0^{-1/2}G$ and $u=g/(4\sqrt{d_0})$ this yields
\[
 \PP(E_b^c)\le e^{-d_0/2}+e^{-d_0/8}\le2e^{-d_0/8},\qquad
 \PP(\norm{u}>1/2)\le e^{-d_0/2}.
\]
Let $\widehat u=\Pi^1(\widehat z)$, so that $\norm{\widehat u}\le1$. Since
$\E\norm{u}^4=(1+2/d_0)/256$ and $d_0\ge128$, the Cauchy--Schwarz inequality gives
\begin{equation}
 \E\bigl[\norm{\widehat u-u}^2\mathbf1_{E_b^c}\bigr]
 \le2\PP(E_b^c)+2\bigl(\E\norm{u}^4\,\PP(E_b^c)\bigr)^{1/2}
 \le5e^{-d_0/16}\le1/512.
 \label{eq:explicit-tail-transfer}
\end{equation}
On $E_b$, $\mu_\theta=g_\theta$ at every point of $\R^{2d_0}$, so, as in
\citet[Lemma~2.7]{RajaramanHan2026}, the law of the whole trajectory of the
auxiliary experiment conditional on $E_b$ is that of the actual experiment
with prior $\pi_b$. With \eqref{eq:explicit-tail-transfer} this gives
$\E_{\pi,\mu}\norm{\widehat u-u}^2\le\E_{\pi_b,g}\norm{\widehat u-u}^2+1/512$,
where the subscripts indicate the prior and the mean function. On $E_b$,
\eqref{eq:smooth-decoder} and $e^{9/128}<3$ give
$F_\theta(z)-\min F_\theta\ge(A\lambda_0\vartheta/8)\norm{\Pi^1(z)-u}^2$.
Hence Bayes error at most $A\lambda_0\vartheta/4096$ under $\pi_b$ would imply
$\E_{\pi_b,g}\norm{\widehat u-u}^2\le1/512$ and
$\E_{\pi,\mu}\norm{\widehat u-u}^2\le1/256$. It remains to show that under
\eqref{eq:information-budget} no estimator has
$\E\norm{\widehat u-u}^2\le1/256$ in the auxiliary experiment; all
expectations below refer to this experiment.

\emph{Step 2: parameter derivatives.} At a query with $\norm{z_t}\ge1$,
$\mu_\theta(z_t)=\Gamma(z_t)$ does not depend on $\theta$. For $\norm{z_t}<1$
put
\[
 p_t=\frac{e^{b_u(z_t)}}{e^{a_W(z_t)}+e^{b_u(z_t)}},\qquad
 s_t=\lambda_0 f_\theta(z_t)-\Gamma(z_t)-2\lambda_0 .
\]
As in \citet[Appendix~C.2]{RajaramanHan2026}, with $\eps=\vartheta$ and
$\lambda=\lambda_0$,
\begin{align}
 \nabla_u\mu_\theta(z_t)&=-\lambda_0\vartheta\Psi'(s_t)p_t x_t,\nonumber\\
 \nabla_u^2\mu_\theta(z_t)&=\bigl[\lambda_0\vartheta\Psi'(s_t)p_t(1-p_t)
      +\lambda_0^2\vartheta^2\Psi''(s_t)p_t^2\bigr]x_tx_t^\top,\nonumber\\
 \norm{\nabla_W\mu_\theta(z_t)}_F&\le\lambda_0\vartheta/r;
 \label{eq:parameter-derivatives}
\end{align}
the last bound holds because $\partial f_\theta/\partial a_W\le\vartheta$ and
$\nabla_W a_W(z_t)=B_W(z_t)x_t^\top/(r(1+\norm{B_W(z_t)}^2)^{1/2})$ has
Frobenius norm at most $\norm{x_t}/r\le1/r$. Put $p_t=0$ if $\norm{z_t}\ge1$,
and
\[
 I_0=0,\qquad
 I_T=\nu^2\sum_{t=1}^Tp_t^2x_tx_t^\top,
 \qquad \overline I_T=\E[I_T\mid\mathcal H_T].
\]
Since $0\le\Psi'\le1$, $\lambda_0\vartheta\norm{\Psi''}_\infty\le1$ and
$\norm{x_t}\le1$, the bracket in \eqref{eq:parameter-derivatives} lies in
$[0,\lambda_0\vartheta p_t]$, and $B_t=\nabla_u^2h_\theta(z_t)$ satisfies
\begin{equation}
 (\nabla_u h_\theta)(\nabla_u h_\theta)^\top\preceq I_t-I_{t-1},
 \qquad B_t^2\preceq I_t-I_{t-1}.
 \label{eq:proxy-domination}
\end{equation}
These bounds, the Gaussian prior and the at most linear growth of
$\mu_\theta$ in $\theta$ justify the differentiations under the integral sign
and the integrations by parts below, first with compact cutoffs.

\emph{Step 3: posterior spread.} Let $\rho_{T,u}$ be the marginal posterior
density of $u$. Given \eqref{eq:proxy-domination}, the proof of
\citet[Lemma~3.1]{RajaramanHan2026} with $\eta=1$ and $\delta=1/4$ applies
verbatim. It gives
\begin{align}
 J(\rho_{T,u})&\preceq16d_0I_{d_0}+\overline I_T-\overline C_T,
 \qquad C_T=\sum_{t=1}^TZ_tB_t,
 \quad\overline C_T=\E[C_T\mid\mathcal H_T],
 \label{eq:posterior-hessian}\\
 J(\rho_{T,u})&\preceq17d_0I_{d_0}+\tfrac32\overline I_T
 \quad\text{with probability at least }3/4.
 \label{eq:posterior-fisher-explicit}
\end{align}
Here $16d_0I_{d_0}$ is the prior precision of $u$. For the second bound, that proof
applies the matrix supermartingale inequality
\citep[Lemma~2.1]{Tropp2011MatrixFreedman} to $X_t=-Z_tB_t$, using
$\E[e^{-Z_tB_t}\mid\theta,\mathcal H_{t-1},z_t]=e^{B_t^2/2}$ and the second
bound in \eqref{eq:proxy-domination}, and obtains
$-\overline C_T\preceq\overline I_T/2+\log(4d_0)I_{d_0}$ with probability at least
$3/4$; then $\log(4d_0)\le d_0$ gives \eqref{eq:posterior-fisher-explicit}.

Now suppose that $\E\norm{\widehat u-u}^2\le1/256$. The posterior mean
minimizes the conditional risk, so Markov's inequality gives
$\tr\operatorname{Cov}(u\mid\mathcal H_T)\le1/64$ with probability at least
$3/4$. Let $S_T$ be the intersection of this event with the event in
\eqref{eq:posterior-fisher-explicit}; then $\PP(S_T)\ge1/2$. The Cram\'er--Rao
inequality $\operatorname{Cov}(u\mid\mathcal H_T)\succeq J(\rho_{T,u})^{-1}$
(integration by parts gives $\E[(u-\E u)(\nabla\log\rho)^\top]=-I$, and a
Schur complement concludes) shows that on $S_T$ the trace of the inverse of
$17d_0I_{d_0}+\frac32\overline I_T$ is at most $1/64$. If fewer than $d_0/2$
eigenvalues of $\overline I_T$ were at least $d_0$, this trace would exceed
$(d_0/2)/((37/2)d_0)=1/37>1/64$. Thus on $S_T$ at least $d_0/2$ eigenvalues of
$\overline I_T$ are at least $d_0$.

\emph{Step 4: information about $W$.} Let $\rho_{t,W}$ be the marginal
posterior density of $W$ and
$K_t=d_0^{-2}\operatorname{Tr}_{\rm out}J(\rho_{t,W})$,
where $\operatorname{Tr}_{\rm out}$ sums the $d_0$ diagonal blocks indexed by
the output rows of $W$ \citep[Section~2]{RajaramanHan2026}. The proof of
\citet[Lemma~2.2(1)--(2)]{RajaramanHan2026} in Section~3.2 there applies
verbatim with $\norm{\nabla_Wh_\theta}_F\le\nu/r$ from
\eqref{eq:parameter-derivatives}: $K_0=I_{d_0}$,
$K_t\preceq\E[K_{t+1}\mid\mathcal H_t]$,
and
\begin{equation}
 \E\tr K_T\le d_0+\frac{\nu^2T}{r^2d_0^2}.
 \label{eq:map-information}
\end{equation}

\emph{Step 5: anti-concentration.} \Citet[Lemma~3.2]{RajaramanHan2026}
bounds small-ball probabilities by Fisher information with an unspecified
constant; we use an explicit version. Let $Z\in\R^{d_0}$, $d_0\ge3$, have a
differentiable density with scalar Fisher information $J_{\rm tr}<\infty$.
Integration by parts against $V_\delta(w)=(w-z)/(\norm{w-z}^2+\delta)$,
whose divergence is at least $(d_0-2)/(\norm{w-z}^2+\delta)$, and the
Cauchy--Schwarz inequality give
$(d_0-2)\E(\norm{Z-z}^2+\delta)^{-1}
\le(J_{\rm tr}\E(\norm{Z-z}^2+\delta)^{-1})^{1/2}$.
Letting $\delta\downarrow0$ gives Hardy's inequality
$\E\norm{Z-z}^{-2}\le J_{\rm tr}/(d_0-2)^2$, and Markov's inequality gives
\begin{equation}
 \PP(\norm{Z-z}\le s)
 \le \frac{s^2J_{\rm tr}}{(d_0-2)^2}
 \le \frac{9s^2J_{\rm tr}}{d_0^2}.
 \label{eq:hardy-small-ball}
\end{equation}

Fix a history and an in-ball query with $x_t\ne0$ (the case $x_t=0$ is
trivial), and let $Z=Wx_t$ with $W$ distributed according to
$\rho_{t-1,W}$. The argument in \citet[Section~3.2]{RajaramanHan2026} relates
the scores of $W$ and $Z$ and gives
\begin{equation}
 \frac{\tr J(Z)}{d_0^2}x_tx_t^\top\preceq K_{t-1},
 \qquad
 \frac{\tr J(Z)}{d_0^2}
       x_t^\top K_{t-1}^{-1}x_t\le1.
 \label{eq:score-contraction}
\end{equation}
The matrix $K_{t-1}$ is positive definite, since a positive differentiable
density whose Fisher matrix has a null direction is constant along lines in
that direction and hence not integrable.

Let $E=\{\norm{u}\le1/2\}$ and $H_t=x_t^\top K_{t-1}^{-1}x_t$. On $E$,
$b_u(z_t)\le1$, so $p_t^2\mathbf1_E\le\exp[-2(a_W(z_t)-1)_+]$, which depends
on $\theta$ only through $W$. This is the step that changes for the smooth
tube: for $s\ge0$,
\[
 \{a_W(z_t)\le s\}
 =\bigl\{\norm{Wx_t-r y_t/(8\vartheta)}\le r\sqrt{s^2+2s}\bigr\},
\]
whereas for the tube $\norm{B_W}$ the radius is $rs$. Using
$e^{-2(a-1)_+}=\int_1^\infty2e^{-2(s-1)}\mathbf1_{\{a\le s\}}\,ds$,
\eqref{eq:hardy-small-ball} and \eqref{eq:score-contraction}, we obtain
\begin{equation}
 H_t\E[p_t^2\mathbf1_E\mid\mathcal H_{t-1},z_t]
 \le9r^2\int_1^\infty2e^{-2(s-1)}(s^2+2s)\,ds
 =\tfrac{99}{2}r^2\le50r^2.
 \label{eq:smooth-anticoncentration}
\end{equation}

\emph{Step 6: projections and the contradiction.} Let
$\Lambda=16(1+\nu^2T/(r^2d_0^3))$. The construction of
\citet[Section~3.3]{RajaramanHan2026} gives $\mathcal H_t$-measurable
orthogonal projections $I_{d_0}=Q_0\succeq Q_1\succeq\cdots\succeq Q_T$ with
$Q_tK_tQ_t\preceq\Lambda Q_t$: $Q_t$ is obtained from $Q_{t-1}$ by removing
the eigendirections of $Q_{t-1}K_tQ_{t-1}$ with eigenvalues larger than
$\Lambda$. The submartingale property and \eqref{eq:map-information} give,
as there,
\begin{equation}
 \E\tr(I_{d_0}-Q_T)\le d_0/16.
 \label{eq:projection-codimension}
\end{equation}
Moreover $K_t^{-1}\succeq Q_t/\Lambda$: the bound
$\norm{K_t^{1/2}Q_t}_{\op}^2\le\Lambda$ gives
$K_t^{1/2}Q_tK_t^{1/2}\preceq\Lambda I$, and conjugation by $K_t^{-1/2}$
gives the claim. Hence
$x_t^\top Q_Tx_t\le x_t^\top Q_{t-1}x_t\le\Lambda H_t$, and
\eqref{eq:smooth-anticoncentration} gives
$\E[\tr(I_TQ_T)\mathbf1_E]\le50\Lambda\nu^2Tr^2$. Since
$\tr I_T\le\nu^2T$, $\PP(E^c)\le e^{-d_0/2}\le d_0^{-1/2}\le r^2$ and
$\E\tr(\overline I_TQ_T)=\E\tr(I_TQ_T)$, we get
\begin{equation}
 \E\tr(\overline I_TQ_T)
 \le801\left(\nu^2Tr^2+\frac{\nu^4T^2}{d_0^3}\right).
 \label{eq:explicit-tension}
\end{equation}

On the other hand, by \eqref{eq:projection-codimension} and Markov's
inequality, $\operatorname{rank}Q_T\ge3d_0/4$ with probability at least $3/4$.
On the intersection of this event with $S_T$, which has probability at least
$1/4$, the span of the eigenvectors of $\overline I_T$ with eigenvalues at
least $d_0$ meets $\operatorname{range}Q_T$ in dimension at least $d_0/4$, so
$\tr(\overline I_TQ_T)\ge d_0^2/4$ and
\begin{equation}
 \E\tr(\overline I_TQ_T)\ge d_0^2/16.
 \label{eq:target-information-lower}
\end{equation}
By \eqref{eq:information-budget}, $\nu^2Tr^2<2^{-16}d_0^2$ and
$\nu^4T^2/d_0^3<2^{-32}d_0^2$, so the right side
of~\eqref{eq:explicit-tension} is less than
$801(2^{-16}+2^{-32})d_0^2<d_0^2/16$, which contradicts
\eqref{eq:target-information-lower}. Hence no estimator has
$\E\norm{\widehat u-u}^2\le1/256$ in the auxiliary experiment, and Step~1
completes the proof.
\end{proof}

\subsection{Scaling to the convex classes}
\label{sec:convex-lower}

Let $\mathcal C_{\beta,L_\beta,R}$ denote the convex functions
$F:\R^d\to\R$ of class $C^q$ with $[F]_{\beta,\op}\le L_\beta$ and a global
minimizer in $B_2^d(R)$. A query at $X_t\in\R^d$ returns $F(X_t)+\xi_t$,
where the $\xi_t$ are independent $N(0,\sigma^2)$ variables, independent of
the seed of the algorithm. Queries and the output are unrestricted. The
algorithm may know $\beta,L_\beta,R,\sigma,T$, the hard family and its
construction parameters, and, on the subclass with $\nabla^2F\preceq LI$,
also $L$.

\begin{alemma}[A quantitative hard family]\label{thm:convex-lower}
Let $\beta>2$, $L_\beta,R,\sigma>0$, and $d=2d_0$ with $d_0\ge128$, and let
$K_\beta^{\mathrm{tube}},E_2,\vartheta_\beta$ be the constants of
\cref{sec:smooth-family}.
For every accuracy scale
\[
 0<\eps_0\le
 \frac{L_\beta R^\beta\vartheta_\beta^\beta}{K_\beta^{\mathrm{tube}}},\qquad
 \vartheta=\left(\frac{K_\beta^{\mathrm{tube}}\eps_0}
 {L_\beta R^\beta}\right)^{1/\beta},\qquad
 r=\max\{16\vartheta,d_0^{-1/4}\},
\]
every adaptive randomized algorithm using an integer budget
\begin{equation}
 T<\frac{\sigma^2}{16\eps_0^2}
       \min\left\{d_0^{5/2},\frac{d_0^2}{256\vartheta^2}\right\}
 \label{eq:exact-smooth-lower-budget}
\end{equation}
has expected error greater than $\eps_0/262144$ on some
$F\in\mathcal C_{\beta,L_\beta,R}$.
The hard functions also satisfy
\begin{equation}
 0\preceq\nabla^2F\preceq
 \frac{E_2\eps_0}{R^2\vartheta^2}I
 =\frac{E_2}{K_\beta^{\mathrm{tube}}}
 L_\beta R^{\beta-2}\vartheta^{\beta-2}I.
 \label{eq:hard-hessian-cap}
\end{equation}
The same assertion therefore holds on the subclass with a known Hessian
cap $L$ whenever the right-hand side of \eqref{eq:hard-hessian-cap} is at
most $L$. Odd dimensions are covered by $d_0=\lfloor d/2\rfloor$ and one
unused coordinate.
\end{alemma}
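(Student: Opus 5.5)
The plan is to rescale the smooth-tube family of \cref{thm:information} so that it lies in $\mathcal C_{\beta,L_\beta,R}$, and then transfer that theorem's budget and error bounds. For $(W,u)\in E_b$ put
\[
F(X)=A\,g_{W,u}(X/R),\qquad A=\frac{\eps_0}{\vartheta\lambda_0}\cdot\lambda_0=\frac{\eps_0}{\vartheta},
\]
with $A$ to be calibrated below, and with $\vartheta$ and $r$ as in the statement. The upper bound on $\eps_0$ is exactly $\vartheta\le\vartheta_\beta$, and $r\ge\max\{16\vartheta,d_0^{-1/4}\}$ holds by definition. So \cref{lem:extension,lem:derivatives} apply: $g$ is convex and $C^{q+1}$, its minimizer $z_*$ has $\norm{z_*}\le\rho<1$, and
\[
[g]_{\beta,\op}\le K_\beta^{\mathrm{tube}}\vartheta^{1-\beta},\qquad \nabla^2 g\preceq E_2\vartheta^{-1}I .
\]
Scaling then gives $[F]_{\beta,\op}\le AR^{-\beta}K_\beta^{\mathrm{tube}}\vartheta^{1-\beta}$. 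Setting this equal to $L_\beta$ forces $A\vartheta=L_\beta R^\beta\vartheta^\beta/K_\beta^{\mathrm{tube}}=\eps_0$, so I take $A=\eps_0/\vartheta$. With this choice the minimizer $Rz_*$ lies in $B(R)$, and
\[
\nabla^2F\preceq \frac{A}{R^2}E_2\vartheta^{-1}I=\frac{E_2\eps_0}{R^2\vartheta^2}I ,
\]
which after substituting $\eps_0$ is \eqref{eq:hard-hessian-cap}.

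Next comes the reduction. An algorithm querying $F$ at $X_t$ induces an algorithm for $Ag$ that queries at $X_t/R$ with the same Gaussian noise, and the errors coincide: $F(\hat X)-\min F=Ag(\hat X/R)-\min Ag$. Then I apply \cref{thm:information} with $\nu=A\lambda_0\vartheta/\sigma=\lambda_0\eps_0/\sigma$. Its budget condition \eqref{eq:information-budget} reads
\[
T<\frac{\sigma^2}{65536\lambda_0^2\eps_0^2}\min\{d_0^{5/2},d_0^2/r^2\}.
\]
Since $\lambda_0^2=2^{-12}$, the prefactor is $\sigma^2/(16\eps_0^2)$.

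It remains to compare $d_0^2/r^2$ with the statement's $\min\{d_0^{5/2},d_0^2/(256\vartheta^2)\}$. When $r=16\vartheta$ the two expressions agree. When $r=d_0^{-1/4}$ we have $d_0^2/r^2=d_0^{5/2}$, and the statement's minimum is at most $d_0^{5/2}$ anyway. So \eqref{eq:exact-smooth-lower-budget} implies \eqref{eq:information-budget}. The conclusion of \cref{thm:information} is an error greater than $A\lambda_0\vartheta/4096=\eps_0/(64\cdot4096)=\eps_0/262144$ for some $(W,u)\in E_b$, which is the claim.

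The subclass statement follows because the hard functions already satisfy the Hessian cap, so they belong to the smaller class as well. Odd $d$ is handled by letting $F$ ignore one coordinate, which changes neither the seminorm nor the location of the minimizer. The main obstacle is bookkeeping rather than a new idea: I need to confirm that $\eps_0$ and $\vartheta$ are consistent, that is, $\vartheta^\beta=K_\beta^{\mathrm{tube}}\eps_0/(L_\beta R^\beta)$ indeed gives $A\vartheta=\eps_0$, and to track the constants $\lambda_0=1/64$ through $\nu$ and the final error. All the substance sits in \cref{thm:information}.
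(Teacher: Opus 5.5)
Your proposal is correct and is essentially the paper's proof. You rescale $F=A\,g_{W,u}(\cdot/R)$ with $A=\eps_0/\vartheta$, read off membership, the seminorm bound $L_\beta$ and the Hessian cap from \cref{lem:extension,lem:derivatives}, and then apply \cref{thm:information} with $\nu=\lambda_0\eps_0/\sigma$, using $65536\lambda_0^2=16$, $r^2=\max\{256\vartheta^2,d_0^{-1/2}\}$ and $A\lambda_0\vartheta/4096=\eps_0/262144$. The one step you leave implicit is the odd-dimensional case: an algorithm on $\R^{2d_0+1}$ must be simulated by projecting its queries onto the first $2d_0$ coordinates, which preserves both the Gaussian experiment and the errors.
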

\begin{proof}
Set $A=\eps_0/\vartheta$ and $F_\theta(x)=A g_\theta(x/R)$ for $\theta\in E_b$.
By the condition on $\eps_0$, $\vartheta\le\vartheta_\beta$.

\emph{The class.} By \cref{lem:extension}, $F_\theta$ is convex and has a
global minimizer in $B_2^d(R)$. By \cref{lem:derivatives},
\[
 [F_\theta]_{\beta,\op}
 \le A R^{-\beta}K_\beta^{\mathrm{tube}}\vartheta^{1-\beta}
 = K_\beta^{\mathrm{tube}}\eps_0R^{-\beta}\vartheta^{-\beta}=L_\beta,
\]
and $\nabla^2F_\theta\preceq AR^{-2}E_2\vartheta^{-1}I$, which is
\eqref{eq:hard-hessian-cap}.

\emph{The budget.} A query of $F_\theta$ at $X$ is a query of $Ag_\theta$ at
$X/R$, and errors are preserved. \Cref{thm:information} excludes Bayes error
at most $A\lambda_0\vartheta/4096=\eps_0/262144$ whenever
\[
 T<2^{-16}\left(\frac{\sigma}{A\lambda_0\vartheta}\right)^2
       \min\{d_0^{5/2},d_0^2/r^2\}.
\]
Since $A\vartheta=\eps_0$, $\lambda_0=1/64$ and
$r^2=\max\{256\vartheta^2,d_0^{-1/2}\}$, this is
\eqref{eq:exact-smooth-lower-budget}. Bayes error greater than
$\eps_0/262144$ implies the same lower bound for the worst case.

\emph{Odd dimensions.} Extending $F_\theta$ to $\R^{2d_0+1}$ by a coordinate on
which it does not depend preserves the class, the errors and, after
projection of the queries onto the first $2d_0$ coordinates, the Gaussian
experiment.
\end{proof}

\begin{proof}[Proof of \cref{thm:convex-lower-main}(a)]
Let $C_{\mathrm{cap}}=2^{18(1-2/\beta)}E_2
(K_\beta^{\mathrm{tube}})^{-2/\beta}$ and
$c'_0=\vartheta_\beta^\beta/(2^{18}K_\beta^{\mathrm{tube}})$. For
$L\in(0,\infty]$ put
\[
\ell=\min\bigl\{L_\beta,\ (L/C_{\mathrm{cap}})^{\beta/2}\eps^{-(\beta-2)/2}\bigr\},
\]
with $\ell=L_\beta$ if $L=\infty$. Apply \cref{thm:convex-lower} with $\ell$ in
place of $L_\beta$, $\eps_0=2^{18}\eps$ and
$d_0=\lfloor d/2\rfloor\ge128$; the
condition on $\eps_0$ is $\eps\le c'_0\ell R^\beta$. The hard functions have
$[F]_{\beta,\op}\le\ell\le L_\beta$, and by the definition of $\vartheta$ the cap in
\eqref{eq:hard-hessian-cap} equals
\[
\frac{E_2\eps_0}{R^2\vartheta^2}
=E_2(K_\beta^{\mathrm{tube}})^{-2/\beta}
\ell^{2/\beta}\eps_0^{1-2/\beta}
=C_{\mathrm{cap}}\ell^{2/\beta}\eps^{1-2/\beta}\le L ,
\]
so they belong to $\Fcvx(\beta,L_\beta;L,R)$. An $\eps$-accurate method therefore
needs at least the right-hand side of \eqref{eq:exact-smooth-lower-budget}
calls; with $d_0\ge d/3$ and
$\vartheta^2=(2^{18}K_\beta^{\mathrm{tube}}\eps/(\ell R^\beta))^{2/\beta}$
this is at least
$c_\beta(\sigma^2/\eps^2)\min\{d^{5/2},d^2(\ell R^\beta/\eps)^{2/\beta}\}$, and
\[
\Bigl(\frac{\ell R^\beta}\eps\Bigr)^{2/\beta}
=\min\Bigl\{\Bigl(\frac{L_\beta R^\beta}\eps\Bigr)^{2/\beta},\ \frac{LR^2}{C_{\mathrm{cap}}\eps}\Bigr\}.
\]
The condition $\eps\le c'_0\ell R^\beta$ holds if $\eps\le c'_0L_\beta R^\beta$ and
$\eps\le(c'_0)^{2/\beta}LR^2/C_{\mathrm{cap}}$. The theorem follows with
$c'_\beta=\min\{c'_0,(c'_0)^{2/\beta}/C_{\mathrm{cap}}\}$ and $c_\beta$ divided by
$\max\{1,C_{\mathrm{cap}}\}$. For $L\ge L_\eps:=C_{\mathrm{cap}}L_\beta^{2/\beta}\eps^{1-2/\beta}$
one has $\ell=L_\beta$, and the third term of the minimum is not the smallest.
\end{proof}

\subsection{The separable bounds for convex functions}
\label{app:prior-convex}
\citet{Akhavan2020,Akhavan2024} state their bounds for strongly convex
functions. This subsection derives from them the lower and the upper bound on
$\Fcvx(\beta,L_\beta;L,R)$ in their row of \cref{tab:convex-comparison} and
compares the lower bound with \cref{thm:convex-lower-main}(a). Let
$A=L_\beta R^\beta/\eps$, $S=\sigma^2/\eps^2$ and $K=LR^2/\eps$.
The lower bound rescales the construction in the proof of
\citet[Theorem~22]{Akhavan2024} (see also \citealp[Theorem~6.1]{Akhavan2020}),
whose minimizers lie in the unit ball. Below the seminorm is the operator
seminorm, the errors are independent $N(0,\sigma^2)$ with $\sigma>0$, queries
may be adaptive and unrestricted, and all constants depend only on $\beta$ and
on the fixed function $\phi$.

\paragraph{The cited family.}
Let $\phi:\R\to\R$ be the bounded smooth function of the cited proof, with
$\phi'=1$ on $[-1/4,1/4]$ and $\phi'=0$ outside $[-1,1]$. For $h,a,k>0$ and
$\omega\in\{-1,1\}^d$, set
\[
 f_\omega(x)=\frac{k}{2}\norm{x}^2+a\sum_{i=1}^d\omega_i\phi(x_i/h).
\]
Its operator H\"older seminorm is at most $C_\beta ah^{-\beta}$.
If $k\ge C_\beta a/h^2$, the Hessian lies between $kI/2$ and $2kI$,
$(x_\omega^*)_i=-a\omega_i/(kh)$, and $\norm{x_\omega^*}=\sqrt d\,a/(kh)$.
The bounded differences between adjacent functions and the testing argument
of the cited proof give, whenever $T\le c_\beta\sigma^2/a^2$,
\begin{equation}
\label{eq:prior-family-risk}
 \sup_\omega\E_\omega[f_\omega(\hat x)-f_\omega(x_\omega^*)]
       \ge c_\beta da^2/(kh^2),
\end{equation}
where $\E_\omega$ denotes expectation under $f_\omega$.

\paragraph{Lower bound.}
Let $d\ge1$, $\beta>2$, $L_\beta,R>0$ and $0<\eps\le c_\beta L_\beta R^\beta$. Put
$h_1=(\eps/(dL_\beta))^{1/\beta}$ and $h_2=(\eps/(\sqrt d\,RL_\beta))^{1/(\beta-1)}$, and
choose
\[
 h=C_\beta\max\{h_1,h_2\},\qquad a=c_\beta L_\beta h^\beta,\qquad
 k=C_\beta L_\beta\max\{h^{\beta-2},\sqrt d\,h^{\beta-1}/R\}.
\]
The functions have seminorm at most $L_\beta$ and minimizers in $B(R)$, and the
right side of \eqref{eq:prior-family-risk} is at least a constant times
$\min\{dL_\beta h^\beta,\sqrt d\,RL_\beta h^{\beta-1}\}$, which exceeds $\eps$ for a
sufficiently large constant in $h$. Thus every $\eps$-accurate method
requires
\begin{equation}
\label{eq:prior-convex-calls}
 T\ge c_\beta\frac{\sigma^2}{\eps^2}
 \min\left\{d^2,\ d^{\beta/(\beta-1)}
                  \left(\frac{L_\beta R^\beta}{\eps}\right)^{2/(\beta-1)}\right\}.
\end{equation}
Since $h_1,h_2\le(\eps/L_\beta)^{1/\beta}$,
$\sqrt d\,L_\beta h_1^{\beta-1}/R=L_\beta^{1/\beta}\eps^{1-1/\beta}d^{1/\beta-1/2}/R$,
$\sqrt d\,L_\beta h_2^{\beta-1}/R=\eps/R^2$ and $\eps\le L_\beta R^\beta$, both terms of $k$
are at most $C_\beta L_\beta^{2/\beta}\eps^{1-2/\beta}$. Hence these functions belong to
$\Fcvx(\beta,L_\beta;L,R)$ whenever $L\ge C_\beta L_\beta^{2/\beta}\eps^{1-2/\beta}$. The
two terms of the minimum in \eqref{eq:prior-convex-calls} are equal at
$\eps=L_\beta R^\beta d^{1-\beta/2}$, and the term $d^2$ is the smaller one for
smaller $\eps$.

\paragraph{Upper bound by regularization.}
For $f\in\Fcvx(\beta,L_\beta;L,R)$ put $f_\mu=f+\mu\norm{\cdot}^2/2$ with
$\mu=\eps/R^2$. The minimizer of $f_\mu$ lies in $B(R)$, its seminorm is at
most $L_\beta$, and $\mu I\preceq\nabla^2f_\mu\preceq(L+\mu)I$; let $A\ge1$.
\Citet[Theorem~18 and Table~1]{Akhavan2024}, applied to $f_\mu$ at accuracy
$\eps/2$, gives error at most $\eps$ for $f$ with
\[
 T\le C_\beta\{d^2S A^{2/(\beta-1)}+d(1+K)^2\}
\]
calls. The third term of that theorem requires a budget of order $Q$, where
\[
 Q^{\beta+1}=d^{\beta+2}(1+K)^{2\beta}S A^{-2}
 \le [d(1+K)^2]^\beta\,[d^2S A^{2/(\beta-1)}],
\]
so it is absorbed by the two displayed terms. The leading statistical term
alone suffices when it dominates $d(1+K)^2$. The bound of
\citet[Corollary~19]{Akhavan2024} is not used here, because its constant
depends on $L^2/\sigma^2$.

\paragraph{Size of the improvement.}
Let $A=L_\beta R^\beta/\eps$ with $1\le A\le d^{\beta/4}$. On this range the lower
bound of \cref{thm:convex-lower-main}(a) is $Sd^2A^{2/\beta}$ up to a constant
factor, and its ratio to \eqref{eq:prior-convex-calls} is at least a constant
times $d^{(\beta-2)/\beta}$. Indeed, for $A\ge d^{(\beta-2)/2}$ the prior
minimum is $d^2$ and the ratio is $A^{2/\beta}$; for $A\le d^{(\beta-2)/2}$ the
prior minimum is $d^{\beta/(\beta-1)}A^{2/(\beta-1)}$ and the ratio is
$d^{(\beta-2)/(\beta-1)}A^{-2/(\beta(\beta-1))}$, which decreases in $A$ and is
$d^{(\beta-2)/\beta}$ at $A=d^{(\beta-2)/2}$ and
$d^{(2\beta-5)/(2\beta-2)}\ge d^{(\beta-2)/\beta}$ at $A=d^{\beta/4}$ when
$\beta>4$. For $A>d^{\beta/4}$ the new lower bound is $c_\beta Sd^{5/2}$ and
\eqref{eq:prior-convex-calls} is at most $c_\beta Sd^2$, so the ratio is at
least of order $d^{1/2}$. Both bounds hold on the same class
$\Fcvx(\beta,L_\beta;L,R)$ with $L\ge C_\beta L_\beta^{2/\beta}\eps^{1-2/\beta}$.

\section{Non-Euclidean geometry, constraints, and an \texorpdfstring{$\ell_1$}{l1} upper bound}
\label{app:geometry}

This appendix proves the results of \cref{sec:other}.
\Cref{sec:geo-pullback,sec:geo-transfer} transfer the Euclidean lower
bounds of \cref{app:lower,app:convex-lower} through image balls to
H\"older regularity measured in an arbitrary norm, to minimizers localized
in an arbitrary set and to constrained problems. \Cref{sec:geo-lp}
evaluates these bounds on $\ell_p$ balls. \Cref{sec:geo-l1} proves an upper
bound on the $\ell_1$ ball that is linear in the dimension up to
logarithmic factors for conditionally sub-Gaussian centered noise without
an oblivious component; in a range of accuracies the lower bound is linear
in the dimension as well.

\paragraph{Notation.}
Throughout, $\beta>2$, $q=\lceil\beta\rceil-1\ge2$ and
$\gamma=\beta-q\in(0,1]$. In this appendix $\norm{\cdot}$ is a norm on
$\R^d$ with closed unit ball $\mathcal B$ (written
$\norm\cdot_{\mathcal B}$ in \cref{sec:other}),
$\norm{u}_*=\sup_{\norm{v}\le1}\inner{u}{v}$ is its dual norm, and the
Euclidean norm is written $\norm{\cdot}_2$. We write $\norm{\cdot}_p$ for
the $\ell_p$ norm, $B_p^n(R)=\{y\in\R^n:\norm{y}_p\le R\}$ and
$B_p^n=B_p^n(1)$. For a $j$-linear form $A$ on $\R^d$ and for
$f\in C^q(\R^d)$ let
\[
\norm{A}_{\norm{\cdot}}=\sup\bigl\{\abs{A[u_1,\dots,u_j]}:\
\norm{u_1}\le1,\dots,\norm{u_j}\le1\bigr\},\qquad
[f]_{\beta,\norm{\cdot}}=\sup_{x\ne y}
\frac{\norm{D^qf(x)-D^qf(y)}_{\norm{\cdot}}}{\norm{x-y}^{\gamma}} ,
\]
so that $[f]_{\beta,\norm{\cdot}}=[f]_{\beta,\mathcal B}$ in the notation of
\cref{sec:other} and $[f]_{\beta,\norm{\cdot}_2}=[f]_{\beta,\op}$; we write
$[f]_{\beta,p}=[f]_{\beta,\norm{\cdot}_p}$. For a nonempty set
$K\subseteq\R^d$ let $\mathcal C(\beta,L_\beta;\norm{\cdot},K)$ be the class of
convex $f\in C^q(\R^d)$ with $[f]_{\beta,\norm{\cdot}}\le L_\beta$ and
$\argmin f\cap K\ne\emptyset$. For a linear map $P:\R^d\to\R^n$ and a
symmetric $d\times d$ matrix $H$ let
\[
\norm{P}_{\mathcal B\to2}=\max_{x\in\mathcal B}\norm{Px}_2,\qquad
\norm{H}_{\norm{\cdot}\to\norm{\cdot}_*}=\sup\{u^\top Hv:\norm{u}\le1,\norm{v}\le1\},
\]
and let $\norm{P}_{2\to2}$ be the spectral norm of $P$. If
$\norm{u}_2\le\kappa_0\norm{u}$ for all $u$, then
$\norm{A}_{\norm{\cdot}}\le\kappa_0^q\norm{A}_{\op}$ for every $q$-linear
form $A$ and $\norm{x-y}_2^\gamma\le\kappa_0^\gamma\norm{x-y}^\gamma$, hence
\begin{equation}
\label{eq:geo-norm-compare}
[f]_{\beta,\norm{\cdot}}\le\kappa_0^\beta\,[f]_{\beta,\op};
\end{equation}
for $\norm{\cdot}_p$ with $1\le p\le2$ one may take $\kappa_0=1$.

\begin{adefinition}[Image radius]
\label{def:image-radius}
Let $C\subseteq\R^d$ be nonempty and $1\le n\le d$. Image balls $(P,z,r)$
of $C$ with $P:\R^d\to\R^n$ are defined in \cref{def:image-ball} with
$\mathcal K=C$; their first condition is $\norm{P}_{\mathcal B\to2}\le1$.
The \emph{image radius} of $C$ is
\[
\mathcal R_n(\mathcal B,C)=\sup\bigl\{r>0:\ (P,z,r)\text{ is an image ball of
}C\text{ for some }P,z\bigr\},\qquad \sup\emptyset=0 .
\]
\end{adefinition}

This is the quantity of \citet{Dorn2026GradientFree}, who write the supremum as a
maximum. For compact $C$ with $\mathcal R_n(\mathcal B,C)>0$ the supremum is
attained, because $P$ and $z$ stay bounded along a maximizing sequence and a
limit of image balls of a compact set, with radii bounded away from $0$, is
again an image ball. Thus positive image radius is precisely the geometric
condition used by the transfer theorem. For centrally symmetric convex $C$
the center may be taken to be $0$
\citep[Lemma~J.1]{Dorn2026GradientFree}.

\paragraph{Oracle.}
We use the round model and the noise of \cref{def:oracle}; unless stated
otherwise, $\eta$ is Lipschitz in the Euclidean norm. The lower bounds use
independent $N(0,\sigma^2)$ noise and allow adaptation after every call, so
they hold for every number of rounds.

\subsection{Pull-back of hard functions}
\label{sec:geo-pullback}

\begin{alemma}[Pull-back]
\label{lem:pullback}
Let $K\subseteq\R^d$ be nonempty, let $1\le n\le d$, and let $(P,z,r)$ be an
image ball of $K$. Let $g:\R^n\to\R$ be convex and $q$ times continuously
differentiable, with $[g]_{\beta,\op}\le L_\beta$ and a minimizer
$y^\star\in z+rB_2^n$, and put $f=g\circ P$.
\begin{enumerate}[label=(\alph*)]
\item $f$ is convex and $q$ times continuously differentiable,
$D^jf(x)[u_1,\dots,u_j]=D^jg(Px)[Pu_1,\dots,Pu_j]$ for $1\le j\le q$, and
$[f]_{\beta,\norm{\cdot}}\le\norm{P}_{\mathcal B\to2}^\beta\,L_\beta\le L_\beta$.
\item $\nabla f(x)=P^\top\nabla g(Px)$ and
$\nabla^2f(x)=P^\top\nabla^2g(Px)P$. Hence, for all $x\in\R^d$,
\[
\norm{\nabla f(x)}_*\le\norm{\nabla g(Px)}_2,\qquad
\norm{\nabla^2f(x)}_{\norm{\cdot}\to\norm{\cdot}_*}\le\norm{\nabla^2g(Px)}_{\op}.
\]
If $0\preceq\nabla^2g\preceq LI$ on $\R^n$, then
$0\le u^\top\nabla^2f(x)u\le L\norm{u}^2$ and
$0\preceq\nabla^2f(x)\preceq L\norm{P}_{2\to2}^2I$ for all $x,u\in\R^d$.
\item There is $\xstar\in K$ with $P\xstar=y^\star$; it minimizes $f$,
$\min f=\min g$, and $f(x)-\min f=g(Px)-\min g$ for all $x$. In particular
$f\in\mathcal C(\beta,L_\beta;\norm{\cdot},K)$.
\item Let $\mathsf A$ be an algorithm on $\R^d$ with $T$ calls in $N$
rounds, and let $\mathsf A_P$ be the algorithm on $\R^n$ that uses the same
random elements, runs a copy of $\mathsf A$, answers each query $x$ of the
copy by querying $Px$, and outputs $P\hat x$; it has $T$ calls in $N$
rounds. For every noise admissible at levels $(M',\sigma,\delta)$ in
the run of $\mathsf A_P$ on $g$ there is a noise admissible at levels
$(M'\norm{P}_{2\to2},\sigma,\delta)$ in the run of $\mathsf A$ on $f$
under which the random elements, the queries of $\mathsf A$, the observed
values and the error components coincide pathwise with those of the copy;
independent $N(0,\sigma^2)$ noise corresponds to independent
$N(0,\sigma^2)$ noise, and $\E[g(P\hat x)-\min g]=\E[f(\hat x)-\min f]$.
Consequently, if $\mathsf A$ is $\eps$-accurate at levels
$(M,\sigma,\delta)$ on a class that contains $g\circ P$ for all $g$
in a class $\mathcal E$ of functions as above, then $\mathsf A_P$ is
$\eps$-accurate on $\mathcal E$ at levels
$(M/\norm{P}_{2\to2},\sigma,\delta)$, with the same numbers of calls
and rounds.
\end{enumerate}
\end{alemma}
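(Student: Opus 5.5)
The plan is to verify (a)--(c) by direct computation from the chain rule for the linear composition $f=g\circ P$, and to prove (d) by an explicit coupling of the two runs.

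For (a), convexity is immediate: a convex function composed with a linear map is convex. Since $P$ is linear, all derivatives of $P$ beyond the first vanish, so by induction $D^jf(x)[u_1,\dots,u_j]=D^jg(Px)[Pu_1,\dots,Pu_j]$. Take $\norm{u_i}\le1$. Then $\norm{Pu_i}_2\le\norm{P}_{\mathcal B\to2}$, and by multilinearity
\[
\norm{D^qf(x)-D^qf(y)}_{\norm{\cdot}}\le\norm{P}_{\mathcal B\to2}^q\,L_\beta\norm{P(x-y)}_2^\gamma\le\norm{P}_{\mathcal B\to2}^{q+\gamma}L_\beta\norm{x-y}^\gamma .
\]
The image-ball condition gives $\norm{P}_{\mathcal B\to2}\le1$, which yields the final inequality $\le L_\beta$.

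For (b), the cases $j=1,2$ of the formula give $\nabla f=P^\top\nabla g(P\cdot)$ and $\nabla^2f=P^\top\nabla^2g(P\cdot)P$. The dual-norm bounds follow from
\[
\inner{P^\top w}{v}=\inner{w}{Pv}\le\norm{w}_2
\quad\text{for }\norm v\le1,
\]
and similarly for $u^\top\nabla^2f\,v$. The statement $0\le u^\top\nabla^2fu\le L\norm{Pu}_2^2$ is bounded by $L\norm u^2$ and also by $L\norm{P}_{2\to2}^2\norm u_2^2$.

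For (c), since $y^\star\in z+rB_2^n\subseteq P(K)$, there is a point $\xstar\in K$ with $P\xstar=y^\star$. For every $x$, $f(x)=g(Px)\ge\min g=f(\xstar)$. This gives the identity of gaps and, together with (a), membership in $\mathcal C(\beta,L_\beta;\norm{\cdot},K)$.

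Part (d) is the step needing care. Couple the two runs as follows. Given a noise for $\mathsf A_P$ on $g$, with components $\eta'$, $\xi^{\mathrm c}_t$, $\xi^{\mathrm o}_t$ and $\delta_t$, define the noise for $\mathsf A$ on $f$ by $\eta=\eta'\circ P$, keeping the same $\xi^{\mathrm c}_t$, $\xi^{\mathrm o}_t$ and $\delta_t$ at the corresponding call. Then $\eta$ is $M'\norm{P}_{2\to2}$-Lipschitz and fixed before the run. The observed value of $\mathsf A$ at $x$ equals the value the copy receives at $Px$, so by induction over calls both runs generate the same random elements, queries of $\mathsf A$, observations and outputs.

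It remains to check admissibility of the transported noise. The $\sigma$-fields $\mathcal F_{k-1}$ and $\mathcal E_t$ of the run of $\mathsf A$ are generated by the same random elements and errors as those of $\mathsf A_P$, together with the queries, which are functions of these. Hence (C), (O) and (A) transfer verbatim; in particular the conditional independence in (O) concerns the same $\Theta_k$. Independent Gaussian noise stays independent Gaussian noise.

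The equality of expected errors then follows from (c) applied to $\hat x$. For the consequence, start from noise on $g$ at levels $(M/\norm{P}_{2\to2},\sigma,\delta)$; the transported noise on $f$ is at level $(M,\sigma,\delta)$, so $\eps$-accuracy of $\mathsf A$ on $g\circ P$ transfers to $\mathsf A_P$ on $g$. The main obstacle is exactly this filtration bookkeeping. If $\norm{P}_{2\to2}=0$ the statement degenerates, so $P\ne0$ must be assumed, which holds because $P(K)$ contains a ball of positive radius.
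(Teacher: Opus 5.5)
Your proof is correct and follows essentially the same route as the paper. Parts (a) and (b) use the chain rule for the linear composition together with $\norm{Pu}_2\le\norm u$, and part (c) uses $y^\star\in z+rB_2^n\subseteq P(K)$. Part (d) uses the coupling $\eta=\eta'\circ P$ with all other error components and random elements shared, so that $f+\eta=(g+\eta')\circ P$ makes the runs, and hence their $\sigma$-algebras, coincide. Your remark that $P\neq0$ because $P(K)$ contains a ball of positive radius is a small detail the paper leaves implicit.
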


\begin{proof}
For exact function values this is the image-ball reduction of
\citet{Dorn2026GradientFree}; we check the regularity of order $\beta$ and the
noise model. The chain rule gives $D^jf(x)=D^jg(Px)\circ P^{\otimes j}$,
so
\[
\norm{D^qf(x)-D^qf(y)}_{\norm\cdot}
\le\norm{P}_{\mathcal B\to2}^q\norm{D^qg(Px)-D^qg(Py)}_{\op}
\le L_\beta\norm{P}_{\mathcal B\to2}^{q+\gamma}\norm{x-y}^{\gamma}.
\]
The norm bounds in (b) follow from $\norm{Pu}_2\le\norm u$ and
$\norm{Pu}_2\le\norm P_{2\to2}\norm u_2$, and (c) from
$y^\star\in z+rB_2^n\subseteq P(K)$.

For (d), let the run of $\mathsf A$ on $f$ use the same random elements and
the same variables $\xi^{\mathrm c}_t$, $\xi^{\mathrm o}_t$, $\delta_t$ as the
run of $\mathsf A_P$ on $g$, and put $\eta=\eta'\circ P$, where $\eta'$ is
the Lipschitz component of the latter. Since $f+\eta=(g+\eta')\circ P$,
induction over the rounds shows that the two runs observe the same values.
The queries of a round are functions of the random elements and of the
values of the earlier rounds, so both runs generate the same
$\sigma$-algebras $\mathcal F_{k-1}$ and $\mathcal E_t$. Hence (C), (O) and
(A) carry over, $\eta$ is fixed before the run, and
$\abs{\eta(x)-\eta(y)}\le M'\norm{P(x-y)}_2\le M'\norm P_{2\to2}\norm{x-y}_2$.
Finally $f(\hat x)-\min f=g(P\hat x)-\min g$ by (c). If the Lipschitz
component on $\R^d$ is measured in $\norm{\cdot}$ instead, the same
computation gives the level $M'$.
\end{proof}

\subsection{Transfer of one-point lower bounds}
\label{sec:geo-transfer}

\paragraph{Euclidean lower bounds.}
Write $K_{\mathrm H}$, $E_{\mathrm H}$ and $s_{\mathrm H}$ for the
constants $K_\beta$, $E_2$ and $\vartheta_\beta\le1/2$ of
\cref{thm:convex-lower} (not those of \cref{lem:radial-profile}), and put
$C_{\mathrm H}=E_{\mathrm H}s_{\mathrm H}^{\beta-2}/K_{\mathrm H}$. For
$n\ge1$, $z\in\R^n$ and $r>0$ let $\mathcal E_n(z,r)$ be the class of
convex $g\in C^q(\R^n)$ with $[g]_{\beta,\op}\le L_\beta$ and a global minimizer
in $z+rB_2^n$, and let $\mathcal E^{\mathrm{cap}}_n(z,r)$ be its subclass
defined by $0\preceq\nabla^2g\preceq C_{\mathrm H}L_\beta r^{\beta-2}I$. The
translation $g\mapsto g(\cdot+z)$ maps these classes onto
$\mathcal E_n(0,r)$ and $\mathcal E^{\mathrm{cap}}_n(0,r)$ and preserves
optimization errors. We use two bounds for independent $N(0,\sigma^2)$
noise and adaptive randomized algorithms with unrestricted queries and
outputs. This noise is independent of the random elements of the
algorithm, so both bounds hold in the model of \cref{def:oracle}.

\emph{(H1)} For $n\ge256$ and $0<\eps\le c'_{\mathrm H}L_\beta r^\beta$, every
algorithm with
$\sup_{g\in\mathcal E^{\mathrm{cap}}_n(0,r)}\E[g(\hat y)-\min g]\le\eps$
uses
\[
T\ge c_{\mathrm H}\frac{\sigma^2}{\eps^2}\min\{n^{5/2},n^2(L_\beta r^\beta/\eps)^{2/\beta}\}
\]
calls, even when the Hessian cap and all parameters are known. This is
\cref{thm:convex-lower} with $R$ replaced by $r$, with $\eps_0=2^{18}\eps$
and with the half-dimension $m'=\lfloor n/2\rfloor\ge128$ (denoted $m$
there), so that $m'\ge\frac{255}{512}n$; it gives
\[
c'_{\mathrm H}=\frac{s_{\mathrm H}^\beta}{2^{18}K_{\mathrm H}},\qquad
c_{\mathrm H}=2^{-40}\Bigl(\frac{255}{512}\Bigr)^{5/2}
\min\bigl\{1,\,2^{-8}(2^{18}K_{\mathrm H})^{-2/\beta}\bigr\}.
\]
The scale parameter $\vartheta$ of these hard functions is at most
$s_{\mathrm H}$, so \eqref{eq:hard-hessian-cap} bounds their Hessians by
$C_{\mathrm H}L_\beta r^{\beta-2}$.

\emph{(V)} For $n\ge1$ and $0<\eps\le c'_{\mathrm V}L_\beta r^\beta$, every
algorithm with $\sup_{g\in\mathcal E_n(0,r)}\E[g(\hat y)-\min g]\le\eps$
uses $T\ge c_{\mathrm V}n^2\sigma^2/\eps^2$ calls. This is the operator case
of \cref{cor:convex}, whose proof applies to every $n\ge1$ and gives
$c_{\mathrm V}=(3/(256\bar K_2))^2$
and $c'_{\mathrm V}=3(2\bar K_2)^\beta/(128\bar K_2K_\beta)$, where
$\bar K_2=\max\{1,K_2\}$ and $K_2,K_\beta$ are the constants of
\cref{lem:radial-profile}.

\begin{atheorem}[Geometry transfer of one-point lower bounds]
\label{thm:geometry-lower}
Let $K\subseteq\R^d$ be nonempty, let $1\le n\le d$, and let $(P,z,r)$ be
an image ball of $K$. Consider independent $N(0,\sigma^2)$ noise and an
algorithm with $T$ calls, in any number of rounds, such that
$\E[f(\hat x)-\min f]\le\eps$ for every $f$ in a class $\mathcal F$ of
functions on $\R^d$.
\begin{enumerate}[label=(\alph*)]
\item If $\mathcal F\supseteq\{g\circ P:g\in\mathcal E^{\mathrm{cap}}_n(z,r)\}$,
for instance if $\mathcal F=\mathcal C(\beta,L_\beta;\norm{\cdot},K)$, and if
$n\ge256$ and $0<\eps\le c'_{\mathrm H}L_\beta r^\beta$, then
\[
T\ge c_{\mathrm H}\frac{\sigma^2}{\eps^2}
\min\Bigl\{n^{5/2},\,n^2\Bigl(\frac{L_\beta r^\beta}{\eps}\Bigr)^{2/\beta}\Bigr\}.
\]
\item If $\mathcal F\supseteq\{g\circ P:g\in\mathcal E_n(z,r)\}$, for
instance if $\mathcal F=\mathcal C(\beta,L_\beta;\norm{\cdot},K)$, and if
$0<\eps\le c'_{\mathrm V}L_\beta r^\beta$, then
$T\ge c_{\mathrm V}n^2\sigma^2/\eps^2$.
\end{enumerate}
Every $f=g\circ P$ with $g\in\mathcal E^{\mathrm{cap}}_n(z,r)$ satisfies,
for all $x,u\in\R^d$,
\begin{equation}
\label{eq:geo-hard-regularity}
0\le u^\top\nabla^2f(x)u\le C_{\mathrm H}L_\beta r^{\beta-2}\norm{u}^2,
\qquad
\norm{\nabla f(x)}_*\le C_{\mathrm H}L_\beta r^{\beta-2}
\bigl(\norm{Px-z}_2+r\bigr),
\end{equation}
so (a) holds also on the subclass of $\mathcal C(\beta,L_\beta;\norm{\cdot},K)$
defined by \eqref{eq:geo-hard-regularity}, even when these bounds, $P$,
$z$, $r$ and all class parameters are known. If in addition
$\norm{P}_{2\to2}\le1$, these functions satisfy the Euclidean cap
$0\preceq\nabla^2f\preceq C_{\mathrm H}L_\beta r^{\beta-2}I$, and (a) holds on the
corresponding subclass. If $K$ is compact, (a) and (b) hold with
$r=\mathcal R_n(\mathcal B,K)$; for general $K$ they hold for every
$r<\mathcal R_n(\mathcal B,K)$.
\end{atheorem}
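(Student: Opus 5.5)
The plan is to deduce everything from \cref{lem:pullback}(d) combined with the two Euclidean bounds (H1) and (V). Suppose an algorithm $\mathsf A$ on $\R^d$ with $T$ calls is $\eps$-accurate on $\mathcal F$ under independent $N(0,\sigma^2)$ noise. In case (a) the class $\mathcal F$ contains $g\circ P$ for every $g\in\mathcal E^{\mathrm{cap}}_n(z,r)$. \Cref{lem:pullback}(d), with $M=\delta=0$, turns $\mathsf A$ into an algorithm $\mathsf A_P$ on $\R^n$ that has the same number of calls, is driven by the same Gaussian noise, and is $\eps$-accurate on $\mathcal E^{\mathrm{cap}}_n(z,r)$. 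Composing with the translation $g\mapsto g(\cdot+z)$, i.e.\ shifting queries by $z$ and shifting the output back, gives an $\eps$-accurate method on $\mathcal E^{\mathrm{cap}}_n(0,r)$ with the same $T$. Then (H1) applies directly and yields the bound in (a); (V) gives (b) in the same way. To check that $\mathcal C(\beta,L_\beta;\norm{\cdot},K)$ is an admissible $\mathcal F$, I will use \cref{lem:pullback}(a),(c): $g\circ P$ is convex, satisfies $[g\circ P]_{\beta,\norm{\cdot}}\le L_\beta$, and has a minimizer in $K$.

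Next I verify \eqref{eq:geo-hard-regularity}. By \cref{lem:pullback}(b) and $0\preceq\nabla^2g\preceq C_{\mathrm H}L_\beta r^{\beta-2}I$ we get $0\le u^\top\nabla^2f(x)u\le C_{\mathrm H}L_\beta r^{\beta-2}\norm{Pu}_2^2\le C_{\mathrm H}L_\beta r^{\beta-2}\norm u^2$. For the gradient, \cref{lem:pullback}(b) gives $\norm{\nabla f(x)}_*\le\norm{\nabla g(Px)}_2$. Since $\nabla g$ is $C_{\mathrm H}L_\beta r^{\beta-2}$-Lipschitz and vanishes at a minimizer $y^\star$ with $\norm{y^\star-z}_2\le r$, we obtain $\norm{\nabla g(Px)}_2\le C_{\mathrm H}L_\beta r^{\beta-2}(\norm{Px-z}_2+r)$. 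The reduction in the first paragraph transfers a method $\eps$-accurate on this subclass, so (a) holds there as well. Knowledge of $P,z,r$ and of the parameters does not matter, because (H1) holds even when the hard family's parameters are known. If $\norm P_{2\to2}\le1$, the last clause of \cref{lem:pullback}(b) gives the Euclidean cap, and the same argument applies.

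For the image radius: when $K$ is compact and $\mathcal R_n(\mathcal B,K)>0$, the supremum in \cref{def:image-radius} is attained, as noted after that definition, so an image ball with $r=\mathcal R_n$ exists. For general $K$, every $r<\mathcal R_n$ is the radius of some image ball, since shrinking the radius preserves the image-ball property. The step needing the most care is the pathwise identification in \cref{lem:pullback}(d): the Gaussian noise must stay i.i.d.\ and independent of the algorithm's seed after the reduction and translation, which is exactly what that lemma provides. Apart from that, the argument is bookkeeping.
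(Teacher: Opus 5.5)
Your proposal is correct and follows the paper's proof essentially step for step. You reduce to (H1) and (V) via \cref{lem:pullback}(d) composed with the translation by $z$, derive \eqref{eq:geo-hard-regularity} from \cref{lem:pullback}(b) and the fact that $\nabla g$ is Lipschitz and vanishes at some $y^\star\in z+rB_2^n$, and handle the image radius by attainment for compact $K$ and by monotonicity in $r$ otherwise; the paper also relies, as you do, on the pathwise part of \cref{lem:pullback}(d) (Gaussian noise maps to Gaussian noise) rather than its accuracy-at-all-admissible-levels clause.
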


\begin{proof}
Let $g_0$ range over $\mathcal E^{\mathrm{cap}}_n(0,r)$ for (a) and over
$\mathcal E_n(0,r)$ for (b), and put $g=g_0(\cdot-z)$. By
\cref{lem:pullback}(c), $g\circ P\in\mathcal C(\beta,L_\beta;\norm{\cdot},K)$. The
algorithm $\mathsf A_P$ of \cref{lem:pullback}(d), with its queries shifted
to $Px-z$ and its output to $P\hat x-z$, has on $g_0$ the error of the
given algorithm on $g\circ P$, with the same calls, rounds and Gaussian
noise. Bounds (H1) and (V) therefore give (a) and (b). If
$g\in\mathcal E^{\mathrm{cap}}_n(z,r)$ has the minimizer $y^\star$, then,
with $L=C_{\mathrm H}L_\beta r^{\beta-2}$,
$\norm{\nabla g(y)}_2\le L\norm{y-y^\star}_2\le L(\norm{y-z}_2+r)$; together
with \cref{lem:pullback}(b) this gives \eqref{eq:geo-hard-regularity} and
the Euclidean cap. The last assertion follows from the attainment of the
image radius for compact $K$ and from the definition of a supremum
otherwise.
\end{proof}

\Cref{thm:geometry-main} follows from parts (a) and (b), with
$c_\beta=\min\{c_{\mathrm H},c_{\mathrm V}\}$ and
$c'_\beta=\min\{c'_{\mathrm H},c'_{\mathrm V}\}$, and from the following
corollary.

\begin{acorollary}[Restricted queries and functional constraints]
\label{cor:geometry-constrained}
Let $K$ and $n$ be as in \cref{thm:geometry-lower}, let
$\phi_1,\dots,\phi_J:\R^d\to\R$ ($J\ge0$) be fixed functions, let
$X_\phi=\{x\in K:\phi_i(x)\le0,\ 1\le i\le J\}$, and let $(P,z,r)$ be an
image ball of $X_\phi$. Let $X_{\mathrm q},X_{\mathrm o}\subseteq\R^d$ be
arbitrary sets, for instance $X_{\mathrm q}=K+\rho B_2^d$ with $\rho\ge0$
and $X_{\mathrm o}=X_\phi$. Consider an algorithm that evaluates the
objective only at points of $X_{\mathrm q}$, outputs
$\hat x\in X_{\mathrm o}$, and may in addition evaluate the functions
$\phi_i$ at arbitrary points without these evaluations being counted.
Suppose that the objective values carry independent $N(0,\sigma^2)$ noise,
that the constraint values are returned exactly or with independent noise
of a fixed law, and that
\[
\E\Bigl[f(\hat x)-\min_{X_\phi}f\Bigr]\le\eps
\qquad\text{for every }f\in\mathcal C(\beta,L_\beta;\norm{\cdot},X_\phi),
\]
possibly together with further requirements on $\hat x$, such as bounds on
the constraint violation. Then the number $T$ of objective evaluations
satisfies (a) and (b) of \cref{thm:geometry-lower}.
\end{acorollary}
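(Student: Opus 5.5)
The plan is to reduce the constrained, restricted-query problem to the unconstrained Euclidean experiment on $\R^n$ that underlies (H1) and (V), exactly as in the proof of \cref{thm:geometry-lower}. The reduction has three parts: the hard functions will be of the form $f=g\circ P$, the algorithm will be simulated by a Euclidean algorithm $\mathsf A_P$, and the restrictions on queries, outputs and constraint evaluations will be absorbed into $\mathsf A_P$. Let $g$ range over $\mathcal E^{\mathrm{cap}}_n(z,r)$ for (a) and over $\mathcal E_n(z,r)$ for (b). By \cref{lem:pullback}(c), applied with $K=X_\phi$, the function $f=g\circ P$ is convex and satisfies $[f]_{\beta,\norm\cdot}\le L_\beta$. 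It has a global minimizer $\xstar\in X_\phi$ with $P\xstar=y^\star$. Therefore $\min_{X_\phi}f=\min_{\R^d}f=\min g$, and $f\in\mathcal C(\beta,L_\beta;\norm\cdot,X_\phi)$, so the hypothesis applies to $f$.

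Next I would build $\mathsf A_P$ on $\R^n$. It draws the same random elements as the given algorithm and, in addition, the randomness of the constraint noise, whose law is fixed and independent of $f$. It then runs a copy of the algorithm. An objective query at $x\in X_{\mathrm q}$ is answered by querying $g$ at $Px$. A constraint evaluation $\phi_i(x)$ is answered internally, since $\phi_i$ is known and fixed, by computing it exactly or adding simulated noise. At the end $\mathsf A_P$ outputs $P\hat x$. As in \cref{lem:pullback}(d), I would check by induction over calls that the two runs coincide pathwise under independent $N(0,\sigma^2)$ objective noise. Three facts are needed. First, the objective values agree because $f(x)=g(Px)$. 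Second, the constraint answers have the same law as in the original interaction and are independent of $g$. Third, the Gaussian noise of $\mathsf A_P$ is independent of all its internal randomness, so the model of \cref{def:oracle} holds, with objective calls counted and constraint evaluations free. The query restriction $X_{\mathrm q}$ only narrows the original algorithm's choices, and $\mathsf A_P$ may query anywhere in $\R^n$, so the restriction costs nothing. Likewise $\hat x\in X_{\mathrm o}$ is irrelevant, because $\mathsf A_P$'s output $P\hat x$ is unrestricted.

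The error identity then reads $g(P\hat x)-\min g=f(\hat x)-\min f=f(\hat x)-\min_{X_\phi}f$. The first equality is \cref{lem:pullback}(c). The second holds because the global minimizer of $f$ lies in $X_\phi$. This step needs no assumption that $\hat x$ is feasible: the identity holds for any $\hat x\in\R^d$, and additional requirements on $\hat x$ only shrink the set of admissible algorithms. Hence $\mathsf A_P$ is $\eps$-accurate on $\mathcal E^{\mathrm{cap}}_n(z,r)$, respectively $\mathcal E_n(z,r)$, with $T$ objective calls. Translating by $z$ as in the proof of \cref{thm:geometry-lower} and applying (H1), respectively (V), gives (a) and (b).

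I expect the main obstacle to be justifying that free constraint evaluations and noisy constraint values do not break the Gaussian oracle model. Concretely, one must confirm that simulating them inside $\mathsf A_P$ produces a legitimate randomized adaptive algorithm, with the seed independent of the objective noise. One must also confirm that the constraint-noise law does not depend on $f$, which the statement guarantees. Once this is settled, the rest is bookkeeping with \cref{lem:pullback}.
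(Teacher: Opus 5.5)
Your proposal is correct and uses the same ingredients as the paper: $\min_{X_\phi}f=\min f$ on the class, simulation of the constraint evaluations inside the algorithm with extra independent randomness, and the observation that the restrictions on queries and outputs, and any further requirements on $\hat x$, only shrink the set of algorithms. The difference is only organizational: the paper stops at an unrestricted algorithm $\mathsf A'$ on $\R^d$ and applies \cref{thm:geometry-lower} with $K$ replaced by $X_\phi$ as a black box, whereas you fold the pull-back of \cref{lem:pullback}(d), the translation by $z$ and the bounds (H1), (V) into one simulated algorithm $\mathsf A_P$, which re-proves that theorem inline.
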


\begin{proof}
Every $f\in\mathcal C(\beta,L_\beta;\norm{\cdot},X_\phi)$ has a global minimizer
in $X_\phi$, so $\min_{X_\phi}f=\min f$. Let $\mathsf A'$ run the given
algorithm, forward its objective queries, answer its constraint queries by
computing the $\phi_i$ (adding, if the constraint oracle is noisy, noise
generated from additional independent random elements appended to
$\Theta_k$), and output $\hat x$. Then $\mathsf A'$ is an algorithm of the
model with $T$ calls and unrestricted queries and outputs, and
$\E[f(\hat x)-\min f]\le\eps$ for every
$f\in\mathcal C(\beta,L_\beta;\norm{\cdot},X_\phi)$. Apply
\cref{thm:geometry-lower} with $K$ replaced by $X_\phi$.
\end{proof}

The corollary applies to every class of constrained problems, for instance
with the functional constraints of \citet{Abronin2026}, that contains all
objectives in $\mathcal C(\beta,L_\beta;\norm{\cdot},X_\phi)$ for one fixed
admissible tuple $(\phi_1,\dots,\phi_J)$; the algorithm need not know the
$\phi_i$. For exact values, \citet[Theorems~3.2 and~3.3]{Dorn2026GradientFree}
treat a known regularizer, whose values are free, in a similar way.

\subsection{Lower bounds on \texorpdfstring{$\ell_p$}{lp} balls}
\label{sec:geo-lp}

\begin{alemma}[Radius bounds]
\label{lem:radius}
Let $1\le p\le2$, $R>0$ and $1\le n\le d$, and let $P_n:\R^d\to\R^n$,
$P_nx=(x_1,\dots,x_n)$, be the coordinate projection.
\begin{enumerate}[label=(\alph*)]
\item $(P_n,0,Rn^{1/2-1/p})$ is an image ball of $B_p^d(R)$, both
for the norm $\norm{\cdot}_p$ and for the Euclidean norm, and
$\norm{P_n}_{2\to2}=1$. Hence
$\mathcal R_n(B_p^d,B_p^d(R))\ge Rn^{1/2-1/p}$ and
$\mathcal R_n(B_2^d,B_p^d(R))\ge Rn^{1/2-1/p}$.
\item Let $p=1$, and let $k\le n$ be a positive integer with
$b=\lceil n/k\rceil$ satisfying $b\,5^k\le d$. Then there is a linear
$P:\R^d\to\R^n$ such that $(P,0,7R/(8\sqrt b))$ is an image ball of
$B_1^d(R)$ for the norm $\norm{\cdot}_1$. Hence
$\mathcal R_n(B_1^d,B_1^d(R))\ge7R/(8\sqrt b)\ge
\frac78R\sqrt{k/(2n)}$. The map $P$ constructed in the proof has
$\norm{P}_{2\to2}\ge(1+49/64)^{1/2}>1$.
\end{enumerate}
\end{alemma}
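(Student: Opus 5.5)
For part (a) the plan is to verify the two conditions of \cref{def:image-ball} directly. First, for $1\le p\le2$ we have $\norm{P_nu}_2\le\norm u_2\le\norm u_p$, so $\norm{P_n}_{\mathcal B\to2}\le1$ both for $\mathcal B=B_p^d$ and for $\mathcal B=B_2^d$. Second, the coordinate projection has $\norm{P_n}_{2\to2}=1$. For the covering condition, take $y\in\R^n$ with $\norm y_2\le Rn^{1/2-1/p}$ and lift it to $x=(y,0)\in\R^d$. H\"older's inequality on $\R^n$ gives $\norm y_p\le n^{1/p-1/2}\norm y_2\le R$, so $x\in B_p^d(R)$ and $P_nx=y$. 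The two radius bounds then follow from \cref{def:image-radius}.

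For part (b) I will build $P$ column by column. The key facts are that the unit ball of $\norm\cdot_1$ is $\operatorname{conv}\{\pm e_j\}$, and hence $P(B_1^d(R))=R\operatorname{conv}\{\pm Pe_j\}$.

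\emph{Columns.} Split the $n$ output coordinates into $b=\lceil n/k\rceil$ blocks of sizes $k_i\le k$. In each block, pick a $\frac12$-net $\mathcal N_i$ of the unit sphere of $\R^{k_i}$ with $\abs{\mathcal N_i}\le5^{k_i}\le5^k$; the volumetric bound $(1+2/\delta)^{k_i}$ at $\delta=\frac12$ gives this size. Embed each net point in the coordinates of its block and use these vectors as columns of $P$. This needs at most $b5^k\le d$ columns, and the remaining columns are set to zero. Every column has Euclidean norm at most one, so $\norm{Pu}_2\le\sum_j\abs{u_j}\norm{Pe_j}_2\le\norm u_1$.

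\emph{Inner radius of one block.} For a unit vector $w\in\R^{k_i}$, choose $v\in\mathcal N_i$ with $\norm{v-w}\le\frac12$. Then $\inner vw=1-\norm{v-w}^2/2\ge\frac78$. Hence the support function of $\operatorname{conv}(\pm\mathcal N_i)$ is at least $\frac78$ in every direction, and this hull contains $\frac78B_2^{k_i}$.

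\emph{Combining the blocks.} Let $y=(y_1,\dots,y_b)$ with $\norm y_2\le7/(8\sqrt b)$, and put $s=\sum_i\norm{y_i}$; by Cauchy--Schwarz, $s\le\sqrt b\norm y_2\le\frac78$. Write
\[
y=\sum_i\frac{\norm{y_i}}{s}\cdot\frac{s\,y_i}{\norm{y_i}} ,
\]
a convex combination in which each vector $s\,y_i/\norm{y_i}$ lies in block $i$ and has norm $s\le\frac78$. It therefore belongs to the hull of block $i$. So $y\in\operatorname{conv}\{\pm Pe_j\}$, and after scaling by $R$ we get $\frac{7R}{8\sqrt b}B_2^n\subseteq P(B_1^d(R))$. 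Finally, $b\le n/k+1\le2n/k$ gives $7R/(8\sqrt b)\ge\frac78R\sqrt{k/(2n)}$.

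\emph{Spectral norm.} Take a unit vector $u$ that is a net point of some block, viewed in $\R^n$. Then
\[
\norm{P^\top u}_2^2=\sum_j\inner{Pe_j}u^2
\]
contains the term $1$ from $u$ itself. It also contains a term at least $(7/8)^2$ from the net point within $\frac12$ of $-u$, whose inner product with $u$ is at most $-\frac78$. If that point is $u$ itself, then $\norm{u+u}=2>\frac12$, a contradiction, so the two terms are distinct. Hence $\norm P_{2\to2}^2\ge1+\frac{49}{64}$.

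The only delicate step is the inner-radius bound for a single block. There the choice of a $\frac12$-net, rather than a finer one, must still give radius $\frac78$ with a net of size $5^k$, and this works only because of the quadratic identity $\inner vw=1-\norm{v-w}^2/2$.
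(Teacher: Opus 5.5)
Your proposal is correct and follows essentially the paper's own proof: the coordinate projection with H\"older's inequality for (a); for (b), block-wise $\tfrac12$-nets of size at most $5^k$ as the columns of $P$, the identity $\inner{v}{w}=1-\norm{v-w}^2/2$ showing that each block hull contains $\frac78B_2^{k_i}$, the Cauchy--Schwarz bound $\sum_i\norm{y_i}\le\sqrt b\,\norm{y}_2$ to combine the blocks, and a net point near $-u$ for the spectral-norm bound. The differences are cosmetic: you write an explicit convex combination where the paper rescales inside the absolutely convex hull, and you cite the volumetric net bound where the paper gives the packing argument.
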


\begin{proof}
(a) This is the coordinate-projection case of
\citet[Proposition~I.1]{Dorn2026GradientFree}: $P_n$ is a contraction for both
norms, and $P_n(B_p^d(R))=B_p^n(R)\supseteq Rn^{1/2-1/p}B_2^n$ by
H\"older's inequality.

(b) Partition $\{1,\dots,n\}$ into $b$ blocks $I_1,\dots,I_b$ of sizes at
most $k$. A subset of the unit sphere of $\R^{I_\ell}$ with pairwise Euclidean
distances larger than $1/2$ has at most $5^k$ points, because the balls of
radius $1/4$ around its points are disjoint and lie in the ball of radius
$5/4$. Let $\mathcal N_\ell$ be a maximal such subset. By maximality,
every unit vector $u\in\R^{I_\ell}$ has some $v\in\mathcal N_\ell$ with
$\norm{u-v}_2\le1/2$, and then $\inner uv=1-\norm{u-v}_2^2/2\ge7/8$. Hence
the support function of the absolutely convex hull $A_\ell$ of $\mathcal N_\ell$
is at least $7/8$ on the unit sphere, and $A_\ell\supseteq\frac78B_2^{I_\ell}$ by
the separation theorem.

Regard the at most $b5^k\le d$ vectors of all $\mathcal N_\ell$ as vectors of
$\R^n$ supported on $I_\ell$, and let $P$ map $e_1,e_2,\dots$ to these
vectors and the remaining coordinate vectors to $0$. Since $B_1^d$ is the
absolutely convex hull of the $\pm e_i$, $\norm{P}_{B_1^d\to2}$ is the
largest column norm, which is $1$, and $P(B_1^d(R))=RA$, where $A$ is the
absolutely convex hull of $\bigcup_\ell\mathcal N_\ell$. Let
$\norm{y}_2\le7/(8\sqrt b)$, and let $y_\ell$ be the restriction of $y$ to
$I_\ell$. Then $y_\ell\in\frac87\norm{y_\ell}_2A_\ell\subseteq\frac87\norm{y_\ell}_2A$,
and since $A$ is convex and contains $0$,
$y=\sum_\ell y_\ell\in\frac87\bigl(\sum_\ell\norm{y_\ell}_2\bigr)A\subseteq A$, because
$\sum_\ell\norm{y_\ell}_2\le\sqrt b\,\norm{y}_2\le7/8$. Hence
$\frac{7R}{8\sqrt b}B_2^n\subseteq P(B_1^d(R))$, and $b\le n/k+1\le2n/k$
gives the last bound on the image radius.

Finally, let $v\in\mathcal N_1$ and choose $v'\in\mathcal N_1$ with
$\inner{-v}{v'}\ge7/8$. The vector $P^\top v$ has the entries
$\inner vv=1$ and $\inner{v}{v'}\le-7/8$, so
$\norm{P}_{2\to2}=\norm{P^\top}_{2\to2}\ge(1+49/64)^{1/2}$.
\end{proof}

Part (b) with $b=1$, that is, $k=n\le\log_5d$, gives the radius $7R/8$,
independent of $n$. \citet[Proposition~I.3]{Dorn2026GradientFree} also give
images of radius of order $R$ of every $\ell_p$ ball with $1\le p\le2$, of
dimension of order $\log d$ for $p=1$.

For $1\le p\le2$ put
\[
p^*=\frac{p}{p-1}\in[2,\infty]\ (p^*=\infty\text{ for }p=1),\qquad
\theta_p=\frac1p-\frac12\in\Bigl[0,\frac12\Bigr],
\]
and $\mathcal C_p(R)=\mathcal C(\beta,L_\beta;\norm{\cdot}_p,B_p^d(R))$. Then
$1+2/p^*=3-2/p=2-2\theta_p$.

\begin{acorollary}[Lower bounds for the $\ell_p$ geometry]
\label{cor:lp-lower}
Let $1\le p\le2$. Under independent $N(0,\sigma^2)$ noise, every algorithm
with $\sup_{f\in\mathcal C_p(R)}\E[f(\hat x)-\min f]\le\eps$ uses $T$
calls, where:
\begin{enumerate}[label=(\alph*)]
\item for every integer $n$ with $256\le n\le d$ and
$\eps\le c'_{\mathrm H}L_\beta R^\beta n^{-\beta\theta_p}$,
\[
T\ge c_{\mathrm H}\frac{\sigma^2}{\eps^2}\min\Bigl\{n^{5/2},\,
n^{1+2/p^*}\Bigl(\frac{L_\beta R^\beta}\eps\Bigr)^{2/\beta}\Bigr\};
\]
\item for every integer $n$ with $1\le n\le d$ and
$\eps\le c'_{\mathrm V}L_\beta R^\beta n^{-\beta\theta_p}$,
$T\ge c_{\mathrm V}n^2\sigma^2/\eps^2$.
\item Let $\eps\le c'_{\mathrm H}L_\beta R^\beta$ and put
\[
n_\eps=\begin{cases}(c'_{\mathrm H}L_\beta R^\beta/\eps)^{1/(\beta\theta_p)},&p<2,\\
+\infty,&p=2,\end{cases}\qquad
n_\times=(L_\beta R^\beta/\eps)^{4/(\beta(4\theta_p+1))},\qquad
n_*=\min\{d,\lfloor n_\eps\rfloor\}.
\]
If $n_*\ge256$, then
\[
T\ge c_{\mathrm H}\frac{\sigma^2}{\eps^2}\min\{n_*^{5/2},n_*^{1+2/p^*}(L_\beta R^\beta/\eps)^{2/\beta}\},
\]
and the first term is the smaller one exactly when $n_*\le n_\times$. In
particular, provided $n_*\ge256$:
\begin{enumerate}[label=(R\arabic*)]
\item if $d\le\min\{n_\eps,n_\times\}$, then $T\ge c_{\mathrm H}d^{5/2}\sigma^2/\eps^2$;
\item if $n_\times\le d\le n_\eps$, then
$T\ge c_{\mathrm H}d^{1+2/p^*}(L_\beta R^\beta/\eps)^{2/\beta}\sigma^2/\eps^2$;
\item if $p<2$, $d>n_\eps$ and $\lfloor n_\eps\rfloor\ge n_\times$, then,
independently of $d$,
\[
T\ge\frac{c_{\mathrm H}}4\Bigl(\frac{c'_{\mathrm H}L_\beta R^\beta}{\eps}\Bigr)^{\frac{2(3p-2)}{\beta(2-p)}}
\Bigl(\frac{L_\beta R^\beta}\eps\Bigr)^{2/\beta}\frac{\sigma^2}{\eps^2}.
\]
\end{enumerate}
\item For $p=1$: $n_\eps=(c'_{\mathrm H}L_\beta R^\beta/\eps)^{2/\beta}$ and
$n_\times=(L_\beta R^\beta/\eps)^{4/(3\beta)}$. If
$\min\{d,\lfloor n_\eps\rfloor\}\ge\max\{256,n_\times\}$, then
\[
T\ge\frac{c_{\mathrm H}}2\min\Bigl\{d,\Bigl(\frac{c'_{\mathrm H}L_\beta R^\beta}\eps\Bigr)^{2/\beta}\Bigr\}
\Bigl(\frac{L_\beta R^\beta}\eps\Bigr)^{2/\beta}\frac{\sigma^2}{\eps^2}.
\]
In particular, if $d\ge256$ and
$L_\beta R^\beta d^{-3\beta/4}\le\eps\le c'_{\mathrm H}L_\beta R^\beta d^{-\beta/2}$, then
$T\ge c_{\mathrm H}\,d\,(L_\beta R^\beta/\eps)^{2/\beta}\sigma^2/\eps^2$, which is
linear in $d$.
\item For $p=1$ and integers $b,k\ge1$ with $b5^k\le d$, $bk\ge256$ and
$\eps\le c'_{\mathrm H}(7/8)^\beta L_\beta R^\beta b^{-\beta/2}$,
\[
T\ge c_{\mathrm H}\frac{\sigma^2}{\eps^2}\min\Bigl\{(bk)^{5/2},\
\frac{49}{64}\,b\,k^2\Bigl(\frac{L_\beta R^\beta}\eps\Bigr)^{2/\beta}\Bigr\}.
\]
\end{enumerate}
The bounds (a), (c) and (d) hold already on the subclass of
$\mathcal C_p(R)$ defined by the Euclidean Hessian cap and the gradient
bound
\[
0\preceq\nabla^2f\preceq C_{\mathrm H}L_\beta r^{\beta-2}I,\qquad
\norm{\nabla f(x)}_{p^*}\le C_{\mathrm H}L_\beta r^{\beta-2}(\norm{P_nx}_2+r),
\]
where $r=Rn^{-\theta_p}$ and $n$ is the dimension used. The bound (e)
holds on the subclass defined by \eqref{eq:geo-hard-regularity} for the
map of \cref{lem:radius}(b).
\end{acorollary}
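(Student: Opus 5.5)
The plan is to read every part of the corollary off \cref{thm:geometry-lower}, using the image balls of \cref{lem:radius}. For (a) and (b), fix $n$ and use the coordinate projection $P_n$. By \cref{lem:radius}(a), $(P_n,0,r)$ with $r=Rn^{-\theta_p}$ is an image ball of $B_p^d(R)$ for $\norm\cdot_p$. Then \cref{thm:geometry-lower}(a) applies whenever $\eps\le c'_{\mathrm H}L_\beta r^\beta=c'_{\mathrm H}L_\beta R^\beta n^{-\beta\theta_p}$, and part (b) applies under the analogous condition with $c'_{\mathrm V}$. For the second term in (a) we only need the algebra
\[
n^2\bigl(L_\beta r^\beta/\eps\bigr)^{2/\beta}=n^{2-2\theta_p}\bigl(L_\beta R^\beta/\eps\bigr)^{2/\beta},
\]
together with $2-2\theta_p=1+2/p^*$. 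The subclass claims come from \eqref{eq:geo-hard-regularity} with $\norm\cdot_*=\norm\cdot_{p^*}$ and $z=0$. Since $\norm{P_n}_{2\to2}=1$, the Euclidean cap also holds.

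For (c), take the largest admissible dimension $n=n_*$. The condition $n_*\le\lfloor n_\eps\rfloor$ is exactly the accuracy constraint of (a), and $n_*\ge256$ is assumed. The two terms $n^{5/2}$ and $n^{2-2\theta_p}A^{2/\beta}$, with $A=L_\beta R^\beta/\eps$, are equal when $n^{1/2+2\theta_p}=A^{2/\beta}$, that is, when $n=n_\times$. This gives the stated dichotomy. For (R1) and (R2) we have $n_*=d$. For (R3) we have $n_*=\lfloor n_\eps\rfloor\ge n_\eps/2$, and the second term is then at least $\tfrac14 n_\eps^{2-2\theta_p}A^{2/\beta}$. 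This uses $2^{-(2-2\theta_p)}\ge1/4$, and the exponent is $(2-2\theta_p)/(\beta\theta_p)=2(3p-2)/(\beta(2-p))$.

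Part (d) is the case $p=1$. There $\theta_1=1/2$ and $p^*=\infty$, so the second term is linear in $n$. On the assumed range $n_*\ge n_\times$ the minimum equals $n_*A^{2/\beta}$. Using $n_*=\min\{d,\lfloor n_\eps\rfloor\}\ge\tfrac12\min\{d,n_\eps\}$ gives the factor $c_{\mathrm H}/2$. For the final range we check three things. First, $\eps\le c'_{\mathrm H}L_\beta R^\beta d^{-\beta/2}$ gives $n_\eps\ge d$, so $n_*=d$. Second, $\eps\ge L_\beta R^\beta d^{-3\beta/4}$ gives $d\ge n_\times$. Third, $d\ge256$. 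Then (R2) applies with constant $c_{\mathrm H}$, and we do not need the halving.

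Part (e) uses \cref{lem:radius}(b), which gives the image ball $(P,0,r)$ of $B_1^d(R)$ with $r=7R/(8\sqrt b)$ and $n=bk$; we take $n=bk$ exactly so that $\lceil n/k\rceil=b$. The requirement $bk\ge256$ lets us apply \cref{thm:geometry-lower}(a). The accuracy condition becomes $\eps\le c'_{\mathrm H}(7/8)^\beta L_\beta R^\beta b^{-\beta/2}$. The second term is
\[
n^2(L_\beta r^\beta/\eps)^{2/\beta}=(bk)^2(49/64)b^{-1}A^{2/\beta}=\tfrac{49}{64}bk^2A^{2/\beta}.
\]
Here $\norm{P}_{2\to2}>1$, so only \eqref{eq:geo-hard-regularity} is available, not the Euclidean cap. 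This is why (e) is stated on that subclass.

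The steps are mostly bookkeeping. The one place I expect to need care is the floor and rounding losses in (c) and (d): confirming that $\lfloor n_\eps\rfloor\ge n_\eps/2$ when $n_\eps\ge256$, and checking that $n_*\le n_\times$ is exactly the switching condition between the two terms.
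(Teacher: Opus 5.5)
Your proposal is correct and follows the paper's own proof. You obtain (a), (b) and (e) from \cref{thm:geometry-lower} with the image balls of \cref{lem:radius}, using the same exponent algebra. You obtain (c) and (d) by taking $n=n_*$, comparing with the switching point $n_\times$, and using $\lfloor n_\eps\rfloor\ge n_\eps/2$ together with $2-2\theta_p\le2$. Your explicit checks fill in bookkeeping the paper leaves implicit: choosing $n=bk$ so that $\lceil n/k\rceil=b$ and $n\le b5^k\le d$, and noting that $n_*=d$ on the final $p=1$ window, so no factor $1/2$ is lost.
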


\begin{proof}
Parts (a), (b) and (e) follow from \cref{thm:geometry-lower} with the image
balls of \cref{lem:radius}. For the coordinate projection,
$r=Rn^{-\theta_p}$ and
$n^2(L_\beta r^\beta/\eps)^{2/\beta}=n^{1+2/p^*}(L_\beta R^\beta/\eps)^{2/\beta}$; for
(e), $n=bk$ and $r=7R/(8\sqrt b)$ give $n^2r^2=\frac{49}{64}bk^2R^2$. In
(c), the condition on $\eps$ in (a) means $n\le n_\eps$, and both terms of
the minimum increase with $n$, so the best choice is $n=n_*$; the first
term is the smaller one exactly when $n\le n_\times$. This gives (R1)--(R3),
where (R3) uses $\lfloor n_\eps\rfloor\ge n_\eps/2$ and $1+2/p^*\le2$. Part
(d) is (c) with $\theta_1=1/2$ and $1+2/p^*=1$, since
$n_*\ge\min\{d,n_\eps\}/2$, and $n_*=d$ under its last condition. The
subclass assertions follow from \eqref{eq:geo-hard-regularity} and
$\norm{P_n}_{2\to2}=1$.
\end{proof}

Part (b) also covers dimensions below $256$. If $d\ge256$ and
$\eps\le\min\{c'_{\mathrm H},c'_{\mathrm V}\}L_\beta R^\beta d^{-\beta\theta_p}$,
then part (a) with $n=d$ is at least $c_{\mathrm H}/c_{\mathrm V}$ times
part (b) with $n=d$, because $d^{5/2}\ge d^2$ and
$d^{1+2/p^*}(L_\beta R^\beta/\eps)^{2/\beta}
=d^2(d^{-\beta\theta_p}L_\beta R^\beta/\eps)^{2/\beta}\ge(c'_{\mathrm H})^{-2/\beta}d^2\ge d^2$.
Part (e) improves the regime (R3) at $p=1$: with
$b=\lfloor\frac{49}{64}n_\eps\rfloor$ and $k=\lfloor\log_5(d/b)\rfloor$, its
second term exceeds the bound of (R3) by a factor of order
$k^2\asymp\log^2(d/n_\eps)$.

\subsection{An upper bound on the \texorpdfstring{$\ell_1$}{l1} ball}
\label{sec:geo-l1}

We use the stencil of \cref{app:estimator}: $m=\lceil q/2\rceil$, nodes
$t_j=j/m$, weights $a_j$ solving \eqref{eq:stencil-moments}, and the
operator $D_{h,u}$ of \eqref{eq:directional}, here with $u=e_i$. Its
constants are $S_1$, $S_2$ and $c_0$ of \eqref{eq:stencil-constants},
$A_1$ of \eqref{eq:A1} and $C_\sigma=(S_1+(S_2/2)^{1/2})^2$ of
\cref{prop:unified-estimator} (see \cref{tab:unified-constants}); they
depend only on $\beta$. For $2<\beta\le3$ they are given in the row $m=1$ of
\cref{tab:unified-constants}, and $c_0=1/2$.

Let $\mathcal C_1(R,G_\infty)$ be the class of convex $f\in C^q(\R^d)$ with
$[f]_{\beta,1}\le L_\beta$, $\argmin f\cap B_1^d(R)\ne\emptyset$, and
$\norm{\nabla f(x)}_\infty\le G_\infty$ for all $x\in B_1^d(R)$. Let
$\Delta_{2d}=\{w\in[0,1]^{2d}:\sum_\ell w_\ell=1\}$, and for $w\in\R^{2d}$ let
$w^+,w^-\in\R^d$ be its first and last $d$ coordinates.
\Cref{alg:entropic-coordinate} runs entropic mirror descent
\citep{NemirovskiYudin1983,BeckTeboulle2003} on $\Delta_{2d}$, mapped onto
$B_1^d(R)$ by $w\mapsto R(w^+-w^-)$, with coordinate stencils whose signs
are drawn afresh in every round. Its parameters $h,N,\varsigma$ are fixed
before the run; \cref{thm:l1-upper}(b) chooses them for a target accuracy
$\eps$.

\begin{algorithm}[t]
\caption{Entropic descent with coordinate stencils on the $\ell_1$ ball}
\label{alg:entropic-coordinate}
\begin{algorithmic}[1]
\Require $\beta,L_\beta,R,G_\infty,\sigma,d,\eps$; $h>0,N\ge1,\varsigma>0$;
$t_j,a_j$ from \eqref{eq:stencil-moments}
\State $w_1\gets(2d)^{-1}\mathbf1\in\Delta_{2d}$
\For{$k=1,\ldots,N$}
  \State $x_k\gets R(w_k^+-w_k^-)$; draw independent uniform signs $s_{k,i}$
  \State In one round obtain $Y^\pm_{k,i,j}$ at $x_k\pm ht_js_{k,i}e_i$
  \State $g_{k,i}\gets\dfrac{s_{k,i}}{2h}\sum_{j=1}^ma_j
                   (Y^+_{k,i,j}-Y^-_{k,i,j})$ for $i=1,\ldots,d$
  \State $G_k\gets R(g_k,-g_k)$
  \State $w_{k+1,\ell}\gets w_{k,\ell}e^{-\varsigma G_{k,\ell}}/\sum_{r=1}^{2d}
                         w_{k,r}e^{-\varsigma G_{k,r}}$ for $\ell=1,\ldots,2d$
\EndFor
\State \Return $\bar x_N=N^{-1}\sum_{k=1}^N x_k$
\end{algorithmic}
\end{algorithm}

Since $x_k$ is a function of $g_1,\dots,g_{k-1}$, it is
$\mathcal F_{k-1}$-measurable. The queries of round $k$ are functions of
$x_k$ and of the signs $s_{k,i}$, which belong to $\Theta_k$, and their
order does not depend on $\Theta_k$. Thus the algorithm belongs to the
model, with $N$ rounds and $T=2mdN$ calls, and $g_k$ is the estimate
\eqref{eq:coord-estimate} of the
next lemma at the center $x=x_k$, with $B=1$ and $s^{(1)}_i=s_{k,i}$. We
write $b_k$ and $\Xi_k$ for the vectors $b$ and $\Xi$ of the lemma for this
estimate.

\begin{alemma}[Coordinate stencils with random signs]
\label{lem:coord-estimator}
Let $f\in C^q(\R^d)$ be such that for every $z\in\R^d$ and every $i$ the
function $u\mapsto f(z+ue_i)$ has a $\gamma$-H\"older $q$-th derivative
with constant $L_\beta$; this holds if $[f]_{\beta,p}\le L_\beta$ for some
$p\in[1,\infty]$, because $\norm{e_i}_p=1$. Let $h>0$ and an integer
$B\ge1$ be fixed, let $k$ be a round, and let $x$ be an
$\mathcal F_{k-1}$-measurable random point of $\R^d$. In round $k$, draw
independent signs $s^{(\ell)}_i$ with
$\PP(s^{(\ell)}_i=1)=\PP(s^{(\ell)}_i=-1)=1/2$, $1\le i\le d$,
$1\le\ell\le B$, as part of $\Theta_k$. Query, in an order that does not
depend on $\Theta_k$, the $2mdB$ points
$X^\pm_{i,j,\ell}=x\pm ht_js^{(\ell)}_ie_i$ ($1\le i\le d$, $1\le j\le m$,
$1\le\ell\le B$), observe the values $Y^\pm_{i,j,\ell}$, and set
\begin{equation}
\label{eq:coord-estimate}
g_i=\frac1B\sum_{\ell=1}^B\frac{s^{(\ell)}_i}{2h}\sum_{j=1}^ma_j
\bigl(Y^+_{i,j,\ell}-Y^-_{i,j,\ell}\bigr),\qquad1\le i\le d .
\end{equation}
Let the noise satisfy (C), (O) and (A), and let its Lipschitz component
$\eta$ satisfy
\begin{equation}
\label{eq:coord-lip}
\abs{\eta(y+ue_i)-\eta(y-ue_i)}\le2M\abs u\qquad
(y\in\R^d,\ u\in\R,\ 1\le i\le d);
\end{equation}
this holds if $\eta$ is $M$-Lipschitz for $\norm{\cdot}_p$ for some
$p\in[1,\infty]$, in particular under (L). Let $\xi^\pm_{i,j,\ell}$ be the
random error $\xi^{\mathrm c}_t+\xi^{\mathrm o}_t$ at the query
$X^\pm_{i,j,\ell}$. Then $g$ is $\mathcal F_k$-measurable and
$g=\nabla f(x)+b+\Xi$, where
\[
\Xi_i=\frac1B\sum_{\ell=1}^B\frac{s^{(\ell)}_i}{2h}\sum_{j=1}^ma_j
\bigl(\xi^+_{i,j,\ell}-\xi^-_{i,j,\ell}\bigr),
\]
and:
\begin{enumerate}[label=(\alph*)]
\item $\norm{b}_\infty\le\bar b_h:=c_0L_\beta h^{\beta-1}+A_1M+S_1\delta/h$
almost surely;
\item for every $i$, $\E[\Xi_i\mid\mathcal F_{k-1}]=0$ and
$\E[\Xi_i^2\mid\mathcal F_{k-1}]\le C_\sigma\sigma^2/(Bh^2)$; hence
$\E[\Xi\mid\mathcal F_{k-1}]=0$ and
\[
\E[\norm{\Xi}_\infty^2\mid\mathcal F_{k-1}]\le
\E[\norm{\Xi}_2^2\mid\mathcal F_{k-1}]\le C_\sigma d\sigma^2/(Bh^2);
\]
\item if the noise satisfies \eqref{eq:SG} and $\xi^{\mathrm o}\equiv0$, then
\[
\E[\norm{\Xi}_\infty^2\mid\mathcal F_{k-1}\vee\sigma(\Theta_k)]\le
S_2(1+\log(2d))\sigma^2/(Bh^2)\le(1+1/\log2)S_2\log(2d)\sigma^2/(Bh^2),
\]
and the same bound holds for $\E[\norm{\Xi}_\infty^2\mid\mathcal F_{k-1}]$.
\end{enumerate}
\end{alemma}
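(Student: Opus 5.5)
The plan is to decompose the observed value at each query into its components and treat each per coordinate $i$, as in the proof of \cref{prop:unified-estimator}, but now along the fixed directions $e_i$ with random signs. Write $Y^\pm_{i,j,\ell}=f(X^\pm)+\eta(X^\pm)+\xi^\pm+\delta^\pm$. Since $s^{(\ell)}_i(\phi(x+ht_js e_i)-\phi(x-ht_js e_i))=\phi(x+ht_je_i)-\phi(x-ht_je_i)$ for $s=\pm1$, the $f$ and $\eta$ parts do not depend on the signs. We therefore set $b_i=D_{h,e_i}f(x)-\partial_if(x)+D_{h,e_i}\eta(x)+\frac1B\sum_\ell\frac{s^{(\ell)}_i}{2h}\sum_ja_j(\delta^+-\delta^-)$. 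Measurability of $g$ with respect to $\mathcal F_k$ is immediate, because $g$ is a function of $x$, $\Theta_k$ and the observed values; localization on events $E_n$ handles integrability exactly as in \cref{app:unified-setting}.

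For (a) we bound the three pieces of $b_i$ separately. \Cref{lem:exact} applied to the one-dimensional map $u\mapsto f(x+ue_i)$, whose $q$-th derivative is $\gamma$-H\"older with constant $L_\beta$, gives $\abs{D_{h,e_i}f(x)-\partial_if(x)}\le c_0L_\beta h^{\beta-1}$. Condition \eqref{eq:coord-lip} gives $\abs{D_{h,e_i}\eta(x)}\le\frac1{2h}\sum_j\abs{a_j}2Mht_j=A_1M$. Finally $\abs{\delta^\pm}\le\delta$ bounds the adversarial term by $S_1\delta/h$. For the claim that \eqref{eq:coord-lip} follows from $M$-Lipschitzness in $\norm\cdot_p$, use $\norm{2ue_i}_p=2\abs u$.

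For (b) we split $\Xi_i$ into its oblivious and centered parts. For the oblivious part, \cref{lem:round-facts}(a), applied with the signs in place of frames, shows that conditionally on $\mathcal F_{k-1}\vee\sigma(\boldsymbol\xi^{\mathrm o}_k)$ the signs are independent and uniform. Hence each frame term $s^{(\ell)}_i\omega^{(\ell)}_i$ has mean zero, the terms for different $\ell$ are orthogonal, and Minkowski gives second moment at most $S_1^2\sigma^2/(Bh^2)$. For the centered part, the coefficients $\pm a_js^{(\ell)}_i/(2hB)$ are $\mathcal E_t$-measurable, so the terms are orthogonal by (C), with total second moment at most $S_2\sigma^2/(2Bh^2)$. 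Minkowski then gives $C_\sigma\sigma^2/(Bh^2)$ per coordinate, and summing over $i$ bounds the $\ell_2$ norm; the $\ell_\infty$ bound is then trivial.

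For (c), the main obstacle, we need a maximal inequality over $d$ coordinates. Each $\Xi_i$ is a martingale-type sum of conditionally sub-Gaussian increments with predictable coefficients $w_t$ satisfying $\sum_{t\in i}w_t^2\le S_2/(2Bh^2)$. Iterating \eqref{eq:SG} along the fixed query order gives $\E[e^{\lambda\Xi_i}\mid\mathcal F_{k-1}\vee\sigma(\Theta_k)]\le\exp(\lambda^2\sigma^2S_2/(4Bh^2))$; the weights are fixed given $\Theta_k$, and the tower property is applied backwards. Setting $v=S_2\sigma^2/(2Bh^2)$, the variables $\pm\Xi_i$ are $v$-sub-Gaussian, so $\PP(\norm\Xi_\infty>u)\le2de^{-u^2/(2v)}$. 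Integrating the tail above $u_0^2=2v\log(2d)$ yields $\E\norm\Xi_\infty^2\le2v(\log(2d)+1)=S_2(1+\log 2d)\sigma^2/(Bh^2)$. The second form of the bound uses $1\le\log(2d)/\log2$, and the tower property transfers the bound to conditioning on $\mathcal F_{k-1}$ alone.
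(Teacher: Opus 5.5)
Your proposal is correct and follows essentially the same route as the paper. It uses the same sign-invariance identity to get the decomposition and the same three-term bound for (a). For (b) it splits the noise into oblivious and centered parts, uses sign orthogonality for one and (C) for the other, and combines them by Minkowski to get $C_\sigma$. For (c) it iterates \eqref{eq:SG} backwards along the fixed query order, then applies a Chernoff and union bound and tail integration above $2v\log(2d)$.
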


\begin{proof}
\emph{Decomposition.} The queries are functions of $x$ and $\Theta_k$, and
$g$ is a function of $\Theta_k$ and of the values observed in round $k$, so
$g$ is $\mathcal F_k$-measurable. For $s\in\{-1,1\}$ and every
$\varphi:\R^d\to\R$,
\[
\frac{s}{2h}\sum_ja_j\bigl(\varphi(x+ht_jse_i)-\varphi(x-ht_jse_i)\bigr)
=D_{h,e_i}\varphi(x),
\]
since for $s=-1$ both the factor $s$ and the difference change sign.
Inserting the values, with adversarial errors $\delta^\pm_{i,j,\ell}$, gives
$g_i=\partial_if(x)+b_i+\Xi_i$ with
\[
b_i=D_{h,e_i}f(x)-\partial_if(x)+D_{h,e_i}\eta(x)
+\frac1B\sum_{\ell=1}^B\frac{s^{(\ell)}_i}{2h}\sum_ja_j
\bigl(\delta^+_{i,j,\ell}-\delta^-_{i,j,\ell}\bigr).
\]

\emph{Part (a).} \Cref{lem:exact} in dimension one, applied to
$u\mapsto f(x+ue_i)$, gives
$\abs{D_{h,e_i}f(x)-\partial_if(x)}\le c_0L_\beta h^{\beta-1}$. By
\eqref{eq:coord-lip},
$\abs{D_{h,e_i}\eta(x)}\le\frac1{2h}\sum_j\abs{a_j}\,2M ht_j=A_1M$,
and by (A) the last term of $b_i$ is at most $S_1\delta/h$ in absolute
value. These bounds hold for every realization of the signs.

\emph{Part (b).} This is the noise part of the proof of
\cref{prop:unified-estimator}, with random signs in place of random frames.
Split $\Xi_i=\Xi^{\mathrm c}_i+\Xi^{\mathrm o}_i$ into its centered and
oblivious parts. The query $t$ at $X^\pm_{i,j,\ell}$ enters $\Xi_i$ with the
$\Theta_k$-measurable coefficient $w_t=\pm s^{(\ell)}_ia_j/(2hB)$, and the
$2mB$ queries of coordinate $i$ have $\sum_tw_t^2=S_2/(2Bh^2)$. By (C), as
there, the terms $w_t\xi^{\mathrm c}_t$ are centered and pairwise orthogonal,
so $\E[\Xi^{\mathrm c}_i\mid\mathcal F_{k-1}]=0$ and
$\mathsf N(\Xi^{\mathrm c}_i)^2\le S_2\sigma^2/(2Bh^2)$, with $\mathsf N$ as in
\cref{app:unified-setting}. For the oblivious part,
$\Xi^{\mathrm o}_i=B^{-1}\sum_\ell s^{(\ell)}_io^{(\ell)}_i$ with
$o^{(\ell)}_i=\frac1{2h}\sum_ja_j(\xi^{\mathrm o,+}_{i,j,\ell}-\xi^{\mathrm o,-}_{i,j,\ell})$.
By (O) and the argument of \cref{lem:round-facts}(a), the signs are
independent of $\mathcal F_{k-1}$ and of the oblivious errors of the round,
and they have mean zero and are independent of each other; since
$\mathsf N(o^{(\ell)}_i)\le S_1\sigma/h$, this gives
$\E[\Xi^{\mathrm o}_i\mid\mathcal F_{k-1}]=0$ and
$\mathsf N(\Xi^{\mathrm o}_i)^2\le S_1^2\sigma^2/(Bh^2)$. Minkowski's
inequality gives (b) for $\Xi_i$, and summing over $i$ and
$\norm{\cdot}_\infty\le\norm{\cdot}_2$ give the rest of (b).

\emph{Part (c).} Let $\mathcal G=\mathcal F_{k-1}\vee\sigma(\Theta_k)$. Now
$\Xi_i=\sum_tw_t\xi^{\mathrm c}_t$, summed over the $2mB$ queries $t$ of
coordinate $i$ in their order, which does not depend on $\Theta_k$. Each
$w_t$ takes two values, $w_t$ and $\sum_{t'<t}w_{t'}\xi^{\mathrm c}_{t'}$
are $\mathcal E_t$-measurable, and $\mathcal E_t\supseteq\mathcal G$.
Applying \eqref{eq:SG} to these queries from the last to the first gives
$\E[e^{\theta\Xi_i}\mid\mathcal G]\le e^{\theta^2v/2}$ for $\theta\in\R$,
where $v=\sigma^2\sum_tw_t^2=S_2\sigma^2/(2Bh^2)$. The Chernoff bound and a
union bound over $i$ give
$\PP(\norm\Xi_\infty>u\mid\mathcal G)\le2de^{-u^2/(2v)}$, so, with
$w_0=2v\log(2d)$,
\[
\E[\norm{\Xi}_\infty^2\mid\mathcal G]
\le w_0+\int_{w_0}^\infty2de^{-w/(2v)}\,dw=2v(1+\log(2d)) .
\]
This is the first bound of (c). The second follows from
$\log2\le\log(2d)$, and conditioning on $\mathcal F_{k-1}$ gives the last
claim.
\end{proof}

The random signs are needed for the oblivious part of (b): without them
($s^{(\ell)}_i\equiv1$), the oblivious errors $\pm\sigma\sgn(a_j)$ at the
points $x\pm ht_je_i$ give $\Xi_i=S_1\sigma/h$ for every $i$. Without
\eqref{eq:SG}, the factor $d$ in (b) cannot be removed. Let $B=1$, fix $j$
with $a_j\ne0$, let the centered errors at the queries $X^+_{i,j,1}$,
$1\le i\le d$, be independent of each other and of everything else, equal
to $\pm\sqrt d\,\sigma$ with probability $1/(2d)$ each and to $0$
otherwise, and let all other errors vanish. Then (C) holds, and
$\norm{\Xi}_\infty=\abs{a_j}\sqrt d\,\sigma/(2h)$ unless all these errors
vanish, so $\E\norm{\Xi}_\infty^2\ge(1-e^{-1})a_j^2d\sigma^2/(4h^2)$.

\begin{alemma}[Entropic mirror descent on the $\ell_1$ ball]
\label{lem:entropic-md}
Let $R,\varsigma>0$, let $g_1,g_2,\dots\in\R^d$ be arbitrary, and define
$w_k$, $x_k$ and $G_k$ by \cref{alg:entropic-coordinate}. Then
$x_k\in B_1^d(R)$ for all $k$, and for every $\xstar\in B_1^d(R)$ and
$N\ge1$,
\[
\sum_{k=1}^N\inner{g_k}{x_k-\xstar}\le\frac{\log(2d)}{\varsigma}
+\frac{\varsigma R^2}2\sum_{k=1}^N\norm{g_k}_\infty^2 .
\]
\end{alemma}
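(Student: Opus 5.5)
The plan is to reduce the statement to the standard regret bound for exponentiated gradient on the simplex $\Delta_{2d}$, and then to transfer it to the ball through the linear map $w\mapsto R(w^+-w^-)$.

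\emph{Feasibility.} First, $x_k\in B_1^d(R)$, because $\norm{w^+-w^-}_1\le\sum_\ell w_\ell=1$.

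\emph{Representing the comparator.} Given $\xstar\in B_1^d(R)$, write $\xstar=R(w_*^+-w_*^-)$ with $w_*\in\Delta_{2d}$. For this, put $w_{*,i}^\pm=(\xstar_i)_\pm/R$, which gives total mass $\norm{\xstar}_1/R\le1$. Add the missing mass $(1-\norm{\xstar}_1/R)/2$ to both $w^+_{*,1}$ and $w^-_{*,1}$; these additions cancel in the difference.

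\emph{Pulling back the linear terms.} For any $w\in\Delta_{2d}$ we have $\inner{G_k}{w}=R\inner{g_k}{w^+-w^-}$. Hence
\[
\inner{g_k}{x_k-\xstar}=\inner{G_k}{w_k-w_*},\qquad \norm{G_k}_\infty=R\norm{g_k}_\infty .
\]
The task is therefore to bound $\sum_k\inner{G_k}{w_k-w_*}$ on the simplex.

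\emph{Regret of the multiplicative update.} The update is mirror descent with negative entropy, which is $1$-strongly convex with respect to $\norm\cdot_1$ on $\Delta_{2d}$ by Pinsker's inequality. I will use the potential $\KL(w_*\Vert w_k)$, equivalently the log-partition function $\Phi_k=\log\sum_\ell w_{k,\ell}e^{-\varsigma G_{k,\ell}}$. The one-step identity is
\[
\KL(w_*\Vert w_{k+1})-\KL(w_*\Vert w_k)=\varsigma\inner{G_k}{w_*}+\log\sum_\ell w_{k,\ell}e^{-\varsigma G_{k,\ell}} .
\]
By Hoeffding's lemma, applied to the random variable $-\varsigma G_{k,\ell}$ under $w_k$, whose range has length at most $2\varsigma\norm{G_k}_\infty$, the log-sum is at most
\[
-\varsigma\inner{G_k}{w_k}+\varsigma^2\norm{G_k}_\infty^2/2 .
\]

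\emph{Telescoping.} Summing over $k$ and using $\KL(w_*\Vert w_1)\le\log(2d)$ (since $w_1$ is uniform) together with $\KL\ge0$ gives
\[
\varsigma\sum_k\inner{G_k}{w_k-w_*}\le\log(2d)+\frac{\varsigma^2}2\sum_k\norm{G_k}_\infty^2 .
\]
Dividing by $\varsigma$ and substituting $\norm{G_k}_\infty=R\norm{g_k}_\infty$ yields the claim.

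\emph{Where the care goes.} None of the steps is a real obstacle; it is a deterministic, pathwise argument. The only point needing attention is the Hoeffding constant: the range of $-\varsigma G_{k,\ell}$ has length $2\varsigma\norm{G_k}_\infty$, which gives exactly the factor $1/2$ in front of $\varsigma^2\norm{G_k}_\infty^2$. The representation of the comparator must also keep the total mass exactly one.
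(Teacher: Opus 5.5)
Your proof is correct, and its skeleton matches the paper's. Both arguments lift to $\Delta_{2d}$ through $w\mapsto R(w^+-w^-)$, represent $\xstar$ by a point of the simplex, use $\KL(u\Vert w_k)$ as the potential, and telescope with $\KL(u\Vert w_1)\le\log(2d)$.

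The difference is in the one-step inequality. The paper views the update as the prox-mapping of the entropy, which is $1$-strongly convex with respect to $\norm\cdot_1$, and quotes the generic mirror-descent inequality of \citet[Lemma~2.1]{NJLS2009}. You instead use the closed form of the multiplicative update to get the exact identity
\[
\KL(w_*\Vert w_{k+1})-\KL(w_*\Vert w_k)=\varsigma\inner{G_k}{w_*}+\log\sum_\ell w_{k,\ell}e^{-\varsigma G_{k,\ell}},
\]
and then bound the log-partition term by Hoeffding's lemma.

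Your route is self-contained and elementary, but it is specific to the entropy; your remark about Pinsker's inequality is then not used. The paper's route is a citation that works for any strongly convex distance-generating function. Both give the same constant $\varsigma^2\norm{G_k}_\infty^2/2$.

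Your comparator, which puts the leftover mass on the first pair of coordinates, works as well as the paper's uniform spreading. Indeed, $\KL(u\Vert w_1)\le\log(2d)$ holds for every $u\in\Delta_{2d}$ when $w_1$ is uniform, even if $u$ has zero entries.
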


\begin{proof}
The update of $w_k$ is the prox-mapping, applied to $\varsigma G_k$, of the
entropy on $\Delta_{2d}$, which is $1$-strongly convex with respect to
$\norm{\cdot}_1$ \citep[Section~2.3]{NJLS2009}; the associated
prox-function is $V(w,u)=\KL(u\Vert w)$, the Kullback--Leibler divergence.
By \citet[Lemma~2.1]{NJLS2009} and $\norm{G_k}_\infty=R\norm{g_k}_\infty$,
every $u\in\Delta_{2d}$ satisfies
\[
\KL(u\Vert w_{k+1})\le\KL(u\Vert w_k)-\varsigma\inner{G_k}{w_k-u}
+\frac{\varsigma^2R^2}2\norm{g_k}_\infty^2 .
\]
Summing over $k\le N$ and using $\KL(u\Vert w_1)\le\log(2d)$ bounds
$\sum_k\inner{G_k}{w_k-u}$ by the right-hand side of the claim. Moreover
$\norm{x_k}_1\le R\sum_\ell w_{k,\ell}=R$. Given $\xstar\in B_1^d(R)$, the vector
$u$ with $u_i^\pm=(\pm\xstar_i)_+/R+(1-\norm{\xstar}_1/R)/(2d)$ lies in
$\Delta_{2d}$ and satisfies $R(u^+-u^-)=\xstar$, so that
\begin{equation}\label{eq:geo-lift}
\inner{G_k}{w_k-u}=\inner{g_k}{x_k-\xstar},
\end{equation}
and the claim follows.
\end{proof}

\begin{atheorem}[Upper bound for the $\ell_1$ geometry]
\label{thm:l1-upper}
\label{cor:l1-dimension}
Let $\beta>2$, $L_\beta,R,G_\infty>0$, let $\bar b_h$ be as in
\cref{lem:coord-estimator}, and let $\bar x_N$ be the output of
\cref{alg:entropic-coordinate}. Consider noises admissible at levels
$(M,\sigma,\delta)$, and put $V_h=C_\sigma d\sigma^2/h^2$ in general
and $V_h=S_2(1+\log(2d))\sigma^2/h^2$ for those that satisfy \eqref{eq:SG}
and $\xi^{\mathrm o}\equiv0$.
\begin{enumerate}[label=(\alph*)]
\item For all $h,\varsigma>0$, $N\ge1$, $f\in\mathcal C_1(R,G_\infty)$ and
such noises,
\[
\E[f(\bar x_N)-\fstar]\le\frac{\log(2d)}{\varsigma N}
+\varsigma R^2\bigl((G_\infty+\bar b_h)^2+V_h\bigr)+2R\bar b_h .
\]
If $\bar G^2\ge2(G_\infty+\bar b_h)^2+2V_h$ and
$\varsigma=(2\log(2d)/(R^2\bar G^2N))^{1/2}$, then
$\E[f(\bar x_N)-\fstar]\le R\sqrt{2\log(2d)\bar G^2/N}+2R\bar b_h$.
\item Let $0<\eps<G_\infty R$ and put
\begin{gather*}
h_{1,\eps}=\Bigl(\frac{\eps}{8c_0L_\beta R}\Bigr)^{\frac1{\beta-1}},\qquad
\bar G_\eps^2=2\Bigl(G_\infty+\frac{\eps}{4R}\Bigr)^2+2V_{h_{1,\eps}},\\
N=\Bigl\lceil\frac{8R^2\log(2d)\bar G_\eps^2}{\eps^2}\Bigr\rceil,\qquad
\varsigma=\Bigl(\frac{2\log(2d)}{R^2\bar G_\eps^2N}\Bigr)^{1/2}.
\end{gather*}
If $M\le\eps/(16A_1R)$ and $\delta\le\eps h_{1,\eps}/(16S_1R)$, then
$\E[f(\bar x_N)-\fstar]\le\eps$ for every $f\in\mathcal C_1(R,G_\infty)$
and every such noise. The algorithm uses $N$ rounds of $2md$
calls, all queries lie in $B_1^d(R+h_{1,\eps})$, and $T=2mdN$
satisfies, under \eqref{eq:SG} and $\xi^{\mathrm o}\equiv0$,
\[
T\le72m\,d\log(2d)\frac{G_\infty^2R^2}{\eps^2}
+79mS_2(8c_0)^{\frac2{\beta-1}}\,d\log^2(2d)\,
\frac{\sigma^2}{\eps^2}\Bigl(\frac{L_\beta R^\beta}\eps\Bigr)^{\frac2{\beta-1}},
\]
and in general
\[
T\le72m\,d\log(2d)\frac{G_\infty^2R^2}{\eps^2}
+32mC_\sigma(8c_0)^{\frac2{\beta-1}}\,d^2\log(2d)\,
\frac{\sigma^2}{\eps^2}\Bigl(\frac{L_\beta R^\beta}\eps\Bigr)^{\frac2{\beta-1}}.
\]
The parameters depend only on $\beta,L_\beta,R,G_\infty,\sigma,d,\eps$. If
$\eps\ge G_\infty R$, the output $\hat x=0$ satisfies $f(0)-\fstar\le\eps$
without any call.
\item \emph{Dimension dependence.} Let $d\ge256$, $\sigma>0$,
$G_\infty\ge G_d:=2C_{\mathrm H}L_\beta R^{\beta-1}d^{-(\beta-2)/2}$, and
\[
L_\beta R^\beta d^{-3\beta/4}\le\eps\le c'_{\mathrm H}L_\beta R^\beta d^{-\beta/2}.
\]
For independent $N(0,\sigma^2)$ noise, let $T^*(\eps)$ be the minimax
number of calls on $\mathcal C_1(R,G_\infty)$. Then
\[
T^*(\eps)\ge c_{\mathrm H}d(L_\beta R^\beta/\eps)^{2/\beta}\sigma^2/\eps^2,
\]
and the sub-Gaussian upper bound in (b) also bounds $T^*(\eps)$.
Here $\eps<G_dR\le G_\infty R$; the window is nonempty exactly when
$d\ge(c'_{\mathrm H})^{-4/\beta}$.
\end{enumerate}
\end{atheorem}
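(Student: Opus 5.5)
For (a) the plan is to combine \cref{lem:entropic-md} with \cref{lem:coord-estimator} along the run. Write $g_k=\nabla f(x_k)+b_k+\Xi_k$. Since $x_k\in B_1^d(R)$ and $f$ is convex, $f(x_k)-\fstar\le\inner{\nabla f(x_k)}{x_k-\xstar}=\inner{g_k}{x_k-\xstar}-\inner{b_k}{x_k-\xstar}-\inner{\Xi_k}{x_k-\xstar}$, with $\xstar\in B_1^d(R)$ a minimizer. The bias term is at most $2R\bar b_h$ by Hölder's inequality ($\norm{x_k-\xstar}_1\le2R$). The noise term has zero expectation, because $x_k$ is $\mathcal F_{k-1}$-measurable and bounded and $\E[\Xi_k\mid\mathcal F_{k-1}]=0$ by \cref{lem:coord-estimator}(b). \Cref{lem:entropic-md} bounds $\sum_k\inner{g_k}{x_k-\xstar}$ by $\log(2d)/\varsigma+\frac{\varsigma R^2}2\sum_k\norm{g_k}_\infty^2$. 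For the square I would use $\norm{g_k}_\infty^2\le2(\norm{\nabla f(x_k)}_\infty+\norm{b_k}_\infty)^2+2\norm{\Xi_k}_\infty^2\le2(G_\infty+\bar b_h)^2+2\norm{\Xi_k}_\infty^2$. The conditional second moment of $\norm{\Xi_k}_\infty$ is at most $V_h$: in general by \cref{lem:coord-estimator}(b), and under \eqref{eq:SG} by (c). Dividing by $N$ and applying Jensen to $\bar x_N$ gives the first display of (a). The tuned bound then follows by substituting $\varsigma$ and using $(G_\infty+\bar b_h)^2+V_h\le\bar G^2/2$.

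For (b) I would take $h=h_{1,\eps}$ and check the bias. The three pieces are $c_0L_\beta h^{\beta-1}=\eps/(8R)$, $A_1M\le\eps/(16R)$ and $S_1\delta/h\le\eps/(16R)$, so $\bar b_h\le\eps/(4R)$ and $2R\bar b_h\le\eps/2$. This also shows that $\bar G_\eps^2$ dominates $2(G_\infty+\bar b_h)^2+2V_h$. The choice of $N$ makes $R\sqrt{2\log(2d)\bar G_\eps^2/N}\le\eps/2$, so the error is at most $\eps$. The queries lie in $B_1^d(R+h)$ because $t_j\le1$. For the call count, $T=2mdN\le2md(1+8R^2\log(2d)\bar G_\eps^2/\eps^2)$. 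I would expand $\bar G_\eps^2\le4G_\infty^2+\eps^2/(4R^2)+2V_h$ (using $(G_\infty+\eps/(4R))^2\le2G_\infty^2+\eps^2/(8R^2)$) and insert $1/h^2=(8c_0)^{2/(\beta-1)}(L_\beta R/\eps)^{2/(\beta-1)}$, noting $R^2(L_\beta R/\eps)^{2/(\beta-1)}=(L_\beta R^\beta/\eps)^{2/(\beta-1)}\cdot R^{2-2\beta/(\beta-1)}\cdot R^{2/(\beta-1)}$, which is consistent. The additive $2md$ and $\eps^2/(4R^2)$ terms are absorbed into the $G_\infty^2$ term using $\eps<G_\infty R$, which is where the constants 72, 79 and 32 come from. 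For the bookkeeping in the sub-Gaussian case I would use $1+\log(2d)\le(1+1/\log2)\log(2d)$. If $\eps\ge G_\infty R$, then along the segment from $\xstar$ to $0$, convexity gives $f(0)-\fstar\le\inner{\nabla f(0)}{-\xstar}\le G_\infty R$.

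For (c) the lower bound comes from \cref{cor:lp-lower}(d). The stated window is exactly its last condition, and it yields $T\ge c_{\mathrm H}d(L_\beta R^\beta/\eps)^{2/\beta}\sigma^2/\eps^2$ on the subclass defined by the Hessian cap and gradient bound with $n=d$, $r=R/\sqrt d$, $P_n=I$. The one thing to verify is that these hard functions lie in $\mathcal C_1(R,G_\infty)$. For $x\in B_1^d(R)$ we have $\norm{x}_2\le R$, so the gradient bound gives $\norm{\nabla f(x)}_\infty\le C_{\mathrm H}L_\beta r^{\beta-2}(R+r)\le2C_{\mathrm H}L_\beta R^{\beta-1}d^{-(\beta-2)/2}=G_d\le G_\infty$. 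The seminorm $[f]_{\beta,1}\le L_\beta$ and minimizer localization come from \cref{lem:pullback} and \cref{thm:geometry-lower}. The upper bound is (b) applied to Gaussian noise, which satisfies \eqref{eq:SG}. It remains to check $\eps<G_dR$. This follows from $c'_{\mathrm H}L_\beta R^\beta d^{-\beta/2}<2C_{\mathrm H}L_\beta R^\beta d^{-(\beta-2)/2}$, which holds since $c'_{\mathrm H}$ is tiny and $C_{\mathrm H}\ge$ something of order one; this comparison of constants is the step I expect to need care. The nonemptiness of the window is just $d^{-3\beta/4}\le c'_{\mathrm H}d^{-\beta/2}$, i.e. $d\ge(c'_{\mathrm H})^{-4/\beta}$.
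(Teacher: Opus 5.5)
Your proposal is correct and matches the paper's proof for (a), for (b), and for the lower-bound part of (c).

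The one place you diverge is the verification of $\eps<G_dR$ in (c).

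\emph{Your direct comparison.} Your justification (``$c'_{\mathrm H}$ is tiny and $C_{\mathrm H}$ is of order one'') is not right as stated. The constant $C_{\mathrm H}=E_{\mathrm H}s_{\mathrm H}^{\beta-2}/K_{\mathrm H}$ need not be of order one; it can be very small when $\beta$ or $K_{\mathrm H}$ is large. The comparison works through the ratio instead. Since $c'_{\mathrm H}=s_{\mathrm H}^\beta/(2^{18}K_{\mathrm H})$, the needed inequality $c'_{\mathrm H}<2C_{\mathrm H}d$ reads $s_{\mathrm H}^2<2^{19}E_{\mathrm H}d$. This holds because $s_{\mathrm H}\le1/2$ and $E_{\mathrm H}=E_2\ge G_2=4$. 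Hence $\eps\le 2^{-23}G_dR/d$ on the window.

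\emph{The paper's argument.} The paper avoids constant chasing altogether. The lower bound of (c) is positive, so no method without calls is $\eps$-accurate on $\mathcal C_1(R,G_d)$. By the last assertion of (b), however, the output $0$ has error at most $G_dR$ on that class, so $\eps<G_dR$. This indirect route is shorter and survives any change of the constants. Your fixed direct comparison gives an explicit quantitative margin but depends on the specific values of $s_{\mathrm H}$, $E_{\mathrm H}$ and $K_{\mathrm H}$.
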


\begin{proof}
\emph{Part (a).} Let $\xstar\in\argmin f\cap B_1^d(R)$. Convexity,
\cref{lem:entropic-md} and the decomposition
$g_k=\nabla f(x_k)+b_k+\Xi_k$ of \cref{lem:coord-estimator} give
\[
N\bigl(f(\bar x_N)-\fstar\bigr)\le\sum_{k=1}^N\inner{\nabla f(x_k)}{x_k-\xstar}
\le\frac{\log(2d)}{\varsigma}+\frac{\varsigma R^2}{2}\sum_{k=1}^N\norm{g_k}_\infty^2
+2RN\bar b_h-\sum_{k=1}^N\inner{\Xi_k}{x_k-\xstar},
\]
since $\norm{x_k-\xstar}_1\le2R$. The last sum has expectation zero by
\cref{lem:coord-estimator}(b), because $x_k-\xstar$ is bounded and
$\mathcal F_{k-1}$-measurable, and
$\E\norm{g_k}_\infty^2\le2(G_\infty+\bar b_h)^2+2V_h$, because
$\norm{\nabla f(x_k)}_\infty\le G_\infty$. This proves the first bound of
(a); the second follows by inserting $\varsigma$.

\emph{Part (b).} The choices of $h_{1,\eps}$, $M$ and $\delta$ give
$\bar b_{h_{1,\eps}}\le\eps/(8R)+\eps/(16R)+\eps/(16R)=\eps/(4R)$, and the
choice of $N$ makes both terms of the tuned bound in (a) at most $\eps/2$.
Each query differs from a point of $B_1^d(R)$ in one coordinate by at most
$h_{1,\eps}$. Since $2(G_\infty+\eps/(4R))^2\le4G_\infty^2+\eps^2/(4R^2)$,
\[
N\le1+2\log(2d)+32\log(2d)\frac{G_\infty^2R^2}{\eps^2}
+16\log(2d)\frac{R^2V_{h_{1,\eps}}}{\eps^2}.
\]
As $\eps<G_\infty R$ and $1\le\log(2d)/\log2$, multiplying by $2md$ bounds
the first three terms by $72md\log(2d)G_\infty^2R^2/\eps^2$. The two call
bounds follow by inserting $V_h$, using
$R^2h_{1,\eps}^{-2}=(8c_0)^{2/(\beta-1)}(L_\beta R^\beta/\eps)^{2/(\beta-1)}$ and,
in the sub-Gaussian case, $1+\log(2d)\le(1+1/\log2)\log(2d)$ and
$32(1+1/\log2)<79$. Finally, if $\eps\ge G_\infty R$, convexity gives
$f(0)-\fstar\le\norm{\nabla f(0)}_\infty\norm{\xstar}_1\le G_\infty R$.

\emph{Part (c).} The window is the condition of the last assertion of
\cref{cor:lp-lower}(d). By the subclass assertion of \cref{cor:lp-lower}
with $p=1$ and $n=d$, that lower bound holds on the subclass of
$\mathcal C_p(R)$, $p=1$, defined by
$\norm{\nabla f(x)}_\infty\le C_{\mathrm H}L_\beta r^{\beta-2}(\norm x_2+r)$ (and
a Euclidean Hessian cap), where $r=R/\sqrt d$. For $x\in B_1^d(R)$ the
right-hand side is at most $2C_{\mathrm H}L_\beta r^{\beta-2}R=G_d$, so this
subclass is contained in $\mathcal C_1(R,G_d)\subseteq\mathcal C_1(R,G_\infty)$.
Since the lower bound is positive, no algorithm without calls is
$\eps$-accurate on $\mathcal C_1(R,G_d)$, whereas the output $0$ has error
at most $G_dR$ there (last assertion of (b) with $G_d$ in place of
$G_\infty$); hence $\eps<G_dR$. The upper bound is part (b) for Gaussian
noise, which satisfies \eqref{eq:SG} with $\xi^{\mathrm o}\equiv0$. The
window is nonempty exactly when $d^{\beta/4}\ge1/c'_{\mathrm H}$.
\end{proof}

\begin{aremark}[Local smoothness suffices]
\label{rem:l1-local}
The proof of \cref{thm:l1-upper}(a), (b) uses the H\"older condition only in
\cref{lem:coord-estimator}(a), through the functions $u\mapsto f(x_k+ue_i)$ on
$[-h,h]$ with $x_k\in B_1^d(R)$; all queries lie in $B_1^d(R+h)$. Hence (a) and
(b) hold for every $f$ that is defined, convex and continuously differentiable
on an open set containing $B_1^d(R+h)$, has a minimizer in $B_1^d(R)$ and
$\norm{\nabla f}_\infty\le G_\infty$ on $B_1^d(R)$, and for which every function
$u\mapsto f(x+ue_i)$, $x\in B_1^d(R)$, has a $\gamma$-H\"older $q$-th derivative
with constant $L_\beta$ on $[-h,h]$. Experiment~6 of \cref{app:experiments} uses
this form: for $f(x)=\sum_j\varphi(x_j-c_j)$ with $\varphi(t)=e^t-1-t$ the global
seminorm is infinite, while the local constant is the largest value of
$\varphi'''$ at the coordinates that occur.
\end{aremark}

\begin{aremark}[Comparison of the bounds]
\label{rem:lp-upper}
For fixed $\beta,L_\beta,R,\sigma,\eps$ and $G_\infty\ge G_d$, both bounds of
\cref{thm:l1-upper}(c) are linear in $d$ up to the factor $\log^2(2d)$ on
the whole range
$(L_\beta R^\beta/\eps)^{4/(3\beta)}\le d\le(c'_{\mathrm H}L_\beta R^\beta/\eps)^{2/\beta}$
of the window. Their ratio is at most
\[
C_\beta\log^2(2d)(L_\beta R^\beta/\eps)^{2/(\beta-1)-2/\beta}
+C_\beta\log(2d)G_\infty^2R^2(\eps/(L_\beta R^\beta))^{2/\beta}/\sigma^2,
\]
so the remaining gap concerns the power of $1/\eps$, which is $2+2/\beta$ in
the lower bound and $2+2/(\beta-1)$ in the upper bound. For larger $d$ the lower bound of
\cref{cor:lp-lower}(d) no longer grows with $d$, up to the logarithmic gain
of \cref{cor:lp-lower}(e), while the upper bound still grows linearly. For
noise that is only admissible at levels $(M,\sigma,\delta)$, the
upper bound grows like $d^2\log(2d)$, and by the example after
\cref{lem:coord-estimator} the second moment of $\norm{\Xi}_\infty$ can
indeed be of order $d\sigma^2/h^2$. Since
$s_{\mathrm H}\le1/2$, the threshold of \cref{thm:l1-upper}(c) satisfies
$(c'_{\mathrm H})^{-4/\beta}\ge2^{4+72/\beta}K_{\mathrm H}^{4/\beta}$ (for
instance $2^{28}K_{\mathrm H}^{4/3}$ at $\beta=3$); with the constants of
\cref{thm:convex-lower}, part (c) is therefore a statement about large
dimensions.

Upper bounds for $p>2$ are open. For $p=2$ and fixed $L_\beta,R,\sigma$, the
known convex upper bound is $O(d^2\eps^{-2\beta/(\beta-1)})$
\citep{Bach2016,Gasnikov2022Survey}, and
\cref{cor:direct-convex,thm:value-analysis} give a sharper comparison when a
Hessian bound is available.
\end{aremark}

\section{Functional constraints}
\label{app:functional}

This appendix treats convex problems with functional constraints whose
values are observed with noise. \Cref{app:fc-setting} introduces the
problem, the multipliers and the oracle. \Cref{app:fc-levels} shows that no
method controls both the objective error and the violation if the values of
the constraints carry an unknown constant error. \Cref{app:fc-mirror-prox}
analyzes stochastic Mirror--Prox with biased operator estimates, and
\cref{app:fc-stencil} builds these estimates from \cref{alg:stencil} and
calibrated values of the constraints. Lower bounds for problems whose optimal
value is not changed by the constraints are given by
\cref{cor:geometry-constrained}.

\subsection{Problem, multipliers and oracle}
\label{app:fc-setting}

\paragraph{Problem.}
Let $Q\subset\R^d$ be nonempty, closed and convex with diameter
$D\in(0,\infty)$, and let $g_0=f,g_1,\dots,g_{{n_{\mathrm c}}}$
(${n_{\mathrm c}}\ge1$) be real functions
that are differentiable on an open set containing $Q$ and convex on $Q$. We
consider
\begin{equation}
\label{eq:fc-problem}
\text{minimize } f(x)\ \text{ over } x\in Q\quad
\text{subject to } g_i(x)\le0,\ i=1,\dots,{n_{\mathrm c}} ,
\end{equation}
and write $g=(g_1,\dots,g_{{n_{\mathrm c}}})$. We assume that the feasible set
$\mathcal X=\{x\in Q:\ g(x)\le0\}$ is nonempty. It is compact, so
$\fstar=\min_{\mathcal X}f$ is attained at some $x^\star\in\mathcal X$. The
\emph{violation} of $x\in Q$ is
$v(x)=\max\{0,g_1(x),\dots,g_{{n_{\mathrm c}}}(x)\}$. A method
outputs $\hat x\in Q$; we bound $\E[f(\hat x)-\fstar]_+$ and $\E v(\hat x)$,
where $[a]_+=\max\{a,0\}$. The objective error $f(\hat x)-\fstar$ may be
negative at infeasible points.

\paragraph{Multipliers.}
We assume a known number $\bar y\ge0$ and a vector
$y^\star\in\R^{{n_{\mathrm c}}}$ with
\begin{equation}
\label{eq:fc-mult}
y^\star\ge0,\qquad\norm{y^\star}_1\le\bar y,\qquad
\inf_{x\in Q}\bigl(f(x)+\inner{y^\star}{g(x)}\bigr)=\fstar .
\end{equation}
Such a vector is a Lagrange multiplier of \eqref{eq:fc-problem};
\cref{rem:fc-multiplier} discusses its existence and its size. Fix
$\rho>\bar y$; the method below uses $\rho=\bar y+1$. Put
\[
Y_\rho=\{y\in\R^{{n_{\mathrm c}}}:\ y\ge0,\ \norm{y}_1\le\rho\},\qquad
K=Q\times Y_\rho\subset\R^{d+{n_{\mathrm c}}},\qquad
D_K=(D^2+2\rho^2)^{1/2}.
\]
We use the Euclidean norm on $\R^{d+{n_{\mathrm c}}}$ and write $w=(x,y)$ for its points;
the Euclidean projection onto $K$ is $\Proj_K(x,y)=(\Proj_Qx,\Proj_{Y_\rho}y)$.
For $y,y'\in Y_\rho$ we have $\inner{y}{y'}\ge0$, hence
$\norm{y-y'}^2\le\norm{y}_1^2+\norm{y'}_1^2\le2\rho^2$, and $D_K$ bounds the
diameter of $K$.

\paragraph{Lagrangian, operator and gap.}
For $(x,y)\in K$ let
\[
\Psi(x,y)=f(x)+\inner y{g(x)},\qquad
H(x,y)=\Bigl(\nabla f(x)+\sum_{i=1}^{{n_{\mathrm c}}}y_i\nabla g_i(x),\ -g(x)\Bigr).
\]
For each $y\in Y_\rho$ the function $\Psi(\cdot,y)$ is convex on $Q$, and
$\Psi(x,\cdot)$ is affine with gradient $g(x)$. We assume that $H$ is
$M_H$-Lipschitz on $K$ for some $M_H>0$, that is,
$\norm{H(w)-H(w')}\le M_H\norm{w-w'}$ for $w,w'\in K$;
\cref{lem:fc-lipschitz} gives a sufficient condition. The \emph{gap} of
$w=(x,y)\in K$ is
\[
\mathrm{gap}(x,y)=\max_{y'\in Y_\rho}\Psi(x,y')-\min_{x'\in Q}\Psi(x',y)
=\max_{(x',y')\in K}\bigl(\Psi(x,y')-\Psi(x',y)\bigr)\ \ge0 .
\]

\begin{alemma}[The gap controls the objective error and the violation]
\label{lem:fc-certificate}
Assume \eqref{eq:fc-mult}. For every $(x,y)\in K$,
\[
f(x)-\fstar+\rho\,v(x)\le\mathrm{gap}(x,y)\qquad\text{and}\qquad
f(x)-\fstar\ge-\bar y\,v(x).
\]
Consequently $[f(x)-\fstar]_+\le\mathrm{gap}(x,y)$ and
$v(x)\le\mathrm{gap}(x,y)/(\rho-\bar y)$.
\end{alemma}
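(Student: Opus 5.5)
The plan is to bound the gap from below by evaluating its two terms at well-chosen points. The first inequality comes from a specific $y'$ in the maximum. The second comes from the multiplier identity \eqref{eq:fc-mult}.

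\emph{First inequality.} Fix $(x,y)\in K$. For the maximum over $y'\in Y_\rho$, take $y'=\rho e_{i^*}$, where $i^*$ attains $\max_i g_i(x)$, if $v(x)>0$; otherwise take $y'=0$. Both choices lie in $Y_\rho$, and each gives
\[
\max_{y'\in Y_\rho}\Psi(x,y')\ge f(x)+\rho\,v(x).
\]
For the minimum over $x'$, evaluate at the minimizer $x^\star\in\mathcal X$. Since $y\ge0$ and $g(x^\star)\le0$,
\[
\min_{x'\in Q}\Psi(x',y)\le\Psi(x^\star,y)=\fstar+\inner y{g(x^\star)}\le\fstar.
\]
Subtracting the second bound from the first gives $f(x)-\fstar+\rho v(x)\le\mathrm{gap}(x,y)$.

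\emph{Second inequality.} By \eqref{eq:fc-mult}, $\fstar\le f(x)+\inner{y^\star}{g(x)}$ for every $x\in Q$. Since $y^\star\ge0$ and $g_i(x)\le v(x)$, we get $\inner{y^\star}{g(x)}\le\norm{y^\star}_1v(x)\le\bar y\,v(x)$. Hence $f(x)-\fstar\ge-\bar y\,v(x)$.

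\emph{Consequences.} Since $v\ge0$, the first inequality gives $f(x)-\fstar\le\mathrm{gap}(x,y)$, and $\mathrm{gap}\ge0$, so $[f(x)-\fstar]_+\le\mathrm{gap}(x,y)$. Inserting the second inequality into the first gives $(\rho-\bar y)v(x)\le\mathrm{gap}(x,y)$. Dividing by $\rho-\bar y>0$ gives the violation bound.

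No step is a real obstacle. The only care needed is to treat the case $v(x)=0$ separately in the choice of $y'$, and to note that $x^\star$ is feasible, so that $\inner y{g(x^\star)}\le0$.
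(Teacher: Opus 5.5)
Your proof is correct and follows the paper's argument. The paper evaluates the maximum over $Y_\rho$ through the support-function identity $\max_{y'\in Y_\rho}\inner{y'}{g(x)}=\rho\,v(x)$, where you instead exhibit $y'=\rho e_{i^*}$ or $y'=0$, which gives the needed lower bound; the remaining steps coincide with the paper's: it bounds the minimum over $Q$ by $\Psi(x^\star,y)\le\fstar$ and obtains the second inequality and both consequences from \eqref{eq:fc-mult} and $\mathrm{gap}\ge0$.
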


\begin{proof}
The support function of $Y_\rho$ gives
$\max_{y'\in Y_\rho}\inner{y'}{g(x)}=\rho v(x)$, and feasibility of
$x^\star$ gives $\min_{x'\in Q}\Psi(x',y)\le\Psi(x^\star,y)\le\fstar$. This
proves the first inequality. The multiplier condition gives
$f(x)-\fstar\ge-\inner{y^\star}{g(x)}\ge-\bar yv(x)$. The two conclusions
follow from these inequalities and $\mathrm{gap}\ge0$.
\end{proof}

\begin{alemma}[A Lipschitz constant of the operator]
\label{lem:fc-lipschitz}
Let $f,g_1,\dots,g_{{n_{\mathrm c}}}$ be twice continuously differentiable on an open set
containing $Q$, with $0\preceq\nabla^2g_i(x)\preceq LI$ for $x\in Q$ and
$i=0,\dots,{n_{\mathrm c}}$, and $\norm{\nabla g_i(x)}\le G_g$ for $x\in Q$ and
$i=1,\dots,{n_{\mathrm c}}$. Then $H$ is $M_H$-Lipschitz on $K$ with
$M_H=(1+\rho)L+2\sqrt{{n_{\mathrm c}}}\,G_g$.
\end{alemma}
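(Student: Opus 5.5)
The plan is to split $H(w)-H(w')$ into its primal and dual blocks and bound each by elementary triangle-inequality estimates on $K=Q\times Y_\rho$. The only facts needed are $\norm{y}_2\le\norm{y}_1\le\rho$ for $y\in Y_\rho$ and the Lipschitz bounds on $Q$. By integration along segments of the convex set $Q$, the Hessian assumption gives that each $\nabla g_i$ is $L$-Lipschitz on $Q$. The gradient bound gives that each $g_i$, $i\ge1$, is $G_g$-Lipschitz on $Q$.

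For the primal block, with $w=(x,y)$ and $w'=(x',y')$, write
\[
\nabla f(x)-\nabla f(x')+\sum_i y_i\bigl(\nabla g_i(x)-\nabla g_i(x')\bigr)+\sum_i(y_i-y_i')\nabla g_i(x').
\]
The first term has norm at most $L\norm{x-x'}$. The second has norm at most $\norm{y}_1L\norm{x-x'}\le\rho L\norm{x-x'}$, using $y_i\ge0$. The third has norm at most $G_g\norm{y-y'}_1\le G_g\sqrt{n_{\mathrm c}}\norm{y-y'}$. For the dual block, $\norm{g(x)-g(x')}\le\bigl(\sum_i G_g^2\norm{x-x'}^2\bigr)^{1/2}=\sqrt{n_{\mathrm c}}G_g\norm{x-x'}$.

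Adding the two blocks gives
\[
\norm{H(w)-H(w')}\le(1+\rho)L\norm{x-x'}+\sqrt{n_{\mathrm c}}G_g\bigl(\norm{x-x'}+\norm{y-y'}\bigr).
\]
Since $\norm{x-x'},\norm{y-y'}\le\norm{w-w'}$, the right-hand side is at most $\bigl((1+\rho)L+2\sqrt{n_{\mathrm c}}G_g\bigr)\norm{w-w'}$, which is the claimed $M_H$.

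No step is a real obstacle. The one point needing care is that the Lipschitz bound on $\nabla g_i$ is used only between points of $Q$. This holds because both $x$ and $x'$ lie in the convex set $Q$, so the segment joining them stays where the Hessian bound applies.
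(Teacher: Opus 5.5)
Your proof is correct and follows essentially the same route as the paper: you bound the primal block of $H(w)-H(w')$ by $(1+\rho)L\norm{x-x'}+\sqrt{n_{\mathrm c}}\,G_g\norm{y-y'}$ and the dual block by $\sqrt{n_{\mathrm c}}\,G_g\norm{x-x'}$ via integration along segments of $Q$ and $\norm{y-y'}_1\le\sqrt{n_{\mathrm c}}\norm{y-y'}$. You then add the two blocks and use $\norm{x-x'},\norm{y-y'}\le\norm{w-w'}$, exactly as the paper does.
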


\begin{proof}
For $w=(x,y)$ and $w'=(x',y')$ in $K$, the Hessian bounds, integration
along segments and $\norm{y-y'}_1\le\sqrt{{n_{\mathrm c}}}\norm{y-y'}$ show that the two
blocks of $H(w)-H(w')$ have norms at most
$(1+\rho)L\norm{x-x'}+\sqrt{{n_{\mathrm c}}}G_g\norm{y-y'}$ and
$\sqrt{{n_{\mathrm c}}}G_g\norm{x-x'}$.
Add these bounds and use $\norm{x-x'},\norm{y-y'}\le\norm{w-w'}$.
\end{proof}

\begin{aremark}[The size of the multipliers]
\label{rem:fc-multiplier}
\emph{Slater points.} Let $x_\circ\in Q$ and $\theta>0$ with
$g_i(x_\circ)\le-\theta$ for all $i$. Every $y^\star$ as in \eqref{eq:fc-mult}
satisfies $\norm{y^\star}_1\le(f(x_\circ)-\fstar)/\theta$, because
$\fstar\le f(x_\circ)+\inner{y^\star}{g(x_\circ)}\le f(x_\circ)-\theta
\norm{y^\star}_1$. If $\norm{\nabla f}\le G_0$ on $Q$, then
$f(x_\circ)-\fstar\le G_0D$, so $\bar y=G_0D/\theta$ is admissible. If
$x_\circ$ lies in the relative interior of $Q$, a vector $y^\star$ as in
\eqref{eq:fc-mult} exists \citep[Theorem~28.2]{Rockafellar1970}.

\emph{Smoothness does not bound the multipliers.} On $Q=[0,1]\subset\R$, for
$c>0$, the problem of minimizing $f(x)=-cx$ subject to $x-\frac12\le0$ has
$\fstar=-c/2$. For $y\ge0$ the infimum over $[0,1]$ of
$f(x)+y(x-\frac12)=(y-c)x-y/2$ is $-y/2$ if $y\ge c$ and $y/2-c$ if $y<c$,
so the unique multiplier is $y^\star=c$. All derivatives of order at least
two of $f$ and of the constraint vanish, so the Hessian bound and the
H\"older constants are zero for every $c$. Thus $\bar y$ is a parameter of
the problem that is not controlled by the smoothness parameters.
\end{aremark}

\paragraph{Oracle.}
The method works in rounds as in \cref{sec:setup}. Here $\mathcal F_{k-1}$
is generated by all queries and all returned values of rounds
$1,\dots,k-1$ and by $\Theta_1,\dots,\Theta_{k-1}$. The queries of a round are
ordered in a way that does not depend on $\Theta_k$. For a query $t$ of round
$k$, the $\sigma$-algebra $\mathcal E_t$ is generated by $\mathcal F_{k-1}$,
$\Theta_k$, the queries of round $k$ and all error components of the queries
of round $k$ that precede $t$.

\begin{adefinition}[Value calls and level calls]
\label{def:fc-oracle}
A query is of one of two kinds.
\begin{enumerate}[label=(\roman*)]
\item A \emph{value call} at $X_t$ returns the
${n_{\mathrm c}}+1$ values
\[
Y_{i,t}=g_i(X_t)+\eta_i(X_t)+\xi^{\mathrm c}_{i,t}+\xi^{\mathrm o}_{i,t}
+\delta_{i,t},\qquad i=0,\dots,{n_{\mathrm c}} ,
\]
where, for each $i$, $\eta_i$ is fixed before the run and the four
components satisfy \eqref{eq:noise-lip}--\eqref{eq:noise-adv} at levels
$(M,\sigma,\delta)$ with the $\sigma$-algebras above; in
\eqref{eq:noise-oblivious}, the vector of the errors $\xi^{\mathrm o}_{i,t}$
of the value calls of round $k$ is independent of $\Theta_k$ given
$\mathcal F_{k-1}$.
\item A \emph{level call} at $X_t$ returns, for a level $\sigma_g\ge0$, the
${n_{\mathrm c}}$ values
\[
Z_{i,t}=g_i(X_t)+\xi^{\mathrm c}_{i,t}+\delta_{i,t},\qquad
i=1,\dots,{n_{\mathrm c}} ,
\]
with $\E[\xi^{\mathrm c}_{i,t}\mid\mathcal E_t]=0$,
$\E[(\xi^{\mathrm c}_{i,t})^2\mid\mathcal E_t]\le\sigma_g^2$ and
$\abs{\delta_{i,t}}\le\delta$.
\end{enumerate}
The components of one call may depend on each other. We count the calls of
each kind and the returned scalar values.
\end{adefinition}

Thus the values that enter differences may carry all four kinds of error of
\cref{def:oracle}, while the levels $g_i(x)$ are observed with centered noise
and a bounded adversarial error only. A level call is a value call whose
constraint components have neither a Lipschitz nor an oblivious part, with
the objective component discarded; an oracle whose constraint components
satisfy these restrictions provides both kinds of calls, with
$\sigma_g=\sigma$. \Cref{prop:fc-obstruction} shows that some restriction of
this kind on the levels is necessary.

\subsection{The levels of the constraints must be calibrated}
\label{app:fc-levels}

\begin{aproposition}[A constant error in the constraint values]
\label{prop:fc-obstruction}
Let $d={n_{\mathrm c}}=1$, $Q=[-1,1]$, $f(x)=-x$ and
$0<a<1$. For $\theta\in\{-a,a\}$ let
$g_\theta(x)=x-\theta$, and let $\fstar_\theta$ and
$v_\theta(x)=[x-\theta]_+$ be the optimal value and the violation of the
problem \eqref{eq:fc-problem} with the constraint $g_\theta$.
\begin{enumerate}[label=(\alph*)]
\item Both problems satisfy the assumptions of \cref{app:fc-setting} with
$\bar y=1$: the minimizer is $x^\star_\theta=\theta$, $\fstar_\theta=-\theta$,
the vector $y^\star=1$ satisfies \eqref{eq:fc-mult}, and $x_\circ=-1$ satisfies
$g_\theta(x_\circ)\le-(1-a)$. All derivatives of order at least two of $f$
and $g_\theta$ vanish.
\item Let the values of the objective be exact, and let the values of the
constraint carry the fixed error $\eta_\theta\equiv\theta$, which is a
constant and hence $M$-Lipschitz for every $M\ge0$; all other
error components vanish. Then every returned value of the constraint at a
query $X$ equals $X$, whatever $\theta$. Consequently, for every method,
whatever its number of calls and rounds and wherever it queries, the output
$\hat x\in Q$ has the same law under both problems, and
\[
\max_{\theta\in\{-a,a\}}\max\Bigl\{\E_\theta\bigl[f(\hat x)-\fstar_\theta\bigr]_+,\
\E_\theta v_\theta(\hat x)\Bigr\}\ge a .
\]
The output $\hat x=0$ attains equality.
\item The same conclusion holds if the constant $\theta$ is, instead of
$\eta_\theta$, an oblivious error $\xi^{\mathrm o}_t=\theta$ of every
constraint value, which is admissible at levels $(0,\sigma,0)$ for
$\sigma\ge a$, or an adversarial error $\delta_t=\theta$, which is admissible
at levels $(0,0,\delta)$ for $\delta\ge a$.
\end{enumerate}
\end{aproposition}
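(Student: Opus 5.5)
The plan is a two-point indistinguishability argument in the spirit of \cref{prop:lipschitz-floor}. The observed objective and constraint values are made to coincide pathwise under $\theta=a$ and $\theta=-a$. An output that is near-optimal for one instance is then infeasible or suboptimal for the other.

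\emph{Part (a)} is direct verification. For $\theta\in\{-a,a\}$ the feasible set is $[-1,\theta]$, and $-x$ is minimized at $x^\star_\theta=\theta$ with $\fstar_\theta=-\theta$. For $y^\star=1$ we have $\Psi(x,1)=-x+x-\theta=-\theta$ for all $x$, so the infimum in \eqref{eq:fc-mult} equals $\fstar_\theta$ and $\norm{y^\star}_1=1=\bar y$. At $x_\circ=-1$ we get $g_\theta(-1)=-1-\theta\le-(1-a)$. Both functions are affine, so all higher derivatives vanish.

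\emph{Part (b).} The constraint value returned at $X$ is $g_\theta(X)+\eta_\theta(X)=X-\theta+\theta=X$, and the objective value is $-X$. Neither depends on $\theta$. The method's randomness $\Theta_1,\Theta_2,\dots$ has a law fixed by the method. Induction over rounds then shows that queries, returned values and the output $\hat x$ are the same measurable functions of $(\Theta_k)$ under both problems, so $\hat x$ has one law. For a fixed point $x\in[-1,1]$:
\begin{itemize}
\item under $\theta=a$ the objective error is $[a-x]_+$;
\item under $\theta=-a$ the violation is $[x+a]_+$.
\end{itemize}
Since $[a-x]_++[x+a]_+\ge 2a$, taking expectations under the common law gives $\E[a-\hat x]_++\E[\hat x+a]_+\ge2a$. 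Hence one of the two terms is at least $a$, which bounds the displayed maximum from below. For $\hat x=0$ every error and violation equals $a$ or $0$, so equality holds.

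\emph{Part (c)} is the same argument. We check admissibility: a constant $\xi^{\mathrm o}_t=\theta$ is deterministic, hence independent of $\Theta_k$, and has second moment $a^2\le\sigma^2$. A constant $\delta_t=\theta$ has $\abs{\delta_t}\le\delta$. In both cases the returned constraint value is again $X$.

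The only mildly delicate step is the equality-in-law claim. It must allow adaptive randomized round structure, but since the answer maps coincide as functions of the query, this is immediate, exactly as in the proof of \cref{prop:lipschitz-floor}.
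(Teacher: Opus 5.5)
Your proposal is correct and follows the paper's proof essentially step for step. You verify (a) via $\Psi(x,1)\equiv-\theta$ and show that the returned values $X$ and $-X$ coincide for both $\theta$, so the law of $\hat x$ is the same by induction over rounds; you then apply $[a-x]_++[x+a]_+\ge2a$ under the common law, check equality at $\hat x=0$, and make the same admissibility checks for the constant oblivious and adversarial errors in (c).
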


\begin{proof}
(a) The feasible set is $[-1,\theta]$ and $f$ is decreasing, so
$x^\star_\theta=\theta$ and $\fstar_\theta=-\theta$. For every $x$,
$f(x)+g_\theta(x)=-\theta=\fstar_\theta$, so $y^\star=1$ satisfies
\eqref{eq:fc-mult} with $\bar y=1$. Finally
$g_\theta(-1)=-1-\theta\le-1+a$, and $f$, $g_\theta$ are affine.

(b) The returned value of the constraint at $X$ is $g_\theta(X)+\theta=X$,
and that of the objective is $-X$. So under both problems every returned
value is the same function of the query. By induction over the rounds, the
queries, the returned values and the output are the same measurable functions
of the random elements $\Theta_1,\Theta_2,\dots$ of the method under both
problems, and their laws coincide. For every $x\in Q$,
\[
\bigl[f(x)-\fstar_a\bigr]_++v_{-a}(x)=[a-x]_++[x+a]_+\ge(a-x)+(x+a)=2a .
\]
Taking expectations under the common law of $\hat x$ gives
$\E_a[f(\hat x)-\fstar_a]_++\E_{-a}v_{-a}(\hat x)\ge2a$, so one of the two
terms is at least $a$. For $\hat x=0$ we have
$[f(0)-\fstar_\theta]_+=[\theta]_+$ and $v_\theta(0)=[-\theta]_+$, so the
maximum over $\theta$ and over the two errors equals $a$.

(c) In both cases the returned value of the constraint at $X$ is again $X$,
and the argument of (b) applies. A constant error is independent of the
randomness of the method and has second moment $a^2$.
\end{proof}

\begin{aremark}[Relation to \citet{Abronin2026}]
\label{rem:fc-abronin}
\citet{Abronin2026} study \eqref{eq:fc-problem} with $f(x)=\E F(x,\xi)$ and
$g(x)=\E G(x,\xi)$ for an oracle that returns $(F(x,\xi),G(x,\xi))$ at two
points with a common sample $\xi$. Their gradient estimates use differences
at a common sample, which cancel every error that does not depend on the
point, and their Lipschitz assumption on the samples (their Assumption~2.2)
concerns only such differences. The levels enter the dual updates; for them
they assume $\E\norm{G(x,\xi)-g(x)}_\infty^2\le\sigma_g^2$ (their
Assumption~2.3), which they call a genuine additional assumption (their
Remark~2.7(ii)).
\Cref{prop:fc-obstruction} shows that an assumption of this kind cannot be
dropped. Our level calls play the role of their Assumption~2.3, with an
additional adversarial error; our bound on each component implies theirs
with ${n_{\mathrm c}}\sigma_g^2$ in place of $\sigma_g^2$.

Their Theorem~9.3 shows that the term of order $\sigma_g^2/\eps^2$ in the
number of level calls of \cref{cor:fc-stencil} cannot be avoided: its
instances $G_i(x,\xi)=x-\xi_i$ are realized by our level calls, while under
the constant Lipschitz errors $\eta_i\equiv\E\xi_i$ the value calls carry no
information on the levels.
\end{aremark}

\subsection{Mirror--Prox with biased operator estimates}
\label{app:fc-mirror-prox}

The method is the stochastic Mirror--Prox method of \citet{JNT2011} in the
Euclidean setup, that is, a stochastic extragradient method with projections
onto $K$. Each iteration takes two rounds, because the second estimate is
requested at a point that depends on the first one.

\begin{atheorem}[Stochastic Mirror--Prox with bounded bias]
\label{thm:fc-mirror-prox}
Assume the setting of \cref{app:fc-setting}, including \eqref{eq:fc-mult}.
Let $(\mathcal F_k)_{k\ge0}$ be a filtration. In round $k$ the method submits
an $\mathcal F_{k-1}$-measurable point $w\in K$ and receives an
$\mathcal F_k$-measurable estimate
\[
\hat H=H(w)+\mu_k+\zeta_k,
\]
where $\zeta_k$ is $\mathcal F_k$-measurable,
$\E[\zeta_k\mid\mathcal F_{k-1}]=0$,
$\E[\norm{\zeta_k}^2\mid\mathcal F_{k-1}]\le V_K$, and
$\norm{\mu_k}\le b_K$ almost surely. Let $w_1\in K$ be deterministic and
$0<\nu\le1/(2M_H)$. For $t=1,\dots,N$:
\begin{itemize}
\item in round $2t-1$, obtain the estimate $\hat H_t$ at $w_t$ and put
$w_{t+1/2}=\Proj_K(w_t-\nu\hat H_t)$;
\item in round $2t$, obtain the estimate $\hat H_{t+1/2}$ at $w_{t+1/2}$ and
put $w_{t+1}=\Proj_K(w_t-\nu\hat H_{t+1/2})$.
\end{itemize}
Let $\hat w_N=(\hat x_N,\hat y_N)=\frac1N\sum_{t=1}^Nw_{t+1/2}$. Then
\begin{equation}
\label{eq:fc-gap}
\E\,\mathrm{gap}(\hat w_N)\le\frac{D_K^2}{2\nu N}+2\nu V_K
+D_K\sqrt{\frac{V_K}N}+3D_Kb_K .
\end{equation}
With $\nu=\min\{1/(2M_H),D_K/(2\sqrt{NV_K})\}$, and $\nu=1/(2M_H)$ if
$V_K=0$,
\begin{equation}
\label{eq:fc-gap-tuned}
\E\,\mathrm{gap}(\hat w_N)\le\mathrm{Err}_N:=\frac{M_HD_K^2}{N}
+3D_K\sqrt{\frac{V_K}N}+3D_Kb_K .
\end{equation}
Consequently $\hat x_N\in Q$, $\E[f(\hat x_N)-\fstar]_+\le\mathrm{Err}_N$ and
$\E v(\hat x_N)\le\mathrm{Err}_N/(\rho-\bar y)$. The method uses $2N$ rounds.
\end{atheorem}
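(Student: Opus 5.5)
The proof is the standard extragradient analysis of \citet{JNT2011}, with the bias kept separate and the noise split as in \cref{lem:convex-sgd}. Fix an arbitrary comparator $u=(x',y')\in K$. Since $\Psi$ is convex in $x$ and affine in $y$, for $w=(x,y)\in K$ we have
\[
\Psi(x,y')-\Psi(x',y)\le\inner{H(w)}{w-u}.
\]
Averaging over $t$ and using convexity of $\Psi(\cdot,y')$ and linearity of $\Psi(x',\cdot)$ at the points $w_{t+1/2}$ gives
\[
\mathrm{gap}(\hat w_N)\le\max_{u\in K}\frac1N\sum_{t=1}^N\inner{H(w_{t+1/2})}{w_{t+1/2}-u}.
\]
The goal is therefore to bound the expectation of the maximum of this sum over $u$.

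\emph{Step 1 (deterministic three-point inequality).} Write $\hat H_t=H(w_t)+e_t$ and $\hat H_{t+1/2}=H(w_{t+1/2})+e_{t+1/2}$, where $e=\mu+\zeta$. The two projection inequalities, for $w_{t+1/2}$ tested against $w_{t+1}$ and for $w_{t+1}$ tested against $u$, give
\[
\nu\inner{\hat H_{t+1/2}}{w_{t+1/2}-u}\le\tfrac12\norm{w_t-u}^2-\tfrac12\norm{w_{t+1}-u}^2+\nu^2\norm{\hat H_{t+1/2}-\hat H_t}^2-\tfrac12\norm{w_{t+1/2}-w_t}^2 .
\]
Expanding $\norm{\hat H_{t+1/2}-\hat H_t}^2\le 2M_H^2\norm{w_{t+1/2}-w_t}^2+2\norm{e_{t+1/2}-e_t}^2$ and using $\nu\le1/(2M_H)$ cancels the $\norm{w_{t+1/2}-w_t}^2$ term. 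What remains is $2\nu^2\norm{e_{t+1/2}-e_t}^2$, bounded by $4\nu^2(\norm{\zeta_{t+1/2}}^2+\norm{\zeta_t}^2)$ plus bias terms. I expect to recover exactly the constant $2\nu V_K$ by retaining the bias cross terms instead of squaring them, though some adjustment of constants may be needed.

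\emph{Step 2 (errors).} Move $\inner{e_{t+1/2}}{w_{t+1/2}-u}$ to the right side. The bias part is at most $D_Kb_K$ per step. Bias appearing inside $\norm{e_{t+1/2}-e_t}$ can be absorbed into cross terms bounded by $D_Kb_K$, using $\norm{w_{t+1/2}-w_t}\le D_K$; this is the source of the constant 3. The noise part $\inner{\zeta_{t+1/2}}{w_{t+1/2}-u}$ is delicate because $u$ is chosen after the run, as the maximizer. I handle it as in \citet{JNT2011} with an auxiliary sequence $\tilde w_1=w_1$, $\tilde w_{t+1}=\Proj_K(\tilde w_t+\nu\zeta_{t+1/2})$. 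Then
\[
\sum_t\inner{-\zeta_{t+1/2}}{\tilde w_t-u}\le\tfrac{1}{2\nu}\norm{w_1-u}^2+\tfrac\nu2\sum_t\norm{\zeta_{t+1/2}}^2 ,
\]
and $\inner{\zeta_{t+1/2}}{w_{t+1/2}-\tilde w_t}$ has zero mean, since both points are $\mathcal F_{2t-1}$-measurable and bounded. Alternatively, and more cheaply, the term $D_K\sqrt{V_K/N}$ in \eqref{eq:fc-gap} suggests a direct Cauchy--Schwarz bound,
\[
\E\max_u\inner{\textstyle\sum_t\zeta_{t+1/2}}{-u}\le D_K\,\E\norm{\textstyle\sum_t\zeta_{t+1/2}}\le D_K\sqrt{NV_K},
\]
using orthogonality of the martingale differences, after centering at a fixed point of $K$. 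I will use this version. It matches the $D_K\sqrt{V_K/N}$ term after division by $N$.

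\emph{Step 3 (conclusion).} Summing over $t$, using $\norm{w_1-u}\le D_K$, dividing by $\nu N$ and taking expectations yields \eqref{eq:fc-gap}. Substituting $\nu$ gives
\[
\frac{D_K^2}{2\nu N}\le\frac{M_HD_K^2}{N}+D_K\sqrt{V_K/N},\qquad 2\nu V_K\le D_K\sqrt{V_K/N},
\]
which together give \eqref{eq:fc-gap-tuned}. The final claims then follow from \cref{lem:fc-certificate} applied pathwise to $\hat w_N\in K$.

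The main obstacle is the bookkeeping of the error terms in Step 1. The bias enters both squared and linearly, and the noise at the half step is correlated with $w_{t+1}$. Getting exactly the constants $2\nu V_K$ and $3D_Kb_K$ requires care in deciding which terms are bounded pathwise and which are bounded in expectation.
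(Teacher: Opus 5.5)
Your overall architecture is the paper's: you bound the gap by the average of $\inner{H(w_{t+1/2})}{w_{t+1/2}-u}$, center the comparator noise at a fixed $u_0\in K$ and use $D_K\,\E\norm{\sum_t\zeta_{t+1/2}}\le D_K\sqrt{NV_K}$, and finish with the same tuning and \cref{lem:fc-certificate}. The gap is in Step~1.

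\textbf{Where Step~1 fails.} Once you pass to $\nu^2\norm{\hat H_{t+1/2}-\hat H_t}^2$ and split it into $2M_H^2r^2+2\norm{e_{t+1/2}-e_t}^2$, the bias enters quadratically. After dividing by $\nu$ you are left with $2\nu\norm{(\mu_{t+1/2}-\mu_t)+(\zeta_{t+1/2}-\zeta_t)}^2$. This produces three kinds of terms.
\begin{itemize}
\item Terms of order $\nu b_K^2$.
\item A cross term $\nu\inner{\zeta_{t+1/2}-\zeta_t}{\mu_{t+1/2}-\mu_t}$. It need not vanish in expectation: the theorem assumes only $\norm{\mu_k}\le b_K$, and $\mu_{t+1/2}$ may depend on $\zeta_t$ through $w_{t+1/2}$.
\item A noise term larger than $2\nu V_K$. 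Your own accounting, $4\nu^2(\norm{\zeta_{t+1/2}}^2+\norm{\zeta_t}^2)$, gives $8\nu V_K$.
\end{itemize}
No rearrangement of ``cross terms'' turns these back into $D_Kb_K$. After squaring, no displacement multiplies the bias any more. Absorbing $\nu b_K^2$ into $CD_Kb_K$ would need $\nu b_K\lesssim D_K$, which is not assumed. Your guess that the $3$ comes from one bias on the left plus two in the difference is right. But your Step~1 proves a bound of a different form, not \eqref{eq:fc-gap}.

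\textbf{The fix.} Do not apply Young's inequality to the whole difference. With $r=\norm{w_t-w_{t+1/2}}$ and $r'=\norm{w_{t+1/2}-w_{t+1}}$, keep
\[
\nu\inner{\hat H_{t+1/2}}{w_{t+1/2}-u}\le\tfrac12\bigl(\norm{w_t-u}^2-\norm{w_{t+1}-u}^2-r^2-r'^2\bigr)
+\nu\inner{\hat H_{t+1/2}-\hat H_t}{w_{t+1/2}-w_{t+1}} .
\]
Then bound the three parts of the last inner product separately.
\begin{itemize}
\item The operator part is at most $\nu M_Hrr'\le(r^2+r'^2)/4$.
\item The bias part is at most $2\nu b_KD_K$ pathwise, since $r'\le D_K$.
\item Only the noise part uses Young: it is at most $\nu^2\norm{\zeta_{t+1/2}-\zeta_t}^2+r'^2/4$.
\end{itemize}
The negative terms leave $-r^2/4\le0$, and the bias stays linear. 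Moreover $\E\norm{\zeta_{t+1/2}-\zeta_t}^2\le2V_K$, because $\zeta_t$ is $\mathcal F_{2t-1}$-measurable and the cross term has mean zero. Replacing $\hat H_{t+1/2}$ by $H(w_{t+1/2})$ on the left costs one more $\nu b_KD_K$ plus the martingale term you already handle. This gives exactly $2\nu V_K$ and $3D_Kb_K$.
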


\begin{proof}
A bound of this form is \citet[Theorem~1]{JNT2011}, whose oracle assumption
(Assumption~I there) also allows a bias; we give the short argument because
here the errors are martingale differences with a history-dependent bias and
because its constants are used below. Fix $u\in K$ and put
$r=\norm{w_t-w_{t+1/2}}$ and $r'=\norm{w_{t+1/2}-w_{t+1}}$. Adding the
projection inequalities for $w_{t+1/2}$, tested at $w_{t+1}$, and for
$w_{t+1}$, tested at $u$, bounds $\nu\inner{\hat H_{t+1/2}}{w_{t+1/2}-u}$ by
$(\norm{w_t-u}^2-\norm{w_{t+1}-u}^2-r^2-r'^2)/2$ plus
$\nu\inner{\hat H_{t+1/2}-\hat H_t}{w_{t+1/2}-w_{t+1}}$. In this term the
operator difference contributes at most $\nu M_Hrr'\le(r^2+r'^2)/4$, the two
biases at most $2\nu b_KD_K$, and the noise at most
$\nu^2\norm{\zeta_{t+1/2}-\zeta_t}^2+r'^2/4$. Replacing $\hat H_{t+1/2}$ by
$H(w_{t+1/2})$ on the left costs at most $\nu b_KD_K$ and a martingale term,
so
\begin{equation}\label{eq:fc-step}
\inner{H(w_{t+1/2})}{w_{t+1/2}-u}\le
\frac{\norm{w_t-u}^2-\norm{w_{t+1}-u}^2}{2\nu}
+\nu\norm{\zeta_{t+1/2}-\zeta_t}^2+3b_KD_K
-\inner{\zeta_{t+1/2}}{w_{t+1/2}-u}.
\end{equation}
For $u=(x',y')$, convexity in $x$ and affinity in $y$ give
$\inner{H(x,y)}{(x,y)-u}\ge\Psi(x,y')-\Psi(x',y)$. Sum \eqref{eq:fc-step},
use Jensen's inequality, write $w_{t+1/2}-u=(w_{t+1/2}-u_0)-(u-u_0)$ for a
fixed $u_0\in K$, and take the supremum over $u$:
\begin{align*}
N\,\mathrm{gap}(\hat w_N)\le{}&D_K^2/(2\nu)
+\nu\sum_t\norm{\zeta_{t+1/2}-\zeta_t}^2+3Nb_KD_K-\sum_t\inner{\zeta_{t+1/2}}{w_{t+1/2}-u_0}
+D_K\norm{\sum_t\zeta_{t+1/2}}.
\end{align*}
The inner products have mean zero. Martingale differences are orthogonal, so
$\E\norm{\zeta_{t+1/2}-\zeta_t}^2\le2V_K$ and
$\E\norm{\sum_t\zeta_{t+1/2}}\le\sqrt{NV_K}$, which proves \eqref{eq:fc-gap}.
For the tuned step, $1/\nu\le2M_H+2\sqrt{NV_K}/D_K$ and
$2\nu V_K\le D_K\sqrt{V_K/N}$, which proves \eqref{eq:fc-gap-tuned}.
\Cref{lem:fc-certificate} gives the last two bounds.
\end{proof}

\subsection{Operator estimates from the stencil}
\label{app:fc-stencil}

Fix a round $k$, a radius $h>0$, integers $B,B_L\ge1$ and an
$\mathcal F_{k-1}$-measurable point $w=(x,y)\in K$. The \emph{operator
estimate} $\hat H(w)=(\hat H^x,\hat H^y)$ at $w$ consists of two parts.
\begin{enumerate}[label=(\roman*)]
\item \emph{Value calls.} Run \cref{alg:stencil} at the center $x$ with radius
$h$ and $B$ frames, and apply its formula to each of the
${n_{\mathrm c}}+1$ components of the returned values;
this gives vectors
$\gbar_0,\dots,\gbar_{{n_{\mathrm c}}}\in\R^d$. Put
$\hat H^x=\gbar_0+\sum_{i=1}^{{n_{\mathrm c}}}y_i\gbar_i$.
Since the formula is linear in the
values, $\hat H^x$ is \cref{alg:stencil} applied to the values of the
Lagrangian $\Psi(\cdot,y)$ at the frozen multiplier $y$. This part uses
$2mdB$ value calls and $({n_{\mathrm c}}+1)2mdB$ scalar values.
\item \emph{Level calls.} Request $B_L$ level calls at $x$, with returned
vectors $Z_{\cdot,\ell}\in\R^{{n_{\mathrm c}}}$,
$\ell=1,\dots,B_L$, and put
$\hat H^y=-\frac1{B_L}\sum_{\ell=1}^{B_L}Z_{\cdot,\ell}$. This part uses $B_L$ level
calls and ${n_{\mathrm c}}B_L$ scalar values.
\end{enumerate}
All queries of the estimate are determined at the start of the round, so the
estimate takes one round; the value calls precede the level calls.

\begin{alemma}[Bias and variance of the operator estimate]
\label{lem:fc-estimate}
Let $\vartheta$ be as in \cref{lem:cvx-local-estimate}, and let
$g_0=f,g_1,\dots,g_{{n_{\mathrm c}}}$ be $q$ times continuously differentiable on an open set
containing $Q_{\vartheta h}$, with
$\norm{D^qg_i(x)-D^qg_i(x')}\le L_\beta\norm{x-x'}^\gamma$ for $x,x'\in
Q_{\vartheta h}$ and $i=0,\dots,{n_{\mathrm c}}$, in the chosen norm of \cref{sec:setup}
(with the corresponding exponent $\tau$). Let the calls satisfy
\cref{def:fc-oracle}. Then the value calls lie in $Q_h$, the level calls are
made at $x\in Q$, and $\hat H(w)=H(w)+\mu+\zeta$, where $\mu$ is
$\mathcal F_{k-1}$-measurable, $\zeta$ is $\mathcal F_k$-measurable,
$\E[\zeta\mid\mathcal F_{k-1}]=0$,
\begin{equation}
\label{eq:fc-bias}
\norm\mu\le b_K:=(1+\rho)\Bigl(\bfC L_\beta d^{-\tau/2}h^{\beta-1}+A_1M
+\frac{\sqrt d\,S_1\delta}h\Bigr)
+\sqrt{{n_{\mathrm c}}}\,\delta ,
\end{equation}
and, with
$\Omega_h=\Cf L_\beta^2d^{-\tau}h^{2\beta-2}+C_\eta M^2+C_\sigma d\sigma^2/h^2$,
where $\bfC$, $\Cf$, $C_\eta=4A_1^2$ and $C_\sigma$ are the constants of
\cref{prop:unified-estimator},
\begin{equation}
\label{eq:fc-var}
\E\bigl[\norm\zeta^2\bigm|\mathcal F_{k-1}\bigr]\le V_K:=
2(1+\rho)^2\Bigl(\frac{\Omega_h}B+\frac{dS_1^2\delta^2}{h^2}\Bigr)
+2{n_{\mathrm c}}\Bigl(\frac{\sigma_g^2}{B_L}+\delta^2\Bigr).
\end{equation}
\end{alemma}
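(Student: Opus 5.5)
We would treat the $x$-block and the $y$-block separately and then combine them. For the $x$-block, the key observation is linearity. The stencil formula applied to the combined values $Y_{0,t}+\sum_iy_iY_{i,t}$, with the multiplier $y$ frozen and $\mathcal F_{k-1}$-measurable, is exactly $\hat H^x$. Those values are noisy evaluations of $\Psi(\cdot,y)$, and their error components are $\eta_0+\sum_iy_i\eta_i$, $\sum_i y_i\xi^{\mathrm c}_{i,t}$, and so on. Conditionally on $\mathcal F_{k-1}$ these form a noise admissible at levels $((1+\rho)M,(1+\rho)\sigma,(1+\rho)\delta)$, because $\norm y_1\le\rho$. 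The weights $y_i$ are $\mathcal F_{k-1}$-measurable, so (C), (O) and (A) transfer by Minkowski and the triangle inequality. The Lipschitz part $\sum_iy_i\eta_i$, however, is only fixed given $\mathcal F_{k-1}$ rather than before the run. We would therefore not use it as a single $\eta$. Instead we keep it componentwise, using linearity of $b(z)$ and of the frame terms in $\eta$. Applying \cref{lem:cvx-local-estimate} (with $K=Q$) to each component $g_i$ separately then gives bias $b_i(x)$, $\zeta_i$ and $\Delta_i$ for each $i$. Their $y$-weighted sums yield the bias $\sum_i y_i b_i+b_0$, with norm at most $(1+\rho)(\bfC L_\beta d^{-\tau/2}h^{\beta-1}+A_1M)$, and a noise part $\sum y_i\zeta_i+\zeta_0$. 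This sum also equals $\nabla_x\Psi(x,y)$ plus the error, since $\nabla_x\Psi=\nabla f+\sum y_i\nabla g_i$.

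For the variance of $\zeta^x=\zeta_0+\sum y_i\zeta_i$, Minkowski in $\mathsf N$ bounds $\mathsf N(\zeta^x)$ by $(1+\rho)\max_i\mathsf N(\zeta_i)$, so $\mathsf N(\zeta^x)^2\le(1+\rho)^2\Omega_h/B$. The adversarial parts need the decomposition of \cref{prop:unified-estimator}(iv). We put $\E[\Delta\mid\mathcal F_{k-1}]$ into $\mu$, giving the term $(1+\rho)\sqrt dS_1\delta/h$ in $b_K$, and put $\Delta-\E[\Delta\mid\mathcal F_{k-1}]$ into $\zeta$. The latter has conditional second moment at most $(1+\rho)^2dS_1^2\delta^2/h^2$. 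Then $(a+b)^2\le2a^2+2b^2$ gives the first summand of $V_K$. Centering and $\mathcal F_k$-measurability follow from (iv) componentwise.

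For the $y$-block, $\hat H^y+g(x)=-\frac1{B_L}\sum_\ell(\xi^{\mathrm c}_{\cdot,\ell}+\delta_{\cdot,\ell})$. Again the conditional mean of the adversarial average goes into $\mu$; its norm is at most $\sqrt{n_{\mathrm c}}\,\delta$. The rest goes into $\zeta^y$. The centered terms are orthogonal by the $\mathcal E_t$ condition, which gives ${n_{\mathrm c}}\sigma_g^2/B_L$. The recentred adversarial part contributes at most ${n_{\mathrm c}}\delta^2$, and the factor $2$ comes from $(a+b)^2$. Because value calls precede level calls, cross-conditioning is consistent with $\mathcal E_t$. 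Adding the $x$- and $y$-blocks then gives \eqref{eq:fc-bias} and \eqref{eq:fc-var}.

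The main obstacle is that the Lagrangian noise is not literally admissible, since the $y$-weighted systematic error is chosen after the run starts, and \cref{rem:adaptive-lipschitz} warns about exactly this. The plan is to avoid that issue by working with each component's fixed $\eta_i$ and using linearity in $y$ only after the estimates are formed. The remaining care is in checking that the oblivious independence in (O) holds for the joint vector of all components, which \cref{def:fc-oracle} assumes.
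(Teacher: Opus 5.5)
Your plan is correct and is essentially the paper's proof. You apply \cref{lem:cvx-local-estimate} and part (iv) of \cref{prop:unified-estimator} to each component $g_i$, each with its own fixed $\eta_i$. You then combine the components through the $\mathcal F_{k-1}$-measurable weights $y_i\ge0$, $\sum_{i=0}^{n_{\mathrm c}}y_i\le1+\rho$, using the conditional Minkowski inequality, and you handle the level calls by orthogonality of the centered errors plus recentring of the adversarial average. Your opening remark that the combined Lagrangian noise is admissible is never used, so it does not affect the argument; the paper also works componentwise.
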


\begin{proof}
Write $\mathsf N(X)=(\E[\norm X^2\mid\mathcal F_{k-1}])^{1/2}$.

\emph{Primal block.} Fix
$i\in\{0,\dots,{n_{\mathrm c}}\}$. The $i$-th components of the
value calls form an oracle for $g_i$ as in \cref{def:oracle} at levels
$(M,\sigma,\delta)$, with the $\sigma$-algebras of
\cref{def:fc-oracle}. The proofs of \cref{prop:unified-estimator} and
\cref{lem:cvx-local-estimate} use the $\sigma$-algebras only through the
$\mathcal F_{k-1}$-measurability of the center and the radius, the
independence of $\Theta_k$ and $\mathcal F_{k-1}$, the condition
\eqref{eq:noise-centered} with $\sigma$-algebras $\mathcal E_t$ that contain
$\Theta_k$ and the errors of the earlier queries of the round, the condition
\eqref{eq:noise-oblivious}, and the fact that the order of the queries does
not depend on $\Theta_k$. These properties hold here; the larger
$\sigma$-algebras and the level calls, which follow the value calls, do not
affect them. Hence \cref{lem:cvx-local-estimate}, with
its convex set (called $K$ there) equal to $Q$, and part~(iv) of
\cref{prop:unified-estimator} give
$\gbar_i=\nabla g_i(x)+\mu_i+\tilde\zeta_i$, where
$\mu_i=b_i(x)+\E[\Delta_i\mid\mathcal F_{k-1}]$ is
$\mathcal F_{k-1}$-measurable with
$\norm{\mu_i}\le\bfC L_\beta d^{-\tau/2}h^{\beta-1}+A_1M+\sqrt d\,S_1\delta/h$,
and $\tilde\zeta_i=\zeta_i+\Delta_i-\E[\Delta_i\mid\mathcal F_{k-1}]$ is
$\mathcal F_k$-measurable with $\E[\tilde\zeta_i\mid\mathcal F_{k-1}]=0$ and
$\mathsf N(\tilde\zeta_i)\le(\Omega_h/B)^{1/2}+\sqrt d\,S_1\delta/h$; here
$b_i(x)$, $\zeta_i$ and $\Delta_i$ are the terms $b(z)$, $\zeta$ and $\Delta$
of \cref{prop:unified-estimator} for $g_i$. With $y_0=1$,
\[
\hat H^x=\nabla f(x)+\sum_{i\ge1}y_i\nabla g_i(x)+\mu^x+\zeta^x,\qquad
\mu^x=\sum_{i=0}^{{n_{\mathrm c}}}y_i\mu_i,
\qquad
\zeta^x=\sum_{i=0}^{{n_{\mathrm c}}}y_i\tilde\zeta_i .
\]
Since $y$ is $\mathcal F_{k-1}$-measurable, $y\ge0$ and
$\sum_{i=0}^{{n_{\mathrm c}}}y_i\le1+\rho$, we have
$\norm{\mu^x}\le(1+\rho)\max_i\norm{\mu_i}$ and
$\E[\zeta^x\mid\mathcal F_{k-1}]=0$, and the conditional Minkowski inequality
gives $\mathsf N(\zeta^x)^2\le2(1+\rho)^2(\Omega_h/B+dS_1^2\delta^2/h^2)$.

\emph{Dual block.} Let $\bar\xi$ and $\Delta^L$ be the averages over the
$B_L$ level calls of the vectors $(\xi^{\mathrm c}_{i,\ell})_i$ and
$(\delta_{i,\ell})_i$ of their errors. Then $\hat H^y=-g(x)+\mu^y+\zeta^y$ with
$\mu^y=-\E[\Delta^L\mid\mathcal F_{k-1}]$ and
$\zeta^y=-(\bar\xi+\Delta^L-\E[\Delta^L\mid\mathcal F_{k-1}])$, so that
$\norm{\mu^y}\le\sqrt{{n_{\mathrm c}}}\,\delta$,
$\zeta^y$ is $\mathcal F_k$-measurable and
$\E[\zeta^y\mid\mathcal F_{k-1}]=0$. For $\ell<\ell'$ the error
$\xi^{\mathrm c}_{i,\ell}$ is measurable with respect to the $\sigma$-algebra
$\mathcal E_t$ of the $\ell'$-th level call, so the cross terms vanish and
$\mathsf N(\bar\xi)^2\le
{n_{\mathrm c}}\sigma_g^2/B_L$. Moreover
$\mathsf N(\Delta^L-\E[\Delta^L\mid\mathcal F_{k-1}])^2\le
{n_{\mathrm c}}\delta^2$, and
the conditional Minkowski inequality gives
$\mathsf N(\zeta^y)^2\le
2{n_{\mathrm c}}(\sigma_g^2/B_L+\delta^2)$.

\emph{Both blocks.} Put $\mu=(\mu^x,\mu^y)$ and $\zeta=(\zeta^x,\zeta^y)$.
Then $\norm\mu\le\norm{\mu^x}+\norm{\mu^y}\le b_K$ and
$\E[\norm\zeta^2\mid\mathcal F_{k-1}]=\mathsf N(\zeta^x)^2
+\mathsf N(\zeta^y)^2\le V_K$; no independence between the blocks is
needed. The value calls are the queries of \cref{alg:stencil} at $x\in Q$
and lie in $Q_h$.
\end{proof}

\begin{algorithm}[t]
\caption{Calibrated zeroth-order Mirror--Prox}
\label{alg:functional-mirror-prox}
\begin{algorithmic}[1]
\Require $x_0\in Q$, $\Proj_K$; value and level calls as in
\cref{def:fc-oracle}; $h,B,B_L,N,\nu$ as in \cref{cor:fc-stencil}
\State $w_1\gets(x_0,0)$
\For{$t=1,\ldots,N$}
  \State $a_t\gets\hat H(w_t)$, the operator estimate of
  \cref{app:fc-stencil} (one round)
  \State $w_{t+1/2}\gets\Proj_K(w_t-\nu a_t)$
  \State $b_t\gets\hat H(w_{t+1/2})$ (one round)
  \State $w_{t+1}\gets\Proj_K(w_t-\nu b_t)$
\EndFor
\State \Return the $x$-component of $N^{-1}\sum_{t=1}^N w_{t+1/2}$
\end{algorithmic}
\end{algorithm}

\begin{acorollary}[Mirror--Prox with the stencil]
\label{cor:fc-stencil}
Assume the setting of \cref{app:fc-setting} with $\rho=\bar y+1$, so that
$D_K=(D^2+2\rho^2)^{1/2}$, and let $H$ be $M_H$-Lipschitz on $K$ for some
$M_H>0$, for instance with $M_H$ of \cref{lem:fc-lipschitz} when it is
positive. The method uses the quantities below, which depend on
$\sigma,\sigma_g,\bar y,D,L_\beta,M_H,{n_{\mathrm c}},d$
and $\eps$, but not on $M$ and $\delta$; upper bounds on
$\sigma,\sigma_g,\bar y,D,L_\beta,M_H$ and
${n_{\mathrm c}}$ suffice.
Let $\eps>0$ and put
\[
h=\Bigl(\frac{\eps\,d^{\tau/2}}{24(1+\rho)\bfC L_\beta D_K}\Bigr)^{\frac1{\beta-1}},
\qquad
\bar M=\frac{\eps}{24(1+\rho)A_1D_K},\qquad
\bar\delta=\min\Bigl\{\frac{\eps h}{42(1+\rho)S_1D_K\sqrt d},\
\frac{\eps}{42\sqrt{{n_{\mathrm c}}}\,D_K}\Bigr\},
\]
\[
\bar\Omega=\Cf L_\beta^2d^{-\tau}h^{2\beta-2}+C_\eta\bar M^2
+C_\sigma\frac{d\sigma^2}{h^2},\qquad
N=\Bigl\lceil\frac{4M_HD_K^2}{\eps}\Bigr\rceil,
\]
\[
B=\max\Bigl\{1,\Bigl\lceil\frac{864(1+\rho)^2D_K^2\bar\Omega}{N\eps^2}
\Bigr\rceil\Bigr\},\qquad
B_L=\max\Bigl\{1,\Bigl\lceil
\frac{864\,{n_{\mathrm c}}\sigma_g^2D_K^2}{N\eps^2}
\Bigr\rceil\Bigr\},
\]
\[
\bar V=\frac{2(1+\rho)^2\bar\Omega}B
+\frac{2{n_{\mathrm c}}\sigma_g^2}{B_L}
+\frac{\eps^2}{432D_K^2},\qquad
\nu=\min\Bigl\{\frac1{2M_H},\frac{D_K}{2\sqrt{N\bar V}}\Bigr\}.
\]
Let $g_0=f,g_1,\dots,g_{{n_{\mathrm c}}}$ satisfy the assumptions of \cref{lem:fc-estimate}
on $Q_{\vartheta h}$, and let the calls satisfy \cref{def:fc-oracle} with
$M\le\bar M$ and $\delta\le\bar\delta$. Run
\cref{alg:functional-mirror-prox}, that is, the method of
\cref{thm:fc-mirror-prox} from $w_1=(x_0,0)$, for some $x_0\in Q$, with the
step $\nu$ and the operator estimates of \cref{lem:fc-estimate} with radius
$h$, $B$ frames and $B_L$ level calls. Then the output $\hat x=\hat x_N\in Q$
satisfies
\[
\E\bigl[f(\hat x)-\fstar\bigr]_+\le\eps\qquad\text{and}\qquad
\E v(\hat x)\le\eps
\]
after $2N$ rounds, all value calls lie in $Q_h$, and the numbers $T_V$ of
value calls and $T_L$ of level calls satisfy
\begin{align*}
T_V&\le3456\,mC_\sigma(24\bfC)^{\frac2{\beta-1}}\frac{\sigma^2}{\eps^2}\,
d^{\,2-\frac\tau{\beta-1}}\Bigl(\frac{S_K}{\eps}\Bigr)^{\frac2{\beta-1}}
+6md\Bigl(\frac{\Cf}{\bfC^2}+\frac{C_\eta}{A_1^2}\Bigr)+4mdN,
\qquad S_K=(1+\rho)^\beta L_\beta D_K^\beta,\\
T_L&\le\frac{1728\,{n_{\mathrm c}}\sigma_g^2D_K^2}{\eps^2}+2N,
\qquad N\le\frac{4M_HD_K^2}{\eps}+1 .
\end{align*}
Each value call returns ${n_{\mathrm c}}+1$ scalar values
and each level call returns ${n_{\mathrm c}}$.
\end{acorollary}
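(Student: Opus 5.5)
The argument combines \cref{thm:fc-mirror-prox} with \cref{lem:fc-estimate}. Each iteration of \cref{alg:functional-mirror-prox} requests the operator estimate at an $\mathcal F_{k-1}$-measurable point $w\in K$ in one round. The centers $w_t$ and $w_{t+1/2}$ are projections onto $K$ of quantities computed from earlier rounds, so they are measurable before their rounds and lie in $K$. Since $w_1=(x_0,0)$ is deterministic, \cref{lem:fc-estimate} applies with the given $h,B,B_L$. It supplies the decomposition $\hat H=H(w)+\mu_k+\zeta_k$ required by \cref{thm:fc-mirror-prox}, with $\norm{\mu_k}\le b_K$ from \eqref{eq:fc-bias} and conditional variance at most $V_K$ from \eqref{eq:fc-var}. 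It also shows that the value calls lie in $Q_h$. It then suffices to check that $\mathrm{Err}_N\le\eps$ when $V_K$ is replaced by $\bar V$. The conclusion follows from \cref{thm:fc-mirror-prox}, using $\rho-\bar y=1$ for the violation.

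The main step is to bound the three terms of \eqref{eq:fc-gap} with the tuned step $\nu$. The step uses $\bar V\ge V_K$, so the tuning computation in the proof of \cref{thm:fc-mirror-prox} gives $\mathrm{Err}_N\le M_HD_K^2/N+3D_K\sqrt{\bar V/N}+3D_Kb_K$.

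\emph{Bias.} The choice of $h$ makes $(1+\rho)\bfC L_\beta d^{-\tau/2}h^{\beta-1}=\eps/(24D_K)$. The bound $M\le\bar M$ gives $(1+\rho)A_1M\le\eps/(24D_K)$. The two components of $\bar\delta$ bound the adversarial contributions by $\eps/(42D_K)$ each. Hence $3D_Kb_K\le\eps/8+\eps/8+\eps/7+\eps/7<\eps/2+$ a small margin; I will adjust the bookkeeping so that the total is at most about $0.54\eps$.

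\emph{Variance.} First, $V_K\le\bar V$. The terms containing $\Omega_h$ and $\sigma_g^2$ match, because $M\le\bar M$ implies $\Omega_h\le\bar\Omega$. The $\delta^2$ terms satisfy $2(1+\rho)^2dS_1^2\delta^2/h^2+2{n_{\mathrm c}}\delta^2\le4\eps^2/(42^2D_K^2)\le\eps^2/(432D_K^2)$. Next, the definitions of $B$ and $B_L$ make each of the first two terms of $\bar V$ at most $\eps^2/(432ND_K^2)\cdot N$, so that $N\bar V\le3\eps^2/(432D_K^2)\cdot N$. This gives $3D_K\sqrt{\bar V/N}$ of order $\eps/4$.

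\emph{Optimization term.} $N\ge4M_HD_K^2/\eps$ gives $M_HD_K^2/N\le\eps/4$.

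The main obstacle is getting these fractions to sum to at most $\eps$ with the stated constants $24,42,432,864$. I expect the intended split to be roughly $\eps/4$ for the optimization term, $\eps/4$ for the noise term, and the remainder for the bias, with the $\delta$-contribution to the variance absorbed by the $432$ term. If the constants turn out tight, I will redo the tuning bound directly from \eqref{eq:fc-gap} with $\nu$ and $\bar V$.

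Finally, I count the calls. $T_V=2N\cdot2mdB\le4md N+4mdN\cdot864(1+\rho)^2D_K^2\bar\Omega/(N\eps^2)$, and I then insert the definitions of $h$ and $\bar M$ into $\bar\Omega$. The term $C_\sigma d\sigma^2/h^2$ produces $(24\bfC)^{2/(\beta-1)}d^{-\tau/(\beta-1)}((1+\rho)L_\beta D_K/\eps)^{2/(\beta-1)}$. Multiplied by $(1+\rho)^2D_K^2$, this gives $(S_K/\eps)^{2/(\beta-1)}$, since $(1+\rho)^2D_K^2\cdot((1+\rho)L_\beta D_K)^{2/(\beta-1)}=S_K^{2/(\beta-1)}$. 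The $\Cf$ and $C_\eta$ terms give $\eps^2$-sized contributions that produce the additive $6md(\cdot)$ term. Similarly, $T_L=2NB_L\le2N+1728{n_{\mathrm c}}\sigma_g^2D_K^2/\eps^2$.
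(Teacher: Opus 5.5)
Your route is the paper's. You use \cref{lem:fc-estimate} to put each estimate in the form of \cref{thm:fc-mirror-prox}, and $M\le\bar M$, $\delta\le\bar\delta$ to get $V_K\le\bar V$; your $4\eps^2/(42^2D_K^2)\le\eps^2/(432D_K^2)$ is exactly the paper's step. The step tuned with $\bar V$ then gives $M_HD_K^2/N+3D_K\sqrt{\bar V/N}+3D_Kb_K$, \cref{lem:fc-certificate} with $\rho-\bar y=1$ finishes, and your call counts match the paper's.

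The only defect is the bookkeeping you call the main obstacle, and as written it does not close. With your $0.54\eps$ for the bias, the three terms sum to $\tfrac{29}{28}\eps>\eps$. The error is arithmetic, not a tightness problem. Each of the two adversarial parts of $b_K$ is at most $\eps/(42D_K)$, and $3D_K\cdot\eps/(42D_K)=\eps/14$, not $\eps/7$. Hence
\[
3D_Kb_K\le\frac\eps8+\frac\eps8+\frac\eps{14}+\frac\eps{14}=\frac\eps4+\frac\eps7<\frac\eps2 ,
\]
and no retuning is needed.

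Your variance step also misplaces a factor $N$. The definitions of $B$ and $B_L$ give $2(1+\rho)^2\bar\Omega/B\le N\eps^2/(432D_K^2)$ and $2{n_{\mathrm c}}\sigma_g^2/B_L\le N\eps^2/(432D_K^2)$, with $N$ in the numerator rather than $\eps^2/(432D_K^2)$. Using $N\ge1$ for the third term of $\bar V$, you get $\bar V\le N\eps^2/(144D_K^2)$, hence $3D_K\sqrt{\bar V/N}\le\eps/4$ exactly. Together with $M_HD_K^2/N\le\eps/4$, the expected gap is at most $\eps/4+\eps/4+\eps/4+\eps/7<\eps$, which is the paper's accounting.
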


\begin{proof}
By \cref{lem:fc-estimate}, every operator estimate of the run has the form
required in \cref{thm:fc-mirror-prox}, with $b_K$ of \eqref{eq:fc-bias} and
$V_K$ of \eqref{eq:fc-var}. Since $M\le\bar M$, we have
$\Omega_h\le\bar\Omega$. Since $\delta\le\bar\delta$, the terms of $V_K$ with
$\delta$ are at most $4\eps^2/(1764D_K^2)\le\eps^2/(432D_K^2)$. Hence
$V_K\le\bar V$, and \cref{thm:fc-mirror-prox} applies with $\bar V$ in place
of $V_K$ and the step $\nu$:
\[
\E\,\mathrm{gap}(\hat w_N)\le M_HD_K^2/N+3D_K\sqrt{\bar V/N}+3D_Kb_K.
\]
The choice of $N$ gives $M_HD_K^2/N\le\eps/4$. The choices of $B$ and $B_L$
give $2(1+\rho)^2\bar\Omega/B\le N\eps^2/(432D_K^2)$ and
$2{n_{\mathrm c}}\sigma_g^2/B_L
\le N\eps^2/(432D_K^2)$, so $\bar V\le N\eps^2/(144D_K^2)$
and $3D_K\sqrt{\bar V/N}\le\eps/4$. The choices of $h$, $\bar M$ and
$\bar\delta$ bound the three parts of $b_K$ by $\eps/(24D_K)$,
$\eps/(24D_K)$ and $\eps/(21D_K)$, so $3D_Kb_K\le\eps/8+\eps/8+\eps/7<\eps/2$.
Thus $\E\,\mathrm{gap}(\hat w_N)\le\eps$, and \cref{lem:fc-certificate} with
$\rho-\bar y=1$ gives the two claims.

Each of the $2N$ rounds uses $2mdB$ value calls and $B_L$ level calls. Since
$NB\le N+864(1+\rho)^2D_K^2\bar\Omega/\eps^2$ and
$NB_L\le N+864{n_{\mathrm c}}\sigma_g^2D_K^2/\eps^2$,
we get
$T_L\le2N+1728{n_{\mathrm c}}\sigma_g^2D_K^2/\eps^2$ and
$T_V\le4mdN+3456md(1+\rho)^2D_K^2\bar\Omega/\eps^2$. By the choice of $h$ and
$\bar M$, the first two terms of $\bar\Omega$ equal
$\Cf\eps^2/(576(1+\rho)^2\bfC^2D_K^2)$ and
$C_\eta\eps^2/(576(1+\rho)^2A_1^2D_K^2)$; they contribute
$6md(\Cf/\bfC^2+C_\eta/A_1^2)$ to $T_V$. The term $C_\sigma d\sigma^2/h^2$
contributes the first term of the bound on $T_V$, since
$h^{-2}=(24\bfC(1+\rho)L_\beta D_K/\eps)^{2/(\beta-1)}d^{-\tau/(\beta-1)}$.
\end{proof}

\section{Details of the experiments}
\label{app:experiments}

\pgfplotsset{appgrid/.style={
  scale only axis, width=6.4cm, height=4.4cm, grid=major, grid style={gray!18},
  tick label style={font=\footnotesize}, label style={font=\footnotesize},
  title style={font=\small, yshift=-0.4ex},
  legend style={font=\footnotesize, draw=none, fill=none, at={(0.5,0)}, anchor=north,
    yshift=-0.9cm, legend columns=2, cells={anchor=west}, row sep=-1.5pt,
    /tikz/every even column/.append style={column sep=8pt}},
  every axis plot/.append style={mark size=1.6pt},
  error bars/error bar style={solid}}}

\paragraph{Code, randomness and reporting.}
{
The supplementary script \texttt{experiments/run\_all.py}, which uses NumPy
\citep{Harris2020NumPy}, produces Experiments~1--7 and the robustness checks
reported with Experiments~3 and~5. All random streams are derived from
\texttt{SeedSequence(20260927)}. Instances, tuning runs, reported runs, and
methods use separate streams, and tuning does not reuse evaluation data.
The reported run used Python~3.13.16 and NumPy~2.5.3 on two cores of a
2.80\,GHz Intel Xeon processor. It took about 46 minutes, including 32 minutes
for Experiment~5; \texttt{experiments/run\_log.txt} records the time of each
run.

Vertical uncertainty bars are centered on the plotted markers and represent
one standard error over independent runs. When a standard error is smaller
than its marker, the bar is covered by the marker. Paired ratios in
Experiment~3 and its robustness check use the delta
method. Calls required for a target accuracy are interpolated between adjacent
points of a doubling grid in log--log coordinates. Their standard errors use
400 fixed bootstrap resamples; the grid is extended until every resample
crosses the target. Slopes are least-squares fits in log--log coordinates.
Experiments~5 and~6 obtain their slope errors from bootstrap resamples across
independent dimensions, while the budget check in Experiment~5 uses the delta
method for the mean errors. Haar frames are generated by QR factorization of a
standard Gaussian matrix, with the correction
$Q\gets Q\,\mathrm{diag}(\sgn\mathrm{diag}\,R)$. Each oracle call evaluates the
objective at its stated query and adds the noise assigned to that call.
\Cref{fig:app-experiments-a,fig:app-experiments-b} display all seven
experiments together with the
robustness checks for Experiments~3 and~5.
Panels~(a)--(d) of \cref{fig:experiments} are Experiments~3, 4, 5, and~7.
}

\begin{figure}[h]
\centering
\begin{tikzpicture}
\begin{groupplot}[
  appgrid,
  group style={group size=2 by 3, horizontal sep=1.8cm, vertical sep=3.35cm},
]
\nextgroupplot[title={{Experiment 1: one estimate}},
  xmode=log, ymode=log, log basis x=2,
  xlabel={dimension $d$}, ylabel={RMS error},
  xmin=3, xmax=340, ymin=0.08, ymax=100]
\addplot[thick, MidnightBlue, mark=*, error bars/.cd, y dir=both, y explicit]
  table[x=d, y=rms_lip, y error=se_rms_lip] {experiments/exp1_estimate.dat};
\addlegendentry{{Lip., fixed}}
\addplot[thick, MidnightBlue, densely dashed, mark=o, mark options={solid}]
  table[x=d, y=rms_lip_adaptive] {experiments/exp1_estimate.dat};
\addlegendentry{{Lip., adaptive}}
\addplot[thick, Plum, mark=diamond*]
  table[x=d, y=rms_lin] {experiments/exp1_estimate.dat};
\addlegendentry{linear $\eta$}
\addplot[thick, ForestGreen, mark=square*, error bars/.cd, y dir=both, y explicit]
  table[x=d, y=rms_bounded_fixed, y error=se_rms_bounded_fixed] {experiments/exp1_estimate.dat};
\addlegendentry{bounded, fixed}
\addplot[thick, BrickRed, mark=triangle*]
  table[x=d, y=rms_bounded_adaptive] {experiments/exp1_estimate.dat};
\addlegendentry{{bounded, adv.}}
\addplot[gray!70!black, dashed]
  table[x=d, y=ref_lip_bound] {experiments/exp1_estimate.dat};
\addlegendentry{{Lip. bound}}
\addplot[gray!70!black, dotted, thick]
  table[x=d, y=ref_randsign] {experiments/exp1_estimate.dat};
\addlegendentry{{indep. signs}}

\nextgroupplot[title={{Experiment 2: oblivious noise}},
  xmode=log, ymode=log,
  xlabel={{oracle calls $T$}},
  ylabel={$\E f(\hat x)-\fstar$},
  xmin=1.2e4, xmax=2.8e6, ymin=2e-5, ymax=3]
\addplot[thick, MidnightBlue, mark=*, error bars/.cd, y dir=both, y explicit]
  table[x=T, y=err_frames, y error=se_frames] {experiments/exp2_oblivious.dat};
\addlegendentry{random frames}
\addplot[thick, BrickRed, mark=triangle*, error bars/.cd, y dir=both, y explicit]
  table[x=T, y=err_fixed, y error=se_fixed] {experiments/exp2_oblivious.dat};
\addlegendentry{fixed axes}
\addplot[thick, ForestGreen, densely dashed, mark=square, mark options={solid},
  error bars/.cd, y dir=both, y explicit]
  table[x=T, y=err_randsign, y error=se_randsign] {experiments/exp2_oblivious.dat};
\addlegendentry{random signs}
\addplot[black, dashed]
  table[x=T, y=plateau_fixed] {experiments/exp2_oblivious.dat};
\addlegendentry{{fixed plateau}}
\addplot[gray!70!black, dotted, thick, domain=1.6e4:2.05e6, samples=2]
  {1.1e-3*(x/1.6e4)^(-2/3)};
\addlegendentry{slope $-2/3$}

\nextgroupplot[title={{Experiment 3: admissible levels}},
  xmode=log, ymode=log,
  xlabel={level $x$}, ylabel={error$/\eps_T$},
  xmin=0.02, xmax=50, ymin=0.8, ymax=1000]
\addplot[thick, MidnightBlue, mark=*, error bars/.cd, y dir=both, y explicit]
  table[x=x_M, y=err_lip, y error=se_lip] {experiments/exp3_levels.dat};
\addlegendentry{systematic}
\addplot[thick, BrickRed, mark=triangle*, error bars/.cd, y dir=both, y explicit]
  table[x=x_delta, y=err_adv, y error=se_adv] {experiments/exp3_levels.dat};
\addlegendentry{adversarial}
\addplot[gray!70!black, dashed]
  table[x=x_M, y=pred_lip] {experiments/exp3_levels.dat};
\addlegendentry{predicted}
\addplot[gray!70!black, dashed, forget plot]
  table[x=x_delta, y=pred_adv] {experiments/exp3_levels.dat};
\addplot[gray!70!black, dotted, thick]
  table[x=x_M, y=pred_shift_lip] {experiments/exp3_levels.dat};
\addlegendentry{{no cross term}}

\nextgroupplot[title={{Threshold robustness}},
  xlabel={predicted doubling point}, ylabel={measured doubling point},
  xmin=1.1, xmax=2.4, ymin=1.1, ymax=2.4]
\addplot[only marks, thick, MidnightBlue, mark=*]
  table[x=xM_pred, y=xM_meas] {experiments/exp3_thresholds.dat};
\addlegendentry{systematic}
\addplot[only marks, thick, BrickRed, mark=triangle*]
  table[x=xD_pred, y=xD_meas] {experiments/exp3_thresholds.dat};
\addlegendentry{adversarial}
\addplot[only marks, MidnightBlue, mark=o] coordinates {(1.109,1.07)};
\addlegendentry{{main, syst.}}
\addplot[only marks, BrickRed, mark=triangle] coordinates {(1.372,1.30)};
\addlegendentry{{main, adv.}}
\addplot[gray!70!black, dashed, domain=1.1:2.4, samples=2] {x};
\addlegendentry{equality}

\nextgroupplot[title={{Experiment 4: rounds, $\kappa=10$}},
  xmode=log, ymode=log, log basis x=2,
  xlabel={rounds ({$T$ fixed})},
  ylabel={$\E f(\hat x)-\fstar$},
  xmin=6, xmax=22000, ymin=1e-3, ymax=40]
\addplot[thick, MidnightBlue, mark=*, error bars/.cd, y dir=both, y explicit]
  table[x=N, y=err_tp, y error=se_tp] {experiments/exp4_rounds_kappa10.dat};
\addlegendentry{two-phase}
\addplot[thick, BrickRed, mark=triangle*,
  restrict expr to domain={\thisrow{new_acc}}{1:1},
  error bars/.cd, y dir=both, y explicit]
  table[x=rounds_acc, y=err_acc, y error=se_acc] {experiments/exp4_rounds_kappa10.dat};
\addlegendentry{accelerated}
\addplot[gray!70!black, dashed]
  table[x=N, y=pr_level] {experiments/exp4_rounds_kappa10.dat};
\addlegendentry{statistical level}

\nextgroupplot[title={{Experiment 4: rounds, $\kappa=100$}},
  xmode=log, ymode=log, log basis x=2,
  xlabel={rounds ({$T$ fixed})},
  ylabel={$\E f(\hat x)-\fstar$},
  xmin=6, xmax=22000, ymin=1e-3, ymax=40]
\addplot[thick, MidnightBlue, mark=*, error bars/.cd, y dir=both, y explicit]
  table[x=N, y=err_tp, y error=se_tp] {experiments/exp4_rounds_kappa100.dat};
\addlegendentry{two-phase}
\addplot[thick, BrickRed, mark=triangle*,
  restrict expr to domain={\thisrow{new_acc}}{1:1},
  error bars/.cd, y dir=both, y explicit]
  table[x=rounds_acc, y=err_acc, y error=se_acc] {experiments/exp4_rounds_kappa100.dat};
\addlegendentry{accelerated}
\addplot[gray!70!black, dashed]
  table[x=N, y=pr_level] {experiments/exp4_rounds_kappa100.dat};
\addlegendentry{statistical level}
\end{groupplot}
\end{tikzpicture}
\caption{{Experiments~1--4 and the robustness check for
Experiment~3.}}
\label{fig:app-experiments-a}
\end{figure}
\FloatBarrier

\begin{figure}[h]
\centering
\begin{tikzpicture}
\begin{groupplot}[
  appgrid,
  group style={group size=2 by 2, horizontal sep=1.8cm, vertical sep=2.75cm},
]
\nextgroupplot[title={{Experiment 5: dimension scaling}},
  xmode=log, ymode=log, log basis x=2,
  xlabel={dimension $d$},
  ylabel={{oracle calls} to reach $\eps$},
  xmin=6.5, xmax=160, ymin=4e3, ymax=4e6]
\addplot[thick, MidnightBlue, mark=*, error bars/.cd, y dir=both, y explicit]
  table[x=d, y=calls_ns, y error=se_ns] {experiments/exp5_dimension.dat};
\addlegendentry{{nonsep., op}}
\addplot[thick, MidnightBlue, densely dashed, mark=o, mark options={solid},
  error bars/.cd, y dir=both, y explicit]
  table[x=d, y=calls_ns_hs, y error=se_ns_hs] {experiments/exp5_dimension.dat};
\addlegendentry{{nonsep., HS}}
\addplot[thick, BrickRed, mark=triangle*, error bars/.cd, y dir=both, y explicit]
  table[x=d, y=calls_sep, y error=se_sep] {experiments/exp5_dimension.dat};
\addlegendentry{separable, HS}
\addplot[thick, BrickRed, densely dashed, mark=triangle, mark options={solid},
  error bars/.cd, y dir=both, y explicit]
  table[x=d, y=calls_sep_op, y error=se_sep_op] {experiments/exp5_dimension.dat};
\addlegendentry{separable, op}
\addplot[gray!70!black, thin]
  table[x=d, y=pred_ns] {experiments/exp5_dimension.dat};
\addlegendentry{finite-$d$ model}
\addplot[gray!70!black, thin, forget plot]
  table[x=d, y=pred_ns_hs] {experiments/exp5_dimension.dat};
\addplot[gray!70!black, thin, forget plot]
  table[x=d, y=pred_sep] {experiments/exp5_dimension.dat};
\addplot[gray!70!black, thin, forget plot]
  table[x=d, y=pred_sep_op] {experiments/exp5_dimension.dat};

\nextgroupplot[title={{Budget scaling at $d=32$}},
  xmode=log, ymode=log,
  xlabel={{oracle calls $T$}},
  ylabel={$\E f(\hat x)-\fstar$},
  xmin=1.8e4, xmax=1.2e6, ymin=2.5e-5, ymax=1.2e-3]
\addplot[thick, MidnightBlue, mark=*, error bars/.cd, y dir=both, y explicit]
  table[x=T, y=err_ns, y error=se_ns] {experiments/exp5_budget.dat};
\addlegendentry{{nonsep., op}}
\addplot[thick, MidnightBlue, densely dashed, mark=o, mark options={solid},
  error bars/.cd, y dir=both, y explicit]
  table[x=T, y=err_ns_hs, y error=se_ns_hs] {experiments/exp5_budget.dat};
\addlegendentry{{nonsep., HS}}
\addplot[thick, BrickRed, mark=triangle*, error bars/.cd, y dir=both, y explicit]
  table[x=T, y=err_sep, y error=se_sep] {experiments/exp5_budget.dat};
\addlegendentry{separable, HS}
\addplot[thick, BrickRed, densely dashed, mark=triangle, mark options={solid},
  error bars/.cd, y dir=both, y explicit]
  table[x=T, y=err_sep_op, y error=se_sep_op] {experiments/exp5_budget.dat};
\addlegendentry{separable, op}
\addplot[gray!70!black, dotted, thick, domain=2.56e4:8.192e5, samples=2]
  {1.0e-3*(x/2.56e4)^(-2/3)};
\addlegendentry{slope $-2/3$}

\nextgroupplot[title={{Experiment 6: the $\ell_1$ ball}},
  xmode=log, ymode=log, log basis x=2,
  xlabel={dimension $d$},
  ylabel={{oracle calls} to reach $\eps$},
  xmin=13, xmax=320, ymin=4e3, ymax=4e6]
\addplot[thick, MidnightBlue, mark=*, error bars/.cd, y dir=both, y explicit]
  table[x=d, y=calls_md, y error=se_md] {experiments/exp6_l1.dat};
\addlegendentry{mirror descent}
\addplot[thick, BrickRed, mark=triangle*, error bars/.cd, y dir=both, y explicit]
  table[x=d, y=calls_eucl, y error=se_eucl] {experiments/exp6_l1.dat};
\addlegendentry{Euclidean descent}
\addplot[gray!70!black, dashed, domain=16:256, samples=2] {6000*x/16};
\addlegendentry{slope $1$}
\addplot[gray!70!black, dotted, thick, domain=16:128, samples=2]
  {1.0e4*(x/16)^2};
\addlegendentry{slope $2$}

\nextgroupplot[title={{Experiment 7: higher-order smoothness}},
  xmode=log, ymode=log,
  xlabel={{oracle calls $T$}},
  ylabel={$\E f(\hat x)-\fstar$},
  xmin=8e3, xmax=2.6e6, ymin=5e-6, ymax=2e-3]
\addplot[thick, MidnightBlue, mark=*, error bars/.cd, y dir=both, y explicit]
  table[x=T, y=err_m2, y error=se_m2] {experiments/exp7_highorder.dat};
\addlegendentry{frames, $m=2$}
\addplot[thick, BrickRed, mark=triangle*, error bars/.cd, y dir=both, y explicit]
  table[x=T, y=err_m1, y error=se_m1] {experiments/exp7_highorder.dat};
\addlegendentry{frames, $m=1$}
\addplot[thick, ForestGreen, densely dashed, mark=square, mark options={solid},
  error bars/.cd, y dir=both, y explicit]
  table[x=T, y=err_ker, y error=se_ker] {experiments/exp7_highorder.dat};
\addlegendentry{kernel, order 4}
\addplot[gray!70!black, dashed, domain=1.28e4:1.64e6, samples=2]
  {3.0e-4*(x/1.28e4)^(-0.8)};
\addlegendentry{slope $-4/5$}
\addplot[gray!70!black, dotted, thick, domain=1.28e4:1.64e6, samples=2]
  {6.5e-4*(x/1.28e4)^(-2/3)};
\addlegendentry{slope $-2/3$}
\end{groupplot}
\end{tikzpicture}
\caption{{Experiments~5--7 and the robustness check for
Experiment~5.}}
\label{fig:app-experiments-b}
\end{figure}

\paragraph{Experiment 1: one estimate under systematic and bounded errors.}
The objective is $f(x)=\frac12\norm x^2$, the center is $z=e_1/2$, the
stencil has $m=2$ and $h=0.1$, oracle values are exact, and
$d\in\{4,8,\dots,256\}$; the same 400 frames per $d$ serve all error
models. The Lipschitz error is
$\eta(x)=M\sum_{k=1}^{50}w_k\abs{\inner{v_k}{x}-c_k}$ with $M=1$,
independent uniform unit vectors $v_k$, weights $(w_k)$ from the flat
Dirichlet distribution and offsets
$c_k=\inner{v_k}z+(h/\sqrt d)\zeta_k$, $\zeta_k\sim U[-1,1]$, so that every
kink passes close to $z$; $\eta$ is drawn once per $d$, before the frames. This
one function does not probe the supremum over all Lipschitz errors; as a check
of the order $M$, the linear error $\eta(x)=M\inner vx$ shifts every frame by
exactly $Mv$, and the measured error has norm $1.000000$ for every $d$. The
bounded error fixed in advance is $\delta\,\sgn\sin(1000\inner ux)$ with
$\delta=Mh$ and a fresh uniform unit vector $u$ for each frame, also drawn
before the frames; its values depend on the frame only through the query
points, so it is a bounded error as in \eqref{eq:noise-adv}, not oblivious
noise. Independent random signs of size $\delta$ are not simulated; they serve
as the reference value $M\sqrt{S_2d/2}$. The adversarial error sees the frame
and adds $\pm\delta\,\sgn(a_j)\sgn\inner{U_i}w$ to the value at $z\pm ht_jU_i$
for a fixed unit vector $w$; the error of the estimate then has norm exactly
$S_1M\sqrt d=3\sqrt d$ for every frame. The root-mean-square error under the
Lipschitz error stays between $0.14$ and $0.21$ for all $d$, almost all of it a
deterministic shift (the norm of the mean error is at least $96\%$ of the
root-mean-square error), and below the bound $A_1M\sqrt{d/(d-1)}$ of
\cref{lem:lipschitz-part}(d), which decreases from $1.92$ to $1.67$. The error
under the bounded error fixed in advance agrees within $2.3\%$ (at most $1.6$
standard errors) with the reference $M\sqrt{S_2d/2}\approx1.90\sqrt d$. A
Lipschitz error chosen after the frame as in \cref{rem:adaptive-lipschitz}
gives an error of norm exactly $M\sqrt{d/2}$.

\paragraph{{Shared objective and method for Experiments 2, 3, and 5.}}
Here $d=20$, except in {Experiment~5 and the robustness
checks described with Experiments~3 and~5}, and $f(x)=\frac12\norm x^2+c\,g(x)$
with $g(x)=\inner wx\norm x^2e^{-\norm x^2}$, $c=1/4$ and
$w=d^{-1/2}(1,\dots,1)$, so that $\xstar=0$, $\fstar=0$ and
$\nabla^2f(0)=I$. The eigenvalues of $\nabla^2g$ have absolute value at most
$1.69123$ on $\R^d$, attained at $x=\pm1.1495w$; this value is computed
numerically, on a grid of step $0.005$ after reduction to the plane spanned by
$w$ and $x$ and with five local refinements, and it is not a certified bound.
The methods use the bound $1.7$, that is, $\alpha=1-1.7c=0.575$,
$L=1+1.7c=1.425$ and $\kappa=2.48$; so $\alpha=1$ in the normalizations of
{Experiment~3 and its robustness check} is the
curvature at $\xstar$, not the constant of a class.
Since $g$ is odd and $UU^\top=I$, every frame with $m=1$ at the minimizer equals
$b=ch^2e^{-h^2}w$, the gradient of the ball average of $f$ at $0$, and so does
every coordinate stencil; this bias does not depend on $d$. The noise is
$N(0,\sigma^2)$ with $\sigma=0.03$. All methods use $m=1$, the radius
$h=\sqrt2\,T^{-1/6}$ (except in {Experiment~5 and its
budget-scaling check}), and variant (a) of
\cref{alg:descent} with $N=200$ rounds ($k_0=16\kappa=39.7$) and $R=2$, from a
random start $x_0$ with $\norm{x_0}=1$ drawn once per experiment. A round
consists of $B$ frames, that is, $2dB$ calls. Without noise and with
$h=10^{-4}$, this schedule ends at $f(\hat x)-\fstar\le3.4\cdot10^{-18}$, so the
optimization error is negligible. The odd term of $f$ decays, so a very large
radius would see only the quadratic, whose minimizer is $\xstar$; in the
reported runs $h\le0.31$ for $f$, so the radii stay near the analysis.

\paragraph{Experiment 2: oblivious noise.}
The budgets are $T=1.6\cdot10^4\cdot2^k$, $k=0,\dots,7$, with 20 runs each.
In addition to the $N(0,\sigma^2)$ noise, every query $z+ht_ju$ of a round
receives $+b$ and every query $z-ht_ju$ receives $-b$, with $b=\sigma$; this
oblivious error depends only on the position of the query in the round. It is
a crafted failure mode of stencils along fixed directions, not a model of all
oblivious noise. The methods are (i) random frames, (ii) the stencil along
$e_1,\dots,e_d$ with the same query order, and (iii) coordinate stencils with
signs drawn in every round (\cref{lem:coord-estimator}). For (ii) the oblivious
error shifts every coordinate of the estimate by $b/h$, and the predicted
plateau is $f(x_b)-\fstar$, where $x_b$ minimizes $f(x)+(b/h)\sum_ix_i$
(computed by Newton's method). In (iii) the random sign of coordinate $i$
multiplies both the difference and the direction, so the oblivious shift
$\pm b/h$ enters with a random sign and has mean zero; in (i) the random frame
plays this role. Method (ii) stays at $0.97$ to $1.00$ times its plateau,
which grows from $0.15$ to $0.52$ because $b/h$ grows as $h$ decreases. For (i)
and (iii) the oblivious error adds a term of mean zero and covariance
$(b/h)^2I$ to every estimate. The ratio of their errors lies between $0.85$ and
$1.27$ (standard errors $5$ to $10\%$ of each error), and their fitted log-log
slopes are $-0.66$ and $-0.68$ over all budgets and $-0.59$ and $-0.67$ over
$T\ge2.56\cdot10^5$, against the rate $-2/3$.

\paragraph{Experiment 3: admissible levels.}
Here $T=1.024\cdot10^6$ ($B=128$), $h=\sqrt2\,T^{-1/6}=0.141$ and 20 runs.
A unit vector $v$ is drawn before the runs. The systematic error is
$\eta(x)=M\inner vx$. The adversarial error sees the frames and adds
$\pm\delta\,\sgn(a_j)\sgn\inner{U_i}v$ to the value at $z\pm ht_jU_i$, which
adds $(\delta S_1/h)\sum_i\sgn\inner{U_i}v\,U_i$ to the frame, a vector with
mean $a_\delta v$, $a_\delta=(\delta S_1/h)\,d\,\E\abs{u_1}$ for $u$ uniform on
the sphere ($d\,\E\abs{u_1}=3.61$ for $d=20$). The $15$ log-spaced values of
$M$ and of $\delta$ were set from a pilot estimate of $\eps_T$ on a separate
stream. All $31$ settings share the frames and the Gaussian noise of every
call, and the errors are reported as ratios to $\eps_T=3.03\cdot10^{-5}$,
the error at $M=\delta=0$. They are plotted against $x=M/\sqrt{\alpha\eps_T}$
and $x=\delta\sqrt d/(h\sqrt{\alpha\eps_T})$, from $0.031$ to $31$, with
$\alpha=1=\nabla^2f(0)$.

The predictions follow the mean of the estimate. Near $\xstar=0$ the
estimate has mean $b+av$ up to higher-order terms, with $a=M$ for the
systematic error, since $\nabla\eta=Mv$, and $a=a_\delta$ for the adversarial
one. The method then tends to $-(b+av)$, where $f$ exceeds its value at $-b$
by $a\inner bv+a^2/2$ up to higher-order terms. We therefore predict the
ratio $1+(F(a)+a\inner bv)/\eps_T$, where $F(a)=f(x_a)-\fstar$ and $x_a$
minimizes $f(x)+a\inner vx$ (computed by Newton's method); for the systematic
error, $F(M)$ is the error that remains with exact values of $f+\eta$. Here
$\inner bv=0.346\sqrt{\alpha\eps_T}$, and the squared bias $\norm b^2/2$ is
$0.39\,\eps_T$. The ratio of the measured to the predicted error lies between
$1.00$ and $1.02$ for both errors at all $15$ levels, at most $0.53$ standard
errors away. The errors cross $2\eps_T$ at $x\approx1.07$ (systematic) and
$x\approx1.30$ (adversarial) by linear interpolation in log-log coordinates
between the two grid points around the crossing ($1.08$ and $1.34$ by a power
law $1+cx^p$ through them), while the predictions cross $2$ at $1.11$ and
$1.37$ (computed on a fine grid); without the cross term $a\inner bv$ the first
would cross $2$ at $1.41$. For $x\ge6$ the errors grow with fitted slopes
$1.96$ and $1.94$, and at the largest level the ratios are $542.7$ and $351.8$,
against the predictions $540.9$ and $350.0$. The systematic and adversarial
errors are tested separately; their joint effect is not.

\paragraph{Experiment 4: rounds against calls.}
Here $d=20$, $m=2$, the noise is independent $N(0,\sigma^2)$ with
$\sigma=0.1$, the budget is $T=2md\cdot2^{14}=1{,}310{,}720$ calls and the
radius is $h=0.3\,(d^2\sigma^2/T)^{1/8}=0.0613$. The objective is
$f(x)=\frac12x^\top Ax-b^\top x+0.1\log\cosh\inner wx$ with $A$ diagonal,
eigenvalues log-spaced in $[1,\kappa]$, $\kappa\in\{10,100\}$, a unit vector
$w$ and a minimizer $\xstar$ of norm $2$, drawn once per $\kappa$; then
$I\preceq\nabla^2f\preceq(\kappa+0.1)I$. Its third derivative is small, so
this experiment concerns rounds, not the order of the stencil. The methods use
$R=4$, $\alpha=1$ and $L=\kappa+0.1$. Each of the $20$ runs draws a pool of
$n=2^{14}$ frames and of the noise of their calls, which both methods use in
order, for both values of $\kappa$ and all numbers of rounds (common random
numbers). Without noise, the estimate at $20$ random centers in $B(R)$
differs from $\nabla f$ by at most $1.0\cdot10^{-9}$. For
$N\in\{8,16,\dots,2^{14}\}$ rounds the batch has $B=2^{14}/N$ frames. The
two-phase method is variant (a) of \cref{alg:descent}. The accelerated method
is variant (b) with $N_0=\lceil5\sqrt{L/\alpha}\,\rceil$ ($16$ and $51$)
rounds per stage and with its iterates projected onto $B(R)$. If $N<N_0$, it
runs one stage of $N$ rounds with $B$ frames; otherwise it runs
$J=\min\{\lfloor N/N_0\rfloor,\lfloor\log_2(2^{14}/N_0+1)\rfloor\}$ stages,
stage $j$ with $\lfloor2^{13+j}/(N_0(2^J-1))\rfloor$ frames per round, so
that the batches double up to rounding. Rounds beyond the last stage are not
used: the method uses at most $160$ rounds for $\kappa=10$ and $408$ for
$\kappa=100$, and \cref{fig:experiments}(b) shows it against the rounds it
uses, once per schedule. The rounding leaves at most $1.1\%$ of the budget
unused. The statistical level is $\bar\sigma^2\tr(\nabla^2f(\xstar)^{-1})/(2n)$
with $\bar\sigma^2=\sigma^2S_2/(2h^2)$, the asymptotic error of
Polyak--Ruppert averaging \citep{PolyakJuditsky1992} of $n$ frames with exact
gradients and the same noise; it is $2.34\cdot10^{-3}$ for $\kappa=10$ and
$1.35\cdot10^{-3}$ for $\kappa=100$. The experiment compares methods at one
budget; it is not a measurement of the least number of rounds.

The two-phase method uses $N$ rounds. Its error is $24$ times the
statistical level at $N=64$, $2.8$ times it at $N=128$ and $1.7$ to $2.3$ times
it for $N\ge256$ when $\kappa=10$, and $13$ times it at $N=512$, $2.4$ times it
at $N=1024$ and $1.7$ to $2.0$ times it for $N\ge2048$ when $\kappa=100$. The
accelerated method attains its smallest error with a single stage: $3.5$
times the statistical level with $16$ rounds for $\kappa=10$ and with $32$
rounds for $\kappa=100$, where $16$ rounds still give $35$ times this level.
With more rounds it runs more stages, its last stage has fewer frames per
round, and its error rises to $5.0$ to $6.7$ times the statistical level with
$32$ to $160$ rounds for $\kappa=10$, and to $5.5$ to $7.3$ times it with
$102$ to $408$ rounds for $\kappa=100$. The number of rounds the two-phase
method needs grows by a factor $8$ from $\kappa=10$ to $\kappa=100$, that of
the accelerated method by a factor $2$.

\paragraph{Experiment 5: calls against the dimension.}
Besides the objective $f$ above we use the separable objective
$f_{\mathrm s}(x)=\sum_j\psi(x_j)$, $\psi(t)=\frac12t^2+ct^3e^{-t^2}$, with the
profile of $f$ on the line through $w$ along each axis. Its minimizer is $0$,
$\nabla^2f_{\mathrm s}(0)=I$, the eigenvalues of $\nabla^2f_{\mathrm s}$ lie in
$[1-1.6912c,1+1.6912c]$, so the methods use the same $\alpha$ and $L$, and
$\norm{D^3f_{\mathrm s}(0)}_{\op}=\norm{D^3f(0)}_{\op}=6c$. Since
$\nabla^2f_{\mathrm s}$ is diagonal, its Lipschitz constant is $\sup\abs{\psi'''}$
in the operator and in the Hilbert--Schmidt norm, while $D^3f(0)$ has
Hilbert--Schmidt norm of order $c\sqrt d$. A frame at the minimizer has bias
$ch^2e^{-h^2}w$ for $f$, and $\frac{3ch^2}{d+2}(1,\dots,1)$, of norm
$\frac{3ch^2\sqrt d}{d+2}$, for $f_{\mathrm s}$ to leading order in $h$. Both
objectives are run with both radius rules $h=C(d^{2+\tau}\sigma^2/T)^{1/6}$ of
\eqref{eq:rate}, $\tau=0$ (op) and $\tau=1$ (HS), with variant (a) of
\cref{alg:descent}, $N=200$ rounds of $B$ frames, $T=2dNB$, the target
$\eps=2\cdot10^{-4}$ and a uniform unit start drawn once per $d$. For the
matched pairs (non-separable, op) and (separable, HS) the constant
$C\in\{2^{k/4}:-4\le k\le4\}$ was tuned once, on a separate stream at $d=16$
with 10 runs, budgets $B=1,\dots,32$ and common randomness across $C$, by
minimizing the interpolated number of calls to reach $\eps$; the chosen values
are $C=1.68$ and $C=1.19$, both inside the grid. The other rule of each
objective uses the constant that gives the same radius at $d=16$, so that the
two rules differ only in how the radius scales with $d$. For each $d$ the
number of frames per round is halved or doubled until the mean error brackets
$\eps$, with $40$ runs for $d\le16$ and $20$ for $d\ge32$; the non-separable
objective is not run at $d=128$. For $f_{\mathrm s}$ the radii reach $0.49$
($d=128$), but a frame moves each coordinate by $h$ times a coordinate of a
uniform unit vector, and the next term of the bias is smaller than the leading
one by a factor of order $h^2/d$.

With the matched rules the calls grow with the fitted slopes $2.03\pm0.03$
(non-separable) and $1.62\pm0.03$ (separable) over $8\le d\le64$, and
$1.61\pm0.02$ for the separable objective up to $d=128$; with the other rules
the slopes are $2.21\pm0.05$ and $1.64\pm0.03$ over $8\le d\le64$, and
$1.68\pm0.02$ for the separable objective up to $d=128$. The ratio of the
calls of the two objectives with the matched rules grows from $1.14$ at $d=8$
to $2.66$ at $d=64$. The rule of the other seminorm needs $1.03$, $1.00$,
$1.14$ and $1.53$ times the calls of the matched rule for the non-separable
objective ($d=8,\dots,64$), and $1.02$, $1.23$, $1.06$, $1.12$ and $1.37$ times
them for the separable one ($d=8,\dots,128$). At $d=16$ the two rules give the
same radius, so the ratio $1.23$ there, $2.4$ standard errors from $1$, is a
fluctuation of the runs; it indicates their variability.

To compare the slopes with a finite-$d$ prediction we model the error after
the method as $A\,k_b(d)^2h^4+Vd^2\sigma^2/(Th^2)$, with $k_b=c$ for $f$ and
$k_b=3c\sqrt d/(d+2)$ for $f_{\mathrm s}$, the leading terms of the squared bias
and of the noise. With the radius rule and $C$ optimal at $d=16$, where the
bias term is then half the noise term, the calls to reach $\eps$ are
proportional to
\begin{equation}
\label{eq:exp5-model}
\Bigl(\tfrac12v(16)\,\frac{u(d)}{u(16)}+v(d)\Bigr)^{3/2},\qquad
u(d)=k_b(d)^2d^{2(2+\tau)/3},\quad v(d)=d^{(4-\tau)/3},
\end{equation}
and we fit one constant per curve by least squares on the log scale. The model
neglects higher-order terms of the bias, the variance of the frames without
noise and the optimization error. For $8\le d\le64$ it has the slopes $2.00$
and $2.13$ for the non-separable objective with the op and the HS rule, and
$1.59$ and $1.67$ for the separable one with the HS and the op rule ($1.57$
and $1.72$ up to $d=128$); its slopes as $d\to\infty$ are $2$, $3$, $3/2$ and
$2$. The measured slopes differ from those of the model by $+0.03$, $+0.08$,
$+0.03$ and $-0.03$ ($+0.03$ and $-0.04$ up to $d=128$). At all $18$ points the
residuals $\log(\text{measured}/\text{model})$ are at most $0.104$ in
absolute value, that is, within $11\%$, and at most $2.7$ standard errors of
the logarithm of the calls; \texttt{run\_log.txt} lists them.

\paragraph{Experiment 6: the $\ell_1$ ball.}
The objective is $f(x)=\sum_j\varphi(x_j-c_j)$ with $\varphi(t)=e^t-1-t$ and
$c=\frac12e_1$: it is convex, its minimizer is $1$-sparse and lies in
$B_1(1)$, and $f(0)-\fstar=0.107$. Since $\varphi'''$ is unbounded on $\R$, $f$
does not belong to the global class $\mathcal C_1(R,G_\infty)$ of
\cref{thm:l1-upper}, and we use its local form, \cref{rem:l1-local}. The target
is $\eps=0.01$ and the noise is $N(0,\sigma^2)$ with $\sigma=0.03$. Both methods
use $m=1$ and the radius of \cref{thm:l1-upper}(b),
$h=(\eps/(4L_\beta))^{1/2}=0.0299$, with $L_\beta=e^{1+h}$. Every query of method
(i) differs from a point of $B_1(1)$ in one coordinate by at most $h$, and every
query of method (ii) lies in $B_1(1)+hB_2(1)$; in both cases all coordinates
are at most $1+h$ in absolute value. On this set
$\nabla^2f=\mathrm{diag}(e^{x_j-c_j})$, so $e^{1+h}$ bounds the H\"older constant
of $\nabla^2f$ in the $\ell_1$ operator norm, and the local constant of
\cref{rem:l1-local}. The noise of one coordinate of either estimate has standard
deviation $\sigma/(\sqrt2h)=0.71$. Method (i) is entropic mirror descent with
coordinate stencils, \cref{alg:entropic-coordinate}, with the step
$\varsigma=\mu(2\log(2d)/(\bar G^2N))^{1/2}$; here
$\bar G^2=2(G_\infty+\eps/4)^2+2(1+\log(2d))\sigma^2/h^2$ is $\bar G_\eps^2$
of \cref{thm:l1-upper}(b) for Gaussian noise, and $G_\infty=e-1=1.72$ bounds
$\norm{\nabla f}_\infty$ on $B_1(1)$. Method (ii) is projected stochastic
gradient descent onto $B_1(1)$ with the exact Euclidean projection, one
frame per round and the step $\mu/(\bar G_2\sqrt N)$, where
$\bar G_2^2=2(G_2+\eps/4)^2+d\sigma^2/h^2$ and $G_2=1.76$ bounds
$\norm{\nabla f}$ on $B_1(1)$. Both output the average of their centers and
use $2d$ calls per round. The multiplier $\mu\in\{2^k:k=-2,\dots,4\}$ was
tuned in the same way for both methods, on a separate stream at $d=32$ with
10 runs, budgets $N=128,\dots,8192$ and common randomness across $\mu$, by
minimizing the interpolated number of calls to reach $\eps$; the chosen
values are $\mu=8$ for (i) and $\mu=2$ for (ii), both inside the grid. For
each $d$ the number of rounds is halved or doubled until the mean error
brackets $\eps$ (20 runs; 10 and 6 for (ii) at $d=64$ and $128$). Method (i)
needs $7.2\cdot10^3$, $2.1\cdot10^4$, $3.6\cdot10^4$, $8.4\cdot10^4$ and
$2.0\cdot10^5$ calls for $d=16,32,\dots,256$, that is, $225$ to $381$ rounds;
the fitted slope in $d$ is $1.14\pm0.06$ for $16\le d\le128$ and $1.15\pm0.04$
up to $256$. Method (ii) needs $1.3\cdot10^4$, $6.0\cdot10^4$, $2.9\cdot10^5$
and $1.34\cdot10^6$ calls for $d\le128$, that is, $393$ to $5{,}230$ rounds, and
a fitted slope of $2.25\pm0.04$. It was not run at $d=256$: extrapolating its
number of rounds gives about $1.2\cdot10^4$, and one literal $256\times256$
frame takes about $6$\,ms. The instance is sparse and favorable to the
$\ell_1$ geometry; it illustrates the upper bound and is not a minimax
comparison.

\paragraph{Experiment 7: higher order.}
The objective is $f_7(x)=\frac12\norm x^2+c\inner wx(1-e^{-\norm x^2})$ with
$c=1/4$, $w=d^{-1/2}(1,\dots,1)$ and $d=16$. It is $C^\infty$ with bounded
derivatives of every order, so it belongs to the classes of every $\beta$;
$\xstar=0$, $\fstar=0$, $\nabla^2f_7(0)=I$, and the eigenvalues of the Hessian
of the odd term $g_7$ have absolute value at most $1.9518$ (computed
numerically as for $g$), so the methods use $\alpha=1-1.96c=0.51$ and
$L=1.49$. Unlike the odd term of $f$, that of $f_7$ does not decay: at a very
large radius every estimate sees $x+cw$, whose root $-cw$ has error
$2.7\cdot10^{-2}$, so wide tuning grids cannot favor huge radii. At the minimizer
the bias is $ch^2w$ for central differences, to leading order, and $O(h^4)$
for $m=2$ and for the kernel estimate; noiseless means over many frames or
directions at $h=0.1,0.2,0.4$ confirm the orders ($2.49\cdot10^{-3}$,
$9.80\cdot10^{-3}$, $3.70\cdot10^{-2}$ for $m=1$ and $3.1\cdot10^{-6}$,
$4.9\cdot10^{-5}$, $7.5\cdot10^{-4}$ for $m=2$). Three estimates with $4dB$
values per round drive variant (a) of \cref{alg:descent} with $N=200$ rounds:
(i) $B$ frames with $m=2$ and the radius rule of $\beta=5$,
$h=C(d^2\sigma^2/T)^{1/10}$; (ii) $2B$ frames with $m=1$ and the rule of
$\beta=3$, $h=C(d^2\sigma^2/T)^{1/6}$; (iii) $2dB$ directions of the
$\ell_2$-randomized kernel estimate of \citet{Akhavan2020,Akhavan2024},
$\frac d{2h}(Y(x+hr\zeta)-Y(x-hr\zeta))K(r)\zeta$ with $\zeta$ uniform on the
sphere, $r$ uniform on $[-1,1]$ and the kernel $K(r)=\frac{15r}4(5-7r^2)$ of
order $4$, with the rule of $\beta=5$. The cited works use one such estimate per
iteration of stochastic gradient descent; here all three estimates drive the
same batched method, so that only the estimate differs. Each constant
$C\in\{2^{k/2}:-4\le k\le8\}$ was tuned at $T=2.05\cdot10^5$ on a separate
stream with 10 runs and common randomness across $C$, by minimizing the mean
error; the chosen values are $2.83$, $1.41$ and $2.83$, all inside the grid.
The budgets are $T=1.28\cdot10^4\cdot2^k$, $k=0,\dots,7$, with 20 runs each.
The fitted slopes in $T$ are $-0.76$, $-0.67$ and $-0.85$ over all budgets and
$-0.80$, $-0.67$ and $-0.82$ for $T\ge2.05\cdot10^5$, against the rates
$-4/5$, $-2/3$ and $-4/5$ of the analyses. The error of central differences
is $0.93$ to $1.65$ times that of the frames with $m=2$, growing with $T$, and
the error of the kernel estimate is $1.02$ to $1.62$ times it.

\paragraph{{Additional checks for Experiments 3 and 5.}}
{
\emph{Thresholds across dimensions and budgets.}
We repeat Experiment~3 at
$(d,T)=(10,1.024\cdot10^6)$, $(40,1.024\cdot10^6)$, and
$(20,2.56\cdot10^5)$, using nine log-spaced levels from $x=0.25$ to $4.5$ and
independent streams for the instance, pilot, and reported runs. The measured
systematic doubling points are $1.69$, $1.36$, and $1.35$, compared with
predictions $1.79$, $1.32$, and $1.35$. The adversarial values are $2.08$,
$1.67$, and $1.67$, compared with $2.19$, $1.64$, and $1.67$. Their variation
comes from the instance-dependent cross term $a\inner bv$:
$\inner bv/\sqrt{\alpha\eps_T}=-0.34$, $0.10$, and $0.07$.

\emph{Budget scaling at fixed dimension.}
We also rerun the four configurations of Experiment~5 at $d=32$, using its
tuned constants, $N=200$ rounds, $B=2,4,\dots,64$ frames, and 20 independent
runs per budget. Thus $T$ ranges from $2.56\cdot10^4$ to
$8.19\cdot10^5$ oracle calls and the radii range from $0.17$ to $0.39$.
Because $h$ is proportional to $T^{-1/6}$, the predicted error is
$T^{-2/3}$. The four fitted slopes are $-0.648\pm0.017$,
$-0.677\pm0.017$, $-0.668\pm0.020$, and $-0.703\pm0.018$; for
$T\ge1.02\cdot10^5$ they are $-0.60\pm0.03$, $-0.67\pm0.04$,
$-0.71\pm0.04$, and $-0.67\pm0.03$. The interpolated calls required for
$\eps=2\cdot10^{-4}$ are $1.39\cdot10^5$, $1.77\cdot10^5$,
$8.8\cdot10^4$, and $7.8\cdot10^4$, compared with $1.57\cdot10^5$,
$1.78\cdot10^5$, $7.4\cdot10^4$, and $7.8\cdot10^4$ in the independent
dimension-scaling runs of Experiment~5.
}

\section{Discussion and open questions}
\label{app:discussion}

Random noise costs calls but not rounds: the accelerated method keeps the
$O(\sqrt\kappa\log(1/\eps))$ rounds of Nesterov's method with exact gradients,
and only the number of calls per round grows with $d$ and $\sigma$. Persistent
errors are governed by their regularity rather than their size. A Lipschitz
error fixed before the run is not averaged out: for $\eta(x)=M\inner vx$ every
estimate is shifted by $Mv$. But the shift is $O(M)$, without a factor
$\sqrt d$, so the error costs $O(M^2/\alpha)$ in value and is admissible up to
a Lipschitz constant of order $\sqrt{\alpha\eps}$ in every dimension, however
large its values are. An error known only to be bounded, such as a rounding
error, is admissible only up to $\min\{\sqrt d,\sqrt{\log(4T)}\}\,\delta_\eps$,
even if it is a fixed function of the query, and only for $\eps\le\eps_0$; the
achievable level $c_\beta\delta_\eps$ does not grow with the budget, since the
radius cannot exceed the order of $h_\eps$.

\paragraph{Limitations.}
The methods need $\beta$, $L_\beta$, $\sigma$, $\alpha$ and $L$; adaptation to
unknown parameters is not studied. The counts concern oracle calls and
adaptive depth: a round assumes unlimited parallelism, a Haar frame costs
$O(d^3)$ arithmetic operations, and the methods of \cref{thm:nh-main} minimize
polynomials at a cost we do not bound. Some constants are far from practical:
the factor in $c_N$ of \cref{thm:rounds-lower}(b) is below $10^{-45}$ for
$\beta=3$ and $N=3$; the method of \cref{thm:nh-main}(b) starts with
$O(d^{\beta^2-1}\log(d+T))$ calls for integer $\beta$, that is $d^8$ for
$\beta=3$ and $d^{15}$ for $\beta=4$; and with our constants the range of
accuracies in \cref{cor:natural-curvature}(a), and that of the $\ell_1$ lower bound of
\cref{sec:other}, are nonempty only for $d$ above large thresholds, which
exceed $2^{28}$ for the latter at $\beta=3$. Under unrestricted Hessian
magnitude, our analysis of the initial polynomial fit uses centered noise. The
upper bounds on constrained sets query the $h$-neighborhood of the set. The
experiments check the predictions on particular instances, which are not
worst cases: Hilbert--Schmidt smoothness appears only through the separable
instance of \cref{fig:experiments}(c) and Experiment~5, the order $\beta$ only
through the
comparison of the stencils with $m=1$ and $m=2$ in (d), and the methods of
\cref{thm:gs,thm:nh-main} are not tested; minimax lower bounds cannot be
confirmed by finitely many instances.

\paragraph{Open questions.}
\emph{Rounds.} The lower bound $N_\beta(d)$ of \cref{thm:rounds-lower}(c)
concerns the rate in the number of calls per round for fixed $d$, while for a
fixed accuracy \cref{thm:main}(c) needs $O(\sqrt\kappa\,(1+\log(LR^2/\eps)))$
rounds in every dimension. How the minimal number of rounds depends on
$\kappa$ and $\eps$ is open, since our nested hard functions have
$\kappa\le3$. Under the condition of \cref{thm:gs}, we do not know whether
$O(\sqrt{\kappa_{\mathrm{eff}}}\log(1/\eps))$ rounds suffice.
\emph{Convex functions.} The minimax number of calls is known only in the
ranges of \cref{cor:natural-curvature}(a) and, for $L\ge L_\eps$ and
$L_\beta R^\beta d^{-\beta/4}\le\eps\le c'_\beta L_\beta R^\beta$, up to the factor
$C_\beta V/L_\eps$ between the leading statistical terms
(\cref{cor:natural-curvature}(b)); the full upper bound adds
$32d(1+LR^2/\eps)$ calls, and for smaller $\eps$ the gap grows. The admissible
adversarial level is known only up to a factor polynomial in $1/\eps$
(\cref{tab:noise}).
\emph{Unbounded Hessians.} Can the rate of \cref{thm:nh-main} be reached with
a smaller startup, under oblivious noise or with systematic and adversarial
errors, or in polynomial time?

\section{Further related work}
\label{app:related}

\paragraph{Oracle calls.}
Finite differences of noisy values go back to \citet{KieferWolfowitz1952};
higher smoothness is exploited by differences of higher order
\citep{Fabian1967,Dippon2003}, optimal in dimension one \citep{Chen1988}, and by
randomized differences weighted by a kernel \citep{Polyak1990}. Omitting
factors free of $d$ and $\eps$, the number of calls has since fallen from
$d^2\eps^{-(\beta+1)/(\beta-1)}$ \citep{Bach2016} to
$d^2\eps^{-\beta/(\beta-1)}$ \citep[Theorem~18 and Corollary~19]{Akhavan2024},
with intermediate steps by \citet{Akhavan2020} and \citet{Novitskii2022}. For
integer $\beta$ the class of \citet{Akhavan2024} is defined through derivatives
of order $\beta$, but their bias bound \citep[Lemma~6]{Akhavan2024} uses only
the Taylor remainder of order $\beta$, which is bounded on our classes. Their
noise, like that of \citet{Lobanov2024Improved}, may be oblivious: independent
of the random directions and not necessarily centered. The lower bound
$d^{\beta/(\beta-1)}\eps^{-\beta/(\beta-1)}$ of \citet[Theorem~6.1]{Akhavan2020}
and \citet[Theorem~22]{Akhavan2024} comes from separable functions, for which
it is attained \citep{AkhavanTsybakov2025}. For a Hilbert--Schmidt bound on the Lipschitz
constant of the Hessian, $\beta=3$, \citet{Yu2024} established matching bounds
of order $d^{3/2}\eps^{-3/2}$ for centered noise and unrestricted Hessian
magnitude; their Theorems~3.1 and~3.2 are asymptotic in the number of calls
with the other parameters fixed. With a bounded Hessian alone,
$d^2/\eps^2$ calls are necessary
\citep[improving \citealp{Jamieson2012}]{Shamir2013}, but the third
derivatives of the hard functions are not bounded uniformly in $\eps$.
\citet{WangBalakrishnanSingh2018} study local minimax rates for noisy
optimization of smooth functions, a neighboring problem with a different
notion of optimality. \citet{AkhavanLouatiTsybakov2026}
minimize a partial derivative $\partial_j^kg$ of an unknown function from noisy
values of $g$, with the optimal rate $N^{-(\beta-1)/(\beta+k)}$ in the number
$N$ of queries; for $k=0$ their bound is $d^{(2\beta-1)/\beta}N^{-(\beta-1)/\beta}$.
Hessians that grow at most linearly with the gradient \citep{Zhang2020Clipping}
were treated for two-point feedback by \citet{LobanovGasnikov2025}.
Zeroth-order methods are also used for nonconvex problems such as black-box
attacks on neural networks \citep{Chen2017ZOO} and fine-tuning of language
models \citep{Malladi2023MeZO}, which lie outside our assumptions.

\paragraph{Directions.}
Our estimate uses all axes of a uniformly random orthonormal basis.
Finite differences along random orthogonal directions, which include spherical
smoothing and coordinate descent as special cases, were analyzed by
\citet{Kozak2023Orthogonal}, and stochastic descent along $\ell\le d$ structured
orthogonal directions by \citet{Rando2024Structured}. Their oracles and
classes differ from ours; we use stencils of high order and the noise of
\cref{def:oracle}, in which the frames make an oblivious error average out and
a fixed Lipschitz error cost $O(M)$ (\cref{prop:estimator}).

\paragraph{Rounds.}
Parallel first-order methods need many rounds in high dimension
\citep{Nemirovski1994,DiakonikolasGuzman2020}, and zeroth-order methods for
functions with a bounded Hessian need about $d\log\log n$ rounds of $n$ calls
to keep the sequential rate \citep{DRY2018}. For highly smooth functions,
batched kernel methods with bounds on the rounds exist for convex
\citep{Lobanov2023Kernel}, Polyak--{\L}ojasiewicz
\citep{GasnikovLobanovStonyakin2024} and strongly convex
\citep[Theorem~2]{Lobanov2024Improved} objectives; the last one makes $d^2$ calls, up to
logarithmic factors, in $O(\sqrt\kappa\log(1/\eps))$ rounds for
$\kappa=L/\alpha$. Compared with it, \cref{thm:main}(a) has no logarithmic
factor in the error bound, holds under Hilbert--Schmidt smoothness with the
power {$d^s$}, and admits systematic and adversarial errors at the levels of
\cref{thm:main}(c), which are sharp up to the factors stated there.

\paragraph{Admissible errors.}
Systematic and adversarial errors have mostly been studied for less smooth
functions and for two-point feedback, where two values share one random
sample \citep{Duchi2015,Shamir2017}. There \citet{Gasnikov2022Power} bounded
the calls, the rounds and the largest admissible adversarial error, which
\citet{Pasechnyuk2026MALN} determined up to logarithms, building on
approximately convex optimization
\citep{Belloni2015,SingerVondrak2015,RisteskiLi2016}. In
\citet{Pasechnyuk2026MALN} the errors are fixed functions on a Euclidean ball,
queries and output lie in the ball, and the guarantee holds with high
probability; their lower bound
$\eps^{3/2}/(R\sqrt{dL})$ on the admissible level (\cref{tab:noise}) is proved on
the subclass with a Lipschitz constant of order $\sqrt{L\eps}$ on the ball, and
their upper bound for a range of accuracies and for $d$ at least of the order
of the logarithm of the number of queries. Lipschitz errors were treated by
\citet{GasnikovDvurechenskyNesterov2014} and by \citet{Dvinskikh2023}, whose
constant is of order $\eps/(R\sqrt d)$ for exact values and two-point feedback,
and accelerated methods with bounded errors by
\citet{GorbunovDvurechenskyGasnikov2022}. For highly smooth
Polyak--{\L}ojasiewicz functions, kernel methods tolerate bounded
deterministic errors of order $\eps^{\beta/(2(\beta-1))}d^{-\beta/(\beta-1)}$
\citep[Theorems~1 and~3]{GasnikovLobanovStonyakin2024}.

\paragraph{Convex functions and other geometries.}
For convex functions the optimal dependence on $d$ and $\eps$ is open. Finite
differences use $d^2\eps^{-2\beta/(\beta-1)}$ calls \citep{Bach2016}, as does the
method of \citet{Akhavan2024} applied to a regularized objective
(\cref{app:prior-convex}). Smoothing with a bounded Hessian
\citep{Flaxman2005,SahaTewari2011} uses $d^2L\eps^{-3}$ calls
\citep[Section~4.4]{Gasnikov2022Survey}, a bound stated with the second moment
of the values in place of $\sigma^2$; our bounds depend on $\sigma$ but not on
the size of the values, since the stencil uses differences of values. Bandit
methods use $\eps^{-2}$ calls up to logarithmic factors, removed for $d=1$ only
recently \citep{Carpentier2026OneDim}, but with a larger power of $d$
\citep{Lattimore2023Second,Fokkema2024,Lattimore2024BCO}: the methods of
\citet{Lattimore2023Second,Fokkema2024} use $d^p\eps^{-2}$ calls with
$3\le p\le7/2$ for centered sub-Gaussian noise and functions that are
Lipschitz on the domain, as ours are with constant $2LR$ on $B(R)$. The lower
bound derived from the separable functions of \citet{Akhavan2020,Akhavan2024}
(\cref{app:prior-convex}) is smaller in $d$ or in $1/\eps$. For convex Lipschitz functions,
smoothing uses $d^2\eps^{-4}$ calls \citep{Flaxman2005}, which
\citet{RajaramanHan2026} proved necessary for $\eps$ at least of order
$d^{-1/4}$ by hiding curvature near a tube defined by an unknown linear map.
For other norms, a linear map onto a Euclidean ball transfers lower bounds, as
\citet{Dorn2026GradientFree} showed for exact values. One-point methods with
nearly linear dependence on $d$ were known on the simplex for $\beta\le2$ and
Euclidean constants \citep{Gasnikov2017Online}, and two-point ones adapted to
$\ell_1$ by \citet{Gasnikov2016Simplex,Akhavan2022L1}. For first-order methods,
a mismatch between the norm of smoothness and the feasible set also changes
the complexity: for a Euclidean-Lipschitz gradient on the $\ell_1$ ball,
$LR^2/N^3$ after $N$ gradients is the minimax rate up to logarithms in high
dimension, $d\ge N+1$ for deterministic and $d\ge32N+1$ for randomized
methods \citep{Loginov2026Mismatched}.

\section{Comparison with concurrent work}
\label{app:concurrent}

\citet{Zhang2026Sharp} study noisy zeroth-order minimization of strongly
convex functions whose derivative of order $q$ is H\"older continuous in the
multilinear operator norm, from values with independent Gaussian noise and
with adaptive queries anywhere in $\R^d$. In our notation their class is
$\Fsc(\beta,1;1,3,1)$ with the operator seminorm and $\sigma=1$, and their
main theorem states that for every fixed $\beta>2$ and all integers $d,T\ge1$
the minimax error is of order $\min\{1,(d^2/T)^{(\beta-1)/\beta}\}$, so that
$d^2\eps^{-\beta/(\beta-1)}$ calls are necessary and sufficient for small $\eps$.
Their Appendix~E extends this to other fixed values of the parameters, with
constants that depend on them. The two works were carried out
independently. This appendix lists where they overlap and where they differ.

\paragraph{Overlap.} For the operator seminorm, \cref{cor:normalized} with
{$s=2$} is their main theorem, and \cref{thm:main}(b) contains their lower
bound. Both lower bounds perturb a quadratic by a linear function with
direction $\omega/\sqrt d$, $\omega\in\{-1,1\}^d$, under one radial cutoff,
and both bound the sum over the coordinates of the squared changes of a
value under a sign flip uniformly in the query point
(\cref{thm:radial-lower}, Step~3). The upper bound of this order was known
\citep{Akhavan2024}; they also prove it with a spherical estimator at the
radii $2^{-j}h$ combined with Lagrange weights and with projected
stochastic gradient descent, one estimate per iteration.

\paragraph{Differences.}
\begin{itemize}[leftmargin=*,itemsep=1pt]
\item \emph{Dependence on the parameters.} In \cref{thm:main} the constants
depend only on $\beta$: for $\delta=0$, $0\le M\le\alpha R$ and budgets above the
two thresholds stated after \cref{thm:main}, the minimax error is of order
$\Phi_T+M^2/\alpha$ jointly in all parameters, and \cref{thm:main}(b) holds for
every budget with the explicit saturation level $\Phi_0$. \citet[Section~5]{Zhang2026Sharp} do not claim a formula that is
sharp jointly in these parameters.
\item \emph{Hilbert--Schmidt smoothness.} They do not consider the
Hilbert--Schmidt seminorm, for which the dimension factor is $d^{3/2}$ for
even $q$ and $d^{2-(q-1)/(2(\beta-1))}$ for odd $q$
(\cref{thm:main,cor:normalized}); for even $q$ this needs the ridge functions
of \cref{thm:ridge-lower}. They discuss the result of \citet{Yu2024} for a
Frobenius-norm bound on the Lipschitz constant of the Hessian, $\beta=3$, as a
neighboring class.
\item \emph{Rounds.} Their method computes one estimate per iteration, so it
uses as many sequential rounds as iterations. \Cref{thm:main}(a) attains the
same number of calls in $O(\sqrt\kappa\log(1/\eps))$ rounds, and
\cref{thm:rounds-lower} bounds the number of rounds from below.
\item \emph{Noise.} They consider independent Gaussian noise. Here the
random noise may be biased if it is independent of the directions of the
method, and \cref{thm:main}(c) determines the admissible levels of
Lipschitz systematic errors and, up to $C_\beta\min\{\sqrt d,\sqrt{\log(4T)}\}$,
of adversarial errors.
\item \emph{Other classes.} Functions whose Hessian is not bounded from
above (\cref{thm:gs,thm:nh-main}), convex functions (\cref{sec:convex}) and
other norms and constraints (\cref{sec:other}) are not considered there.
\end{itemize}

\paragraph{Estimators.} Both upper bounds cancel the lower-order terms of the
Taylor expansion by combining several radii, as in \citet{Fabian1967} and
\citet{Polyak1990}. Their estimator differentiates along one random direction
at the radii $2^{-j}h$; ours uses the nodes $j/m$ along all axes of a random
orthonormal basis and averages such frames within a round
(\cref{sec:method}).

\FloatBarrier
\end{document}